\documentclass[11pt]{report}
\usepackage{tikz}
\usepackage{amsmath,amsthm,amssymb}
\usepackage{enumerate}
%copy of macros.tex
\newtheorem{theorem}{Theorem}[section]
\newtheorem{lem}[theorem]{Lemma}
\newtheorem{cor}[theorem]{Corollary}
\newtheorem{prop}[theorem]{Proposition}
\theoremstyle{remark}
\newtheorem{rem}[theorem]{Remark}
\newtheorem{ex}[theorem]{Example}
\theoremstyle{definition}
\newtheorem{defn}[theorem]{Definition}

\begin{document}
%\title{Some combinatorial models for reduced expressions in Coxeter groups}
%\author{Hugh Denoncourt}
%\date{}
%\maketitle
\begin{center}
\Large{\textbf{Some combinatorial models for reduced expressions in Coxeter groups}}
\end{center}

\begin{center}
by
\end{center}

\begin{center}
\textbf{Hugh Denoncourt}
\end{center}

\begin{center}
B.A., University of New Hampshire, 1997
\end{center}

\begin{center}
A thesis submitted to the
\end{center}

\begin{center}
Faculty of the Graduate School of the
\end{center}

\begin{center}
University of Colorado in partial fulfillment
\end{center}

\begin{center}
of the requirements for the degree of
\end{center}

\begin{center}
Doctor of Philosophy
\end{center}

\begin{center}
Department of Mathematics
\end{center}

\begin{center}
2009
\end{center}
\newpage
\noindent
Denoncourt, Hugh (Ph.D., Mathematics)\\ \\
Some combinatorial models for reduced expressions in Coxeter groups\\ \\
Thesis directed by Prof. Richard M. Green\\ \\

\begin{center}
\textbf{Abstract}
\end{center}

\vspace{0.4cm}

\noindent
Stanley's formula for the number of reduced expressions of a
permutation regarded as a Coxeter group element raises the
question of how to enumerate the reduced expressions of an
arbitrary Coxeter group element.  We provide a framework for
answering this question by constructing combinatorial objects
that represent the inversion set and the reduced expressions
for an arbitrary Coxeter group element.  The framework also
provides a formula for the length of an element formed by
deleting a generator from a Coxeter group element.  Fan and
Hagiwara, et al$.$ showed that for certain Coxeter groups, the
short-braid avoiding elements characterize those elements that
give reduced expressions when any generator is deleted from a
reduced expression.  We provide a characterization that holds
in all Coxeter groups.  Lastly, we give applications to the
freely braided elements introduced by Green and Losonczy,
generalizing some of their results that hold in simply-laced
Coxeter groups to the arbitrary Coxeter group setting.
\newpage
\begin{center}
\textbf{Dedication}
\end{center}

\vspace{1.5cm}

This thesis is dedicated to Mom and Dad.
\newpage
\begin{center}
\textbf{Acknowledgements}
\end{center}

\vspace{1.5cm}

\noindent
I would especially like to thank Richard Green for being my advisor, for
giving me great advice, especially with respect to the craft of writing
research mathematics, but mostly I would like to thank him for his patience.
I would also like to Nathaniel Thiem for being my second reader and all other
members of my committee for serving on the committee.  I would also like to
thank any and all fellow graduate students who have been there for me over
these adventurous years.

\tableofcontents

\chapter{Introduction}
\label{c:introchap}

Let $W$ be an arbitrary Coxeter group with generating set $S$ of involutions.  The geometric representation of $W$ is a canonically determined faithful representation of $W$ on a vector space $V$ of dimension $|S|$.  Associated to the geometric representation is a special subset $\Phi$ of $V$, called the root system of $W$, upon which $W$ acts.  The root system splits into ``positive'' and ``negative'' roots, and it is known that a Coxeter group element $w$ is determined by the set $\Phi(w)$ of positive roots sent negative by $w$.\\ \\
The minimal length representations of Coxeter group elements with respect to the generators in $S$ are called reduced expressions.  The number of generators in a minimal length representation of a Coxeter group element $w$ is called the length of $w$.\\ \\
In this thesis, we study two problems associated with the combinatorics of reduced expressions.  The first problem is to construct correspondences between the set of reduced expressions for an arbitrary Coxeter group element $w$ and sets of combinatorial objects related to the set $\Phi(w)$.  The second problem is to determine the reduction in length caused by deleting a generator from a reduced expression.\\ \\
A general correspondence for reduced expressions of arbitrary Coxeter groups was given by Tits in \cite{tits2}, but that correspondence involves the Coxeter complex instead of the set $\Phi(w)$.  In \cite{balancedtableaux}, Edelman and Greene gave a correspondence between the set of reduced expressions for a special type of element in a type $A$ Coxeter group and what they call ``balanced tableaux''.  These tableaux can be viewed as special labelings of the set $\Phi(w)$.  A more general correspondence was given by Kra\'skiewicz in \cite{reducedweyl}, in which he gave a correspondence between the set of reduced expressions for $w$ and what he calls standard $w$-tableaux.  While more general than the Edelman-Greene correspondence, Kra\'skiewicz's correspondence only applies to Coxeter groups that are finite Weyl groups.\\ \\
For an arbitrary element $w$ of an arbitrary Coxeter group $W$, we introduce incidence structures with ``betweenness'' relations on the set of positive roots $\Phi^+$ and the set $\Phi(w)$.  The main theorem of this thesis, Theorem~\ref{t:maintheorem}, gives several correspondences between the set of reduced expressions for $w$ and sets of combinatorial objects that are naturally compatible with these two incidence structures.\\ \\
Additionally, Theorem~\ref{t:deletiontheorem} uses the incidence structure on $\Phi(w)$ to determine the reduction in length caused by deleting a generator from a reduced expression.  We use Theorem~\ref{t:deletiontheorem} to give a characterization of the fully covering elements of $W$, which are the elements such that deletion of any generator from any reduced expression results in a reduced expression.  Such elements were characterized by Fan for finite Weyl groups in \cite{shortbraid} and by Hagiwara, et al$.$ for FC-finite simply-laced Coxeter groups in \cite{hagiwara}.  Our characterization applies in the setting of arbitrary Coxeter groups.\\ \\
Lastly, we study the freely braided elements introduced by Green and Losonczy for simply-laced Coxeter groups in \cite{fbI}.  We generalize their definition to the setting of arbitrary Coxeter groups and characterize the freely braided elements in terms of statistics determined by the element.\\ \\
This thesis is organized as follows:\\ \\
In Chapter $1$, we give the preliminaries associated to Coxeter groups and the geometric representation for a Coxeter group $W$ acting on a vector space $V$ with basis determined by $S$.  Associated to the geometric representation are important subsets of the vector space $V$ that $W$ acts upon.  These are the root system $\Phi$ and its associated positive system $\Phi^+$ and negative system $\Phi^-$.\\ \\
Chapter $2$ begins with the preliminaries for reduced expressions and their associated root sequences.  In Corollary~\ref{c:deletion}, we show that the length of an element $w$ obtained by deleting a generator from a reduced expression $\textbf{x}$ for $w$ is given by finding the number of roots in the root sequence of $\textbf{x}$ sent negative by a reflection associated to the deletion.  Towards making this calculation in Chapter $5$, we introduce the notion of a local Coxeter system associated to a set $\Lambda$ of roots.  In this thesis, we primarily use the case where $\Lambda$ consists of two roots, which determines local Coxeter systems that we call ``dihedral subsystems''.\\ \\
In Chapter $3$ we study the dihedral subsystems of $W$ by analyzing how the roots associated to a dihedral subsystem are formed.  We introduce sequences of the roots in a dihedral subsystem that make transparent the restrictions on the order in which these roots must occur in a root sequence.  Such restrictions are given by a property of subsets of roots called ``biconvexity''.  Proposition~\ref{p:biconvexstructure} characterizes the biconvex subsets of $\Phi^+$ in terms of their intersections with dihedral subsystems.\\ \\
In Chapter $4$ we construct a discrete incidence structure with betweenness relations for the set of positive roots $\Phi^+$ that is compatible with the sequences of roots introduced in Chapter $3$.  For an arbitrary $w \in W$ we also construct a discrete (and necessarily finite) incidence structure for the set $\Phi(w)$.  We then give correspondences between reduced expressions for $w$ and information placed ``on top'' of these structures, which is summarized in Theorem~\ref{t:maintheorem}.\\ \\
In Chapter $5$, Corollary~\ref{c:localreflect} shows that the sequences of roots in a dihedral subsystem derived in Chapter $3$ can be used to determine which reflections applied to a positive root will send that positive root to a negative root.  This information is naturally encoded in the discrete incidence structure on $\Phi(w)$ introduced in Chapter $4$, so in Theorem~\ref{t:deletiontheorem}, we use the incidence structure to determined the effect of deleting a generator from a reduced expression upon the length of the resulting Coxeter group element.  We then use Theorem~\ref{t:deletiontheorem} to characterize the fully covering elements of a Coxeter group, which are the elements such that deletion of any generator from any reduced expression always results in a reduced expression.  We then introduce the freely braided elements of Green and Losonczy.  Definition~\ref{d:freelybraided} generalizes their definition to the setting of arbitrary Coxeter groups.  Theorem~\ref{t:fbcharacterization} characterizes the elements $w$ that are freely braided in terms of the number of commutation classes of $w$ and the number of contractible long inversion sets of $w$.
\section{Preliminaries}
We define a \emph{Coxeter system} to be a pair $(W,S)$ consisting of a group $W$ and a finite set of generators $S \subseteq W$ subject only to relations of the form
\begin{equation*}
(ss')^{m_{s,s'}} = 1,
\end{equation*}
where $m_{s,s} = 1$ and $m_{s,s'} > 1$ for $s \neq s'$ in $S$.  In case no relation occurs for a pair $s,s'$, we make the convention that $m_{s,s'} = \infty$.  Thus, to specify a Coxeter system $(W,S)$ is to specify a finite set $S$ and a matrix $m$ indexed by $S$ with entries in $\mathbb{Z} \cup \{\infty\}$ satisfying the above conditions on $m$.  We call the matrix $m$ the \emph{Coxeter matrix of $(W,S)$}.  See \cite[Section 5.1]{humph} for details.\\ \\
Let $S^*$ denote the free monoid on $S$.  The elements of $S^*$ will be written as finite sequences.  There is a natural surjective monoid morphism $\phi:S^* \rightarrow W$ given by $\phi(s_1,\ldots,s_n) = s_1 \cdots s_n$.  We say that $\textbf{x} \in S^*$ is an \emph{expression for $w \in W$} if $\phi(\textbf{x}) = w$.  In general, given any finite sequence $\overline{\alpha}$, the \emph{length} $\ell(\overline{\alpha})$ of $\overline{\alpha}$ is the number of entries in the finite sequence.  Thus, for $\textbf{x} = (s_1,\ldots,s_n) \in S^*$, we say that $\ell(\textbf{x}) = n$.  The \emph{length} of an element $w \in W$, denoted $\ell(w)$, is the minimum of the lengths of the sequences in $\phi^{-1}(\{w\})$.  If $w = \phi(\textbf{x})$ and $\ell(\textbf{x}) = \ell(w)$, then we say that $\textbf{x}$ is a \emph{reduced expression} for $w$.  We denote the set of all reduced expressions for $w$ by $\mathcal{R}(w)$.\\ \\
We also sometimes specify a group element $w \in W$ as a product $$w = w_1w_2 \cdots w_k$$ where each $w_i \in W$ is called a \emph{factor of $w$}.  The product $w_1w_2 \cdots w_k$ is also called an \emph{expression for $w$}.\\ \\
For $k \geq 0$, we write $(s,t)_k$ for the length $k$ sequence $(s,t,s,\ldots)$ in $S^*$ that begins with $s$ and alternates between $s$ and $t$.  Similarly, if $u,v \in W$, we write $(uv)_k$ for the alternating product $uvu\cdots$ with $k$ factors.  If $k = 0$, we adopt the convention that $(s,t)_k$ and $(uv)_k$ represent the identity element of $S^*$ and $W$, respectively.\\ \\
Let $V$ be the real vector space with basis $\Delta = \{\alpha_s\,:\,s \in S\}$.  Associated to $(W,S)$ is the symmetric bilinear form $B$ on $V$ determined by $B(\alpha_s,\alpha_t) = -\cos(\pi/m_{s,t})$ for all $s,t \in S$.  We define an action of $W$ on $V$ by requiring that $s(\alpha_t) = \alpha_t - 2B(\alpha_s,\alpha_t)\,\alpha_s$ and extending linearly.  It follows that the action of $s$ on an arbitrary $v \in V$ takes the same form.  It can also be checked that $B$ is invariant under the action of the generators in $S$, and hence under the action of any $w \in W$.  This action gives rise to a canonical representation $\sigma:W \rightarrow GL_n(V)$ called the \emph{geometric representation of $W$}.  For details, see \cite[Section 5.3]{humph}.\\ \\
We call the set $\Phi = \{w(\alpha_s)\,:\,s \in S,w \in W\}$ the \emph{root system} of $(W,S)$, and we call the elements of $\Phi$ \emph{roots}.  Given $s \in S$, we call $\alpha_s$ a \emph{simple root} and $\Delta$ the \emph{simple system} of $(W,S)$.  Let $\Phi^+$ be the set of all $\alpha \in \Phi$ expressible as a nonnegative linear combination of the simple roots.  We call the set $\Phi^+$ the \emph{positive system} of $(W,S)$ and we call the elements of $\Phi^+$ \emph{positive roots}.  Let $\Phi^- = -\Phi^+$.  We call the set $\Phi^-$ the \emph{negative system} of $(W,S)$ and we call the elements of $\Phi^-$ \emph{negative roots}.  It is known (see \cite[Section 5.4]{humph}) that $\Phi = \Phi^+ \cup \Phi^-$ and that the union is disjoint.\\ \\
Let $\alpha, \beta \in \Phi^+$.  Suppose that $a,b$ are nonnegative real numbers such that $a,b$ are not both $0$ and suppose that $a\alpha + b\beta \in \Phi$.  Then it follows easily that $a \alpha + b\beta \in \Phi^+$.  Similarly, a root that is a nonnegative linear combination of negative roots is itself a negative root.  It is a consequence of the $W$-invariance of $B$ that all roots are of unit length (i.e. $\alpha \in \Phi$ implies $B(\alpha,\alpha) = 1$).  Hence, if $\alpha \in \Phi$, then $\pm \alpha$ are the only scalar multiples of $\alpha$ in $\Phi$.  It follows that distinct positive roots are not scalar multiples of one another.  For details, see \cite[Section 5.4]{humph}.\\ \\
Once we specify a Coxeter system $(W,S)$, we obtain the geometric representation of $W$ and hence the associated simple system, positive (and negative) system, and the root system associated to $\sigma$.  The terminology we use is the same as in \cite[Section 1.3]{humph} except that we allow $W$ (and hence $\Phi$) to be infinite.\\ \\
If $\alpha = w(\alpha_s)$ for some $s \in S$, then we write $s_\alpha = wsw^{-1}$ and we call $s_{\alpha}$ a \emph{reflection}.  We denote the set of all reflections by $R$.  The formula for a reflection $s_\alpha$ applied to an arbitrary vector $\lambda \in V$ is given by
\begin{equation} \label{e:reflection}
s_{\alpha}(\lambda) = \lambda - 2B(\lambda,\alpha)\alpha.
\end{equation}
Observe that $s_{\alpha}$ is independent of the choice of $w$ and $s$, and that the action of $s_\alpha$ on $V$ takes the same form as that of a simple reflection.  That is, the action of $s_{\alpha}$ on $V$ sends $\alpha$ to its negative and since $B(\alpha,\alpha) \neq 0$, the orthogonal complement of $\text{span}(\{\alpha\})$ is a subspace of $V$ of codimension $1$ (i.e. a hyperplane) and this subspace is fixed pointwise by $s_{\alpha}$.  Also, we have $s_{\alpha} = s_{-\alpha}$ for any $\alpha \in \Phi$.  See \cite[Section 5.7]{humph} for details.\\ \\
To each $w \in W$ one can associate the set $\Phi(w) = \Phi^+ \cap w^{-1}(\Phi^-)$, which we call the \emph{inversion set of $w$}.  This is the set of positive roots sent to negative roots by $w$.  It is well known (see \cite[Proposition 5.6]{humph}) that $\Phi(w)$ has $\ell(w)$ elements and that $w$ is uniquely determined by $\Phi(w)$.\\ \\
We let $\mathbb{N}_0$ denote the set of all natural numbers containing zero.
%end of chap1.tex
%copy of chap2.tex
\chapter{Root sequences, labelings, and reflection subgroups}
\label{c:braidmovechapt}
\section{Root Sequences and Labelings}
Let $(W,S)$ be a Coxeter system, let $w \in W$ and let $\textbf{x} = (s_1,\ldots,s_n)$ be an expression for $w$ (i.e. $\phi(\textbf{x}) = w$).  We form the sequence $\overline{\theta}(\textbf{x}) = (\theta_1,\ldots, \theta_n)$ given by $\theta_k = s_n \cdots s_{k+1}(\alpha_{s_k})$ for $1 \leq k \leq n$ (with $\theta_n$ understood to be $\alpha_{s_n}$), which we call the \emph{root sequence} of $\textbf{x}$.  It is well-known (cf. \cite[Exercise 5.6(1)]{humph}) that if $\textbf{x}$ is reduced, then the entries of the root sequence are precisely the elements of $\Phi(w)$.\\ \\
We use the following basic results from the theory of Coxeter groups, so we reproduce them here for convenience.
\begin{prop}  \label{p:basiccoxeter}
Let $(W,S)$ be a Coxeter system with root system $\Phi$.  Then:
\begin{enumerate}[(1)]
\item If $\alpha, \beta \in \Phi^+$ and $\beta = w(\alpha)$ for some $w \in W$, then $ws_\alpha w^{-1} = s_\beta$.
\item Let $w \in W$ and $s \in S$.  Then $\ell(ws) = \ell(w) \pm 1$.
\item If $s \in S$, then $s$ sends $\alpha_s$ to $-\alpha_s$, but permutes the remaining positive roots.
\item Let $w \in W$ and $\alpha \in \Phi^+$.  Then $\ell(ws_\alpha) > \ell(w)$ if and only if $w(\alpha) \in \Phi^+$.
\item Let $w = s_1 \cdots s_n$ ($s_i \in S$).  Suppose $s_\alpha$ satisfies $\ell(ws_\alpha) < \ell(w)$.  Then there is an index $i$ for which $ws_\alpha = s_1 \cdots \widehat{s_i} \cdots s_n$ ($s_i$ is omitted).
\item If $w = s_1 \cdots s_n$ and $\ell(w) < n$, then there exist indices $i$ and $j$ such that $$w = s_1 \cdots \widehat{s_i} \cdots \widehat{s_j} \cdots s_n.$$
\item Let $w, w' \in W$.  If $\Phi(w) = \Phi(w')$, then $w = w'$.
\item Let $w \in W$ and let $\textbf{x}$ be a reduced expression for $w$.  Then the entries of $\overline{\theta}(\textbf{x})$ are precisely the elements of $\Phi(w)$.
\item Let $w \in W$ and let $\textbf{x}$ and $\textbf{x}'$ be reduced expressions for $w$.  Suppose $\overline{\theta}(\textbf{x}) = \overline{\theta}(\textbf{x}')$.  Then $\textbf{x} = \textbf{x}'$.
\end{enumerate}
\end{prop}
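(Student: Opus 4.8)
The plan is to obtain all nine items from the standard development of the geometric representation in \cite[Sections~5.3--5.8]{humph}, supplying short direct arguments only where the cited results are not recorded there in exactly this form. Items (1)--(3) are immediate. For (1), write $\alpha = v(\alpha_s)$ with $v \in W$ and $s \in S$, so $s_\alpha = vsv^{-1}$; then $\beta = w(\alpha) = (wv)(\alpha_s)$, whence $s_\beta = (wv)s(wv)^{-1} = w s_\alpha w^{-1}$ (in fact no positivity hypothesis is needed). For (3): $s(\alpha_s) = \alpha_s - 2B(\alpha_s,\alpha_s)\alpha_s = -\alpha_s$ because $B(\alpha_s,\alpha_s) = 1$, and if $\beta \in \Phi^+$ is not a scalar multiple of $\alpha_s$ then $\beta$ has a strictly positive coefficient on some $\alpha_t$ with $t \neq s$, a coefficient left unchanged by $s$, so $s(\beta)$ is a root with a positive simple coefficient and hence lies in $\Phi^+$; since $s$ is an involution it permutes $\Phi^+ \setminus \{\alpha_s\}$. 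Item (2) follows from the trivial bounds $\ell(ws) \le \ell(w) + 1$ and $\ell(w) \le \ell(ws) + 1$ together with the parity constraint from the sign character $w \mapsto \det\sigma(w) = (-1)^{\ell(w)}$ (each generator acts with determinant $-1$): $\ell(ws)$ and $\ell(w)$ have opposite parity, so they differ by exactly one.

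Items (4)--(7) I would simply cite. Item (4) is the criterion comparing $\ell(ws_\alpha)$ with the sign of $w(\alpha)$, proved first for simple reflections by induction on $\ell(w)$ using (3) and then extended to arbitrary reflections; (5) is the Strong Exchange Condition; (6) is the Deletion Condition, a formal consequence of (5); and (7) is \cite[Proposition~5.6]{humph}. Should one want a self-contained proof of (7), induct on $\ell(w)$: if $w \neq e$ then $\Phi(w)$ contains a simple root $\alpha_s$ (the last entry of the root sequence of a reduced expression, using (8)), and a short computation with (3) gives $\Phi(ws) = s\big(\Phi(w) \setminus \{\alpha_s\}\big)$ with $\ell(ws) < \ell(w)$ by (4); thus $\Phi(w) = \Phi(w')$ forces $\Phi(ws) = \Phi(w's)$, and the inductive hypothesis gives $ws = w's$, hence $w = w'$.

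For (8), let $\textbf{x} = (s_1,\ldots,s_n)$ be reduced: each $\theta_k = s_n\cdots s_{k+1}(\alpha_{s_k})$ is positive since $s_n\cdots s_k$ is reduced and (4) (simple case) applies, and $w(\theta_k) = s_1\cdots s_k(\alpha_{s_k}) = -\,s_1\cdots s_{k-1}(\alpha_{s_k})$ is negative since $s_1\cdots s_k$ is reduced, so $\theta_k \in \Phi(w)$; as the $\theta_k$ are pairwise distinct and $|\Phi(w)| = n$, they exhaust $\Phi(w)$. For (9), the root sequence can be read back from the right: $\theta_n = \alpha_{s_n}$ determines $s_n$, and once $s_{k+1},\ldots,s_n$ are known, the identity $s_{k+1}\cdots s_n(\theta_k) = \alpha_{s_k}$ determines $s_k$; inductively $\overline{\theta}(\textbf{x})$ determines every entry of $\textbf{x}$, so $\overline{\theta}(\textbf{x}) = \overline{\theta}(\textbf{x}')$ gives $\textbf{x} = \textbf{x}'$ (reducedness is not really needed, only that the two equal sequences have the same length). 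The one genuinely substantial part is (4)--(6) --- the interaction of length with reflections and the exchange conditions --- which is exactly why I would invoke \cite{humph} there rather than reproduce those inductions.
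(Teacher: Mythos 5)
Your proposal is correct and follows essentially the same route as the paper, whose proof of this proposition consists entirely of citations to the standard references (Humphreys, Bj{\"{o}}rner, Bourbaki, and a noted straightforward induction for (9)); the short arguments you supply for (1)--(3) and (7)--(9) are the standard ones and are sound. The only step you assert without justification is the pairwise distinctness of the root-sequence entries in (8), but that is exactly the well-known fact the paper itself takes from \cite[Exercise 5.6(1)]{humph}, so nothing essential is missing.
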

\begin{proof}
See \cite[Lemma 5.7]{humph} for a proof of statement $(1)$.  See \cite[Proposition 5.2]{humph} for a proof of statement $(2)$.  See \cite[Proposition 5.6a]{humph} for a proof of statement $(3)$. See \cite[Proposition 5.7]{humph} for a proof of statement $(4)$.  See \cite[Theorem 5.8]{humph} for a proof of statement $(5)$.  See \cite[Corollary 5.8]{humph} for a proof of statement $(6)$.  Statement $(7)$ follows from \cite[Proposition 2]{coxeterorderings}.  For statement $(8)$, see Exercise $1$ of \cite[Section 5.6]{humph} or \cite[Lemma 4.3]{bourbakicoxeter}.  Statement $(9)$ can be proven by a straightforward induction argument, and is explicitly mentioned in \cite[Section 1.2]{fbII}.
\end{proof}
\noindent
Statement $(5)$ is called the \emph{strong exchange condition}, while statement $(6)$ is called the \emph{deletion condition}.  It is well known that if $W$ is a group generated by a set $S$ of involutions, then $(W,S)$ is a Coxeter system if and only if $(W,S)$ satisfies the deletion condition if and only if $(W,S)$ satisfies the strong exchange condition (see \cite[Theorem 1.5.1]{bb}, for example).\\ \\
We wish to make our root sequence manipulations precise.  Thus, given two finite sequences $\overline{\alpha} = (\alpha_1,\ldots,\alpha_k)$ and $\overline{\beta} = (\beta_1,\ldots,\beta_{k'})$ of roots we define the multiplication $\overline{\alpha}\overline{\beta}$ by concatenation of sequences.  In other words,
\begin{equation*}
\overline{\alpha}\overline{\beta} = (\alpha_1,\ldots,\alpha_k,\beta_1,\ldots,\beta_{k'}).
\end{equation*}
Also, given $w \in W$, we let $w$ act on a sequence $\overline{\theta} = (\theta_1,\ldots,\theta_k)$ by
\begin{equation*}
w[\overline{\theta}] = (w(\theta_1),\ldots,w(\theta_k)).
\end{equation*}
If we form the free monoid over the set $\Phi$, we find that our multiplication of finite sequences of roots is isomorphic to the multiplication of the free monoid $\Phi^*$.  We do not pursue this except to point out that our sequence multiplication satisfies both the left and right cancelation properties.  The next two lemmas record cancelation properties of our sequence multiplication, which are well known and trivial.
\begin{lem} \label{l:cancelation}
Let $\overline{\alpha}$, $\overline{\beta}$, and $\overline{\gamma}$ be sequences of roots.  If $\overline{\alpha} \overline{\beta} = \overline{\alpha} \overline{\gamma}$, then $\overline{\beta} = \overline{\gamma}$.  Similarly, if $\overline{\alpha} \overline{\gamma} = \overline{\beta} \overline{\gamma}$, then $\overline{\alpha} = \overline{\beta}$.
\end{lem}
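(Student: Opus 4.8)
This is a bookkeeping statement about concatenation of finite sequences, so the plan is simply to unwind the definition of $\overline{\alpha}\overline{\beta}$ and compare entries. Write $\overline{\alpha} = (\alpha_1,\ldots,\alpha_k)$, $\overline{\beta} = (\beta_1,\ldots,\beta_m)$, and $\overline{\gamma} = (\gamma_1,\ldots,\gamma_{m'})$.

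For the left cancellation claim, I would first observe that equality of the sequences $\overline{\alpha}\overline{\beta}$ and $\overline{\alpha}\overline{\gamma}$ forces equality of their lengths, so $k + m = k + m'$ and hence $m = m'$. Then, for each $i$ with $1 \leq i \leq m$, the $(k+i)$-th entry of $\overline{\alpha}\overline{\beta}$ is $\beta_i$ and the $(k+i)$-th entry of $\overline{\alpha}\overline{\gamma}$ is $\gamma_i$; since the two sequences are equal entrywise, $\beta_i = \gamma_i$ for all such $i$, giving $\overline{\beta} = \overline{\gamma}$.

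For the right cancellation claim, the argument is symmetric: equality of $\overline{\alpha}\overline{\gamma}$ and $\overline{\beta}\overline{\gamma}$ again forces $\ell(\overline{\alpha}) = \ell(\overline{\beta})$, say both equal to $k$, and then for $1 \leq i \leq k$ the $i$-th entries of the two concatenations are $\alpha_i$ and $\beta_i$ respectively, so $\alpha_i = \beta_i$ and $\overline{\alpha} = \overline{\beta}$. (Equivalently, one may note that the multiplication is isomorphic to multiplication in the free monoid $\Phi^*$, which is cancellative on both sides for the same reason.)

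There is no real obstacle here; the only thing to be careful about is the index shift by $\ell(\overline{\alpha})$ when reading off the entries of $\overline{\beta}$ (resp. $\overline{\gamma}$) from the concatenated sequence, and the preliminary length comparison that justifies matching up entries in the first place.
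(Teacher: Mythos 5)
Your proof is correct; the paper simply dismisses this lemma with ``This is clear,'' and your entrywise argument (length comparison followed by matching entries with the index shift by $\ell(\overline{\alpha})$) is exactly the routine verification that remark leaves implicit.
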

\begin{proof}
This is clear.
\end{proof}
\begin{lem} \label{l:likecancelation}
Let $\overline{\alpha},\overline{\beta},\overline{\gamma},\overline{\delta}$ be sequences of roots and suppose $\overline{\alpha} \, \overline{\gamma} = \overline{\beta}  \, \overline{\delta}$.  If $\ell(\overline{\alpha}) = \ell(\overline{\beta})$, then $\overline{\alpha} = \overline{\beta}$ and $\overline{\gamma} = \overline{\delta}$.
\end{lem}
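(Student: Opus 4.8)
The plan is to observe that this is a purely formal consequence of how concatenation works, together with the left-cancelation property recorded in Lemma~\ref{l:cancelation}. Write $\overline{\alpha} = (\alpha_1,\ldots,\alpha_k)$ and $\overline{\beta} = (\beta_1,\ldots,\beta_k)$, using the hypothesis $\ell(\overline{\alpha}) = \ell(\overline{\beta}) = k$, and write $\overline{\gamma} = (\gamma_1,\ldots,\gamma_p)$, $\overline{\delta} = (\delta_1,\ldots,\delta_q)$.

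First I would spell out both sides of $\overline{\alpha}\,\overline{\gamma} = \overline{\beta}\,\overline{\delta}$ entrywise: the left-hand side is the sequence $(\alpha_1,\ldots,\alpha_k,\gamma_1,\ldots,\gamma_p)$ and the right-hand side is $(\beta_1,\ldots,\beta_k,\delta_1,\ldots,\delta_q)$. Equality of finite sequences forces equality of lengths, so $k + p = k + q$, hence $p = q$, and it forces entrywise equality. Comparing the first $k$ entries gives $\alpha_i = \beta_i$ for $1 \le i \le k$, i.e.\ $\overline{\alpha} = \overline{\beta}$.

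Having established $\overline{\alpha} = \overline{\beta}$, I would then rewrite the original identity as $\overline{\alpha}\,\overline{\gamma} = \overline{\alpha}\,\overline{\delta}$ and invoke the left-cancelation half of Lemma~\ref{l:cancelation} to conclude $\overline{\gamma} = \overline{\delta}$. (Alternatively one can just read off the last $p = q$ entries directly, but citing Lemma~\ref{l:cancelation} keeps the bookkeeping minimal.) There is no real obstacle here; the only thing to be slightly careful about is deducing $p = q$ before comparing entries, so that the comparison of the tail segments is between sequences of the same length.
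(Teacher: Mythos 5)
Your proof is correct; the paper simply declares this lemma ``clear,'' and your entrywise bookkeeping (deducing $p=q$ from equal total lengths, matching the first $k$ entries, then applying the left-cancelation of Lemma~\ref{l:cancelation}) is exactly the routine argument being elided. Nothing further is needed.
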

\begin{proof}
This is clear.
\end{proof}
\begin{lem} \label{l:basicrs}
Let $\textbf{a}, \textbf{b} \in S^*$, let $\textbf{x} = \textbf{a} \textbf{b}$, and let $w^{-1} = \phi(\textbf{b})$.  Then $\overline{\theta}(\textbf{x}) = w[\overline{\theta}(\textbf{a})] \, \overline{\theta}(\textbf{b})$.
\end{lem}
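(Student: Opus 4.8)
The plan is to unwind the definition of the root sequence directly and compare the resulting entries on both sides. Write $\textbf{a} = (s_1,\ldots,s_p)$ and $\textbf{b} = (t_1,\ldots,t_q)$, so that $\textbf{x} = (s_1,\ldots,s_p,t_1,\ldots,t_q)$ has $n = p+q$ letters. Since the elements of $S$ are involutions, $w = \phi(\textbf{b})^{-1} = (t_1 t_2 \cdots t_q)^{-1} = t_q t_{q-1} \cdots t_1$; keeping this identity straight is the only point that requires care. The proof then splits into two cases according to whether the index into $\overline{\theta}(\textbf{x})$ lands in the $\textbf{a}$-part or the $\textbf{b}$-part.

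First I would treat an index $k$ with $1 \leq k \leq p$. By definition, $\theta_k(\textbf{x})$ is obtained by applying the product of the letters of $\textbf{x}$ in positions $k+1,\ldots,n$, taken in decreasing order of position, to $\alpha_{s_k}$. That product is $(t_q \cdots t_1)(s_p \cdots s_{k+1}) = w\,(s_p \cdots s_{k+1})$, and since $s_p \cdots s_{k+1}(\alpha_{s_k})$ is precisely the $k$-th entry of $\overline{\theta}(\textbf{a})$, we get $\theta_k(\textbf{x}) = w(\theta_k(\textbf{a}))$. Hence the first $p$ entries of $\overline{\theta}(\textbf{x})$ are exactly the entries of $w[\overline{\theta}(\textbf{a})]$.

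Next I would treat an index of the form $k = p+j$ with $1 \leq j \leq q$. Now the letters of $\textbf{x}$ in positions $k+1,\ldots,n$ are exactly $t_{j+1},\ldots,t_q$, so their product in decreasing order of position is $t_q \cdots t_{j+1}$, and applying it to $\alpha_{t_j}$ yields exactly the $j$-th entry of $\overline{\theta}(\textbf{b})$. Thus the last $q$ entries of $\overline{\theta}(\textbf{x})$ are the entries of $\overline{\theta}(\textbf{b})$. Concatenating the two cases gives $\overline{\theta}(\textbf{x}) = w[\overline{\theta}(\textbf{a})]\,\overline{\theta}(\textbf{b})$, which is the claim. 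There is no real obstacle here: the statement is essentially a bookkeeping identity, and the only thing to watch is the order-reversal both in passing from $\phi(\textbf{b})$ to $w$ and in the definition of the root sequence itself. (If one prefers to avoid the explicit index juggling, the same result follows by induction on $\ell(\textbf{b})$ from the $q=1$ case, using Lemma~\ref{l:cancelation}; but the direct computation is shorter.)
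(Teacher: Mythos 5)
Your proof is correct and is essentially identical to the paper's: both split the index range at $\ell(\textbf{a})$, observe that the tail entries coincide with $\overline{\theta}(\textbf{b})$ verbatim, and for the initial entries factor the product of suffix letters as $w$ times the corresponding product from $\textbf{a}$. Your explicit remark that $w = t_q\cdots t_1$ is the order-reversal one must watch is exactly the one computation the paper's proof also performs.
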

\begin{proof}
Starting at the $\ell(\textbf{a}) + 1$ entry of $\overline{\theta}(\textbf{x})$, the calculations are identical to those of $\overline{\theta}(\textbf{b})$.  If $\textbf{a} = (s_1,\ldots,s_m)$ and $\textbf{b} = (s_1',\ldots,s_n')$, then for $i < \ell(\textbf{a}) + 1$, the $i$-th entry of $\overline{\theta}(\textbf{x})$ is $s_n' \cdots s_1' s_m \cdots s_{i+1}(\alpha_{s_i}) = w(s_n \cdots s_{i+1}(\alpha_{s_i}))$, which is $w$ applied to the $i$-th entry of $\overline{\textbf{a}}$.
\end{proof}
\noindent
It will be useful to know what change occurs in a root sequence upon substituting one expression for another in a word where the expressions represent the same element $w \in W$.  For more complex decompositions of a word than what appears in Lemma~\ref{l:basicrs}, we need the following terminology:
\begin{defn} \label{d:factordecomp}
Let $\textbf{x} = \textbf{a}_1 \cdots \textbf{a}_k$ be a product in $S^*$.  If $\overline{\theta}(\textbf{x}) = \overline{\theta_1} \cdots \overline{\theta_k}$ and $$\ell(\overline{\theta_i}) = \ell(\textbf{a}_i)$$ for all $i$ such that $1 \leq i \leq k$, then we say that the equation $\overline{\theta}(\textbf{x}) = \overline{\theta_1} \cdots \overline{\theta_k}$ is \emph{the decomposition of $\overline{\theta}(\textbf{x})$ respecting $\textbf{a}_1 \cdots \textbf{a}_k$}.
\end{defn}
\begin{rem}
Note that the $\overline{\theta_i}$'s appearing in the decomposition are to denote subsequences of the root sequence of $\textbf{x}$, but are not themselves root sequences of the $\textbf{a}_i$.  Rather, the $\overline{\theta_i}$ are given by some $w \in W$ acting on the root sequence of $\textbf{a}_i$, which can be made precise by repeatedly applying Lemma~\ref{l:basicrs} to the factorization of $\textbf{x}$ in Definition~\ref{d:factordecomp}.
\end{rem}
\begin{lem} \label{l:subrs}
Let $\textbf{x} = \textbf{a}\textbf{b}\textbf{c}$ and $\textbf{x}' = \textbf{a}\textbf{b}'\textbf{c}$.  Let $v^{-1} = \phi(\textbf{c})$ and $u^{-1} = \phi(\textbf{b})$.   Suppose $\phi(\textbf{b}) = \phi(\textbf{b}')$ and set $\overline{\theta_2}' = v[\overline{\theta}(\textbf{b}')]$.  If $\overline{\theta}(\textbf{x}) = \overline{\theta_1}\,\overline{\theta_2}\,\overline{\theta_3}$ is the decomposition of $\overline{\theta}(\textbf{x})$ respecting $\textbf{a}\textbf{b}\textbf{c}$, then $\overline{\theta}(\textbf{x}') = \overline{\theta_1}\, \overline{\theta_2}' \,\overline{\theta_3}$ is the decomposition of $\overline{\theta}(\textbf{x}')$ respecting $\textbf{a}\textbf{b}'\textbf{c}$.  Thus, substituting equivalent expressions changes only the corresponding substituted portion of the root sequence.
\end{lem}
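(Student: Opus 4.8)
The plan is to reduce the statement to two applications of Lemma~\ref{l:basicrs} together with the length-cancelation property recorded in Lemma~\ref{l:likecancelation}. The underlying point is that the group element by which the block coming from $\textbf{a}$ gets twisted depends only on the image under $\phi$ of the suffix following it, and this image is unchanged when $\textbf{b}\textbf{c}$ is replaced by $\textbf{b}'\textbf{c}$.

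First I would compute $\overline{\theta}(\textbf{x})$ explicitly. Factoring $\textbf{x} = (\textbf{a}\textbf{b})\textbf{c}$ and applying Lemma~\ref{l:basicrs} with $v^{-1} = \phi(\textbf{c})$ gives $\overline{\theta}(\textbf{x}) = v[\overline{\theta}(\textbf{a}\textbf{b})]\,\overline{\theta}(\textbf{c})$; applying Lemma~\ref{l:basicrs} once more to $\textbf{a}\textbf{b}$ with $u^{-1} = \phi(\textbf{b})$ gives $\overline{\theta}(\textbf{a}\textbf{b}) = u[\overline{\theta}(\textbf{a})]\,\overline{\theta}(\textbf{b})$. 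Since $W$ acts on $V$, the action on sequences satisfies $v[u[\overline{\theta}(\textbf{a})]] = (vu)[\overline{\theta}(\textbf{a})]$, so
\[
\overline{\theta}(\textbf{x}) = (vu)[\overline{\theta}(\textbf{a})]\;v[\overline{\theta}(\textbf{b})]\;\overline{\theta}(\textbf{c}).
\]
Running the identical computation for $\textbf{x}' = (\textbf{a}\textbf{b}')\textbf{c}$, and using that $\phi$ is a monoid homomorphism together with $\phi(\textbf{b}') = \phi(\textbf{b}) = u^{-1}$, the twist on $\overline{\theta}(\textbf{a})$ is again by $vu$, so
\[
\overline{\theta}(\textbf{x}') = (vu)[\overline{\theta}(\textbf{a})]\;v[\overline{\theta}(\textbf{b}')]\;\overline{\theta}(\textbf{c}) = (vu)[\overline{\theta}(\textbf{a})]\;\overline{\theta_2}'\;\overline{\theta}(\textbf{c}).
\]

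Next I would match the first displayed factorization of $\overline{\theta}(\textbf{x})$ against the decomposition $\overline{\theta}(\textbf{x}) = \overline{\theta_1}\,\overline{\theta_2}\,\overline{\theta_3}$ respecting $\textbf{a}\textbf{b}\textbf{c}$. Since $\ell\big((vu)[\overline{\theta}(\textbf{a})]\big) = \ell(\overline{\theta}(\textbf{a})) = \ell(\textbf{a}) = \ell(\overline{\theta_1})$ and the trailing blocks have matching length $\ell(\textbf{c})$, two applications of Lemma~\ref{l:likecancelation} force $\overline{\theta_1} = (vu)[\overline{\theta}(\textbf{a})]$ and $\overline{\theta_3} = \overline{\theta}(\textbf{c})$ (the middle blocks then match as well, though this is not needed). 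Substituting these identifications into the expression for $\overline{\theta}(\textbf{x}')$ yields $\overline{\theta}(\textbf{x}') = \overline{\theta_1}\,\overline{\theta_2}'\,\overline{\theta_3}$. Finally, to confirm this is the decomposition respecting $\textbf{a}\textbf{b}'\textbf{c}$ in the sense of Definition~\ref{d:factordecomp}, I would check the three length conditions: $\ell(\overline{\theta_1}) = \ell(\textbf{a})$ and $\ell(\overline{\theta_3}) = \ell(\textbf{c})$ are inherited from the decomposition for $\textbf{x}$, and $\ell(\overline{\theta_2}') = \ell\big(v[\overline{\theta}(\textbf{b}')]\big) = \ell(\overline{\theta}(\textbf{b}')) = \ell(\textbf{b}')$.

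The only step needing genuine care — and the conceptual content of the lemma — is the bookkeeping in the first paragraph: verifying that the element twisting $\overline{\theta}(\textbf{a})$ is the same, namely $vu$, for $\textbf{x}$ and for $\textbf{x}'$. This is exactly where the hypothesis $\phi(\textbf{b}) = \phi(\textbf{b}')$ is used, through $\phi(\textbf{b}'\textbf{c}) = \phi(\textbf{b})\phi(\textbf{c}) = \phi(\textbf{b}\textbf{c})$; everything else is routine manipulation of sequence concatenation, and I expect no further obstacles.
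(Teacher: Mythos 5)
Your proposal is correct and follows essentially the same route as the paper: two applications of Lemma~\ref{l:basicrs} to obtain $\overline{\theta}(\textbf{x}) = vu[\overline{\theta}(\textbf{a})]\,v[\overline{\theta}(\textbf{b})]\,\overline{\theta}(\textbf{c})$, identification of the blocks via Lemma~\ref{l:likecancelation} and Definition~\ref{d:factordecomp}, and then the same computation for $\textbf{x}'$ using $\phi(\textbf{b}')=\phi(\textbf{b})$. You are somewhat more explicit than the paper about verifying the length conditions for the new decomposition, but the argument is the same.
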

\begin{proof}
From Lemma~\ref{l:basicrs}, we have $$\overline{\theta}(\textbf{x}) = vu[\overline{\theta}(\textbf{a})] \, v[\overline{\theta}(\textbf{b})] \, \overline{\theta}(\textbf{c}).$$  Thus, by Lemma~\ref{l:likecancelation} and Definition~\ref{d:factordecomp}, $\overline{\theta_1} = vu[\overline{\theta}(\textbf{a})]$, $\overline{\theta_2} = v [\overline{\theta}(\textbf{b})]$, and $\overline{\theta_3} = \overline{\theta}(\textbf{c})$.  By Lemma~\ref{l:basicrs} again,
\begin{equation*}
\overline{\theta}(\textbf{x}') = vu[\overline{\theta}(\textbf{a})] \, v[\overline{\theta}(\textbf{b}')] \, \overline{\theta}(\textbf{c}),
\end{equation*}
which gives the result.
\end{proof}
\noindent
The root sequence of a reduced expression $\textbf{x}$ is related to the strong exchange condition by the following well known fact, which specifies the reflection that will remove the $k$-th generator of $\textbf{x}$.
\begin{lem} \label{l:reflectdelete}
Suppose that $\textbf{x} = (s_1,\ldots,s_n)$ is a reduced expression for $w$, $\overline{\theta}(\textbf{x})$ is the root sequence for $\textbf{x}$, and $\theta_k$ is the $k$-th root of $\overline{\theta}(\textbf{x})$.  Then $ws_{\theta_k} = s_1 \cdots \widehat{s_k} \cdots s_n$.
\end{lem}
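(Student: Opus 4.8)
The plan is to write the reflection $s_{\theta_k}$ explicitly as a conjugate of the simple reflection $s_k$ and then simplify the product $ws_{\theta_k}$ by a telescoping cancellation. Set $v = s_n \cdots s_{k+1}$, so that by the definition of the root sequence $\theta_k = v(\alpha_{s_k})$. Since $\textbf{x}$ is reduced, Proposition~\ref{p:basiccoxeter}(8) shows $\theta_k$ is an entry of $\overline{\theta}(\textbf{x})$ lying in $\Phi(w) \subseteq \Phi^+$, and $\alpha_{s_k}$ is a simple root; so Proposition~\ref{p:basiccoxeter}(1), applied with $v$ carrying $\alpha_{s_k}$ to $\theta_k$ and using $s_{\alpha_{s_k}} = s_k$, gives
\begin{equation*}
s_{\theta_k} = v\, s_k\, v^{-1} = (s_n \cdots s_{k+1})\, s_k\, (s_{k+1} \cdots s_n).
\end{equation*}

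Next I would substitute this into $ws_{\theta_k}$ using $w = s_1 \cdots s_n$, obtaining
\begin{equation*}
ws_{\theta_k} = (s_1 \cdots s_n)(s_n \cdots s_{k+1})\, s_k\, (s_{k+1} \cdots s_n).
\end{equation*}
The block $(s_1 \cdots s_n)(s_n \cdots s_{k+1})$ collapses: the trailing $s_n$ of the first factor cancels the leading $s_n$ of the second (as $s_n^2 = 1$), then $s_{n-1}$ against $s_{n-1}$, and so on down to $s_{k+1}$ against $s_{k+1}$, leaving $s_1 \cdots s_k$. Hence
\begin{equation*}
ws_{\theta_k} = (s_1 \cdots s_k)\, s_k\, (s_{k+1} \cdots s_n) = s_1 \cdots s_{k-1}\, s_{k+1} \cdots s_n,
\end{equation*}
using $s_k^2 = 1$ for the last step. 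This is exactly $s_1 \cdots \widehat{s_k} \cdots s_n$, as claimed.

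The proof is essentially bookkeeping, so there is no serious obstacle; the only point requiring a little care is the justification that $s_{\theta_k}$ equals the displayed conjugate of $s_k$, which rests on $\theta_k$ being genuinely of the form $v(\alpha_{s_k})$ together with the well-definedness of the reflection $s_\alpha$ (equivalently Proposition~\ref{p:basiccoxeter}(1)). I would also note in passing the boundary case $k = n$: there $v$ is the empty product, $\theta_n = \alpha_{s_n}$, $s_{\theta_n} = s_n$, and the identity degenerates to the obvious $ws_n = s_1 \cdots s_{n-1}$.
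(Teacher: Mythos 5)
Your proof is correct and is precisely the ``straightforward calculation based on Proposition~\ref{p:basiccoxeter}(1)'' that the paper defers to (it appears in the proof of Humphreys' Theorem 5.8): writing $s_{\theta_k}$ as the conjugate $(s_n \cdots s_{k+1})\,s_k\,(s_{k+1}\cdots s_n)$ and telescoping. No difference in approach worth noting.
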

\begin{proof}
This is a straightforward calculation based on Proposition~\ref{p:basiccoxeter}(1), which appears in the proof of \cite[Theorem 5.8]{humph}.
\end{proof}
\noindent
We can state a version of the previous lemma that gives the effect of deleting a generator upon the root sequence of a reduced expression.  It is probably well known, but we have not found it in the literature, so we prove it here.
\begin{lem} \label{l:gendeletion}
Let $(W,S)$ be a Coxeter system and let $w \in W$.  Let $s \in S$, let $\textbf{x} = \textbf{a} \, (s) \, \textbf{b}$ be a reduced expression for $w$, and let $\overline{\theta}(\textbf{x})$ be the root sequence of $\textbf{x}$.  Let $j = \ell(\textbf{a}) + 1$ and let $D_j(\textbf{x}) = \textbf{x}' = \textbf{a} \, \textbf{b}$ denote the expression obtained by deleting the $j$-th letter from $\textbf{x}$.  Let $\overline{\theta}(\textbf{x}) = \overline{\theta_1} (\theta_j) \overline{\theta_2}$ be the decomposition of $\overline{\theta}(\textbf{x})$ respecting $\textbf{a}\,(s)\,\textbf{b}$.  Then $\overline{\theta}(\textbf{x}') = s_{\theta_j} [\overline{\theta_1}] \, \overline{\theta_2}$ is the decomposition of $\overline{\theta}(\textbf{x}')$ respecting $\textbf{a} \, \textbf{b}$.
\end{lem}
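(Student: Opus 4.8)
The plan is to compute both root sequences directly via repeated application of Lemma~\ref{l:basicrs} and then to identify the two resulting expressions using the conjugation formula of Proposition~\ref{p:basiccoxeter}(1). The hypothesis that $\textbf{x}$ is reduced will enter only at that last step, through Proposition~\ref{p:basiccoxeter}(8).

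First I would write $\textbf{a} = (s_1,\ldots,s_{j-1})$ and $\textbf{b} = (s_{j+1},\ldots,s_n)$, set $v = \phi(\textbf{b})^{-1}$, and apply Lemma~\ref{l:basicrs} to the factorization $\textbf{x} = \bigl(\textbf{a}\,(s)\bigr)\,\textbf{b}$ and then again to $\textbf{a}\,(s) = \textbf{a}\,((s))$. Using $\phi((s)) = s$, $\overline{\theta}((s)) = (\alpha_s)$, and the fact that the $W$-action on sequences respects composition (it is induced by the representation $\sigma$), this yields
$$\overline{\theta}(\textbf{x}) = (vs)[\overline{\theta}(\textbf{a})]\,\bigl(v(\alpha_s)\bigr)\,\overline{\theta}(\textbf{b}).$$
Since $\overline{\theta}(\textbf{a})$ and $\overline{\theta}(\textbf{b})$ have exactly $\ell(\textbf{a})$ and $\ell(\textbf{b})$ entries, Lemma~\ref{l:likecancelation} forces the identifications $\overline{\theta_1} = (vs)[\overline{\theta}(\textbf{a})]$, $\theta_j = v(\alpha_s)$, and $\overline{\theta_2} = \overline{\theta}(\textbf{b})$ in the decomposition respecting $\textbf{a}\,(s)\,\textbf{b}$. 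Applying Lemma~\ref{l:basicrs} to $\textbf{x}' = \textbf{a}\,\textbf{b}$ gives $\overline{\theta}(\textbf{x}') = v[\overline{\theta}(\textbf{a})]\,\overline{\theta}(\textbf{b})$.

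It then remains to show $v[\overline{\theta}(\textbf{a})] = s_{\theta_j}[\overline{\theta_1}] = s_{\theta_j}(vs)[\overline{\theta}(\textbf{a})]$, i.e. that $s_{\theta_j}vs = v$ in $W$, equivalently $s_{\theta_j} = vsv^{-1}$. This is where reducedness of $\textbf{x}$ is used: by Proposition~\ref{p:basiccoxeter}(8) the entries of $\overline{\theta}(\textbf{x})$ all lie in $\Phi(w) \subseteq \Phi^+$, so $\theta_j = v(\alpha_s)$ is a positive root; since $\alpha_s$ is also positive and $s = s_{\alpha_s}$, Proposition~\ref{p:basiccoxeter}(1) gives $vsv^{-1} = v s_{\alpha_s} v^{-1} = s_{\theta_j}$. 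Hence $s_{\theta_j}vs = (vsv^{-1})vs = vs^2 = v$, as needed, so $\overline{\theta}(\textbf{x}') = v[\overline{\theta}(\textbf{a})]\,\overline{\theta}(\textbf{b}) = s_{\theta_j}[\overline{\theta_1}]\,\overline{\theta_2}$. Finally, acting by $s_{\theta_j}$ preserves the number of entries of a sequence, so $\ell(s_{\theta_j}[\overline{\theta_1}]) = \ell(\overline{\theta_1}) = \ell(\textbf{a})$ and $\ell(\overline{\theta_2}) = \ell(\textbf{b})$, which confirms that this is the decomposition respecting $\textbf{a}\,\textbf{b}$.

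The computations are routine bookkeeping; the only real content is recognizing that positivity of $\theta_j$ — precisely what reducedness of $\textbf{x}$ buys us — is exactly what makes Proposition~\ref{p:basiccoxeter}(1) applicable, upgrading the formal identity $s_{\theta_j}vs = v$ to a genuine statement. I do not expect any serious obstacle beyond keeping the left/right conventions of Lemma~\ref{l:basicrs} and of the $W$-action on root sequences consistent throughout.
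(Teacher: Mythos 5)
Your proof is correct and follows essentially the same route as the paper's: both decompose $\overline{\theta}(\textbf{x})$ and $\overline{\theta}(\textbf{x}')$ via repeated application of Lemma~\ref{l:basicrs} and then compare the prefix actions. The only difference is that the paper obtains the key identity $s_{\theta_j}\,\phi((s)\textbf{b})^{-1} = \phi(\textbf{b})^{-1}$ by citing Lemma~\ref{l:reflectdelete}, whereas you derive the equivalent statement $s_{\theta_j} = vsv^{-1}$ directly from Proposition~\ref{p:basiccoxeter}(1) (using reducedness to ensure $\theta_j \in \Phi^+$), which amounts to inlining the proof of that lemma.
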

\begin{proof}
Let $\overline{\theta}(\textbf{x}') = \overline{\theta_1'} \, \overline{\theta_2'}$ be the decomposition of $\overline{\theta}(\textbf{x}')$ respecting $\textbf{a} \textbf{b}$.  By Definition~\ref{d:factordecomp}, we have $\overline{\theta_2} = \overline{\theta_2'} = \overline{\theta}(\textbf{b})$.  Let $u^{-1} = \phi((s)\textbf{b})$.  By Lemma~\ref{l:reflectdelete}, $(s_{\theta_j} u)^{-1} = \phi(\textbf{b})$.  Thus, by Lemma~\ref{l:basicrs} and Definition~\ref{d:factordecomp}, we have
\begin{equation*}
\overline{\theta_1} = u [\overline{\theta}(\textbf{a})] \text{ and }
\overline{\theta_1'} = s_{\theta_j} u [\overline{\theta}(\textbf{a})].
\end{equation*}
It follows that $\overline{\theta_1'} = s_{\theta_j} [\overline{\theta_1}]$.
\end{proof}
\noindent
The root sequence of an expression is related to the deletion condition by the following result, which asserts that the number of times we must apply the deletion condition to an expression to reach a reduced expression is given by the number of negative roots in the root sequence.
\begin{prop} \label{p:rsdeletion}
Let $(W,S)$ be a Coxeter system.  Let $w \in W$, let $\textbf{x} = (s_1,\ldots,s_n)$ be an expression for $w$, and let $\overline{\theta}(\textbf{x})$ be the root sequence of $\textbf{x}$.   If $d$ is the number of negative entries in $\overline{\theta}(\textbf{x})$, then $\ell(\textbf{x}) = \ell(w) + 2d$.  In particular, $\textbf{x}$ is reduced if and only if $\overline{\theta}(\textbf{x})$ consists only of positive roots.
\end{prop}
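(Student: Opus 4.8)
The plan is to induct on the number $d$ of negative entries in $\overline{\theta}(\textbf{x})$. The base case $d = 0$ is essentially the content of Proposition~\ref{p:basiccoxeter}(8) together with the fact that $|\Phi(w)| = \ell(w)$: if $\overline{\theta}(\textbf{x})$ has no negative entries, then (by a counting argument, since the root sequence has $n$ entries and I would want to know they are distinct and lie in $\Phi(w)$) the expression must already be reduced, so $\ell(\textbf{x}) = n = \ell(w)$. Actually it is cleaner to argue the contrapositive direction first: if $\textbf{x}$ is reduced, then by Proposition~\ref{p:basiccoxeter}(8) the entries of $\overline{\theta}(\textbf{x})$ are exactly the elements of $\Phi(w)$, all positive, so $d = 0$; this handles one half of the ``in particular'' clause and the $d=0$ instance of the length formula.

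For the inductive step, suppose $d \geq 1$, so some entry $\theta_k$ of $\overline{\theta}(\textbf{x})$ is negative. First I would show this forces $\ell(\textbf{x}) > \ell(w)$, i.e. $\textbf{x}$ is not reduced: if it were reduced, part (8) would again make every entry positive. Since $n = \ell(\textbf{x}) > \ell(w)$, the deletion condition, Proposition~\ref{p:basiccoxeter}(6), lets me delete two letters and stay an expression for $w$; but I want to control the root sequence, so instead I would locate a specific generator whose deletion drops $d$ by exactly $1$ and drops $\ell(\textbf{x})$ by exactly $1$. The natural candidate: take the \emph{first} index $k$ with $\theta_k \in \Phi^-$, write $\textbf{x} = \textbf{a}\,(s_k)\,\textbf{b}$ accordingly, and delete $s_k$. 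Set $\textbf{x}' = D_k(\textbf{x})$. Then $\phi(\textbf{x}') = s_1 \cdots \widehat{s_k} \cdots s_n$, and one checks $\phi(\textbf{x}') = w$: indeed, since $\theta_k$ is the root that Lemma~\ref{l:reflectdelete} attaches to position $k$ and $\theta_k \in \Phi^-$, the reflection $s_{\theta_k} = s_{-\theta_k}$ equals a reflection through a \emph{positive} root, and using Proposition~\ref{p:basiccoxeter}(4) with the prefix/suffix relation one gets that deleting $s_k$ does not change the group element — more carefully, I would compute directly that $\theta_k$ negative means $s_n\cdots s_{k+1}(\alpha_{s_k}) \in \Phi^-$, equivalently $\ell\bigl((s_{k+1}\cdots s_n)^{-1} s_k\bigr) < \ell\bigl((s_{k+1}\cdots s_n)^{-1}\bigr) + 1$, i.e. $s_k s_{k+1}\cdots s_n = s_{k+1}\cdots s_n$ after an exchange, which is exactly what makes the deletion of $s_k$ fix $w$. (I will phrase this via Lemma~\ref{l:reflectdelete} and the strong exchange condition rather than reprove it.)

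Next I would transfer this to the root sequence using Lemma~\ref{l:gendeletion}: with $\textbf{x} = \overline{\theta_1}(\theta_k)\overline{\theta_2}$ the decomposition respecting $\textbf{a}(s_k)\textbf{b}$, we get $\overline{\theta}(\textbf{x}') = s_{\theta_k}[\overline{\theta_1}]\,\overline{\theta_2}$. The suffix $\overline{\theta_2}$ is unchanged. For the prefix: by the minimality of $k$, every entry of $\overline{\theta_1}$ is a positive root, and $s_{\theta_k} = s_{-\theta_k}$ is the reflection through the positive root $-\theta_k$; by Proposition~\ref{p:basiccoxeter}(3) applied after conjugating — or, more directly, since all of $\theta_1,\dots,\theta_{k-1}$ together with $\theta_k$ are entries of a root sequence hence no two are equal or negatives of each other — the reflection $s_{\theta_k}$ sends $-\theta_k$ (which is not among $\theta_1,\dots,\theta_{k-1}$) to $\theta_k \in \Phi^-$ and permutes the positive roots $\theta_1,\dots,\theta_{k-1}$. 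Wait: I should check $s_{\theta_k}$ doesn't flip any $\theta_i$, $i<k$, to negative. Here I would invoke that $s_\beta(\gamma) \in \Phi^-$ for $\gamma \in \Phi^+$ forces $\gamma = \beta$ when $\beta \in \Phi^+$ — this is the analogue of part (3) for a general reflection and follows from (3) by conjugation. So $s_{\theta_k}[\overline{\theta_1}]$ consists of $k-1$ positive roots. Hence $\overline{\theta}(\textbf{x}')$ has exactly $d - 1$ negative entries (we removed the one at position $k$, changed nothing else's sign). Since $\ell(\textbf{x}') = n - 1$, the inductive hypothesis gives $n - 1 = \ell(w) + 2(d-1)$, which is inconsistent with $n = \ell(w) + 2d$ — so in fact deleting one generator cannot be the right bookkeeping.

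Let me correct the step: deleting a single generator changes the length parity, so I cannot land on the inductive hypothesis after one deletion. The fix is to delete \emph{two} carefully chosen generators, or equivalently to set up the induction on $d$ but with a two-step reduction. Concretely, after forming $\textbf{x}'$ above with $d-1$ negatives and length $n-1$: if $d - 1 \geq 1$, repeat to get $\textbf{x}''$ with $d-2$ negatives and length $n-2$... this still has the parity issue relative to the claimed formula. The genuinely correct approach: prove instead that the number of negative entries in $\overline{\theta}(\textbf{x})$ equals the number of deletion steps needed, where each deletion step of the deletion condition removes \emph{two} letters. So I would phrase the induction as: if $\overline{\theta}(\textbf{x})$ has a negative entry, I produce $\textbf{x}'$ with $\ell(\textbf{x}') = n - 2$, still an expression for $w$, with exactly $d - 1$ negative entries in its root sequence. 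To do this, take $k$ minimal with $\theta_k \in \Phi^-$ and delete $s_k$ as above to get $\textbf{y}$ with $d-1$ negatives and length $n-1$; then among the (now possibly different) negatives... no. Simpler: when $\theta_k \in \Phi^-$, the strong exchange condition applied to the suffix shows there is a second index $k' > k$ with $s_k \cdots s_n = s_k \cdots \widehat{s_{k'}} \cdots s_n$...

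The main obstacle, then, is precisely this bookkeeping: pinning down a pair of indices whose simultaneous deletion (i) keeps $\phi$ equal to $w$, (ii) decreases the negative count by exactly one, and (iii) decreases length by exactly two — and verifying (ii) via Lemma~\ref{l:gendeletion} applied twice, controlling the sign changes in the intermediate root sequence. Once that is set up, the induction closes immediately, and the ``in particular'' follows by taking $d = 0$ in one direction and the contrapositive (via part (8)) in the other. I would organize the write-up as: (a) reduced $\Rightarrow d = 0$ via (8); (b) $d \geq 1 \Rightarrow$ exhibit the two deletions and the root-sequence sign accounting; (c) induct.
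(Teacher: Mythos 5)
Your proposal does not close, and two of its intermediate claims are false.  The fatal one: deleting the single letter $s_k$ does \emph{not} preserve the group element.  For any expression one has $s_1 \cdots \widehat{s_k} \cdots s_n = w\,s_{\theta_k}$ (this is the computation behind Lemma~\ref{l:reflectdelete}), and $s_{\theta_k}$ is a reflection, never the identity; the type $A_1$ example $\textbf{x}=(s,s)$ already refutes the claim, since deleting a letter yields $s \neq 1 = w$.  The inference ``$\ell(s_k s_{k+1}\cdots s_n) = \ell(s_{k+1}\cdots s_n)-1$, hence $s_k s_{k+1}\cdots s_n = s_{k+1}\cdots s_n$ after an exchange'' conflates a drop in length with equality of elements.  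Separately, your claimed ``analogue of part (3) for a general reflection'' is false: for $\beta\in\Phi^+$ the reflection $s_\beta$ sends exactly $\ell(s_\beta)$ positive roots negative, which exceeds $1$ whenever $\beta$ is not simple, so $s_{\theta_k}[\overline{\theta_1}]$ can acquire new negative entries and your sign count fails.  You then correctly sense that two deletions are needed, but you end by labelling the required bookkeeping ``the main obstacle'' without carrying it out, so the inductive step is simply absent.  (A smaller point: your ``contrapositive'' for the base case runs the wrong way; the induction needs ``$d=0\Rightarrow\textbf{x}$ reduced,'' not ``reduced $\Rightarrow d=0$.'')

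The paper's proof avoids single-letter deletions entirely.  Since $d>0$ forces $\textbf{x}$ to be non-reduced, let $i$ be the \emph{greatest} index such that the suffix $(s_i,\ldots,s_n)$ is not reduced; equivalently, $\theta_i$ is the last negative entry of the root sequence.  Then $(s_{i+1},\ldots,s_n)$ is reduced, so $\theta_{i+1},\ldots,\theta_n$ are all positive by Proposition~\ref{p:basiccoxeter}(8), and $\textbf{b}=(s_i,\ldots,s_n)$ has length exceeding $\ell(\phi(\textbf{b}))$ by exactly $2$.  One application of the deletion condition therefore replaces $\textbf{b}$ by a \emph{reduced} expression $\textbf{b}'$ for the same element with two fewer letters.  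Setting $\textbf{x}'=\textbf{a}\textbf{b}'$, Lemma~\ref{l:basicrs} shows that the prefix portion of the root sequence is $\phi(\textbf{b})^{-1}[\overline{\theta}(\textbf{a})]$, which depends only on $\phi(\textbf{b})=\phi(\textbf{b}')$ and is therefore unchanged, while the new suffix portion $\overline{\theta}(\textbf{b}')$ is all positive.  Hence the negative count drops by exactly one and the length by exactly two, and the induction on $d$ closes.  The idea you were missing is to replace a whole non-reduced suffix at once, exploiting the fact that the earlier part of a root sequence sees only the group element represented by the suffix, not the suffix itself.
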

\begin{proof}
The proof is by induction on $d$.  If $d = 0$, then the entries in $\overline{\theta}(\textbf{x})$ are precisely the positive roots sent negative by $w$, so $\ell(\textbf{x}) = \ell(w)$.  Let $d > 0$ and let $i$ be the greatest index such that $(s_i, \ldots, s_n)$ is not reduced.  Since $(s_n, \ldots, s_{i+1})$ is a reduced expression but $(s_n, \ldots, s_i)$ is not reduced, we have $s_n \cdots s_{i+1}(\alpha_{s_i}) \in \Phi^-$ by Proposition~\ref{p:basiccoxeter}(4).  Thus if we let $\textbf{a} = (s_1, \ldots, s_{i-1})$, $\textbf{b} = (s_i, \ldots, s_n)$, and we let $\overline{\theta}(\textbf{x}) = \overline{\theta_1} \, \overline{\theta_2}$ be the decomposition of $\overline{\theta}(\textbf{x})$ respecting $\textbf{a} \textbf{b}$, then the first root (and only the first root) of $\overline{\theta_2}$ is negative.  Thus, $\overline{\theta_1}$ contains $d - 1$ negative roots.\\ \\
Since $(s_n)$ is a reduced expression, $i < n$.  Let $u^{-1} = \phi(s_{i+1}, \ldots, s_n)$, so that $(us_i)^{-1} = \phi(\textbf{b})$ and $\ell(u) = n - i$.  By Proposition~\ref{p:basiccoxeter}(2), $\ell(us_i) = \ell(w) \pm 1$, while the choice of $i$ implies $\ell(us_i) = \ell(u) - 1$.  Thus, since the expression $(s_i, \ldots, s_n)$ is not reduced, the deletion condition implies there exists an expression $\textbf{b}' = (s_i', \ldots, s_{n-2}')$ such that $\phi(\textbf{b}') = (us_i)^{-1} = \phi(\textbf{b})$.  Thus $\textbf{x}' = \textbf{a} \textbf{b}'$ is an expression for $w$ such that $\ell(\textbf{x}') = \ell(\textbf{x}) - 2$.  Since $\ell(us_i) = \ell(u) - 1$ and $\ell(\textbf{b}') = n - i - 1$, $\textbf{b}'$ is a reduced expression for $(us_i)^{-1}$.  If $\overline{\theta_2}'$ is the root sequence for $\textbf{b}'$, then $\overline{\theta}(\textbf{x}') = \overline{\theta_1} \, \overline{\theta_2}'$ is the root sequence for $\textbf{x}'$ by Lemma~\ref{l:basicrs}.  The sequence $\overline{\theta_2}'$ has no negative roots since $\textbf{b}'$ is a reduced expression, and $\overline{\theta_1}$ contains $d - 1$ negative roots, so $\overline{\theta}(\textbf{x}')$ contains $d - 1$ negative roots.  The induction hypothesis now implies
\begin{equation*}
\ell(\textbf{x}) - 2 = \ell(\textbf{x}') = \ell(w) + 2(d - 1),
\end{equation*}
which then implies $\ell(\textbf{x}) = \ell(w) + 2d$.
\end{proof}
\begin{cor} \label{c:deletion}
Let $(W,S)$ be a Coxeter system and let $w \in W$.  Let $\textbf{x}$ be a reduced expression for $w$, let $s \in S$, and let $\textbf{x} = \textbf{a} \, (s) \, \textbf{b}$.  Let $j = \ell(\textbf{a}) + 1$ and $\textbf{x}' = D_j(\textbf{x}) = \textbf{a} \, \textbf{b}$.  Let $w' = \phi(\textbf{x}')$ and let $\overline{\theta}(\textbf{x}) = (\theta_1,\ldots,\theta_{\ell(w)})$ be the root sequence of $\textbf{x}$.  If $d$ is the number of roots $\theta_k$, $1 \leq k < j$, such that $s_{\theta_j}(\theta_k) \in \Phi^-$, then $\ell(w') = \ell(w) - 2d - 1$.  In particular, $\textbf{x}'$ is a reduced expression for $w'$ if and only if $s_{\theta_j}(\theta_k) \in \Phi^+$ for every $k$ such that $1 \leq k < j$.
\end{cor}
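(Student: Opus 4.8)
The plan is to read off the root sequence of $\textbf{x}'$ from Lemma~\ref{l:gendeletion} and then feed it into Proposition~\ref{p:rsdeletion}, which converts a count of negative roots in a root sequence into a length discrepancy. First I would record the easy bookkeeping. Since $\textbf{x} = \textbf{a}\,(s)\,\textbf{b}$ is a reduced expression for $w$, deleting one letter gives $\ell(\textbf{x}') = \ell(\textbf{x}) - 1 = \ell(w) - 1$, and $\textbf{b}$ is itself reduced. Let $\overline{\theta}(\textbf{x}) = \overline{\theta_1}\,(\theta_j)\,\overline{\theta_2}$ be the decomposition of $\overline{\theta}(\textbf{x})$ respecting $\textbf{a}\,(s)\,\textbf{b}$. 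Then $\overline{\theta_1}$ has length $\ell(\textbf{a}) = j-1$, so its entries are precisely $\theta_1,\ldots,\theta_{j-1}$, while $\overline{\theta_2} = \overline{\theta}(\textbf{b})$ has no negative entries, since $\textbf{b}$ is reduced (Proposition~\ref{p:basiccoxeter}(8), or Proposition~\ref{p:rsdeletion}).

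Next I would apply Lemma~\ref{l:gendeletion}, which tells us that $\overline{\theta}(\textbf{x}') = s_{\theta_j}[\overline{\theta_1}]\,\overline{\theta_2}$ is the decomposition of $\overline{\theta}(\textbf{x}')$ respecting $\textbf{a}\,\textbf{b}$. Counting negative roots in this sequence: the tail $\overline{\theta_2}$ contributes none, and $s_{\theta_j}[\overline{\theta_1}] = (s_{\theta_j}(\theta_1),\ldots,s_{\theta_j}(\theta_{j-1}))$ contributes exactly the number of indices $k$ with $1 \le k < j$ for which $s_{\theta_j}(\theta_k) \in \Phi^-$, which is $d$ by definition. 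So $\overline{\theta}(\textbf{x}')$ has exactly $d$ negative entries. Proposition~\ref{p:rsdeletion} applied to the expression $\textbf{x}'$ for $w'$ then gives $\ell(\textbf{x}') = \ell(w') + 2d$, and combining with $\ell(\textbf{x}') = \ell(w) - 1$ yields $\ell(w') = \ell(w) - 2d - 1$.

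For the final assertion, $\textbf{x}'$ is a reduced expression for $w'$ if and only if $\ell(\textbf{x}') = \ell(w')$, i.e. if and only if $d = 0$. Since $s_{\theta_j}$ permutes the root system $\Phi$, each $s_{\theta_j}(\theta_k)$ lies in $\Phi^+ \cup \Phi^-$, so $d = 0$ is equivalent to the condition that $s_{\theta_j}(\theta_k) \in \Phi^+$ for every $k$ with $1 \le k < j$. I do not expect a genuine obstacle here: the argument is just a chaining of Lemma~\ref{l:gendeletion} and Proposition~\ref{p:rsdeletion}, and the only thing needing care is the indexing — verifying that the subsequence $\overline{\theta_1}$ in the decomposition respecting $\textbf{a}\,(s)\,\textbf{b}$ really is $(\theta_1,\ldots,\theta_{j-1})$, and that the hypotheses of both cited results (in particular that $\textbf{x}$, hence $\textbf{b}$, is reduced) are literally in force.
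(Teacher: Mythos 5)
Your proof is correct and follows essentially the same route as the paper: apply Lemma~\ref{l:gendeletion} to identify the root sequence of $\textbf{x}'$ as $s_{\theta_j}[\overline{\theta_1}]\,\overline{\theta_2}$, observe that the only possible negative entries are the $s_{\theta_j}(\theta_k)$ with $1 \leq k < j$, and invoke Proposition~\ref{p:rsdeletion}. Your write-up is in fact slightly more explicit about the indexing and about why the tail contributes no negative roots, but the argument is the same.
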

\begin{proof}
If we let $\overline{\theta}(\textbf{x}) = \overline{\theta_1} (\theta_j) \overline{\theta_2}$ be the decomposition of $\overline{\theta}(\textbf{x})$ respecting $\textbf{a} (s) \textbf{b}$, then by Lemma~\ref{l:gendeletion}, $\overline{\theta}(\textbf{x}') = s_{\theta_j} [\overline{\theta_1}] \overline{\theta_2}$.  Thus the roots in the root sequence $\overline{\theta}(\textbf{x}')$ not already in $\overline{\theta}(\textbf{x})$ are those of the form $s_{\theta_j}(\theta_k)$, where $1 \leq k < j$.  Since $\textbf{x}$ is a reduced expression, the roots in $\overline{\theta}(\textbf{x})$ are positive, so the result follows by Proposition~\ref{p:rsdeletion}.
\end{proof}
\noindent
While we will use root sequences in the sequel, we shall also have some use for labelings of roots.  In Definition~\ref{d:standardencoding}, we introduce a labeling associated to an expression $\textbf{x}$, which carries the exact same information as the root sequence of $\textbf{x}$.  We give correspondences in Chapter $4$ between certain types of labelings of $\Phi^+$ and reduced expressions for a Coxeter group element $w$.  While the correspondences can also be stated in terms of root sequences, we prefer the labeling approach for technical reasons.
\begin{defn} \label{d:labeling}
Let $(W,S)$ be a Coxeter system and and let $\Lambda \subseteq \Phi$.  A function $T:\Lambda \rightarrow \Bbb{N}_0$ is called a \emph{labeling of $\Lambda$}.  The \emph{support of $T$} is the set $\text{supp}(T)$ of all $\lambda \in \Lambda$ such that $T(\lambda) \neq 0$.  We call a labeling \emph{sequential} if $\text{supp}(T)$ is finite and $T(\text{supp}(T)) = \{1,\ldots,|\text{supp}(T)|\}$.
\end{defn}
\begin{rem}
The definition of labeling allows the restriction of a labeling $T$ to its support to be non-injective.  If we view the range of $T$ as the labels of a labeling, then the definition allows the labels to skip numbers and be larger than $|\text{supp}(T)|$.  By contrast, the nonzero labels of a sequential labeling $T$ occur in order (sequentially), and the restriction of a sequential labeling $T$ to its support is necessarily injective.
\end{rem}
\begin{defn} \label{d:standardencoding} \cite[Definition 2.4]{reducedweyl}
Let $\textbf{x}$ be a reduced expression for $w$ and let $$\overline{\theta}(\textbf{x}) = (\theta_1,\ldots,\theta_{\ell(w)})$$ be the root sequence for $\textbf{x}$.  Define $T_{\textbf{x}}:\Phi^+ \rightarrow \mathbb{N}_0$ by $T_{\textbf{x}}(\theta_k) = k$ and $T_{\textbf{x}}(\lambda) = 0$ for $\lambda \not\in \Phi(w)$.  We call this labeling of $\Phi^+$ the \emph{standard encoding of $\textbf{x}$} and we say that \emph{$T_{\textbf{x}}$ encodes $\textbf{x}$}.
\end{defn}
\noindent
The following terminology is quite standard.  In addition to the linear orders on $\Phi(w)$ determined by the root sequence of an expression, we shall have some use for partial orders on $\Phi(w)$ in Chapter $5$.
\begin{defn}
A \emph{partial order} is a binary relation $\leq$ on a set $X$ that is:
\begin{enumerate}[(1)]
\item reflexive:  for all $x \in X$, $x \leq x$;
\item antisymmetric:  for all $x,y \in X$, if $x \leq y$ and $y \leq x$, then $x = y$;
\item transitive:  for all $x,y,z \in X$, if $x \leq y$ and $y \leq z$, then $x \leq z$.\\ \\
We call the pair $(X,\leq)$ a \emph{partially ordered set}.
A \emph{linear order} (or \emph{total order}) \emph{on $X$} is a partial order $\leq$ on $X$ that satisfies the following additional property:
\item For all $x,y \in X$, either $x \leq y$ or $y \leq x$.
\end{enumerate}
\noindent
If properties $(1)-(4)$ are satisfied, then we call the pair $(X,\leq)$ a \emph{linearly ordered set} (or a \emph{totally ordered set}).
\end{defn}
\section{Reflection subgroups and the roots associated to them}
At this point we fix a Coxeter system $(W,S)$ and the root system $\Phi$ associated to the geometric representation of $(W,S)$.  Thus we also fix $\Phi^+$, $\Phi^-$, and $\Delta$ for the remainder of this thesis.\\ \\
Subgroups of $W$ generated by a set of (not necessarily simple) reflections are called \emph{reflection subgroups} and were investigated by Deodhar in \cite{reflectiongen} and Dyer independently in \cite{dyerreflection}.  In both papers it is shown that a reflection subgroup is a Coxeter system in its own right.  Thus, if $R$ is the set of reflections of $(W,S)$, $R' \subseteq R$, and $W' = \langle R' \rangle$, then there exists a (canonically determined) set $S' \subseteq R'$ such that $(W', S')$ is a Coxeter system.  If
\begin{equation*}
R(w) := \{t \in R \ : \ \ell(tw) < \ell(w) \},
\end{equation*}
then the canonical set $S'$ is the set of all $t' \in R$ such that $R(t') \cap W' = \{t'\}$ (see \cite[Section 8.2]{humph} for a summary or the papers \cite{reflectiongen} and \cite{dyerreflection} for details).
\begin{theorem}[\textbf{Deodhar, Dyer}] \label{t:dyerdeodhar}
Let $(W,S)$ be a Coxeter system and $R$ be the set of reflections in $W$.  Let $W'$ be the subgroup of $W$ generated by $R' \subseteq R$.  Then, with $S'$ defined as above,
\begin{enumerate}[(1)]
\item  Every reflection in $R \cap W'$ is of the form $w' s' w'^{-1}$, where $w' \in W'$ and $s' \in S'$;
\item  The pair $(W',S')$ is a Coxeter system.
\end{enumerate}
\end{theorem}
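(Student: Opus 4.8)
The plan is to translate the statement into the root system and then verify that $(W',S')$ satisfies the strong exchange condition, which by the characterization recorded just after Proposition~\ref{p:basiccoxeter} is enough to conclude that $(W',S')$ is a Coxeter system. First I would set $\Phi' = \{\alpha \in \Phi : s_\alpha \in W'\}$ and $\Phi'^+ = \Phi' \cap \Phi^+$. Since $s_\alpha = s_{-\alpha}$ and since $w' s_\alpha w'^{-1} = s_{w'(\alpha)}$ for all $w' \in W'$ by Proposition~\ref{p:basiccoxeter}(1), the set $\Phi'$ is stable under negation and under $W'$; hence $\Phi' = \Phi'^+ \sqcup (-\Phi'^+)$, the map $\alpha \mapsto s_\alpha$ is a bijection $\Phi'^+ \to R \cap W'$, and under it the conjugation action of $W'$ on $R \cap W'$ corresponds to the action of $W'$ on $\Phi'^+$ (up to sign). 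The next step is to rewrite $S'$ in these terms. For $\alpha \in \Phi^+$ one has $R(s_\alpha) = \{s_\beta : \beta \in \Phi^+,\ s_\alpha(\beta) \in \Phi^-\}$, because $s_\beta s_\alpha$ and $s_\alpha s_\beta$ are mutually inverse and hence of equal length, while $\ell(s_\alpha s_\beta) > \ell(s_\alpha)$ if and only if $s_\alpha(\beta) \in \Phi^+$ by Proposition~\ref{p:basiccoxeter}(4). Intersecting with $W'$ and using the previous paragraph, $R(s_\alpha) \cap W' = \{s_\beta : \beta \in \Phi'^+,\ s_\alpha(\beta) \in \Phi'^-\}$, and this equals $\{s_\alpha\}$ precisely when $s_\alpha$ maps $\Phi'^+ \setminus \{\alpha\}$ into itself. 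So $S' = \{s_\delta : \delta \in \Delta'\}$, where $\Delta' := \{\alpha \in \Phi'^+ : s_\alpha(\Phi'^+ \setminus \{\alpha\}) = \Phi'^+ \setminus \{\alpha\}\}$; I would regard $\Delta'$ as the candidate simple system of the positive system $\Phi'^+$.

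For the bookkeeping I would introduce, for $w' \in W'$, the finite set $N'(w') := R(w') \cap W'$; from the formula above, $N'(w') = \{s_\beta : \beta \in \Phi(w'^{-1}) \cap \Phi'^+\}$, so $|N'(w')| \le \ell(w')$ and $N'(s_\delta) = \{s_\delta\}$ for each $\delta \in \Delta'$. The structural tool is the reflection cocycle: the identity $R(uv) = R(u) \,\triangle\, (u\,R(v)\,u^{-1})$ in $W$ (symmetric difference), intersected with $W'$ — which conjugation by $u \in W'$ preserves — gives $N'(uv) = N'(u) \,\triangle\, (u\,N'(v)\,u^{-1})$ for all $u,v \in W'$. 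I would then show, by induction on $|N'(s_\alpha)|$, that every $\alpha \in \Phi'^+$ can be written as $\alpha = w'(\delta)$ with $\delta \in \Delta'$ and $w' \in \langle S' \rangle$. This proves statement~(1); and since $W' = \langle R \cap W' \rangle = \langle s_\alpha : \alpha \in \Phi'^+ \rangle$, it also proves $W' = \langle S' \rangle$. The base case $|N'(s_\alpha)| = 1$ says $\alpha \in \Delta'$. For the inductive step one produces $\delta \in \Delta'$ with $s_\delta \in N'(s_\alpha)$; since then $B(\alpha,\delta) \neq 0$ and $\alpha \neq \delta$, we get $s_\delta(\alpha) \in \Phi'^+ \setminus \{\alpha\}$, and the cocycle identity applied to $s_{s_\delta(\alpha)} = s_\delta s_\alpha s_\delta$ shows $|N'(s_{s_\delta(\alpha)})| = |N'(s_\alpha)| - 2$; the inductive hypothesis applied to $s_\delta(\alpha)$ then transfers back to $\alpha$ upon conjugating by $s_\delta$.

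It remains to establish statement~(2), i.e. the strong exchange condition for $(W',S')$ with length function $\ell'(w') := |N'(w')|$: given $w' = s_{\delta_1} \cdots s_{\delta_k}$ with $\ell'(w') = k$ and a reflection $s_\gamma \in R \cap W'$ with $\ell'(s_\gamma w') < \ell'(w')$, one uses the cocycle to follow $s_\gamma$ along the partial products $s_{\delta_1} \cdots s_{\delta_j}$, exactly as in the proof of Proposition~\ref{p:basiccoxeter}(5), and locates an index $i$ with $s_\gamma w' = s_{\delta_1} \cdots \widehat{s_{\delta_i}} \cdots s_{\delta_k}$. The genuine obstacle, which underlies both the inductive step and the exchange argument, is the existence of descents in $\Delta'$: one must show that whenever $N'(w') \neq \emptyset$ there is some $\delta \in \Delta'$ with $s_\delta \in N'(w')$ — equivalently, that $\Phi(w'^{-1}) \cap \Phi'^+$, whenever nonempty, contains an element of $\Delta'$. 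In the ambient group the analogous fact falls out of a reduced word, but here no reduced $S'$-expressions are available yet, so one must instead argue directly that $\Phi(w'^{-1}) \cap \Phi'^+$ is biconvex as a subset of $\Phi'^+$ (closed under the formation of roots in $\Phi'^+$ that are nonnegative combinations of its members, and with the same true of its complement) and that every nonempty biconvex subset of $\Phi'^+$ contains a $\Delta'$-element; this is a statement about the order structure of $\Phi'^+$ that cannot be read off a naive height count, since root coordinates need not be rational in an arbitrary Coxeter group. Granting this descent property, generation, statement~(1), and the exchange condition all follow, which completes the proof.
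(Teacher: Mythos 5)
The paper offers no internal proof of this theorem --- it simply cites Deodhar's and Dyer's original papers --- so the only question is whether your sketch stands on its own. Its architecture is a faithful reconstruction of Dyer's cocycle argument: the sets $N'(w') = R(w') \cap W'$, the identity $N'(uv) = N'(u) \,\triangle\, u N'(v) u^{-1}$, induction on $|N'(s_\alpha)|$ to get statement (1) and generation by $S'$, and strong exchange read off the cocycle. Your translation of the definition of $S'$ into the condition that $s_\alpha$ permute $\Phi'^+ \setminus \{\alpha\}$ is also correct. But the argument is not complete, and you have put your finger on exactly the missing piece: the descent property, that $N'(w') \neq \emptyset$ forces $N'(w') \cap S' \neq \emptyset$ (equivalently, that every nonempty finite biconvex subset of $\Phi'^+$ contains an element of $\Delta'$). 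This is not a loose end to be ``granted''; it is the entire content of the theorem, and it is precisely what Deodhar's proof spends its successive steps establishing via the preorder $\leqq_\Lambda$ of Definition~\ref{d:preorder} --- one must show that minimal elements of that preorder exist below every element of $\Phi'^+$ and that they coincide with your $\Delta'$, and, as you yourself observe, no height function or rationality of coordinates is available to make minimality automatic. Since both the base of your induction and every inductive step, as well as the exchange argument, invoke this property, the proposal is a correct roadmap rather than a proof.

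Two smaller points. First, in the inductive step the inference ``$B(\alpha,\delta) \neq 0$ and $\alpha \neq \delta$, hence $s_\delta(\alpha) \in \Phi'^+ \setminus \{\alpha\}$'' is not valid as stated: for distinct positive roots with $B(\alpha,\delta) > 0$ the image $s_\delta(\alpha) = \alpha - 2B(\alpha,\delta)\delta$ can perfectly well be negative (in type $A_2$, $s_{\gamma+\delta}(\gamma) = -\delta$). What saves you is that $\delta \in \Delta'$, so $s_\delta$ permutes $\Phi'^+ \setminus \{\delta\}$ by your own characterization of $S'$; the hypothesis $B(\alpha,\delta) \neq 0$ is only needed to rule out $s_\delta(\alpha) = \alpha$. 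Second, the cardinality claim $|N'(s_\delta s_\alpha s_\delta)| = |N'(s_\alpha)| - 2$ is true but requires an actual computation with the cocycle (first $|N'(s_\alpha s_\delta)| = |N'(s_\alpha)| - 1$ because $s_\alpha s_\delta s_\alpha \in N'(s_\alpha)$, then a second application removing $s_\delta$, again using $\delta \in \Delta'$); as written it is asserted rather than derived.
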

\begin{proof}
See \cite[Theorem 3.3(i)]{dyerreflection} for a proof of statement $(1)$.  See \cite[Theorem]{reflectiongen} or \cite[Theorem 3.3]{dyerreflection} for a proof of statement $(2)$.
\end{proof}
\noindent
Proposition~\ref{p:basiccoxeter}(1) induces a 1--1 correspondence between the set $R$ of reflections in $(W,S)$ and the set $\Phi^+$ of positive roots.  Thus, from an arbitrary subset $\Lambda \subseteq \Phi^+$, we can form the reflection subgroup $(W[\Lambda],S[\Lambda])$ by letting $R' = \{s_\lambda \, : \, \lambda \in \Lambda\}$.  We associate to such an arbitrary $\Lambda$, sets $\Phi[\Lambda]$, $\Phi^+[\Lambda]$, $\Phi^-[\Lambda]$, and $\Delta[\Lambda]$ that are analogs of the root system $\Phi$, the positive system $\Phi^+$, the negative system $\Phi^-$, and the simple system $\Delta$, respectively.  We would like the following properties to hold for the associated root system in $(W[\Lambda],S[\Lambda])$:\\ \\
$(1)$  The positive root analogs are positive roots in the original Coxeter system.  Similarly, the negative root analogs are negative roots in the original Coxeter system.  That is, $\Phi^+[\Lambda] \subseteq \Phi^+$ and  $\Phi^-[\Lambda] \subseteq \Phi^-$. \\ \\
$(2)$  The reflections associated to the simple root analogs form the generating set $S[\Lambda]$ of $(W[\Lambda],S[\Lambda])$.  That is, $S[\Lambda] = \{s_\delta \, : \, \delta \in \Delta[\Lambda]\}$.\\ \\
$(3)$  Every root in $\Phi[\Lambda]$ is either a nonnegative linear combination of roots in $\Delta[\Lambda]$ or a nonpositive linear combination of roots in $\Delta[\Lambda]$.\\ \\
$(4)$  Every root in $\Phi[\Lambda]$ is either a positive root or a negative root, but not both.  In other words, $\Phi[\Lambda] = \Phi^+[\Lambda] \cup \Phi^-[\Lambda]$ and the union is disjoint.\\ \\
$(5)$  Every root in $\Phi[\Lambda]$ can be obtained by applying an element in $W[\Lambda]$ to a root in $\Delta[\Lambda]$.\\ \\
Deodhar establishes properties $(1)$ through $(4)$ in the proof of \cite[Theorem]{reflectiongen}.  We give his constructions in Definitions~\ref{d:localrootsystem} and \ref{d:simplelocal}.  Property $(1)$ is immediate from Definition~\ref{d:localrootsystem}.  Property $(2)$ is given by Lemma~\ref{l:basiclocal}(1).  Property $(3)$ follows from Lemma~\ref{l:basiclocal}(3) and Definition~\ref{d:localrootsystem}(5).  Property $(4)$ is Lemma~\ref{l:basiclocal}(2).  Lastly, Lemma~\ref{l:reflectionform}, gives property $(5)$.\\ \\
Below we give the above mentioned subsets of $\Phi$ that we use from Deodhar's proof of \cite[Theorem]{reflectiongen}.
\begin{defn} \label{d:localrootsystem}
Let $\Lambda \subseteq \Phi^+$ and let $(W', S') = (W,S)[\Lambda]$ be the reflection subgroup generated by $R' = \{ s_\lambda \in R \ : \ \lambda \in \Lambda\}$.  Then we set:
\begin{enumerate}[(1)]
\item  $W[\Lambda] = W'$,
\item  $S[\Lambda] = S'$,
\item $\Phi[\Lambda] = \{ w(\gamma) \ : \ w \in W', \, s_\gamma \in R' \} = \{ w(\gamma) \ : \ w \in W', \gamma \in \Lambda\}$,
\item  $\Phi^+[\Lambda] = \Phi[\Lambda] \cap \Phi^+$,
\item  $\Phi^-[\Lambda] = \Phi[\Lambda] \cap \Phi^-$.
\end{enumerate}
\noindent
We call $\Phi[\Lambda]$, $\Phi^+[\Lambda]$, and $\Phi^-[\Lambda]$, respectively, the \emph{root system, positive system, and negative system}, respectively, of $(W,S)[\Lambda]$.  We call $S[\Lambda]$ the \emph{canonical generators} for $(W,S)[\Lambda]$.
\end{defn}
\noindent
Note that the root system, positive system, and negative system analogs are obtained from roots associated to the reflections present in the reflection subgroup $W'$.  The simple system analog we will use can be similarly obtained from $S[\Lambda]$, but to identify the simple root analogs we will need more specific information regarding such roots.  The relation $\leqq_{\Lambda}$ in the next definition is from \cite[Section 2]{reflectiongen}.
\begin{defn} \label{d:preorder}
Let $\Lambda \subseteq \Phi^+$.  We say that $\mu \in \Phi^+[\Lambda]$ is a \emph{positive summand of $\lambda \in \Phi^+[\Lambda]$} if there exists $a > 0$ and a finite set $\{\gamma_i \in \Lambda\}_{i \in \mathcal{I}}$ of roots such that $\lambda = a\mu + \sum a_{\gamma_i} \gamma_i$, where $a_{\gamma_i} \geq 0$ for all $i \in \mathcal{I}$.  If $\mu \in \Phi^+[\Lambda]$ is a positive summand of $\lambda \in \Phi^+[\Lambda]$, then we write $\mu \leqq_{\Lambda} \lambda$.
\end{defn}
\begin{rem}
It is noted in \cite[Section 2]{reflectiongen} that $\leqq_{\Lambda}$ is a preorder relation (i.e. a reflexive and transitive relation).  Thus there is an equivalence relation $\sim_{\Lambda}$ given by $\lambda \sim_{\Lambda} \mu$ if and only if $\lambda \leqq_{\Lambda} \mu$ and $\mu \leqq_{\Lambda} \lambda$ that gives rise to a partial order on the equivalence classes.  For our purposes, we only need the preorder relation.
\end{rem}
\begin{ex}
To illustrate the preorder $\leqq_{\Lambda}$, we let $(W,S)$ be the Coxeter system of type $B_2$ given by $S = \{s,t\}$ and $m_{s,t} = 4$.  It is known that the positive system is given by $\Phi^+ = \{\alpha_s, \sqrt{2} \alpha_s + \alpha_t, \alpha_s + \sqrt{2} \alpha_t, \alpha_t\}$.  We have $$\alpha_s \leqq_{\Phi^+} \sqrt{2}\alpha_s + \alpha_t$$ by writing $$\sqrt{2} \alpha_s + \alpha_t = (\sqrt{2} \alpha_s) + \alpha_t,$$ which makes $\alpha_s$ a positive summand of $\sqrt{2}\alpha_s + \alpha_t$.  Similarly $$\alpha_s \leqq_{\Phi^+} \alpha_s + \sqrt{2}\alpha_t.$$  Since $$\sqrt{2} \alpha_s + \alpha_t = \sqrt{2}/2 (\alpha_s + \sqrt{2}\alpha_t) + \sqrt{2}/2 \alpha_s,$$ we have $$\sqrt{2} \alpha_s + \alpha_t \leqq_{\Phi^+} \alpha_s + \sqrt{2}\alpha_t.$$  By a similar calculation, we also have $$\alpha_s + \sqrt{2}\alpha_t \leqq_{\Phi^+} \sqrt{2} \alpha_s + \alpha_t.$$  Note that $\alpha_s$ and $\alpha_t$, the simple roots of $(W,S)$, are incomparable and minimal with respect to $\leqq_{\Phi^+}$.
\end{ex}
\begin{defn} \label{d:simplelocal}
Let $$\Delta[\Lambda] = \{\lambda \in \Phi^+[\Lambda] \, : \, (\mu \in \Phi^+[\Lambda] \text{ and } \mu \leqq_\Lambda \lambda) \Rightarrow \lambda = \mu\}.$$  We call the set $\Delta[\Lambda]$ the \emph{simple system of $(W,S)[\Lambda]$}.
\end{defn}
\noindent
The next lemma tells us that our constructions of $\Phi[\Lambda]$, $\Phi^+[\Lambda]$, $\Phi^-[\Lambda]$, and $\Delta[\Lambda]$, respectively, behave like a root system, a positive system, a negative system, and a simple system, respectively.
\begin{lem} \label{l:basiclocal}
Let $\Delta[\Lambda]$ be the simple system of $(W,S)[\Lambda]$.  Then: \\ \\
(1)  $S[\Lambda] = \{s_\delta \, : \, \delta \in \Delta[\Lambda]\}$.  \\
(2)  $\Phi[\Lambda] = \Phi^+[\Lambda] \cup (-\Phi^+[\Lambda]) = \Phi^+[\Lambda] \cup \Phi^-[\Lambda]$, where the unions are disjoint.\\
(3)  Any $\lambda \in \Phi^+[\Lambda]$ can be written as a sum $$\sum_{\delta \in \Delta[\Lambda]} a_\delta \delta,$$ where $a_\delta \geq 0$ for all $\delta \in \Delta[\Lambda]$.
\end{lem}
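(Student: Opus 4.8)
The strategy is to deduce all three statements from the theory of Deodhar and Dyer (Theorem~\ref{t:dyerdeodhar}) together with the combinatorial properties of the preorder $\leqq_\Lambda$ recorded above. The essential point, which Deodhar establishes in the proof of \cite[Theorem]{reflectiongen}, is that $(W[\Lambda],S[\Lambda])$ carries a positive system $\Phi^+[\Lambda]$ whose elements are genuine positive roots of $(W,S)$, and the key device for recovering the simple roots is the observation that the $\leqq_\Lambda$-minimal elements of $\Phi^+[\Lambda]$ are exactly the roots attached to the canonical generators $S'$. First I would prove (1) and (2) essentially in parallel, then derive (3) as a consequence of (1) and standard Coxeter-theoretic facts applied inside $(W[\Lambda],S[\Lambda])$.

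For statement (2): by Definition~\ref{d:localrootsystem}, $\Phi[\Lambda] = \{w(\gamma): w\in W', \gamma\in\Lambda\}$, and since each $\gamma\in\Lambda\subseteq\Phi^+$ is a unit-length root of the original system, every element of $\Phi[\Lambda]$ is a root of $\Phi$, hence lies in $\Phi^+\cup\Phi^-$ with the union disjoint (as recalled in the Preliminaries). Intersecting with $\Phi^+$ and $\Phi^-$ gives $\Phi[\Lambda] = \Phi^+[\Lambda]\cup\Phi^-[\Lambda]$ disjointly; the identification $\Phi^-[\Lambda] = -\Phi^+[\Lambda]$ follows because $\Phi[\Lambda]$ is stable under $w\mapsto -w$ (each $s_\gamma\in W'$ sends $\gamma\mapsto-\gamma$, and more simply $\Phi[\Lambda]$ is a union of $W'$-orbits of roots, each of which is closed under negation since $-\gamma = s_\gamma(\gamma)$ lies in the same orbit). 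For statement (1): one inclusion is that for $\delta\in\Delta[\Lambda]$, minimality of $\delta$ in the preorder forces $s_\delta$ to be a canonical generator — this is exactly Deodhar's characterization that $S' = \{t'\in R: R(t')\cap W' = \{t'\}\}$ translated into root language, using Proposition~\ref{p:basiccoxeter}(1) to pass between reflections and positive roots and using part (4) of Proposition~\ref{p:basiccoxeter} to see that $\ell(ws_{\delta}) < \ell(w)$ inside $W'$ corresponds to $w(\delta)\in\Phi^-$. The reverse inclusion — that the positive root attached to a canonical generator is $\leqq_\Lambda$-minimal — follows by contradiction: if such a root had a proper positive summand in $\Phi^+[\Lambda]$, one could realize a shorter expression or violate the defining property of $S'$.

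For statement (3): once (1) is known, $\Delta[\Lambda]$ is the simple system of the Coxeter system $(W[\Lambda],S[\Lambda])$ in the honest sense, so I would apply the analogue of the fact (from \cite[Section 5.4]{humph}) that every positive root of a Coxeter system is a nonnegative integer (in particular nonnegative real) combination of simple roots — but here one must be careful, because the bilinear form on the ambient $V$ restricted to $\mathrm{span}(\Delta[\Lambda])$ need not be the standard Coxeter form of $(W[\Lambda],S[\Lambda])$ up to the usual normalization. The cleanest route is to argue directly with the preorder: given $\lambda\in\Phi^+[\Lambda]$, either $\lambda\in\Delta[\Lambda]$ and we are done, or there is $\mu\in\Phi^+[\Lambda]$ with $\mu\leqq_\Lambda\lambda$ and $\mu\neq\lambda$, giving $\lambda = a\mu + \sum a_{\gamma_i}\gamma_i$ with $a>0$, $a_{\gamma_i}\geq 0$, $\gamma_i\in\Lambda$; each $\gamma_i\in\Lambda\subseteq\Phi^+[\Lambda]$ so one can recurse, and termination follows because the abstract Coxeter system $(W[\Lambda],S[\Lambda])$ has a well-defined length function and each such reduction corresponds to stripping off a canonical generator, decreasing length. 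I expect the main obstacle to be exactly this termination/finiteness argument for (3): one must ensure the recursion is well-founded (the preorder is not a partial order, so $\sim_\Lambda$-classes could in principle cause loops), and the right way to control this is to track the $W[\Lambda]$-length of the reflection $s_\lambda$, or equivalently to invoke Deodhar's result that $\Delta[\Lambda]$ genuinely generates $\Phi^+[\Lambda]$ additively, citing \cite{reflectiongen} for the finiteness input while giving the combinatorial unwinding here.
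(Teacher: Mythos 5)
The paper's own ``proof'' of this lemma is a bare citation to step~4 of Deodhar's proof of the Theorem in \cite{reflectiongen}, so any reconstruction necessarily supplies more than the thesis records. Your argument for statement~(2) is complete, correct, and self-contained: disjointness of $\Phi^+[\Lambda]$ and $\Phi^-[\Lambda]$ is immediate from Definition~\ref{d:localrootsystem}(4),(5) together with $\Phi = \Phi^+ \cup \Phi^-$ (disjointly), and your observation that $-w(\gamma) = (ws_\gamma)(\gamma)$ with $ws_\gamma \in W'$ gives $\Phi[\Lambda] = -\Phi[\Lambda]$, hence $\Phi^-[\Lambda] = -\Phi^+[\Lambda]$; this needs no input from Deodhar at all, which is a small improvement over the paper. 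For statements~(1) and~(3) you correctly locate where the real work lies and ultimately defer to the same citation the thesis uses, so there is no divergence of approach there; but be aware that your proposed direct recursion for~(3) would not survive as written: the decomposition $\lambda = a\mu + \sum a_{\gamma_i}\gamma_i$ has its tail terms $\gamma_i$ in $\Lambda$ rather than in $\Delta[\Lambda]$, so those must be recursed on as well, and since $\leqq_\Lambda$ is only a preorder the recursion can cycle (the thesis's own $B_2$ example exhibits two $\sim_{\Lambda}$-equivalent non-minimal roots). You flag this obstruction yourself, and the repair you name --- passing to the induced partial order on $\sim_\Lambda$-classes, or tracking the length of $s_\lambda$ in $(W[\Lambda],S[\Lambda])$ --- is exactly the device in Deodhar's step~4. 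In summary: correct in outline, matching the paper's purely citational proof, with a genuinely self-contained contribution only for part~(2).
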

\begin{proof}
See ``step 4'' of the proof of \cite[Theorem]{reflectiongen}.
\end{proof}
\noindent
For the next lemma, recall that $B(\lambda, \lambda) = 1$ for all $\lambda \in \Phi$.
\begin{lem} \label{l:pmroots}
Let $\alpha, \beta \in \Phi$ and suppose $s_\alpha = s_\beta$.  Then $\alpha = \pm \beta$.
\end{lem}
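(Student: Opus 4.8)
The plan is to exploit the explicit reflection formula \eqref{e:reflection} together with the fact that every root has unit length with respect to $B$. First I would write down what $s_\alpha = s_\beta$ means as an identity of linear maps on $V$: for every $\lambda \in V$ we have $\lambda - 2B(\lambda,\alpha)\alpha = \lambda - 2B(\lambda,\beta)\beta$, i.e.
\begin{equation*}
B(\lambda,\alpha)\,\alpha = B(\lambda,\beta)\,\beta \quad \text{for all } \lambda \in V.
\end{equation*}
The key step is to pick a cleverly chosen $\lambda$ to force a relation between $\alpha$ and $\beta$. Taking $\lambda = \alpha$ gives $B(\alpha,\alpha)\alpha = B(\alpha,\beta)\beta$, and since $B(\alpha,\alpha) = 1$, this already yields $\alpha = B(\alpha,\beta)\beta$, so $\alpha$ is a scalar multiple of $\beta$.

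Next I would pin down the scalar. Since $\alpha = c\beta$ with $c = B(\alpha,\beta)$, and both $\alpha$ and $\beta$ have $B$-length one, we get $1 = B(\alpha,\alpha) = c^2 B(\beta,\beta) = c^2$, so $c = \pm 1$, giving $\alpha = \pm\beta$ as required. (Alternatively, one could invoke the fact quoted earlier from \cite[Section 5.4]{humph} that $\pm\alpha$ are the only scalar multiples of $\alpha$ lying in $\Phi$, but the length computation is self-contained and just as short.)

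I do not anticipate any real obstacle here; the only point requiring a moment's care is the justification that $s_\alpha = s_\beta$ as group elements forces the equality of the associated linear operators on $V$ — but this is immediate since the geometric representation $\sigma$ is faithful, so $s_\alpha$ and $s_\beta$ act identically on $V$ precisely when they are equal in $W$. Everything else is the one-line substitution $\lambda = \alpha$ into the operator identity followed by comparing $B$-norms, so the proof is essentially two lines.
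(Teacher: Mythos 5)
Your proof is correct and takes essentially the same approach as the paper: apply the operator identity $s_\alpha = s_\beta$ to a single root (the paper plugs in $\lambda = \beta$, you plug in $\lambda = \alpha$) to conclude that $\alpha$ and $\beta$ are proportional, then use $B(\alpha,\alpha) = B(\beta,\beta) = 1$ to force the scalar to be $\pm 1$. The difference in which root is substituted is immaterial, so there is nothing to add.
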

\begin{proof}
Since $s_\alpha(\beta) = s_\beta(\beta)$, we have $\beta \, - \, 2B(\alpha, \beta)\alpha = \beta \, - \, 2B(\beta,\beta)\beta$.  Thus
\begin{equation*}
-2B(\alpha,\beta) \alpha = -2\beta,
\end{equation*}
so that $\alpha = \mu \beta$ for some scalar $\mu$.  It follows that $$1 = B(\alpha,\alpha) = B(\mu \beta, \mu \beta) = \mu^2.$$  Thus $\mu = \pm 1$, so $\alpha = \pm \beta$.
\end{proof}
\begin{lem} \label{l:reflectionform}
Let $\Lambda \subseteq \Phi^+$ and let $(W',S') = (W,S)[\Lambda]$ be the reflection subgroup generated by $R' = \{s_\lambda \in R \ : \ \lambda \in \Lambda\}$.  Then $$\Phi[\Lambda] = \{ w( \delta ) \ : \ w \in W[\Lambda], \, \delta \in \Delta[\Lambda]\}.$$
\end{lem}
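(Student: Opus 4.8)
The plan is to establish the set equality by double inclusion. The inclusion $\{w(\delta) : w \in W[\Lambda],\ \delta \in \Delta[\Lambda]\} \subseteq \Phi[\Lambda]$ is the easy direction: by Definition~\ref{d:localrootsystem}(3), $\Phi[\Lambda] = \{w(\gamma) : w \in W', \gamma \in \Lambda\}$, and since each $\delta \in \Delta[\Lambda] \subseteq \Phi^+[\Lambda] \subseteq \Phi[\Lambda]$, we may write $\delta = v(\gamma)$ for some $v \in W'$ and $\gamma \in \Lambda$; then $w(\delta) = (wv)(\gamma) \in \Phi[\Lambda]$ since $wv \in W' = W[\Lambda]$.

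For the reverse inclusion, the key point is that the canonical generators $S[\Lambda]$ suffice to build every root of $\Phi[\Lambda]$ from a simple root of $\Delta[\Lambda]$. First I would observe, using Theorem~\ref{t:dyerdeodhar}(2), that $(W[\Lambda], S[\Lambda])$ is a Coxeter system in its own right, and by Lemma~\ref{l:basiclocal}(1) its generating set is exactly $\{s_\delta : \delta \in \Delta[\Lambda]\}$. Now let $\beta \in \Phi[\Lambda]$. By Definition~\ref{d:localrootsystem}(3), $\beta = w(\gamma)$ for some $w \in W[\Lambda]$ and $\gamma \in \Lambda$. The reflection $s_\gamma$ lies in $R \cap W[\Lambda]$, so by Theorem~\ref{t:dyerdeodhar}(1), $s_\gamma = w' s_\delta w'^{-1}$ for some $w' \in W[\Lambda]$ and $\delta \in \Delta[\Lambda]$. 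Applying Proposition~\ref{p:basiccoxeter}(1) inside the ambient system (or Lemma~\ref{l:pmroots}), the relation $s_\gamma = s_{w'(\delta)}$ forces $\gamma = \pm w'(\delta)$. By Lemma~\ref{l:basiclocal}(2), $\gamma$ and $w'(\delta)$ both lie in $\Phi[\Lambda] = \Phi^+[\Lambda] \sqcup \Phi^-[\Lambda]$; since $\gamma \in \Lambda \subseteq \Phi^+$ and $\delta \in \Phi^+[\Lambda]$, a sign analysis (replacing $w'$ by $w' s_\delta$ if necessary, which swaps the sign of $w'(\delta)$ while keeping $w' \in W[\Lambda]$) lets us arrange $\gamma = w'(\delta)$. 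Then $\beta = w(\gamma) = (w w')(\delta)$ with $w w' \in W[\Lambda]$ and $\delta \in \Delta[\Lambda]$, as required.

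I expect the main obstacle to be the sign bookkeeping in the reverse inclusion: the identity $s_\gamma = s_{w'(\delta)}$ only pins down $w'(\delta)$ up to sign, and one must use disjointness of $\Phi^+[\Lambda]$ and $\Phi^-[\Lambda]$ (Lemma~\ref{l:basiclocal}(2)) together with the fact that both $\gamma$ and the simple root $\delta$ are positive in the local system to rule out the wrong sign — or, when the wrong sign does occur, to absorb it by composing $w'$ with the generator $s_\delta$. A secondary point worth checking carefully is that the appeal to Theorem~\ref{t:dyerdeodhar}(1) is legitimate: its hypothesis requires $w' \in W'$ and $s' \in S'$, and indeed $\Delta[\Lambda]$ is precisely the set whose associated reflections are $S[\Lambda]$ by Lemma~\ref{l:basiclocal}(1), so $s_\delta \in S[\Lambda]$ is exactly the form the theorem delivers. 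Everything else is a routine substitution.
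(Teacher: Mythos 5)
Your proposal is correct and follows essentially the same route as the paper: the easy inclusion via Definition~\ref{d:localrootsystem}(3), and the reverse inclusion by writing $s_\gamma = w' s_\delta w'^{-1}$ via Theorem~\ref{t:dyerdeodhar}(1) and Lemma~\ref{l:basiclocal}(1), deducing $\gamma = \pm w'(\delta)$ from Proposition~\ref{p:basiccoxeter}(1) and Lemma~\ref{l:pmroots}, and absorbing a possible minus sign by replacing $w'$ with $w's_\delta$. The paper likewise handles the sign purely by this absorption trick (it never needs to determine which sign actually occurs), so your parenthetical worry about ruling out the wrong sign via positivity is unnecessary.
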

\begin{proof}
\noindent
For any $w \in W[\Lambda]$ and $\delta \in \Delta[\Lambda]$, we have $w(\delta) \in \Phi[\Lambda]$ by Definition~\ref{d:localrootsystem}(3).\\ \\
By Definition~\ref{d:localrootsystem}(3), to prove the converse, we need to show that for any $s_\lambda \in R'$ and $u \in W[\Lambda]$, we have $u(\lambda) = v(\delta)$ for some $\delta \in \Delta[\Lambda]$ and $v \in W[\Lambda]$.  Thus, we let $s_\lambda \in R'$.  By Theorem~\ref{t:dyerdeodhar}(1), $s_\lambda = w s_\mu w^{-1}$, where $w \in W[\Lambda]$ and $s_\mu \in S[\Lambda]$.  By Lemma~\ref{l:basiclocal}(1), $s_\mu = s_\delta$, where $\delta \in \Delta[\Lambda]$.  By Proposition~\ref{p:basiccoxeter}(1), $s_\lambda = s_{w(\delta)}$.  Lemma~\ref{l:pmroots} then implies that $\lambda = \pm w(\delta)$.\\ \\
Thus, if $u \in W[\Lambda]$, then $u(\lambda) = u w(\delta)$ or $u(\lambda) = u w(s_\delta(\delta))$.  In either case, $u(\lambda) = v(\delta)$ for some $v \in W[\Lambda]$ and $\delta \in \Delta[\Lambda]$.
\end{proof}
\noindent
If a positive system of a reflection subgroup contains a simple root $\alpha_i$, then $\alpha_i$ must be simple relative to the root system $\Phi[\Lambda]$.  This is the content of the next lemma.
\begin{lem}  \label{l:simplerootlocal}
Let $\Lambda \subseteq \Phi^+$ and let $\alpha_i \in \Phi^+[\Lambda]$ be a simple root relative to $(W,S)$.  Then $\alpha_i \in \Delta[\Lambda]$.
\end{lem}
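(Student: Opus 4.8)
The plan is to use the characterization of $\Delta[\Lambda]$ from Definition~\ref{d:simplelocal}: to show $\alpha_i \in \Delta[\Lambda]$, I must show that $\alpha_i$ is minimal with respect to the preorder $\leqq_\Lambda$, i.e. that whenever $\mu \in \Phi^+[\Lambda]$ and $\mu \leqq_\Lambda \alpha_i$, then $\mu = \alpha_i$. So suppose $\mu \in \Phi^+[\Lambda]$ satisfies $\mu \leqq_\Lambda \alpha_i$. By Definition~\ref{d:preorder}, there exist $a > 0$, a finite index set $\mathcal{I}$, roots $\gamma_i \in \Lambda$, and scalars $a_{\gamma_i} \geq 0$ such that
\begin{equation*}
\alpha_i = a\mu + \sum_{i \in \mathcal{I}} a_{\gamma_i}\gamma_i.
\end{equation*}

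First I would expand everything in the simple basis $\Delta$ of $(W,S)$. Each $\gamma_i \in \Lambda \subseteq \Phi^+$ is a nonnegative linear combination of the simple roots in $\Delta$, and similarly $\mu \in \Phi^+[\Lambda] \subseteq \Phi^+$ (by property $(1)$ / Definition~\ref{d:localrootsystem}(4)) is a nonnegative combination of simple roots. So the right-hand side is a nonnegative linear combination of the simple roots of $(W,S)$, while the left-hand side is just the single simple root $\alpha_i$. Comparing coefficients of the simple roots $\alpha_j$ for $j \neq i$: the coefficient on the left is $0$, and on the right it is a sum of nonnegative terms, forcing every term to vanish. In particular, for each $j \neq i$, the $\alpha_j$-coefficient of $a\mu$ is zero; since $a > 0$, the $\alpha_j$-coefficient of $\mu$ is zero. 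Hence $\mu$ is a nonnegative multiple of $\alpha_i$, and since $\mu$ is a root and distinct positive roots are not scalar multiples of one another (as noted in the preliminaries, all roots have unit length), we get $\mu = \alpha_i$.

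This shows $\alpha_i$ is minimal for $\leqq_\Lambda$, hence $\alpha_i \in \Delta[\Lambda]$ by Definition~\ref{d:simplelocal}. There is no real obstacle here; the only point requiring a moment's care is invoking the right "positivity" facts — that both $\mu$ and the $\gamma_i$ lie in $\Phi^+$ so that coefficient comparison in the simple basis $\Delta$ is legitimate (using Lemma~\ref{l:basiclocal}(2) to know $\Phi^+[\Lambda] \subseteq \Phi^+$), and that $\mu$ being a root of unit length rules out $\mu = c\alpha_i$ for $c \neq 1$. Both are immediate from the material already established.
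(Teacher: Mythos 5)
Your proof is correct and follows essentially the same route as the paper: take a positive summand $\mu \leqq_\Lambda \alpha_i$, expand both sides of $\alpha_i = a\mu + \sum a_{\gamma}\gamma$ in the simple basis, and use nonnegativity of all coefficients together with simplicity of $\alpha_i$ to force $\mu = \alpha_i$. You merely spell out the coefficient comparison that the paper compresses into ``since $\alpha_i$ is simple, we must have $\lambda = \alpha_i$''; the only (cosmetic) quibble is the reuse of the letter $i$ both for the simple root $\alpha_i$ and for the summation index.
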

\begin{proof}
Suppose $\lambda \in \Phi^+[\Lambda]$ is such that $\alpha_i = a \lambda + \sum a_\delta \delta$, where $a > 0$ and the sum on the right hand side of the equation is a nonnegative linear combination of positive roots.  Then, since $\alpha_i$ is simple, we must have $\lambda = \alpha_i$, $a = 1$, and $a_\delta = 0$ for all $\delta$ in the sum.  By Definition~\ref{d:simplelocal}, $\alpha_i \in \Delta[\Lambda]$.
\end{proof}
\section{Local Coxeter systems}
It was noted in \cite[Remark 3.2]{dyerbruhat} that associated to each pair of distinct positive roots $\alpha$ and $\beta$ there exists a unique maximal dihedral reflection subgroup containing the reflections $s_\alpha$ and $s_\beta$. In general, given an arbitrary set $\Lambda$ of roots, we can form a reflection subgroup that is maximal with respect to the roots that lie in the subspace spanned by $\Lambda$.  The next definition gives a name to these reflection subgroups.  This section is devoted to the useful properties that do not hold for arbitrary reflection subgroups that do hold in these maximal reflection subgroups.
\begin{defn}  \label{d:localsystem}
Let $(W,S)$ be Coxeter system.  Let $\Lambda \subseteq \Phi$.  We call the Coxeter system $(W,S)[\text{span}(\Lambda) \cap \Phi^+]$ the \emph{local Coxeter system of $\Lambda$}, which we denote by $(W,S)_\Lambda$.  Similarly, we set:\\ \\
(1)  $W_\Lambda = W[\text{span}(\Lambda) \cap \Phi^+]$,\\
(2)  $S_\Lambda = S[\text{span}(\Lambda) \cap \Phi^+]$,\\
(3)  $\Phi_\Lambda = \Phi[\text{span}(\Lambda) \cap \Phi^+]$,\\
(4)  $\Phi^+_\Lambda = \Phi^+[\text{span}(\Lambda) \cap \Phi^+]$,\\
(5)  $\Phi^-_\Lambda = \Phi^-[\text{span}(\Lambda) \cap \Phi^+]$,\\
(6)  $\Delta_\Lambda = \Delta[\text{span}(\Lambda) \cap \Phi^+]$.\\ \\
We call $\Phi_\Lambda$ the \emph{local root system of $\Lambda$}.  We call $\Phi^+_\Lambda$ the \emph{local positive system of $\Lambda$}.  We call $\Phi^-_\Lambda$ the \emph{local negative system of $\Lambda$}.  We call $\Delta_\Lambda$ the \emph{local simple system of $\Lambda$}.  If $|\Lambda| = 2$, then we say that $(W,S)_\Lambda$ is a \emph{dihedral subsystem}.
\end{defn}
\begin{rem}
Note that we do not allow arbitrary subsets of reflections in our formation of local Coxeter systems.  A simple example of what we wish to disallow occurs with the Coxeter system $(W,S)$ of type $B_2$ specified by $S = \{s, t\}$ and $m_{s,t} = 4$.  We have
\begin{equation*}
\Phi^+ = \{\alpha_s, \sqrt{2} \alpha_s + \alpha_t, \alpha_s + \sqrt{2}\alpha_t, \alpha_t\}.
\end{equation*}
Using $R' = \{s,tst\}$, the reflection subgroup generated by $R'$ is the subgroup $W' = \{1,s,tst,stst\}$.  Letting $\Lambda = \{\alpha_s, \alpha_s + \sqrt{2}\alpha_t\}$ (the positive roots associated to the reflections in $R'$), the local Coxeter system is $(W,S)$ itself since $\text{span}(\Lambda) \cap \Phi^+ = \Phi^+$.  By Definition~\ref{d:localrootsystem} parts $(3)$ and $(4)$, we can apply the elements of $W'$ to those of $\Lambda$ to check that $\Phi^+[\Lambda] = \Lambda$, which is properly contained in $\Phi^+$.  In the sequel we show that the positive root systems of distinct dihedral subsystems intersect in at most one root.  This example shows that this fact does not hold for the positive root systems of distinct reflection subgroups generated by two reflections.
\end{rem}
\noindent
For local Coxeter systems, parts $(3)$, $(4)$ and $(5)$ of Definition~\ref{d:localrootsystem} can be made more explicit.  Thus one advantage to using local Coxeter systems over arbitrary reflection subgroups is that we know what the root system $\Phi_\Lambda$ is without reference to the reflection subgroup $W'$.
\begin{lem} \label{l:localsystemform}
Let $\Lambda \subseteq \Phi^+$.  Then we have\\ \\
(1)  $\Phi_\Lambda = \mathrm{span}(\Lambda) \cap \Phi$;\\
(2)  $\Phi^+_\Lambda = \mathrm{span}(\Lambda) \cap \Phi^+$;\\
(3)  $\Phi^-_\Lambda = \mathrm{span}(\Lambda) \cap \Phi^-$.
\end{lem}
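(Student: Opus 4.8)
The plan is to reduce everything to part (1), since parts (2) and (3) then follow immediately: by Definition~\ref{d:localrootsystem}(4)--(5), specialized as in Definition~\ref{d:localsystem}, we have $\Phi^+_\Lambda = \Phi_\Lambda \cap \Phi^+$ and $\Phi^-_\Lambda = \Phi_\Lambda \cap \Phi^-$, so once $\Phi_\Lambda = \mathrm{span}(\Lambda) \cap \Phi$ is established, intersecting with $\Phi^+$ (resp.\ $\Phi^-$) and using $\Phi^\pm \subseteq \Phi$ gives (2) (resp.\ (3)). So the whole content is (1). Throughout, write $\Lambda' := \mathrm{span}(\Lambda) \cap \Phi^+$, so that $(W,S)_\Lambda = (W,S)[\Lambda']$ and, by Definition~\ref{d:localrootsystem}(3), $\Phi_\Lambda = \Phi[\Lambda'] = \{\, w(\gamma) : w \in W[\Lambda'],\ \gamma \in \Lambda' \,\}$ where $W[\Lambda'] = \langle s_\mu : \mu \in \Lambda' \rangle$; note also $\mathrm{span}(\Lambda') = \mathrm{span}(\Lambda)$ since $\Lambda \subseteq \Lambda' \subseteq \mathrm{span}(\Lambda)$.

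For the inclusion $\Phi_\Lambda \subseteq \mathrm{span}(\Lambda) \cap \Phi$, I would take a typical element $w(\gamma)$ with $\gamma \in \Lambda'$ and $w$ a product of reflections $s_\mu$, $\mu \in \Lambda'$. By the reflection formula~\eqref{e:reflection}, each $s_\mu$ with $\mu \in \mathrm{span}(\Lambda)$ maps the subspace $\mathrm{span}(\Lambda)$ into itself, hence so does $w$; since $\gamma \in \Lambda' \subseteq \mathrm{span}(\Lambda)$, this gives $w(\gamma) \in \mathrm{span}(\Lambda)$, and $w(\gamma) \in \Phi$ by the $W$-invariance of $\Phi$. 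That settles the easy inclusion.

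For the reverse inclusion $\mathrm{span}(\Lambda) \cap \Phi \subseteq \Phi_\Lambda$, I would first record that $\Phi_\Lambda = -\Phi_\Lambda$: if $w(\gamma) \in \Phi_\Lambda$ then $-w(\gamma) = w(s_\gamma(\gamma)) = (w s_\gamma)(\gamma)$ and $w s_\gamma \in W[\Lambda']$. Now let $\beta \in \mathrm{span}(\Lambda) \cap \Phi$. If $\beta \in \Phi^+$, then $\beta \in \mathrm{span}(\Lambda) \cap \Phi^+ = \Lambda'$, and taking $w = 1$ in the description of $\Phi[\Lambda']$ shows $\Lambda' \subseteq \Phi_\Lambda$, so $\beta \in \Phi_\Lambda$. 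If instead $\beta \in \Phi^-$, then $-\beta \in \mathrm{span}(\Lambda) \cap \Phi^+ = \Lambda' \subseteq \Phi_\Lambda$, so $\beta \in -\Phi_\Lambda = \Phi_\Lambda$. This proves (1), and hence the lemma.

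I do not anticipate a genuine obstacle; the only point requiring care is the bookkeeping that makes the reverse inclusion work. The set indexing the local Coxeter system, $\mathrm{span}(\Lambda) \cap \Phi^+$, is defined precisely so that it already contains \emph{every} positive root lying in $\mathrm{span}(\Lambda)$, which is exactly what trivializes $\mathrm{span}(\Lambda) \cap \Phi^+ \subseteq \Phi_\Lambda$. This feature genuinely uses the ``local'' hypothesis: for a general reflection subgroup generated by two reflections it fails, as the Remark following Definition~\ref{d:localsystem} illustrates. One should also check the harmless fact that $1 \in W[\Lambda']$, so that $\Lambda' \subseteq \Phi[\Lambda']$ is legitimate.
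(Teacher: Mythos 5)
Your proof is correct and follows essentially the same route as the paper: reflections with roots in $\mathrm{span}(\Lambda)$ preserve $\mathrm{span}(\Lambda)$ for one inclusion, and the fact that the indexing set $\mathrm{span}(\Lambda)\cap\Phi^+$ already contains every positive root of the span gives the other, with (2) and (3) obtained by intersecting. Your handling of negative roots via $\Phi_\Lambda = -\Phi_\Lambda$ (using $w s_\gamma$) is in fact slightly more explicit than the paper's appeal to $s_\alpha = s_{-\alpha}$.
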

\begin{proof}
Let $\lambda \in \text{span}(\Lambda) \cap \Phi$.  Since $s_\alpha = s_{-\alpha}$ for all $\alpha \in \Phi$, we have that $s_\lambda$ is one of the generating reflections of $W[\text{span}(\Lambda) \cap \Phi^+]$.  Using $w = 1$ in part $(3)$ of Definition~\ref{d:localrootsystem}, we have $\lambda \in \Phi_\Lambda$, so $\text{span}(\Lambda) \cap \Phi \subseteq \Phi_\Lambda$.  Conversely, each reflection of the form $s_\gamma$ with $\gamma \in \text{span}(\Lambda) \cap \Phi^+$ fixes (set-wise) the space $\text{span}(\Lambda)$ by the reflection formula $(\ref{e:reflection})$, so $\Phi_\Lambda \subseteq \text{span}(\Lambda) \cap \Phi$, which proves the first assertion.  Intersecting both sides of the equation $\Phi_\Lambda =  \text{span}(\Lambda) \cap \Phi$ with $\Phi^+$ and $\Phi^-$, we obtain the stated formulas for $\Phi^+_\Lambda$ and $\Phi^-_\Lambda$ by Definition~\ref{d:localsystem} and Definition~\ref{d:localrootsystem}.
\end{proof}
\begin{lem} \label{l:dihedralsize}
Let $\Lambda = \{\alpha, \beta\} \subseteq \Phi^+$ be such that $\alpha \neq \beta$.  Let $(W,S)_\Lambda$ be the local Coxeter system of $\Lambda$.  Then $|\Delta_\Lambda| = 2$.
\end{lem}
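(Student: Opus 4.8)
The plan is to prove the two inequalities $|\Delta_\Lambda| \ge 2$ and $|\Delta_\Lambda| \le 2$ separately, working throughout inside the two-dimensional subspace $\mathrm{span}(\Lambda)$ and using only the description of the local root system from Lemmas~\ref{l:localsystemform} and~\ref{l:basiclocal} together with Definitions~\ref{d:preorder} and~\ref{d:simplelocal}.

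First I would fix notation by writing $\Lambda' := \mathrm{span}(\Lambda) \cap \Phi^+$, so that $(W,S)_\Lambda = (W,S)[\Lambda']$, $\Delta_\Lambda = \Delta[\Lambda']$, and $\Phi^+_\Lambda = \Lambda'$ by Lemma~\ref{l:localsystemform}(2); in particular the preorder relevant to $\Delta_\Lambda$ via Definition~\ref{d:simplelocal} is $\leqq_{\Lambda'}$. Since $\alpha$ and $\beta$ are distinct positive roots, they are not scalar multiples of one another, hence linearly independent, and $\mathrm{span}(\Lambda)$ is two-dimensional. Both $\alpha$ and $\beta$ lie in $\Phi^+_\Lambda = \Lambda'$, and by Lemma~\ref{l:basiclocal}(3) every element of $\Phi^+_\Lambda$ is a nonnegative linear combination of $\Delta_\Lambda$; hence $\mathrm{span}(\Delta_\Lambda) \supseteq \mathrm{span}(\{\alpha,\beta\}) = \mathrm{span}(\Lambda)$. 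As $\Delta_\Lambda \subseteq \Phi^+_\Lambda \subseteq \mathrm{span}(\Lambda)$ as well, we get $\mathrm{span}(\Delta_\Lambda) = \mathrm{span}(\Lambda)$, a two-dimensional space, so $|\Delta_\Lambda| \ge 2$.

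For the reverse inequality I would argue by contradiction: suppose $\Delta_\Lambda$ contains three distinct roots $\delta_1,\delta_2,\delta_3$. Since $\delta_1$ and $\delta_2$ are distinct positive roots they are linearly independent and hence span $\mathrm{span}(\Lambda)$, so $\delta_3 = c_1\delta_1 + c_2\delta_2$ for some scalars $c_1,c_2$. Neither $c_1$ nor $c_2$ can be $0$, since otherwise $\delta_3$ would be a scalar multiple of $\delta_2$ or of $\delta_1$, impossible for distinct positive roots; and they cannot both be negative, since then $\delta_3 + |c_1|\delta_1 + |c_2|\delta_2 = 0$, which is impossible because $\delta_3$ is a nonzero nonnegative combination of the basis $\Delta$ and $|c_1|\delta_1 + |c_2|\delta_2$ contributes only nonnegative coefficients, so the left-hand side has a strictly positive coefficient on some $\alpha_s$. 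Hence at least one coefficient is positive; say $c_1 > 0$. If $c_2 \ge 0$, then $\delta_3 = c_1\delta_1 + c_2\delta_2$ with $c_1 > 0$, $c_2 \ge 0$, and $\delta_2 \in \Lambda'$ exhibits $\delta_1$ as a positive summand of $\delta_3$ in the sense of Definition~\ref{d:preorder}, so $\delta_1 \leqq_{\Lambda'} \delta_3$; since $\delta_1 \ne \delta_3$, this contradicts $\delta_3 \in \Delta_\Lambda$ by Definition~\ref{d:simplelocal}. If instead $c_2 < 0$, then solving for $\delta_1$ gives $\delta_1 = \tfrac{1}{c_1}\delta_3 + \tfrac{|c_2|}{c_1}\delta_2$ with $\tfrac{1}{c_1} > 0$, $\tfrac{|c_2|}{c_1} > 0$, and $\delta_2 \in \Lambda'$, which exhibits $\delta_3$ as a positive summand of $\delta_1$, so $\delta_3 \leqq_{\Lambda'} \delta_1$ with $\delta_3 \ne \delta_1$, contradicting $\delta_1 \in \Delta_\Lambda$. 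Either way we have a contradiction, so $|\Delta_\Lambda| \le 2$, and combined with the previous paragraph this yields $|\Delta_\Lambda| = 2$.

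The hard part will be only the bookkeeping in the last paragraph: one must check in each sign case that the roots chosen as the ``$\gamma_i$'' of Definition~\ref{d:preorder} genuinely lie in $\Lambda' = \mathrm{span}(\Lambda)\cap\Phi^+$, and that the preorder comparison is oriented so as to contradict the minimality of the correct one of $\delta_1$ or $\delta_3$ with respect to $\leqq_{\Lambda'}$. Notably, no input about reflection subgroups beyond Lemmas~\ref{l:localsystemform} and~\ref{l:basiclocal} is needed; the rank-two hypothesis is precisely what forces the local simple system to be linearly independent, and that is what the argument extracts.
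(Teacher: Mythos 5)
Your argument is correct, and it is genuinely different from what the paper does: the paper's ``proof'' of Lemma~\ref{l:dihedralsize} is simply a citation of Dyer's Remark~3.2 in \cite{dyerbruhat}, whereas you supply a self-contained argument using only material already established in the text (Lemmas~\ref{l:localsystemform} and~\ref{l:basiclocal}, Definitions~\ref{d:preorder} and~\ref{d:simplelocal}, and the fact from the preliminaries that distinct positive roots are not proportional). Both halves of your argument check out: the lower bound follows because $\alpha,\beta\in\Phi^+_\Lambda$ must be nonnegative combinations of $\Delta_\Lambda$, forcing $\Delta_\Lambda$ to span the two-dimensional space $\mathrm{span}(\Lambda)$; and in the upper bound, each sign case for the coefficients of $\delta_3 = c_1\delta_1 + c_2\delta_2$ correctly produces a relation $\mu \leqq_{\Lambda'} \lambda$ with $\mu \neq \lambda$ between two alleged elements of $\Delta_\Lambda$ (using $\delta_2 \in \Lambda' = \mathrm{span}(\Lambda)\cap\Phi^+$ as the auxiliary root permitted by Definition~\ref{d:preorder}), contradicting minimality in Definition~\ref{d:simplelocal}. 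What your approach buys is independence from the external reference and transparency about exactly which hypotheses matter --- in particular, that the rank-two restriction is what makes the local simple system linearly independent; what the citation buys is brevity and a pointer to the general theory of maximal dihedral reflection subgroups. One small presentational point: it is worth saying explicitly that $\Delta_\Lambda \subseteq \Phi^+_\Lambda = \Lambda'$ (immediate from Definition~\ref{d:simplelocal} and Lemma~\ref{l:localsystemform}) before invoking $\delta_2 \in \Lambda'$ and $\delta_1,\delta_3 \in \Phi^+[\Lambda']$ in the preorder comparisons, since Definition~\ref{d:preorder} requires both compared roots to lie in $\Phi^+[\Lambda']$.
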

\begin{proof}
See \cite[Remark 3.2]{dyerbruhat}.
\end{proof}
\begin{rem} \label{r:dihedral}
Proposition 4.5.4 of \cite{bb} states that a subgroup generated by distinct reflections $s_{\gamma}$, $s_{\delta}$ is a finite dihedral group if $|B(\gamma,\delta)| < 1$ and an infinite dihedral group if $B(\gamma,\delta) \leq -1$.  This, combined with Lemma~\ref{l:dihedralsize}, shows that the subgroup $W_{\{\alpha,\beta\}}$ of $W$ is a dihedral group.  Thus, the name ``dihedral subsystem'' (as given in Definition~\ref{d:localsystem}) is justified whenever $|\Lambda| = 2$.
\end{rem}
\begin{defn} \label{d:canonicalroot}
Let $\alpha,\beta \in \Phi^+$ be distinct.  If $\Delta_{\{\alpha,\beta\}} = \{\gamma,\delta\}$ and $S_{\{\alpha,\beta\}} = \{s_\gamma,s_\delta\}$ then we call $\gamma$ and $\delta$ the \emph{canonical simple roots}, and $s_\gamma,s_\delta$ the \emph{canonical generators}, for the dihedral Coxeter system $(W,S)_{\{\alpha,\beta\}}$.
\end{defn}
\noindent
The next lemma shows that a dihedral subsystem and its associated root system is uniquely determined by the canonical simple roots.  As a result, any two distinct roots within a dihedral subsystem determine the same dihedral subsystem.
\begin{lem} \label{l:closure}
Let $\alpha,\beta \in \Phi^+$ be distinct and let $\gamma$ and $\delta$ be the canonical simple roots for the local Coxeter system $(W,S)_{\{\alpha,\beta\}}$.  Then:
\begin{enumerate}[(1)]
\item  $(W,S)_{\{\alpha,\beta\}} = (W,S)_{\{\gamma,\delta\}}$.
\item  $\Delta_{\{\alpha,\beta\}} = \Delta_{\{\gamma, \delta\}}$.
\item  $\Phi^+_{\{\alpha,\beta\}} = \Phi^+_{\{\gamma,\delta\}}$.
\item  $\Phi_{\{\alpha,\beta\}} = \Phi_{\{\gamma, \delta\}}$.
\end{enumerate}
\end{lem}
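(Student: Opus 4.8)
The plan is to reduce all four assertions to a single observation: the linear span of $\{\alpha,\beta\}$ coincides with the linear span of the canonical simple roots $\{\gamma,\delta\}$. Once that is established, statements (1)--(4) follow immediately, since by Definition~\ref{d:localsystem} each of $(W,S)_{\{\alpha,\beta\}}$, $\Delta_{\{\alpha,\beta\}}$, $\Phi^+_{\{\alpha,\beta\}}$, and $\Phi_{\{\alpha,\beta\}}$ is defined purely in terms of $\text{span}(\{\alpha,\beta\}) \cap \Phi^+$, and likewise for the $\{\gamma,\delta\}$ versions; equal spans force the defining data to be literally the same set.

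First I would record the dimension count. Since $\alpha$ and $\beta$ are distinct positive roots, they are not scalar multiples of one another (as noted among the preliminaries on $\Phi^+$), hence linearly independent, so $\text{span}(\{\alpha,\beta\})$ is a $2$-dimensional subspace of $V$. By Lemma~\ref{l:dihedralsize}, $\Delta_{\{\alpha,\beta\}} = \{\gamma,\delta\}$ has exactly two elements; and $\gamma, \delta$ are distinct positive roots, since $\Delta[\Lambda] \subseteq \Phi^+[\Lambda] \subseteq \Phi^+$. Thus $\gamma$ and $\delta$ are also linearly independent, so $\text{span}(\{\gamma,\delta\})$ is $2$-dimensional as well.

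Next I would establish the containment $\text{span}(\{\gamma,\delta\}) \subseteq \text{span}(\{\alpha,\beta\})$. By Lemma~\ref{l:localsystemform}(2), $\Phi^+_{\{\alpha,\beta\}} = \text{span}(\{\alpha,\beta\}) \cap \Phi^+$, and $\gamma, \delta \in \Delta_{\{\alpha,\beta\}} \subseteq \Phi^+_{\{\alpha,\beta\}}$, so both $\gamma$ and $\delta$ lie in $\text{span}(\{\alpha,\beta\})$. Hence $\text{span}(\{\gamma,\delta\}) \subseteq \text{span}(\{\alpha,\beta\})$, and since both sides are $2$-dimensional, they are equal. Substituting $\text{span}(\{\gamma,\delta\}) = \text{span}(\{\alpha,\beta\})$ into the respective clauses of Definition~\ref{d:localsystem} (for the Coxeter system, for $\Delta_\Lambda$, for $\Phi^+_\Lambda$, and for $\Phi_\Lambda$) then yields (1), (2), (3), and (4).

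I do not expect a real obstacle here; the only point requiring a moment's care is the dimension bookkeeping --- confirming that $\gamma$ and $\delta$ are genuinely linearly independent and that they actually lie in $\text{span}(\{\alpha,\beta\})$ --- which is precisely what Lemmas~\ref{l:dihedralsize} and \ref{l:localsystemform}(2) supply.
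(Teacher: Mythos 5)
Your proof is correct and follows essentially the same route as the paper: both arguments reduce everything to the equality $\mathrm{span}(\{\alpha,\beta\}) = \mathrm{span}(\{\gamma,\delta\})$, obtained from the two-dimensionality of both spans and the containment $\gamma,\delta \in \mathrm{span}(\{\alpha,\beta\})$. The paper then additionally cites Theorem~\ref{t:dyerdeodhar}(2) to identify the Coxeter systems, whereas you observe more directly that the defining set $\mathrm{span}(\Lambda)\cap\Phi^+$ is literally the same for both localizations; this is a harmless simplification.
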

\begin{proof}
Recall that distinct positive roots are not scalar multiples of one another.  Thus, since $\gamma, \delta \in \text{span}(\{\alpha,\beta\})$ and $\text{span}(\{\gamma,\delta\})$ is two-dimensional, we have
\begin{equation*}
\text{span}(\{\alpha,\beta\}) = \text{span}(\{\gamma,\delta\}),
\end{equation*}
so the generating set of reflections is the same for both localizations.  By Theorem~\ref{t:dyerdeodhar}(2), the generating set of reflections uniquely determines the reflection subgroup and its canonical generating set. Therefore we have $(W,S)_{\{\alpha,\beta\}} = (W,S)_{\{\gamma,\delta\}}$.  The remaining equations follow from Definition~\ref{d:localrootsystem}.
\end{proof}
\noindent
Lemmas~\ref{l:preservespan} shows that when an arbitrary $w \in W$ acts on a dihedral subsystem, the result is a dihedral subsystem.  Lemmas~\ref{l:preservesignspan} and \ref{l:wclosure} show that if $w$ does not send the canonical simple roots negative, then $w$ maps positive roots to positive roots and canonical simple roots to canonical simple roots in the resulting subsystem.
\begin{lem} \label{l:preservespan}
Let $w \in W$ and let $\alpha, \beta \in \Phi$ be distinct roots.  Then
\begin{equation*}
w(\Phi_{\{\alpha,\beta\}}) = \Phi_{\{w(\alpha),w(\beta)\}}.
\end{equation*}
\end{lem}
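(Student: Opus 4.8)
The plan is to reduce the statement to the explicit description $\Phi_\Lambda = \mathrm{span}(\Lambda) \cap \Phi$ and then exploit that $w$ acts on $V$ through the geometric representation as an invertible linear map that permutes the root system $\Phi$.

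First I would upgrade Lemma~\ref{l:localsystemform}(1) from sets $\Lambda \subseteq \Phi^+$ to arbitrary sets $\Lambda \subseteq \Phi$, proving that $\Phi_\Lambda = \mathrm{span}(\Lambda) \cap \Phi$ for any set of roots $\Lambda$. By Definition~\ref{d:localsystem} we have $\Phi_\Lambda = \Phi[\mathrm{span}(\Lambda) \cap \Phi^+]$, and Lemma~\ref{l:localsystemform}(1) applied to the subset $\mathrm{span}(\Lambda) \cap \Phi^+$ of $\Phi^+$ gives $\Phi[\mathrm{span}(\Lambda) \cap \Phi^+] = \mathrm{span}(\mathrm{span}(\Lambda) \cap \Phi^+) \cap \Phi$. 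So it suffices to check that $\mathrm{span}(\mathrm{span}(\Lambda) \cap \Phi^+) = \mathrm{span}(\Lambda)$: the inclusion $\subseteq$ is immediate, and for $\supseteq$ note that each $\gamma \in \Lambda$ lies in $\Phi$, so exactly one of $\pm\gamma$ lies in $\Phi^+$ and that one lies in $\mathrm{span}(\Lambda) \cap \Phi^+$, whence $\gamma \in \mathrm{span}(\mathrm{span}(\Lambda) \cap \Phi^+)$. Applying this to $\Lambda = \{\alpha,\beta\}$ and to $\Lambda = \{w(\alpha),w(\beta)\}$ — which is legitimate even though these roots need not be positive — gives $\Phi_{\{\alpha,\beta\}} = \mathrm{span}(\{\alpha,\beta\}) \cap \Phi$ and $\Phi_{\{w(\alpha),w(\beta)\}} = \mathrm{span}(\{w(\alpha),w(\beta)\}) \cap \Phi$.

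Next I would record the two elementary facts about the action of $w$: as an element of $GL(V)$ it is an invertible linear map, so $w(\mathrm{span}(\{\alpha,\beta\})) = \mathrm{span}(\{w(\alpha),w(\beta)\})$; and $w$ permutes $\Phi$, i.e. $w(\Phi) = \Phi$, since $\Phi = \{w'(\alpha_s)\,:\,w' \in W,\ s \in S\}$ and $wW = W$. Combining these with the previous paragraph and using that $w$ is injective on $V$,
\begin{equation*}
w(\Phi_{\{\alpha,\beta\}}) = w\bigl(\mathrm{span}(\{\alpha,\beta\}) \cap \Phi\bigr) = w(\mathrm{span}(\{\alpha,\beta\})) \cap w(\Phi) = \mathrm{span}(\{w(\alpha),w(\beta)\}) \cap \Phi = \Phi_{\{w(\alpha),w(\beta)\}},
\end{equation*}
which is the desired equality. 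I do not expect a genuine obstacle here; the only point that needs a moment's care is the extension of Lemma~\ref{l:localsystemform} to roots of either sign, which is necessary because the definition of a local Coxeter system is phrased in terms of the positive roots contained in a span, whereas $w$ may send $\alpha$ or $\beta$ to a negative root.
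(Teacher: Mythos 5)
Your proof is correct and follows essentially the same route as the paper: express both sides as $\mathrm{span}(\cdot)\cap\Phi$ via Lemma~\ref{l:localsystemform} and use that $w$ acts as a linear automorphism permuting $\Phi$. The one difference is that you explicitly justify applying Lemma~\ref{l:localsystemform} when $\{w(\alpha),w(\beta)\}$ may contain negative roots (by checking $\mathrm{span}(\mathrm{span}(\Lambda)\cap\Phi^+)=\mathrm{span}(\Lambda)$), a point the paper's proof passes over silently; this is a welcome bit of extra care, not a divergence in method.
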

\begin{proof}
Since $W$ acts on $\Phi$ and each $w \in W$ acts as a vector space automorphism on $V$, we have $a\alpha + b\beta \in \Phi$ if and only if $aw(\alpha) + bw(\beta) \in \Phi$.  We also have $$w(\text{span}(\{\alpha,\beta\})) = \text{span}(\{w(\alpha),w(\beta)\})$$ because $w$ acts as a vector space automorphism on $V$.  Thus $$w(\text{span}(\{\alpha,\beta\}) \cap \Phi) = \text{span}(\{w(\alpha),w(\beta)\}) \cap \Phi.$$  By Lemma~\ref{l:localsystemform}, the result follows.
\end{proof}
\begin{lem} \label{l:preservesignspan}
Let $\gamma$ and $\delta$ be the canonical simple roots of $\Phi_{\{\alpha,\beta\}}$ and suppose that $w(\gamma), w(\delta) \in \Phi^+$.  Then
\begin{equation*}
w\left(\Phi^+_{\{\alpha,\beta\}}\right) = \Phi^+_{\{w(\alpha),w(\beta)\}} \text{ and } w\left(\Phi^-_{\{\alpha,\beta\}}\right) = \Phi^-_{\{w(\alpha),w(\beta)\}}.
\end{equation*}
\end{lem}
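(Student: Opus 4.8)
The plan is to reduce to the canonical simple roots and then push a nonnegative-coefficient expansion through $w$. First I would use Lemma~\ref{l:closure} to replace $\{\alpha,\beta\}$ by $\{\gamma,\delta\}$ throughout: since $\text{span}(\{\alpha,\beta\}) = \text{span}(\{\gamma,\delta\})$ (distinct positive roots are not scalar multiples, so both spans are the same two-dimensional space) and $w$ is a vector space automorphism, we get $\text{span}(\{w(\alpha),w(\beta)\}) = \text{span}(\{w(\gamma),w(\delta)\})$, hence by Lemma~\ref{l:localsystemform} the local root systems of $\{w(\alpha),w(\beta)\}$ and $\{w(\gamma),w(\delta)\}$ coincide, together with their positive and negative parts. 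Combined with Lemma~\ref{l:closure}(3),(4), it therefore suffices to prove the statement with $\{\alpha,\beta\}$ replaced by the pair of canonical simple roots $\{\gamma,\delta\}$.

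Next I would establish the forward inclusion $w(\Phi^+_{\{\gamma,\delta\}}) \subseteq \Phi^+_{\{w(\gamma),w(\delta)\}}$. Take $\lambda \in \Phi^+_{\{\gamma,\delta\}}$. By Lemma~\ref{l:basiclocal}(3) applied to the local system $(W,S)_{\{\gamma,\delta\}}$, whose simple system is $\{\gamma,\delta\}$, we may write $\lambda = a\gamma + b\delta$ with $a,b \geq 0$; since $\lambda \neq 0$, not both $a$ and $b$ vanish. Then $w(\lambda) = a\,w(\gamma) + b\,w(\delta)$ is a nonnegative linear combination, not all coefficients zero, of the positive roots $w(\gamma), w(\delta)$, and $w(\lambda)$ is a root, so the standard fact recorded in the preliminaries (a root that is a nonnegative linear combination of positive roots is positive) gives $w(\lambda) \in \Phi^+$. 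Meanwhile $w(\lambda) \in w(\Phi_{\{\gamma,\delta\}}) = \Phi_{\{w(\gamma),w(\delta)\}}$ by Lemma~\ref{l:preservespan}, so $w(\lambda) \in \Phi_{\{w(\gamma),w(\delta)\}} \cap \Phi^+ = \Phi^+_{\{w(\gamma),w(\delta)\}}$. Applying the same reasoning to $-\lambda$ (or just using $\Phi^-_{\{\gamma,\delta\}} = -\Phi^+_{\{\gamma,\delta\}}$ and linearity of $w$) yields $w(\Phi^-_{\{\gamma,\delta\}}) \subseteq \Phi^-_{\{w(\gamma),w(\delta)\}}$.

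Finally I would upgrade these inclusions to equalities by a disjointness argument. By Lemma~\ref{l:basiclocal}(2), $\Phi_{\{w(\gamma),w(\delta)\}}$ is the disjoint union of $\Phi^+_{\{w(\gamma),w(\delta)\}}$ and $\Phi^-_{\{w(\gamma),w(\delta)\}}$; by Lemma~\ref{l:preservespan} this set equals $w(\Phi_{\{\gamma,\delta\}})$, which in turn equals $w(\Phi^+_{\{\gamma,\delta\}}) \cup w(\Phi^-_{\{\gamma,\delta\}})$, a disjoint union since $\Phi^+_{\{\gamma,\delta\}}$ and $\Phi^-_{\{\gamma,\delta\}}$ are disjoint and $w$ is injective. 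Since each term on the left lies inside the corresponding term on the right and the right-hand union is disjoint, both inclusions from the previous paragraph must be equalities. Undoing the reduction of the first paragraph then gives the claim for $\{\alpha,\beta\}$.

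I do not anticipate a real obstacle here; the delicate points are purely bookkeeping: in the first paragraph, confirming that the local root systems attached to $\{w(\alpha),w(\beta)\}$ and $\{w(\gamma),w(\delta)\}$ genuinely coincide (which rests on $w$ being linear and on Lemma~\ref{l:closure}), and in the second paragraph, remembering to exclude $\lambda = 0$ so that the nonnegative combination of $w(\gamma), w(\delta)$ is nonzero before invoking the ``nonnegative combination of positive roots is positive'' principle.
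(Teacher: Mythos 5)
Your proof is correct. The forward inclusions are obtained exactly as in the paper: expand $\lambda$ as a nonnegative combination of the canonical simple roots via Lemma~\ref{l:basiclocal}(3), apply $w$, and use the fact that a root which is a nonnegative combination of positive roots is positive, together with Lemma~\ref{l:preservespan}. Where you diverge is in upgrading the inclusions to equalities: the paper argues element-wise, taking $\lambda \in \Phi^+_{\{w(\alpha),w(\beta)\}}$, pulling back by $w^{-1}$, and deriving a contradiction from the assumption that $w^{-1}(\lambda)$ is negative; you instead observe that $\Phi_{\{w(\gamma),w(\delta)\}}$ is partitioned both as $\Phi^+_{\{w(\gamma),w(\delta)\}} \sqcup \Phi^-_{\{w(\gamma),w(\delta)\}}$ and as $w(\Phi^+_{\{\gamma,\delta\}}) \sqcup w(\Phi^-_{\{\gamma,\delta\}})$, so the term-by-term inclusions force term-by-term equalities. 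Both arguments rest on the same disjointness fact from Lemma~\ref{l:basiclocal}(2), but your packaging is cleaner and sidesteps the slightly awkward step in the paper's converse where Lemma~\ref{l:basiclocal}(3) is invoked to write $\lambda = aw(\alpha) + bw(\beta)$ with real coefficients that are never used. Your preliminary reduction to $\{\gamma,\delta\}$ via Lemma~\ref{l:closure} is also sound (the local system depends only on $\mathrm{span}(\Lambda) \cap \Phi^+$, so equal spans give equal systems even when $w(\alpha), w(\beta)$ are not positive), though it is not strictly necessary since Lemma~\ref{l:basiclocal}(3) already expresses elements of $\Phi^+_{\{\alpha,\beta\}}$ in terms of $\Delta_{\{\alpha,\beta\}} = \{\gamma,\delta\}$.
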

\begin{proof}
Let $w \in W$ and suppose $\gamma$ and $\delta$ are the canonical simple roots of $\Phi_{\{\alpha,\beta\}}$ and that $w(\gamma), w(\delta) \in \Phi^+$.  Let $\lambda \in \Phi^+_{\{\alpha,\beta\}}$.  We have $\lambda = c\gamma + d\delta$ for some $c,d \geq 0$ by Lemma~\ref{l:basiclocal}(3).  Since $w(\gamma),w(\delta) \in \Phi^+$, we have $w(\lambda) \in \Phi^+$.  Similarly, we have that $\lambda \in \Phi^-_{\{\alpha,\beta\}}$ implies $w(\lambda) \in \Phi^-$.  By Lemma~\ref{l:preservespan}, $\lambda \in \Phi_{\{\alpha,\beta\}}$ implies $w(\lambda) \in \Phi_{\{w(\alpha), w(\beta)\}}$.  Thus, by parts (4) and (5) of Definition~\ref{d:localrootsystem}, this proves $$w\left(\Phi^+_{\{\alpha, \beta\}}\right) \subseteq \Phi^+_{\{w(\alpha),w(\beta)\}}$$
\begin{center}
and
\end{center}
$$w\left(\Phi^-_{\{\alpha,\beta\}}\right) \subseteq \Phi^-_{\{w(\alpha),w(\beta)\}}.$$
To prove the converse, suppose $\lambda \in \Phi^+_{\{w(\alpha),w(\beta)\}}$. By Lemma~\ref{l:basiclocal}(3), we have $\lambda = aw(\alpha) + bw(\beta)$ for some $a,b \in \mathbb{R}$ and $\lambda \in \Phi^+$.  Then we have $w^{-1}(\lambda) \in \Phi_{\{\alpha,\beta\}}$ by Lemma~\ref{l:preservespan}.  Suppose towards a contradiction that $w^{-1}(\lambda) \in \Phi^-_{\{\alpha,\beta\}}$.  By Lemma~\ref{l:basiclocal}, there exist $c,d \in \mathbb{R}$ such that $c,d \leq 0$ and $w^{-1}(\lambda) = c\gamma + d\delta$.  Since $$w\left(\Phi^-_{\{\alpha,\beta\}}\right) \subseteq \Phi^-_{\{w(\alpha),w(\beta)\}},$$ we have $w(w^{-1}(\lambda)) \in \Phi^-$.  This implies $\lambda \in \Phi^-$ and since it was assumed that $\lambda \in \Phi^+$, we have a contradiction.  Thus $w^{-1}(\lambda) \in \Phi^+$.  Now, since $w^{-1}(\lambda) \in \Phi^+_{\{\alpha,\beta\}}$, it follows that $$\lambda \in w\left(\Phi^+_{\{\alpha,\beta\}}\right).$$  By a similar argument, we have $\lambda \in \Phi^-_{\{w(\alpha),w(\beta)\}}$ implies $\lambda \in w\left(\Phi^-_{\{\alpha,\beta\}}\right)$.
\end{proof}
\begin{lem} \label{l:wclosure}
Let $w \in W$ and let $\alpha$ and $\beta$ be distinct positive roots such that $\gamma$ and $\delta$ are the canonical simple roots for $(W,S)_{\{\alpha,\beta\}}$.  Suppose that $w(\gamma), w(\delta) \in \Phi^+$.  Then the canonical simple roots of $(W,S)_{\{w(\alpha),w(\beta)\}}$ are $w(\gamma)$ and $w(\delta)$.
\end{lem}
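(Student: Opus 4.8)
The plan is to show that $w$ carries the local simple system $\Delta_{\{\alpha,\beta\}}$ onto $\Delta_{\{w(\alpha),w(\beta)\}}$; once that is known, Definition~\ref{d:canonicalroot} immediately yields the claim. First note that $\alpha$ and $\beta$ are nonnegative combinations of $\gamma$ and $\delta$ by Lemma~\ref{l:basiclocal}(3), so $w(\alpha)$ and $w(\beta)$ are nonnegative combinations of the positive roots $w(\gamma), w(\delta)$, hence are themselves positive roots, and they are distinct since $w$ is injective; thus the dihedral subsystem $(W,S)_{\{w(\alpha),w(\beta)\}}$ and its canonical simple roots are well defined. By Definition~\ref{d:canonicalroot} we have $\Delta_{\{\alpha,\beta\}} = \{\gamma,\delta\}$, and by Lemma~\ref{l:dihedralsize} the set $\Delta_{\{w(\alpha),w(\beta)\}}$ has exactly two elements. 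Since $w(\gamma) \ne w(\delta)$, it will be enough to show $w(\gamma) \in \Delta_{\{w(\alpha),w(\beta)\}}$ and $w(\delta) \in \Delta_{\{w(\alpha),w(\beta)\}}$; the two arguments are symmetric, so I treat $w(\gamma)$.

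Because $\gamma$ and $\delta$ are the canonical simple roots of $\Phi_{\{\alpha,\beta\}}$ and $w(\gamma), w(\delta) \in \Phi^+$ by hypothesis, Lemma~\ref{l:preservesignspan} gives the \emph{equality} $w\!\left(\Phi^+_{\{\alpha,\beta\}}\right) = \Phi^+_{\{w(\alpha),w(\beta)\}}$; since $w$ is a bijection of $V$, this means $w^{-1}$ carries $\Phi^+_{\{w(\alpha),w(\beta)\}}$ onto $\Phi^+_{\{\alpha,\beta\}}$, and also $w(\gamma) \in \Phi^+_{\{w(\alpha),w(\beta)\}}$. To verify the minimality condition of Definition~\ref{d:simplelocal}, write $\Lambda' = \Phi^+_{\{w(\alpha),w(\beta)\}}$ and suppose $\mu \in \Lambda'$ satisfies $\mu \leqq_{\Lambda'} w(\gamma)$, say $w(\gamma) = a\mu + \sum_i a_i \gamma_i$ with $a > 0$, $a_i \ge 0$, and $\gamma_i \in \Lambda'$. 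Applying the linear map $w^{-1}$ gives $\gamma = a\, w^{-1}(\mu) + \sum_i a_i\, w^{-1}(\gamma_i)$, and by the previous sentence $w^{-1}(\mu)$ and all the $w^{-1}(\gamma_i)$ lie in $\Phi^+_{\{\alpha,\beta\}}$. Hence $w^{-1}(\mu) \leqq_{\Phi^+_{\{\alpha,\beta\}}} \gamma$; but $\gamma \in \Delta_{\{\alpha,\beta\}}$, so Definition~\ref{d:simplelocal} forces $w^{-1}(\mu) = \gamma$, that is $\mu = w(\gamma)$. Therefore $w(\gamma) \in \Delta_{\{w(\alpha),w(\beta)\}}$, and likewise $w(\delta) \in \Delta_{\{w(\alpha),w(\beta)\}}$.

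Combining the two paragraphs, $\Delta_{\{w(\alpha),w(\beta)\}} = \{w(\gamma), w(\delta)\}$, and then Definition~\ref{d:canonicalroot} says exactly that $w(\gamma)$ and $w(\delta)$ are the canonical simple roots of $(W,S)_{\{w(\alpha),w(\beta)\}}$. The step requiring the most care is ensuring that the pulled-back summands $w^{-1}(\mu)$ and $w^{-1}(\gamma_i)$ are genuinely \emph{positive} roots of the local system $\Phi_{\{\alpha,\beta\}}$, rather than merely roots of it; this is precisely where the equality (not just an inclusion) in Lemma~\ref{l:preservesignspan} is used, together with the hypothesis $w(\gamma), w(\delta) \in \Phi^+$. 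Everything else is a routine manipulation of the defining inequality for $\leqq$ and linearity of $w$.
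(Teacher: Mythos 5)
Your proof is correct and follows essentially the same route as the paper's: both use Lemma~\ref{l:preservesignspan} to transport positive roots of the local system in both directions, verify the minimality condition of Definition~\ref{d:simplelocal} for $w(\gamma)$ and $w(\delta)$ by pulling the positive-summand relation back through $w^{-1}$, and conclude via $\left|\Delta_{\{w(\alpha),w(\beta)\}}\right| = 2$ from Lemma~\ref{l:dihedralsize}. The only difference is cosmetic: you add an explicit well-definedness remark about $w(\alpha), w(\beta)$ being distinct positive roots, which the paper leaves implicit.
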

\begin{proof}
Set $\Lambda = \text{span}(\{\alpha,\beta\}) \cap \Phi^+$ and $\Lambda' = \text{span}(\{w(\alpha),w(\beta)\}) \cap \Phi^+$.  We assumed that $w(\gamma), w(\delta) \in \Phi^+$, so Lemma~\ref{l:preservesignspan} implies that if $\mu \in \Phi^+_{\{w(\alpha),w(\beta)\}}$ then we may write $\mu = w(\lambda)$ for some $\lambda \in \Phi^+_{\{\alpha,\beta\}}$.\\ \\
Suppose that $w(\lambda) \leqq_{\Lambda'} w(\gamma)$.  Then, by Definition~\ref{d:preorder} there exists $a > 0$ and a finite set of coefficients $\{a_\nu\}_{\nu \in \mathcal{I}}$ such that $a_\nu \geq 0$ for all $\nu \in \mathcal{I}$ satisfying
\begin{equation*}
w(\gamma) = aw(\lambda) + \sum_{\nu \in \mathcal{I}} a_\nu w(\nu) = w(a\lambda + \sum_{\nu \in \mathcal{I}} a_\nu \nu),
\end{equation*}
where each $\nu \in \Phi^+_{\{\alpha,\beta\}}$.  Thus we have
\begin{equation*}
\gamma = a\lambda + \sum_{\nu \in \mathcal{I}} a_\nu \nu,
\end{equation*}
so that $\lambda \leqq_\Lambda \gamma$.  Since $\gamma \in \Delta_{\{\alpha,\beta\}}$, we have $\lambda = \gamma$ by Definition~\ref{d:simplelocal}.  Thus $w(\lambda) = w(\gamma)$, and it follows that $w(\gamma) \in \Delta_{\{w(\alpha),w(\beta)\}}$.  The same argument applies to $w(\delta)$ so that $\{w(\delta),w(\gamma)\} \subseteq \Delta_{\{w(\alpha),w(\beta)\}}$.  By Lemma~\ref{l:dihedralsize}, $\left|\Delta_{\{w(\alpha),w(\beta)\}}\right| = 2$, so $$\Delta_{\{w(\alpha),w(\beta)\}} = \{w(\gamma),w(\delta)\}.$$
\end{proof}
\noindent
Recall that the expression $(s_\gamma s_\delta)_k$ has $k$ factors (as opposed to $2k$ factors) and does not end in $s_\delta$ if $k$ is odd.
\begin{lem} \label{l:inversionroots}
Let $\Delta_{\{\alpha,\beta\}} = \{\gamma,\delta\}$.  Then the roots in $\Phi_{\{\alpha,\beta\}}$ can be obtained by applying a product of the form $(s_\gamma s_\delta)_k$ or $(s_\delta s_\gamma)_k$ ($k \geq 0$) to $\gamma$ or to $\delta$.
\end{lem}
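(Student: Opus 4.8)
The plan is to reduce the statement to the elementary structure theory of dihedral groups, using Lemma~\ref{l:reflectionform} to pass between group elements and roots. Write $\Lambda = \mathrm{span}(\{\alpha,\beta\}) \cap \Phi^+$, so that $(W,S)_{\{\alpha,\beta\}} = (W,S)[\Lambda]$, and hence $W_{\{\alpha,\beta\}} = W[\Lambda]$, $S_{\{\alpha,\beta\}} = S[\Lambda]$, and $\Delta_{\{\alpha,\beta\}} = \Delta[\Lambda]$. By hypothesis $\Delta[\Lambda] = \{\gamma,\delta\}$, so by Lemma~\ref{l:basiclocal}(1) the canonical generating set is $S[\Lambda] = \{s_\gamma, s_\delta\}$, and by Remark~\ref{r:dihedral} the group $W_{\{\alpha,\beta\}} = \langle s_\gamma, s_\delta\rangle$ is a dihedral group (finite or infinite, according to $B(\gamma,\delta)$).

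The first step is to record the purely group-theoretic fact that every element of a group generated by two involutions $x,y$ can be written in the form $(xy)_k$ or $(yx)_k$ for some $k \geq 0$, where the alternating products are taken with $k$ factors and both denote the identity when $k = 0$. This follows by a one-line induction on the minimal number of generators needed to express an element $g$: the case $g = 1$ is $k = 0$; and if $g = x g'$ (respectively $g = y g'$) with $g'$ of shorter length, then $g'$ is already an alternating product by induction, and since $x^2 = y^2 = 1$, prepending $x$ (respectively $y$) and cancelling when necessary again produces an alternating product of one of the two prescribed shapes. Applying this with $x = s_\gamma$, $y = s_\delta$ shows that every $w \in W_{\{\alpha,\beta\}}$ equals $(s_\gamma s_\delta)_k$ or $(s_\delta s_\gamma)_k$ for some $k \geq 0$.

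The second step is to invoke Lemma~\ref{l:reflectionform} for this $\Lambda$: every root $\eta \in \Phi_{\{\alpha,\beta\}} = \Phi[\Lambda]$ has the form $\eta = w(\varepsilon)$ for some $w \in W[\Lambda] = W_{\{\alpha,\beta\}}$ and some $\varepsilon \in \Delta[\Lambda] = \{\gamma,\delta\}$. Combining the two steps, $\eta$ is obtained by applying a product of the form $(s_\gamma s_\delta)_k$ or $(s_\delta s_\gamma)_k$ to $\gamma$ or to $\delta$, which is the claim. (The reverse containment is immediate from Definition~\ref{d:localrootsystem}(3), since any such product lies in $W_{\{\alpha,\beta\}}$ and $\gamma,\delta \in \Delta_{\{\alpha,\beta\}}$, so the description is in fact exact.)

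I do not expect a genuine obstacle: the argument is essentially bookkeeping. The only points deserving a moment's attention are the $k = 0$ convention, which is already fixed in the preliminaries, and the fact that in the finite dihedral case the listed alternating products are not pairwise distinct — but this merely introduces redundancy into the list, not a gap in coverage, so it does not affect the statement.
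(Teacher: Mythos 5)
Your argument is correct and is essentially the paper's own proof: both reduce to Lemma~\ref{l:reflectionform} to write each root as $w(\theta)$ with $w \in W_{\{\alpha,\beta\}}$ and $\theta \in \{\gamma,\delta\}$, and then rewrite $w$ as an alternating product of $s_\gamma$ and $s_\delta$ by cancelling squares. Your version merely spells out the cancellation step as an induction; no further comment is needed.
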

\begin{proof}
Let $\lambda \in \Phi_{\{\alpha,\beta\}}$.  By Lemma~\ref{l:reflectionform}, we have that $\lambda = w(\theta)$ for some $w \in W_{\{\alpha,\beta\}}$ and $\theta \in \{\gamma,\delta\}$.  Since $W_{\{\alpha,\beta\}}$ is generated by $s_\gamma$ and $s_\delta$ we may express $w$ as a product whose only factors are $s_\gamma$ and $s_\delta$.  If any repeated factors of $s_\gamma$ or $s_\delta$ occur, we may apply the relations $s_\gamma^2 = 1$ and $s_\delta^2 = 1$ until there are no repeated factors.
\end{proof}
%end of chap2.tex
%copy of chap3.tex
\chapter{Dihedral subsystems}  \label{c:dihedralchapt}
The alternating generators expression of Lemma~\ref{l:inversionroots} gives recurrences for the roots of a dihedral subsystem.  These recurrences also naturally determine doubly infinite sequences, which give a natural total ordering on the positive roots of the subsystem.  We then solve the recurrences explicitly in terms of substitutions into Chebyshev polynomials in Lemma~\ref{l:cheb:alphabeta}.  In Lemmas~\ref{l:biconvexity} and \ref{l:infinitebiconvexity}, as well as Corollaries~\ref{c:biconvexityconverse} and \ref{c:infinitebiconvexityconverse}, the solution is used to characterize when a root in one of the doubly infinite sequences lies in the convex cone spanned by two roots from the sequences.  These characterizations are used in Section $3.4$ to construct a total ordering on the positive roots of a dihedral subsystem.  We will find that ``betweenness'' in the ordering detects when a root lies in the convex cone spanned by two other roots and vice versa.\\ \\
In \cite[Section 2]{dyerhecke}, Dyer defines an ordering on the reflections of a dihedral subsystem that corresponds to our total ordering on the positive roots. He outlines an approach for showing the two orders are compatible, which we provide details for in this chapter.\\ \\
In Chapter $5$, building on what we develop in this chapter, we will give conditions that determine whether $s_\theta(\mu)$ is negative in terms of where $\theta$ and $\mu$ lie in the ordering.  The conditions are made precise in Corollary~\ref{c:localreflect}, which is used to prove Theorem~\ref{t:deletiontheorem}.
\section{Convex cones and biconvexity}
\begin{defn} \label{d:convexspan}
Let $A \subseteq V$.  We call the set $\text{c}(A)$ of all nonnegative linear combinations of elements of $A$ the \emph{convex cone spanned by the set $A$}.  If $A = \{\alpha, \beta\}$, then we call $\text{c}(A)= \{a\alpha + b\beta \, : \, a,b \geq 0\}$ the \emph{convex cone spanned by $\alpha$ and $\beta$}.  If $\gamma = a\alpha + b\beta$ where $a,b > 0$, then we say \emph{$\gamma$ strictly lies in the convex cone spanned by $\alpha$ and $\beta$}.
\end{defn}
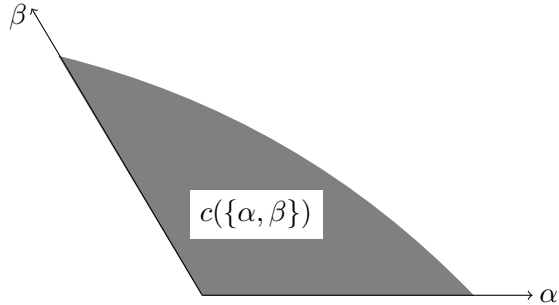
\begin{figure} \label{convexconefig}
\begin{center}
\begin{tikzpicture}[scale=3.6]
\filldraw[fill=gray, draw=gray] (0,0) -- (1,0) arc (44:76:3.2) -- (0,0);
\draw (0.2,0.3) node [fill=white]{$c(\{\alpha,\beta\})$};
\draw [->] (0,0) -- (-0.63,1.06);
\draw (-0.68,1.03) node {$\beta$};
\draw [->] (0,0) -- (1.22,0);
\draw (1.28,0) node {$\alpha$};
\end{tikzpicture}
\caption{The shaded region portrays the convex cone spanned by $\alpha$ and $\beta$.  Included in the convex cone is the indefinite extension into the plane of the shaded region.}
\end{center}
\end{figure}
\noindent
The following lemma is basic, but useful.
\begin{lem} \label{l:convexcone}
Let $\alpha, \beta, \gamma \in V$ be pairwise distinct nonzero vectors such that $\alpha$ is not a scalar multiple of $\beta$.  Suppose $\gamma$ strictly lies in the convex cone spanned by $\alpha$ and $\beta$.  Then $\alpha$ does not lie in the convex cone spanned by $\beta$ and $\gamma$.  Similarly, $\beta$ does not lie in the convex cone spanned by $\alpha$ and $\gamma$.
\end{lem}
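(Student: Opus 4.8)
The plan is to argue directly by contradiction, working entirely within the plane $\mathrm{span}(\{\alpha,\beta\})$. Since $\gamma$ strictly lies in the convex cone spanned by $\alpha$ and $\beta$, we may write $\gamma = a\alpha + b\beta$ with $a,b > 0$. Suppose for contradiction that $\alpha$ lies in the convex cone spanned by $\beta$ and $\gamma$, so that $\alpha = c\beta + d\gamma$ with $c,d \geq 0$. Substituting the expression for $\gamma$ gives $\alpha = c\beta + d(a\alpha + b\beta) = (da)\alpha + (c + db)\beta$, hence $(1 - da)\alpha = (c + db)\beta$.

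The next step is to extract a contradiction from this last equation using the hypothesis that $\alpha$ is not a scalar multiple of $\beta$. Since $\alpha$ and $\beta$ are linearly independent, the only way $(1 - da)\alpha = (c + db)\beta$ can hold is for both coefficients to vanish: $1 - da = 0$ and $c + db = 0$. From $da = 1$ and $a > 0$ we get $d = 1/a > 0$; then $c + db = 0$ with $c \geq 0$, $d > 0$, and $b > 0$ forces $c = db < 0$ — wait, more carefully, $c = -db < 0$, contradicting $c \geq 0$. This is the desired contradiction, so $\alpha$ does not lie in the convex cone spanned by $\beta$ and $\gamma$.

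The symmetric statement — that $\beta$ does not lie in the convex cone spanned by $\alpha$ and $\gamma$ — follows by the identical argument with the roles of $\alpha$ and $\beta$ interchanged, using that $\gamma = b\beta + a\alpha$ still has both coefficients positive and that $\beta$ is not a scalar multiple of $\alpha$.

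I do not expect any genuine obstacle here; the only point requiring a little care is the clean use of linear independence of $\alpha$ and $\beta$ to conclude that both scalar coefficients in $(1-da)\alpha = (c+db)\beta$ are zero, rather than, say, dividing prematurely. One should also note at the outset that the hypotheses (pairwise distinct, nonzero, $\alpha$ not a scalar multiple of $\beta$) guarantee $\{\alpha,\beta\}$ is a basis of the plane they span, which is what makes the coefficient-matching valid.
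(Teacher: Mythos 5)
Your proof is correct: the coefficient-matching argument via linear independence of $\alpha$ and $\beta$ is exactly the intended (elementary) justification, and the paper itself simply declares the lemma clear without writing it out. The small mid-sentence self-correction (arriving at $c = -db < 0$) lands on the right conclusion, so there is nothing to fix.
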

\begin{proof}
This is clear.
\end{proof}
\noindent
In \cite[Section 3]{coxeterorderings}, it is shown that given a finite Weyl group $W$, the subsets of positive roots that have the form $\Phi(w)$ for some $w \in W$ are characterized by a property called ``biconvexity''.  Corollary~\ref{c:finitebiconvex}, which is probably folklore, shows that this characterization holds for arbitrary Coxeter groups.
\begin{defn} \label{d:biconvexity}
Let $\Lambda \subset \Phi^+$.  We say that $\Lambda$ is \emph{convex} (or that $\Lambda$ satisfies the \emph{convexity} property) if for every $\alpha, \beta \in \Phi^+$
the following condition holds:
\begin{enumerate}[(1)]
\item If $\alpha, \beta \in \Lambda$ and $\lambda \in \Phi^+$ lies in the convex cone spanned by $\alpha$ and $\beta$, then $\lambda \in \Lambda$.\\ \\
If both $\Lambda$ and $\Phi^+ \setminus \Lambda$ are convex, then we say $\Lambda$ is \emph{biconvex} (or that $\Lambda$ satisfies the \emph{biconvexity} property).  Equivalently, $(1)$ and the following implication holds:
\item If $\alpha, \beta \not\in \Lambda$ and $\lambda \in \Phi^+$ lies in the convex cone spanned by $\alpha$ and $\beta$, then $\lambda \not\in \Lambda$.
\end{enumerate}
\end{defn}
\noindent
Recall that $\Phi(w)$ is the set of positive roots that are sent to a negative root by $w$.
Lemma~\ref{l:inversionbiconvex} is implicit in \cite[Section 3]{coxeterorderings} and stated exactly as we state it here in \cite[Section 2]{fbI} and \cite[Section 2]{nilorbits}.  No proof is given in any of those papers, so we include a proof for the sake of completeness.
\begin{lem} \label{l:inversionbiconvex}
Let $w \in W$.  Then the set $\Phi(w)$ satisfies the biconvexity property.
\end{lem}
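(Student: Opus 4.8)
The plan is to unwind the definition $\Phi(w) = \Phi^+ \cap w^{-1}(\Phi^-)$ and push a convex combination through the linear automorphism $w$, invoking the elementary fact recorded earlier in the preliminaries: a root that is a nonnegative linear combination of positive roots is itself positive, and a root that is a nonnegative linear combination of negative roots is itself negative. Both halves of biconvexity will then fall out by the same one-line argument, applied to $\Phi(w)$ and to its complement.

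First I would verify convexity of $\Phi(w)$ itself. Suppose $\alpha, \beta \in \Phi(w)$ and $\lambda \in \Phi^+$ lies in the convex cone spanned by $\alpha$ and $\beta$, say $\lambda = a\alpha + b\beta$ with $a, b \geq 0$. Since $w$ acts linearly on $V$, we have $w(\lambda) = a\,w(\alpha) + b\,w(\beta)$. By definition of $\Phi(w)$, both $w(\alpha)$ and $w(\beta)$ lie in $\Phi^-$, so $w(\lambda)$ is a root expressible as a nonnegative linear combination of negative roots, hence $w(\lambda) \in \Phi^-$. Therefore $\lambda \in \Phi^+ \cap w^{-1}(\Phi^-) = \Phi(w)$, which is exactly condition (1) of Definition~\ref{d:biconvexity}.

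Next I would handle the complement $\Phi^+ \setminus \Phi(w)$. The key observation is that since $\Phi = \Phi^+ \cup \Phi^-$ is a disjoint union and $w$ permutes $\Phi$, a positive root $\gamma$ fails to lie in $\Phi(w)$ precisely when $w(\gamma) \in \Phi^+$; that is, $\Phi^+ \setminus \Phi(w) = \Phi^+ \cap w^{-1}(\Phi^+)$. Now suppose $\alpha, \beta \notin \Phi(w)$ and $\lambda \in \Phi^+$ lies in the convex cone spanned by $\alpha$ and $\beta$, say $\lambda = a\alpha + b\beta$ with $a, b \geq 0$. Then $w(\lambda) = a\,w(\alpha) + b\,w(\beta)$ with $w(\alpha), w(\beta) \in \Phi^+$, so $w(\lambda)$ is a root that is a nonnegative linear combination of positive roots, hence $w(\lambda) \in \Phi^+$. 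Thus $\lambda \in \Phi^+ \cap w^{-1}(\Phi^+)$, i.e.\ $\lambda \notin \Phi(w)$, establishing condition (2) of Definition~\ref{d:biconvexity}.

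There is no real obstacle here; the argument is entirely formal once the preliminaries on nonnegative combinations of positive/negative roots and on the disjointness $\Phi = \Phi^+ \sqcup \Phi^-$ are in hand. The only point requiring a moment's care is the characterization $\Phi^+ \setminus \Phi(w) = \Phi^+ \cap w^{-1}(\Phi^+)$, which uses that $w$ maps $\Phi$ bijectively to itself and that $\Phi^+$ and $\Phi^-$ partition $\Phi$.
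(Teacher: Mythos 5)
Your proof is correct and follows essentially the same route as the paper's own argument: apply the linear action of $w$ to the convex combination and invoke the preliminary fact that a root which is a nonnegative linear combination of positive (resp.\ negative) roots is itself positive (resp.\ negative), once for $\Phi(w)$ and once for its complement. No gaps.
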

\begin{proof}
Recall that $w$ acts as a linear transformation, and that a nonnegative linear combination of positive (respectively, negative) roots is a positive (respectively, negative) root.\\ \\
Let $\alpha, \beta \in \Phi(w)$ and suppose $\lambda$ is a root such that $\lambda = a\alpha + b\beta$ for some $a,b \geq 0$.  By the definition of $\Phi(w)$, $\alpha$ and $\beta$ are positive roots, so $\lambda$ is also a positive root.  Since $w(\alpha)$ and $w(\beta)$ are negative roots, we have that $a w(\alpha) + b w(\beta)$ is also a negative root.  It follows that $\lambda \in \Phi(w)$.\\ \\
Similarly, let $\alpha, \beta \in \Phi^+ \setminus \Phi(w)$ and suppose $\lambda$ is a root such that $\lambda = a\alpha + b\beta$ for some $a,b \geq 0$.  Then $\lambda$ is a positive root since $\alpha$ and $\beta$ are positive roots.  Since $\alpha, \beta \not\in \Phi(w)$, $w(\alpha)$ and $w(\beta)$ are positive roots.  Thus, $w(\lambda)$ is a positive root as well and it follows that $w(\lambda) \not\in \Phi(w)$.
\end{proof}
\section{Chebyshev polynomials of the second kind}
By Lemma~\ref{l:inversionroots}, the roots of a dihedral subsystem can be obtained by applying an alternating product of canonical generators to one of the canonical roots.  We can directly calculate the scalars arising in the linear combinations of the canonical generators by applying the reflection formula (\ref{e:reflection}).  The alternating products give rise to a recurrence that has the same form that the Chebyshev polynomials of the second kind have.  The way in which the alternating products are calculated is independent of the canonical roots themselves, so the Chebyshev polynomials require an evaluation based on the canonical roots to calculate the actual scalars.  In this section, we collect the results about Chebyshev polynomials that we need for later calculations.\\ \\
In what follows, it will be convenient to use sequences indexed by the integers instead of the natural numbers.  A typical recurrence for a sequence of natural numbers expresses a sequence entry in terms of previous entries in the sequence.  For doubly infinite sequences, we also need a ``backward recurrence'', which is a recurrence expressing a sequence entry in terms of entries indexed by larger integers.  In the sequences we use, we can obtain these backward recurrences by rearranging the forward recurrences.\\ \\
The following definition of the Chebyshev polynomials of the second kind agrees with \cite[Definition 1.2]{cheby} except that we shift the indices up by one and extend it to a doubly infinite sequence.
\begin{defn}  \label{d:shifted}
Let $x \in [-1,1]$ and write $x = \cos \theta$ for some $\theta \in [0,\pi]$.  Then for $n \in \mathbb{Z}$, we define
\begin{equation*}
U_n(x) = \frac{\sin (n\theta)}{\sin \theta}.
\end{equation*}
The functions in this doubly infinite sequence are known as the \emph{Chebyshev polynomials of the second kind}.
\end{defn}
\begin{rem}
The discontinuities that occur at $\theta = 0$ and $\theta = \pi$ are removable.  Since the functions $U_n(x)$ turn out to be polynomials, they are continuous and hence, the discontinuities are not present when $U_n(x)$ is viewed as a polynomial.  Thus, $U_n(1) = n$ and $U_n(-1) = (-1)^{n+1} n$ for all $n \in \mathbb{Z}$.
\end{rem}
\noindent
The following lemma is our reason for shifting the indices in the standard definition.
\begin{lem} \label{e:cheb:symmetric}
For any $n \in \mathbb{N}$, we have $U_{-n}(x) = - U_n(x)$.
\end{lem}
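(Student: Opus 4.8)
The plan is to reduce the whole statement to the oddness of the sine function. For $x$ in the open interval $(-1,1)$, write $x = \cos\theta$ with $\theta \in (0,\pi)$; then $\sin\theta \neq 0$, so the defining formula of Definition~\ref{d:shifted} is an honest quotient of real numbers, and
$$U_{-n}(x) = \frac{\sin(-n\theta)}{\sin\theta} = \frac{-\sin(n\theta)}{\sin\theta} = -U_n(x),$$
the middle equality being just $\sin(-t) = -\sin t$. This already establishes the identity for every $x \in (-1,1)$.

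To upgrade this to all of $[-1,1]$ — that is, to cover the endpoints $x = \pm 1$, where the formula has a removable discontinuity — I would appeal to the fact, recorded in the remark following Definition~\ref{d:shifted}, that each $U_n$ is a polynomial and hence continuous on $[-1,1]$. Then $U_{-n}$ and $-U_n$ are both continuous on $[-1,1]$ and agree on the dense subset $(-1,1)$, so they agree everywhere on $[-1,1]$. Alternatively one can simply check the two endpoints directly from $U_m(1) = m$ and $U_m(-1) = (-1)^{m+1}m$: at $x = 1$ one has $U_{-n}(1) = -n = -U_n(1)$, and at $x = -1$ one has $U_{-n}(-1) = (-1)^{1-n}(-n) = (-1)^n n = -(-1)^{n+1} n = -U_n(-1)$.

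There is essentially no obstacle here. The only point requiring any care is that one must not divide by $\sin\theta$ when $\theta \in \{0,\pi\}$, which is exactly why the continuity remark (or the explicit endpoint check) is invoked; everything else is a two-line computation.
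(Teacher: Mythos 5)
Your proof is correct and follows the same route as the paper's, which consists precisely of the one-line computation $U_{-n}(x) = \sin(-n\theta)/\sin\theta = -\sin(n\theta)/\sin\theta = -U_n(x)$. The only difference is that you explicitly handle the endpoints $x = \pm 1$ (by continuity or by the values $U_m(\pm 1)$ from the remark), a point the paper's proof leaves implicit; this is a reasonable bit of added care but not a different argument.
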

\begin{proof}
We have $$U_{-n}(x) = \frac{\sin(-n\theta)}{\sin \theta} = -\frac{\sin(n\theta)}{\sin \theta} = -U_n(x).$$
\end{proof}
\begin{lem} \label{l:cheb}
Let $\{U_n(x)\}_{n \in \mathbb{Z}}$ be the Chebyshev polynomials of the second kind.  Then $U_n(x)$ satisfies the initial conditions
\begin{equation} \label{e:cheb:initial}
U_0(x) = 0, U_1(x) = 1
\end{equation}
and the recurrence
\begin{equation} \label{e:cheb:recurrence}
U_{n+2}(x) = 2x U_{n+1}(x) - U_n(x) \ \ \ \ \ (n \in \mathbb{Z}).
\end{equation}
The associated backward recurrence is given by
\begin{equation} \label{e:cheb:backrecurrence}
U_n(x) = 2x U_{n+1}(x) - U_{n+2}(x).
\end{equation}
\end{lem}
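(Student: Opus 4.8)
The plan is to verify each of the three assertions by a direct trigonometric computation, working first with $\theta \in (0,\pi)$ so that $\sin\theta \neq 0$, and then promoting the resulting equalities to identities in $x$ by invoking the observation (recorded in the Remark following Definition~\ref{d:shifted}) that each $U_n$ is an honest polynomial.

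First I would check the initial conditions. Directly from Definition~\ref{d:shifted}, for $\theta \in (0,\pi)$ we have $U_0(x) = \sin(0)/\sin\theta = 0$ and $U_1(x) = \sin\theta/\sin\theta = 1$. Since $U_0$ and $U_1$ are polynomials that agree with the constants $0$ and $1$ on the infinite set $\cos((0,\pi)) = (-1,1)$, the equalities $U_0(x) = 0$ and $U_1(x) = 1$ hold identically, which gives (\ref{e:cheb:initial}).

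Next, for the recurrence (\ref{e:cheb:recurrence}) I would apply the product-to-sum identity $2\cos\theta\,\sin(m\theta) = \sin((m+1)\theta) + \sin((m-1)\theta)$ with $m = n+1$, obtaining $\sin((n+2)\theta) = 2\cos\theta\,\sin((n+1)\theta) - \sin(n\theta)$ for every $n \in \mathbb{Z}$. Dividing both sides by $\sin\theta$ and recalling $x = \cos\theta$ yields $U_{n+2}(x) = 2x\,U_{n+1}(x) - U_n(x)$ for $x \in (-1,1)$, and the polynomial remark again upgrades this to an identity in $x$. The backward recurrence (\ref{e:cheb:backrecurrence}) is then obtained by simply solving (\ref{e:cheb:recurrence}) for $U_n(x)$, with no further input.

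There is no real obstacle here; the only point that requires a word of care is the behavior at $\theta \in \{0,\pi\}$, where $\sin\theta$ vanishes and the formula of Definition~\ref{d:shifted} is literally a removable singularity. This is dispatched once and for all by the fact that each $U_n$ is a genuine polynomial, so that any polynomial identity valid on a subset of $[-1,1]$ with an accumulation point is valid everywhere.
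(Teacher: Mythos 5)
Your proof is correct and follows essentially the same route as the paper: the paper derives (\ref{e:cheb:recurrence}) from the very same product-to-sum identity $\sin((n+1)z) + \sin((n-1)z) = 2\cos(z)\sin(nz)$ applied to Definition~\ref{d:shifted}, and obtains (\ref{e:cheb:backrecurrence}) by rearrangement. Your added care about the removable singularities at $\theta \in \{0,\pi\}$ and the promotion from $(-1,1)$ to all of $\mathbb{R}$ via polynomiality is exactly the point the paper makes in the discussion surrounding the lemma, so nothing is missing.
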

\begin{proof}
Equation (\ref{e:cheb:recurrence}) is \cite[(1.6a)]{cheby}, which is obtained by applying the identity
\begin{equation*}
\sin((n+1)z) + \sin((n-1)z) = 2\cos (z) \sin(nz)
\end{equation*}
to the definition.  We obtain the backward recurrence (\ref{e:cheb:backrecurrence}) by rearranging (\ref{e:cheb:recurrence}).
\end{proof}
\noindent
Although the defining representation for the Chebyshev polynomials of the second kind applies only to the interval $[-1,1]$, the recurrence relations imply that the resulting sequence of functions is a sequence of polynomials with integral coefficients.  Thus, we may extend the domain of $U_n(x)$ to $\mathbb{R}$ (or even $\mathbb{C}$).  In what follows, we will assume that for each function $U_n(x)$, the domain and codomain are both $\mathbb{R}$.  Note that if we obtain an identity using the trigonometric definition, then the identity applies to the extended domain because two polynomials need only agree on a large enough finite set in order for them to agree everywhere.  Thus, if two polynomials agree on $[-1,1]$, an infinite set, then they have the same coefficients and must agree on their extension to $\mathbb{R}$.\\ \\
We will see that $B(\gamma,\delta) \leq -1$ whenever $s_\gamma$ and $s_\delta$ generate an infinite dihedral subsystem.  Thus the condition $a \geq 1$ in the lemmas that follow will be applied when we make substitutions of the form $x = a$, where $a = -B(\gamma,\delta)$.
\begin{lem} \label{l:chebincreasing}
Let $a \geq 1$ be a fixed real number and let $n \geq 1$.  Then the sequence of real numbers $\{U_n(a)\}_{n \geq 1}$ is positive and strictly increasing.
\end{lem}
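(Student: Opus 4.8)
The plan is to prove by induction on $n$ the combined assertion
\[
P(n):\qquad U_{n+1}(a) > U_n(a) > 0,
\]
from which both claims follow at once: the inequality $U_n(a) > 0$ for $n \geq 1$ gives positivity, and $U_{n+1}(a) > U_n(a)$ for $n \geq 1$ gives that the sequence is strictly increasing. I will use only the initial conditions $U_0(a) = 0$, $U_1(a) = 1$ from \eqref{e:cheb:initial} and the forward recurrence \eqref{e:cheb:recurrence}, together with the hypothesis $a \geq 1$.

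For the base case $P(1)$, note $U_1(a) = 1 > 0$, and by \eqref{e:cheb:recurrence} together with \eqref{e:cheb:initial} we have $U_2(a) = 2a\,U_1(a) - U_0(a) = 2a \geq 2 > 1 = U_1(a)$, so $U_2(a) > U_1(a) > 0$. For the inductive step, assume $P(n)$ holds, i.e. $U_{n+1}(a) > U_n(a) > 0$. By \eqref{e:cheb:recurrence},
\[
U_{n+2}(a) = 2a\,U_{n+1}(a) - U_n(a) \geq 2\,U_{n+1}(a) - U_n(a) = U_{n+1}(a) + \bigl(U_{n+1}(a) - U_n(a)\bigr),
\]
where the inequality uses $a \geq 1$ and $U_{n+1}(a) > 0$. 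Since $U_{n+1}(a) - U_n(a) > 0$ by the inductive hypothesis, this shows $U_{n+2}(a) > U_{n+1}(a)$, and in particular $U_{n+2}(a) > U_{n+1}(a) > 0$, which is $P(n+1)$. By induction $P(n)$ holds for all $n \geq 1$, completing the argument.

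I do not anticipate a genuine obstacle here; the only point requiring care is that the statement being inducted on must be a statement about the \emph{pair} of consecutive terms $(U_n(a), U_{n+1}(a))$ rather than a single term, since the recurrence is second order. The crucial quantitative input is simply the splitting $2a\,U_{n+1}(a) \geq U_{n+1}(a) + U_{n+1}(a)$ afforded by $a \geq 1$ together with positivity of $U_{n+1}(a)$, which converts the recurrence into a strict monotonicity statement.
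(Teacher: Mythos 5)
Your proof is correct and follows essentially the same route as the paper: induction using the forward recurrence together with $a \geq 1$, with the base case $U_2(a) = 2a > 1 = U_1(a)$. The only difference is cosmetic — you carry positivity explicitly in the inductive hypothesis $U_{n+1}(a) > U_n(a) > 0$ and split $2a\,U_{n+1}(a) \geq 2U_{n+1}(a)$, while the paper bounds $-U_n(a) > -U_{n+1}(a)$ first and then uses $(2a-1)U_{n+1}(a) \geq U_{n+1}(a)$; your version is if anything slightly more careful about where positivity enters.
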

\begin{proof}
Since $U_1(a) = 1$ and $U_2(x) = 2a$ where $x \geq 1$, we have $U_2(a) > U_1(a)$.  Suppose that $U_{n+1}(a) > U_n(a)$.  Then, by the recurrence (\ref{e:cheb:recurrence}), we have $$U_{n+2}(a) = 2aU_{n+1}(a) - U_n(a) > 2aU_{n+1}(a) - U_{n+1}(a).$$
Thus $U_{n+2}(a) > (2a - 1)U_{n+1}(a) \geq U_{n+1}(a)$ since we assumed $a \geq 1$.  From $U_1(a) > 0$, we get that the sequence $\{U_n(a)\}_{n \geq 1}$ is positive and strictly increasing by induction.
\end{proof}
\begin{lem} \label{l:chebratio}
Let $a \geq 1$ be a fixed real number and for $n \geq 1$, form the sequence of ratios $$r_n = \frac{U_{n+1}(a)}{U_n(a)}.$$  Then we have $r_n$ is a decreasing sequence of positive real numbers such that $r_n \geq 1$ for all $n \geq 1$.
\end{lem}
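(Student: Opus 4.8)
The plan is to get the ``positive'' and ``$r_n \geq 1$'' assertions for free from Lemma~\ref{l:chebincreasing}, and to handle the monotonicity via a Cassini-type identity. Since $a \geq 1$, Lemma~\ref{l:chebincreasing} gives $U_n(a) > 0$ for every $n \geq 1$, so each $r_n = U_{n+1}(a)/U_n(a)$ is a well-defined positive real number; the same lemma gives $U_{n+1}(a) > U_n(a)$, so in fact $r_n > 1$. It then remains to prove $r_{n+1} \leq r_n$ for all $n \geq 1$, which, since $U_n(a)$ and $U_{n+1}(a)$ are positive, is equivalent to the inequality $U_{n+2}(a)\,U_n(a) \leq U_{n+1}(a)^2$.

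The key step I would isolate is the polynomial identity
$$U_{n+1}(x)^2 - U_{n+2}(x)\,U_n(x) = 1 \qquad (n \in \mathbb{Z}).$$
To prove it, set $D_n(x) = U_{n+1}(x)^2 - U_{n+2}(x)\,U_n(x)$. Using the forward recurrence (\ref{e:cheb:recurrence}) to replace $U_{n+2}$ and the backward recurrence (\ref{e:cheb:backrecurrence}) to replace $U_{n-1}$, one computes that both $D_n(x)$ and $D_{n-1}(x)$ equal $U_n(x)^2 - 2x\,U_n(x)U_{n+1}(x) + U_{n+1}(x)^2$, so $D_n$ is independent of $n$; the initial conditions (\ref{e:cheb:initial}) then give $D_0(x) = U_1(x)^2 - U_2(x)U_0(x) = 1$. (Alternatively, one can check the identity directly from the trigonometric definition using product-to-sum formulas on $[-1,1]$ and then invoke the polynomial-agreement principle discussed after Lemma~\ref{l:cheb} to extend it to all of $\mathbb{R}$.)

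Granting the identity, substituting $x = a$ yields $U_{n+2}(a)\,U_n(a) = U_{n+1}(a)^2 - 1 < U_{n+1}(a)^2$, and dividing by the positive number $U_n(a)\,U_{n+1}(a)$ gives $r_{n+1} < r_n$; hence $\{r_n\}_{n \geq 1}$ is (strictly) decreasing, which completes the proof. I do not expect a genuine obstacle here: the only step that is not bookkeeping is spotting and verifying the Cassini-type identity $U_{n+1}^2 - U_{n+2}U_n = 1$, and that is where I would direct the reader's attention.
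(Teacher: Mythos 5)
Your proof is correct, but it takes a genuinely different route from the paper. The paper divides the recurrence (\ref{e:cheb:recurrence}) by $U_{n+1}(a)$ to obtain the M\"obius iteration $r_{n+1} = 2a - \frac{1}{r_n}$, and then runs two interleaved inductions: one showing $r_n > r_{n+1}$ (via $r_{n+1} - r_{n+2} = \frac{1}{r_{n+1}} - \frac{1}{r_n}$) and one showing $r_n \geq 1$ (via $r_n \geq 1 \Rightarrow \frac{1}{r_n} \leq 1 \Rightarrow r_{n+1} \geq 2a - 1 \geq 1$). You instead isolate the Cassini-type identity $U_{n+1}(x)^2 - U_{n+2}(x)U_n(x) = 1$, prove it by showing $D_n = D_{n-1}$ under the two recurrences (your computation checks out: both reduce to $U_n^2 - 2xU_nU_{n+1} + U_{n+1}^2$, and $D_0 = 1$), and read off strict monotonicity after dividing by $U_n(a)U_{n+1}(a) > 0$; the positivity and $r_n > 1$ you correctly extract from Lemma~\ref{l:chebincreasing} alone, which is slightly sharper than the paper's $r_n \geq 1$. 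Your identity buys a quantitative statement, $r_n - r_{n+1} = \frac{1}{U_n(a)U_{n+1}(a)}$, at the cost of introducing a new lemma; the paper's argument is more elementary and reuses only the recurrence it has already recorded. It is worth noting that your identity is in fact the special case $n = 1$, $i = n$, $j = 1$ of the paper's later Lemma~\ref{l:cheb:biconvexity} (which gives $U_n(a)U_{n+2}(a) + U_1(a)U_1(a) = U_{n+1}(a)U_{n+1}(a)$), so if you reordered the lemmas you could cite that identity rather than proving it afresh.
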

\begin{proof}
First note that by Lemma~\ref{l:chebincreasing}, the denominator of $r_n$ is nonzero.
For the base case, we prove that $r_1 > r_2$.  By the recurrence relations and initial conditions, we have $U_1(a) = 1$, $U_2(a) = 2a$, and $U_3(a) = 4a^2 - 1$.  Thus $r_1 = 2a$ and $r_2 = 2a - \frac{1}{2a}$.  Since $a \geq 1$, we have $r_1 > r_2 \geq 1$, as desired.  For the inductive step, if we divide both sides of (\ref{e:cheb:recurrence}) by $U_{n+1}(a)$ we have $$\frac{U_{n+2}(a)}{U_{n+1}(a)} = 2a - \frac{U_n(a)}{U_{n+1}(a)}.$$  Thus we have $r_{n+1} = 2a - \frac{1}{r_n}$.  It follows that $r_{n+2} = 2a - \frac{1}{r_{n+1}}$.  If we assume that $r_n > r_{n+1}$, then $r_{n+1} - r_{n+2} = {1 \over {r_{n+1}}} - {1 \over {r_n}}$, so that $r_{n+1} - r_{n+2} > 0$.  Thus, the sequence is decreasing by induction.  Also, if $r_n \geq 1$, then $\frac{1}{r_n} \leq 1$ so that $r_{n+1} = 2a - \frac{1}{r_n} \geq 1$ follows from the assumption that $a \geq 1$.  Thus, by induction, $r_n \geq 1$ for all $n \geq 1$.
\end{proof}
\noindent
The previous lemma is useful because of the next lemma, which will be used to determine when a root in an infinite dihedral subsystem lies in the convex cone spanned by two other roots in the subsystem.\\ \\
We interpret the next lemma in the extended real number system. The natural ordering on the extended reals $\mathbb{R} \cup \{-\infty,+\infty\}$ is determined by the condition that $$-\infty < x < \infty$$ for every $x \in \mathbb{R}$. We interpret a fraction of the form $\frac{a}{0}$, where $a > 0$, as $+\infty$.  Such an interpretation is not standard, but our coefficients are nonnegative (so that we are approaching $0$ from the right) and the inferences we use are consistent with this choice.  In particular, for the hypothesis $\frac{c_1}{d_1} < \frac{c_3}{d_3} < \frac{c_2}{d_2}$ (or its reverse), only one of the fractions can have $0$ as a denominator in order for the inequalities to be strict.  Thus, if $\frac{a}{b} < \frac{c}{d}$ where the denominator $d$ is $0$, we still have $ad - bc < 0$.
\begin{lem} \label{l:conelemma}
Suppose that $\{\gamma,\delta\}$ forms a basis for a two-dimensional subspace of $V$.  Let $\alpha_1 = c_1\gamma + d_1\delta$, $\alpha_2 = c_2\gamma + d_2 \delta$, and $\alpha_3 = c_3 \gamma + d_3 \delta$, where $c_i, d_i \geq 0$ and $(c_i,d_i) \neq (0,0)$ for each $i \in \{1,2,3\}$.  Suppose that either $\frac{c_1}{d_1} < \frac{c_3}{d_3} < \frac{c_2}{d_2}$ or $\frac{c_1}{d_1} > \frac{c_3}{d_3} > \frac{c_2}{d_2}$. Then there exist positive scalars $a,b > 0$ satisfying $a\alpha_1 + b\alpha_2 = \alpha_3$.
\end{lem}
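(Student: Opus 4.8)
The plan is to solve the linear system $a\alpha_1 + b\alpha_2 = \alpha_3$ explicitly by Cramer's rule and then read off the signs of $a$ and $b$ directly from the hypothesis on the ratios. First I would note that the chain of strict inequalities (in either direction) forces $\frac{c_1}{d_1} \neq \frac{c_2}{d_2}$, hence $c_1 d_2 - c_2 d_1 \neq 0$, so $\alpha_1$ and $\alpha_2$ are linearly independent; since they lie in the two-dimensional space spanned by $\gamma$ and $\delta$, they form a basis of it, and therefore $\alpha_3$ has a unique expression $a\alpha_1 + b\alpha_2$ with $a,b \in \mathbb{R}$. Comparing coefficients of $\gamma$ and of $\delta$ yields the system $a c_1 + b c_2 = c_3$, $a d_1 + b d_2 = d_3$, whose solution is
\begin{equation*}
a = \frac{c_3 d_2 - c_2 d_3}{c_1 d_2 - c_2 d_1}, \qquad b = \frac{c_1 d_3 - c_3 d_1}{c_1 d_2 - c_2 d_1}.
\end{equation*}

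Next I would convert each of the three inequalities in the chain into a sign statement about a $2\times 2$ determinant, using exactly the extended-real conventions set up just before the statement: from $\frac{c_i}{d_i} < \frac{c_j}{d_j}$ one gets $c_i d_j - c_j d_i < 0$ (and the reversed inequality gives the reversed sign), valid even when one denominator vanishes since at most one of the relevant fractions can have denominator $0$ and the convention $\frac{\text{positive}}{0} = +\infty$ is consistent with cross-multiplication. In the case $\frac{c_1}{d_1} < \frac{c_3}{d_3} < \frac{c_2}{d_2}$ this gives $c_1 d_3 - c_3 d_1 < 0$, $c_3 d_2 - c_2 d_3 < 0$, and $c_1 d_2 - c_2 d_1 < 0$, so each of $a$ and $b$ is a quotient of two negative reals and hence positive. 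In the reversed case all three determinants flip sign and $a, b$ become quotients of two positive reals, again positive.

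The only point requiring care — and the step I expect to be the main (and only minor) obstacle — is the bookkeeping around vanishing denominators. I would observe that in the increasing case $d_1 = 0$ would make $\frac{c_1}{d_1} = +\infty$, contradicting $\frac{c_1}{d_1} < \frac{c_3}{d_3}$, and likewise $d_3 = 0$ is impossible, so only $d_2$ may vanish; when it does, the formulas collapse to $a = \frac{-c_2 d_3}{-c_2 d_1}$ and $b = \frac{c_1 d_3 - c_3 d_1}{-c_2 d_1}$, and since $d_2 = 0$ forces $c_2 > 0$ while $d_1, d_3 > 0$, both are still positive. The decreasing case is symmetric (there only $d_1$ may vanish), and everything else is the routine Cramer's-rule verification, so no separate argument is needed.
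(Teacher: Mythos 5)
Your proof is correct and follows essentially the same route as the paper's: solve $a\alpha_1 + b\alpha_2 = \alpha_3$ by Cramer's rule, translate each fractional inequality into the sign of a $2\times 2$ determinant, and observe that all three determinants share a sign so both quotients are positive. Your extra bookkeeping for vanishing denominators is a welcome refinement of the extended-real convention the paper states just before the lemma, but it does not change the argument.
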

\begin{proof}
The given fractional inequalities imply inequalities involving determinants:
\begin{equation*}
\frac{c_1}{d_1} < \frac{c_3}{d_3} < \frac{c_2}{d_2} \Rightarrow c_1d_2 - d_1c_2 < 0 \text{, } c_1d_3 - d_1c_3 < 0 \text{, and } c_3d_2 - d_3c_2 < 0.
\end{equation*}
\begin{equation*}
\frac{c_1}{d_1} > \frac{c_3}{d_3} > \frac{c_2}{d_2} \Rightarrow c_1d_2 - d_1c_2 > 0 \text{, } c_1d_3 - d_1c_3 > 0 \text{, and } c_3d_2 - d_3c_2 > 0.
\end{equation*}
The solution to the vector equation $a\alpha_1 + b\alpha_2 = \alpha_3$ is given by the matrix equation
\begin{equation*}
\begin{bmatrix}
c_1 & c_2\\
d_1 & d_2
\end{bmatrix}
\begin{bmatrix}
a \\
b
\end{bmatrix}
=
\begin{bmatrix}
c_3 \\
d_3
\end{bmatrix}.
\end{equation*}
Since $c_1d_2 - d_1 c_2 \neq 0$ in either case, the matrix in the matrix equation is invertible.
By Cramer's rule, the solution to this matrix equation is given by
\begin{equation*}
a = \frac{
\begin{vmatrix}
c_3 & c_2 \\
d_3 & d_2
\end{vmatrix}}{
\begin{vmatrix}
c_1 & c_2 \\
d_1 & d_2
\end{vmatrix}} \text{ and } b = \frac{
\begin{vmatrix}
c_1 & c_3 \\
d_1 & d_3
\end{vmatrix}}{
\begin{vmatrix}
c_1 & c_2 \\
d_1 & d_2
\end{vmatrix}}.
\end{equation*}
If $\frac{c_1}{d_1} < \frac{c_3}{d_3} < \frac{c_2}{d_2}$ then the inequalities imply all of the above determinants are negative.  If instead, $\frac{c_1}{d_1} > \frac{c_3}{d_3} > \frac{c_2}{d_2}$, then the inequalities imply all of the above determinants are positive.  In either case, $a,b > 0$.
\end{proof}
\begin{lem} \label{l:infinitedistinct}
Let $a \geq 1$ be a fixed real number.  Then, for $n \in \mathbb{Z}$ the real numbers $U_n(a)$ are pairwise distinct, where $U_n(a) > 0$ if $n \geq 1$, $U_n(a) < 0$ if $n \leq -1$, and $U_n(a) = 0$ if $n = 0$.
\end{lem}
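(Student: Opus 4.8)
The plan is to bootstrap entirely from two facts already in hand: Lemma~\ref{l:chebincreasing}, which says that $\{U_n(a)\}_{n \geq 1}$ is positive and strictly increasing whenever $a \geq 1$, and Lemma~\ref{e:cheb:symmetric}, which gives the antisymmetry $U_{-n}(a) = -U_n(a)$. Combined with the initial condition $U_0(a) = 0$ from (\ref{e:cheb:initial}), these determine the sign of $U_n(a)$ for every $n \in \mathbb{Z}$ and force the required distinctness.

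First I would dispose of the case $n \geq 1$: by Lemma~\ref{l:chebincreasing} the values satisfy $0 < U_1(a) < U_2(a) < \cdots$, so in particular $U_n(a) > 0$ for $n \geq 1$, and strict monotonicity gives that the $U_n(a)$ with $n \geq 1$ are pairwise distinct. Next, for $n \leq -1$ I write $n = -m$ with $m \geq 1$; then $U_n(a) = U_{-m}(a) = -U_m(a) < 0$ by Lemma~\ref{e:cheb:symmetric} together with the previous step. Since $m \mapsto U_m(a)$ is injective on $\{m \geq 1\}$, the map $n \mapsto U_n(a) = -U_{-n}(a)$ is injective on $\{n \leq -1\}$, so those values are pairwise distinct as well. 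Finally $U_0(a) = 0$ by (\ref{e:cheb:initial}).

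To conclude pairwise distinctness over all of $\mathbb{Z}$, I would observe that the three index ranges $n \geq 1$, $n = 0$, and $n \leq -1$ produce values lying in the three pairwise disjoint sets $(0,\infty)$, $\{0\}$, and $(-\infty,0)$ respectively, so a coincidence $U_m(a) = U_n(a)$ with $m \neq n$ is impossible when $m,n$ come from different ranges, and has already been excluded when they come from the same range.

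I do not expect any real obstacle: this lemma is essentially a repackaging of Lemma~\ref{l:chebincreasing} and Lemma~\ref{e:cheb:symmetric}. The only points requiring a little care are that Lemma~\ref{l:chebincreasing} is stated only for $n \geq 1$, so the negative-index statement must be obtained through the antisymmetry relation rather than by a direct induction, and that one must treat $n = 0$ separately rather than sweeping it into either monotone tail.
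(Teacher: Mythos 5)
Your proposal is correct and follows the same route as the paper's proof: positivity and strict monotonicity from Lemma~\ref{l:chebincreasing} for $n \geq 1$, the antisymmetry $U_{-n}(a) = -U_n(a)$ together with $U_0(a) = 0$ for the rest, and the sign trichotomy to rule out coincidences across ranges. Your write-up merely makes explicit the cross-range disjointness that the paper leaves implicit.
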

\begin{proof}
By Lemma~\ref{l:chebincreasing}, for $n \geq 1$, we have that $\{U_n(a)\}_{n \geq 1}$ is a strictly increasing sequence of positive real numbers.  Thus, the numbers present in the sequence are pairwise distinct.  Since $U_0(a) = 0$ and $U_{-n}(a) = -U_n(a)$, the result follows.
\end{proof}
\noindent
With the exception of the shift of indices, our proof of the next lemma is identical to the one given for \cite[(2.1)]{udrea} and is included for completeness.
\begin{lem} \label{l:cheb:biconvexity}
Let $n,i,j \in \mathbb{Z}$ and $a \in \mathbb{R}$.  Then we have
\begin{equation} \label{e:cheb:biconvexity}
U_i(a) U_{n+i+j}(a) + U_j(a) U_n(a) = U_{i+j}(a) U_{n+i}(a).
\end{equation}
\end{lem}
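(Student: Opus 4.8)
The plan is to verify (\ref{e:cheb:biconvexity}) first for those real $a$ lying in the range of $\cos$, using the trigonometric definition of the $U_n$ from Definition~\ref{d:shifted}, and then to pass to all $a \in \mathbb{R}$ by the polynomial extension principle already discussed in the text.

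First I would fix $\theta \in (0,\pi)$ and set $a = \cos\theta$, so that $\sin\theta \neq 0$; the set of such $a$ is infinite, which is all we will need at the end. Multiplying both sides of (\ref{e:cheb:biconvexity}) through by $\sin^2\theta$ and applying Definition~\ref{d:shifted}, the claimed identity becomes
\[
\sin(i\theta)\sin((n+i+j)\theta) + \sin(j\theta)\sin(n\theta) = \sin((i+j)\theta)\sin((n+i)\theta).
\]
Now I would apply the product-to-sum identity $2\sin A \sin B = \cos(A-B) - \cos(A+B)$ to each of the three products. On the left this yields $\cos((n+j)\theta) - \cos((n+2i+j)\theta) + \cos((n-j)\theta) - \cos((n+j)\theta)$, where I have used that cosine is even to rewrite $\cos((j-n)\theta)$ as $\cos((n-j)\theta)$; the two $\cos((n+j)\theta)$ terms cancel. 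On the right one gets $\cos((n-j)\theta) - \cos((n+2i+j)\theta)$ directly. The two sides agree, so (\ref{e:cheb:biconvexity}) holds for every $a \in (-1,1)$ of the form $\cos\theta$.

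Finally, the recurrence (\ref{e:cheb:recurrence}) together with the initial conditions (\ref{e:cheb:initial}) (and Lemma~\ref{e:cheb:symmetric} to handle negative indices) shows that each $U_m$ is a polynomial in $a$ with integer coefficients. Hence both sides of (\ref{e:cheb:biconvexity}) are polynomials in $a$, and they agree on the infinite set $(-1,1)$; therefore they are equal as polynomials, and the identity holds for all $a \in \mathbb{R}$.

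The only real obstacle is the bookkeeping in the product-to-sum step: one must keep careful track of the six cosine terms, use evenness of cosine to line up $\cos((j-n)\theta)$ with $\cos((n-j)\theta)$, and check that exactly the $\cos((n+j)\theta)$ pair cancels. An alternative, purely algebraic route would be induction on $n$ with $i,j$ fixed, using (\ref{e:cheb:recurrence}) for the forward step and (\ref{e:cheb:backrecurrence}) for the backward step, with base cases $n \in \{0,1\}$; I expect this to be more tedious than the trigonometric computation, so I would present the latter.
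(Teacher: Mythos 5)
Your proposal is correct and follows essentially the same route as the paper: reduce to the trigonometric form via Definition~\ref{d:shifted}, apply the product-to-sum identity so that the $\cos((n+j)\theta)$ terms cancel and both sides become $\cos((n-j)\theta)-\cos((n+2i+j)\theta)$, then extend from an infinite subset of $[-1,1]$ to all of $\mathbb{R}$ by the polynomial identity principle. Your restriction to $\theta\in(0,\pi)$ to keep $\sin\theta\neq 0$ is a harmless (indeed slightly cleaner) variant of the paper's use of $[-1,1]$.
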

\begin{proof}
First suppose $a \in [-1,1]$.  Let $\theta \in [0,\pi]$ be such that $a = \cos \theta$.  Then
\begin{equation*}
\begin{split}
U_i(a) U_{n + i + j}(a) + U_j(a) U_n(a) ={}& \frac{\sin (i\theta)}{\sin \theta} \frac{\sin ((n+i+j) \theta)}{\sin \theta} + \frac{\sin (j\theta)}{\sin \theta} \frac{\sin(n\theta)}{\sin \theta} \\
={}& \frac{\cos((n + j)\theta) - \cos((n + 2i + j)\theta)}{2\sin^2 \theta} \\ &+ \frac{\cos((n - j)\theta) - \cos((n + j)\theta)}{2 \sin^2 \theta}\\
={}& \frac{\cos((n - j)\theta) - \cos((n + 2i + j)\theta)}{2\sin^2 \theta} \\
={}& \frac{\sin((i + j)\theta)}{\sin \theta} \frac{\sin((n + i)\theta)}{\sin \theta}\\
={}& U_{i+j}(x) U_{n+i}(a).
\end{split}
\end{equation*}
The first and last equations are by Definition~\ref{d:shifted}, while the second and fourth equations use the basic identity
\begin{equation*}
\sin(x)\sin(y) = \frac{\cos(x - y) - \cos(x + y)}{2}.
\end{equation*}
Since the left hand side and the right hand side are both polynomials for fixed values of $n,i,j \in \mathbb{Z}$, the fact that the equality holds for all $a \in [-1,1]$ implies that the equality holds for all $a \in \mathbb{R}$.
\end{proof}
\noindent
We will see that $B(\gamma,\delta) = -\cos(\pi/m)$ for some $m < \infty$ whenever $s_\gamma$ and $s_\delta$ generate a finite dihedral subsystem.  Thus, there are two substitutions into the Chebyshev polynomials of the second kind that are of importance to us.  These are substitutions of the form $x = a$, where $a = \cos(\pi/m)$ for $2 \leq m < \infty$ and substitutions of the form $x = a$ where $a \geq 1$.
\begin{lem} \label{l:distinctpairs}
Let $a = \cos(\pi/m)$, where $m \geq 2$.  For $-m \leq n \leq m - 1$, the ordered pairs $$(U_n(a), U_{n+1}(a))$$ are pairwise distinct as ordered pairs of real numbers.
\end{lem}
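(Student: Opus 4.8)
The plan is to argue by contradiction. Suppose that $(U_n(a), U_{n+1}(a)) = (U_{n'}(a), U_{n'+1}(a))$ for two indices $n \neq n'$ with $-m \leq n, n' \leq m-1$, and set $d = n' - n$, so that $0 < |d| \leq 2m-1$. The crucial point is that by Lemma~\ref{l:cheb} the sequence $\{U_k(a)\}_{k \in \mathbb{Z}}$ satisfies both the forward recurrence $(\ref{e:cheb:recurrence})$ and the backward recurrence $(\ref{e:cheb:backrecurrence})$ for every integer index; hence any two consecutive entries determine the entire doubly infinite sequence. Since these recurrences are shift-invariant, the sequence $k \mapsto U_{k+d}(a)$ obeys the same recurrence, and by assumption it agrees with $k \mapsto U_k(a)$ at the two consecutive indices $k = n$ and $k = n+1$. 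Therefore $U_k(a) = U_{k+d}(a)$ for all $k \in \mathbb{Z}$.

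Next I would extract the contradiction. Putting $k = 0$ and using $U_0(a) = 0$ from $(\ref{e:cheb:initial})$ gives $U_d(a) = 0$; by Definition~\ref{d:shifted} with $\theta = \pi/m$ this says $\sin(d\pi/m) = 0$, so $m \mid d$. Combined with $0 < |d| \leq 2m-1$, the only possibility is $d = \pm m$. But $U_{k \pm m}(a) = \sin\!\big((k \pm m)\pi/m\big)/\sin(\pi/m) = \sin(k\pi/m \pm \pi)/\sin(\pi/m) = -U_k(a)$, so the identity $U_k(a) = U_{k+d}(a)$ becomes $U_k(a) = -U_k(a)$, forcing $U_k(a) = 0$ for all $k$ and contradicting $U_1(a) = 1$.

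The one step that needs care is the assertion that two consecutive entries pin down the whole bi-infinite sequence: this uses both $(\ref{e:cheb:recurrence})$ and its rearrangement $(\ref{e:cheb:backrecurrence})$, together with the fact --- already recorded in Lemma~\ref{l:cheb} --- that they hold for all $n \in \mathbb{Z}$, not merely $n \geq 0$. Everything else is elementary: the bound $|d| \leq 2m-1$ is pure arithmetic on the index range, and the relation $U_{k \pm m}(a) = -U_k(a)$ follows at once from $\sin(x \pm \pi) = -\sin x$. Note that no recourse to the polynomial extension of $U_n$ to $\mathbb{R}$ is needed here, since $a = \cos(\pi/m) \in [-1,1]$ and the trigonometric definition applies directly.
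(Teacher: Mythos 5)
Your proof is correct, but it takes a genuinely different route from the one in the paper. The paper argues directly with trigonometric identities: from $U_n(a)=U_{n'}(a)$ and $U_{n+1}(a)=U_{n'+1}(a)$, the sine addition formula yields $\cos(n\pi/m)=\cos(n'\pi/m)$ as well as $\sin(n\pi/m)=\sin(n'\pi/m)$, and since both angles lie in $[-\pi,\pi)$ they must coincide, giving $n=n'$. You instead exploit the algebraic structure of the recurrence: two consecutive entries determine the whole doubly infinite sequence (this is sound, since the second-order recurrence (\ref{e:cheb:recurrence}) and its rearrangement (\ref{e:cheb:backrecurrence}) hold for all integer indices and the extremal coefficients are $\pm 1$), so a coincidence of consecutive pairs at shift $d$ forces $U_k(a)=U_{k+d}(a)$ for all $k$; evaluating at $k=0$ gives $\sin(d\pi/m)=0$, hence $m\mid d$, and the index bound $0<|d|\le 2m-1$ pins down $d=\pm m$, which is killed by the antiperiodicity $U_{k\pm m}(a)=-U_k(a)$. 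All steps check out (in particular $\sin(\pi/m)\neq 0$ for $m\ge 2$, so dividing by it is legitimate). What your approach buys is a structural explanation: it isolates exactly the period-$2m$ antiperiodic behavior of the sequence $\{U_k(a)\}$, which the paper only extracts later (in Lemma~\ref{l:sequencerelation} and Corollary~\ref{c:periodic}); the paper's argument is shorter and purely computational but gives no such insight. Your closing remark that the polynomial extension is not needed applies equally to the paper's proof, since $a=\cos(\pi/m)\in[-1,1]$ in both.
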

\begin{proof}
If $a = \cos(\pi/m)$, then $$U_n(a) = \frac{\sin(\frac{n\pi}{m})}{\sin(\frac{\pi}{m})} \text{ and } U_{n+1}(a) = \frac{\sin(\frac{(n+1)\pi}{m})}{\sin(\frac{\pi}{m})}.$$
Suppose there exist $n,n'$ such that $-m \leq n,n' \leq m - 1$ and $U_n(a) = U_{n'}(a)$ and $U_{n+1}(a) = U_{n'+1}(a)$.
Since $$\sin\left(\frac{(n+1)\pi}{m}\right) = \sin\left(\frac{n\pi}{m}\right)\cos\left(\frac{\pi}{m}\right) + \sin\left(\frac{\pi}{m}\right)\cos\left(\frac{n\pi}{m}\right),$$
and since we are assuming $U_{n+1}(a) = U_{n'+1}(a)$, we have
\begin{eqnarray*}
\begin{split}
\sin\left(\frac{n\pi}{m}\right)\cos\left(\frac{\pi}{m}\right) + \sin\left(\frac{\pi}{m}\right)\cos\left(\frac{n\pi}{m}\right) ={}& \sin\left(\frac{n'\pi}{m}\right)\cos\left(\frac{\pi}{m}\right)\\
 &+ \sin\left(\frac{\pi}{m}\right)\cos\left(\frac{n'\pi}{m}\right).
\end{split}
\end{eqnarray*}
Since we are assuming $U_n(a) = U_{n'}(a)$, and hence that $$\sin(n\pi/m) = \sin(n'\pi/m),$$ this last equation reduces to
$$\cos\left(\frac{n\pi}{m}\right) = \cos\left(\frac{n'\pi}{m}\right).$$
As $\frac{n\pi}{m}$, $\frac{n'\pi}{m} \in [-\pi, \pi)$ and the two angles agree on both sine and cosine, we must have $\frac{n\pi}{m} = \frac{n'\pi}{m}$.  Thus $n = n'$ and the ordered pairs $(U_n(a),U_{n+1}(a))$ are pairwise distinct in the given range.
\end{proof}
\section{Sequences associated to dihedral Coxeter systems}
In this section we show that we can generate the roots of a dihedral subsystem using a recurrence very much like the one given by the Chebyshev polynomials.  Though we only gave results for evaluating Chebyshev polynomials at $x = \cos(\pi/m)$ and $x \geq 1$, those will be the only values at which we evaluate the Chebyshev polynomials.  Specifically, we will be plugging in the value $-B(\gamma, \delta)$, which by a theorem of Dyer can only take on certain values.  The next theorem is translated to the situation of dihedral subsystems.
\begin{theorem}[Dyer] \label{t:dyertheorem}
Let $(W,S)_{\{\alpha,\beta\}}$ be a dihedral subsystem of $(W,S)$.  Then, if $\gamma$ and $\delta$ are the canonical simple roots of $(W,S)_{\{\alpha,\beta\}}$, we must have
$$B(\gamma, \delta) \in (-\infty,-1] \cup \{-\cos(\pi/n) \, : \, n \in \mathbb{N}, n \geq 2\}.$$
\end{theorem}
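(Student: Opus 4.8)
The plan is to pin down $B(\gamma,\delta)$ by showing that the canonical simple roots $\gamma,\delta$ are forced to be the two extreme roots of the cone of positive roots of the dihedral subsystem, and then reading off $B(\gamma,\delta)$ from planar geometry. Throughout, write $W' = W_{\{\alpha,\beta\}} = \langle s_\gamma, s_\delta\rangle$, let $U = \text{span}(\{\gamma,\delta\}) = \text{span}(\{\alpha,\beta\})$ (a $2$-dimensional space, since distinct positive roots are not proportional, so $\{\gamma,\delta\}$ is a basis of $U$), and set $c = B(\gamma,\delta)$. By Lemma~\ref{l:dihedralsize}, Theorem~\ref{t:dyerdeodhar} and Remark~\ref{r:dihedral}, $(W',S')$ is a rank-$2$ Coxeter system, hence a dihedral group of order $2m$ with $m = m_{s_\gamma,s_\delta}\in\{2,3,\dots\}\cup\{\infty\}$; moreover $W'$ stabilizes $U$ and permutes $\Phi_{\{\alpha,\beta\}} = U\cap\Phi$ by Lemma~\ref{l:localsystemform}.

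First I would show $c\le 0$. If $c>0$, then the root $s_\gamma(\delta)=\delta-2c\gamma\in\Phi_{\{\alpha,\beta\}}$ has coordinates $(-2c,1)$ in the basis $\{\gamma,\delta\}$, which are neither both nonnegative nor both nonpositive; this contradicts Lemma~\ref{l:basiclocal}(2),(3), which says every root of $\Phi_{\{\alpha,\beta\}}$ is a nonnegative or a nonpositive combination of $\Delta_{\{\alpha,\beta\}}=\{\gamma,\delta\}$. So $c\le 0$. If $c\le -1$ we are already done, so assume $-1<c\le 0$. Then $|c|<1$, so $W'$ is \emph{finite} dihedral by Remark~\ref{r:dihedral}, and the Gram matrix of $\{\gamma,\delta\}$ has determinant $1-c^2>0$, i.e.\ $B|_U$ is positive definite.

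Next I would pass to planar Euclidean geometry on $(U,B|_U)$. Since $B|_U$ is nondegenerate, $V=U\oplus U^\perp$ ($B$-orthogonally), and this splitting is $W'$-invariant; as $U^\perp\subseteq\gamma^\perp\cap\delta^\perp$, the generators $s_\gamma,s_\delta$, and hence all of $W'$, fix $U^\perp$ pointwise. Because the geometric representation of $W$ on $V$ is faithful, it follows that $W'\to GL(U)$ is faithful. Thus $W'$ is realized as a finite reflection group of order $2m$ on the Euclidean plane $U$, generated by the two distinct reflections $s_\gamma|_U$ and $s_\delta|_U$ (distinct since $\gamma,\delta$ are not proportional); so it is the planar dihedral group $I_2(m)$, and the roots $\Phi_{\{\alpha,\beta\}}$ — which by Lemma~\ref{l:reflectionform} are exactly the $W'$-orbit of $\{\gamma,\delta\}$, i.e.\ the root system of $(W',S')$ — are the $2m$ unit normals to its $m$ mirrors, hence $2m$ unit vectors at consecutive angular intervals $\pi/m$. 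By Lemma~\ref{l:basiclocal}(3), $\Phi^+_{\{\alpha,\beta\}}$ is precisely the set of roots of $\Phi_{\{\alpha,\beta\}}$ lying in $\text{c}(\{\gamma,\delta\})$, and since the angle between $\gamma$ and $\delta$ lies in $(0,\pi)$ this cone lies in an open half-plane, so $\Phi^+_{\{\alpha,\beta\}}$ is a block of $m$ consecutive unit vectors among the $2m$, whose two extreme members — lying on the rays $\mathbb{R}_{\ge 0}\gamma$ and $\mathbb{R}_{\ge 0}\delta$ — must be $\gamma$ and $\delta$. Therefore the angle between $\gamma$ and $\delta$ equals $(m-1)\pi/m$, and $c = B(\gamma,\delta) = \cos\!\big((m-1)\pi/m\big) = -\cos(\pi/m)$ with $m\ge 2$, as required.

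The step I expect to be the main obstacle is the transition to the Euclidean plane with the \emph{correct} dihedral order: one must be sure $W'$ acts faithfully on the $2$-plane $U$ (so that it genuinely is $I_2(m)$ there, not a proper quotient), and that the positive roots of the subsystem are precisely the subsystem roots lying in $\text{c}(\{\gamma,\delta\})$ — which is exactly where the canonicity of $\gamma,\delta$ as simple roots enters, via Lemma~\ref{l:basiclocal}(3). The faithfulness is handled by the orthogonal decomposition $V=U\oplus U^\perp$ together with the faithfulness of the geometric representation, and everything else is routine bookkeeping about finite dihedral root systems. One could instead simply cite Dyer's original computation (e.g.\ \cite{dyerhecke}, \cite{dyerreflection}); the argument above makes that computation explicit.
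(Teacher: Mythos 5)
Your proof is correct, but it is genuinely different from what the paper does: the paper offers no argument at all for this statement, simply citing Dyer's Theorem 4.4 of \cite{dyerreflection}, whereas you supply a self-contained derivation. Your two key inputs are exactly the right ones: the sign constraint $B(\gamma,\delta)\le 0$, extracted from the fact that $\Delta_{\{\alpha,\beta\}}$ is a genuine simple system for the subsystem (Lemma~\ref{l:basiclocal}(2),(3) applied to $s_\gamma(\delta)=\delta-2B(\gamma,\delta)\gamma$), and, in the case $-1<B(\gamma,\delta)\le 0$, the identification of $W_{\{\alpha,\beta\}}$ with a planar finite dihedral group acting on $(U,B|_U)$, from which the angle $(m-1)\pi/m$ between the two extreme roots of the positive cone can be read off. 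The faithfulness argument via $V=U\oplus U^{\perp}$ is sound. The only places where I would ask you to be slightly more explicit are routine: (i) that $\Phi_{\{\alpha,\beta\}}$ really is the full set of $2m$ unit mirror normals, which uses that every reflection of $I_2(m)$ is conjugate to $s_\gamma$ or $s_\delta$ (true because they generate, but needed when $m$ is even and the two reflection classes are distinct), so that the normals are equally spaced at angular intervals $\pi/m$; and (ii) that the $m$ positive roots, being one from each antipodal pair and all contained in the salient cone $\mathrm{c}(\{\gamma,\delta\})$, are forced to be angularly consecutive with the cone they span equal to $\mathrm{c}(\{\gamma,\delta\})$, whence its extreme rays carry $\gamma$ and $\delta$. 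Neither point is a gap, and your proof has the advantage of making Dyer's computation explicit within the framework of the thesis; the citation has the advantage of also covering the infinite case uniformly, which your argument simply does not need to touch since $B(\gamma,\delta)\le -1$ already places it in the allowed set.
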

\begin{proof}
See \cite[Theorem 4.4]{dyerreflection}.
\end{proof}
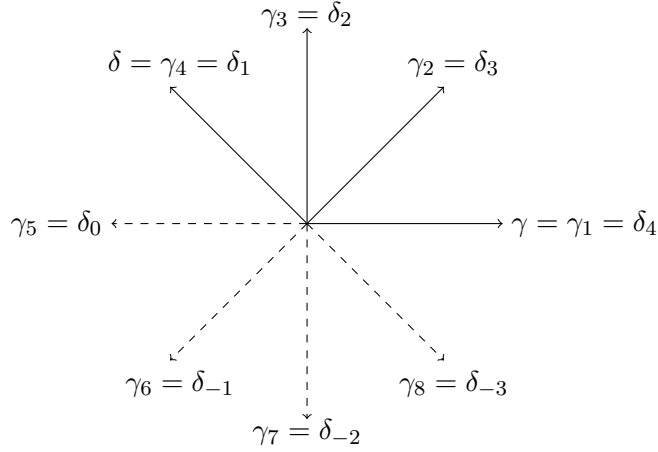
\begin{figure} \label{finitesequencefig}
\begin{center}
\begin{tikzpicture}[scale=2.6]
\draw [->] (0,0) -- (1,0);
\draw (1.42,0) node {$\gamma = \gamma_1 = \delta_4$};
\draw [->] (0,0) -- (0.7,0.7);
\draw (0.75,0.82) node {$\gamma_2 = \delta_3$};
\draw [->] (0,0) -- (0,1);
\draw (0,1.07) node {$\gamma_3 = \delta_2$};
\draw [->] (0,0) -- (-0.7,0.7);
\draw (-0.65,0.82) node {$\delta = \gamma_4 = \delta_1$};
\draw [dashed, ->] (0,0) -- (-1,0);
\draw (-1.28, 0) node {$\gamma_5 = \delta_0$};
\draw [dashed, ->] (0,0) -- (0.7,-0.7);
\draw (0.75,-0.82) node {$\gamma_8 = \delta_{-3}$};
\draw [dashed, ->] (0,0) -- (0,-1);
\draw (0,-1.07) node {$\gamma_7 = \delta_{-2}$};
\draw [dashed, ->] (0,0) -- (-0.7,-0.7);
\draw (-0.65,-0.82) node {$\gamma_6 = \delta_{-1}$};
\end{tikzpicture}
\caption{The sequence of roots in a dihedral subsystem pictured will form what we call a ``local root sequence'' for the dihedral subsytem.  This figure depicts the case when $|s_\gamma s_\delta| = 4$.}
\end{center}
\end{figure}
\begin{defn} \label{d:alphasequence}
Let $\alpha,\beta \in \Phi^{+}$ and let $\Delta_{\{\alpha,\beta\}} = \{\gamma,\delta\}$ be the simple system of $(W,S)_{\{\alpha,\beta\}}$.  Let $\overline{\gamma} = \{\gamma_i\}_{i \in \mathbb{Z}}$ and $\overline{\delta} = \{\delta_i\}_{i \in \mathbb{Z}}$ be the doubly infinite sequences of roots defined by the initial conditions
\begin{equation} \label{e:initial}
\gamma_1 = \gamma,\ \delta_1 = \delta
\end{equation}
and the recurrences
\begin{equation}  \label{e:recurrence}
\gamma_{i+1} = s_{\gamma}(\delta_i),\ \delta_{i+1} = s_{\delta}(\gamma_i).
\end{equation}
The associated backward recurrences are given by
\begin{equation} \label{e:backrecurrence}
\gamma_i = s_\delta (\delta_{i+1}),\ \delta_i = s_\gamma (\gamma_{i+1}).
\end{equation}
We call $\overline{\gamma}$ the \emph{$\gamma$-sequence} of $\Phi_{\{\alpha,\beta\}}$ and $\overline{\delta}$ the \emph{$\delta$-sequence} of $\Phi_{\{\alpha,\beta\}}$.  Any such sequence is called a \emph{local root sequence}.
\end{defn}
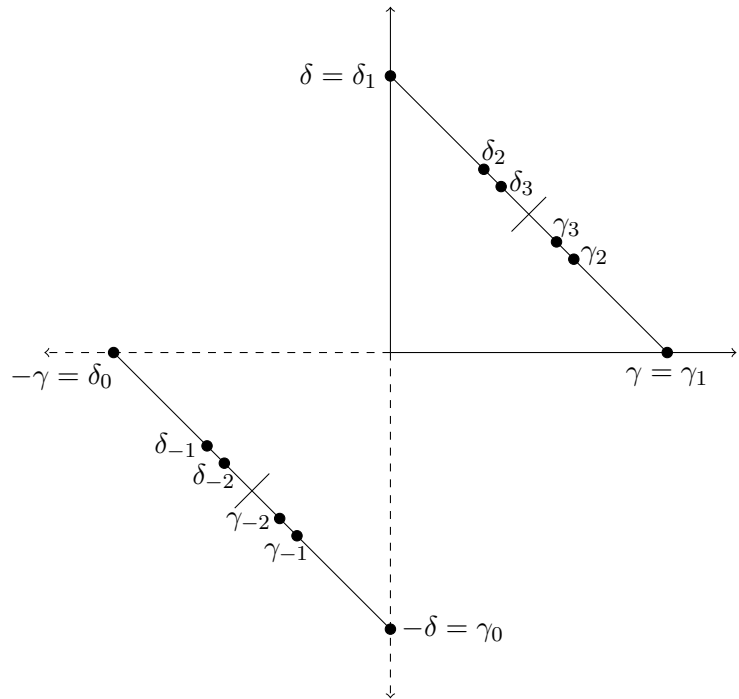
\begin{figure} \label{infiniterootsysfig}
\begin{center}
\begin{tikzpicture}[scale=4.6]
\draw [->] (0,0) -- (1,0);
\draw [->] (0,0) -- (0,1);
\draw [dashed, ->] (0,0) -- (-1,0);
\draw [dashed, ->] (0,0) -- (0,-1);
\filldraw           (0.8,0) circle (0.015)
                    (0,0.8) circle (0.015)
                    (-0.8,0) circle (0.015)
                    (0,-0.8) circle (0.015)
                    (0.53,0.27) circle (0.015)
                    (0.48,0.32) circle (0.015)
                    (0.27,0.53) circle (0.015)
                    (0.32,0.48) circle (0.015)
                    (-0.53,-0.27) circle (0.015)
                    (-0.48,-0.32) circle (0.015)
                    (-0.27,-0.53) circle (0.015)
                    (-0.32,-0.48) circle (0.015);
\draw (0.8,0) -- (0,0.8);
\draw (-0.8,0) -- (0,-0.8);
\draw (0.35,0.35) -- (0.45,0.45);
\draw (-0.35,-0.35) -- (-0.45,-0.45);
\draw (-0.15,0.8) node {$\delta = \delta_1$};
\draw (0.8,-0.07) node {$\gamma = \gamma_1$};
\draw (0.18,-0.8) node {$-\delta = \gamma_0$};
\draw (-0.95,-0.07) node {$-\gamma = \delta_0$};
\draw (0.59, 0.28) node {$\gamma_2$};
\draw (0.51, 0.36) node {$\gamma_3$};
\draw (0.3, 0.58) node {$\delta_2$};
\draw (0.38, 0.49) node {$\delta_3$};
\draw (-0.62, -0.27) node {$\delta_{-1}$};
\draw (-0.51, -0.36) node {$\delta_{-2}$};
\draw (-0.3, -0.58) node {$\gamma_{-1}$};
\draw (-0.41, -0.49) node {$\gamma_{-2}$};
\end{tikzpicture}
\caption{The geometric analogies break down somewhat for infinite dihedral subsystems.  This figure treats the simple roots as perpendicular (they are not perpendicular in the geometric representation).  The non-simple roots are then projected so that their coefficients add to $1$.  The picture does faithfully represent the property of a root lying in the convex cone of two roots in the subsystem.}
\end{center}
\end{figure}
\begin{rem}
The entries of the $\overline{\gamma}$- and $\overline{\delta}$-sequences are calculated by multiplying alternating factors of $s_\gamma$ and $s_\delta$ and applying the result to either $\gamma$ or $\delta$.  In particular, each $\gamma_i$ for $i > 0$ has $s_\gamma$ as a leftmost factor, and similarly $\delta_i$ has $s_\delta$ as a leftmost factor.
\end{rem}
\begin{ex}
Let $(W,S)$ be the Coxeter system of type $A_3$, with generating set $S = \{s,t,u\}$ and relations given by $m_{s,t} = m_{t,u} = 3$ and $m_{s,u} = 2$.  Let $\alpha = \alpha_s + \alpha_t + \alpha_u$ and $\beta = \alpha_s$.   Then $\gamma = \alpha_t + \alpha_u$ and $\delta = \alpha_s$ are canonical simple roots for the local system $(W,S)_{\{\alpha,\beta\}}$.  We have
\begin{equation*}
\overline{\gamma} = (\ldots,-(\alpha_s + \alpha_t + \alpha_u),-\alpha_s,\alpha_t + \alpha_u, \alpha_s+\alpha_t+\alpha_u,\alpha_s,-(\alpha_t + \alpha_u),\ldots),
\end{equation*}
where  the displayed root $\alpha_t+\alpha_u$ is meant to represent $\gamma_1$.  Also,
\begin{equation*}
\overline{\delta} = (\ldots,-\alpha_s,-(\alpha_s+\alpha_t+\alpha_u),-(\alpha_t+\alpha_u),\alpha_s,\alpha_s+\alpha_t+\alpha_u,
\alpha_t+\alpha_u,-\alpha_s,\ldots),
\end{equation*}
where the displayed root $\alpha_s$ is meant to represent $\delta_1$.
\end{ex}
\noindent
The recurrences given in Definition~\ref{d:alphasequence} for the $\overline{\gamma}$- and $\overline{\beta}$-sequences can, in a certain sense, be ``solved'' explicitly.  Our first ``solution'' expresses the roots as an alternating product applied to either $\gamma$ or $\delta$.\\ \\
Recall that if $u,v \in W$, then $(uv)_k$ denotes the alternating product $uvuv\cdots$ of $u$ and $v$ beginning with $u$ and having $k$ factors.  Thus, if $k$ is odd, then $(uv)_k$ ends with the factor $u$; otherwise, $(uv)_k$ ends with the factor $v$.
\begin{lem} \label{l:alphabetasolution}
Let $\alpha,\beta \in \Phi^{+}$ and let $\Delta_{\{\alpha,\beta\}} = \{\gamma,\delta\}$ be the simple system of $(W,S)_{\{\alpha,\beta\}}$.  Let $\overline{\gamma}$ and $\overline{\delta}$ be the associated local root sequences.  Then, for $k \geq 1$, there exist $\theta_k,\theta_k' \in \{\gamma,\delta\}$ satisfying
\begin{eqnarray*}
(s_\gamma s_\delta)_k(\theta_k) = \gamma_{k + 1}\\
(s_\gamma s_\delta)_k(\theta_k') = \delta_{-k + 1}\\
(s_\delta s_\gamma)_k(\theta_k') = \delta_{k + 1}\\
(s_\delta s_\gamma)_k(\theta_k) = \gamma_{-k + 1},
\end{eqnarray*}
where $\theta_k \neq \theta_k'$.
\end{lem}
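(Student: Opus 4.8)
The plan is to prove all four identities simultaneously by induction on $k$, carrying along as the induction hypothesis the full statement that there exist distinct $\theta_k \ne \theta_k'$ in $\{\gamma,\delta\}$ for which the four equations hold. The key structural observation is the elementary ``peeling'' identity $(s_\gamma s_\delta)_{k+1} = s_\gamma \cdot (s_\delta s_\gamma)_k$, and symmetrically $(s_\delta s_\gamma)_{k+1} = s_\delta \cdot (s_\gamma s_\delta)_k$: removing the leading factor from a length-$(k{+}1)$ alternating product leaves a length-$k$ alternating product of the opposite starting letter. Applied to a root $\theta$, this reads $(s_\gamma s_\delta)_{k+1}(\theta) = s_\gamma\bigl((s_\delta s_\gamma)_k(\theta)\bigr)$, which is exactly the shape needed to match one step of the recurrences in Definition~\ref{d:alphasequence}.

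For the base case $k = 1$ we have $(s_\gamma s_\delta)_1 = s_\gamma$ and $(s_\delta s_\gamma)_1 = s_\delta$. The forward recurrence $\gamma_2 = s_\gamma(\delta_1) = s_\gamma(\delta)$ forces $\theta_1 = \delta$ for the first equation; the backward recurrence $\delta_0 = s_\gamma(\gamma_1) = s_\gamma(\gamma)$ forces $\theta_1' = \gamma$ for the second; and then $s_\delta(\gamma) = s_\delta(\gamma_1) = \delta_2$ and $s_\delta(\delta) = s_\delta(\delta_1) = \gamma_0$ (again by the backward recurrence) settle the remaining two. Since $\theta_1 = \delta \ne \gamma = \theta_1'$, the package holds at $k=1$.

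For the inductive step, assume distinct $\theta_k, \theta_k'$ satisfy the four equations for some $k \ge 1$. I would set $\theta_{k+1} = \theta_k'$ and $\theta_{k+1}' = \theta_k$, which are automatically distinct. Then $(s_\gamma s_\delta)_{k+1}(\theta_{k+1}) = s_\gamma\bigl((s_\delta s_\gamma)_k(\theta_k')\bigr) = s_\gamma(\delta_{k+1}) = \gamma_{k+2}$, using the hypothesis $(s_\delta s_\gamma)_k(\theta_k') = \delta_{k+1}$ and the forward recurrence $\gamma_{i+1} = s_\gamma(\delta_i)$. The identity $(s_\gamma s_\delta)_{k+1}(\theta_{k+1}') = \delta_{-k}$ follows the same way from $(s_\delta s_\gamma)_k(\theta_k) = \gamma_{-k+1}$ and the backward recurrence $\delta_{-k} = s_\gamma(\gamma_{-k+1})$. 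Peeling with $s_\delta$ instead handles the last two: $(s_\delta s_\gamma)_{k+1}(\theta_{k+1}') = s_\delta\bigl((s_\gamma s_\delta)_k(\theta_k)\bigr) = s_\delta(\gamma_{k+1}) = \delta_{k+2}$, and $(s_\delta s_\gamma)_{k+1}(\theta_{k+1}) = s_\delta\bigl((s_\gamma s_\delta)_k(\theta_k')\bigr) = s_\delta(\delta_{-k+1}) = \gamma_{-k}$, using the forward recurrence $\delta_{i+1} = s_\delta(\gamma_i)$ and the backward recurrence $\gamma_i = s_\delta(\delta_{i+1})$ respectively.

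The whole argument is bookkeeping: no Chebyshev polynomials or Dyer's theorem are needed here, only the recurrences of Definition~\ref{d:alphasequence} together with $s_\gamma^2 = s_\delta^2 = 1$. The one genuine hazard is keeping straight which of $\theta_k, \theta_k'$ (equivalently, which parity) feeds each of the four target roots; the safeguard is precisely to prove the four statements as a single package, since under the swap $\theta_{k+1} = \theta_k'$, $\theta_{k+1}' = \theta_k$ they propagate into one another rather than each into itself. Verifying that this swap is the correct one — and that all four equations close up under it — is the part of the proof that requires care.
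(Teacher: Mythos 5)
Your proof is correct and follows essentially the same route as the paper's: a simultaneous induction on all four identities, peeling the leading reflection off the alternating product and swapping $\theta$ and $\theta'$ at each step, with the forward and backward recurrences of Definition~\ref{d:alphasequence} closing the loop. The only (harmless) slip is the parenthetical attributing $\delta_2 = s_\delta(\gamma_1)$ to the backward recurrence when it is the forward one $\delta_{i+1} = s_\delta(\gamma_i)$; everything else, including the base case and the verification that the swap propagates the four equations into one another, matches the paper's argument up to a shift of the induction index.
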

\begin{proof}
For the base case of $k = 1$, we apply the recurrences (\ref{e:recurrence}) and (\ref{e:backrecurrence}) once to the initial conditions to obtain $\gamma_2 = s_\gamma(\delta)$, $\delta_0 = s_\gamma(\gamma)$, $\delta_2 = s_\delta(\gamma)$, and $\gamma_0 = s_\delta(\delta)$.  Thus the equations are satisfied if $\theta_1 = \delta$ and $\theta_1' = \gamma$.  For the inductive step, we have:
\begin{eqnarray*}
\gamma_{k + 1} = s_\gamma(\delta_k) = s_\gamma(s_\delta s_\gamma)_{k-1}(\theta_{k-1}') = (s_\gamma s_\delta)_k(\theta_{k-1}')\\
\delta_{-k + 1} = s_\gamma(\gamma_{-(k-1) + 1}) = s_\gamma(s_\delta s_\gamma)_{k-1}(\theta_{k-1}) = (s_\gamma s_\delta)_k(\theta_{k-1})\\
\delta_{k + 1} = s_\delta(\gamma_k) = s_\delta(s_\gamma s_\delta)_{k-1}(\theta_{k-1}) = (s_\delta s_\gamma)_k(\theta_{k-1})\\
\gamma_{-k + 1} = s_\delta(\delta_{-(k-1) + 1}) = s_\delta(s_\gamma s_\delta)_{k-1}(\theta_{k-1}') = (s_\delta s_\gamma)_k (\theta_{k-1}'),
\end{eqnarray*}
where the first equation in each line follows from (\ref{e:recurrence}) or (\ref{e:backrecurrence}), and the second equation in each line follows from the inductive hypothesis.  We also have that $\theta_{k-1}, \theta_{k-1}' \in \{\gamma, \delta\}$ and $\theta_{k-1} \neq \theta_{k-1}'$ by the inductive hypothesis.  The result follows from the last equation of each line by using $\theta_k = \theta_{k-1}'$ and $\theta_k' = \theta_{k-1}$.
\end{proof}
\noindent
The next lemma provides much of the motivation for introducing the $\overline{\gamma}$- and $\overline{\delta}$-sequences.
\begin{lem} \label{l:rootsinsequence}
Let $(W,S)_{\{\alpha,\beta\}}$ be a dihedral subsystem with canonical generators $s_\gamma$ and $s_\delta$.  Then every root in $\Phi_{\{\alpha,\beta\}}$ is in either the $\overline{\gamma}$-sequence or the $\overline{\delta}$-sequence.
\end{lem}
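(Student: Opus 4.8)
The plan is to combine Lemma~\ref{l:inversionroots} with Lemma~\ref{l:alphabetasolution}. By Lemma~\ref{l:inversionroots}, an arbitrary $\lambda \in \Phi_{\{\alpha,\beta\}}$ can be written as $(s_\gamma s_\delta)_k(\theta)$ or $(s_\delta s_\gamma)_k(\theta)$ for some $k \geq 0$ and some $\theta \in \{\gamma,\delta\}$. So it suffices to show that every element of this form is an entry of the $\overline{\gamma}$-sequence or the $\overline{\delta}$-sequence.

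First I would dispose of the case $k = 0$: here $(s_\gamma s_\delta)_0$ and $(s_\delta s_\gamma)_0$ are both the identity of $W$, so the root in question is $\gamma = \gamma_1$ or $\delta = \delta_1$ by the initial conditions \eqref{e:initial}.

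For $k \geq 1$, I would invoke Lemma~\ref{l:alphabetasolution}, which supplies $\theta_k, \theta_k' \in \{\gamma,\delta\}$ with $\theta_k \neq \theta_k'$ satisfying the four displayed identities. Since $\theta_k \neq \theta_k'$ forces $\{\theta_k,\theta_k'\} = \{\gamma,\delta\}$, the root $(s_\gamma s_\delta)_k(\theta)$ equals $\gamma_{k+1}$ when $\theta = \theta_k$ and $\delta_{-k+1}$ when $\theta = \theta_k'$; in either case it is an entry of a local root sequence. The same reasoning applied to $(s_\delta s_\gamma)_k(\theta)$ yields $\delta_{k+1}$ or $\gamma_{-k+1}$. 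This exhausts all cases, so every root of $\Phi_{\{\alpha,\beta\}}$ occurs in $\overline{\gamma}$ or $\overline{\delta}$.

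The argument is essentially bookkeeping once Lemmas~\ref{l:inversionroots} and~\ref{l:alphabetasolution} are in hand; the only point that requires care is to notice that the existential quantifier in Lemma~\ref{l:alphabetasolution} is harmless here, because $\{\theta_k,\theta_k'\}$ is forced to be all of $\{\gamma,\delta\}$, so both possible choices of the canonical simple root to which the alternating product is applied are covered. I do not anticipate a genuine obstacle beyond handling the $k=0$ case separately.
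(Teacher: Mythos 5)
Your proof is correct and follows essentially the same route as the paper: reduce to Lemma~\ref{l:inversionroots}, treat the empty product separately, and match the four cases of a length-$k \geq 1$ alternating product to the four identities of Lemma~\ref{l:alphabetasolution}. Your explicit observation that $\{\theta_k,\theta_k'\} = \{\gamma,\delta\}$ makes the case analysis airtight, but the argument is the same one the paper gives.
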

\begin{proof}
By Lemma~\ref{l:inversionroots}, every root in $\Phi_{\{\alpha,\beta\}}$ can be obtained as a (possibly empty) alternating product of $s_\gamma$ and $s_\delta$ applied to either $\gamma$ or to $\delta$.  In particular, $\gamma$ and $\delta$ can be obtained using the empty product applied to $\gamma$ or $\delta$.  For an alternating product of fixed length $k \geq 1$, the product either begins with $s_\gamma$ or with $s_\delta$, and is applied to either $\gamma$ or to $\delta$.  These four cases are precisely the four cases given in Lemma~\ref{l:alphabetasolution}.
\end{proof}
\noindent
The next lemma gives another ``solution'' to the recurrences (\ref{e:recurrence}) and (\ref{e:backrecurrence}).  This solution gives an explicit description of the scalars of any root in a dihedral subsystem expressed as a linear combination of the canonical simple roots.
\begin{lem}  \label{l:cheb:alphabeta}
Let $\alpha, \beta \in \Phi$ and $\Delta_{\{\alpha,\beta\}} = \{\gamma, \delta\}$.  Let $a = -B(\gamma,\delta)$. Then we have
\begin{equation}  \label{e:chebcoefficients}
\begin{split}
\gamma_k &= U_k(a) \gamma + U_{k-1}(a) \delta \text{ and}\\
\delta_k &= U_{k-1}(a) \gamma + U_k(a) \delta,
\end{split}
\end{equation}
for all $k \in \mathbb{Z}$.
\end{lem}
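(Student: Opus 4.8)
The plan is to prove the two formulas simultaneously by induction on $k$, using the recurrences \eqref{e:recurrence} together with the backward recurrences \eqref{e:backrecurrence}, and matching them against the Chebyshev recurrences from Lemma~\ref{l:cheb}. First I would verify the base cases. For $k = 1$, the initial conditions \eqref{e:initial} give $\gamma_1 = \gamma$ and $\delta_1 = \delta$, while $U_1(a) = 1$ and $U_0(a) = 0$ by \eqref{e:cheb:initial}, so \eqref{e:chebcoefficients} reads $\gamma_1 = 1\cdot\gamma + 0\cdot\delta$ and $\delta_1 = 0\cdot\gamma + 1\cdot\delta$, which checks out. It is convenient to also record $k = 0$: applying the backward recurrence \eqref{e:backrecurrence} to the initial conditions gives $\gamma_0 = s_\delta(\delta) = -\delta$ and $\delta_0 = s_\gamma(\gamma) = -\gamma$ (using that a reflection sends its own root to its negative, and that $s_\gamma,s_\delta$ are the generators associated to $\gamma,\delta$); on the other side, $U_0(a) = 0$ and $U_{-1}(a) = -U_1(a) = -1$ by Lemma~\ref{e:cheb:symmetric}, so \eqref{e:chebcoefficients} predicts $\gamma_0 = 0\cdot\gamma + (-1)\cdot\delta = -\delta$ and $\delta_0 = (-1)\cdot\gamma + 0\cdot\delta = -\gamma$. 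Good.

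For the inductive step going upward, I would assume \eqref{e:chebcoefficients} holds for indices $k-1$ and $k$ (so the induction is a two-step induction, which the base cases $k=0,1$ set up) and compute $\gamma_{k+1} = s_\gamma(\delta_k)$. Expanding $\delta_k = U_{k-1}(a)\gamma + U_k(a)\delta$ and applying the reflection formula \eqref{e:reflection}, using $s_\gamma(\gamma) = -\gamma$, $B(\gamma,\gamma) = 1$, and $B(\delta,\gamma) = -a$: one gets $s_\gamma(\delta) = \delta - 2B(\delta,\gamma)\gamma = \delta + 2a\gamma$, hence
\[
\gamma_{k+1} = U_{k-1}(a)(-\gamma) + U_k(a)(\delta + 2a\gamma) = \bigl(2aU_k(a) - U_{k-1}(a)\bigr)\gamma + U_k(a)\delta.
\]
By the Chebyshev recurrence \eqref{e:cheb:recurrence} with $n = k-1$, the coefficient of $\gamma$ is exactly $U_{k+1}(a)$, giving $\gamma_{k+1} = U_{k+1}(a)\gamma + U_k(a)\delta$ as desired. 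The computation for $\delta_{k+1} = s_\delta(\gamma_k)$ is symmetric, swapping the roles of $\gamma$ and $\delta$, and yields $\delta_{k+1} = U_k(a)\gamma + U_{k+1}(a)\delta$.

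For negative indices I would run the analogous argument downward using the backward recurrences \eqref{e:backrecurrence}: assuming \eqref{e:chebcoefficients} for $k$ and $k+1$, compute $\gamma_k = s_\delta(\delta_{k+1})$ and $\delta_k = s_\gamma(\gamma_{k+1})$ in the same way, this time invoking the backward Chebyshev recurrence \eqref{e:cheb:backrecurrence} to collapse the coefficients. Since the base cases cover $k = 0$ and $k = 1$, these two directions together establish \eqref{e:chebcoefficients} for all $k \in \mathbb{Z}$. The only mild subtlety — and the step I would be most careful about — is making sure the reflection formula is being applied with the correct bilinear-form value, i.e.\ that $B(\gamma,\delta) = -a$ where $a = -B(\gamma,\delta)$, and that $s_\gamma,s_\delta$ genuinely act on $\gamma,\delta$ as the "simple" reflections of the dihedral subsystem (so that $s_\gamma(\gamma) = -\gamma$ and the formula $s_\gamma(v) = v - 2B(v,\gamma)\gamma$ holds); both of these follow from the setup in Definition~\ref{d:alphasequence} and the discussion of reflections around \eqref{e:reflection}, so there is no real obstacle, just bookkeeping.
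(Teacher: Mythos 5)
Your proof is correct and follows essentially the same route as the paper's: base cases at $k=0,1$, forward induction via $\gamma_{k+1}=s_\gamma(\delta_k)$ matched against the Chebyshev recurrence, the symmetric computation for $\delta_{k+1}$, and the backward recurrences for negative indices. The only cosmetic difference is that you frame it as a two-step induction, whereas each step actually needs only the formula at the single preceding index (since $\gamma_{k+1}$ depends only on $\delta_k$); this does no harm.
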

\begin{proof}
For $k = 1$, this is just the initial conditions given by (\ref{e:cheb:initial}) since $U_0(a) = 0$ and $U_1(a) = 1$.  For $k = 0$, we have $$\gamma_0 = s_\delta (\delta_1) = -\delta = U_0(a) \gamma + U_{-1}(a) \delta$$
\begin{center}
and
\end{center}
$$\delta_0 = s_\gamma (\gamma_1) = -\gamma = U_{-1}(a) \gamma + U_0(a) \delta.$$  Suppose $k > 1$.  Then, by induction,
\begin{equation*}
\begin{split}
\gamma_k
         &= s_\gamma (\delta_{k-1}) \\
         &= s_\gamma(U_{k-2}(a) \gamma + U_{k-1}(a) \delta) \\
         &= -U_{k-2}(a) \gamma + U_{k-1}(a) (\delta - 2B(\gamma,\delta)\gamma) \\
         &= (2a U_{k-1}(a) - U_{k-2}(a)) \gamma + U_{k-1}(a) \delta \\
         &= U_k(a) \gamma + U_{k-1}(a) \delta.
\end{split}
\end{equation*}
The equation $\delta_k = U_{k-1}(a) \gamma + U_k(a) \delta$ can be obtained by reversing the roles of $\gamma$ and $\delta$ in the above proof.  For $k < 0$, we apply the backward recurrences and use induction, proving the equations hold for $\gamma_k,\delta_k$ assuming they hold for $\gamma_{k+1}$,$\delta_{k+1}$:
\begin{equation*}
\begin{split}
\gamma_k
    &= s_\delta(\delta_{k+1})\\
    &= s_\delta(U_k(a) \gamma + U_{k+1}(a)\delta)\\
    &= U_k(a)(\gamma - 2B(\gamma,\delta)\delta) - U_{k+1}(a) \delta\\
    &= U_k(a)\gamma + (2aU_k(a) - U_{k+1}(a))\delta\\
    &= U_k(a)\gamma + U_{k-1}(a)\delta.
\end{split}
\end{equation*}
The last equation applies the backward recurrence (\ref{e:cheb:backrecurrence}) given for the Chebyshev polynomials using $n=k-1$.  The equation for $\delta_k$ with $k < 0$ can be obtained by reversing the roles of $\gamma$ and $\delta$ in the above proof.
\end{proof}
\begin{lem}  \label{l:sequencepositive}
Let $\alpha, \beta \in \Phi^+$ and $\Delta_{\{\alpha,\beta\}} = \{\gamma, \delta\}$ be the simple system of $(W,S)_{\{\alpha,\beta\}}$.  If $m = |s_\gamma s_\delta|$, then the roots of the form $\gamma_i$, $1 \leq i \leq m$ ($i < m$ if $m = \infty$), are pairwise distinct and positive.  Similarly, the roots of the form $\delta_i$, $1 \leq i \leq m$ ($i < m$ if $m = \infty$), are pairwise distinct and positive.
\end{lem}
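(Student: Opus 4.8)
The plan is to combine the explicit Chebyshev description of the roots $\gamma_i$ and $\delta_i$ from Lemma~\ref{l:cheb:alphabeta} with the evaluation results of Section~3.2, after first identifying the value at which the Chebyshev polynomials are being evaluated. Write $a = -B(\gamma,\delta)$, so that Lemma~\ref{l:cheb:alphabeta} gives $\gamma_k = U_k(a)\gamma + U_{k-1}(a)\delta$ and $\delta_k = U_{k-1}(a)\gamma + U_k(a)\delta$ for all $k \in \mathbb{Z}$. By Theorem~\ref{t:dyertheorem} we have $a \in [1,\infty) \cup \{\cos(\pi/n) : n \in \mathbb{N}, n \geq 2\}$, and by Remark~\ref{r:dihedral} these two possibilities correspond exactly to the cases $m = \infty$ (when $a \geq 1$) and $m < \infty$ (when $W_{\{\alpha,\beta\}}$ is finite dihedral, in which case $s_\gamma s_\delta$ acts as a rotation of order $m$ on the plane $\mathrm{span}(\{\gamma,\delta\})$, forcing $a = \cos(\pi/m)$). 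I would therefore split into these two cases, prove the assertion for $\gamma_i$, and then obtain the assertion for $\delta_i$ by interchanging the roles of $\gamma$ and $\delta$, using the symmetry of the two formulas in Lemma~\ref{l:cheb:alphabeta}.

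For positivity in the case $m = \infty$, Lemma~\ref{l:chebincreasing} gives $U_i(a) > 0$ for $i \geq 1$ while $U_0(a) = 0$; hence for every $i \geq 1$ the coefficients $U_i(a)$ and $U_{i-1}(a)$ are nonnegative and not both zero, so $\gamma_i$ is a nonnegative linear combination of the positive roots $\gamma, \delta$. Since $\gamma_i$ lies in $\Phi_{\{\alpha,\beta\}} \subseteq \Phi$ by construction, the fact recalled in the preliminaries that a root which is a nonnegative combination of positive roots is itself positive shows $\gamma_i \in \Phi^+$. In the case $m < \infty$ we have $U_n(a) = \sin(n\pi/m)/\sin(\pi/m)$, which is positive for $1 \leq n \leq m-1$, zero for $n \in \{0,m\}$, and in particular nonnegative for $0 \leq n \leq m$; so for $1 \leq i \leq m$ the pair $(U_i(a), U_{i-1}(a))$ consists of nonnegative numbers that are not both zero (the extreme cases give $(1,0)$ at $i=1$ and $(0,1)$ at $i=m$), and $\gamma_i$ is again a nonzero nonnegative combination of $\gamma,\delta$, hence lies in $\Phi^+$.

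For pairwise distinctness, note that $\gamma$ and $\delta$ are distinct positive roots, hence not proportional, so $\{\gamma,\delta\}$ is a basis of a two-dimensional subspace of $V$ and two roots of the dihedral subsystem coincide if and only if their coordinate pairs relative to $(\gamma,\delta)$ coincide. When $m = \infty$, the numbers $U_i(a)$ for $i \geq 1$ are strictly increasing by Lemma~\ref{l:chebincreasing} (equivalently, pairwise distinct by Lemma~\ref{l:infinitedistinct}), so the first coordinates of the $\gamma_i$ are already pairwise distinct. When $m < \infty$, I would apply Lemma~\ref{l:distinctpairs}: setting $n = i-1$, the pairs $(U_{n+1}(a), U_n(a))$ for $0 \leq n \leq m-1$ are the reverses of the pairs $(U_n(a), U_{n+1}(a))$, which are pairwise distinct by that lemma since $0 \leq n \leq m-1$ falls within the range $-m \leq n \leq m-1$; hence the coordinate pairs of $\gamma_1, \ldots, \gamma_m$ are pairwise distinct.

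I expect the only genuinely delicate point to be the bookkeeping in the first paragraph: matching the order $m = |s_\gamma s_\delta|$ with the exact value of $a = -B(\gamma,\delta)$ so that Dyer's theorem and the geometry of the dihedral action pin us to either $a \geq 1$ or $a = \cos(\pi/m)$. Once that identification is made, everything else is a direct read-off from Lemmas~\ref{l:cheb:alphabeta}, \ref{l:chebincreasing}, \ref{l:infinitedistinct}, and \ref{l:distinctpairs}, together with the elementary facts about positive roots recalled in the preliminaries.
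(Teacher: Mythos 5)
Your proposal is correct and follows essentially the same route as the paper: both arguments use Lemma~\ref{l:cheb:alphabeta} to write $\gamma_i = U_i(a)\gamma + U_{i-1}(a)\delta$ with $a = -B(\gamma,\delta)$, use Theorem~\ref{t:dyertheorem} to pin $a$ to $\cos(\pi/m)$ or $[1,\infty)$ according to whether $m$ is finite or infinite, and then read off positivity from the sign of the Chebyshev values and distinctness from Lemma~\ref{l:distinctpairs} (finite case) or Lemma~\ref{l:chebincreasing}/\ref{l:infinitedistinct} (infinite case). The only cosmetic difference is that the paper justifies $a = \cos(\pi/m)$ by noting that $m'$ in Dyer's theorem determines the order of $s_\gamma s_\delta$, where you appeal to Remark~\ref{r:dihedral}; both are adequate.
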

\begin{proof}
Suppose $m$ is finite.  Then $B(\gamma,\delta) > -1$, so by Theorem~\ref{t:dyertheorem} we have $$B(\gamma, \delta) = -\cos(\pi/m')$$ for some $m' \geq 2$.  Since $m'$ determines the order of $s_\gamma s_\delta$, $m' = m$.  If $\gamma_k = \gamma_{k'}$ for $1 \leq k,k' \leq m$, then by Lemma~\ref{l:cheb:alphabeta}, we have $$U_k(a) \gamma + U_{k-1}(a) \delta = U_{k'}(a)\gamma + U_{k'-1}(a)\delta,$$ where $a = \cos(\pi/m)$.  By Lemma~\ref{l:distinctpairs}, we must have $k = k'$.  Since the scalars in $\delta_k$ can be obtained from the $\gamma_k$ scalars by interchanging $\gamma$ and $\delta$, the same argument can be applied to the roots $\delta_i$, $1 \leq i \leq m$.\\ \\
For all $i$ satisfying $0 \leq i \leq m$, we have $\sin(\frac{i\pi}{m}) \geq 0$.  Since $a = \cos(\pi/m)$, we have that the linear combination $U_i(a) \gamma + U_{i-1}(a)\delta$ has nonnegative coefficients for $1 \leq i \leq m$ by Definition~\ref{d:shifted}.  Thus $\gamma_i$ is a positive root for all $i$ satisfying $1 \leq i \leq m$.  The same argument applies to $\delta_i$ where $1 \leq i \leq m$.\\ \\
If $m$ is infinite, then $B(\gamma, \delta) \leq -1$ by Theorem~\ref{t:dyertheorem}.  By Lemma~\ref{l:infinitedistinct}, for $a = -B(\gamma, \delta)$, we have $U_k(a) = U_{k'}(a)$ only if $k = k'$.  Also, for $k \geq 0$, $U_k(a) \geq 0$ by Lemma~\ref{l:infinitedistinct}, so that the linear combination $U_i(a) \gamma + U_{i-1}(a) \delta$ has nonnegative coefficients for $i \geq 1$.  Thus, for $i \geq 1$, the $\gamma_i$ are pairwise distinct and positive.  Similarly, the $\delta_i$ are pairwise distinct and positive for all $i \geq 1$.
\end{proof}
\begin{lem}  \label{l:sequencenegative}
Let $\alpha, \beta \in \Phi^+$ and $\Delta_{\{\alpha,\beta\}} = \{\gamma, \delta\}$ be the simple system of $(W,S)_{\{\alpha,\beta\}}$.  Then, for $i \geq 0$, we have $\gamma_{-i} = -\delta_{i+1}$ and $\delta_{-i} = -\gamma_{i+1}$.
\end{lem}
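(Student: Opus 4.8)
The plan is to read the result straight off the explicit Chebyshev formula for the local root sequences established in Lemma~\ref{l:cheb:alphabeta}, combined with the parity identity $U_{-n}(x) = -U_n(x)$ of Lemma~\ref{e:cheb:symmetric}. Setting $a = -B(\gamma,\delta)$, Lemma~\ref{l:cheb:alphabeta} gives $\gamma_{-i} = U_{-i}(a)\gamma + U_{-i-1}(a)\delta$ and $\delta_{i+1} = U_i(a)\gamma + U_{i+1}(a)\delta$ for every $i \in \ZZ$. Applying the parity identity to the two coefficients, $U_{-i}(a) = -U_i(a)$ and $U_{-i-1}(a) = -U_{i+1}(a)$, turns the first expression into $-\bigl(U_i(a)\gamma + U_{i+1}(a)\delta\bigr) = -\delta_{i+1}$. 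The companion identity $\delta_{-i} = -\gamma_{i+1}$ comes out the same way, comparing $\delta_{-i} = U_{-i-1}(a)\gamma + U_{-i}(a)\delta$ with $\gamma_{i+1} = U_{i+1}(a)\gamma + U_i(a)\delta$.

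The one small bookkeeping point is that Lemma~\ref{e:cheb:symmetric} is phrased for $n \in \NN$. For $i \geq 1$ the exponents $i$ and $i+1$ that appear are genuine positive integers, so the identity applies verbatim; for $i = 0$ one simply notes $U_0(a) = 0$, so the claim $\gamma_0 = -\delta_1$ (resp.\ $\delta_0 = -\gamma_1$) reduces to $-\delta = -\delta$ (resp.\ $-\gamma = -\gamma$) using the initial conditions $\gamma_1 = \gamma$, $\delta_1 = \delta$ of Definition~\ref{d:alphasequence}. So the $i = 0$ case is handled separately and trivially, and the remaining cases are a one-line substitution.

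As an alternative that sidesteps the Chebyshev formula entirely, one can argue by induction on $i$ using only the recurrences. The base case $i = 0$ is $\gamma_0 = s_\delta(\delta_1) = s_\delta(\delta) = -\delta = -\delta_1$ and $\delta_0 = s_\gamma(\gamma_1) = -\gamma = -\gamma_1$, from the backward recurrences \eqref{e:backrecurrence} and the initial conditions. For the inductive step, \eqref{e:backrecurrence} gives $\gamma_{-(i+1)} = s_\delta(\delta_{-i})$; substituting the inductive hypothesis $\delta_{-i} = -\gamma_{i+1}$ and using linearity of $s_\delta$ together with the forward recurrence \eqref{e:recurrence} in the form $\delta_{i+2} = s_\delta(\gamma_{i+1})$ yields $\gamma_{-(i+1)} = -s_\delta(\gamma_{i+1}) = -\delta_{i+2}$; the assertion $\delta_{-(i+1)} = -\gamma_{i+2}$ is symmetric. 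I would present the first approach as the actual proof, since it is an immediate consequence of results already in hand. There is no genuine obstacle here: the lemma is a statement about index symmetry of the doubly infinite sequences, and the only thing to watch is keeping the index shifts consistent.
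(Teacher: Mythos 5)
Your primary argument is correct, but it is not the route the paper takes: the paper proves this lemma by a short induction directly from the recurrences (\ref{e:recurrence}) and (\ref{e:backrecurrence}) --- base case $\gamma_0 = s_\delta(\delta_1) = -\delta$, $\delta_0 = s_\gamma(\gamma_1) = -\gamma$, then $\gamma_{-i} = s_\delta(\delta_{-i+1}) = s_\delta(-\gamma_i) = -\delta_{i+1}$ --- which is exactly the ``alternative'' you sketch in your last paragraph. Your main proof instead reads the identity off the closed form of Lemma~\ref{l:cheb:alphabeta} together with the parity identity $U_{-n}(x) = -U_n(x)$ of Lemma~\ref{e:cheb:symmetric}; the index bookkeeping checks out ($\gamma_{-i} = U_{-i}(a)\gamma + U_{-i-1}(a)\delta = -\bigl(U_i(a)\gamma + U_{i+1}(a)\delta\bigr) = -\delta_{i+1}$, and symmetrically for $\delta_{-i}$), there is no circularity since Lemma~\ref{l:cheb:alphabeta} is proved for all $k \in \ZZ$ without appeal to the present lemma, and your separate treatment of $i=0$ is careful (though even that case follows from the formula, since the only parity instance needed there is $U_{-1}(a) = -U_1(a)$). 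What your route buys is a one-line derivation once the Chebyshev machinery is in place; what the paper's route buys is independence from that machinery --- it uses only the defining recurrences and would survive even if one reorganized the chapter to prove this lemma before the Chebyshev solution. Either proof is acceptable here.
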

\begin{proof}
First note that $\gamma_0 = s_\delta (\delta_1) = -\delta$ and $\delta_0 = s_\gamma (\gamma_1) = -\gamma$ by the backward recurrences of (\ref{e:backrecurrence}).  Next, for $i > 0$, we have
\begin{equation*}
\begin{split}
\gamma_{-i}
    &= s_\delta (\delta_{-i+1})\\
    &= s_\delta (-\gamma_i)\\
    &= -\delta_{i+1}.
\end{split}
\end{equation*}
The second equality follows from induction while the last equality follows from the definition of the $\delta$-sequence. A similar computation yields the result that $\delta_{-i} = -\gamma_{i+1}$.
\end{proof}
\begin{cor} \label{c:sequencenegative}
Let $\alpha, \beta \in \Phi^+$ and $\Delta_{\{\alpha,\beta\}} = \{\gamma, \delta\}$ be the simple system of $(W,S)_{\{\alpha,\beta\}}$.  If $m = |s_\gamma s_\delta|$, then  the roots of the form $\gamma_{-i}$, $0 \leq i \leq m-1$ (with $i < m$ if $m = \infty$), are negative and pairwise distinct.  Similarly, the roots of the form $\delta_{-i}$, $0 \leq i \leq m-1$ (with $i < m$ if $m = \infty$), are negative and pairwise distinct.
\end{cor}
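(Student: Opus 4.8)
The plan is to obtain this as an immediate consequence of the two preceding lemmas. The point is that Lemma~\ref{l:sequencenegative} supplies the identities $\gamma_{-i} = -\delta_{i+1}$ and $\delta_{-i} = -\gamma_{i+1}$ for all $i \geq 0$, which translate a statement about the ``negative tails'' of the $\overline{\gamma}$- and $\overline{\delta}$-sequences into a statement about their ``positive heads'', where Lemma~\ref{l:sequencepositive} already delivers pairwise distinctness and positivity. First I would invoke Lemma~\ref{l:sequencenegative} to write each $\gamma_{-i}$ in the claimed range as $-\delta_{i+1}$.

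Next, for the $\overline{\gamma}$-claim: as $i$ runs over $0 \leq i \leq m-1$ (and over all $i \geq 0$ when $m = \infty$, per the stated convention $i < m$), the shifted index $i+1$ runs over $1 \leq i+1 \leq m$ (respectively over all positive integers, i.e. $1 \leq i+1 < m$). By Lemma~\ref{l:sequencepositive}, the roots $\delta_{i+1}$ over exactly this range of $i+1$ are pairwise distinct and positive. Since negation is a bijection of $V$ and $\Phi^- = -\Phi^+$, the roots $\gamma_{-i} = -\delta_{i+1}$ are then pairwise distinct and negative, as required. The $\overline{\delta}$-claim follows by the same argument with the roles of $\gamma$ and $\delta$ interchanged, using $\delta_{-i} = -\gamma_{i+1}$ together with the pairwise distinctness and positivity of the $\gamma_{i+1}$ for $1 \leq i+1 \leq m$ from Lemma~\ref{l:sequencepositive}.

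There is no genuine obstacle in this argument; the only step demanding any care is the index bookkeeping — matching the range $0 \leq i \leq m-1$ against $1 \leq i+1 \leq m$, and checking that the exceptional convention $i < m$ for $m = \infty$ lines up with the corresponding convention in Lemma~\ref{l:sequencepositive} so that every $\delta_{i+1}$ (resp.\ $\gamma_{i+1}$) we negate is covered by that lemma's hypotheses.
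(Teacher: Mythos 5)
Your proof is correct and is essentially the paper's own argument: the paper likewise deduces the corollary directly from the formulas $\gamma_{-i} = -\delta_{i+1}$, $\delta_{-i} = -\gamma_{i+1}$ of Lemma~\ref{l:sequencenegative} combined with the positivity and pairwise distinctness in Lemma~\ref{l:sequencepositive}. Your version just spells out the index bookkeeping that the paper leaves implicit.
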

\begin{proof}
The formulas for $\gamma_{-i}$ and $\delta_{-i}$ given by Lemma~\ref{l:sequencenegative} and Lemma~\ref{l:sequencepositive} imply that the roots of the form $\gamma_{-i}$, for $0 \leq i \leq m-1$ are negative and pairwise distinct, and similarly for the roots of the form $\delta_{-i}$, $ \leq i \leq m-1$.
\end{proof}
\begin{cor}  \label{c:periodic}
Let $\gamma$, $\delta$ be the canonical simple roots for a dihedral subsystem and suppose $m = |s_\gamma s_\delta|$ is finite.  Then, the $\gamma$- and $\delta$-sequences are periodic with period $2m$.
\end{cor}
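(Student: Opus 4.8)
The plan is to read off the periodicity directly from the explicit ``Chebyshev solution'' of the recurrences in Lemma~\ref{l:cheb:alphabeta}, after first pinning down the relevant value of the Chebyshev argument.

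First I would identify $a := -B(\gamma,\delta)$. Since $m = |s_\gamma s_\delta|$ is finite, Remark~\ref{r:dihedral} forces $B(\gamma,\delta) > -1$, so by Theorem~\ref{t:dyertheorem} we have $B(\gamma,\delta) = -\cos(\pi/m')$ for some integer $m' \geq 2$; exactly as in the proof of Lemma~\ref{l:sequencepositive}, $m'$ is the order of $s_\gamma s_\delta$, hence $m' = m$ and $a = \cos(\pi/m)$. As $m \geq 2$ we have $\sin(\pi/m) > 0$, so Definition~\ref{d:shifted} applies and gives $U_n(a) = \sin(n\pi/m)/\sin(\pi/m)$ for all $n \in \mathbb{Z}$.

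The key observation is then the elementary identity $U_{n+2m}(a) = \sin\bigl((n+2m)\pi/m\bigr)/\sin(\pi/m) = \sin\bigl(n\pi/m + 2\pi\bigr)/\sin(\pi/m) = U_n(a)$, valid for every $n \in \mathbb{Z}$. Substituting this into the formulas $\gamma_k = U_k(a)\gamma + U_{k-1}(a)\delta$ and $\delta_k = U_{k-1}(a)\gamma + U_k(a)\delta$ from Lemma~\ref{l:cheb:alphabeta} yields $\gamma_{k+2m} = \gamma_k$ and $\delta_{k+2m} = \delta_k$ for all $k \in \mathbb{Z}$, which is the asserted periodicity of $\overline{\gamma}$ and $\overline{\delta}$.

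To confirm that $2m$ is the minimal period (and not merely a period), I would note that Lemma~\ref{l:sequencepositive} makes $\gamma_1,\dots,\gamma_m$ pairwise distinct and positive, while Corollary~\ref{c:sequencenegative} makes $\gamma_0,\gamma_{-1},\dots,\gamma_{-(m-1)}$ pairwise distinct and negative; hence the values $\gamma_k$ as $k$ runs over the $2m$ consecutive integers $-(m-1),\dots,m$ are all distinct, which leaves no room for a period smaller than $2m$, and the same argument applies to $\overline{\delta}$. I do not expect a genuine obstacle here: every ingredient is already established, and the only step needing mild care is the bookkeeping that identifies $a = \cos(\pi/m)$, i.e.\ that the dihedral subgroup has order $2m$, which is essentially carried out inside the proof of Lemma~\ref{l:sequencepositive}.
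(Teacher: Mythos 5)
Your proof is correct, but it takes a different route from the paper's. The paper's argument is purely group-theoretic and very short: from the recurrences $\gamma_{i+1} = s_\gamma(\delta_i)$ and $\delta_{i+1} = s_\delta(\gamma_i)$ one reads off $\gamma_{i+2} = s_\gamma s_\delta(\gamma_i)$ and $\delta_{i+2} = s_\delta s_\gamma(\delta_i)$, so $(s_\gamma s_\delta)^m = (s_\delta s_\gamma)^m = 1$ immediately gives $\gamma_{i+2m} = \gamma_i$ and $\delta_{i+2m} = \delta_i$, with no need to evaluate $B(\gamma,\delta)$ or invoke Theorem~\ref{t:dyertheorem} at all. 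You instead pass through the explicit Chebyshev solution of Lemma~\ref{l:cheb:alphabeta}, which requires first pinning down $a = \cos(\pi/m)$ (via Remark~\ref{r:dihedral}, Theorem~\ref{t:dyertheorem}, and the identification $m' = m$ as in the proof of Lemma~\ref{l:sequencepositive}) and then observing that $U_{n+2m}(a) = U_n(a)$ by the $2\pi$-periodicity of sine. Both arguments are sound; the paper's is leaner and makes the role of the dihedral relation transparent, while yours has the minor virtue of exhibiting the periodicity at the level of the coefficients themselves, consistent with the ``solve the recurrence explicitly'' philosophy of the chapter. Your minimality argument (distinctness of $\gamma_{-(m-1)},\ldots,\gamma_m$ via Lemma~\ref{l:sequencepositive} and Corollary~\ref{c:sequencenegative}) is exactly the one the paper uses.
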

\begin{proof}
It follows from the defining recurrence for the $\gamma$- and $\delta$-sequence that $\gamma_{i+2} = s_\gamma s_\delta \gamma_i$ and $\delta_{i+2} = s_\delta s_\gamma \delta_i$ for all $i$.  Since $$(s_\gamma s_\delta)^m = (s_\delta s_\gamma)^m = 1,$$ we have $\gamma_{i + 2m} = \gamma_i$ and $\delta_{i + 2m} = \delta_i$.  Lemma~\ref{l:sequencepositive} and Corollary~\ref{c:sequencenegative} imply that the $2m$ roots $\gamma_{-(m-1)},\ldots,\gamma_m$ are pairwise distinct, as are the $2m$ roots $\delta_{-(m-1)},\ldots,\delta_m$, so the result follows.
\end{proof}
\begin{lem} \label{l:sequencerelation}
Let $\gamma$, $\delta$ be the canonical simple roots for a local Coxeter system and let $\gamma \in \Phi^+(\gamma,\delta)$. If $m = |s_\gamma s_\delta|$ is finite, then $\gamma_i = \delta_{m + 1 - i}$ and $\delta_i = \gamma_{m + 1 - i}$.
\end{lem}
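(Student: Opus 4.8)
The plan is to reduce the whole statement to the explicit Chebyshev‑polynomial formulas of Lemma~\ref{l:cheb:alphabeta} together with one elementary trigonometric identity. Set $a = -B(\gamma,\delta)$. Since $m = |s_\gamma s_\delta|$ is finite, the opening argument of Lemma~\ref{l:sequencepositive} (which invokes Theorem~\ref{t:dyertheorem} together with Remark~\ref{r:dihedral}) shows that $B(\gamma,\delta) = -\cos(\pi/m)$, so that $a = \cos(\pi/m)$. First I would record this, so that Definition~\ref{d:shifted} applies with $\theta = \pi/m$ and gives $U_n(a) = \sin(n\pi/m)/\sin(\pi/m)$ for every $n \in \mathbb{Z}$.

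The key step is the identity
\begin{equation*}
U_{m-n}(a) = U_n(a) \qquad (n \in \mathbb{Z}),
\end{equation*}
which is immediate from the formula above, since $\sin\bigl((m-n)\pi/m\bigr) = \sin\bigl(\pi - n\pi/m\bigr) = \sin(n\pi/m)$. (As with the other Chebyshev identities in this section, one could instead deduce it from the recurrence (\ref{e:cheb:recurrence}) and the fact that $U_m(a) = 0$, but the trigonometric route is quickest.)

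Finally I would apply Lemma~\ref{l:cheb:alphabeta} to the four relevant indices. It gives $\gamma_i = U_i(a)\gamma + U_{i-1}(a)\delta$ and $\delta_{m+1-i} = U_{m-i}(a)\gamma + U_{m+1-i}(a)\delta$. Using the displayed identity with $n = i$ yields $U_{m-i}(a) = U_i(a)$, and with $n = i-1$ yields $U_{m+1-i}(a) = U_{m-(i-1)}(a) = U_{i-1}(a)$; hence $\delta_{m+1-i} = \gamma_i$. Interchanging the roles of $\gamma$ and $\delta$ (equivalently, reading off the second formula of Lemma~\ref{l:cheb:alphabeta}) gives $\gamma_{m+1-i} = \delta_i$ by the identical computation.

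I do not anticipate a real obstacle: once the substitution $a = \cos(\pi/m)$ is justified, the lemma is a two‑line consequence of Lemma~\ref{l:cheb:alphabeta}. The only point needing a little care is that the identity $U_{m-n}(a) = U_n(a)$ must be stated for \emph{all} integers $n$, not merely for $1 \le n \le m$, so that it can be invoked at $n = i-1$ as well as at $n = i$; since $\{U_n(a)\}_{n\in\mathbb{Z}}$ is genuinely a doubly infinite sequence of real numbers, this is unproblematic.
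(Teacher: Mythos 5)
Your proposal is correct and follows essentially the same route as the paper's own proof: substitute $a = -B(\gamma,\delta) = \cos(\pi/m)$, apply the explicit formulas of Lemma~\ref{l:cheb:alphabeta}, and use the identity $\sin((m-n)\pi/m) = \sin(n\pi/m)$ to conclude $U_{m-n}(a) = U_n(a)$, then interchange $\gamma$ and $\delta$ for the second equation. Your extra care in justifying $a = \cos(\pi/m)$ via Theorem~\ref{t:dyertheorem} and in noting that the identity holds for all integers $n$ only makes the argument slightly more explicit than the paper's version.
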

\begin{proof}
Let $a = -B(\gamma,\delta) = \cos(\pi/m)$.  We have $$\gamma_i = U_i(a)\gamma + U_{i-1}(a)\delta \text{ and }$$
$$\delta_{m + 1 - i} = U_{m - i}(a) \gamma + U_{m - (i - 1)}(a)\delta,$$
by Lemma~\ref{l:cheb:alphabeta}.  Since $$\sin \left(\frac{(m - i)\pi}{m} \right) = \sin \left(\pi - \frac{i\pi}{m} \right) = \sin \left(\frac{i\pi}{m} \right)$$
for any $i \in \mathbb{Z}$, we have $U_{m-i}(a) = U_i(a)$ and $U_{m - (i - 1)}(a) = U_{i-1}(a)$.  Thus we have $\gamma_i = \delta_{m + 1 - i}$ for all $i \in \mathbb{Z}$.  By the same calculation with $\gamma$ and $\delta$ interchanged, we have $\delta_i = \gamma_{m + 1 - i}$ for all $i \in \mathbb{Z}$.
\end{proof}
\begin{lem}  \label{l:infinitedisjoint}
Let $\Phi^+_{\{\alpha,\beta\}}$ be an infinite dihedral subsystem with canonical simple roots $\gamma$ and $\delta$.  Then, no entry in the $\overline{\gamma}$-sequence is an entry of the $\overline{\delta}$-sequence.
\end{lem}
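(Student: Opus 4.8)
The plan is to use the explicit Chebyshev-polynomial description of the entries of the local root sequences from Lemma~\ref{l:cheb:alphabeta}, together with the fact that the canonical simple roots $\gamma,\delta$ are linearly independent, to reduce the claim to a statement about when two Chebyshev values coincide. First I would set $a = -B(\gamma,\delta)$ and recall that since $\Phi^+_{\{\alpha,\beta\}}$ is infinite, Theorem~\ref{t:dyertheorem} forces $a \in (-\infty,-1]$, in fact $a \geq 1$ after noting $B(\gamma,\delta)\le -1$. By Lemma~\ref{l:cheb:alphabeta}, for all $k,j \in \mathbb{Z}$ we have
\[
\gamma_k = U_k(a)\gamma + U_{k-1}(a)\delta
\quad\text{and}\quad
\delta_j = U_{j-1}(a)\gamma + U_j(a)\delta.
\]

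Next I would observe that $\gamma$ and $\delta$ are distinct positive roots, hence not scalar multiples of one another, so $\{\gamma,\delta\}$ is a basis for $\mathrm{span}(\{\alpha,\beta\})$. Consequently the equality $\gamma_k = \delta_j$ holds if and only if the coefficient vectors agree, i.e.
\[
U_k(a) = U_{j-1}(a) \quad\text{and}\quad U_{k-1}(a) = U_j(a).
\]
Now I would invoke Lemma~\ref{l:infinitedistinct}, which says that for $a \geq 1$ the real numbers $\{U_n(a)\}_{n\in\mathbb{Z}}$ are pairwise distinct. The first equation then forces $k = j-1$ and the second forces $k-1 = j$, i.e.\ $k = j+1$; combining these gives $j - 1 = j + 1$, a contradiction. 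Therefore no entry $\gamma_k$ of the $\overline{\gamma}$-sequence equals any entry $\delta_j$ of the $\overline{\delta}$-sequence.

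I do not expect any serious obstacle here: every ingredient (the coefficient formulas, $a\ge 1$ in the infinite case, pairwise distinctness of the $U_n(a)$, and linear independence of $\gamma,\delta$) is already available in the excerpt, so the only care needed is bookkeeping with the indices. If one wanted to be fully explicit one could also remark that this contrasts with the finite case, where Lemma~\ref{l:sequencerelation} shows the two sequences do overlap; that remark is not needed for the proof but clarifies why the infiniteness hypothesis is essential (it is exactly what rules out the coincidences $U_{m-i}(a) = U_i(a)$ that occur when $a = \cos(\pi/m)$).
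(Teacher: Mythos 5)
Your proof is correct, and it takes a slightly different (and cleaner) route than the paper's. The paper also starts from the coefficient formulas of Lemma~\ref{l:cheb:alphabeta}, but for indices $i,j \geq 1$ it invokes Lemma~\ref{l:chebincreasing} to note that in $\gamma_i$ the $\gamma$-coefficient $U_i(a)$ strictly exceeds the $\delta$-coefficient $U_{i-1}(a)$, while in $\delta_j$ the inequality is reversed, so the two roots cannot coincide; it then disposes of the remaining index ranges by a separate case analysis, using Lemma~\ref{l:sequencenegative} to reduce nonpositive indices to the positive case and a positivity/negativity comparison (Lemma~\ref{l:sequencepositive} and Corollary~\ref{c:sequencenegative}) when the indices have opposite signs. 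You instead use linear independence of $\gamma$ and $\delta$ to equate coefficients and then the global injectivity of $n \mapsto U_n(a)$ from Lemma~\ref{l:infinitedistinct}, which yields the contradictory index equations $k = j-1$ and $k = j+1$ uniformly for all $k,j \in \mathbb{Z}$, with no case split. What your approach buys is brevity and uniformity over all integer indices; what the paper's buys is that the ``dominant coefficient'' observation it isolates is reused almost verbatim elsewhere (e.g.\ in the convex-cone arguments of Lemma~\ref{l:infinitebiconvexity}). One cosmetic slip: you wrote $a \in (-\infty,-1]$ where you meant $B(\gamma,\delta) \in (-\infty,-1]$, but you immediately state the correct conclusion $a \geq 1$, so nothing is affected.
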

\begin{proof}
Let $a = -B(\gamma,\delta)$.  Since $\Phi^+_{\{\alpha,\beta\}}$ is infinite, we have $a \geq 1$ by Theorem~\ref{t:dyertheorem}.  By Lemma~\ref{l:chebincreasing}, $\{U_n(a)\}_{n \geq 1}$ is a strictly increasing sequence of real numbers.  By Lemma~\ref{l:cheb:alphabeta}, for any $i,j \geq 1$, we have $$\gamma_i = U_i(a) \gamma + U_{i-1}(a)\delta$$
\begin{center}
and
\end{center}
$$\delta_j = U_{j-1}(a)\gamma + U_j(a)\delta.$$  Thus, the $\gamma$ coefficient is strictly larger than the $\delta$ coefficient for $\gamma_i$, whereas the $\gamma$ coefficient is strictly smaller than the $\delta$ coefficient for $\delta_j$.  It follows that $\gamma_i \neq \delta_j$.  By Lemma~\ref{l:sequencenegative}, this implies that $\gamma_i \neq \delta_j$ for any $i,j \leq 0$ as well.  In the case that $i \leq 0$ but $j \geq 1$, $\gamma_i$ is negative by Corollary~\ref{c:sequencenegative}, whereas $\delta_j$ is positive by Lemma~\ref{l:sequencepositive}, so $\gamma_i \neq \delta_j$.  The same reasoning applies to the case $i \geq 1$ and $j \leq 0$.
\end{proof}
\begin{prop} \label{p:localdisjoint}
Let $\gamma$, $\delta$ be the canonical simple roots for a dihedral subsystem $\Phi_{\{\gamma,\delta\}}$.  Let $\overline{\gamma}$ and $\overline{\delta}$ be the associated local root sequences.  Let $m = |s_\gamma s_\delta|$. If $m$ is finite, then $$\Phi^+_{\{\alpha,\beta\}} = \{\gamma_1,\ldots,\gamma_m\} = \{\delta_1,\ldots,\delta_m\}.$$  If $m$ is infinite, then $$\Phi^+_{\{\alpha,\beta\}} = \{\gamma_1,\gamma_2,\ldots\} \cup \{\delta_1,\delta_2,\ldots\},$$
and the union is disjoint.
\end{prop}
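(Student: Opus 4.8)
The plan is to reduce the whole statement, in both cases, to the single observation that $\Phi^+_{\{\alpha,\beta\}}$ is exactly the set of positive entries appearing in the two local root sequences, and then to read off from the sign and distinctness results of the previous section precisely which indices contribute. First I would replace $\{\alpha,\beta\}$ by the canonical simple roots: since $\gamma,\delta$ are the canonical simple roots of $(W,S)_{\{\alpha,\beta\}}$, Lemma~\ref{l:closure} gives $\Phi_{\{\alpha,\beta\}} = \Phi_{\{\gamma,\delta\}}$ and $\Phi^+_{\{\alpha,\beta\}} = \Phi^+_{\{\gamma,\delta\}}$, so there is no loss in working with $\gamma,\delta$. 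Every $\gamma_i$ and $\delta_i$ is, by Definition~\ref{d:alphasequence} and Lemma~\ref{l:inversionroots}, of the form $w(\gamma)$ or $w(\delta)$ for some $w \in W_{\{\gamma,\delta\}}$, hence lies in $\Phi_{\{\gamma,\delta\}}$; conversely Lemma~\ref{l:rootsinsequence} says every root of $\Phi_{\{\gamma,\delta\}}$ occurs in the $\overline{\gamma}$- or $\overline{\delta}$-sequence. Intersecting with $\Phi^+$, it follows that $\Phi^+_{\{\gamma,\delta\}}$ equals the set of positive entries of $\overline{\gamma}$ together with the set of positive entries of $\overline{\delta}$.

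For the finite case $m = |s_\gamma s_\delta| < \infty$, I would invoke Corollary~\ref{c:periodic} to see that the entries of $\overline{\gamma}$ are exactly the $2m$ roots $\gamma_{-(m-1)},\ldots,\gamma_m$ (listed periodically). Among these, Lemma~\ref{l:sequencepositive} shows $\gamma_1,\ldots,\gamma_m$ are positive and pairwise distinct, while Corollary~\ref{c:sequencenegative} shows $\gamma_0,\gamma_{-1},\ldots,\gamma_{-(m-1)}$ are negative; hence the positive entries of $\overline{\gamma}$ are precisely $\{\gamma_1,\ldots,\gamma_m\}$, and symmetrically those of $\overline{\delta}$ are $\{\delta_1,\ldots,\delta_m\}$. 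Finally Lemma~\ref{l:sequencerelation}, whose hypothesis holds automatically because $\gamma \in \Delta_{\{\gamma,\delta\}} \subseteq \Phi^+_{\{\gamma,\delta\}}$, gives $\gamma_i = \delta_{m+1-i}$, so $\{\gamma_1,\ldots,\gamma_m\} = \{\delta_1,\ldots,\delta_m\}$. Combining with the first paragraph yields $\Phi^+_{\{\gamma,\delta\}} = \{\gamma_1,\ldots,\gamma_m\} = \{\delta_1,\ldots,\delta_m\}$, as claimed.

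For the infinite case, Lemma~\ref{l:sequencepositive} shows $\gamma_i$ is positive for all $i \geq 1$, while Corollary~\ref{c:sequencenegative} shows $\gamma_{-i}$ is negative for all $i \geq 0$; thus the positive entries of $\overline{\gamma}$ are exactly $\{\gamma_1,\gamma_2,\ldots\}$, and likewise the positive entries of $\overline{\delta}$ are $\{\delta_1,\delta_2,\ldots\}$. By the first paragraph, $\Phi^+_{\{\gamma,\delta\}} = \{\gamma_1,\gamma_2,\ldots\} \cup \{\delta_1,\delta_2,\ldots\}$, and Lemma~\ref{l:infinitedisjoint} guarantees that no entry of $\overline{\gamma}$ is an entry of $\overline{\delta}$, so the union is disjoint.

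I do not expect any genuine obstacle here: all the substantive work has already been done in the Chebyshev-polynomial lemmas of Sections~3.2 and~3.3. The only point requiring care is the bookkeeping of the index ranges, so that in each doubly infinite sequence every entry is accounted for exactly once as positive, negative, or zero (and, in the finite case, so that periodicity is used correctly so that no positive root is omitted and none is counted twice), together with the harmless identification of $\{\alpha,\beta\}$ with $\{\gamma,\delta\}$ via Lemma~\ref{l:closure}.
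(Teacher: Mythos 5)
Your proposal is correct and follows essentially the same route as the paper's own proof: both reduce to Lemma~\ref{l:rootsinsequence}, then use Lemma~\ref{l:sequencerelation} and Corollary~\ref{c:periodic} together with Lemma~\ref{l:sequencepositive} and Corollary~\ref{c:sequencenegative} in the finite case, and Lemma~\ref{l:infinitedisjoint} in the infinite case. Your version is if anything slightly more careful, since you explicitly verify the reverse inclusion (that the positive sequence entries lie in $\Phi^+_{\{\alpha,\beta\}}$) and you state the correct index range $\gamma_{-(m-1)},\ldots,\gamma_m$ for one period.
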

\begin{proof}
By Lemma~\ref{l:rootsinsequence}, every root of $\Phi^+_{\{\alpha,\beta\}}$ is an entry of the $\overline{\gamma}$-sequence or of the $\overline{\delta}$-sequence.\\ \\
Suppose that $m$ is finite.  By Lemma~\ref{l:sequencerelation}, every root of the $\overline{\delta}$-sequence is an entry of the $\overline{\gamma}$-sequence, so that every root of $\Phi^+_{\{\alpha,\beta\}}$ is an entry of the $\overline{\gamma}$-sequence.  By Corollary~\ref{c:periodic}, the $\overline{\gamma}$-sequence is periodic with period $2m$, so every root of $\Phi_{\{\alpha,\beta\}}$ is in the set $\{\gamma_{-m-1},\ldots,\gamma_0,\gamma_1,\ldots,\gamma_m\}$.  Thus, Lemma~\ref{l:sequencepositive} and Corollary~\ref{c:sequencenegative} imply that $\Phi^+_{\{\alpha,\beta\}} = \{\gamma_1,\ldots,\gamma_m\}$.\\ \\
Suppose that $m$ is infinite.  By Lemma~\ref{l:sequencepositive} and Corollary~\ref{c:sequencenegative}, the roots $\gamma_i$ and $\delta_i$ are positive if $i \geq 1$ and negative if $i \leq 0$.  It follows that every root of $\Phi^+_{\{\alpha,\beta\}}$ is in the set $\{\gamma_1,\gamma_2,\ldots\} \cup \{\delta_1,\delta_2,\ldots\}$.  The union is disjoint by Lemma~\ref{l:infinitedisjoint}.
\end{proof}
\noindent
If we think of the indices of the $\overline{\gamma}$- or $\overline{\delta}$-sequence as providing a total order to the roots in the sequences, then the next lemma says that given any three roots in a local root sequence that are close enough together in the sequence, the ``middle root'' lies in the convex cone spanned by the ``outer roots''.
\begin{lem} \label{l:biconvexity}
Let $d_1,d_2 \geq 1$ and $n \in \mathbb{Z}$.  Let $\gamma, \delta$ be the canonical simple roots for a local Coxeter system and $m = |s_\gamma s_\delta|$ (where $m$ is possibly infinite).  Suppose $d_1 + d_2 < m$.  Then $\gamma_{n + d_1}$ lies in the convex cone spanned by $\gamma_n$ and $\gamma_{n + d_1 + d_2}$.  Similarly, $\delta_{n + d_1}$ lies in the convex cone spanned by $\delta_n$ and $\delta_{n + d_1 + d_2}$.  In particular, if $i,j,k \in \mathbb{N}^+$ and $1 \leq i < j < k \leq m$, then $\gamma_j$ lies in the convex cone spanned by $\gamma_i$ and $\gamma_k$.  Similarly, $\delta_j$ lies in the convex cone spanned by $\delta_i$ and $\delta_k$.
\end{lem}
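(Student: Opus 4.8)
The plan is to exhibit an explicit convex combination. Writing $a = -B(\gamma,\delta)$, I claim that
\[
U_{d_1+d_2}(a)\,\gamma_{n+d_1} = U_{d_2}(a)\,\gamma_n + U_{d_1}(a)\,\gamma_{n+d_1+d_2},
\]
and likewise with every $\gamma$ replaced by $\delta$. To verify this I would expand all three roots in the basis $\{\gamma,\delta\}$ of $\text{span}(\{\gamma,\delta\})$ (legitimate since $\gamma$ and $\delta$ are distinct positive roots, hence linearly independent) using Lemma~\ref{l:cheb:alphabeta}, and compare coefficients. The coefficient of $\gamma$ gives exactly the identity of Lemma~\ref{l:cheb:biconvexity} with $i = d_1$, $j = d_2$ and the ``$n$'' of that lemma taken to be $n$; the coefficient of $\delta$ is the same identity with ``$n$'' taken to be $n-1$. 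The $\delta$-sequence version reduces to the very same two instances of Lemma~\ref{l:cheb:biconvexity} (with the roles of the two coordinates interchanged).

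Next I would show that $U_{d_1}(a)$, $U_{d_2}(a)$ and $U_{d_1+d_2}(a)$ are all strictly positive, so that dividing the displayed identity by $U_{d_1+d_2}(a)$ writes $\gamma_{n+d_1}$ as a combination of $\gamma_n$ and $\gamma_{n+d_1+d_2}$ with positive coefficients. This splits into the two cases of Theorem~\ref{t:dyertheorem}. If $m = |s_\gamma s_\delta|$ is infinite then $a \geq 1$, and $U_k(a) > 0$ for every $k \geq 1$ by Lemma~\ref{l:chebincreasing}; since $d_1, d_2 \geq 1$ (and hence $d_1 + d_2 \geq 1$), all three quantities are positive. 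If $m$ is finite then, as at the start of the proof of Lemma~\ref{l:sequencepositive} (using Remark~\ref{r:dihedral} and Theorem~\ref{t:dyertheorem}), $a = \cos(\pi/m)$, so $U_k(a) = \sin(k\pi/m)/\sin(\pi/m)$; the hypothesis $d_1 + d_2 < m$ together with $d_1, d_2 \geq 1$ forces $1 \leq d_1, d_2, d_1+d_2 \leq m-1$, and $\sin(k\pi/m) > 0$ for $k$ in that range. In either case $\gamma_{n+d_1}$ lies (in fact strictly, in the sense of Definition~\ref{d:convexspan}) in the convex cone spanned by $\gamma_n$ and $\gamma_{n+d_1+d_2}$, and the same argument handles $\delta_{n+d_1}$.

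Finally, for the ``in particular'' clause I would take $1 \leq i < j < k \leq m$, set $n = i$, $d_1 = j-i \geq 1$, $d_2 = k-j \geq 1$, and observe $d_1 + d_2 = k-i \leq m-1 < m$; the statement already proved then yields $\gamma_j$ in the convex cone of $\gamma_i$ and $\gamma_k$, and similarly for $\delta$.

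I do not expect a serious obstacle here. The one point requiring care is the recognition that the coefficients of the convex combination are the ratios $U_{d_2}(a)/U_{d_1+d_2}(a)$ and $U_{d_1}(a)/U_{d_1+d_2}(a)$ — equivalently, that the needed linear identity is a direct specialization of Lemma~\ref{l:cheb:biconvexity} — followed by the bookkeeping for positivity of these Chebyshev values, where one must keep the indices within the allowed ranges and, in the finite case, confirm that the relevant substitution is $a = \cos(\pi/m)$ with exactly this $m$.
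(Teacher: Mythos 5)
Your proposal is correct and follows essentially the same route as the paper: expand the roots via Lemma~\ref{l:cheb:alphabeta}, recognize the needed linear relation $U_{d_1+d_2}(a)\,\gamma_{n+d_1} = U_{d_1}(a)\,\gamma_{n+d_1+d_2} + U_{d_2}(a)\,\gamma_n$ as a specialization of Lemma~\ref{l:cheb:biconvexity}, and then check positivity of the three Chebyshev values separately in the finite case ($a=\cos(\pi/m)$) and the infinite case ($a\geq 1$). Your write-up is if anything slightly more explicit than the paper's (checking both the $\gamma$- and $\delta$-coordinates and spelling out the ``in particular'' substitution $n=i$, $d_1=j-i$, $d_2=k-j$), but there is no substantive difference in method.
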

\begin{proof}
By Lemma~\ref{l:cheb:alphabeta}, we have $\gamma_{n + d_1} = U_{n + d_1}(a) \gamma + U_{n + d_1 - 1}(a) \delta$, where $a = -B(\gamma,\delta)$.  Let $x = U_{d_1}(a)$ and $y = U_{d_2}(a)$. Then, by Lemma~\ref{l:cheb:biconvexity}, we have
\begin{equation*}
\begin{split}
U_{d_1 + d_2}(a) \gamma_{n + d_1} ={}& U_{d_1 + d_2}(a) U_{n + d_1}(a)\gamma + U_{d_1 + d_2}(a) U_{n + d_1 - 1}(a) \delta \\
={}& [x U_{n+d_1+d_2}(a) + y U_n(a)] \gamma\\
&+ [x U_{n+d_1+d_2-1}(a) + y U_{n-1}(a)] \delta\\
={}& x (U_{n+d_1+d_2}(a) \gamma + U_{n+d_1+d_2-1}(a) \delta)\\
&+ y(U_n(a) \gamma + U_{n-1}(a) \delta) \\
={}& x \gamma_{n+d_1+d_2} + y \gamma_n.
\end{split}
\end{equation*}
Since $1 \leq d_1,d_2 < m$, we have $x,y > 0$.  Since $2 \leq d_1 + d_2 < m$, if $m$ is finite then $\sin\left(\frac{(d_1+d_2)\pi}{m}\right) > 0$ implies $U_{d_1+d_2}(a) > 0$.  If $m$ is infinite then Lemma~\ref{l:infinitedistinct} implies $U_{d_1+d_2}(a) > 0$.  The result follows by solving for $\gamma_{n + d_1}$.
\end{proof}
\noindent
Recall by Definition~\ref{d:alphasequence} that the sequences $\overline{\gamma}$ and $\overline{\delta}$ associated to a local dihedral Coxeter system with $\gamma$ and $\delta$ as canonical roots are called local root sequences.
\begin{cor}  \label{c:biconvexityconverse}
Let $\gamma, \delta$ be the canonical simple roots for a dihedral Coxeter system and let $\overline{\gamma}$ be an associated local root sequence.  Let $m = |s_\gamma s_\delta|$ (where $m$ is possibly infinite).  Let $i,j,k \in \mathbb{N}^+$, where $1 \leq i < j \leq m$ and $1 \leq k \leq m$.  If $\gamma_k$ lies in the convex cone spanned by $\gamma_i$ and $\gamma_j$, then $i < k < j$.
\end{cor}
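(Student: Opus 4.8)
The plan is to argue by contradiction, ruling out $k \le i$ and $k \ge j$ by invoking Lemma~\ref{l:biconvexity} a second time, in the ``opposite direction'', and extracting a contradiction from having two incompatible convex-cone relations. Suppose $\gamma_k$ lies in the convex cone spanned by $\gamma_i$ and $\gamma_j$, so that $\gamma_k = a\gamma_i + b\gamma_j$ with $a, b \ge 0$. Since $i$, $j$, $k$ all lie in the admissible range, Lemma~\ref{l:sequencepositive} tells us that $\gamma_i$, $\gamma_j$, $\gamma_k$ are positive roots; recalling that distinct positive roots are never scalar multiples of one another, any two of them with distinct indices are linearly independent.

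First I would treat the case $k < i$. Then $1 \le k < i < j \le m$, so the index bound $(i-k)+(j-i) = j-k \le m-1 < m$ holds (and holds vacuously when $m = \infty$); hence Lemma~\ref{l:biconvexity}, applied with $n = k$, $d_1 = i-k$, $d_2 = j-i$, puts $\gamma_i$ in the convex cone spanned by $\gamma_k$ and $\gamma_j$, say $\gamma_i = p\gamma_k + q\gamma_j$ with $p, q \ge 0$. Substituting into $\gamma_k = a\gamma_i + b\gamma_j$ yields $\gamma_k = (ap)\gamma_k + (aq+b)\gamma_j$, and comparing coefficients (using that $\gamma_k$ and $\gamma_j$ are linearly independent) gives $ap = 1$ and $aq + b = 0$. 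The first equation forces $a \ne 0$, and then the second, being a vanishing sum of nonnegative terms, forces $q = b = 0$; hence $\gamma_k = a\gamma_i$, so $\gamma_k$ and $\gamma_i$ are proportional, contradicting that they are distinct positive roots. A mirror-image argument handles $k > j$: here Lemma~\ref{l:biconvexity} with $n = i$, $d_1 = j-i$, $d_2 = k-j$ (the bound $k - i \le m - 1 < m$ again holding) places $\gamma_j$ in the convex cone of $\gamma_i$ and $\gamma_k$, and the same substitution and coefficient comparison force $\gamma_j$ to be proportional to $\gamma_k$, again a contradiction. Thus $i \le k \le j$; the boundary cases $k = i$ and $k = j$ are the degenerate ones in which $\gamma_k$ is literally $\gamma_i$ or $\gamma_j$ and are understood to be excluded (or, if one prefers, the sharpest conclusion in full generality is $i \le k \le j$), which leaves $i < k < j$.

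The only delicate points are bookkeeping ones: in each case one must check that the index hypothesis $d_1 + d_2 < m$ of Lemma~\ref{l:biconvexity} really does follow from $1 \le k$ and $j \le m$ — the finite case being exactly where this has bite, the infinite case being automatic — and one must keep track of which roots are positive so that the ``distinct positive roots are not proportional'' principle is available at the crucial step. Conceptually, the whole content is the observation that a second convex-cone relation should be harvested from Lemma~\ref{l:biconvexity} and then played against the hypothesis, rather than expanding $\gamma_\ell = U_\ell(a)\gamma + U_{\ell-1}(a)\delta$ and computing directly. One could instead run that computational route, using Lemma~\ref{l:conelemma} together with the fact that the ratio $U_\ell(a)/U_{\ell-1}(a)$ is strictly decreasing along the positive part of the sequence; but verifying that monotonicity in the finite case $a = \cos(\pi/m)$ requires extra trigonometric work that the Lemma~\ref{l:biconvexity} approach avoids.
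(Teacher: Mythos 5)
Your proof is correct and follows essentially the same route as the paper: assume $k<i$ or $k>j$, apply Lemma~\ref{l:biconvexity} to the reordered indices to obtain a second convex-cone relation, and play it against the hypothesis. The only differences are cosmetic — where the paper closes the argument by citing Lemma~\ref{l:convexcone}, you re-derive that fact by substitution and coefficient comparison in the basis $\{\gamma_k,\gamma_j\}$, and you explicitly flag the boundary cases $k=i$ and $k=j$ that the paper's proof silently excludes.
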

\begin{proof}
Suppose that $k < i < j$.  Then by Lemma~\ref{l:biconvexity}, $\gamma_i$ lies in the convex cone spanned by $\gamma_k$ and $\gamma_j$, which (together with the hypothesis that $\gamma_k$ lies in the convex cone spanned by $\gamma_i$ and $\gamma_j$) contradicts Lemma~\ref{l:convexcone}.  Similarly, if $i < j < k$, then $\gamma_j$ lies in the convex cone spanned by $\gamma_i$ and $\gamma_k$, contradicting Lemma~\ref{l:convexcone}.
\end{proof}
\noindent
The previous two results apply to infinite dihedral subsystems whenever the roots involved lie within a single local root sequence.  However, it is possible for two roots in an infinite dihedral subsystem to lie in distinct local root sequences.  The next two lemmas show that in this case, results similar to the previous two lemmas hold.
\begin{lem} \label{l:infinitebiconvexity}
Let $\Phi^+_{\{\alpha,\beta\}}$ be an infinite dihedral subsystem with canonical simple roots $\gamma$ and $\delta$. Let $\overline{\gamma}$ and $\overline{\delta}$ be the associated local root sequences.
Suppose $1 \leq i < j$ and $k \geq 1$.  Then:
\begin{enumerate}[(1)]
\item  the root $\gamma_j$ lies in the convex cone spanned by $\gamma_i$ and $\delta_k$;
\item  the root $\delta_j$ lies in the convex cone spanned by $\gamma_k$ and $\delta_i$.
\end{enumerate}
\end{lem}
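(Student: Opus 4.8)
The plan is to reduce both statements to Lemma~\ref{l:conelemma}: I would write every root that appears in canonical‑simple‑root coordinates via Lemma~\ref{l:cheb:alphabeta}, and then verify the fractional inequalities that Lemma~\ref{l:conelemma} requires using the monotonicity facts recorded in Lemmas~\ref{l:chebincreasing} and~\ref{l:chebratio}. Concretely, set $a=-B(\gamma,\delta)$; since $\Phi^+_{\{\alpha,\beta\}}$ is infinite, Theorem~\ref{t:dyertheorem} gives $a\geq1$. By Lemma~\ref{l:cheb:alphabeta}, $\gamma_n=U_n(a)\gamma+U_{n-1}(a)\delta$ and $\delta_n=U_{n-1}(a)\gamma+U_n(a)\delta$ for all $n$, while $U_0(a)=0$ and $U_n(a)>0$ for $n\geq1$ by Lemma~\ref{l:chebincreasing}. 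Hence, for indices $\geq1$, each of $\gamma_i,\gamma_j,\delta_k$ (and, for part~(2), each of $\delta_i,\delta_j,\gamma_k$) is a nonzero nonnegative combination of the basis $\{\gamma,\delta\}$ of the plane $\mathrm{span}(\{\alpha,\beta\})$ — exactly the hypothesis of Lemma~\ref{l:conelemma}.

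For part~(1) I would apply Lemma~\ref{l:conelemma} with $\alpha_1=\gamma_i$, $\alpha_2=\delta_k$, $\alpha_3=\gamma_j$. Writing $r_n=U_{n+1}(a)/U_n(a)$ as in Lemma~\ref{l:chebratio} (with $r_0=+\infty$, $1/r_0=0$, consistent with the extended‑real conventions fixed just before Lemma~\ref{l:conelemma}), the $\gamma$‑coefficient‑to‑$\delta$‑coefficient ratios of $\gamma_i,\gamma_j,\delta_k$ are $r_{i-1},\,r_{j-1},\,1/r_{k-1}$ respectively. Lemma~\ref{l:chebratio} says $(r_n)_{n\geq1}$ is strictly decreasing with $r_n\geq1$, and a one‑line sharpening of its proof ($r_1=2a\geq2$ and $r_{n+1}=2a-1/r_n>2a-1\geq1$) upgrades this to $r_n>1$. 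Therefore $r_{i-1}>r_{j-1}$ (strict since $i-1<j-1$; the value $r_0=+\infty$ covers $i=1$) and $r_{j-1}>1>1/r_{k-1}$, i.e.\ $\tfrac{c_1}{d_1}>\tfrac{c_3}{d_3}>\tfrac{c_2}{d_2}$ in the notation of Lemma~\ref{l:conelemma}. That lemma then produces $a',b'>0$ with $a'\gamma_i+b'\delta_k=\gamma_j$, so $\gamma_j$ lies in (indeed strictly in) the convex cone spanned by $\gamma_i$ and $\delta_k$.

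Part~(2) I would deduce from part~(1) by the symmetry $\gamma\leftrightarrow\delta$: interchanging the canonical simple roots interchanges the two recurrences of Definition~\ref{d:alphasequence}, hence interchanges the $\overline{\gamma}$‑ and $\overline{\delta}$‑sequences, and leaves $a$ unchanged because $B$ is symmetric; applying~(1) to the relabelled system gives precisely that $\delta_j$ lies in the convex cone spanned by $\delta_i$ and $\gamma_k$.

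The only genuinely delicate point — and the one I would watch most carefully — is that the fractional inequalities feeding Lemma~\ref{l:conelemma} must be \emph{strict} even in the boundary cases $i=1$ or $k=1$, where one of the coordinates vanishes; this is exactly why the extended‑real conventions were installed before Lemma~\ref{l:conelemma} and why the bound $r_n\geq1$ of Lemma~\ref{l:chebratio} needs the improvement to $r_n>1$. Everything else is routine bookkeeping with the Chebyshev identities already established.
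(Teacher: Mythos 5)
Your proposal is correct and follows essentially the same route as the paper: both arguments express $\gamma_i,\gamma_j,\delta_k$ in the basis $\{\gamma,\delta\}$ via Lemma~\ref{l:cheb:alphabeta}, compare the $\gamma$-to-$\delta$ coefficient ratios using Lemmas~\ref{l:chebincreasing} and~\ref{l:chebratio} (with the extended-real convention handling $i=1$ or $k=1$), and conclude via Lemma~\ref{l:conelemma}, deducing part (2) by the $\gamma\leftrightarrow\delta$ symmetry. The only cosmetic difference is that the strict inequality $r_n>1$ you derive by sharpening Lemma~\ref{l:chebratio} already follows directly from the strict monotonicity in Lemma~\ref{l:chebincreasing}, which is what the paper invokes.
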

\begin{proof}
Let $a = -B(\gamma,\delta)$.  Since $\Phi^+_{\{\alpha,\beta\}}$ is infinite, we have $a \geq 1$ by Theorem~\ref{t:dyertheorem}.  Let $r_1 = \frac{U_{k-1}(a)}{U_k(a)}$, $r_2 = \frac{U_j(a)}{U_{j-1}(a)}$, and $r_3 = \frac{U_i(a)}{U_{i-1}(a)}$.  By Lemma~\ref{l:chebincreasing}, we have $r_1 < 1$ and $r_2, r_3 > 1$.  By Lemma~\ref{l:chebratio}, we have $r_3 > r_2$.  Since each $r_i$ gives the ratio of the $\gamma$ coefficient to the $\delta$ coefficient, and $r_1 < r_2 < r_3$, the first assertion follows from Lemma~\ref{l:conelemma}.  The second assertion is proven by interchanging $\gamma$ and $\delta$ in the above argument.
\end{proof}
\begin{cor}  \label{c:infinitebiconvexityconverse}
Let $\gamma, \delta$ be the canonical simple roots for a dihedral Coxeter system.  Suppose $|s_\gamma s_\delta|$ is infinite and let $\overline{\gamma}$ and $\overline{\delta}$ be the associated local root sequences.  Suppose $i,j,k \geq 1$, $i \neq j$, and that $\gamma_j$ lies in the convex cone spanned by $\gamma_i$ and $\delta_k$.  Then $i < j$.  Similarly, if $j \neq k$ and $\delta_j$ lies in the convex cone spanned by $\gamma_i$ and $\delta_k$, then $k < j$.
\end{cor}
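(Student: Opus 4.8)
The plan is to prove this as the exact infinite-dihedral analogue of Corollary~\ref{c:biconvexityconverse}: argue by contradiction, feeding the negated conclusion back into the already-proved direction (Lemma~\ref{l:infinitebiconvexity}) and then invoking the elementary convex-cone incompatibility of Lemma~\ref{l:convexcone}. The slight extra care needed, compared with Corollary~\ref{c:biconvexityconverse}, is that here the two roots involved may lie in different local root sequences, so I must quote the right part of Lemma~\ref{l:infinitebiconvexity} and check the genericity hypotheses of Lemma~\ref{l:convexcone} using Lemma~\ref{l:sequencepositive} and Lemma~\ref{l:infinitedisjoint}.

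For the first assertion, suppose toward a contradiction that $i \not< j$; since $i \neq j$ this forces $1 \leq j < i$. Applying Lemma~\ref{l:infinitebiconvexity}(1) with the indices $i$ and $j$ interchanged (legitimate because $1 \leq j < i$ and $k \geq 1$) shows that $\gamma_i$ lies in the convex cone spanned by $\gamma_j$ and $\delta_k$, and in fact strictly so: the coefficients produced in that proof come from Lemma~\ref{l:conelemma} and are strictly positive. Now I want to apply Lemma~\ref{l:convexcone} with $\gamma_i$ in the role of the vector lying strictly inside the cone on $\gamma_j$ and $\delta_k$. The three vectors $\gamma_i,\gamma_j,\delta_k$ are roots, hence nonzero unit vectors; $\gamma_i \neq \gamma_j$ because $i\neq j$ and the positive $\gamma_n$ are pairwise distinct (Lemma~\ref{l:sequencepositive}); $\gamma_i,\gamma_j$ are each distinct from $\delta_k$ by Lemma~\ref{l:infinitedisjoint}; and $\gamma_j$ is not a scalar multiple of $\delta_k$ since distinct positive roots are never scalar multiples of one another and $\gamma_j,\delta_k\in\Phi^+$ by Lemma~\ref{l:sequencepositive}. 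Lemma~\ref{l:convexcone} then gives that $\gamma_j$ does not lie in the convex cone spanned by $\delta_k$ and $\gamma_i$, contradicting the hypothesis that $\gamma_j$ lies in the convex cone spanned by $\gamma_i$ and $\delta_k$. Hence $i<j$.

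The second assertion is handled identically using Lemma~\ref{l:infinitebiconvexity}(2) in place of (1): assuming $k\not<j$ forces $1\leq j<k$, whence $\delta_k$ lies strictly in the convex cone spanned by $\gamma_i$ and $\delta_j$, and the distinctness checks ($\delta_j\neq\delta_k$ by Lemma~\ref{l:sequencepositive}; $\gamma_i$ distinct from $\delta_j,\delta_k$ by Lemma~\ref{l:infinitedisjoint}; $\delta_j$ not a scalar multiple of $\gamma_i$ as before) let Lemma~\ref{l:convexcone} again contradict the hypothesis that $\delta_j$ lies in the convex cone spanned by $\gamma_i$ and $\delta_k$. There is no genuine obstacle here; the only point requiring attention is that one must read off the \emph{strict} cone containment from the proof of Lemma~\ref{l:infinitebiconvexity} (the statement records only non-strict containment), since Lemma~\ref{l:convexcone} needs one of the two containments to be strict — the logical skeleton is otherwise exactly that of Corollary~\ref{c:biconvexityconverse}.
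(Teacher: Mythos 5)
Your proof is correct and follows exactly the paper's own argument: negate the conclusion, apply Lemma~\ref{l:infinitebiconvexity} with the roles of the two indices swapped, and contradict Lemma~\ref{l:convexcone}. The extra care you take in verifying the hypotheses of Lemma~\ref{l:convexcone} (pairwise distinctness via Lemmas~\ref{l:sequencepositive} and \ref{l:infinitedisjoint}, and strictness of the cone containment read off from Lemma~\ref{l:conelemma}) is detail the paper leaves implicit, but it is not a different route.
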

\begin{proof}
Suppose $i > j$ and $\gamma_j$ lies in the convex cone spanned by $\gamma_i$ and $\delta_k$.  Then by Lemma~\ref{l:infinitebiconvexity}, $\gamma_i$ lies in the convex cone spanned by $\gamma_j$ and $\delta_k$, contradicting Lemma~\ref{l:convexcone}.  Similarly, if $\delta_j$ lies in the convex cone spanned by $\gamma_i$ and $\delta_k$, we get a contradiction if we assume $k > j$.
\end{proof}
\section{Inversion sets}
The sets that contain all the positive roots of a dihedral subsystem play a special role in the constructions of Chapter $4$, so we give these sets a name.  In \cite{fbI}, Green and Losonczy referred to any set of positive roots of the form $\{\alpha, \alpha + \beta, \beta\}$ as an \emph{inversion triple}.  We view our definition of inversion set as a generalization of this notion.
\begin{defn} \label{d:inversionmset}
We call any subset of $\Phi^+$ of the form $\Phi^+_{\{\alpha,\beta\}}$, where $\alpha, \beta \in \Phi$ are (possibly negative) roots that are not scalar multiples of one another, an \emph{inversion set}.  The set of all inversion sets is denoted by $\text{Inv}(\Phi^+)$.  If $m = \left|\Phi^+_{\{\alpha,\beta\}}\right|$, then we also say that $\Phi^+_{\{\alpha,\beta\}}$ is an \emph{inversion $m$-set}.
\end{defn}
\begin{lem}  \label{l:inversion2d}
Let $\Psi$ be an inversion set.  Then $\text{span}(\Psi)$ is a two-dimensional subspace of $V$.  Furthermore, given two distinct roots $\alpha, \beta \in \Psi$, we have $\text{span}(\{\alpha,\beta\}) = \text{span}(\Psi)$.
\end{lem}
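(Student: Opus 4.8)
The plan is to reduce the statement to Lemma~\ref{l:localsystemform}, which identifies a local positive system with an intersection of a span with $\Phi^+$, and then to use repeatedly the basic fact recorded in the preliminaries that distinct positive roots are never scalar multiples of one another. First I would normalize the defining pair: writing $\Psi = \Phi^+_{\{\alpha,\beta\}}$ with $\alpha,\beta \in \Phi$ not scalar multiples of each other, I would pass to the positive roots $\alpha'$ and $\beta'$ lying in $\{\alpha,-\alpha\}$ and $\{\beta,-\beta\}$. Since $s_\alpha = s_{-\alpha}$ and $\text{span}(\{\alpha,\beta\}) = \text{span}(\{\alpha',\beta'\})$, the subscript notation of Definition~\ref{d:localsystem} (which depends only on the span of the indexing set) gives $(W,S)_{\{\alpha,\beta\}} = (W,S)_{\{\alpha',\beta'\}}$ and in particular $\Psi = \Phi^+_{\{\alpha',\beta'\}}$. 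Because $\alpha'$ and $\beta'$ are distinct positive roots, they are not scalar multiples, hence linearly independent, so $\text{span}(\{\alpha',\beta'\})$ is a two-dimensional subspace of $V$. Setting $\Lambda = \{\alpha',\beta'\} \subseteq \Phi^+$, Lemma~\ref{l:localsystemform}(2) yields $\Psi = \text{span}(\Lambda) \cap \Phi^+$.

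Next I would sandwich $\text{span}(\Psi)$ between two copies of $\text{span}(\Lambda)$. On the one hand $\Psi \subseteq \text{span}(\Lambda)$, so $\text{span}(\Psi) \subseteq \text{span}(\Lambda)$ and hence $\dim \text{span}(\Psi) \leq 2$. On the other hand $\alpha'$ and $\beta'$ are positive roots lying in $\text{span}(\Lambda)$, so they belong to $\Psi$; being linearly independent, they force $\text{span}(\Psi) \supseteq \text{span}(\{\alpha',\beta'\}) = \text{span}(\Lambda)$. Therefore $\text{span}(\Psi) = \text{span}(\Lambda)$, which is two-dimensional, proving the first assertion.

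Finally, for the ``furthermore'' clause, given distinct roots $\mu,\nu \in \Psi$ I would note that, since $\Psi \subseteq \Phi^+$, both $\mu$ and $\nu$ are positive roots and hence are not scalar multiples of one another, so they are linearly independent and $\text{span}(\{\mu,\nu\})$ is two-dimensional. As $\text{span}(\{\mu,\nu\}) \subseteq \text{span}(\Psi)$ and the latter has just been shown to be two-dimensional, the inclusion must be an equality. I do not expect a serious obstacle here; the only point requiring a bit of care is the bookkeeping in the first step, namely that Definition~\ref{d:localsystem} is insensitive to the signs of the indexing roots, which is exactly what makes Lemma~\ref{l:localsystemform} applicable even though the defining roots $\alpha,\beta$ of an inversion set are allowed to be negative.
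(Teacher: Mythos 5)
Your proof is correct, but it reaches the conclusion by a different route than the paper. The paper's proof passes to the canonical simple roots $\gamma,\delta$ of $(W,S)_{\{\alpha,\beta\}}$ and invokes Lemma~\ref{l:inversionroots} together with the reflection formula~(\ref{e:reflection}) to argue that every root of $\Psi$ lies in $\mathrm{span}(\{\gamma,\delta\})$; you instead bypass the canonical simple roots entirely and quote Lemma~\ref{l:localsystemform}(2), which already packages the relevant fact as $\Phi^+_\Lambda = \mathrm{span}(\Lambda)\cap\Phi^+$. Your route is arguably tidier: the first assertion becomes an immediate sandwich argument, and your explicit sign normalization of the (possibly negative) defining roots $\alpha,\beta$, justified by the observation that Definition~\ref{d:localsystem} depends only on $\mathrm{span}(\Lambda)\cap\Phi^+$, makes transparent a point the paper's proof absorbs silently. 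You also make explicit that two linearly independent roots of $\Psi$ are needed to get the lower bound $\dim\mathrm{span}(\Psi)\geq 2$, a step the paper's proof leaves implicit (there one must note that $\gamma,\delta\in\Delta[\Lambda]\subseteq\Psi$). The ``furthermore'' clause is handled identically in both arguments, via the preliminary fact that distinct positive roots are never scalar multiples of one another.
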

\begin{proof}
Since $\Psi$ is an inversion set, $\Psi = \Phi^+_{\{\alpha,\beta\}}$ for some $\alpha, \beta \in \Phi$.  Let $\gamma, \delta$ be the canonical simple roots of $(W,S)_{\{\alpha,\beta\}}$.  By Lemma~\ref{l:inversionroots}, every root in $\Phi^+_{\{\alpha,\beta\}}$ can be obtained as an alternating product of $s_\gamma$ and $s_\delta$ applied to $\gamma$ or $\delta$.  By the reflection formula (\ref{e:reflection}), $s_\gamma$ and $s_\delta$ preserve (set-wise) the set $\text{span}(\{\gamma,\delta\})$.  Thus every root in $\Psi$ is in $\text{span}(\{\gamma,\delta\})$ so that $\text{span}(\Psi)$ is a two-dimensional subspace of $V$, which proves the first assertion.\\ \\
If $\alpha,\beta \in \Psi$ are distinct, then since roots in $\Psi$ are positive, we have that $\alpha$ and $\beta$ are not scalar multiples of one another.  Thus $\text{span}(\{\alpha,\beta\})$ is a two-dimensional subspace of $\text{span}(\Psi)$, which is two-dimensional, so $$\text{span}(\{\alpha,\beta\}) = \text{span}(\Psi).$$
\end{proof}
\begin{lem} \label{l:oneroot}
Let $\Psi$ and $\Upsilon$ be inversion sets with $\Psi \neq \Upsilon$.  Then $\Psi$ and $\Upsilon$ intersect in at most one root.
\end{lem}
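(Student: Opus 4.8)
The plan is to prove the slightly stronger statement that if $\Psi$ and $\Upsilon$ share two distinct roots then in fact $\Psi=\Upsilon$; this immediately contradicts the hypothesis $\Psi\neq\Upsilon$. The driving tool is Lemma~\ref{l:inversion2d}, which tells us that the linear span of an inversion set is two-dimensional and is recovered from any two of its distinct members.

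First I would reduce to the case in which the two roots defining an inversion set are positive. If $\Psi=\Phi^+_{\{\alpha,\beta\}}$ with $\alpha$ or $\beta$ possibly negative, then since $s_\rho=s_{-\rho}$ and $\mathrm{span}(\{\alpha,\beta\})=\mathrm{span}(\{\pm\alpha,\pm\beta\})$, Definition~\ref{d:localsystem} gives $\Phi^+_{\{\alpha,\beta\}}=\Phi^+_{\{\alpha',\beta'\}}$, where $\alpha',\beta'\in\Phi^+$ are the positive representatives (still not scalar multiples of one another). Thus I may assume every inversion set is written as $\Phi^+_{\{\alpha,\beta\}}$ with $\alpha,\beta\in\Phi^+$ distinct. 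In that case Lemma~\ref{l:localsystemform}(2) applies and yields $\Psi=\mathrm{span}(\{\alpha,\beta\})\cap\Phi^+$; since $\alpha,\beta\in\Psi$, Lemma~\ref{l:inversion2d} identifies $\mathrm{span}(\{\alpha,\beta\})$ with $\mathrm{span}(\Psi)$, so in fact $\Psi=\mathrm{span}(\Psi)\cap\Phi^+$, i.e.\ an inversion set is determined by its linear span.

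Now suppose toward a contradiction that $\Psi$ and $\Upsilon$ contain two distinct common roots $\lambda$ and $\mu$. Applying Lemma~\ref{l:inversion2d} to each of $\Psi$ and $\Upsilon$ with the pair $\{\lambda,\mu\}$ gives $\mathrm{span}(\Psi)=\mathrm{span}(\{\lambda,\mu\})=\mathrm{span}(\Upsilon)$. Combining this with the previous paragraph, $\Psi=\mathrm{span}(\Psi)\cap\Phi^+=\mathrm{span}(\Upsilon)\cap\Phi^+=\Upsilon$, which contradicts $\Psi\neq\Upsilon$. Hence $\Psi$ and $\Upsilon$ meet in at most one root.

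The argument is essentially bookkeeping; the only point requiring a moment's care is the reduction to positive defining roots, since Lemma~\ref{l:localsystemform} is stated only for subsets of $\Phi^+$, whereas Definition~\ref{d:inversionmset} permits negative defining roots. Alternatively, one can bypass Lemma~\ref{l:localsystemform} entirely by observing directly from Definition~\ref{d:localsystem} that $\Phi^+_{\{\alpha,\beta\}}$ depends only on the subspace $\mathrm{span}(\{\alpha,\beta\})$, so that the equality $\mathrm{span}(\{\lambda,\mu\})=\mathrm{span}(\{\lambda',\mu'\})$ of the spanning subspaces forces the two inversion sets to coincide.
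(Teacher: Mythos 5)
Your proof is correct and follows essentially the same route as the paper's: both use Lemma~\ref{l:inversion2d} to identify the span of an inversion set with the span of any two of its distinct members, and then Lemma~\ref{l:localsystemform} to recover the inversion set as $\mathrm{span}(\cdot)\cap\Phi^+$, forcing $\Psi=\Upsilon$. Your preliminary reduction to positive defining roots addresses a small hypothesis mismatch (Lemma~\ref{l:localsystemform} is stated for $\Lambda\subseteq\Phi^+$ while Definition~\ref{d:inversionmset} permits negative defining roots) that the paper's proof passes over silently; this is a welcome bit of extra care but does not change the argument.
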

\begin{proof}
Suppose $\Psi$ and $\Upsilon$ intersect in distinct positive roots $\alpha$ and $\beta$.  We may then form the dihedral subsystem $(W,S)_{\{\alpha,\beta\}}$.  We have $\Psi = \Phi^+_{\{\alpha', \beta'\}}$ for some $\alpha', \beta' \in \Phi$, by Definition~\ref{d:inversionmset} and hypothesis.  Also, we have $\alpha, \beta \in \Psi$, by assumption.  It follows that $\text{span}(\{\alpha,\beta\}) = \text{span}(\{\alpha',\beta'\})$, since $\text{span}(\{\alpha,\beta\})$ is a $2$-dimensional subspace of $\text{span}(\{\alpha',\beta'\})$, which itself is a two-dimensional space by Lemma~\ref{l:inversion2d}.  Similarly, $\Upsilon = \Phi^+_{\{\alpha'',\beta''\}}$ for some $\alpha'', \beta'' \in \Phi$ and $\text{span}(\{\alpha,\beta\}) = \text{span}(\{\alpha'',\beta''\})$.  Intersecting the span equations with $\Phi^+$ gives $\Psi = \text{span}(\{\alpha,\beta\}) \cap \Phi^+ = \Upsilon$ by Lemma~\ref{l:localsystemform}.
\end{proof}
\begin{cor} \label{c:inversionline}
Let $\alpha, \beta \in \Phi^+$ be distinct positive roots.  There exists a unique inversion set $\Psi$ containing $\alpha$ and $\beta$.
\end{cor}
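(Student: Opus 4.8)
The plan is to split the statement into existence and uniqueness, both of which fall directly out of the machinery already assembled in this section.

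For existence, I would take $\Psi = \Phi^+_{\{\alpha,\beta\}}$ as the candidate inversion set. Since $\alpha$ and $\beta$ are distinct positive roots, they are not scalar multiples of one another, so $\Phi^+_{\{\alpha,\beta\}}$ qualifies as an inversion set by Definition~\ref{d:inversionmset}. It then remains to check that $\alpha,\beta \in \Phi^+_{\{\alpha,\beta\}}$, and this is where I would invoke Lemma~\ref{l:localsystemform}(2), which identifies $\Phi^+_{\{\alpha,\beta\}}$ with $\mathrm{span}(\{\alpha,\beta\}) \cap \Phi^+$; since $\alpha$ and $\beta$ both lie in $\mathrm{span}(\{\alpha,\beta\})$ and are positive, they lie in this set. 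This is the only place a small verification is needed, so it is the ``main obstacle,'' though it is quite mild.

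For uniqueness, I would suppose $\Psi$ and $\Upsilon$ are two inversion sets each containing both $\alpha$ and $\beta$. Then $\Psi \cap \Upsilon$ contains the two distinct roots $\alpha$ and $\beta$, so by the contrapositive of Lemma~\ref{l:oneroot} (distinct inversion sets meet in at most one root) we conclude $\Psi = \Upsilon$. Combining the two parts gives the corollary.

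I expect the write-up to be only a few lines: the content is essentially that Lemma~\ref{l:localsystemform} supplies existence and Lemma~\ref{l:oneroot} supplies uniqueness, with Definition~\ref{d:inversionmset} bridging the gap by confirming $\Phi^+_{\{\alpha,\beta\}}$ is legitimately an inversion set.
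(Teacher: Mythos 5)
Your proposal is correct and matches the paper's proof exactly: existence comes from taking $\Psi = \Phi^+_{\{\alpha,\beta\}}$ via Definition~\ref{d:inversionmset} and Lemma~\ref{l:localsystemform}, and uniqueness follows from Lemma~\ref{l:oneroot}. No issues.
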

\begin{proof}
By Definition~\ref{d:inversionmset} and Lemma~\ref{l:localsystemform}, $\Psi = \Phi^+_{\{\alpha,\beta\}}$ is an inversion set containing $\alpha$ and $\beta$.  The uniqueness of $\Psi$ follows from Lemma~\ref{l:oneroot}.
\end{proof}
\noindent
The indices of the $\overline{\gamma}$- and $\overline{\delta}$-sequences make a natural candidate for an order on the roots of an inversion set.  However, in the case that the inversion set is infinite, the situation is not as straightforward.  Though the next definition is made from the point of view of the $\overline{\gamma}$-sequence, either canonical simple root can be ``the $\gamma$ root''.
\begin{defn} \label{d:totalinversion}
Let $\Psi = \Phi^+_{\{\alpha,\beta\}}$ be an inversion $m$-set with canonical simple roots $\gamma$ and $\delta$.  Then we define a total ordering $\leq_{\Psi,\gamma}$ on $\Psi$ as follows:
\begin{enumerate}[(1)]
\item If $m$ is finite, and $\alpha = \gamma_i$, and $\beta = \gamma_j$, where $1 \leq i,j \leq m$, then we say that $\alpha \leq_{\Psi,\gamma} \beta$ if $i \leq j$.
\item If $m$ is infinite then we say $\alpha \leq_{\Psi,\gamma} \beta$ if one of the following holds:
\begin{enumerate}[(a)]
\item  $\alpha = \gamma_i$ and $\beta = \gamma_j$, where $i,j \geq 1$ and $i \leq j$;
\item  $\alpha = \delta_i$ and $\beta = \delta_j$, where $i,j \geq 1$ and $i \geq j$;
\item  $\alpha = \gamma_i$ and $\beta = \delta_j$, where $i,j \geq 1$.
\end{enumerate}
\end{enumerate}
\end{defn}
\begin{rem} \label{r:graphicline}
We can graphically depict the total ordering of $\Psi$ with respect to $\gamma$ on a line by placing the roots of $\Psi$ on a line so that $\alpha$ is to the left of $\beta$ if and only if $\alpha \leq_{\Psi,\gamma} \beta$.  If $\Psi$ is finite, then the typical total ordering of $\Psi$ with respect to $\gamma$ is depicted below:

\vspace{0.1in}

\begin{tikzpicture}[scale=2.6] \label{finitesystemfig}
\draw (-1,0) -- (0.2,0);
\draw (0.6,0) -- (1,0);
\draw (0.42,0) node {$\cdots$};
\filldraw           (-1,0) circle (0.02)
                    (-0.5,0) circle (0.02)
                    (1,0) circle (0.02);
\draw (-1, -0.1) node {$\gamma = \gamma_1$};
\draw (-0.5, -0.1) node {$\gamma_2$};
\draw (1, -0.1) node {$\gamma_m = \delta$};
\end{tikzpicture}

\vspace{0.1in}

\noindent
If $\Psi$ is infinite, then the order begins with all the roots from $\overline{\gamma}$ and ends with the roots in $\overline{\delta}$.  Thus, there are two endpoints, $\gamma$ and $\delta$, having infinitely many roots between them.  We depict this situation by placing a bar between the two infinite sequences.  Thus, the three cases given in Definition~\ref{d:totalinversion} for infinite inversion sets are translated pictorially as:  the two roots are to the left of the bar (both roots are in $\overline{\gamma}$), one root is to the left of the bar and one root is to the right of the bar (one root is in $\overline{\gamma}$, one root is in $\overline{\delta}$), and the two roots are to the right of the bar (both roots are in $\overline{\delta}$).

\vspace{0.1in}

\begin{tikzpicture}[scale=2.6] \label{infinitesystemfig}
\draw (-1,0) -- (0.2,0);
\draw (1.1,0) -- (2.3,0);
\draw (0.5,0) node {$\cdots$};
\draw (0.8,0) node {$\cdots$};
\filldraw           (-1,0) circle (0.02)
                    (-0.3,0) circle (0.02)
                    (2.3,0) circle (0.02)
                    (1.6,0) circle (0.02);
\draw (-1, -0.1) node {$\gamma = \gamma_1$};
\draw (-0.3, -0.1) node {$\gamma_2$};
\draw (2.3, -0.1) node {$\delta_1 = \delta$};
\draw (1.6,-0.1) node {$\delta_2$};
\draw (0.65,0.13) -- (0.65,-0.13);
\end{tikzpicture}
\end{rem}
\noindent
We use interval notation in the standard way.  That is, given any totally ordered set $(X,\leq)$, we have $[a,b] = \{x \, : \, a \leq x \leq b\}$.
\begin{prop}  \label{p:biconvexstructure}
Let $\Lambda \subseteq \Phi^+$ be any set of positive roots.  Then $\Lambda$ is a biconvex set of positive roots if and only if for every inversion set $\Psi$ with local simple system $\Delta$, we have one of the following:
\begin{enumerate}[(1)]
\item $\Lambda \cap \Psi = \emptyset$, so that $\Lambda \cap \Delta = \emptyset$.
\item There exists $\gamma \in \Lambda \cap \Delta$ such that $\Lambda \cap \Psi = \{\gamma_1,\gamma_2,\ldots\}$, the set of positive roots in the sequence $\overline{\gamma}$.
\item There exists $\gamma \in \Lambda \cap \Delta$ such that for some $\lambda \in \Psi$, $\Lambda \cap \Psi = [\gamma, \lambda]$ in the total order $(\Psi,\leq_{\Psi,\gamma})$.
\end{enumerate}
\end{prop}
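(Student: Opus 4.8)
The plan is to show that biconvexity of $\Lambda$ can be checked one inversion set at a time, and that, inside an inversion set, ``biconvexity'' translates into a condition on a total order because ``betweenness'' in that order exactly detects membership in a convex cone.

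First I would record two elementary facts. If $\alpha \neq \beta$ are positive roots, then $\Psi := \Phi^+_{\{\alpha,\beta\}}$ is the unique inversion set containing them (Corollary~\ref{c:inversionline}) and $\Psi = \mathrm{span}(\{\alpha,\beta\}) \cap \Phi^+$ (Lemma~\ref{l:localsystemform}); since the convex cone spanned by $\alpha$ and $\beta$ lies in $\mathrm{span}(\{\alpha,\beta\})$, any positive root in that cone already lies in $\Psi$, and if it is distinct from $\alpha$ and $\beta$ then, distinct positive roots not being proportional, it lies \emph{strictly} in the cone. Next I would assemble from Lemma~\ref{l:biconvexity}, Corollary~\ref{c:biconvexityconverse}, Lemma~\ref{l:infinitebiconvexity}, Corollary~\ref{c:infinitebiconvexityconverse} (together with their $\gamma \leftrightarrow \delta$ symmetric forms), and from Proposition~\ref{p:localdisjoint} and Lemma~\ref{l:sequencerelation}, the following key fact: for an inversion set $\Psi$ with canonical simple roots $\gamma,\delta$ and distinct $a,b,c \in \Psi$, the root $b$ lies in the convex cone spanned by $a$ and $c$ if and only if $b$ lies strictly between $a$ and $c$ in the total order $\leq_{\Psi,\gamma}$; moreover $\leq_{\Psi,\delta}$ is the order reverse of $\leq_{\Psi,\gamma}$, so this ``betweenness'' is independent of which canonical simple root is singled out. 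The proof of this fact is a finite case check according to whether each of $a,b,c$ lies in $\overline{\gamma}$ or in $\overline{\delta}$ (all in $\overline{\gamma}$ when $\Psi$ is finite), matching each configuration to one of the four cited results. I expect this betweenness fact to be the main obstacle, although it is essentially bookkeeping.

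With that in hand I would reformulate biconvexity. Combining the two elementary facts with the definition of convexity, $\Lambda$ is convex if and only if, for every inversion set $\Psi$, the set $\Lambda \cap \Psi$ is order-convex in $(\Psi, \leq_{\Psi,\gamma})$ (closed under betweenness): given $\alpha,\beta \in \Lambda$ and a positive root $\lambda$ in their cone, $\lambda$ lies in $\Psi := \Phi^+_{\{\alpha,\beta\}}$ and is either $\alpha$, $\beta$, or strictly between them. The same argument applied to $\Phi^+ \setminus \Lambda$ shows $\Phi^+ \setminus \Lambda$ is convex if and only if, for every $\Psi$, the complementary set $\Psi \setminus (\Lambda \cap \Psi)$ is order-convex in $(\Psi, \leq_{\Psi,\gamma})$. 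Hence $\Lambda$ is biconvex if and only if, for every inversion set $\Psi$, both $\Lambda \cap \Psi$ and its complement in $\Psi$ are order-convex.

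Finally I would invoke two order-theoretic observations. First, if a totally ordered set is partitioned into two order-convex pieces, then one piece is a down-set (closed downward) and the other an up-set: otherwise one finds an element of one piece lying strictly between two elements of the other, contradicting order-convexity. Thus $\Lambda \cap \Psi$ is a down-set either of $(\Psi,\leq_{\Psi,\gamma})$ or of $(\Psi,\leq_{\Psi,\delta})$, i.e.\ a down-set of $(\Psi,\leq_{\Psi,\gamma'})$ for a suitable choice $\gamma'$ of canonical simple root. Second, by Proposition~\ref{p:localdisjoint} and Definition~\ref{d:totalinversion} the order $(\Psi,\leq_{\Psi,\gamma'})$ is a finite chain $\gamma'_1 < \cdots < \gamma'_m$ when $|s_{\gamma'}s_{\delta'}|$ is finite, and a chain of order type $\omega + \omega^{*}$ with the positive roots of $\overline{\gamma'}$ coming first when it is infinite; so its down-sets are exactly $\emptyset$, the set of positive roots of $\overline{\gamma'}$ (which coincides with $\Psi = [\gamma',\delta']$ in the finite case), and the intervals $[\gamma',\lambda]$ for $\lambda \in \Psi$, and in every nonempty case $\gamma' = \gamma'_1$ is the minimum, hence lies in $\Lambda \cap \Delta$. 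These are precisely alternatives (1), (2), (3), so chaining the equivalences together yields the proposition.
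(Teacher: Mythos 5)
Your argument is correct, and it reaches the proposition by a genuinely different decomposition than the one in the thesis. The thesis proves the statement by a direct, interleaved case analysis on the root sequences: for sufficiency it splits into the cases ``both roots in $\overline{\gamma}$'' versus ``one in each sequence'' and invokes Corollaries~\ref{c:biconvexityconverse} and \ref{c:infinitebiconvexityconverse} configuration by configuration; for necessity it first shows a canonical simple root must lie in $\Lambda \cap \Psi$ and then locates the smallest index missing from $\Lambda \cap \Psi$ in each of $\overline{\gamma}$ and $\overline{\delta}$, deriving a contradiction with biconvexity in each bad configuration via Lemmas~\ref{l:biconvexity} and \ref{l:infinitebiconvexity}. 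You instead factor the proof into three reusable pieces: (i) the dictionary ``$b$ lies in the cone of $a$ and $c$ iff $b$ is strictly between $a$ and $c$ in $\leq_{\Psi,\gamma}$,'' which is essentially Lemma~\ref{l:betweencone} of Chapter~4 proved early (and this is legitimate --- it depends only on results preceding the proposition, so there is no circularity); (ii) the observation that biconvexity of $\Lambda$ is equivalent to order-biconvexity of $\Lambda \cap \Psi$ inside every single inversion set; and (iii) the purely order-theoretic classification of subsets of a chain of order type $m$ or $\omega + \omega^{*}$ whose complement is also order-convex, namely the down-sets, which are exactly $\emptyset$, the initial $\omega$-block, and the intervals $[\gamma,\lambda]$. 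The underlying computations (the finite case check behind your betweenness fact) are the same as the thesis's, so the total work is comparable; what your organization buys is transparency --- all root-theoretic content is isolated in step (i), the rest is order theory --- and it front-loads a lemma the thesis needs later anyway. What the thesis's version buys is that it never has to state the general betweenness equivalence or the down-set classification explicitly, staying entirely concrete. The only points worth writing out carefully in your version are the proof that a partition of a totally ordered set into two order-convex pieces forces one piece to be a down-set, and the verification that a nonempty down-set of the $\omega + \omega^{*}$ chain meeting the $\overline{\delta}$-block equals $[\gamma_1,\delta_k]$ for the \emph{minimal} index $k$ with $\delta_k$ in the set (which exists because the relevant index set is upward closed in $\mathbb{N}^+$); both are routine and you have indicated the right arguments.
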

\begin{proof}
We first show that if statement $(1)$, $(2)$, or $(3)$ holds for any inversion set $\Psi$, then $\Lambda$ is a convex set of positive roots.  Suppose that $\alpha, \beta \in \Lambda$ are distinct and $a,b \geq 0$.  We must show that $a\alpha + b\beta \in \Phi^+$ implies $a\alpha + b\beta \in \Lambda$.  Thus we let $\Psi = \Phi^+_{\{\alpha,\beta\}}$ and suppose that $a\alpha + b\beta \in \Phi^+$.  Since $\Lambda \cap \Psi$ is nonempty, $(2)$ or $(3)$ holds.  We consider the following two cases:
\begin{enumerate}[(A)]
\item The roots $\alpha$ and $\beta$ are both in the $\overline{\gamma}$-sequence.
\item The set $\Psi$ is infinite and (without loss of generality) $\alpha$ is contained in the $\overline{\gamma}$-sequence while $\beta$ is in the $\overline{\delta}$-sequence.
\end{enumerate}
\noindent
Suppose $\alpha$ and $\beta$ are in the $\overline{\gamma}$-sequence and that $\alpha = \gamma_i$ and $\beta = \gamma_j$ for some $i,j \geq 1$.  We assume without loss of generality that $i < j$.  If $a\alpha + b\beta$ is in the $\overline{\delta}$-sequence but not the $\overline{\gamma}$-sequence, then Corollary~\ref{c:infinitebiconvexityconverse} implies that $\beta$ is in the convex cone spanned by $\alpha$ and $a\alpha + b\beta$, contradicting Lemma~\ref{l:convexcone}.  Thus $a\alpha + b\beta = \gamma_k$, where $i < k < j$ by Corollary~\ref{c:biconvexityconverse}.  If $(2)$ holds, then $a\alpha + b\beta \in \Lambda$ since $\Lambda$ contains all the roots of the $\overline{\gamma}$-sequence.  If $(3)$ holds, then $\alpha,\beta \in [\gamma,\lambda]$ and we have $\alpha \leq_{\Psi,\gamma} a\alpha + b\beta \leq_{\Psi,\gamma} \beta$, so that $a\alpha + b\beta \in [\gamma,\lambda]$ and hence $a\alpha + b\beta \in \Lambda$.\\ \\
Suppose $\Psi$ is infinite, $\alpha = \gamma_i$, and $\beta = \delta_j$.  If $a\alpha + b\beta = \gamma_k$ for some $k \geq 1$, then Corollary~\ref{c:infinitebiconvexityconverse} implies that $k > i$.  By Definition~\ref{d:totalinversion}, we have $\alpha \leq_{\Psi,\gamma} a\alpha + b\beta \leq_{\Psi,\gamma} \beta$, and hence $a\alpha + b\beta \in \Lambda$.  Similarly, if $a\alpha + b\beta = \delta_k$ for some $k \geq 1$, then Corollary~\ref{c:infinitebiconvexityconverse} implies that $k > j$.  Then we have $a\alpha + b\beta \in [\gamma,\lambda]$ since $\alpha, \beta \in [\gamma,\lambda] \subseteq \Lambda$.\\ \\
To show that $\Lambda$ is biconvex, we note that if $\Lambda$ satisfies $(1)$, $(2)$, or $(3)$ for every inversion set $\Psi$, then so does $\Phi^+ \setminus \Lambda$.  (This follows from Definition~\ref{d:totalinversion} or Remark~\ref{r:graphicline}.)  Thus, we have that $\Phi^+ \setminus \Lambda$ is convex, and hence $\Lambda$ is biconvex by Definition~\ref{d:biconvexity}.\\ \\
Turning to the converse, we suppose that $\Lambda$ is a biconvex set of positive roots.  Let $\Psi$ be an inversion set with local simple system $\Delta = \{\gamma,\delta\}$ and suppose $\Lambda \cap \Psi \neq \emptyset$.  If $\gamma,\delta \not\in \Lambda \cap \Psi$, then there exists $\lambda \in \Lambda \cap \Psi$ such that $\lambda = c\gamma + d\delta$.  Since $\gamma,\delta \in \Psi$, we have $\gamma, \delta \not\in \Lambda$, contradicting the biconvexity of $\Lambda$.  Thus we may assume there exists $\gamma \in \Lambda \cap \Delta$.\\ \\
Suppose there exists a $\lambda'$ in the $\overline{\gamma}$-sequence such that $\lambda' \not\in \Lambda \cap \Psi$.  Let $i$ be the smallest index such that $\gamma_i \not\in \Lambda \cap \Psi$ and set $\lambda = \gamma_{i-1}$.  Since $\gamma_i \in \Psi$, $\gamma_i \not\in \Lambda$.  If there exists $j \in \mathbb{N}^+$ such that $i < j \leq |s_\gamma s_\delta|$ and $\gamma_j \in \Lambda \cap \Psi$, we have that $\gamma_i$ lies in the convex cone spanned by $\gamma_{i-1}$ and $\gamma_j$, by Lemma~\ref{l:biconvexity}.  This contradicts the biconvexity of $\Lambda$.  Similarly, if $\Psi$ is infinite and there is a root $\mu$ in the $\overline{\delta}$-sequence, then by Lemma~\ref{l:infinitebiconvexity}, $\gamma_i$ lies in the convex cone spanned by $\gamma_{i-1}$ and $\mu$.  Thus, in this case, it follows that $\Lambda \cap \Psi = [\gamma,\lambda] = [\gamma,\gamma_{i-1}]$.\\ \\
Suppose there is no positive root $\lambda'$ in the $\overline{\gamma}$-sequence such that $\lambda' \not\in \Lambda \cap \Psi$.  Then if $\Psi$ is finite, we have that $\Lambda \cap \Psi = \Psi = [\gamma,\delta]$ by Proposition~\ref{p:localdisjoint}.  If instead $\Psi$ is infinite, then every root of the $\overline{\gamma}$-sequence is in $\Lambda \cap \Psi$.  If there are no roots of the $\overline{\delta}$-sequence in $\Lambda \cap \Psi$, then Condition $(2)$ is satisfied.\\ \\
Suppose every positive root of $\overline{\gamma}$ lies in $\Lambda \cap \Psi$ and that there are roots of $\Lambda \cap \Psi$ in the $\overline{\delta}$-sequence.  If $\Lambda \cap \Psi = \Psi$, then $\Lambda \cap \Psi = [\gamma,\delta]$ so that statement $(3)$ is satisfied.  Thus we suppose there exists a $\lambda'$ in the $\overline{\delta}$-sequence such that $\lambda' \not\in \Lambda \cap \Psi$.  Let $i$ be the smallest index such that $\delta_i \in \Lambda \cap \Psi$.  If $i > 1$, set $\lambda = \delta_i$ and note that $\delta_{i-1} \not\in \Lambda \cap \Psi$.  If there exists $j > i$ such that $\delta_j \not\in \Lambda \cap \Psi$, then Lemma~\ref{l:biconvexity} implies that $\delta_i$ lies in the convex cone of $\delta_{i-1}$ and $\delta_j$.  Since $\delta_{i-1},\delta_i,\delta_j \in \Psi$, this would imply that $\delta_{i-1} \not\in \Lambda$, $\delta_i \in \Lambda$, and $\delta_j \not\in \Lambda$, which contradicts the biconvexity of $\Lambda$.  Thus, if $i > 1$, Condition $(3)$ is satisfied with $\Lambda \cap \Psi = [\gamma,\lambda] = [\gamma,\delta_i]$.  Lastly, if $i = 1$, then $\gamma,\delta \in \Lambda \cap \Psi$.  By Lemma~\ref{l:infinitebiconvexity}, $\lambda'$ lies in the convex cone spanned by $\delta$ and $\gamma$, which contradicts the biconvexity of $\Lambda$ because $\lambda' \not\in \Lambda$.
\end{proof}
\begin{cor}  \label{c:biconvexstructure}
Let $w \in W$ and let $\Psi$ be an inversion set with canonical simple roots $\gamma$ and $\delta$.  Let $\overline{\gamma}$ and $\overline{\delta}$ be the associated local root sequences.  If $m = |s_\gamma s_\delta|$, then at least one of the three following statements holds:
\begin{enumerate}[(1)]
\item For some $k$ satisfying $1 \leq k \leq m$ ($k < m$ if $m = \infty$), we have $$\Phi(w) \cap \Psi = [\gamma_1, \gamma_k]$$ in the total order $(\Psi,\leq_{\Psi,\gamma})$.
\item For some $k$ satisfying $1 \leq k \leq m$ ($k < m$ if $m = \infty$), we have $$\Phi(w) \cap \Psi = [\delta_1, \delta_k]$$ in the total order $(\Psi,\leq_{\Psi,\delta})$.
\item We have $\Phi(w) \cap \Psi = \emptyset$.
\end{enumerate}
\end{cor}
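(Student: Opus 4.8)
The plan is to obtain this corollary directly from Proposition~\ref{p:biconvexstructure} applied to $\Lambda = \Phi(w)$, feeding in the extra fact that $\Phi(w)$ is finite (it has $\ell(w)$ elements and $w$ is uniquely determined by it).

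First I would note, via Lemma~\ref{l:inversionbiconvex}, that $\Phi(w)$ is a biconvex set of positive roots. Applying Proposition~\ref{p:biconvexstructure} with $\Lambda = \Phi(w)$ and the given inversion set $\Psi$, whose local simple system is $\Delta = \{\gamma,\delta\}$, at least one of the three alternatives of that proposition holds. If alternative~(1) holds, then $\Phi(w) \cap \Psi = \emptyset$, which is exactly statement~(3) of the corollary. If alternative~(2) holds, then $\Phi(w) \cap \Psi$ equals the set of all positive roots of the $\overline{\gamma}$-sequence; when $m < \infty$ this set is $\{\gamma_1,\dots,\gamma_m\} = \Psi = [\gamma_1,\gamma_m]$ by Proposition~\ref{p:localdisjoint}, i.e.\ an instance of statement~(1) of the corollary with $k = m$, and when $m = \infty$ this set is infinite and hence cannot equal $\Phi(w)$, so the case does not arise.

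The substantive case is alternative~(3) of Proposition~\ref{p:biconvexstructure}: there is a canonical simple root $\varepsilon \in \Phi(w)\cap\Delta$ and a root $\lambda\in\Psi$ with $\Phi(w)\cap\Psi = [\varepsilon,\lambda]$ in the total order $(\Psi,\le_{\Psi,\varepsilon})$. Since $\gamma$ and $\delta$ enter the construction symmetrically, it suffices to treat $\varepsilon = \gamma$; the case $\varepsilon = \delta$ is word-for-word the same with $\overline{\gamma}$ and $\overline{\delta}$ swapped and yields statement~(2) rather than statement~(1). When $m$ is finite, Proposition~\ref{p:localdisjoint} gives $\Psi = \{\gamma_1,\dots,\gamma_m\}$, so $\lambda = \gamma_k$ for a (unique) $k$ with $1 \le k \le m$ and $\Phi(w)\cap\Psi = [\gamma_1,\gamma_k]$. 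When $m = \infty$, Proposition~\ref{p:localdisjoint} writes $\Psi$ as the disjoint union of the positive parts of $\overline{\gamma}$ and $\overline{\delta}$, so $\lambda$ is either $\gamma_k$ or $\delta_k$ for some $k \ge 1$; but if $\lambda = \delta_k$ then, reading off $\le_{\Psi,\gamma}$ from Definition~\ref{d:totalinversion}, the interval $[\gamma,\lambda]$ contains every positive root of $\overline{\gamma}$ and so is infinite, contradicting $|\Phi(w)| = \ell(w) < \infty$. Hence $\lambda = \gamma_k$ for some $k \ge 1$ (automatically $k < m$), and $\Phi(w)\cap\Psi = [\gamma_1,\gamma_k]$, which is statement~(1).

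The only point requiring care — the closest thing to an obstacle here — is keeping straight the difference between the root called $\gamma$ inside Proposition~\ref{p:biconvexstructure}, which is simply whichever canonical simple root happens to belong to $\Lambda$, and the labels $\gamma$, $\delta$ fixed by the corollary's hypothesis; this bookkeeping is what produces the two separate cases~(1) and~(2). The other ingredient, that finiteness of $\Phi(w)$ excludes the unbounded possibilities that Proposition~\ref{p:biconvexstructure} permits in the infinite-dihedral case, is precisely the reason the corollary takes a cleaner form than the general proposition.
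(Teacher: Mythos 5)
Your proposal is correct and follows essentially the same route as the paper: apply Proposition~\ref{p:biconvexstructure} to the biconvex set $\Phi(w)$ (Lemma~\ref{l:inversionbiconvex}) and use finiteness of $\Phi(w)$ to rule out the unbounded alternatives, identifying $\lambda$ with some $\gamma_k$ or $\delta_k$ via Proposition~\ref{p:localdisjoint}. Your treatment is in fact slightly more explicit than the paper's (which simply ``eliminates Condition (2)''), since you note that in the finite case that condition folds into statement~(1) with $k=m$ rather than disappearing outright.
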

\begin{proof}
By Lemma~\ref{l:inversionbiconvex}, $\Phi(w)$ is a biconvex set of roots so we may apply Proposition~\ref{p:biconvexstructure}.  Since $\Phi(w)$ is finite we can eliminate Condition $(2)$ of Proposition~\ref{p:biconvexstructure}.  Thus, if $\Phi(w) \cap \Psi$ is nonempty, we have
$$\Phi(w) \cap \Psi = \left[\gamma, \lambda\right]$$ for some $\lambda \in \Phi(w) \cap \Psi$ in the total ordering $(\Psi,\leq_{\Psi,\gamma})$ or $$\Phi(w) \cap \Psi = \left[\delta, \lambda\right]$$ for some $\lambda \in \Phi(w) \cap \Psi$ in the total ordering $(\Psi,\leq_{\Psi,\delta})$. Since $\Phi(w)$ is finite, in the first case we must have $\lambda = \gamma_k$ for some $k \leq m$.  Similarly, in the second case we must have $\lambda = \delta_k$ for some $k \leq m$.
\end{proof}
\section{Consequences for standard encodings of reduced expressions}
The following definition has many variants.  In \cite[Definition 2.5]{reducedweyl}, we find the same definition applied to the situation where $a, b = 1$ in the definition given below.  In \cite[Section 5.2]{bb}, the conditions are meant to apply to a total ordering on $\Phi^{+}$.  Our variant is introduced to characterize the labelings of $\Phi^+$ that encode reduced expressions for some $w \in W$.
\begin{defn} \label{d:standard}
Let $T:\Phi^+ \rightarrow \Bbb{N}_0$ be a labeling of $\Phi^+$.  We say that $T$ is a \emph{standard labeling of $\Phi^+$} if for any pair of distinct positive roots $\alpha$ and $\beta$ the following implication holds:  \\ \\
If $\gamma$ lies in the convex cone spanned by $\alpha$ and $\beta$ then we have either
$$T(\alpha) \leq T(\gamma) \leq T(\beta)$$
\begin{center}
or
\end{center}
$$T(\beta) \geq T(\gamma) \geq T(\alpha).$$
\end{defn}
\begin{rem}
Recall by Definition~\ref{d:labeling} that $T:\Phi^+ \rightarrow \mathbb{N}$ is sequential if $T(\text{supp}(T)) = \{1,\ldots,|\text{supp}(T)|\}$.  If $T$ is both sequential and standard, the inequalities in Definition~\ref{d:standard} are strict whenever $\alpha, \beta, \gamma \in \text{supp}(T)$, i.e. whenever $T(\alpha),T(\beta),T(\gamma) \neq 0$.
\end{rem}
\begin{lem} \label{l:standard}
Let $(W,S)$ be a Coxeter system.  Let $w \in W$.  If $\textbf{x}$ is any reduced expression for $w$ and $T_{
\textbf{x}}:\Phi^+ \rightarrow \Bbb{N}$ is the standard encoding of $\textbf{x}$, then $T_{\textbf{x}}$ is a standard labeling.
\end{lem}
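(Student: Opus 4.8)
The plan is to exploit the fact that every ``suffix'' of the root sequence of $\textbf{x}$ is itself an inversion set, and hence biconvex by Lemma~\ref{l:inversionbiconvex}; the standard-labeling condition of Definition~\ref{d:standard} then drops out as a two-sided squeeze.

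Write $\textbf{x} = (s_1,\ldots,s_n)$, so that $w = s_1\cdots s_n$ with $n = \ell(w)$, let $\overline{\theta}(\textbf{x}) = (\theta_1,\ldots,\theta_n)$, and recall $T_{\textbf{x}}(\theta_k) = k$ while $T_{\textbf{x}}(\lambda) = 0$ for $\lambda\notin\Phi(w) = \{\theta_1,\ldots,\theta_n\}$. For $1\le j\le n+1$ put $t_j = s_j s_{j+1}\cdots s_n$ (with $t_{n+1}$ the identity). The first step is to establish the identity
\begin{equation*}
\Phi(t_j) = \{\theta_j,\theta_{j+1},\ldots,\theta_n\}, \qquad 1\le j\le n+1.
\end{equation*}
To see this, write $\textbf{x} = \textbf{a}\textbf{b}$ with $\textbf{a} = (s_1,\ldots,s_{j-1})$ and $\textbf{b} = (s_j,\ldots,s_n)$; the factor $\textbf{b}$ of the reduced expression $\textbf{x}$ is itself reduced (a shorter expression for $\phi(\textbf{b})$ would give a shorter expression for $w$), so by Lemma~\ref{l:basicrs} the final $n-j+1$ entries of $\overline{\theta}(\textbf{x})$ are exactly the entries of $\overline{\theta}(\textbf{b})$, and by Proposition~\ref{p:basiccoxeter}(8) these are precisely the elements of $\Phi(\phi(\textbf{b})) = \Phi(t_j)$. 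Consequently $\Phi(t_j) = \{\lambda\in\Phi^+ : T_{\textbf{x}}(\lambda)\ge j\}$, and therefore $\Phi^+\setminus\Phi(t_j) = \{\lambda\in\Phi^+ : T_{\textbf{x}}(\lambda)\le j-1\}$, for every $j$ with $1\le j\le n+1$.

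Now fix distinct positive roots $\alpha,\beta$ and a positive root $\gamma$ lying in the convex cone spanned by $\alpha$ and $\beta$, say $\gamma = a\alpha + b\beta$ with $a,b\ge 0$. I claim that
\begin{equation*}
\min\bigl(T_{\textbf{x}}(\alpha),T_{\textbf{x}}(\beta)\bigr) \;\le\; T_{\textbf{x}}(\gamma) \;\le\; \max\bigl(T_{\textbf{x}}(\alpha),T_{\textbf{x}}(\beta)\bigr),
\end{equation*}
which is exactly the disjunction required by Definition~\ref{d:standard}. For the upper bound, set $M = \max(T_{\textbf{x}}(\alpha),T_{\textbf{x}}(\beta))$, so $0\le M\le n$ and, by the previous paragraph, $\alpha,\beta\in\Phi^+\setminus\Phi(t_{M+1})$. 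Since $\Phi(t_{M+1})$ is biconvex by Lemma~\ref{l:inversionbiconvex}, the complement $\Phi^+\setminus\Phi(t_{M+1})$ is convex, and as $\gamma = a\alpha + b\beta\in\Phi^+$ lies in the convex cone of $\alpha$ and $\beta$, we get $\gamma\in\Phi^+\setminus\Phi(t_{M+1})$, i.e. $T_{\textbf{x}}(\gamma)\le M$. For the lower bound, set $m = \min(T_{\textbf{x}}(\alpha),T_{\textbf{x}}(\beta))$; if $m = 0$ there is nothing to prove, and if $m\ge 1$ then $\alpha,\beta\in\Phi(t_m)$, which is convex (again by Lemma~\ref{l:inversionbiconvex}), so $\gamma\in\Phi(t_m)$ and $T_{\textbf{x}}(\gamma)\ge m$. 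This proves the claim, and with it the lemma.

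The one step that needs care is the suffix identity for $\Phi(t_j)$ together with the bookkeeping at the extremes: at $j = 1$ it reads $\Phi(t_1) = \Phi(w)$, which is what makes the upper-bound argument work when $M = 0$, and at $j = n+1$ it reads $\Phi(t_{n+1}) = \emptyset$, which covers $M = n$. Once that is in place the whole statement reduces to the biconvexity of inversion sets recorded in Lemma~\ref{l:inversionbiconvex}, so no genuine difficulty remains; I do not expect any serious obstacle.
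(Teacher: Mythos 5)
Your proof is correct and rests on the same two ingredients as the paper's: the observation that each suffix $(s_j,\ldots,s_n)$ is reduced with inversion set $\{\theta_j,\ldots,\theta_n\}=\{\lambda:T_{\textbf{x}}(\lambda)\ge j\}$, and the biconvexity of inversion sets from Lemma~\ref{l:inversionbiconvex}. The only difference is organizational: the paper runs a four-case contradiction argument (splitting on which of $\alpha$, $\beta$, $a\alpha+b\beta$ lie in $\Phi(w)$ and where a bad label could sit), whereas you package the same suffix-plus-biconvexity idea as a single two-sided squeeze $\min(T_{\textbf{x}}(\alpha),T_{\textbf{x}}(\beta))\le T_{\textbf{x}}(\gamma)\le\max(T_{\textbf{x}}(\alpha),T_{\textbf{x}}(\beta))$, which handles all cases uniformly.
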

\begin{proof}
Let $\alpha, \beta, a\alpha + b\beta \in \Phi^+$, where $a,b > 0$ and $\alpha \neq \beta$.  Since $\Phi(w)$ is biconvex and $\alpha$ and $\beta$ can be interchanged in all proofs of standardness, there are only four cases to consider:
\begin{enumerate}[(1)]
\item $\alpha \in \Phi(w)$, $a\alpha + b\beta \in \Phi(w)$, $\beta \in \Phi(w)$;
\item $\alpha \in \Phi(w)$, $a\alpha + b\beta \in \Phi(w)$, $\beta \not\in \Phi(w)$;
\item $\alpha \in \Phi(w)$, $a\alpha + b\beta \not\in \Phi(w)$, $\beta \not\in \Phi(w)$;
\item $\alpha \not\in \Phi(w)$, $a\alpha + b\beta \not \in \Phi(w)$, $\beta \not\in \Phi(w)$.
\end{enumerate}
\noindent
Let $\textbf{x} = (s_1,\ldots,s_n)$.  By Definition~\ref{d:standardencoding}, $T_{\textbf{x}}(\lambda) = 0$ for $\lambda \not\in \Phi(w)$, so cases $(3)$ and $(4)$ satisfy $T_{\textbf{x}}(\beta) \leq T_{\textbf{x}}(a\alpha + b\beta) \leq T_{\textbf{x}}(\alpha)$ since the first two labels in the inequality are zero in both cases.\\ \\
Thus, we first suppose $\alpha, \beta, a\alpha + b\beta \in \Phi(w)$, where $a, b > 0$.  Assume without loss of generality that $T_{\textbf{x}}(\alpha) < T_{\textbf{x}}(\beta)$.  Towards a contradiction, we must consider the subcase where $T_{\textbf{x}}(a\alpha +  b\beta)$ is larger than both $T_{\textbf{x}}(\alpha)$ and $T_{\textbf{x}}(\beta)$ and the subcase where $T_{\textbf{x}}(a\alpha + b\beta)$ is smaller than both $T_{\textbf{x}}(\alpha)$ and $T_{\textbf{x}}(\beta)$.  First suppose $T_{\textbf{x}}(\alpha) = k$, $T_{\textbf{x}}(\beta) = k' > k$, and $T_{\textbf{x}}(a\alpha + b\beta) > k'$.  Then we can form the reduced expression $\textbf{x}' = (s_{k'+1},\ldots,s_n)$ for some $w' \in W$.  The root sequence for $\textbf{x}'$ contains neither $\alpha$ nor $\beta$, but it does contain $a\alpha + b\beta$.  By Lemma~\ref{l:inversionbiconvex}, this contradicts the biconvexity of $\Phi(\phi(\textbf{x}'))$.  Now suppose $T_{\textbf{x}}(a\alpha + b\beta) < k$. Then we can form the reduced expression $\textbf{x}' = (s_k,\ldots,s_n)$ for some $w' \in W$.  The root sequence of $\textbf{x}'$ contains $\alpha$ and $\beta$, but not $\lambda \alpha + \mu \beta$, a contradiction.\\ \\
We next suppose that $\alpha, a\alpha + b\beta \in \Phi(w)$ and $\beta \not\in \Phi(w)$.  Suppose that $T_{\textbf{x}}(a\alpha + b\beta) = k$ and $k > T_{\textbf{x}}(\alpha)$.  Then the expression $\textbf{x}' = (s_k,\ldots,s_n)$ is a reduced expression for some $w' \in W$.  The root sequence for $\textbf{x}'$ does not contain $\alpha$ or $\beta$ but it does contain $a\alpha +  b\beta$, contradicting the biconvexity of $\Phi(w')$.  Thus $T_{\textbf{x}}(\alpha) > T_{\textbf{x}}(a\alpha + b\beta)$.  Since $T_{\textbf{x}}(\beta) = 0$, the standardness property is satisfied for $\alpha,a\alpha + b\beta, \beta \in \Phi^+$.  As all cases are exhausted, $T_{\textbf{x}}$ is standard.
\end{proof}
\noindent
The following useful consequence of the biconvexity of $\Phi(w)$ is noted in \cite[Section 2]{fbI} and \cite[Section 2]{nilorbits} for simply laced Coxeter systems (that is, Coxeter systems such that for all $i,j \in I$, $m_{ij} = 2$ or $m_{ij} = 3$).
\begin{cor} \label{c:rootseqrem}
Let $\alpha,\beta,\lambda \in \Phi^+$ and suppose $\lambda$ lies in the convex cone spanned by $\alpha$ and $\beta$.
Let $w \in W$ and let $\textbf{x}$ be a reduced expression for $w$.
\begin{enumerate}[(1)]
\item If $\alpha,\beta,\lambda \in \Phi(w)$ then $\lambda$ occurs between $\alpha$ and $\beta$ in the root sequence  of $\textbf{x}$.
\item If $\alpha, \lambda \in \Phi(w)$, but $\beta \not\in \Phi(w)$, then $\lambda$ must occur before $\alpha$ in the root sequence of $\textbf{x}$.
\end{enumerate}
\end{cor}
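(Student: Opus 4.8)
The plan is to obtain this corollary directly from Lemma~\ref{l:standard} by unwinding the definition of the standard encoding. Recall that if $\textbf{x}$ is a reduced expression for $w$ with root sequence $\overline{\theta}(\textbf{x}) = (\theta_1,\ldots,\theta_{\ell(w)})$, then by Definition~\ref{d:standardencoding} we have $T_{\textbf{x}}(\mu) = k$ exactly when $\mu = \theta_k$, and $T_{\textbf{x}}(\mu) = 0$ when $\mu \notin \Phi(w)$. Since the $\theta_k$ are pairwise distinct and exhaust $\Phi(w)$ (Proposition~\ref{p:basiccoxeter}(8)), $T_{\textbf{x}}$ is a sequential, hence injective, labeling on $\Phi(w)$, and $T_{\textbf{x}}(\mu)$ is precisely the position of $\mu$ in $\overline{\theta}(\textbf{x})$. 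Thus ``$\lambda$ occurs between $\alpha$ and $\beta$'' translates into ``$T_{\textbf{x}}(\lambda)$ lies weakly between $T_{\textbf{x}}(\alpha)$ and $T_{\textbf{x}}(\beta)$'', and ``$\lambda$ occurs before $\alpha$'' translates into ``$T_{\textbf{x}}(\lambda) < T_{\textbf{x}}(\alpha)$'', whenever the roots in question lie in $\Phi(w)$.

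For part $(1)$, I would first dispose of degenerate cases. If $\alpha = \beta$ then $\lambda \in c(\{\alpha,\alpha\})$ forces $\lambda$ to be a positive scalar multiple of $\alpha$, hence $\lambda = \alpha = \beta$ since distinct positive roots are never proportional, and the claim is immediate; similarly if $\lambda \in \{\alpha,\beta\}$. In the remaining case $\alpha,\beta,\lambda$ are pairwise distinct, and by Lemma~\ref{l:standard} the encoding $T_{\textbf{x}}$ is a standard labeling. Applying Definition~\ref{d:standard} to the distinct positive roots $\alpha,\beta$ and the root $\lambda$ in their convex cone gives either $T_{\textbf{x}}(\alpha) \le T_{\textbf{x}}(\lambda) \le T_{\textbf{x}}(\beta)$ or $T_{\textbf{x}}(\beta) \le T_{\textbf{x}}(\lambda) \le T_{\textbf{x}}(\alpha)$; as all three roots are in $\Phi(w)$, this says exactly that $\lambda$ occurs between $\alpha$ and $\beta$ in $\overline{\theta}(\textbf{x})$.

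For part $(2)$, we have $T_{\textbf{x}}(\beta) = 0$ since $\beta \notin \Phi(w)$, while $T_{\textbf{x}}(\alpha), T_{\textbf{x}}(\lambda) \ge 1$ since $\alpha,\lambda \in \Phi(w)$; note $\alpha \neq \beta$ and $\lambda \neq \beta$ (otherwise $\beta$ would be proportional to a root of $\Phi(w)$, forcing $\beta \in \Phi(w)$), and the subcase $\lambda = \alpha$ is vacuous. Standardness of $T_{\textbf{x}}$ again yields $T_{\textbf{x}}(\alpha) \le T_{\textbf{x}}(\lambda) \le T_{\textbf{x}}(\beta)$ or $T_{\textbf{x}}(\beta) \le T_{\textbf{x}}(\lambda) \le T_{\textbf{x}}(\alpha)$. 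The first alternative forces $T_{\textbf{x}}(\lambda) \le T_{\textbf{x}}(\beta) = 0$, contradicting $\lambda \in \Phi(w)$; hence the second holds, giving $T_{\textbf{x}}(\lambda) \le T_{\textbf{x}}(\alpha)$, and since $T_{\textbf{x}}$ is injective on $\Phi(w)$ and $\lambda \neq \alpha$, in fact $T_{\textbf{x}}(\lambda) < T_{\textbf{x}}(\alpha)$, i.e. $\lambda$ occurs before $\alpha$ in the root sequence.

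There is no genuine obstacle: this is a bookkeeping consequence of Lemma~\ref{l:standard}. The only points needing care are the translation between ``occurs between/before in the root sequence'' and inequalities among values of $T_{\textbf{x}}$ (which uses that $T_{\textbf{x}}$ is sequential, hence injective, on $\Phi(w)$), and checking the degenerate cases where two of $\alpha,\beta,\lambda$ coincide so that Definition~\ref{d:standard}, stated for a \emph{pair of distinct} positive roots, may legitimately be invoked.
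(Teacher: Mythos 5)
Your proof is correct and follows essentially the same route as the paper's: both apply Lemma~\ref{l:standard} together with Definition~\ref{d:standard} to the standard encoding $T_{\textbf{x}}$ and then translate the resulting label inequalities back into positions in the root sequence. The only difference is that you spell out the degenerate coincidence cases and the injectivity of $T_{\textbf{x}}$ on $\Phi(w)$, which the paper's proof leaves implicit.
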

\begin{proof}
Let $\overline{\theta}(\textbf{x}) = (\theta_1,\ldots,\theta_{\ell(w)})$ be the root sequence of $\textbf{x}$.  Let $\theta_i = \alpha$ and $\theta_j = \lambda$.  By Definition~\ref{d:standardencoding}, the standard encoding $T_{\textbf{x}}$ of $\textbf{x}$ satisfies $T_{\textbf{x}}(\theta_n) = n$ for all $n$ satisfying $1 \leq n \leq \ell(w)$.\\ \\
Suppose $\alpha, \beta, \lambda \in \Phi(w)$ and let $\theta_k = \beta$.  Then by Lemma~\ref{l:standard} and Definition~\ref{d:standard}, either $i < j < k$ or $k < j < i$, which proves the first assertion.\\ \\
Suppose $\alpha, \lambda \in \Phi(w)$, but $\beta \not\in \Phi(w)$.  Then Lemma~\ref{l:standard} and Definition~\ref{d:standard} imply that $T_{\textbf{x}}(\beta) = 0 < T_{\textbf{x}}(\lambda) < T_{\textbf{x}}(\alpha)$, which proves that $j < i$.  The second assertion follows.
\end{proof}
\begin{lem} \label{l:wlabel}
Let $T:\Phi^+ \rightarrow \mathbb{N}$ be a sequential standard labeling of $\Phi^+$ such that $\text{supp}(T)$ is finite.  Let $\gamma, \delta$ be the canonical simple roots for an inversion set $\Psi$, let $m = |s_\gamma s_\delta|$, and let $\overline{\gamma}$ and $\overline{\delta}$ be the associated local root sequences.  We have:
\begin{enumerate}[(1)]
\item If $T(\gamma) = 0$ and $T(\delta) = 0$, then $T(\lambda) = 0$ for all $\lambda \in \Psi$.\\
\item If $T(\gamma) \neq 0$ and $T(\delta) = 0$, then
there exists $k < m$ such that $$T(\gamma_1) > \cdots > T(\gamma_k)$$ and $T(\gamma_i) = 0$ for $i > k$.\\
\item If $T(\gamma) \neq 0$ and $T(\delta) \neq 0$ then $m$ is finite and we have either $$T(\gamma_1) < \cdots < T(\gamma_m)$$
\begin{center}
or
\end{center}
$$T(\gamma_1) > \cdots > T(\gamma_m).$$
\end{enumerate}
\end{lem}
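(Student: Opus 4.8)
The plan is to derive all three statements from the betweenness results of this chapter (Lemmas~\ref{l:biconvexity} and~\ref{l:infinitebiconvexity}) together with the defining property of a standard labeling (Definition~\ref{d:standard}) and the fact that a sequential labeling is injective on its support (the remark following Definition~\ref{d:labeling}). Part~(1) is immediate: any $\lambda\in\Psi$ is a nonnegative combination $a\gamma+b\delta$ by Lemma~\ref{l:basiclocal}(3), hence lies in the convex cone spanned by $\gamma$ and $\delta$, so Definition~\ref{d:standard} forces $T(\lambda)$ to lie between $T(\gamma)=0$ and $T(\delta)=0$, giving $T(\lambda)=0$.

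For part~(2) I would first record that $\delta$ is the ``last'' root of the $\overline{\gamma}$-sequence in the relevant sense: $\gamma_m=\delta$ when $m$ is finite (Lemma~\ref{l:sequencerelation}), while $\delta_1=\delta$ in all cases. The key claim is that $T(\gamma_j)\le T(\gamma_i)$ whenever $1\le i<j$ (with $j\le m$ if $m$ is finite, the case $j=m$ being trivial since then $T(\gamma_m)=T(\delta)=0$). To prove it, apply Lemma~\ref{l:biconvexity} with the triple $(i,j,m)$ when $m$ is finite, or Lemma~\ref{l:infinitebiconvexity}(1) with $k=1$ when $m$ is infinite, to place $\gamma_j$ in the convex cone spanned by $\gamma_i$ and $\delta$; since $T(\delta)=0\le T(\gamma_i)$, Definition~\ref{d:standard} yields $0\le T(\gamma_j)\le T(\gamma_i)$. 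Hence $T(\gamma_1)\ge T(\gamma_2)\ge\cdots$, and all but finitely many of these values vanish because $\text{supp}(T)$ is finite. Letting $k$ be the largest index with $T(\gamma_k)\ne 0$ we get $k\ge 1$ (as $T(\gamma_1)=T(\gamma)\ne 0$) and $k<m$; for $i\le k$ the weak monotonicity forces $T(\gamma_i)\ge T(\gamma_k)>0$, so $\gamma_1,\dots,\gamma_k$ all lie in $\text{supp}(T)$, and since they are pairwise distinct roots (Lemma~\ref{l:sequencepositive}) the injectivity of the sequential labeling upgrades the weak inequalities to $T(\gamma_1)>\cdots>T(\gamma_k)$, as required.

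For part~(3) I would first show that $m$ must be finite, by contradiction. If $m=\infty$, choose $j\ge 2$ with $T(\gamma_j)=0$ (possible since $\text{supp}(T)$ is finite). By Lemma~\ref{l:infinitebiconvexity}(1) with $i=1$ and $k=1$, the root $\gamma_j$ lies in the convex cone spanned by $\gamma_1=\gamma$ and $\delta_1=\delta$, so Definition~\ref{d:standard} forces $0=T(\gamma_j)$ to lie between the nonnegative integers $T(\gamma)$ and $T(\delta)$, making one of them zero and contradicting the hypothesis. With $m$ finite, $\gamma_1=\gamma$ and $\gamma_m=\delta$ both have nonzero label; applying Lemma~\ref{l:biconvexity} with the triple $(1,j,m)$ and Definition~\ref{d:standard} squeezes each interior value $T(\gamma_j)$, $1<j<m$, between the two positive integers $T(\gamma_1)$ and $T(\gamma_m)$, so every $T(\gamma_i)$ is nonzero; as the $\gamma_i$ are distinct (Lemma~\ref{l:sequencepositive}), injectivity makes $T(\gamma_1),\dots,T(\gamma_m)$ pairwise distinct. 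Finally, applying Lemma~\ref{l:biconvexity} to the consecutive triples $(i-1,i,i+1)$ together with Definition~\ref{d:standard} shows that each interior value is strictly between its two neighbors, i.e. the finite sequence $T(\gamma_1),\dots,T(\gamma_m)$ has no interior local maximum or minimum, and an elementary argument then forces it to be strictly monotone, which is exactly the dichotomy in the statement.

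The squeezing arguments are routine; the only genuinely combinatorial point is the last one, that a finite sequence of distinct numbers with no interior peak or valley must be strictly monotone, which I would dispatch in a line (induction on the sign of the first difference) or split off as a tiny lemma. The main piece of bookkeeping to be careful about is the finite-$m$ case, where the $\overline{\gamma}$-sequence is $2m$-periodic and only $\gamma_1,\dots,\gamma_m$ are positive roots (Lemma~\ref{l:sequencepositive}, Corollary~\ref{c:sequencenegative}, Proposition~\ref{p:localdisjoint}), so one must be sure the relevant indices are exactly $1\le i\le m$ and that the triple hypotheses of Lemma~\ref{l:biconvexity} are met.
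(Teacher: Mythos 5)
Your proof is correct and follows essentially the same route as the paper's: both arguments rest on Lemmas~\ref{l:biconvexity} and \ref{l:infinitebiconvexity} combined with Definition~\ref{d:standard} and the injectivity of a sequential labeling on its support. The only differences are organizational --- your part (1) goes directly through Lemma~\ref{l:basiclocal}(3) rather than via Proposition~\ref{p:biconvexstructure}, your part (2) anchors every squeeze at $\delta$ (label $0$) to get weak monotonicity of the whole sequence in one step where the paper argues by contradiction about a gap in the support, and your part (3) spells out the strict-monotonicity step (distinct values with no interior local extremum) that the paper states tersely --- none of which changes the substance.
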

\begin{proof}
Since $T$ is sequential with finite support, all the inequalities in Definition~\ref{d:standard} are strict when they apply to roots in $\text{supp}(T)$.\\ \\
First suppose $T(\gamma) = 0$ and $T(\delta) = 0$.  Then by Lemmas~\ref{l:biconvexity} and \ref{l:infinitebiconvexity} and Definition~\ref{d:standard}, we have $T(\gamma_i) = T(\delta_j) = 0$ for any $i,j \in \mathbb{N}^+$.  Thus by Proposition~\ref{p:biconvexstructure}, for any $\lambda \in \Psi$, $T(\lambda) = 0$.\\ \\
For case $(2)$, we suppose that $T(\gamma) \neq 0$ and $T(\delta) = 0$.  Since $\text{supp}(T)$ is finite, and since $\delta \not\in \text{supp}(T)$, there must exist a smallest index $k \geq 1$ such that $\gamma_k \in \text{supp}(T)$ and $\gamma_{k+1} \not\in \text{supp}(T)$.  Suppose towards a contradiction that $\gamma_{k + j} \in \text{supp}(T)$ for some $j > 1$.   We have $T(\gamma_1) > T(\gamma_{k+1}) = 0$ because $\gamma_1 \in \text{supp}(T)$.  By Lemma~\ref{l:biconvexity} and Definition~\ref{d:standard}, either $$T(\gamma_1) \leq T(\gamma_{k+1}) \leq T(\gamma_{k+j})$$
\begin{center}
or
\end{center}
$$T(\gamma_1) \geq T(\gamma_{k+1}) \geq T(\gamma_{k+j}),$$ contradicting the assumption that $T(\gamma_1),T(\gamma_{k+j}) \neq 0$ and $T(\gamma_{k+1}) = 0$.\\ \\
By Lemma~\ref{l:biconvexity} and Definition~\ref{d:standard}, for any $i$ satisfying $1 < i < k$, we have $\gamma_i \in \text{supp}(T)$.  Thus, for $1 \leq i \leq k$, we have $\gamma_i \in \text{supp}(T)$, and for $k < i \leq m$ (with $i$ finite if $m = \infty$), we have $\gamma_i \not\in \text{supp}(T)$.\\ \\
Let $i$ be such that $1 < i \leq k$.  Then, $\gamma_i$ is a positive linear combination of $\gamma$ and $\delta$ and $T(\delta) = 0$, so by Definition~\ref{d:standard}, we have
$$T(\gamma_1) > T(\gamma_i) > T(\delta) = 0$$
for all $i$ such that $1 < i \leq k$.  Now, by Lemma~\ref{l:biconvexity} and Definition~\ref{d:standard}, we have $T(\gamma_1) > T(\gamma_i) > T(\gamma_{i+1})$ for any $i$ such that $2 \leq i \leq k$.  This forces $T(\gamma_1) > T(\gamma_2) > \cdots > T(\gamma_k)$.\\ \\
Turning to case $(3)$, suppose $\gamma, \delta \in \text{supp}(T)$.  Assume by way of contradiction that $m = \infty$.  Then for each $i > 1$, by Lemma~\ref{l:infinitebiconvexity} and Definition~\ref{d:standard}, $T(\gamma_i)$ lies between $T(\gamma)$ and $T(\delta)$.  This implies that $\text{supp}(T)$ is infinite, contradicting the hypotheses.
Thus $m$ is finite.\\ \\
If $T(\gamma) < T(\delta)$, then Lemma~\ref{l:biconvexity} and Definition~\ref{d:standard} imply that $$T(\gamma_1) < \cdots < T(\gamma_m) = T(\delta).$$  Similarly, if $T(\gamma) > T(\delta)$, then $T(\gamma_1) > \cdots > T(\gamma_m)$, as desired.
\end{proof}
%end of chap3.tex
%copy of chap4.tex
\chapter{Correspondences with reduced expressions for a Coxeter group element}   \label{theoremchapt}
The goal of this chapter is to establish a correspondence between subsets of $\Phi^+$ and the elements of $W$, as well a correspondence between standard labelings of $\Phi^+$ with support equal to $\Phi(w)$ and the reduced expressions for $w$.  In addition to these correspondences, we introduce an incidence structure that faithfully represents the local dihedral subsystem structure present in $\Phi^+$ and $\Phi(w)$.
\section{A reduced expression correspondence with labelings}
In \cite{reducedweyl}, Kra\'skiewicz gave a 1--1 correspondence between reduced expressions for an element $w$ of a crystallographic Weyl group and standard $w$-tableaux.  In adopting the labeling terminology of Fomin et al. \cite{balancedlabel}, we will decouple any structure associated to $w$ and its inversion set $\Phi(w)$ from structure imposed by a reduced expression.  In our terminology, a standard $w$-tableau is called a standard sequential labeling.  Kra\'skiewicz's notion of encoding a reduced expression is the same as ours.  However, we have generalized the meaning of standard labeling so that the correspondence applies in the expanded context of arbitrary Coxeter groups.  We now recall Kra\'skiewicz's Theorem and his definition of ``standard'' translated into our terminology for comparison purposes.
\begin{defn} \cite[Definition 2.5]{reducedweyl}  \label{d:krakdef}
We say that a labeling $T$ is \emph{standard} if for every $\gamma \in \Phi^+$ and every decomposition $\gamma = \alpha + \beta$ of $\gamma$ into a sum of two positive roots, $T(\gamma)$ is between $T(\alpha)$ and $T(\beta)$ (i.e. $T(\alpha) \geq T(\gamma) \geq T(\beta)$ or $T(\alpha) \leq T(\gamma) \leq T(\beta)$).
\end{defn}
\begin{theorem}[\textbf{Kra\'skiewicz}] \label{t:krakthm}
Let $W$ be the Weyl group of a crystallographic root system $\Phi$.  Let $w \in W$ and let $T$ be a sequential labeling of $\Phi^+$ such that $\text{supp}(T) = \Phi(w)$.  Then $w$ encodes a reduced expression if and only if $T$ is standard.
\end{theorem}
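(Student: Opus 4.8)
The statement is a biconditional; the forward direction is essentially immediate and the reverse one is the substantial part. Throughout I would use repeatedly that a crystallographic root system is reduced, so that no root is twice a root — hence no simple root is a sum of two positive roots — and, conversely, every non-simple positive root $\gamma$ decomposes as $\gamma = (\gamma - \alpha_s) + \alpha_s$ with $\gamma - \alpha_s \in \Phi^+$ for a suitable $s \in S$ (a standard property of crystallographic root systems, via root strings). This is the one place the crystallographic hypothesis is genuinely needed. For the forward direction, suppose $T$ encodes a reduced expression $\textbf{x}$ for $w$, so $T = T_{\textbf{x}}$; by Lemma~\ref{l:standard}, $T$ is a standard labeling in the sense of Definition~\ref{d:standard}. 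Given any decomposition $\gamma = \alpha + \beta$ of a positive root into two positive roots, we have $\alpha \neq \beta$ by reducedness and $\gamma = 1\cdot\alpha + 1\cdot\beta$ lies in the convex cone spanned by $\alpha$ and $\beta$, so Definition~\ref{d:standard} puts $T(\gamma)$ between $T(\alpha)$ and $T(\beta)$; thus $T$ is standard in Kra\'skiewicz's sense. (In short: the paper's standardness specialises to Kra\'skiewicz's once one restricts attention from convex cones to sums of two roots.)

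For the reverse direction I would induct on $\ell(w) = |\Phi(w)|$, the case $\ell(w) = 0$ being vacuous. Let $T$ be Kra\'skiewicz-standard with $\mathrm{supp}(T) = \Phi(w)$, put $n = \ell(w)$, and let $\theta_1, \dots, \theta_n$ enumerate $\Phi(w)$ with $T(\theta_k) = k$ (possible since $T$ is sequential). First I would show $\theta_n$ is simple: if $\theta_n = \alpha + \beta$ with $\alpha, \beta \in \Phi^+$, then biconvexity of $\Phi(w)$ (Lemma~\ref{l:inversionbiconvex}) forbids $\alpha$ and $\beta$ from both lying outside $\Phi(w)$, so say $\alpha \in \Phi(w)$; then $n = T(\theta_n)$ lies between $T(\alpha) \le n$ and $T(\beta) \le n$, forcing $T(\alpha) = n$, hence $\alpha = \theta_n$ — absurd since $\beta \ne 0$. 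So $\theta_n = \alpha_s$ for some $s \in S$ with $T(\alpha_s) = n$. Set $w' = ws$; since $\alpha_s \in \Phi(w)$ we have $\ell(w') = \ell(w) - 1$, and the standard fact $\Phi(w) = \{\alpha_s\} \sqcup s(\Phi(w'))$ (valid when $\ell(w's) = \ell(w') + 1$) gives $\Phi(w') = \{s(\theta_1), \dots, s(\theta_{n-1})\} \subseteq \Phi^+$. Define $T' \colon \Phi^+ \to \mathbb{N}_0$ by $T'(s(\theta_k)) = k$ for $1 \le k \le n-1$ and $T'(\lambda) = 0$ otherwise; then $T'$ is sequential with $\mathrm{supp}(T') = \Phi(w')$, and one checks $T'(\lambda) = T(s(\lambda))$ for every $\lambda \in \Phi^+ \setminus \{\alpha_s\}$ while $T'(\alpha_s) = 0$.

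Granting that $T'$ is again Kra\'skiewicz-standard, the induction hypothesis supplies a reduced expression $\textbf{x}'$ for $w'$ with $T_{\textbf{x}'} = T'$, so $\overline{\theta}(\textbf{x}') = (s(\theta_1), \dots, s(\theta_{n-1}))$; Lemma~\ref{l:basicrs} then gives $\overline{\theta}(\textbf{x}'(s)) = s[\overline{\theta}(\textbf{x}')]\,(\alpha_s) = (\theta_1, \dots, \theta_{n-1}, \alpha_s)$, and since $\phi(\textbf{x}'(s)) = w's = w$ with $\ell(\textbf{x}'(s)) = \ell(w)$, $\textbf{x}'(s)$ is a reduced expression for $w$ with $T_{\textbf{x}'(s)} = T$. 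To verify that $T'$ is standard, take a decomposition $\mu = \mu_1 + \mu_2$ of a positive root into two positive roots; then $\mu$ is non-simple, so $\mu \ne \alpha_s$ and $s(\mu) \in \Phi^+$. If $\mu_1, \mu_2 \ne \alpha_s$, then $s(\mu) = s(\mu_1) + s(\mu_2)$ is a sum of two positive roots, and standardness of $T$ makes $T'(\mu) = T(s(\mu))$ lie between $T'(\mu_1) = T(s(\mu_1))$ and $T'(\mu_2) = T(s(\mu_2))$. If instead $\mu_2 = \alpha_s$ (whence $\mu_1 \ne \alpha_s$, since $2\alpha_s \notin \Phi$), then $s(\mu_1) = s(\mu) + \alpha_s$ exhibits the positive root $s(\mu_1)$ as a sum of two positive roots, so standardness of $T$ puts $T(s(\mu_1))$ between $T(s(\mu))$ and $T(\alpha_s) = n$; as $s(\mu)$ and $s(\mu_1)$ are distinct from $\alpha_s = \theta_n$ and hence have labels $< n$, this collapses to $T(s(\mu)) \le T(s(\mu_1))$, i.e. $T'(\mu_2) = 0 \le T'(\mu) \le T'(\mu_1)$, as required.

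I expect the main obstacle to be exactly this last verification — both keeping the case analysis straight and, in the case where $\alpha_s$ occurs as a summand, recognising that the maximality $T(\alpha_s) = n$ is precisely what forces the betweenness condition to point in the useful direction. I would also stress that the crystallographic hypothesis is essential on the reverse side: it enters through the decomposition of non-simple positive roots used both to show $\theta_n$ is simple and throughout the standardness check, and for non-crystallographic $W$ (such as $H_3$ or dihedral $I_2(5)$) a Kra\'skiewicz-standard labeling need not encode a reduced expression, which is why the theorem is restricted to Weyl groups. Alternatively, one may of course simply invoke \cite{reducedweyl}.
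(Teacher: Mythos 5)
Your proof is correct, but it is worth noting that the paper does not actually prove this theorem: its ``proof'' is a bare citation to Kra\'skiewicz's paper, and the argument you have reconstructed is essentially the one the paper later adapts to prove the more general Proposition~\ref{p:betweenthm}. The structural skeleton is identical — induct on $\ell(w)$, show the root carrying the top label $n$ must be a simple root $\alpha_s$, pass to $w' = ws$ via the twisted labeling $T'(\lambda) = T(s(\lambda))$, verify $T'$ is again standard by splitting into the case where $\alpha_s$ appears in the decomposition and the case where it does not, and glue the letter $s$ back on at the end. Where you genuinely diverge is in the two places the crystallographic hypothesis does the work: you use the root-string fact that every non-simple positive root is a sum of two positive roots (both to force $T^{-1}(n)$ simple and throughout the standardness check with exact sums $\mu = \mu_1 + \mu_2$), whereas the paper's Lemma~\ref{l:decomp} replaces this with a reflection-formula argument producing a decomposition $\gamma = a\alpha + b\beta$ with possibly non-integral $a,b > 0$, which is exactly why the paper must also replace Kra\'skiewicz's notion of standard (Definition~\ref{d:krakdef}) by the convex-cone version (Definition~\ref{d:standard}) to get a statement valid for arbitrary Coxeter groups. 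Your approach buys a self-contained proof of the theorem as literally stated; the paper's buys the generalization. Your closing observation — that for non-crystallographic groups Kra\'skiewicz-standardness is too weak (e.g.\ in $I_2(5)$ no positive root is an exact sum of two others, so every sequential labeling is vacuously Kra\'skiewicz-standard while the longest element has only two reduced expressions) — is accurate and is precisely the motivation the paper gives for Definition~\ref{d:standard}. The one point to state explicitly rather than leave implicit is that in the case $\mu_2 = \alpha_s$ you need $T(s(\mu_1)) \neq n$ and $T(s(\mu)) \neq n$ (which follow from injectivity of $T$ on its support together with $s(\mu_1), s(\mu) \neq \alpha_s$) in order to rule out the reversed betweenness inequality; you gesture at this but it deserves a sentence.
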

\begin{proof}
See \cite[Theorem 2.6]{reducedweyl}.
\end{proof}
\begin{lem} \label{l:decomp}
Let $(W,S)$ be a Coxeter system with root system $\Phi$.  Let $\gamma \in \Phi^+$.  Suppose there do not exist $\alpha, \beta \in \Phi^+$, $\alpha \neq \beta$, and $a,b > 0$ such that $\gamma = a \alpha + b \beta$.  Then $\gamma$ is a simple root.
\end{lem}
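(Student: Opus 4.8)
The plan is to argue by contradiction using the geometric representation. Suppose $\gamma \in \Phi^+$ is not simple. Then $\gamma = \sum_{s \in S} c_s \alpha_s$ with all $c_s \geq 0$, and at least two of the coefficients are strictly positive. The key fact to exploit is that a non-simple positive root can be brought strictly "downward" by a simple reflection: there exists $s \in S$ such that $s(\gamma) \in \Phi^+$ and $s(\gamma) \neq \gamma$; equivalently, $B(\gamma, \alpha_s) > 0$ for some $s$ with $\alpha_s \neq \gamma$. This is standard from \cite[Section 5.4]{humph}: if $B(\gamma,\alpha_s) \leq 0$ for every $s \in S$, then $\gamma$ would have to be simple (one shows $B(\gamma,\gamma) \leq c_s$ for the relevant coordinate, forcing $\gamma = \alpha_s$). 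So first I would establish that such an $s$ exists, with $s_\gamma \neq s$, hence $\alpha_s \neq \gamma$.

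Next, with $s$ chosen so that $\mu := s(\gamma) = \gamma - 2B(\gamma,\alpha_s)\alpha_s \in \Phi^+$ and $2B(\gamma,\alpha_s) > 0$, I would set $\alpha = \alpha_s$, $\beta = \mu$, $a = 2B(\gamma,\alpha_s) > 0$, $b = 1 > 0$, so that $\gamma = a\alpha + b\beta$. It remains to check $\alpha \neq \beta$, i.e. $\alpha_s \neq \mu$. If $\alpha_s = \mu = s(\gamma)$, then applying $s$ gives $s(\alpha_s) = \gamma$, i.e. $-\alpha_s = \gamma$, impossible since both are positive roots (or: $\gamma = -\alpha_s \in \Phi^-$, contradicting $\gamma \in \Phi^+$). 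Thus $\alpha \neq \beta$, and we have produced exactly the forbidden decomposition, contradicting the hypothesis. Therefore $\gamma$ is simple.

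The main obstacle is the first step: cleanly justifying that a non-simple positive root satisfies $B(\gamma,\alpha_s) > 0$ for some non-proportional simple $\alpha_s$. The cleanest route is to invoke the contrapositive form already available in the Humphreys reference: if $B(\gamma,\alpha_s) \leq 0$ for all $s \in S$, then the reflection $s$ never decreases $\gamma$, and a short induction on $\ell(s_\gamma)$ (using Proposition~\ref{p:basiccoxeter}(4) and the fact that $\gamma = w(\alpha_t)$ for some $w,t$) forces $\gamma = \alpha_t$ to be simple. Alternatively one can argue directly: writing $1 = B(\gamma,\gamma) = \sum_s c_s B(\gamma,\alpha_s)$, if every term were $\leq 0$ the sum would be $\leq 0$, a contradiction; hence some $B(\gamma,\alpha_s) > 0$, and for that $s$ we get $c_s > 0$ and then $s(\gamma) \in \Phi^+$ by the standard argument that only the $\alpha_s$-coordinate changes and it stays nonnegative unless $\gamma = \alpha_s$. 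This direct computation is short and self-contained, so I would favor it; the only care needed is the edge case $\gamma = \alpha_s$, which is precisely the simple case we are trying to rule out.
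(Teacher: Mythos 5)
Your argument is correct, but it runs in the opposite direction from the paper's. You assume $\gamma$ is not simple and manufacture the forbidden decomposition: pick $s\in S$ with $B(\gamma,\alpha_s)>0$ (which exists since $1=B(\gamma,\gamma)=\sum_s c_s B(\gamma,\alpha_s)$ with all $c_s\ge 0$), note $\gamma\neq\alpha_s$ because $\gamma$ is not simple, so $s(\gamma)\in\Phi^+$ by Proposition~\ref{p:basiccoxeter}(3), and then $\gamma = 2B(\gamma,\alpha_s)\,\alpha_s + s(\gamma)$ is a decomposition of the prohibited form (your check that $s(\gamma)\neq\alpha_s$ closes the last loophole). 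The paper instead works with the reflection $s_\gamma$ itself: assuming no decomposition exists, it shows $s_\gamma(\delta)\in\Phi^+$ for every positive $\delta\neq\gamma$ (since $s_\gamma(\delta)\in\Phi^-$ would force $B(\delta,\gamma)>0$ and yield $\gamma$ as a positive combination of $\delta$ and $-s_\gamma(\delta)$), concludes $\Phi(s_\gamma)=\{\gamma\}$, hence $\ell(s_\gamma)=1$ and $\gamma$ is simple. Your route is arguably more elementary in that it leans only on the permutation property of a simple reflection and the positivity of $B(\gamma,\gamma)$, whereas the paper's route invokes the facts that $|\Phi(w)|=\ell(w)$ and that length-one elements are simple reflections; on the other hand the paper's version avoids the initial contradiction setup and reads as a direct computation of the inversion set of $s_\gamma$. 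Both are complete and of comparable length, so either is acceptable.
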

\begin{proof}
Suppose there do not exist $\alpha, \beta \in \Phi^+$, $\alpha \neq \beta$, and $a,b > 0$ satisfying $\gamma = a \alpha + b \beta$.  Let $\delta \in \Phi^+$ be distinct from $\gamma$.  By the reflection formula (\ref{e:reflection}), $s_\gamma (\delta) = \delta - 2B(\delta, \gamma) \gamma$.  Suppose towards a contradiction that $s_\gamma (\delta) \in \Phi^-$.  Then $\delta \in \Phi^+$ implies $\delta \neq s_\gamma(\delta)$.  Since $\gamma,\delta \in \Phi^+$ and $s_\gamma (\delta) \in \Phi^-$, we have $-2B(\delta,\gamma) < 0$ by the reflection formula (\ref{e:reflection}).  Thus $$\gamma = \frac{1}{2B(\delta,\gamma)} \delta + \frac{1}{2B(\delta,\gamma)}(-s_\gamma(\delta)),$$ contradicting the assumption that no such decomposition exists.  It follows that $s_\gamma (\delta) \in \Phi^+$ for each $\delta \in \Phi^+$ distinct from $\gamma$, so that $\Phi(s_\gamma) = \{\gamma\}$.  However, this implies that $\ell(s_\gamma) = 1$, so $\gamma$ is a simple root.
\end{proof}
\noindent
The proof of the next proposition is structurally identical to the proof of Theorem~\ref{t:krakthm}, but the details with respect to Lemma~\ref{l:decomp} and calculations verifying that a labeling is standard in the proof of Proposition~\ref{p:betweenthm} are different.  Our primary contribution to this proposition is in giving the characterization that applies for the weakened hypothesis on $(W,S)$.  For purposes of coherence, we include the full proof.\\ \\
Recall from Definition~\ref{d:labeling} that the support of $T$, denoted $\text{supp}(T)$, is the set of elements not mapped to zero.  Also recall that a sequential labeling has finite support and satisfies $T(\text{supp}(T)) = \{1,\ldots,|\text{supp}(T)|\}$.
\begin{prop} \label{p:betweenthm}
Let $(W,S)$ be a Coxeter system.  Let $T:\Phi^+ \rightarrow \mathbb{N}$ be a labeling of $\Phi^+$.  Then $T$ encodes a reduced expression $\textbf{x}$ for some $w \in W$ if and only if $T$ is a sequential standard labeling of $\Phi^+$.
\end{prop}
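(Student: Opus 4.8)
The plan is to prove the two implications separately. The forward direction is immediate: if $T = T_{\textbf{x}}$ encodes a reduced expression for $w$, then by Definition~\ref{d:standardencoding} its support is $\Phi(w)$, which is finite of size $\ell(w)$, and $T$ takes every value $1,\dots,\ell(w)$ there, so $T$ is sequential in the sense of Definition~\ref{d:labeling}; and $T$ is standard by Lemma~\ref{l:standard}. The substance is the converse, which I would prove by induction on $n = |\mathrm{supp}(T)|$ (finite, since $T$ is sequential), mirroring the structure of Kra\'skiewicz's argument for Theorem~\ref{t:krakthm}.

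The base case $n = 0$ is trivial: $T \equiv 0$ is the standard encoding of the empty reduced expression of the identity. For $n \geq 1$, the first step is to locate the unique root $\gamma$ with $T(\gamma) = n$ — unique because sequentiality makes $T$ injective on its support — and to show $\gamma$ is simple. If it were not, Lemma~\ref{l:decomp} would give distinct $\alpha,\beta \in \Phi^+$ and $a,b>0$ with $\gamma = a\alpha+b\beta$, so $\gamma$ lies in the convex cone spanned by $\alpha$ and $\beta$, and standardness forces $T(\gamma) = n$ to lie between $T(\alpha)$ and $T(\beta)$. Since $T$ takes no value exceeding $n$, whichever of $\alpha,\beta$ carries the larger label must carry label $n$, hence must equal $\gamma$; then $\gamma = a\alpha + b\beta$ forces the other of $\alpha,\beta$ to be a positive scalar multiple of $\gamma$, hence equal to $\gamma$, contradicting $\alpha \neq \beta$. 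So $\gamma = \alpha_s$ for some $s \in S$.

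Next I ``peel off'' $s$. Since $s$ permutes $\Phi^+ \setminus \{\alpha_s\}$ (Proposition~\ref{p:basiccoxeter}(3)), set $T'\colon \Phi^+ \to \mathbb{N}_0$ by $T'(\lambda) = T(s(\lambda))$ for $\lambda \neq \alpha_s$ and $T'(\alpha_s) = 0$; then $\mathrm{supp}(T') = s(\mathrm{supp}(T) \setminus \{\gamma\})$ has size $n-1$ and $T'$ is sequential. The crux is that $T'$ is standard. Given $\lambda$ in the convex cone spanned by distinct $\mu,\nu \in \Phi^+$, we may assume $\lambda \notin \{\mu,\nu\}$ (otherwise the betweenness is trivial), whence $\lambda = a\mu + b\nu$ with $a,b>0$, and then $\lambda \neq \alpha_s$, since a simple root is not a strict positive combination of two distinct positive roots (compare coordinates in $\Delta$). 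If moreover $\mu,\nu \neq \alpha_s$, apply $s$: $s(\lambda) = a\,s(\mu) + b\,s(\nu)$ with all three roots positive and distinct, so standardness of $T$ yields the betweenness of $T(s(\lambda)) = T'(\lambda)$. If instead, say, $\nu = \alpha_s$, then applying $s$ to $\lambda = a\mu + b\alpha_s$ and solving for $s(\mu)$ shows that $s(\mu)$ lies in the convex cone spanned by $s(\lambda)$ and $\alpha_s$; standardness of $T$ then places $T'(\mu) = T(s(\mu))$ between $T'(\lambda) = T(s(\lambda))$ and $T(\alpha_s) = n$, and maximality of $n$ forces $T'(\mu) \geq T'(\lambda) \geq 0 = T'(\nu)$, as required (the case $\mu = \alpha_s$ is symmetric). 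This standardness verification for $T'$, and in particular this last subcase, is the step I expect to be the main obstacle; it is resolved by the ``apply $s$ and rearrange'' device together with the maximality of the label at $\alpha_s$.

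By the inductive hypothesis, $T' = T_{\textbf{x}'}$ for a reduced expression $\textbf{x}'$ of some $w' \in W$. Put $\textbf{x} = \textbf{x}'\,(s)$. By Lemma~\ref{l:basicrs}, $\overline{\theta}(\textbf{x}) = s[\overline{\theta}(\textbf{x}')]\,(\alpha_s)$; the entries of $\overline{\theta}(\textbf{x}')$ lie in $\mathrm{supp}(T') \subseteq \Phi^+ \setminus \{\alpha_s\}$, on which $s$ acts as a permutation of $\Phi^+$, so $\overline{\theta}(\textbf{x})$ has no negative entries and $\textbf{x}$ is reduced by Proposition~\ref{p:rsdeletion}. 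It remains to check $T_{\textbf{x}} = T$: for each entry $\theta$ of $\overline{\theta}(\textbf{x}')$ one has $T_{\textbf{x}}(s(\theta)) = T_{\textbf{x}'}(\theta) = T'(\theta) = T(s(\theta))$; also $T_{\textbf{x}}(\alpha_s) = n = T(\gamma)$; and both labelings vanish off $\mathrm{supp}(T_{\textbf{x}}) = \{\alpha_s\} \cup s(\mathrm{supp}(T')) = \mathrm{supp}(T)$. This closes the induction and completes the proof.
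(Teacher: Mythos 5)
Your proof is correct and follows essentially the same route as the paper's: induction on $|\mathrm{supp}(T)|$, identifying the top-labeled root as simple via Lemma~\ref{l:decomp} and standardness, transferring the labeling through $s_\gamma$, verifying standardness of $T'$ by the same two-case ``apply $s$ and rearrange'' computation, and reassembling $\textbf{x} = \textbf{x}'\,(s)$. The only differences are cosmetic (you certify reducedness of $\textbf{x}$ via Lemma~\ref{l:basicrs} and Proposition~\ref{p:rsdeletion} where the paper uses Proposition~\ref{p:basiccoxeter}(2),(4) and Lemma~\ref{l:gendeletion}), and you are if anything slightly more explicit than the paper about the edge cases, e.g.\ why neither root in a decomposition of $\gamma$ can equal $\gamma$ and why $\lambda \neq \alpha_s$ in the standardness check.
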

\begin{proof}
Let $A = \text{supp}(T)$.
If $T:\Phi(w) \rightarrow \mathbb{N}$ is the standard encoding of a reduced expression $\textbf{x}$ for some $w \in W$, then by Definition~\ref{d:standardencoding}, $A = \Phi(w)$ and $T(A) = \{1,\ldots,|A|\}$, so $T$ is a sequential labeling.  Also, $T$ is a standard labeling by Lemma~\ref{l:standard}.\\ \\
We prove the converse by induction on $|A|$, which is finite since $T$ is assumed to be a sequential labeling.  For the base case, if $A = \emptyset$, then $T(\lambda) = 0$ for all $\lambda \in \Phi^+$, so $T$ is the standard encoding of the empty word, which is a reduced expression for the identity.  Now let $T$ be a sequential standard labeling of $\Phi^+$ such that $|A| = k$, $k > 0$.  The induction hypothesis states that if $T':\Phi^+ \rightarrow \mathbb{N}$ is a sequential standard labeling of $\Phi^+$ such that $\left|\text{supp}(T')\right| < k$, then $T'$ encodes a reduced expression $\textbf{x}'$ for some $w' \in W$.\\ \\
Let $\gamma = T^{-1}(k)$.  If $\gamma$ is not a simple root then $\gamma = a \alpha + b \beta$ for some $\alpha, \beta \in \Phi^+$, $\alpha \neq \beta$,  by Lemma~\ref{l:decomp}.  Since $k$ is the largest value in the image of $T$ and since $T$ is a bijection when restricted to its support, we have that $T(\alpha) < k$ and $T(\beta) < k$.  This contradicts the assumption that $T$ is standard.  Thus, $\gamma$ must be simple.  We define a labeling $T':\Phi^+ \rightarrow \mathbb{N}$ by
\begin{equation*}
T'(\lambda) = \begin{cases}
T(s_\gamma(\lambda)) & \text{if $\lambda \neq \gamma$};\\
0 & \text{if $\lambda = \gamma$}.\\
\end{cases}
\end{equation*}
\noindent
To show that $T'$ is standard, there are two cases to consider:  one of the positive roots in a decomposition is $\gamma$ or neither of the positive roots in a decomposition is $\gamma$.\\ \\
We first suppose that $a,b > 0$, $\alpha, \beta, a \alpha + b\beta \in \Phi^+$ are such that $\alpha \neq \beta$, and neither $\alpha$ nor $\beta$ is $\gamma$.  Then $s_\gamma (\alpha)$, $s_\gamma (\beta)$, and $a s_\gamma(\alpha) + b s_\gamma(\beta)$ are all positive roots by Proposition~\ref{p:basiccoxeter}(3).  Thus $$T(s_\gamma(\alpha)) \leq T(a s_\gamma(\alpha) + b s_\gamma(\beta)) \leq T(s_\gamma(\beta))$$
\begin{center}
if and only if
\end{center}
$$T'(\alpha) \leq T'(a \alpha + b \beta) \leq T'(\beta)$$ (and similarly for the reverse inequalities).  Thus, in this case, $T'$ satisfies one of the inequalities required for $T'$ to be standard.\\ \\
Now suppose that one of the roots in a decomposition is $\gamma$.  That is, suppose $a,b > 0$, $\alpha, \beta, a\alpha + b\beta \in \Phi^+$, $\alpha \neq \beta$, and without loss of generality suppose $\alpha = \gamma$.  Since $T(\alpha)$ is the maximum value of $T(\Phi^+)$ and $T$ is a standard labeling, we must have $T(\alpha) \geq T(a\alpha + b\beta) \geq T(\beta)$.  Since $\alpha$ is a simple root, Proposition~\ref{p:basiccoxeter}(3) implies that $as_\alpha(\alpha) + b s_\alpha(\beta)$ and $s_\alpha(\beta)$ are positive roots.  From $as_\alpha(\alpha) + bs_\alpha(\beta) = -a \alpha + bs_\alpha(\beta)$, it follows that $s_\alpha(\beta) = \frac{1}{b} (-a \alpha + b s_\alpha(\beta)) + \frac{a}{b} \alpha$ is a nonnegative linear combination of $as_\alpha(\alpha) + bs_\alpha(\beta)$ and $\alpha$.  Since $T$ is standard and $T(\alpha)$ is the maximum of $T(\Phi^+)$, we have $T(s_\alpha(a \alpha + b \beta)) \leq T(s_\alpha(\beta)) \leq T(\alpha)$.  This implies $T'(\alpha) \leq T'(a\alpha + b\beta) \leq T'(\beta)$, because $T'(\alpha) = 0$.  Thus, in both cases, one of the inequalities required for $T'$ to be standard is satisfied.  Thus $T'$ is standard.\\ \\
Since $T$ is sequential, and $s_\gamma$ is a bijection, and the root with the highest label in $T$ is labeled $0$ in $T'$, it follows that $T'$ is sequential.  Thus, the inductive hypothesis implies that $T'$ is the standard encoding for a reduced expression $\textbf{x}'$ for some $w' \in W$.  Since $\gamma$ is simple, we may assume $\gamma = \alpha_i$ for some $i \in I$.  We let $\textbf{x} = \textbf{x}' \, i$ and $w = w's_\gamma$.  Then $T'(\gamma) = 0$ and $T'$ is the standard encoding for $\textbf{x}'$, so it follows from Definition~\ref{d:standardencoding} that $\gamma \not\in \Phi(w')$ and $w'(\gamma)$ is therefore a positive root.  By Proposition~\ref{p:basiccoxeter} parts (2) and (4), $\ell(w s_\gamma) = \ell(w) + 1$, so $\textbf{x}$ is a reduced expression for $w$.  Furthermore, we can apply Lemma~\ref{l:gendeletion} (with $\textbf{b}$ being the empty string) to get $\overline{\theta}(\textbf{x}) = s_\gamma [\overline{\theta}(\textbf{x}')] (\theta_i)$.  From the construction of $T'$ we get that $T$ is the standard encoding for $\textbf{x}$.
\end{proof}
\noindent
The next corollary is used to construct the correspondence of Proposition~\ref{p:correspondence1}.
\begin{cor} \label{c:labelcharacterize}
Let $(W,S)$ be a Coxeter system and let $w \in W$.  Let $T:\Phi^+ \rightarrow \mathbb{N}$ be a labeling of $\Phi^+$.  Then $T$ encodes a reduced expression $\textbf{x}$ for $w$ if and only if $T$ is a sequential standard labeling of $\Phi^+$ such that $\text{supp}(T) = \Phi(w)$.
\end{cor}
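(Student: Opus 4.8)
The plan is to deduce this directly from Proposition~\ref{p:betweenthm} together with the fact that an element of $W$ is determined by its inversion set (Proposition~\ref{p:basiccoxeter}(7)). Proposition~\ref{p:betweenthm} already characterizes which labelings $T$ encode a reduced expression for \emph{some} element of $W$; all that remains is to pin down \emph{which} element, and the support of $T$ does exactly that.

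For the forward implication, suppose $T$ encodes a reduced expression $\textbf{x}$ for $w$. By Definition~\ref{d:standardencoding}, $T = T_{\textbf{x}}$ assigns the label $k$ to the $k$-th entry $\theta_k$ of the root sequence $\overline{\theta}(\textbf{x})$ and the label $0$ to every root not in $\Phi(w)$. Since $\textbf{x}$ is reduced, Proposition~\ref{p:basiccoxeter}(8) tells us the entries of $\overline{\theta}(\textbf{x})$ are precisely the $\ell(w)$ elements of $\Phi(w)$, so $T$ restricted to its support is a bijection onto $\{1,\ldots,|\Phi(w)|\}$; hence $\text{supp}(T) = \Phi(w)$ and $T$ is sequential. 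That $T$ is a standard labeling is exactly Lemma~\ref{l:standard} (equivalently, a special case of the easy direction of Proposition~\ref{p:betweenthm}).

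For the converse, suppose $T$ is a sequential standard labeling of $\Phi^+$ with $\text{supp}(T) = \Phi(w)$. By Proposition~\ref{p:betweenthm}, $T$ encodes a reduced expression $\textbf{x}$ for some $w' \in W$. Applying the forward implication already established (or Definition~\ref{d:standardencoding} directly), $\text{supp}(T) = \Phi(w')$. Combining this with the hypothesis $\text{supp}(T) = \Phi(w)$ gives $\Phi(w) = \Phi(w')$, whence $w = w'$ by Proposition~\ref{p:basiccoxeter}(7). Thus $\textbf{x}$ is a reduced expression for $w$ and $T$ encodes it.

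I do not expect a genuine obstacle here; this is a routine bookkeeping corollary of Proposition~\ref{p:betweenthm}. The only point requiring a moment's care is verifying that the support of the standard encoding of a reduced expression for $w$ is exactly $\Phi(w)$, which rests on the root sequence of a reduced expression consisting precisely of the inversion set (Proposition~\ref{p:basiccoxeter}(8)); everything else is an immediate appeal to Proposition~\ref{p:betweenthm} and the uniqueness of $w$ given $\Phi(w)$.
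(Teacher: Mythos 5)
Your proof is correct and follows essentially the same route as the paper: the forward direction reads off $\text{supp}(T) = \Phi(w)$ from Definition~\ref{d:standardencoding} and invokes Proposition~\ref{p:betweenthm} (or Lemma~\ref{l:standard}) for standardness, while the converse applies Proposition~\ref{p:betweenthm} to get a reduced expression for some $w'$ and then uses $\Phi(w) = \Phi(w')$ together with Proposition~\ref{p:basiccoxeter}(7) to conclude $w = w'$. No gaps.
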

\begin{proof}
If $T$ encodes a reduced expression $\textbf{x}$ for $w$ then $\text{supp}(T) = \Phi(w)$ by Definition~\ref{d:standardencoding}.  By Proposition~\ref{p:betweenthm}, $T$ is a sequential standard labeling of $\Phi^+$.\\ \\
Conversely, if $T$ is a standard sequential labeling of $\Phi^+$, then Proposition~\ref{p:betweenthm} implies that $T$ encodes a reduced expression $\textbf{x}$ for some $w \in W$.  Since we are also assuming that $\text{supp}(T) = \Phi(w)$, Definition~\ref{d:standardencoding} implies that $\Phi(w) = \Phi(\phi(\textbf{x}))$.  By Proposition~\ref{p:basiccoxeter}(7), $w = \phi(\textbf{x})$ so that $\textbf{x}$ is a reduced expression for $w$.
\end{proof}
\noindent
We regard the next corollary as fundamental to our work in Chapter 5, though it might be folklore.  In the paper \cite{coxeterorderings}, Bj{\"{o}}rner gives a characterization of subsets of $\Phi^+$ that take the form $\Phi(w)$.  The characterization is that the set must be finite and biconvex (see \cite[Proposition 3]{coxeterorderings}), so long as $W$ is a finite Coxeter group.  In \cite[Section 2]{nilorbits}, Fan and Stembridge assert that the same characterization holds in the setting of arbitrary simply-laced Coxeter groups.  Since there is a natural correspondence between reflections and positive roots, it is also interesting to characterize the corresponding subsets of reflections.  In \cite[Lemma 2.11]{dyerhecke}, Dyer characterizes such subsets as the initial sections of reflection orders.  He then notes (see \cite[Remark 2.12]{dyerhecke}) that through the natural correspondence between reflections and positive roots, the initial sections of reflection orders correspond to biconvex sets but that the converse is open.\\ \\
The next corollary asserts that sets of the form $\Phi(w)$ can be characterized as the finite biconvex sets of positive roots in the generalized setting of arbitrary Coxeter groups.  Via Dyer's \cite[Lemma 2.11]{dyerhecke}, it then follows that finite biconvex sets of positive roots correspond to initial sections of reflection orders.  Whether infinite biconvex sets of positive roots (in a necessarily infinite Coxeter group) correspond to initial sections of reflection orders is open.
\begin{cor}  \label{c:finitebiconvex}
Let $A \subseteq \Phi^+$.  Then $A = \Phi(w)$ for some $w \in W$ if and only if $A$ is finite and biconvex.
\end{cor}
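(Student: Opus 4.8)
The plan is to prove the two directions separately. The forward direction is immediate: if $A=\Phi(w)$, then $A$ is finite because $\Phi(w)$ has $\ell(w)$ elements, and $A$ is biconvex by Lemma~\ref{l:inversionbiconvex}. The substance is the converse, which I will prove by induction on $|A|$, the base case $A=\emptyset=\Phi(1)$ being trivial; concretely, one may either exhibit $w$ directly or build a sequential standard labeling of $\Phi^+$ with support $A$ and then invoke Corollary~\ref{c:labelcharacterize}. Both routes rest on the same two facts about a nonempty finite biconvex set $A$: that $A$ contains a simple root, and that deleting that simple root and reflecting produces a smaller biconvex set.

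First I would prove that a nonempty finite biconvex $A\subseteq\Phi^+$ contains a simple root. Choose $\lambda\in A$ of minimal height $\mathrm{ht}(\lambda)=\sum_s c_s$, where $\lambda=\sum_s c_s\alpha_s$ with all $c_s\ge 0$. Suppose $\lambda$ is not simple. Since $1=B(\lambda,\lambda)=\sum_s c_s\,B(\lambda,\alpha_s)$ with every $c_s\ge 0$, some $B(\lambda,\alpha_s)>0$. Because $\lambda\neq\alpha_s$, Proposition~\ref{p:basiccoxeter}(3) gives $s(\lambda)=\lambda-2B(\lambda,\alpha_s)\alpha_s\in\Phi^+$, and $\mathrm{ht}(s(\lambda))=\mathrm{ht}(\lambda)-2B(\lambda,\alpha_s)<\mathrm{ht}(\lambda)$. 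Moreover $\lambda$ strictly lies in the convex cone spanned by $s(\lambda)$ and $\alpha_s$. If $A$ contained no simple root then $\alpha_s\notin A$, so the contrapositive of condition $(2)$ in Definition~\ref{d:biconvexity} (applied to $\lambda\in A$) forces $s(\lambda)\in A$, contradicting minimality of $\mathrm{ht}(\lambda)$. Hence some element of $A$ is a simple root $\alpha_i$.

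Next I would pass to $A':=s_{\alpha_i}(A\setminus\{\alpha_i\})$. By Proposition~\ref{p:basiccoxeter}(3), $s_{\alpha_i}$ is a linear involution that permutes $\Phi^+\setminus\{\alpha_i\}$, so $|A'|=|A|-1$ and $A'\subseteq\Phi^+\setminus\{\alpha_i\}$. The key verification is that $A'$ is again biconvex. Convexity of $A'$ follows from convexity of $A$: applying $s_{\alpha_i}$ to a positive root in the convex cone of two elements of $A'$ yields a positive root (it cannot be $\alpha_i$) in the convex cone of two elements of $A$, hence in $A$, hence its image lies in $A'$. Convexity of $\Phi^+\setminus A'=\{\alpha_i\}\cup s_{\alpha_i}(\Phi^+\setminus A)$ follows the same way from convexity of $\Phi^+\setminus A$, with the lone extra case where $\alpha_i$ is one of the two spanning roots handled by using that $\alpha_i\in A$ and $A$ is convex. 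By the induction hypothesis $A'=\Phi(w')$ for some $w'\in W$; setting $w=w's_{\alpha_i}$, I would check $\Phi(w)=A$ by cases on a positive root $\alpha$: when $\alpha=\alpha_i$, the fact $\alpha_i\notin A'=\Phi(w')$ gives $w'(\alpha_i)\in\Phi^+$, so $w(\alpha_i)=-w'(\alpha_i)\in\Phi^-$ and $\alpha_i\in\Phi(w)$; when $\alpha\neq\alpha_i$, $\alpha\in\Phi(w)\iff s_{\alpha_i}(\alpha)\in\Phi(w')=A'\iff\alpha\in s_{\alpha_i}(A')=A\setminus\{\alpha_i\}$. (Iterating the step $A\mapsto A'$ is exactly what assembles a sequential standard labeling with support $A$, so Corollary~\ref{c:labelcharacterize} gives $w$ just as well.)

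I expect the first step — that a nonempty finite biconvex set contains a simple root — to be the main obstacle, since this is the one place the finiteness and biconvexity hypotheses genuinely interact; the height‑minimization argument together with the sign identity $1=\sum_s c_s\,B(\lambda,\alpha_s)$ is what resolves it. The biconvexity of $s_{\alpha_i}(A\setminus\{\alpha_i\})$ is routine but requires careful bookkeeping of which roots map to which, and the final identity $\Phi(w's_{\alpha_i})=A$ is the standard computation.
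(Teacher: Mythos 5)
Your proof is correct, but it follows a genuinely different route from the paper's. The paper proves the converse by importing a total ordering on $\Phi^+$ compatible with convex cones (citing \cite[Proposition 5.2.1]{bb}), using it together with biconvexity of $A$ to build a sequential standard labeling supported on $A$, and then invoking Proposition~\ref{p:betweenthm} to realize that labeling as the standard encoding of a reduced expression. You instead run a direct induction on $|A|$ at the level of sets: your height-minimization argument (via $1=B(\lambda,\lambda)=\sum_s c_s B(\lambda,\alpha_s)$, the sign of $B(\lambda,\alpha_s)$, and condition $(2)$ of Definition~\ref{d:biconvexity}) shows a nonempty finite biconvex set contains a simple root --- this is a different mechanism from the paper's Lemma~\ref{l:decomp}, which characterizes simple roots as those admitting no decomposition by computing $\Phi(s_\gamma)$ --- and then you verify that $s_{\alpha_i}(A\setminus\{\alpha_i\})$ is again biconvex and reassemble $w=w's_{\alpha_i}$. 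Your inductive step mirrors the one inside the proof of Proposition~\ref{p:betweenthm}, so in effect you are re-deriving the relevant special case of that proposition rather than quoting it; the trade-off is that your argument is self-contained and avoids the external existence of a reflection-compatible total order on all of $\Phi^+$ (nontrivial for infinite $W$), at the cost of redoing the biconvexity bookkeeping for $s_{\alpha_i}(A\setminus\{\alpha_i\})$, including the case you correctly flag where $\alpha_i$ is one of the spanning roots. Both arguments are sound; yours isolates more clearly the one place where finiteness and biconvexity genuinely interact.
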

\begin{proof}
If $A = \Phi(w)$, then $A$ is finite and biconvex by Lemma~\ref{l:inversionbiconvex}.  By \cite[Proposition 5.2.1]{bb}, there exists a total ordering $<$ on $\Phi^+$ such that if $a,b > 0$ and $\alpha, \beta, a\alpha + b\beta \in \Phi^+$ then $\alpha < a\alpha + b\beta < \beta$ or $\beta < a\alpha + b\beta < \alpha$. We form $T:\Phi^+ \rightarrow \mathbb{N}$ by setting $T(\lambda) = 0$ for $\lambda \not\in A$ and $T(\alpha) = i$ if $\alpha$ is the $i$-th highest root in $A$ relative to the total ordering $<$.  It follows that $T$ is standard, so that by Proposition~\ref{p:betweenthm} there exists a reduced expression $\textbf{x}$ for some $w \in W$ such that $T$ is the standard encoding of $\textbf{x}$.  By Definition~\ref{d:standardencoding}, $A = \Phi(w)$.
\end{proof}
\begin{prop} \label{p:correspondence1}
Let $(W,S)$ be a Coxeter system and let $w \in W$.  Then there is a 1--1 correspondence between the set $\mathcal{R}(w)$ of reduced expressions for $w$ and the set $\textrm{Lab}(w)$ of all sequential standard labelings $T$ such that $\text{supp}(T) = \Phi(w)$.  The correspondence is given by $\textbf{x} \longleftrightarrow T_{\textbf{x}}$.
\end{prop}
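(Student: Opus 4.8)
The plan is to verify that the assignment $\textbf{x} \mapsto T_{\textbf{x}}$ is a well-defined function $\mathcal{R}(w) \to \textrm{Lab}(w)$ and then to establish that it is both injective and surjective. Essentially all of the substantive content has already been isolated in Lemma~\ref{l:standard}, Corollary~\ref{c:labelcharacterize}, and Proposition~\ref{p:basiccoxeter}(9); what remains is bookkeeping, so I do not expect a genuine obstacle.

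First I would check that the map lands in $\textrm{Lab}(w)$. Given $\textbf{x} \in \mathcal{R}(w)$, Definition~\ref{d:standardencoding} gives that $T_{\textbf{x}}$ is a labeling of $\Phi^+$ with $\text{supp}(T_{\textbf{x}}) = \Phi(w)$ and $T_{\textbf{x}}(\Phi(w)) = \{1,\ldots,\ell(w)\} = \{1,\ldots,|\text{supp}(T_{\textbf{x}})|\}$, so $T_{\textbf{x}}$ is sequential; Lemma~\ref{l:standard} gives that it is standard. Hence $T_{\textbf{x}} \in \textrm{Lab}(w)$, and the map is well defined.

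For injectivity, the key observation is that $T_{\textbf{x}}$ determines the root sequence $\overline{\theta}(\textbf{x})$: since $T_{\textbf{x}}$ restricted to $\Phi(w)$ is a bijection onto $\{1,\ldots,\ell(w)\}$ satisfying $T_{\textbf{x}}(\theta_k) = k$, one recovers $\theta_k = T_{\textbf{x}}^{-1}(k)$ for $1 \leq k \leq \ell(w)$. Therefore, if $\textbf{x},\textbf{x}' \in \mathcal{R}(w)$ with $T_{\textbf{x}} = T_{\textbf{x}'}$, then $\overline{\theta}(\textbf{x}) = \overline{\theta}(\textbf{x}')$, and Proposition~\ref{p:basiccoxeter}(9) forces $\textbf{x} = \textbf{x}'$. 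For surjectivity, let $T \in \textrm{Lab}(w)$, i.e. $T$ is a sequential standard labeling of $\Phi^+$ with $\text{supp}(T) = \Phi(w)$. By Corollary~\ref{c:labelcharacterize} (applied with this $w$), $T$ encodes a reduced expression $\textbf{x}$ for $w$; this says precisely that $\textbf{x} \in \mathcal{R}(w)$ and $T = T_{\textbf{x}}$.

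The only place calling for any care is the step asserting that $T_{\textbf{x}}$ determines $\overline{\theta}(\textbf{x})$: this uses the sequential property to guarantee injectivity of $T_{\textbf{x}}$ on its support, and then one should invoke Proposition~\ref{p:basiccoxeter}(9) rather than attempt to reconstruct $\textbf{x}$ directly. Alternatively, injectivity could be absorbed into surjectivity by noting that the reduced expression produced by Corollary~\ref{c:labelcharacterize} from $T_{\textbf{x}}$ must coincide with $\textbf{x}$, but appealing to Proposition~\ref{p:basiccoxeter}(9) keeps the argument cleaner.
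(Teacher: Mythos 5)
Your proof is correct and follows essentially the same route as the paper: both rest on Corollary~\ref{c:labelcharacterize} (equivalently Lemma~\ref{l:standard} together with Proposition~\ref{p:betweenthm}) for well-definedness and surjectivity, and on the fact that $T_{\textbf{x}}$ determines $\overline{\theta}(\textbf{x})$ combined with Proposition~\ref{p:basiccoxeter}(9) for injectivity. The only cosmetic difference is that you phrase the argument as injectivity plus surjectivity where the paper constructs an explicit inverse map $\mathcal{G}$, but the underlying content is identical.
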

\begin{proof}
Let $\textbf{x}$ be a reduced expression for $w$.  By Corollary~\ref{c:labelcharacterize}, the standard encoding $T_{\textbf{x}}$ satisfies $\text{supp}(T_{\textbf{x}}) = \Phi(w)$ and is a sequential standard labeling of $\Phi^+$.  Thus we may define $\mathcal{F}:\mathcal{R}(w) \rightarrow \textrm{Lab}(w)$ by $\mathcal{F}(\textbf{x}) = T_{\textbf{x}}$.\\ \\
Conversely, Corollary~\ref{c:labelcharacterize} also shows that given any sequential standard labeling $T$ of $\Phi^+$ such that $\text{supp}(T) = \Phi(w)$, there exists a reduced expression $\textbf{x}$ for $w$ such that $T = T_{\textbf{x}}$.  By Definition~\ref{d:standardencoding} and Proposition~\ref{p:basiccoxeter}(9), the reduced expression $\textbf{x}$ is unique.  Thus we may define $\mathcal{G}:\mathrm{Lab}(w) \rightarrow \mathcal{R}(w)$ by $\mathcal{G}(T_{\textbf{x}}) = \textbf{x}$.\\ \\
Then we have $\mathcal{G}(\mathcal{F}(\textbf{x})) = \mathcal{G}(T_{\textbf{x}}) = \textbf{x}$, by the definitions of $\mathcal{F}$ and $\mathcal{G}$.\\ \\
Similarly, by the definitions of $\mathcal{F}$ and $\mathcal{G}$, we have $\mathcal{F}(\mathcal{G}(T_{\textbf{x}})) = \mathcal{F}(\textbf{x}) = T_{\textbf{x}}$, so that $\mathcal{F}$ and $\mathcal{G}$ are inverse to each other.
\end{proof}
\section{Combinatorial Models}
In this section, for any $w \in W$, we give a combinatorial model of the set $\Phi(w)$ that faithfully represents the local root structure.  We give a related combinatorial representation of the reduced expressions for $w$.  The terminology we introduce is very similar to that of hypergraphs or incidence structures from graph theory, but we need some additional features.
\begin{defn} \label{d:segmentstructure}
A \emph{segment structure} is a triple $\mathcal{I} = (P,\mathcal{L}, B)$, where $P$ is a set of elements called \emph{points}, $\mathcal{L}$ is a set of non-empty subsets $L$ of $P$ called \emph{lines} satisfying $|L| \geq 2$, and $B$ is a ternary relation on $P$ satisfying:
\begin{enumerate}[(B1)]
\item  If $B(p_1,p_2,p_3)$ holds, then $p_1$, $p_2$, and $p_3$ are pairwise distinct.
\item If $B(p_1,p_2,p_3)$ holds, then there exists $L \in \mathcal{L}$ such that $p_1,p_2,p_3 \in L$.
\item If $p_1,p_2,p_3 \in L$ for some $L \in \mathcal{L}$ and $p_1,p_2,p_3$ are pairwise distinct, then there exists a permutation $$\sigma:\{p_1,p_2,p_3\} \rightarrow \{p_1,p_2,p_3\}$$ such that $B(\sigma(p_1),\sigma(p_2),\sigma(p_3))$ holds.
\item If $B(p_1, p_2, p_3)$ holds, then $B(p_3,p_2,p_1)$ holds, and for any permutation $$\sigma:\{p_1,p_2,p_3\} \rightarrow \{p_1,p_2,p_3\}$$ other than the identity or the transposition $(p_1 \ p_3)$, the ternary relation $B(\sigma(p_1),\sigma(p_2),\sigma(p_3))$ does not hold.
\end{enumerate}
\noindent
If $B(p_1,p_2,p_3)$ holds, then we say that \emph{$p_2$ lies between $p_1$ and $p_3$}.  If $p \in L$ and there do not exist $p', p'' \in L$ such that $B(p', p, p'')$, then we say $p$ is an \emph{endpoint of $L$}.  Otherwise, $p$ is an \emph{intermediate point of $L$}.  If for all $L \in \mathcal{L}$, $p$ is an endpoint of $L$, then we say $p$ is an \emph{endpoint of $\mathcal{I}$}.
\end{defn}
\begin{rem}  \label{rem:lines}
Observe that the lines $L \in \mathcal{L}$ may be infinite.  In the case that $L$ has fewer than three elements, no betweenness relations are satisfied, so every point in $L$ is an endpoint of $L$.
\end{rem}
\noindent
Pictorially, we represent a segment structure with points and (not necessarily straight) edges, similar to the drawings of graphs in graph theory.  However, we prefer to have any collinear points to appear collinear and for the betweenness relations to be respected in the pictorial representation.
\begin{ex} \label{e:segmentex}
Let $A_3$ be the Coxeter system with set generating set given by $S = \{s,t,u\}$ and Coxeter relations $m_{s,t} = m_{t,u} = 3$ and $m_{s,u} = 2$.  As an abstract group, $W(A_3)$ is isomorphic to the symmetric group $S_4$.  It is known that
\begin{equation*}
\Phi^+ = \{ \alpha_s,\alpha_t,\alpha_u, \alpha_s + \alpha_t, \alpha_t + \alpha_u, \alpha_s + \alpha_t + \alpha_u \}.  \end{equation*}
We let $\mathcal{I} = (P, \mathcal{L}, B)$ be the segment structure obtained by letting $P = \Phi^+$, and $\mathcal{L} = \{ \Phi^+_{\{\alpha,\beta\}} \ : \ \alpha, \beta \in \Phi^+ \text{ and } \alpha \neq \beta \}$.  If $\gamma$, $\delta$ are the canonical simple roots for some inversion set $\Phi^+_{\alpha, \beta}$, and $\gamma_i,\gamma_j, \gamma_k \in \Phi^+$ are entries in the associated $\overline{\gamma}$-sequence, then we set $B(\gamma_i,\gamma_j,\gamma_k)$ if and only if we have either $i < j < k$ or $k < j < i$.  The pictorial representation is given below.

\vspace{0.1cm}

\begin{tikzpicture}[scale=2.6] \label{Aexamplefig}
\filldraw           (3,0) circle (0.02)
                    (1.5,0) circle (0.02)
                    (1.25,0.5) circle (0.02)
                    (1.83,0.33) circle (0.02)
                    (0,0) circle (0.02)
                    (2.5,1) circle (0.02);
\draw (0,0) -- (3,0);
\draw (3,0) -- (2.5,1);
\draw (2.5,1) -- (0,0);
\draw (1.5,0) -- (2.5,1);
\draw (1.25,0.5) -- (3,0);
\draw (0,0) -- (1.83,0.33);
\draw (1.25,0.5) -- (1.5,0);
\draw (0, -0.1) node {$\alpha_t$};
\draw (1.5, -0.1) node {$\alpha_t + \alpha_u$};
\draw (3, -0.1) node {$\alpha_u$};
\draw (1,0.6) node {$\alpha_s + \alpha_t$};
\draw (2.5,1.1) node {$\alpha_s$};
\draw (2.33,0.36) node {$\alpha_s + \alpha_t + \alpha_u$};
\end{tikzpicture}

\vspace{0.1cm}

\noindent
Observe that $\alpha_s$, $\alpha_t$, and $\alpha_u$ satisfy the definition for an endpoint of $\mathcal{I}$.  On the other hand, $\alpha_s + \alpha_t$ is an endpoint of the line $$L = \{\alpha_s + \alpha_t, \alpha_s + \alpha_t + \alpha_u, \alpha_u\},$$ but is an intermediate point of the line $$L = \{\alpha_s, \alpha_s + \alpha_t, \alpha_t\},$$ and hence is not an endpoint of $\mathcal{I}$.
\end{ex}
\noindent
In the next definition we give a ternary relation on $\Phi^+$ that is intended to capture the notion of ``betweenness'' for inversion sets.
\begin{defn} \label{d:betweenness}
Let $\lambda,\mu,\nu \in \Phi^+$ be pairwise distinct positive roots.  If there exists an inversion set $\Psi$ with a canonical simple root $\gamma$ such that either $\lambda <_{\Psi,\gamma} \mu <_{\Psi,\gamma} \nu$ or $\nu <_{\Psi,\gamma} \mu <_{\Psi,\gamma} \lambda$, then we say that \emph{$\mu$ is between $\lambda$ and $\nu$}.  We denote this ternary relation by $B_W(\lambda, \mu,\nu)$ and say that \emph{$B_W(\lambda,\mu,\nu)$ holds} if such a $\Psi$ exists.  If $\lambda, \mu, \nu$ are not pairwise distinct positive roots or if there does not exist such an inversion set $\Psi$, then we say that \emph{$B_W(\lambda,\mu,\nu)$ does not hold}.
\end{defn}
\begin{lem} \label{l:inversionsegment}
Let $(W,S)$ be a Coxeter system and let $\Phi^+$ be the associated positive system.  Let $\mathcal{W} = (\Phi^+,\text{Inv}(\Phi^+), B_W)$, where $B_W$ is the betweenness relation of Definition~\ref{d:betweenness} and $\text{Inv}(\Phi^+)$ is the set of all inversion sets contained in $\Phi^+$.  Then $\mathcal{W}$ is a segment structure.
\end{lem}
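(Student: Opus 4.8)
The plan is to verify the requirement that every line have at least two points, together with the four betweenness axioms (B1)--(B4) of Definition~\ref{d:segmentstructure}. The line condition is immediate: an inversion set $\Psi = \Phi^+_{\{\alpha,\beta\}}$ has canonical simple roots $\gamma,\delta$ with $m = |s_\gamma s_\delta| \geq 2$, so by Lemma~\ref{l:sequencepositive} the distinct positive roots $\gamma_1 = \gamma$ and $\gamma_2$ both lie in $\Psi$, whence $|\Psi| \geq 2$. Axiom (B1) is built directly into Definition~\ref{d:betweenness}, which declares $B_W(\lambda,\mu,\nu)$ to hold only when $\lambda,\mu,\nu$ are pairwise distinct. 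Axiom (B2) is equally immediate: if $B_W(\lambda,\mu,\nu)$ holds, witnessed by an inversion set $\Psi$, then $\lambda,\mu,\nu \in \Psi$ and $\Psi \in \text{Inv}(\Phi^+)$ is the required line.

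The key structural fact, which I would isolate first, is that betweenness inside an inversion set is canonically determined. Given pairwise distinct $\lambda,\mu,\nu \in \Phi^+$ all lying in some inversion set $\Psi$, the set $\Psi$ is \emph{unique}: it must equal $\Phi^+_{\{\lambda,\nu\}}$ by Corollary~\ref{c:inversionline}. Moreover $\Psi$ admits exactly two total orders of the form $\leq_{\Psi,\theta}$, one for each of its two canonical simple roots, and these are reverses of one another --- for finite $\Psi$ this follows from the identity $\gamma_i = \delta_{m+1-i}$ of Lemma~\ref{l:sequencerelation}, and for infinite $\Psi$ from inspecting the three clauses of Definition~\ref{d:totalinversion}(2). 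Hence the middle element of a triple $\{\lambda,\mu,\nu\} \subseteq \Psi$ is the same whether computed in $\leq_{\Psi,\gamma}$ or in $\leq_{\Psi,\delta}$, so $B_W(\lambda,\mu,\nu)$ holds if and only if this well-defined middle element is $\mu$, and exactly one of the three elements is the middle element.

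With this in hand, (B3) follows: if $p_1,p_2,p_3$ are pairwise distinct points of a line $\Psi$, I would order them by $\leq_{\Psi,\gamma}$; the middle one, say $p_j$, furnishes the permutation $\sigma$ of $\{p_1,p_2,p_3\}$ with $B_W(\sigma(p_1),\sigma(p_2),\sigma(p_3))$ holding. For (B4), the symmetry $B_W(p_1,p_2,p_3) \Rightarrow B_W(p_3,p_2,p_1)$ is explicit in Definition~\ref{d:betweenness} via the clause ``$\nu <_{\Psi,\gamma} \mu <_{\Psi,\gamma} \lambda$''. For the remaining assertion, suppose $B_W(p_1,p_2,p_3)$ holds, witnessed by $\Psi$, and that $\sigma$ is a permutation of $\{p_1,p_2,p_3\}$ with $B_W(\sigma(p_1),\sigma(p_2),\sigma(p_3))$ holding, witnessed by $\Psi'$. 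Both $\Psi$ and $\Psi'$ contain all three points, so $\Psi' = \Psi$ by the uniqueness above, and the middle element is the same in both statements; thus $\sigma(p_2) = p_2$, forcing $\sigma$ to be the identity or the transposition $(p_1\ p_3)$.

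I expect the main obstacle to be the ``canonically determined'' step --- specifically, checking cleanly that $\leq_{\Psi,\gamma}$ and $\leq_{\Psi,\delta}$ are opposite orders in the infinite case, where one must track how the two doubly infinite sequences $\overline{\gamma}$ and $\overline{\delta}$ interleave according to Definition~\ref{d:totalinversion}(2). Once that is settled, the rest reduces to the elementary observation that a three-element subset of a totally ordered set has a unique middle element, combined with the uniqueness of the inversion set through two distinct positive roots.
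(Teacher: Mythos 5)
Your proposal is correct and follows essentially the same route as the paper: (B1) and (B2) are read off Definition~\ref{d:betweenness}, (B3) from totality of $\leq_{\Psi,\gamma}$, and (B4) from the uniqueness of the inversion set through two distinct positive roots (Lemma~\ref{l:oneroot} / Corollary~\ref{c:inversionline}). Your extra step checking that $\leq_{\Psi,\gamma}$ and $\leq_{\Psi,\delta}$ are opposite orders (via Lemma~\ref{l:sequencerelation} in the finite case) is a worthwhile refinement the paper leaves implicit, since Definition~\ref{d:betweenness} permits either canonical simple root to witness the relation.
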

\begin{proof}
By definition, an inversion set consists of at least two positive roots, so all of the lines are nonempty.  If $B_W(\lambda, \mu, \nu)$ holds, then there exists an inversion set $\Psi$ containing $\lambda, \mu, \nu$, which are distinct by Definition~\ref{d:betweenness}.  Thus $B_W$ satisfies properties $(B1)$ and $(B2)$.\\ \\
Suppose $\lambda, \mu, \nu \in \Psi = \Phi^+_{\{\alpha,\beta\}}$ and that $\gamma$ is a canonical simple root for $\Phi^+_{\{\alpha,\beta\}}$.  Then, since $\leq_{\Psi,\gamma}$ is a total ordering, there exists a permutation $\sigma$ of $\{\lambda,\mu,\nu\}$ such that $\sigma(\lambda) <_{\Psi,\lambda} \sigma(\mu) <_{\Psi,\gamma} \sigma(\nu)$, so $B_W(\sigma(\lambda),\sigma(\mu),\sigma(\nu))$ holds, proving that $B_W$ satisfies property $(B3)$.\\ \\
Suppose $B_W(\lambda,\mu,\nu)$ holds.  Then there exists an inversion set $\Psi$ such that either $\lambda <_{\Psi,\gamma} \mu <_{\Psi,\gamma} \nu$ or $\nu <_{\Psi,\gamma} \mu <_{\Psi,\gamma} \lambda$.  Thus $B_W(\nu,\mu,\lambda)$ holds also.  Since $<_{\Psi,\gamma}$ is a total order, no other permutation of $\lambda, \mu, \nu$ is consistent with $<_{\Psi,\gamma}$.  By Lemma~\ref{l:oneroot}, if $\Upsilon$ is another inversion set containing $\lambda$, it cannot contain $\mu$ or $\nu$, and hence no other betweenness relations for $\lambda,\mu,\nu$ are possible.  This proves that $B_W$ satisfies property $(B4)$ and hence $\mathcal{W}$ is a segment structure.
\end{proof}
\begin{defn}  \label{d:standardsegment}
Let $(W,S)$ be a Coxeter system and let $\Phi^+$ be the associated positive system.  Let $\mathcal{W}$ be the segment structure of Lemma~\ref{l:inversionsegment}.  We call $\mathcal{W}$ the \emph{standard segment structure of $W$}.
\end{defn}
\begin{defn} \label{d:incidencelabel}
Let $\mathcal{I} = (P,\mathcal{L},B)$ be a segment structure and $T:P \rightarrow \mathbb{N}_0$.  Then we call $T$ an \emph{$\mathcal{I}$-labeling}.  We denote by $\text{supp}(T)$ the set of all $p$ such that $T(p) \neq 0$.  We say that $T$ is a \emph{sequential $\mathcal{I}$-labeling} if $T(\text{supp}(T)) = \{1,\ldots,|\text{supp}(T)|\}$.  Suppose that whenever $B(p_1,p_2,p_3)$ holds, either $T(p_1) \leq T(p_2) \leq T(p_3)$ or $T(p_3) \leq T(p_2) \leq T(p_1)$.  Then we call $T$ a \emph{standard $\mathcal{I}$-labeling}.
\end{defn}
\begin{lem}  \label{l:betweencone}
Let $\mathcal{W}$ be the standard segment structure of $W$ and let $\lambda,\mu,\nu \in \Phi^+$ be distinct positive roots.  Then $B_W(\lambda, \mu, \nu)$ holds if and only if $\mu$ lies in the convex cone spanned by $\lambda$ and $\nu$.
\end{lem}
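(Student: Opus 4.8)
The plan is to reduce both implications to the biconvexity results for the local root sequences of a dihedral subsystem --- Lemma~\ref{l:biconvexity} and Corollary~\ref{c:biconvexityconverse} in the finite case, Lemma~\ref{l:infinitebiconvexity} and Corollary~\ref{c:infinitebiconvexityconverse} when distinct local root sequences are involved --- the passage from a betweenness relation to a single inversion set being handled by the definition of $B_W$, Corollary~\ref{c:inversionline}, and Lemma~\ref{l:localsystemform}.

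For the forward direction, suppose $B_W(\lambda,\mu,\nu)$ holds, so that there is an inversion set $\Psi$ with canonical simple roots $\gamma,\delta$ for which, after possibly interchanging $\lambda$ and $\nu$, we have $\lambda <_{\Psi,\gamma} \mu <_{\Psi,\gamma} \nu$. Write $m = |s_\gamma s_\delta|$. If $m$ is finite, then by Proposition~\ref{p:localdisjoint} and Definition~\ref{d:totalinversion}(1) the roots $\lambda,\mu,\nu$ are $\gamma_i,\gamma_j,\gamma_k$ with $i < j < k$, and the ``in particular'' clause of Lemma~\ref{l:biconvexity} gives that $\mu = \gamma_j$ lies in the convex cone spanned by $\gamma_i$ and $\gamma_k$. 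If $m$ is infinite, then since $\leq_{\Psi,\gamma}$ lists every root of $\overline\gamma$ before every root of $\overline\delta$, the triple $(\lambda,\mu,\nu)$ matches one of the patterns: all three in $\overline\gamma$; $\lambda,\mu$ in $\overline\gamma$ and $\nu$ in $\overline\delta$; $\lambda$ in $\overline\gamma$ and $\mu,\nu$ in $\overline\delta$; or all three in $\overline\delta$. For the two ``one sequence'' patterns I apply Lemma~\ref{l:biconvexity} (its $\overline\delta$-form for the last); for the two ``cross sequence'' patterns the index inequalities supplied by Definition~\ref{d:totalinversion}(2)(a)--(c) are exactly the hypotheses of Lemma~\ref{l:infinitebiconvexity}(1) and (2), whose conclusions are that $\mu$ lies in the convex cone spanned by $\lambda$ and $\nu$.

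For the converse, suppose $\mu = a\lambda + b\nu$ with $a,b \ge 0$. Since $\mu$ differs from both $\lambda$ and $\nu$ and distinct positive roots are never scalar multiples, in fact $a,b > 0$, so $\mu$ lies strictly in the cone. Take $\Psi = \Phi^+_{\{\lambda,\nu\}}$, the unique inversion set containing $\lambda$ and $\nu$ (Corollary~\ref{c:inversionline}); then $\mu \in \mathrm{span}(\{\lambda,\nu\}) \cap \Phi^+ = \Psi$ by Lemma~\ref{l:localsystemform}. Choose canonical simple roots $\gamma,\delta$ of $\Psi$ --- since which one is called $\gamma$ is immaterial, I may assume $\lambda$ lies in $\overline\gamma$ --- and set $m = |s_\gamma s_\delta|$. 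If $\nu$ also lies in $\overline\gamma$ (in particular whenever $m$ is finite), then writing $\lambda,\nu$ as $\gamma_i,\gamma_j$ and invoking Corollary~\ref{c:biconvexityconverse} forces $\mu$ to be a $\gamma_k$ with $k$ strictly between $i$ and $j$, so $\mu$ is between $\lambda$ and $\nu$ in $\leq_{\Psi,\gamma}$; the remaining possibility that $\mu$ lands in $\overline\delta$ is excluded because Lemma~\ref{l:infinitebiconvexity}(1) would then place $\gamma_j$ in the cone spanned by $\gamma_i$ and $\mu$, contradicting Lemma~\ref{l:convexcone}. If instead $\nu$ lies in $\overline\delta$, then $m$ is infinite and, according as $\mu$ lies in $\overline\gamma$ or $\overline\delta$, Corollary~\ref{c:infinitebiconvexityconverse} yields the index inequality that Definition~\ref{d:totalinversion}(2) converts into $\lambda <_{\Psi,\gamma} \mu <_{\Psi,\gamma} \nu$. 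Either way $B_W(\lambda,\mu,\nu)$ holds.

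The routine parts are the finite case and the ``one sequence'' subcases, which are direct appeals to Lemma~\ref{l:biconvexity} and Corollary~\ref{c:biconvexityconverse}. The main obstacle is the bookkeeping for infinite dihedral subsystems: lining up each placement pattern of $\lambda,\mu,\nu$ among $\overline\gamma$ and $\overline\delta$ with the correct clause of Definition~\ref{d:totalinversion} and the correct part of Lemma~\ref{l:infinitebiconvexity}/Corollary~\ref{c:infinitebiconvexityconverse}, and using Lemma~\ref{l:convexcone} to rule out the configuration in which $\mu$ would sit in the ``wrong'' local root sequence --- this is what makes ``$\mu$ is between $\lambda$ and $\nu$'' and ``$\mu$ lies in the convex cone spanned by $\lambda$ and $\nu$'' genuinely equivalent.
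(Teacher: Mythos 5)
Your proposal is correct and follows essentially the same route as the paper: the forward direction via Definition~\ref{d:totalinversion} together with Lemma~\ref{l:biconvexity} and Lemma~\ref{l:infinitebiconvexity}, and the converse by forming the unique inversion set $\Phi^+_{\{\lambda,\nu\}}$ and using the convexity statements (via Corollaries~\ref{c:biconvexityconverse} and \ref{c:infinitebiconvexityconverse}, which the paper compresses into an appeal to Lemma~\ref{l:convexcone}) to pin $\mu$ between $\lambda$ and $\nu$ in the total order. Your version merely spells out the case analysis over placements of the three roots in the $\overline{\gamma}$- and $\overline{\delta}$-sequences that the paper's proof leaves implicit.
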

\begin{proof}
Suppose $B_W(\lambda,\mu,\nu)$ holds.  Then there exists an inversion set $\Psi$ and canonical simple root $\gamma$ of $\Psi$ such that either $\lambda <_{\Psi,\gamma} \mu <_{\Psi,\gamma} \nu$ or $\nu <_{\Psi,\gamma} \mu <_{\Psi,\gamma} \lambda$.  If $\Psi$ is finite, then Definition~\ref{d:totalinversion} and Lemma~\ref{l:biconvexity} imply that $\mu$ lies in the convex cone spanned by $\lambda$ and $\nu$.  If $\Psi$ is infinite, then Definition~\ref{d:totalinversion}, Lemma~\ref{l:biconvexity}, and Lemma~\ref{l:infinitebiconvexity} imply that $\mu$ lies in the convex cone spanned by $\lambda$ and $\nu$.\\ \\
Conversely, suppose that $\mu$ lies in the convex cone spanned by $\lambda$ and $\nu$.  Then we may form the inversion set $\Psi = \Phi^+_{\{\lambda,\nu\}} = \text{span}(\{\lambda,\nu\}) \cap \Phi^+$.  Since $\mu \in \Phi^+_{\{\lambda,\nu\}}$, we have that $\lambda,\mu,\nu$ are comparable relative to $<_{\Psi,\gamma}$, where $\gamma$ is a canonical simple root of $\Psi$, since $<_{\Psi,\gamma}$ is a total order.  Lemma~\ref{l:convexcone} shows that the only orders consistent with Lemma~\ref{l:biconvexity} and Lemma~\ref{l:infinitebiconvexity} are $\lambda <_{\Psi,\gamma} \mu <_{\Psi,\gamma} \nu$ and $\nu <_{\Psi,\gamma} \mu <_{\Psi,\gamma} \lambda$.  Thus $B_W(\lambda, \mu, \nu)$ holds.
\end{proof}
\begin{prop} \label{p:workhorse}
Let $\mathcal{W} = (\Phi^+,\text{Inv}(\Phi^+),B_W)$ be the standard segment structure of $W$.  The function $T:\Phi^+ \rightarrow \mathbb{N}_0$ is a standard $\mathcal{W}$-labeling if and only if $T$ is a standard labeling of $\Phi^+$.
\end{prop}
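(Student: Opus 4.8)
The plan is to reduce both notions to the convex cone criterion by invoking Lemma~\ref{l:betweencone}, which already identifies the betweenness relation $B_W(\lambda,\mu,\nu)$ with the assertion that $\mu$ lies in the convex cone spanned by $\lambda$ and $\nu$. Since Definition~\ref{d:standard} (standard labeling of $\Phi^+$) is phrased directly in terms of convex cones while the standard $\mathcal{W}$-labeling condition of Definition~\ref{d:incidencelabel} is phrased in terms of $B_W$, the equivalence becomes essentially formal once one reconciles the ``pairwise distinct'' requirement built into $B_W$ (property $(B1)$) with the degenerate endpoint cases implicitly allowed in Definition~\ref{d:standard}.

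First I would treat the forward implication. Assume $T$ is a standard $\mathcal{W}$-labeling. Let $\alpha,\beta$ be distinct positive roots and let $\gamma \in \Phi^+$ lie in the convex cone spanned by $\alpha$ and $\beta$. If $\gamma = \alpha$ or $\gamma = \beta$, then one of $T(\alpha) \leq T(\gamma) \leq T(\beta)$ or $T(\beta) \leq T(\gamma) \leq T(\alpha)$ holds automatically, so there is nothing to verify. Otherwise $\alpha,\gamma,\beta$ are pairwise distinct, so Lemma~\ref{l:betweencone} gives that $B_W(\alpha,\gamma,\beta)$ holds, and the standardness of the $\mathcal{W}$-labeling $T$ yields precisely the inequality demanded by Definition~\ref{d:standard}. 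Hence $T$ is a standard labeling of $\Phi^+$.

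For the converse, assume $T$ is a standard labeling of $\Phi^+$, and suppose $B_W(\lambda,\mu,\nu)$ holds. By property $(B1)$ the roots $\lambda,\mu,\nu$ are pairwise distinct, and Lemma~\ref{l:betweencone} shows $\mu$ lies in the convex cone spanned by $\lambda$ and $\nu$. Applying Definition~\ref{d:standard} with the pair $\lambda,\nu$ and the root $\mu$ gives $T(\lambda) \leq T(\mu) \leq T(\nu)$ or $T(\nu) \leq T(\mu) \leq T(\lambda)$, which is exactly the condition from Definition~\ref{d:incidencelabel} for $T$ to be a standard $\mathcal{W}$-labeling.

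The only real subtlety, and the ``main obstacle'' such as it is, is the bookkeeping in the degenerate case of the forward direction: when the convex cone membership $\gamma = a\alpha + b\beta$ collapses to $\gamma \in \{\alpha,\beta\}$ (which, since roots of the form $a\alpha$ with $a>0$ must have $a=1$, are the only non-strict possibilities), the relation $B_W$ contributes nothing because its defining betweenness is only declared on triples of distinct roots, yet the inequality required by Definition~\ref{d:standard} is satisfied trivially, so no content is lost. Everything else is an immediate transcription via Lemma~\ref{l:betweencone}.
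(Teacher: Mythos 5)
Your proof is correct and follows essentially the same route as the paper: both directions reduce immediately to Lemma~\ref{l:betweencone}, which identifies the relation $B_W$ with convex cone membership, and then the two standardness conditions translate into one another verbatim. Your explicit handling of the degenerate case $\gamma \in \{\alpha,\beta\}$ in the forward direction is a minor point of extra care that the paper's proof elides by working only with strict combinations $a\alpha + b\beta$, $a,b>0$.
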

\begin{proof}
Suppose $\alpha,\beta, a\alpha + b\alpha \in \Phi^+$, where $a,b > 0$ and $\alpha \neq \beta$.  By Lemma~\ref{l:betweencone}, $B_W(\alpha, a\alpha + b\beta, \beta)$ holds.  Thus, if $T$ is a standard $\mathcal{W}$-labeling, then $T(\alpha) \leq T(a\alpha + b\beta) \leq T(\beta)$ or $T(\beta) \leq T(a\alpha + b\beta) \leq T(\alpha)$.  Since $\alpha, a\alpha + b\beta, \beta$ were arbitrary with $a,b > 0$ and $\alpha \neq \beta$, we have that $T$ is a standard labeling of $\Phi^+$ by Definition~\ref{d:standard}.\\ \\
Conversely, suppose $T$ is a standard labeling of $\Phi^+$.  If $B_W(\lambda,\mu,\nu)$ holds, we have $\mu = a\lambda + b\nu$, where $a,b > 0$ and $\lambda \neq \nu$ by Lemma~\ref{l:betweencone}.  Thus, either $T(\lambda) \leq T(\mu) \leq T(\nu)$ or $T(\nu) \leq T(\mu) \leq T(\lambda)$.  By Definition~\ref{d:incidencelabel}, $T$ is a standard $\mathcal{W}$-labeling.
\end{proof}
\begin{cor} \label{c:workhorse}
Let $\mathcal{W} = (\Phi^+,\text{Inv}(\Phi^+),B_W)$ be the standard segment structure of $W$.  Let $w \in W$ and let $\mathrm{Lab}(\mathcal{W},w)$ denote the set of all sequential standard $\mathcal{W}$-labelings such that $\text{supp}(T) = \Phi(w)$.  Let $\mathrm{Lab}(w)$ denote the set of all sequential standard labelings of $\Phi^+$ such that $\text{supp}(T) = \Phi(w)$.  Then $\mathrm{Lab}(w) = \mathrm{Lab}(\mathcal{W},w)$.
\end{cor}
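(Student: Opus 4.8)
The plan is to deduce this corollary directly from Proposition~\ref{p:workhorse} together with the observation that all the remaining defining conditions for the two sets are literally the same. First I would unwind the definitions: an element of $\mathrm{Lab}(w)$ is a function $T$ on $\Phi^+$ that is sequential (Definition~\ref{d:labeling}), standard as a labeling of $\Phi^+$ (Definition~\ref{d:standard}), and satisfies $\mathrm{supp}(T) = \Phi(w)$; an element of $\mathrm{Lab}(\mathcal{W},w)$ is a function $T$ that is a sequential $\mathcal{W}$-labeling, a standard $\mathcal{W}$-labeling (both from Definition~\ref{d:incidencelabel}), and satisfies $\mathrm{supp}(T) = \Phi(w)$. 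Since the point set of the standard segment structure $\mathcal{W}$ is exactly $\Phi^+$, the two notions of ``labeling'' have the same domain, and the notion of support in Definition~\ref{d:incidencelabel} coincides with that of Definition~\ref{d:labeling}; hence ``sequential'' means the same thing in both contexts, namely $T(\mathrm{supp}(T)) = \{1,\ldots,|\mathrm{supp}(T)|\}$.

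Next I would invoke Proposition~\ref{p:workhorse}, which states that $T$ is a standard $\mathcal{W}$-labeling if and only if $T$ is a standard labeling of $\Phi^+$. Combining this equivalence with the previous paragraph, the three conjuncts defining membership in $\mathrm{Lab}(w)$ are, one by one, equivalent to the three conjuncts defining membership in $\mathrm{Lab}(\mathcal{W},w)$. Therefore the two sets have exactly the same elements, so $\mathrm{Lab}(w) = \mathrm{Lab}(\mathcal{W},w)$.

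There is essentially no obstacle here: the substantive content has already been carried out in Proposition~\ref{p:workhorse} and, underlying it, Lemma~\ref{l:betweencone}, which translates the betweenness relation $B_W$ into the statement that the middle root lies in the convex cone spanned by the outer two. The only point deserving a sentence of care is that the two codomain conventions ($\mathbb{N}$ versus $\mathbb{N}_0$) are harmless, since a sequential labeling with support fixed to be the finite set $\Phi(w)$ takes the value $0$ precisely off $\Phi(w)$ and positive values on it in either formulation; thus the two conventions describe the same functions.
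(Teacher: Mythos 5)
Your proposal is correct and follows essentially the same route as the paper: both arguments observe that the sequentiality and support conditions coincide for the two kinds of labelings and then reduce the standardness condition to Proposition~\ref{p:workhorse}. Your extra remark about the $\mathbb{N}$ versus $\mathbb{N}_0$ codomain conventions is a harmless clarification the paper leaves implicit.
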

\begin{proof}
Both $\mathrm{Lab}(w)$ and $\mathrm{Lab}(\mathcal{W},w)$ consist of functions $T:\Phi^+ \rightarrow \mathbb{N}_0$ such that $\text{supp}(T) = \{1,\ldots,|\Phi(w)|\}$ (since the labelings are required to be sequential with support $\Phi(w)$).  Thus Proposition~\ref{p:workhorse} implies that $T \in \mathrm{Lab}(w)$ if and only if $T \in \mathrm{Lab}(\mathcal{W},w)$.
\end{proof}
\noindent
Note that the labelings of $\Phi^+$ and the $\mathcal{W}$-labelings of the previous corollary are defined in different ways.  The usage we have in mind for $\mathcal{W}$ is that we compute the geometrical structure in advance, then the $\mathcal{W}$-labelings are placed on top of this structure.  In this way, the reduced expression correspondence of the next corollary involves only combinatorial properties of a fixed discrete structure instead of the messy details of $\Phi^+$ and convex cones.
\begin{cor} \label{c:reducedtheorem}
Let $(W,S)$ be a Coxeter system and let $w \in W$.  Let
\begin{equation*}
\mathcal{W} = (\Phi^+, \text{Inv}(W), B_W)
\end{equation*}
be the standard segment structure of $\Phi^+$. Then there is a 1--1 correspondence between the set $\mathcal{R}(w)$ of all reduced expressions for $w$, and the set $\mathrm{Lab}(\mathcal{W},w)$ of all sequential standard $\mathcal{W}$-labelings such that $\text{supp}(T) = \Phi(w)$.  The correspondence is given by $$\textbf{x} \longleftrightarrow T_{\textbf{x}}.$$
\end{cor}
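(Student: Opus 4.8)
The plan is to deduce this corollary directly from the two results that immediately precede it, Proposition~\ref{p:correspondence1} and Corollary~\ref{c:workhorse}, so that essentially no new argument is needed. First I would recall that Proposition~\ref{p:correspondence1} already gives a $1$--$1$ correspondence $\mathcal{R}(w) \longleftrightarrow \mathrm{Lab}(w)$, implemented by $\textbf{x} \longleftrightarrow T_{\textbf{x}}$, where $\mathrm{Lab}(w)$ denotes the sequential standard labelings of $\Phi^+$ with support $\Phi(w)$. The only discrepancy between that statement and the present one is a matter of description: there, ``standard'' refers to the convex-cone condition of Definition~\ref{d:standard}, whereas here the target set $\mathrm{Lab}(\mathcal{W},w)$ is phrased in terms of the betweenness relation $B_W$ of the standard segment structure $\mathcal{W}$ (Definition~\ref{d:incidencelabel}).

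Next I would invoke Corollary~\ref{c:workhorse}, which states precisely that these two descriptions coincide: $\mathrm{Lab}(w) = \mathrm{Lab}(\mathcal{W},w)$ as sets of functions $\Phi^+ \to \mathbb{N}_0$. (This rests on Proposition~\ref{p:workhorse}, identifying standard $\mathcal{W}$-labelings with standard labelings of $\Phi^+$, which in turn uses Lemma~\ref{l:betweencone} to translate $B_W$ into the convex-cone language.) Substituting this equality into Proposition~\ref{p:correspondence1} immediately produces the claimed bijection $\mathcal{R}(w) \longleftrightarrow \mathrm{Lab}(\mathcal{W},w)$, still realized by $\textbf{x} \longleftrightarrow T_{\textbf{x}}$; concretely, the mutually inverse maps $\mathcal{F}$ and $\mathcal{G}$ built in the proof of Proposition~\ref{p:correspondence1} have exactly the domain and codomain demanded here once the set equality is applied.

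Since every ingredient is already in hand, I do not expect any genuine obstacle; the proof should be just a couple of sentences. The one point worth stating carefully is the appeal to Corollary~\ref{c:workhorse}, so that the reader sees why ``standard $\mathcal{W}$-labeling with support $\Phi(w)$'' is literally the same condition as ``sequential standard labeling of $\Phi^+$ with support $\Phi(w)$'', after which the correspondence of Proposition~\ref{p:correspondence1} transfers verbatim.
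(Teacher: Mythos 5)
Your proposal is correct and is exactly the paper's own argument: combine the bijection $\mathcal{R}(w) \leftrightarrow \mathrm{Lab}(w)$ from Proposition~\ref{p:correspondence1} with the set equality $\mathrm{Lab}(w) = \mathrm{Lab}(\mathcal{W},w)$ from Corollary~\ref{c:workhorse}. Nothing further is needed.
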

\begin{proof}
This follows directly from Proposition~\ref{p:correspondence1} and Corollary~\ref{c:workhorse}.
\end{proof}
\begin{ex} \label{ex:mainexample}
The Coxeter matrix of the Coxeter group of type $B_3$ is determined by the entries $m_{s,t} = 4$, $m_{s,u} = 2$, and $m_{t,u} = 3$.  The element $w = \phi(u, s, t, s, t, u)$ has four reduced expressions.  We have $$\Phi(w) = \{\alpha_u, \alpha_t + \alpha_u, \alpha_s + \sqrt{2} \alpha_t + \sqrt{2} \alpha_u, \sqrt{2} \alpha_s + \alpha_t + \alpha_u, \alpha_s, \sqrt{2} \alpha_s + 2 \alpha_t + \alpha_u \}.$$  The positive roots of $\Phi^+$ not in $\Phi(w)$ are $\alpha_t$, $\alpha_s + \sqrt{2} \alpha_t$, and $\sqrt{2} \alpha_s + \alpha_t$.  To avoid clutter in our explanations and figures, we associate names to each root in $\Phi^+$.  The first figure below depicts the standard segment structure of $\Phi^+$ and the chart to its right gives the names we use for each root in $\Phi^+$.  Thus, ``root $F$'' refers to the root $\sqrt{2}\alpha_s + 2\alpha_t + \alpha_u$ and ``the $B-E$ line'' refers to the line $$\{\alpha_s,\sqrt{2}\alpha_s + \alpha_t + \alpha_u,\alpha_s + \sqrt{2}\alpha_t + \sqrt{2}\alpha_u,\alpha_t + \alpha_u\}.$$
\noindent
Let $\mathcal{W}$ be the standard segment structure of $\Phi^+$.  The four figures following the first one are the sequential standard $\mathcal{W}$-labelings $T$ such that $\text{supp}(T) = \Phi(w)$, which we explain after listing the figures.  In all of the diagrams below, we omit the lines containing exactly two roots.  \\ \\
\begin{tikzpicture}[scale=2.5] \label{Btablefig}
\filldraw           (3,0) circle (0.02)
                    (0.75,0.75) circle (0.02)
                    (1.5,1.5) circle (0.02)
                    (0,0) circle (0.02)
                    (2.11,0.88) circle (0.02)
                    (2.38,0.62) circle (0.02)
                    (1.67,0.44) circle (0.02)
                    (1.33,.56) circle (0.02)
                    (1.63,0.68) circle (0.02);
\draw (0,0) -- (1.5,1.5);
\draw (1.5,1.5) -- (3,0);
\draw (0.75,0.75) -- (3,0);
\draw (0,0) -- (2.11,0.88);
\draw (1.5,1.5) -- (1.67,0.44);
\draw (0.75,0.75) -- (2.38,0.62);
\draw (0,0) -- (2.38,0.62);
\draw (-.1,-.05) node {$A$};
\draw (3.05,-.1) node {$B$};
\draw (1.67,0.32) node {$C$};
\draw (1.35,0.44) node {$D$};
\draw (0.63,0.76) node {$E$};
\draw (1.69,0.79) node {$F$};
\draw (2.48,0.63) node {$G$};
\draw (2.23,0.92) node {$H$};
\draw (1.39,1.52) node {$I$};
\node [right=1cm,text width=8cm] at (2.45,1.2)
{
\begin{tabular}{cl} Name & Root\\A & $\alpha_u$\\B & $\alpha_s$\\
C & $\sqrt{2}\alpha_s + \alpha_t + \alpha_u$\\D & $\alpha_s + \sqrt{2} \alpha_t + \sqrt{2}\alpha_u$\\E & $\alpha_t + \alpha_u$\\F & $\sqrt{2}\alpha_s + 2\alpha_t + \alpha_u$\\
G & $\sqrt{2} \alpha_s + \alpha_t$\\H & $\alpha_s + \sqrt{2} \alpha_t$\\
I & $\alpha_t$
\end{tabular}
};

\end{tikzpicture}
\\
The $\mathcal{W}$-labelings corresponding to the reduced expressions $(u,s,t,s,t,u)$ and $(s,u,t,s,t,u)$, respectively are:\\ \\
\begin{tikzpicture}[scale=1.8] \label{Bexample1fig}
\filldraw           (3,0) circle (0.02)
                    (0.75,0.75) circle (0.02)
                    (1.5,1.5) circle (0.02)
                    (0,0) circle (0.02)
                    (2.11,0.88) circle (0.02)
                    (2.38,0.62) circle (0.02)
                    (1.67,0.44) circle (0.02)
                    (1.33,.56) circle (0.02)
                    (1.63,0.68) circle (0.02);
\draw (0,0) -- (1.5,1.5);
\draw (1.5,1.5) -- (3,0);
\draw (0.75,0.75) -- (3,0);
\draw (0,0) -- (2.11,0.88);
\draw (1.5,1.5) -- (1.67,0.44);
\draw (0.75,0.75) -- (2.38,0.62);
\draw (0,0) -- (2.38,0.62);
\draw (-.1,-.05) node {$6$};
\draw (3.05,-.1) node {$2$};
\draw (1.67,0.32) node {$3$};
\draw (1.35,0.44) node {$4$};
\draw (0.63,0.76) node {$5$};
\draw (1.69,0.79) node {$1$};
\draw (2.48,0.63) node {$0$};
\draw (2.23,0.92) node {$0$};
\draw (1.39,1.52) node {$0$};

\filldraw           (6.5,0) circle (0.02)
                    (4.25,0.75) circle (0.02)
                    (5.0,1.5) circle (0.02)
                    (3.5,0) circle (0.02)
                    (5.61,0.88) circle (0.02)
                    (5.88,0.62) circle (0.02)
                    (5.17,0.44) circle (0.02)
                    (4.83,.56) circle (0.02)
                    (5.13,0.68) circle (0.02);
\draw (3.5,0) -- (5.0,1.5);
\draw (5.0,1.5) -- (6.5,0);
\draw (4.25,0.75) -- (6.5,0);
\draw (3.5,0) -- (5.61,0.88);
\draw (5.0,1.5) -- (5.17,0.44);
\draw (4.25,0.75) -- (5.88,0.62);
\draw (3.5,0) -- (5.88,0.62);
\draw (3.4,-.05) node {$6$};
\draw (6.55,-.1) node {$1$};
\draw (5.17,0.32) node {$3$};
\draw (4.85,0.44) node {$4$};
\draw (4.13,0.76) node {$5$};
\draw (5.19,0.79) node {$2$};
\draw (5.98,0.63) node {$0$};
\draw (5.73,0.92) node {$0$};
\draw (4.89,1.52) node {$0$};
\end{tikzpicture}
\\
The $\mathcal{W}$-labelings corresponding to the reduced expressions $(u,t,s,t,s,u)$ and $(u,t,s,t,u,s)$, respectively are:\\ \\
\begin{tikzpicture}[scale=1.8] \label{Bexample2fig}
\filldraw           (3,0) circle (0.02)
                    (0.75,0.75) circle (0.02)
                    (1.5,1.5) circle (0.02)
                    (0,0) circle (0.02)
                    (2.11,0.88) circle (0.02)
                    (2.38,0.62) circle (0.02)
                    (1.67,0.44) circle (0.02)
                    (1.33,.56) circle (0.02)
                    (1.63,0.68) circle (0.02);
\draw (0,0) -- (1.5,1.5);
\draw (1.5,1.5) -- (3,0);
\draw (0.75,0.75) -- (3,0);
\draw (0,0) -- (2.11,0.88);
\draw (1.5,1.5) -- (1.67,0.44);
\draw (0.75,0.75) -- (2.38,0.62);
\draw (0,0) -- (2.38,0.62);
\draw (-.1,-.05) node {$6$};
\draw (3.05,-.1) node {$5$};
\draw (1.67,0.32) node {$4$};
\draw (1.35,0.44) node {$3$};
\draw (0.63,0.76) node {$2$};
\draw (1.69,0.79) node {$1$};
\draw (2.48,0.63) node {$0$};
\draw (2.23,0.92) node {$0$};
\draw (1.39,1.52) node {$0$};

\filldraw           (6.5,0) circle (0.02)
                    (4.25,0.75) circle (0.02)
                    (5.0,1.5) circle (0.02)
                    (3.5,0) circle (0.02)
                    (5.61,0.88) circle (0.02)
                    (5.88,0.62) circle (0.02)
                    (5.17,0.44) circle (0.02)
                    (4.83,.56) circle (0.02)
                    (5.13,0.68) circle (0.02);
\draw (3.5,0) -- (5.0,1.5);
\draw (5.0,1.5) -- (6.5,0);
\draw (4.25,0.75) -- (6.5,0);
\draw (3.5,0) -- (5.61,0.88);
\draw (5.0,1.5) -- (5.17,0.44);
\draw (4.25,0.75) -- (5.88,0.62);
\draw (3.5,0) -- (5.88,0.62);
\draw (3.4,-.05) node {$5$};
\draw (6.55,-.1) node {$6$};
\draw (5.17,0.32) node {$4$};
\draw (4.85,0.44) node {$3$};
\draw (4.13,0.76) node {$2$};
\draw (5.19,0.79) node {$1$};
\draw (5.98,0.63) node {$0$};
\draw (5.73,0.92) node {$0$};
\draw (4.89,1.52) node {$0$};
\end{tikzpicture}
\\
It is easy to check that the four $\mathcal{W}$-labelings above are indeed sequential and standard and have $\Phi(w)$ as their support.  For the converse, we make an ad hoc argument for this specific example.\\ \\
Suppose that $T$ is a sequential standard $\mathcal{W}$-labeling with $\Phi(w)$ as support.  The roots not in $\Phi(w)$ must then be labeled $0$.  Thus, the roots $G$, $H$, and $I$ are labeled $0$ by $T$.\\ \\
Since $T$ is standard and the roots $G$, $H$, and $I$ are labeled $0$ by $T$, the labels on the $I-A$, $H-A$, and $G-A$ lines increase towards root $A$.  Thus, root $A$ necessarily has a higher label than any other root in $\Phi(w)$, except possibly root $B$, which does not lie on the $I-A$, $H-A$, or $G-A$ line.\\ \\
On the lines $G-E$, $H-A$, and $I-C$, and in the direction away from the points labeled $0$, root $F$ is the first root with a nonzero label.  Thus root $F$ must have a lower label than all roots lying on one of those three lines.  Since root $B$ is the only root not lying on those three lines, root $F$ necessarily has a lower label than any root in $\Phi(w)$, except possibly root $B$.\\ \\
The remaining roots of $\Phi(w)$ lie on the $B-E$ line, so that the remaining nonzero labels must lie on that line.  By Definition~\ref{d:standardsegment}, the lines of $\mathcal{W}$ are inversion sets of $\Phi^+$.
By Proposition~\ref{p:workhorse} and Lemma~\ref{l:wlabel}(3), root $E$ must have either the highest label or the lowest label on the $B-E$ line for $T$ to be a standard $\mathcal{W}$-labeling.  Furthermore, once this choice is made, the order of the remaining labels on the $B-E$ line are determined by Lemma~\ref{l:wlabel}(3).  Thus, there are eight cases that exhaust all possibilities, according to whether:
\begin{enumerate}[(1)]
\item  root $A$ is labeled $5$ or $6$;
\item  root $F$ is labeled $1$ or $2$;
\item  root $E$ has the highest label on the $B-E$ line or root $E$ has the lowest label on the $B-E$ line.
\end{enumerate}
\noindent
Choosing root $A$ to have label $6$ and root $F$ to have label $1$, we see that the roots on the $B-E$ line must be labeled $2$, $3$, $4$, and $5$.   The case where root $E$ is labeled $5$ gives the $\mathcal{W}$-labeling corresponding to the reduced expression $(u,s,t,s,t,u)$, while the case where root $E$ is labeled $2$ gives the $\mathcal{W}$-labeling corresponding to the reduced expression $(u,t,s,t,s,u)$.\\ \\
The case of root $A$ being labeled $5$ and root $F$ being labeled $2$ is impossible by the following reasoning:\\ \\
if root $E$ is labeled $1$, then $T(G) = 0$, $T(F) = 2$, and $T(E) = 1$, which contradicts the assumption that $T$ is standard since root $F$ is between root $G$ and root $E$ on the $G-E$ line.  If root $E$ is labeled $6$, then $T(I) = 0$, $T(E) = 6$, and $T(A) = 5$, which again contradicts the assumption that $T$ is standard since root $E$ is between root $A$ and root $I$ on the $A-I$ line.\\ \\
Now suppose that root $A$ is labeled $6$ and root $F$ is labeled $2$.  Then root $E$ must be labeled $1$ or $5$.  If root $E$ is labeled $1$, then $T(G) = 0$, $T(F) = 2$, and $T(E) = 1$, which contradicts the assumption that $T$ is standard.  However, the case where root $E$ is labeled $5$ gives the $\mathcal{W}$-labeling corresponding to the reduced expression $(s,u,t,s,t,u)$.\\ \\
If root $A$ is labeled $5$ and root $F$ is labeled $1$, then root $E$ must be labeled either $2$ or $6$.  If root $E$ is labeled $6$, we have $T(I) = 0$, $T(E) = 6$, and $T(A) = 5$, which contradicts the assumption that $T$ is standard.  However, the case where root $E$ is labeled $2$ gives the $\mathcal{W}$-labeling corresponding to the reduced expression $(u,t,s,t,u,s)$.
\end{ex}
\section{Restricting the standard segment structure to $\Phi(w)$}
The definitions and correspondences of the previous section concern labelings of $\Phi^+$ that have $\Phi(w)$ as support.  The conceptual advantage to this approach is that whenever an inversion set intersects $\Phi(w)$, but is not completely contained in $\Phi(w)$, our labeling scheme informs us what labeling inequalities must hold.  However, computationally this is not optimal if the number of roots in $\Phi^+ \setminus \Phi(w)$ is large.  For infinite Coxeter groups, there are always infinitely many such roots, so it is desirable to have an alternative to Corollary~\ref{c:reducedtheorem}.
\begin{defn}  \label{d:restrictsegment}
Let $\mathcal{I} = (P, \mathcal{L}, B)$ be a segment structure and let $A \subseteq P$.  For each $L \in \mathcal{L}$, we set $L_A = A \, \cap \, L$ and define $$\mathcal{L}_A = \{L_A \ : \ L \in \mathcal{L} \text{ and } |L_A| \geq  2\}.$$  Define a ternary relation $B_A$ by the following condition:  $B_A(p_1,p_2,p_3)$ holds if and only if $p_1,p_2,p_3 \in A$ and $B(p_1,p_2,p_3)$ holds.  We call the triple $\mathcal{I}_A = (A, \mathcal{L}_A, B_A)$ the \emph{restriction of $\mathcal{I}$ to $A$}.
\end{defn}
\begin{lem} \label{l:subsegment}
Let $\mathcal{I} = (P, \mathcal{L}, B)$ be a segment structure and let $A \subseteq P$.  Let $\mathcal{I}_A = (A, \mathcal{L}_A, B_A)$ be the restriction of $\mathcal{I}$ to $A$.  Then $\mathcal{I}_A$ is a segment structure.
\end{lem}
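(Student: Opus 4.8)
The plan is to verify directly that the triple $(A,\mathcal{L}_A,B_A)$ meets the requirements of Definition~\ref{d:segmentstructure}, transporting each condition from $\mathcal{I}$ using two elementary observations about the construction in Definition~\ref{d:restrictsegment}. First, every member of $\mathcal{L}_A$ is by definition of the form $L_A = L \cap A$ with $|L_A| \geq 2$, so it is a nonempty subset of $A$ with at least two elements; hence the structural requirements on the point set $A$ and the line set $\mathcal{L}_A$ hold automatically. Second, $B_A$ is a sub-relation of $B$ — explicitly, $B_A(p_1,p_2,p_3)$ implies $B(p_1,p_2,p_3)$ — and whenever $L_A \in \mathcal{L}_A$ we have $L_A \subseteq L$ for the corresponding line $L \in \mathcal{L}$. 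These two facts are exactly what is needed to push (B1)--(B4) down to $\mathcal{I}_A$.

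Next I would check the four betweenness axioms in turn. For (B1): if $B_A(p_1,p_2,p_3)$ holds then $B(p_1,p_2,p_3)$ holds, so (B1) for $\mathcal{I}$ gives that $p_1,p_2,p_3$ are pairwise distinct. For (B2): from $B_A(p_1,p_2,p_3)$ we get both $B(p_1,p_2,p_3)$ and $p_1,p_2,p_3 \in A$; picking $L \in \mathcal{L}$ containing these three points via (B2) for $\mathcal{I}$, the set $L_A = L \cap A$ contains them, and since they are distinct by (B1) we have $|L_A| \geq 3 \geq 2$, so $L_A \in \mathcal{L}_A$. For (B3): three pairwise distinct points of some line $L_A \in \mathcal{L}_A$ are in particular three pairwise distinct points of $L$, so (B3) for $\mathcal{I}$ produces a permutation $\sigma$ with $B(\sigma(p_1),\sigma(p_2),\sigma(p_3))$; as the points lie in $A$ this is also a $B_A$-relation. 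For (B4): $B_A(p_1,p_2,p_3)$ gives $B(p_1,p_2,p_3)$, hence $B(p_3,p_2,p_1)$ by (B4) for $\mathcal{I}$, hence $B_A(p_3,p_2,p_1)$ since the points lie in $A$; and for any permutation $\sigma$ other than the identity or the transposition $(p_1\ p_3)$, the relation $B(\sigma(p_1),\sigma(p_2),\sigma(p_3))$ fails, so $B_A(\sigma(p_1),\sigma(p_2),\sigma(p_3))$ fails a fortiori because $B_A \subseteq B$.

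I do not expect a genuine obstacle here: the restriction operation of Definition~\ref{d:restrictsegment} was set up precisely so that these verifications are routine. The only point needing a moment's care is the count in (B2) — confirming that the restricted line $L_A$ actually has at least two elements so that it genuinely belongs to $\mathcal{L}_A$ — and this is automatic because the three points witnessing $B_A$ are distinct by (B1). Collecting the four verifications establishes that $\mathcal{I}_A$ is a segment structure.
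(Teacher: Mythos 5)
Your proposal is correct and follows essentially the same route as the paper's proof: each axiom (B1)--(B4) is transported from $\mathcal{I}$ to $\mathcal{I}_A$ using the facts that $B_A$ is a sub-relation of $B$ and $L_A \subseteq L$. Your explicit check in (B2) that $|L_A| \geq 3 \geq 2$ (so that $L_A$ genuinely lies in $\mathcal{L}_A$) is a small point the paper leaves implicit, but otherwise the arguments coincide.
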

\begin{proof}
First note that the lines $L \in \mathcal{L}_A$ satisfy $|L| \geq 2$.\\ \\
If $B_A(p_1,p_2,p_3)$ holds, then $B(p_1,p_2,p_3)$ holds, so $p_1$, $p_2$, and $p_3$ are pairwise distinct.   Thus, $B_A$ satisfies property $B1$.\\ \\
If $B_A(p_1,p_2,p_3)$ holds then $B(p_1,p_2,p_3)$ implies there exists $L \in \mathcal{L}$ such that $p_1,p_2,p_3 \in L$.  Since $p_1,p_2,p_3 \in A$, this implies $p_1,p_2,p_3 \in L_A$ for some $L_A \in \mathcal{L}_A$, so $B_A$ satisfies property $B2$.\\ \\
If $p_1,p_2,p_3 \in L_A$ for some $L_A \in \mathcal{L}_A$, then by property $B3$ for $B$, there exists a permutation $\sigma$ such that $B(\sigma(p_1),\sigma(p_2),\sigma(p_3))$ holds.  Since $p_1,p_2,p_3 \in A$, this implies $B_A(\sigma(p_1),\sigma(p_2),\sigma(p_3))$ holds, so $B_A$ satisfies property $B3$.\\ \\
To show that $B_A$ satisfies property $B4$, let $\sigma:\{p_1,p_2,p_3\} \rightarrow \{p_1,p_2,p_3\}$ be a permutation.  If $B_A(p_1,p_2,p_3)$ holds, then $p_1,p_2,p_3 \in A$ and by definition $$B_A(\sigma(p_1),\sigma(p_2),\sigma(p_3))$$ holds if and only if $B(p_1,p_2,p_3)$ holds, proving that $B_A$ satisfies property $B4$.
\end{proof}
\begin{defn} \label{d:standardphisegment}
Let $(W,S)$ be a Coxeter system and let $$\mathcal{W} = (\Phi^+,\text{Inv}(\Phi^+),B_W)$$ be the standard segment structure of $W$.  For any $w \in W$, we call $\mathcal{W}_{\Phi(w)}$ the \emph{standard segment structure of $\Phi(w)$}.
\end{defn}
\begin{rem}
Although the definition for restriction to a subset $A$ allows infinite lines in the same way that Definition~\ref{d:segmentstructure} does, the primary choice for restriction in this thesis is $\Phi(w)$, which is a finite set.  Thus, the lines we see in this section are all finite.
\end{rem}
\begin{lem}  \label{l:betweenconerestrict}
Let $w \in W$ and let $\mathcal{W}_{\Phi(w)} = (\Phi(w), \text{Inv}(\Phi^+)_{\Phi(w)}, B_{\Phi(w)})$ be the standard segment structure of $\Phi(w)$.  Let $\lambda,\mu,\nu \in \Phi^+$ be distinct positive roots.  Then $B_{\Phi(w)}(\lambda, \mu, \nu)$ holds if and only if $\mu$ lies in the convex cone spanned by $\lambda$ and $\nu$ and $\lambda,\mu,\nu \in \Phi(w)$.
\end{lem}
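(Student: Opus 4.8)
The plan is to reduce the statement directly to Lemma~\ref{l:betweencone} by unwinding the definition of the restricted betweenness relation. First I would recall Definition~\ref{d:restrictsegment}: for the restriction $\mathcal{W}_{\Phi(w)} = \mathcal{W}_A$ with $A = \Phi(w)$, the ternary relation $B_{\Phi(w)} = B_A$ is defined so that $B_{\Phi(w)}(\lambda,\mu,\nu)$ holds if and only if $\lambda,\mu,\nu \in \Phi(w)$ and $B_W(\lambda,\mu,\nu)$ holds, where $B_W$ is the betweenness relation of the standard segment structure $\mathcal{W}$ of $W$ (Definition~\ref{d:standardsegment}). By Lemma~\ref{l:subsegment}, $\mathcal{W}_{\Phi(w)}$ is a segment structure, so the objects in the statement are well defined, although this is not needed for the equivalence itself.

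Next I would invoke Lemma~\ref{l:betweencone}, which states that for pairwise distinct positive roots $\lambda,\mu,\nu$, the relation $B_W(\lambda,\mu,\nu)$ holds if and only if $\mu$ lies in the convex cone spanned by $\lambda$ and $\nu$. Since $\lambda,\mu,\nu$ are assumed to be distinct positive roots, this lemma applies verbatim. Substituting the characterization of $B_W$ into the characterization of $B_{\Phi(w)}$ from the previous paragraph gives exactly the claimed equivalence: $B_{\Phi(w)}(\lambda,\mu,\nu)$ holds if and only if $\mu$ lies in the convex cone spanned by $\lambda$ and $\nu$ \emph{and} $\lambda,\mu,\nu \in \Phi(w)$.

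Spelling out the two directions: if $B_{\Phi(w)}(\lambda,\mu,\nu)$ holds, then by Definition~\ref{d:restrictsegment} we have $\lambda,\mu,\nu \in \Phi(w)$ and $B_W(\lambda,\mu,\nu)$ holds, so Lemma~\ref{l:betweencone} shows that $\mu$ lies in the convex cone spanned by $\lambda$ and $\nu$. Conversely, if $\mu$ lies in that convex cone and $\lambda,\mu,\nu \in \Phi(w)$, then Lemma~\ref{l:betweencone} yields $B_W(\lambda,\mu,\nu)$, and Definition~\ref{d:restrictsegment} then yields $B_{\Phi(w)}(\lambda,\mu,\nu)$ since also $\lambda,\mu,\nu \in \Phi(w)$.

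I do not anticipate any real obstacle here; the proof is a direct combination of Definition~\ref{d:restrictsegment} and Lemma~\ref{l:betweencone}. The only point requiring a small amount of care is that the distinctness of $\lambda,\mu,\nu$ is exactly what licenses the application of Lemma~\ref{l:betweencone}, and this is already part of the hypotheses of the statement being proved.
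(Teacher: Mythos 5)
Your proposal is correct and follows essentially the same route as the paper: unwind the definition of the restricted relation $B_{\Phi(w)}$ (via Definition~\ref{d:restrictsegment} and Definition~\ref{d:standardphisegment}) to reduce to $B_W$, then apply Lemma~\ref{l:betweencone}. Nothing is missing.
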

\begin{proof}
By Lemma~\ref{l:subsegment} and Definition~\ref{d:standardphisegment}, $B_{\Phi(w)}(\lambda,\mu,\nu)$ holds if and only if $\lambda,\mu,\nu \in \Phi(w)$ and $B_W(\lambda,\mu,\nu)$ holds.  By Lemma~\ref{l:betweencone}, $B_W(\lambda,\mu,\nu)$ holds if and only if $\mu$ lies in the convex cone spanned by $\lambda$ and $\nu$.  The conclusion follows.
\end{proof}
\noindent
Given any standard $\mathcal{W}$-labeling $T$ of the standard segment structure for $W$, if we restrict the labeling to the set $\Phi(w)$, where $w \in W$, then the result is a standard $\mathcal{W}_{\Phi(w)}$-labeling.  However, the converse need not be true.  The problem is that if an inversion set $\Psi$ partially intersects $\Phi(w)$, then some roots of the inversion set must be labeled $0$.  This forces the roots in $\Psi \cap \Phi(w)$ to have labels increasing towards the one canonical simple root of $\Psi$ that is in $\Phi(w)$.  However, a standard $\mathcal{W}_{\Phi(w)}$-labeling could have labels increasing in the opposite direction.  Since we wish to produce a correspondence for our restricted segment structure $\mathcal{W}_{\Phi(w)}$, we must do some bookkeeping to account for this possibility.
\begin{defn}\label{d:fullpartial}
Let $w \in W$.  Let $\Psi$ be an inversion set.  If $\Psi \cap \Phi(w) = \Psi$ then we call $\Psi \cap \Phi(w)$ a \emph{full inversion set of $w$}. If $|\Psi \cap \Phi(w)| \geq 2$ and $\Psi \cap \Phi(w) \neq \Psi$, then we call $\Psi$ a \emph{partial inversion set of $w$}.
\end{defn}
\begin{rem}
Observe that for an infinite inversion set $\Psi$, if $|\Psi \cap \Phi(w)| \geq 2$, then $\Psi \cap \Phi(w)$ is automatically a partial inversion set of $w$, since $\Phi(w)$ is finite.
\end{rem}
\begin{ex} \label{ex:fullpartial}
Let $W = W(A_3)$, let $S = \{s,t,u\}$, and let $w = \phi(t,s,t,u)$.  Then $$\Phi(w) = \{\alpha_s,\alpha_s + \alpha_t + \alpha_u, \alpha_t + \alpha_u, \alpha_u\}.$$  Recall, by Definition~\ref{d:inversionmset}, that an inversion $2$-set is an inversion set $\Psi$ such that $|\Psi| = 2$.  Observe that (by Definition~\ref{d:fullpartial}) an inversion $2$-set can not be a partial inversion set and that a partial inversion set can not be a full inversion set.  The sets $$\Psi_1 = \{\alpha_s, \alpha_s + \alpha_t + \alpha_u, \alpha_t + \alpha_u\},$$ $$\Psi_2 = \{\alpha_t, \alpha_t + \alpha_u, \alpha_u\}, $$ $$\Psi_3 = \{\alpha_s,\alpha_s + \alpha_t, \alpha_t\}, \text{ and }$$ $$\Psi_4 = \{\alpha_s + \alpha_t, \alpha_s + \alpha_t + \alpha_u, \alpha_u\}$$ are all the $3$-inversion sets in the positive root system $\Phi^+$ of $W(A_3)$.  Since $\Psi_1 \subseteq \Phi(w)$, the set $\Psi_1 \cap \Phi(w) = \Psi_1$ is a full inversion set of $w$.  Since $\Psi_2 \cap \Phi(w) = \{\alpha_t + \alpha_u, \alpha_u\} \neq \Psi_2$, we see that $\Psi_2 \cap \Phi(w)$ is a partial inversion set of $w$.  However, note that $\Psi_3 \cap \Phi(w) = \{\alpha_s\}$ is neither a partial inversion set of $w$ nor a full inversion set of $w$.  (This choice in terminology is made because $\Psi_3 \cap \Phi(w)$ fails to form a line in the restriction of $\mathcal{W}$ to $\Phi(w)$.)  Lastly, note that $\Psi_4 \cap \Phi(w)$ is a partial inversion set.
\end{ex}
\begin{lem} \label{l:thatlemma}
Let $w \in W$ and let $\Psi$ be an inversion set such that $\Psi \cap \Phi(w)$ is a partial inversion set of $w$.  Then there exists exactly one canonical simple root $\gamma$ of $\Psi$ such that $\gamma \in \Psi \cap \Phi(w)$.
\end{lem}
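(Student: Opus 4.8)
The plan is to invoke the structure theorem for the intersection of an inversion set with $\Phi(w)$ and then simply read off which canonical simple root survives the intersection. Write $\Psi = \Phi^+_{\{\alpha,\beta\}}$, let $\gamma,\delta$ be its canonical simple roots (there are exactly two, by Lemma~\ref{l:dihedralsize}), and set $m = |s_\gamma s_\delta|$. First I would note that $\gamma = \gamma_1$ and $\delta = \delta_1$ are positive roots of $\Psi$, so ``$\gamma \in \Psi \cap \Phi(w)$'' is equivalent to ``$\gamma \in \Phi(w)$''; thus the claim to be proved is that exactly one of $\gamma,\delta$ lies in $\Phi(w)$.

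Since $\Phi(w)$ is biconvex by Lemma~\ref{l:inversionbiconvex}, Corollary~\ref{c:biconvexstructure} applies to $\Psi$: either $\Phi(w) \cap \Psi = [\gamma_1,\gamma_k]$ in $(\Psi,\leq_{\Psi,\gamma})$ for some admissible $k$, or $\Phi(w) \cap \Psi = [\delta_1,\delta_k]$ in $(\Psi,\leq_{\Psi,\delta})$ for some admissible $k$, or $\Phi(w)\cap\Psi = \emptyset$. The hypothesis that $\Psi \cap \Phi(w)$ is a \emph{partial} inversion set of $w$ supplies $|\Psi \cap \Phi(w)| \geq 2$ and $\Psi \cap \Phi(w) \neq \Psi$. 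The first of these rules out the empty case, so we are in one of the first two cases, and by the $\gamma \leftrightarrow \delta$ symmetry it suffices to analyze the first.

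So assume $\Phi(w) \cap \Psi = [\gamma_1,\gamma_k]$ with $1 \leq k \leq m$ (and $k < m$ if $m = \infty$). Then $\gamma = \gamma_1 \in \Phi(w)$, and it remains to show $\delta \notin \Phi(w)$. If $m$ is finite, then $\Psi = \{\gamma_1,\ldots,\gamma_m\}$ by Proposition~\ref{p:localdisjoint} and $[\gamma_1,\gamma_k] = \{\gamma_1,\ldots,\gamma_k\}$ by Definition~\ref{d:totalinversion}; since $\Phi(w)\cap\Psi \neq \Psi$ we must have $k < m$, and since $\delta = \delta_1 = \gamma_m$ by Lemma~\ref{l:sequencerelation} while $\gamma_1,\ldots,\gamma_m$ are pairwise distinct by Lemma~\ref{l:sequencepositive}, the root $\delta$ is not among $\gamma_1,\ldots,\gamma_k$, so $\delta \notin \Phi(w)$. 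If $m = \infty$, then $[\gamma_1,\gamma_k] = \{\gamma_1,\ldots,\gamma_k\}$ consists only of entries of the $\overline{\gamma}$-sequence, whereas $\delta = \delta_1$ is an entry of the $\overline{\delta}$-sequence, and these two sequences have no common entry by Lemma~\ref{l:infinitedisjoint}; again $\delta \notin \Phi(w)$. In either case exactly one of $\gamma,\delta$ lies in $\Phi(w)$, and symmetrically in the $[\delta_1,\delta_k]$ case exactly one (namely $\delta$) does.

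There is no substantial obstacle here, since the real work has already been packaged into Corollary~\ref{c:biconvexstructure}. The only point requiring a moment's care is identifying the endpoints of the interval $[\gamma_1,\gamma_k]$ with the canonical simple roots: one must know that $\gamma$ is the $\leq_{\Psi,\gamma}$-minimum of $\Psi$ while $\delta$ sits at the opposite end of $\Psi$ (at index $m$ when $m<\infty$, or across the ``bar'' in the infinite picture of Remark~\ref{r:graphicline}), so that the partiality hypothesis $\Phi(w)\cap\Psi\ne\Psi$ forces the interval to stop short of $\delta$.
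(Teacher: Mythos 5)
Your proof is correct and rests on the same key result as the paper's own argument, namely Corollary~\ref{c:biconvexstructure} applied to the biconvex set $\Phi(w)$. The only difference is in the uniqueness step: the paper argues by contradiction (if both canonical simple roots lay in $\Phi(w)$, convexity would force $\Psi \cap \Phi(w) = \Psi$), whereas you read off directly from the interval description, together with the distinctness of the $\gamma_i$ (finite case) and the disjointness of the $\overline{\gamma}$- and $\overline{\delta}$-sequences (infinite case), that $\delta$ cannot lie in $[\gamma_1,\gamma_k]$ --- both routes are equally valid.
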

\begin{proof}
By Definition~\ref{d:fullpartial}, $\Psi \cap \Phi(w) \neq \emptyset$.  By Corollary~\ref{c:biconvexstructure}, there exists a canonical simple root $\gamma \in \Psi \cap \Phi(w)$.  Suppose towards a contradiction that both canonical simple roots are in $\Psi \cap \Phi(w)$.  By Lemma~\ref{l:inversionbiconvex}, $\Phi(w)$ is biconvex, so it follows that $\Psi \cap \Phi(w) = \Psi$, which contradicts the hypothesis that $\Psi \cap \Phi(w)$ is a partial inversion set.
\end{proof}
\begin{rem}
We will always use $\gamma$ as a name for the unique canonical simple root of a partial inversion set.
\end{rem}
\begin{cor} \label{c:thatcorollary}
Let $w \in W$ and let $\Psi \cap \Phi(w)$ be a partial inversion set of $w$.  Then there exist canonical simple roots $\gamma$ and $\delta$ for $\Psi$ such that $\Psi \cap \Phi(w) = [\gamma_1,\gamma_k]$ in the total order $(\Psi, <_{\Psi,\gamma})$ and such that $k < |s_\gamma s_\delta|$.
\end{cor}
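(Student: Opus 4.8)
The plan is to read the statement off from Corollary~\ref{c:biconvexstructure} together with Lemma~\ref{l:thatlemma}, with one small extra observation to exclude the ``full'' case. Since $\Psi\cap\Phi(w)$ is a partial inversion set of $w$, it is in particular nonempty, so applying Corollary~\ref{c:biconvexstructure} to $\Psi$ rules out option~(3) there. Hence there is a choice of canonical simple roots of $\Psi$ — say with $m=|s_{\gamma}s_{\delta}|$ — for which $\Phi(w)\cap\Psi$ equals an interval based at one of the two canonical simple roots, i.e. $[\gamma_1,\gamma_k]$ in the total order $(\Psi,\leq_{\Psi,\gamma})$ with $1\le k\le m$ (and $k<m$ automatically when $m=\infty$, since then $k$ is a finite integer), or the analogous statement with $\delta$ in place of $\gamma$. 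The first bookkeeping point is to check that this base root is necessarily the distinguished canonical simple root of Lemma~\ref{l:thatlemma}: it is a canonical simple root lying in the interval, hence in $\Psi\cap\Phi(w)$, and by Lemma~\ref{l:thatlemma} there is exactly one such root. So after relabeling the two canonical simple roots so that this one is called $\gamma$ (and the other $\delta$), we have $\gamma,\delta$ as required and $\Psi\cap\Phi(w)=[\gamma_1,\gamma_k]$ in $(\Psi,\leq_{\Psi,\gamma})$.

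It then remains only to improve $k\le m$ to $k<m$ in the case $m<\infty$. Here I would argue by contradiction. If $k=m$, then by Definition~\ref{d:totalinversion}(1) and Proposition~\ref{p:localdisjoint} the roots $\gamma_1,\dots,\gamma_m$ exhaust $\Psi=\Phi^+_{\{\alpha,\beta\}}$, so the interval $[\gamma_1,\gamma_m]$ is all of $\Psi$ and hence $\Psi\cap\Phi(w)=\Psi$. But then $\Psi\cap\Phi(w)$ is a full inversion set of $w$ by Definition~\ref{d:fullpartial}, contradicting the hypothesis that it is a partial inversion set (which by definition requires $\Psi\cap\Phi(w)\neq\Psi$). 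Therefore $k<m$, completing the proof.

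I do not anticipate any real obstacle: the geometric content is already encapsulated in Corollary~\ref{c:biconvexstructure} (which in turn rests on Proposition~\ref{p:biconvexstructure} and the biconvexity of $\Phi(w)$ from Lemma~\ref{l:inversionbiconvex}), and Lemma~\ref{l:thatlemma} supplies the uniqueness needed to pin down the labeling of $\gamma$ and $\delta$. The only genuinely new step is the trivial remark that $k=m$ would force $\Psi\cap\Phi(w)$ to be full rather than partial. The mildest care is needed in the relabeling argument, since Corollary~\ref{c:biconvexstructure} a priori allows the interval to be based at either canonical simple root, and one must invoke Lemma~\ref{l:thatlemma} to see that this choice is in fact forced.
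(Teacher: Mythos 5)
Your proposal is correct and follows essentially the same route as the paper: Lemma~\ref{l:thatlemma} to identify the unique canonical simple root in $\Psi \cap \Phi(w)$, Corollary~\ref{c:biconvexstructure} to express $\Psi \cap \Phi(w)$ as an interval $[\gamma_1,\gamma_k]$, and Proposition~\ref{p:localdisjoint} to rule out $k = m$ in the finite case. The only cosmetic difference is that you phrase the last step as a contradiction ($k=m$ would make the set full), whereas the paper argues directly from $\delta \not\in \Psi \cap \Phi(w)$; the content is identical.
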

\begin{proof}
By Lemma~\ref{l:thatlemma}, there exists one canonical simple root $\gamma$ of $\Psi$ such that $\gamma \in \Psi \cap \Phi(w)$.  Let $\delta$ be the canonical simple root of $\Psi$ that is not in $\Psi \cap \Phi(w)$.  Then, by Corollary~\ref{c:biconvexstructure}, we have $\Psi \cap \Phi(w) = [\gamma_1,\gamma_k]$ in the total order $(\Psi,<_{\Psi,\gamma})$ for some $k$ because $\gamma \in \Psi \cap \Phi(w)$.  If $|s_\gamma s_\delta|$ is infinite then $k < |s_\gamma s_\delta|$ since $k$ is finite.  Otherwise, $\delta \not\in \Psi \cap \Phi(w)$ and Proposition~\ref{p:localdisjoint} implies that $k < |s_\gamma s_\delta|$.
\end{proof}
\begin{defn}
Let $w \in W$.  Let $\Psi$ be an inversion set and let $\Psi \cap \Phi(w)$ be a partial inversion set.  We call the unique canonical simple root $\gamma$ of $\Psi$ that satisfies $\gamma \in \Psi \cap \Phi(w)$ \emph{the canonical simple root of $\Psi \cap \Phi(w)$}.
\end{defn}
\begin{ex}
In Example~\ref{ex:fullpartial}, the partial inversion set $$\Psi_2 \cap \Phi(w) = \{\alpha_t + \alpha_u, \alpha_u\}$$ has $\alpha_u$ as its canonical simple root.  The inversion set $\Psi_2$ has $\gamma = \alpha_u$ and $\delta = \alpha_t$ as its canonical simple roots, but $\alpha_t \not\in \Psi_2 \cap \Phi(w)$.  Note that $$\Psi \cap \Phi(w) = [\gamma_1,\gamma_2],$$ which is consistent with the assertions of Corollary~\ref{c:thatcorollary}.
\end{ex}
\begin{defn} \label{d:partialinversionlabel}
Let $w \in W$ and let $\mathcal{W}_{\Phi(w)}$ be the standard segment structure of $\Phi(w)$.  Let $\Psi \cap \Phi(w)$ be a partial inversion set of $w$, let $\gamma = \gamma_1$ be the canonical simple root of $\Psi \cap \Phi(w)$ and, as in Corollary~\ref{c:thatcorollary}, write $\Psi \cap \Phi(w) = [\gamma_1, \gamma_k]$.  We say that a $\mathcal{W}_{\Phi(w)}$-labeling $T:\Phi(w) \rightarrow \mathbb{N}_0$ \emph{satisfies the restrictions of $\Psi \cap \Phi(w)$} if $T(\gamma_1) > \cdots > T(\gamma_k)$.  We say that a $\mathcal{W}_{\Phi(w)}$-labeling $T$ \emph{satisfies the restrictions of $\Phi(w)$} if $T$ satisfies the restrictions of every partial inversion set $\Psi \cap \Phi(w)$ of $w$.  We denote the set of all sequential standard $\mathcal{W}_{\Phi(w)}$-labelings with $\text{supp}(T) = \Phi(w)$ that satisfy the restrictions of $\Phi(w)$ by $\mathrm{Lab}(\mathcal{W}_{\Phi(w)},w)$.
\end{defn}
\begin{ex}
Again we let $W = W(A_3)$ and let $w = \phi(t,s,t,u)$.  Define $T:\Phi(w) \rightarrow \mathbb{N}_0$ by $T(\alpha_s) = 1$, $T(\alpha_s + \alpha_t + \alpha_u) = 2$, $T(\alpha_t + \alpha_u) = 3$, and $T(\alpha_u) = 4$.  From Example~\ref{ex:fullpartial}, we have that $\Psi_2 \cap \Phi(w) = \{\alpha_t + \alpha_u, \alpha_u\}$ and $\Psi_4 \cap \Phi(w) = \{\alpha_s + \alpha_t + \alpha_u, \alpha_u\}$ are the only two partial inversion sets of $w$.  The canonical simple root of $\Psi_4 \cap \Phi(w)$ is $\alpha_u$, and the canonical simple root of $\Psi_2 \cap \Phi(w)$ is also $\alpha_u$.  We note that $T(\alpha_u) > T(\alpha_t+\alpha_u)$ and $T(\alpha_u) > T(\alpha_s + \alpha_t + \alpha_u)$ so that $T$ satisfies the restrictions of $\Phi(w)$.\\ \\
If instead we chose the labeling $T':\Phi(w) \rightarrow \mathbb{N}_0$ defined by $T'(\alpha_s) = 1$, $T'(\alpha_s + \alpha_t + \alpha_u) = 2$, $T'(\alpha_u) = 3$, and $T'(\alpha_t + \alpha_u) = 4$, then $T'$ satisfies the restrictions of $\Psi_4 \cap \Phi(w)$, but not those of $\Psi_2 \cap \Phi(w)$:  this is because $T'(\alpha_u) < T'(\alpha_t + \alpha_u)$.  As $T'$ does not satisfy the restrictions of all the partial inversion sets of $w$, $T'$ does not satisfy the restrictions of $\Phi(w)$.  Note that $T'$ is, however, a sequential standard $\mathcal{W}_{\Phi(w)}$-labeling.
\end{ex}
\noindent
The restrictions made on $\mathcal{W}_{\Phi(w)}$-labelings are justified by the next lemma.
\begin{lem} \label{l:uniquewlabel}
Let $T:\Phi(w) \rightarrow \mathbb{N}_0$ be a sequential standard $\mathcal{W}_{\Phi(w)}$-labeling that satisfies the restrictions of $\Phi(w)$ and has $\text{supp}(T) = \Phi(w)$.  Then there exists a unique sequential standard $\mathcal{W}$-labeling $T':\Phi^+ \rightarrow \mathbb{N}_0$ such that $\text{supp}(T') = \Phi(w)$ and $T'|_{\Phi(w)} = T$.
\end{lem}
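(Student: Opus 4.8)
The plan is to define $T'$ in the only way the hypotheses permit and then verify it works. Since we require $\text{supp}(T') = \Phi(w)$, the labeling $T'$ must vanish on $\Phi^+ \setminus \Phi(w)$, and since we require $T'|_{\Phi(w)} = T$, the values on $\Phi(w)$ are forced as well; hence \emph{uniqueness is immediate}, and the entire content of the lemma is that the extension-by-zero, $T'(\lambda) = T(\lambda)$ for $\lambda \in \Phi(w)$ and $T'(\lambda) = 0$ otherwise, is a sequential standard $\mathcal{W}$-labeling. Sequentiality is automatic: $\text{supp}(T') = \text{supp}(T) = \Phi(w)$ and $T'$ agrees with the sequential labeling $T$ there, so $T'(\text{supp}(T')) = \{1,\dots,|\Phi(w)|\}$.

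The substance is checking that $T'$ is a standard $\mathcal{W}$-labeling, i.e.\ that whenever $B_W(\lambda,\mu,\nu)$ holds we have $T'(\lambda) \le T'(\mu) \le T'(\nu)$ or the reverse. I would first use Lemma~\ref{l:betweencone} to rephrase $B_W(\lambda,\mu,\nu)$ as ``$\mu$ lies in the convex cone spanned by $\lambda$ and $\nu$'', and then split into cases according to which of $\lambda,\mu,\nu$ lie in $\Phi(w)$. If all three lie in $\Phi(w)$, then $B_{\Phi(w)}(\lambda,\mu,\nu)$ holds by Lemma~\ref{l:betweenconerestrict}, and since $T' = T$ on $\Phi(w)$ the required inequality is supplied directly by the hypothesis that $T$ is a standard $\mathcal{W}_{\Phi(w)}$-labeling. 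If $\mu \notin \Phi(w)$, then the convexity of $\Phi(w)$ (part of biconvexity, Lemma~\ref{l:inversionbiconvex}) forbids $\lambda$ and $\nu$ from both lying in $\Phi(w)$; hence at least one of $T'(\lambda), T'(\nu)$ equals $0 = T'(\mu)$ and one of the monotone chains holds trivially. Similarly, if $\mu \in \Phi(w)$ but $\lambda,\nu \notin \Phi(w)$, the convexity of $\Phi^+ \setminus \Phi(w)$ would force $\mu \notin \Phi(w)$, a contradiction, so that configuration never arises.

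The remaining — and only genuinely delicate — case is $\mu \in \Phi(w)$ with exactly one of $\lambda,\nu$ outside $\Phi(w)$; say $\lambda \notin \Phi(w)$, $\nu \in \Phi(w)$. I would take $\Psi$ to be the inversion set containing $\lambda,\mu,\nu$ (unique by Corollary~\ref{c:inversionline} and Lemma~\ref{l:oneroot}). Since $\mu,\nu \in \Psi \cap \Phi(w)$ while $\lambda \in \Psi \setminus \Phi(w)$, the set $\Psi \cap \Phi(w)$ is a partial inversion set of $w$, so Corollary~\ref{c:thatcorollary} gives canonical simple roots $\gamma,\delta$ with $\Psi \cap \Phi(w) = [\gamma_1,\gamma_k]$ in the order $<_{\Psi,\gamma}$, where $k < |s_\gamma s_\delta|$ and $\gamma_1 = \gamma$ is the minimum of $(\Psi,<_{\Psi,\gamma})$. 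Then $\mu = \gamma_i$, $\nu = \gamma_j$ with $i,j \le k$, whereas $\lambda$ lies strictly above $\gamma_k$, hence strictly above both $\mu$ and $\nu$, in $<_{\Psi,\gamma}$. For $\mu$ to lie between $\lambda$ and $\nu$ one therefore needs $\nu <_{\Psi,\gamma} \mu <_{\Psi,\gamma} \lambda$, i.e.\ $j < i$. The hypothesis that $T$ satisfies the restrictions of $\Phi(w)$ says precisely $T(\gamma_1) > \cdots > T(\gamma_k)$, so $j < i$ gives $T(\nu) = T(\gamma_j) > T(\gamma_i) = T(\mu) > 0 = T'(\lambda)$, and the chain $T'(\lambda) \le T'(\mu) \le T'(\nu)$ holds. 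The symmetric sub-case ($\nu \notin \Phi(w)$, $\lambda \in \Phi(w)$) is handled by exchanging the roles of $\lambda$ and $\nu$, which yields the reverse chain. This exhausts the cases and shows $T'$ is standard.

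I expect the main obstacle to be exactly this last case: one must correctly translate ``$\mu$ is between $\lambda$ and $\nu$'' into an inequality of indices in $(\Psi,<_{\Psi,\gamma})$ and match its direction against the direction of monotonicity imposed by Corollary~\ref{c:thatcorollary} and Definition~\ref{d:partialinversionlabel}. The possibility that $\Psi$ is infinite (so $\lambda$ might be some $\delta_\ell$) costs only a sentence, since all that is used is that $\lambda$ lies strictly above every element of $[\gamma_1,\gamma_k]$ in $<_{\Psi,\gamma}$, which is exactly what $\Psi \cap \Phi(w) = [\gamma_1,\gamma_k]$ together with the description of $(\Psi,<_{\Psi,\gamma})$ in Definition~\ref{d:totalinversion} provides.
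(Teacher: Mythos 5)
Your proposal is correct and follows essentially the same route as the paper's proof: extension by zero, uniqueness and sequentiality for free, and then a membership case analysis on $\lambda,\mu,\nu$ in which the only substantive case (exactly one of the outer roots outside $\Phi(w)$) is resolved by Corollary~\ref{c:thatcorollary} and the partial-inversion-set restriction of Definition~\ref{d:partialinversionlabel}. The only difference is organizational: the paper first normalizes to $\lambda <_{\Psi,\gamma} \mu <_{\Psi,\gamma} \nu$ and uses Corollary~\ref{c:biconvexstructure} to list the four possible membership patterns, whereas you case on membership first and derive the ordering afterwards — the underlying argument is the same.
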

\begin{proof}
To construct $T'$, we set $T'(\alpha) = 0$ if $\alpha \not\in \Phi(w)$; otherwise we set $T'(\alpha) = T(\alpha)$.
We have $\text{supp}(T') = \text{supp}(T) = \Phi(w)$.  Thus, $T'$ is sequential since $T$ is sequential.\\ \\
To show that $T'$ is a standard $\mathcal{W}$-labeling, we let $\lambda, \mu, \nu$ be distinct positive roots such that $\mu$ strictly lies in the convex cone spanned by $\lambda$ and $\nu$.  By Corollary~\ref{c:inversionline}, there is a unique inversion set $\Psi$ containing $\lambda$ and $\nu$ (and hence $\mu$).  If $\Psi \cap \Phi(w)$ is a partial inversion set we apply Lemma~\ref{l:thatlemma} and let $\gamma$ be the canonical simple root of $\Psi \cap \Phi(w)$.  Assume without loss of generality that $\lambda <_{\Psi,\gamma} \mu <_{\Psi,\gamma} \nu$ (the reverse case being a symmetric argument and the other cases being eliminated by Lemma~\ref{l:betweencone}).  Recall that by Definition~\ref{d:betweenness} this means that $B_W(\lambda,\mu,\nu)$ holds.  We consider the following cases:
\begin{enumerate}[(1)]
\item $\lambda,\mu,\nu \in \Phi(w)$;
\item $\lambda,\mu \in \Phi(w)$, but $\nu \not\in \Phi(w)$;
\item $\lambda \in \Phi(w)$ but $\mu,\nu \not\in \Phi(w)$;
\item $\lambda,\mu,\nu \not\in \Phi(w)$.
\end{enumerate}
\noindent
The assumed ordering of $\lambda$, $\mu$, and $\nu$ with respect to $<_{\Psi,\gamma}$ and Corollary~\ref{c:biconvexstructure} ensures that these are the only possibilities.\\ \\
For case $(1)$, Lemma~\ref{l:betweenconerestrict} implies that $B_{\Phi(w)}(\lambda,\mu,\nu)$ holds.  Since $T$ is assumed to be a standard $\mathcal{W}_{\Phi(w)}$-labeling and $T = T'$ on $\Phi(w)$, we have either $T'(\lambda) \leq T'(\mu) \leq T'(\nu)$ or $T'(\nu) \leq T'(\mu) \leq T'(\lambda)$.\\ \\
For case $(2)$, we have that $\Psi \cap \Phi(w)$ is a partial inversion set.  By Definition~\ref{d:partialinversionlabel}, we have $T(\lambda) > T(\mu)$ since $T$ satisfies the restrictions of $\Psi \cap \Phi(w)$.  Since $\lambda,\mu \in \Phi(w)$, this means $T'(\lambda) > T'(\mu)$.  We have $T'(\mu) > T'(\nu)$ since $T'(\nu) = 0$.\\ \\
For case $(3)$, we have that $T'(\lambda) > 0$ since $\lambda \in \Phi(w)$.  Thus $$T'(\lambda) > T'(\mu) \geq T'(\nu).$$
Similarly, for case $(4)$ we have $T'(\lambda) \geq T'(\mu) \geq T'(\nu)$.  Having exhausted all cases, we see that $T'$ is a standard $\mathcal{W}$-labeling by Definition~\ref{d:incidencelabel}.\\ \\
Now suppose there exists a sequential standard $\mathcal{W}$-labeling $T'':\Phi^+ \rightarrow \mathbb{N}_0$ such that $\text{supp}(T'') = \Phi(w)$ and $T''|_{\Phi(w)} = T$.  Then, if $\alpha \in \Phi(w)$, we have $T''(\alpha) = T(\alpha) = T'(\alpha)$.  If $\alpha \not\in \Phi(w)$, then since $\text{supp}(T'') = \Phi(w)$, we have $T''(\alpha) = 0 = T'(\alpha)$.  Thus $T'' = T'$ so that $T'$ is the unique $\mathcal{W}$-labeling satisfying these properties.
\end{proof}
\begin{lem} \label{l:inversionrestriction}
Let $T:\Phi^+ \rightarrow \mathbb{N}_0$ be a sequential standard $\mathcal{W}$-labeling such that $\text{supp}(T) = \Phi(w)$.  Then $T|_{\Phi(w)}$ is a sequential standard $\mathcal{W}_{\Phi(w)}$-labeling that satisfies the restrictions of $\Phi(w)$.  Furthermore, $(T|_{\Phi(w)})' = T$, where $'$ is the extension of the domain $\Phi(w)$ to $\Phi^+$ introduced in Lemma~\ref{l:uniquewlabel}.
\end{lem}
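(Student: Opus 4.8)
The plan is to verify in turn the three assertions of the lemma, each of which should follow directly from results already in hand. First I would dispose of sequentiality: since $\text{supp}(T) = \Phi(w)$, restricting the domain of $T$ from $\Phi^+$ to $\Phi(w)$ changes neither the support nor the set of values, so $\text{supp}(T|_{\Phi(w)}) = \Phi(w)$ and $T|_{\Phi(w)}(\text{supp}(T|_{\Phi(w)})) = T(\text{supp}(T)) = \{1,\ldots,|\Phi(w)|\}$, which is exactly the condition in Definition~\ref{d:incidencelabel} for $T|_{\Phi(w)}$ to be a sequential $\mathcal{W}_{\Phi(w)}$-labeling.

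Next I would check that $T|_{\Phi(w)}$ is a standard $\mathcal{W}_{\Phi(w)}$-labeling. By Definition~\ref{d:restrictsegment} (and Lemma~\ref{l:subsegment}), $B_{\Phi(w)}(p_1,p_2,p_3)$ holds only when $p_1,p_2,p_3 \in \Phi(w)$ and $B_W(p_1,p_2,p_3)$ holds. Since $T$ is a standard $\mathcal{W}$-labeling, in that case we already have $T(p_1) \leq T(p_2) \leq T(p_3)$ or $T(p_3) \leq T(p_2) \leq T(p_1)$, and those values coincide with the values of $T|_{\Phi(w)}$ since $p_1,p_2,p_3 \in \Phi(w)$. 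Hence the standardness condition of Definition~\ref{d:incidencelabel} holds for $T|_{\Phi(w)}$.

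The third assertion, that $T|_{\Phi(w)}$ satisfies the restrictions of $\Phi(w)$, is where the one genuine piece of work lies. Fix a partial inversion set $\Psi \cap \Phi(w)$ of $w$, let $\gamma = \gamma_1$ be its canonical simple root and $\delta$ the other canonical simple root of $\Psi$, and write $\Psi \cap \Phi(w) = [\gamma_1,\gamma_k]$ in $(\Psi, <_{\Psi,\gamma})$ as in Corollary~\ref{c:thatcorollary}, so that $k < |s_\gamma s_\delta|$. We have $\gamma \in \Phi(w) = \text{supp}(T)$ and $\delta \notin \Phi(w)$, hence $T(\gamma) \neq 0$ and $T(\delta) = 0$. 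By Proposition~\ref{p:workhorse}, $T$ is a standard labeling of $\Phi^+$; it is sequential with finite support, so Lemma~\ref{l:wlabel}(2) applies and produces a smallest index $k'$ with $T(\gamma_1) > \cdots > T(\gamma_{k'})$ and $T(\gamma_i) = 0$ for $i > k'$. Since $\text{supp}(T) = \Phi(w)$ and $\Psi \cap \Phi(w) = \{\gamma_1,\ldots,\gamma_k\}$, the indices $i$ with $\gamma_i \in \text{supp}(T)$ are exactly $1,\ldots,k$, forcing $k' = k$; thus $T(\gamma_1) > \cdots > T(\gamma_k)$, i.e. $T|_{\Phi(w)}$ satisfies the restrictions of $\Psi \cap \Phi(w)$ in the sense of Definition~\ref{d:partialinversionlabel}. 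As $\Psi$ was an arbitrary partial inversion set, $T|_{\Phi(w)}$ satisfies the restrictions of $\Phi(w)$. The main obstacle here is precisely this bookkeeping step—matching the index $k$ coming from Corollary~\ref{c:thatcorollary} with the index produced by Lemma~\ref{l:wlabel}(2)—but it is forced by the support condition.

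Finally, the identity $(T|_{\Phi(w)})' = T$ is immediate from the construction of the extension $'$ in Lemma~\ref{l:uniquewlabel}: both functions vanish off $\Phi(w)$ (the left side by definition of $'$, the right side because $\text{supp}(T) = \Phi(w)$) and both agree with $T|_{\Phi(w)}$ on $\Phi(w)$, so they are equal as functions on $\Phi^+$.
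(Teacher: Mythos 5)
Your proof is correct and follows essentially the same route as the paper's: sequentiality and standardness are inherited directly from the definition of the restricted segment structure, and the restrictions of each partial inversion set are verified via Corollary~\ref{c:thatcorollary} and Lemma~\ref{l:wlabel}(2). The only difference is that you make explicit two steps the paper leaves implicit — passing through Proposition~\ref{p:workhorse} so that Lemma~\ref{l:wlabel} applies, and matching the index from Corollary~\ref{c:thatcorollary} with the one produced by Lemma~\ref{l:wlabel}(2) using the support condition — which is a welcome tightening rather than a departure.
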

\begin{proof}
Since $\text{supp}(T) = \Phi(w)$, we have $\text{supp}(T|_{\Phi(w)}) = \Phi(w)$.  Thus $T|_{\Phi(w)}$ is sequential because $T$ is sequential.\\ \\
For every $\lambda,\mu,\nu \in \Phi(w)$ such that $B_{\Phi(w)}(\lambda,\mu,\nu)$ holds, $B_W(\lambda,\mu,\nu)$ must also hold.  Thus, if $B_{\Phi(w)}(\lambda,\mu,\nu)$ holds, then either $$T|_{\Phi(w)}(\lambda) \leq T|_{\Phi(w)}(\mu) \leq T|_{\Phi(w)}(\nu)$$
\begin{center}
or
\end{center}
$$T|_{\Phi(w)}(\nu) \leq T|_{\Phi(w)}(\mu) \leq T|_{\Phi(w)}(\lambda).$$  It then follows that $T|_{\Phi(w)}$ is a standard $\mathcal{W}_{\Phi(w)}$-labeling.\\ \\
Now let $\Psi \cap \Phi(w)$ be a partial inversion set of $w$.  By Corollary~\ref{c:thatcorollary}, there exist canonical simple roots $\gamma$ and $\delta$ of $\Psi$ such that $\gamma \in \Psi$, $\delta \not\in \Psi$ and such that we may write $\Psi \cap \Phi(w) = [\gamma_1,\gamma_k]$ for some $k < |s_\gamma s_\delta|$.  By Lemma~\ref{l:wlabel}(2), we have $$T(\gamma_1) > \cdots > T(\gamma_k) > 0$$
\begin{center}
so that
\end{center}
$$T|_{\Phi(w)}(\gamma_1) > \cdots > T|_{\Phi(w)}(\gamma_k).$$  By Definition~\ref{d:partialinversionlabel}, $T|_{\Phi(w)}$ satisfies the restrictions imposed by $\Psi \cap \Phi(w)$.\\ \\
Lastly, $(T|_{\Phi(w)})' = T$ follows from the uniqueness assertion of Lemma~\ref{l:uniquewlabel}.
\end{proof}
\begin{cor} \label{c:wlabelcorrespondence}
Let $\mathrm{Lab}(\mathcal{W},w)$ be the set of all sequential standard $\mathcal{W}$-labelings $T$ such that $\text{supp}(T) = \Phi(w)$.  Let $\mathrm{Lab}(\mathcal{W}_{\Phi(w)},w)$ be the set of all sequential standard $\mathcal{W}_{\Phi(w)}$-labelings that satisfy the restrictions of $\Phi(w)$.  Then there is a 1--1 correspondence between $\mathrm{Lab}(\mathcal{W},w)$ and $\mathrm{Lab}(\mathcal{W}_{\Phi(w)},w)$ given by
\begin{equation*}
\begin{split}
T &\longrightarrow T|_{\Phi(w)} \text{ and }\\
T'&\longleftarrow T.
\end{split}
\end{equation*}
\end{cor}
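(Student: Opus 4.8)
The plan is to observe that almost all the work has already been done in Lemmas~\ref{l:uniquewlabel} and \ref{l:inversionrestriction}, so the task reduces to checking that the two assignments $T \mapsto T|_{\Phi(w)}$ and $T \mapsto T'$ are well-defined maps between $\mathrm{Lab}(\mathcal{W},w)$ and $\mathrm{Lab}(\mathcal{W}_{\Phi(w)},w)$, and that they are mutually inverse.

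First I would handle the restriction map. Given $T \in \mathrm{Lab}(\mathcal{W},w)$, i.e. a sequential standard $\mathcal{W}$-labeling with $\text{supp}(T) = \Phi(w)$, Lemma~\ref{l:inversionrestriction} says precisely that $T|_{\Phi(w)}$ is a sequential standard $\mathcal{W}_{\Phi(w)}$-labeling that satisfies the restrictions of $\Phi(w)$, and it has support $\Phi(w)$ since $\text{supp}(T) = \Phi(w)$. Hence $T|_{\Phi(w)} \in \mathrm{Lab}(\mathcal{W}_{\Phi(w)},w)$, so $T \mapsto T|_{\Phi(w)}$ is a well-defined map $\mathrm{Lab}(\mathcal{W},w) \to \mathrm{Lab}(\mathcal{W}_{\Phi(w)},w)$. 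Conversely, given $T \in \mathrm{Lab}(\mathcal{W}_{\Phi(w)},w)$, Lemma~\ref{l:uniquewlabel} produces a (unique) sequential standard $\mathcal{W}$-labeling $T'$ with $\text{supp}(T') = \Phi(w)$ and $T'|_{\Phi(w)} = T$; in particular $T' \in \mathrm{Lab}(\mathcal{W},w)$, so $T \mapsto T'$ is a well-defined map $\mathrm{Lab}(\mathcal{W}_{\Phi(w)},w) \to \mathrm{Lab}(\mathcal{W},w)$.

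Then I would verify the two composites are identities. For $T \in \mathrm{Lab}(\mathcal{W},w)$, the identity $(T|_{\Phi(w)})' = T$ is exactly the last sentence of Lemma~\ref{l:inversionrestriction}. For $T \in \mathrm{Lab}(\mathcal{W}_{\Phi(w)},w)$, the defining property of $T'$ furnished by Lemma~\ref{l:uniquewlabel} is $T'|_{\Phi(w)} = T$, which says that restricting $T'$ returns $T$. Thus the maps $T \mapsto T|_{\Phi(w)}$ and $T \mapsto T'$ are inverse to each other, establishing the claimed $1$--$1$ correspondence.

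I do not expect a genuine obstacle here, since the substance is contained in the two cited lemmas; the only point requiring care is bookkeeping, namely making sure that the three conditions in play on each side --- sequentiality, the support being exactly $\Phi(w)$, and ``satisfying the restrictions of $\Phi(w)$'' on the $\mathcal{W}_{\Phi(w)}$ side --- are matched up so that Lemmas~\ref{l:uniquewlabel} and \ref{l:inversionrestriction} apply verbatim in both directions.
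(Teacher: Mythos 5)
Your proposal is correct and takes exactly the same route as the paper, which simply cites Lemmas~\ref{l:inversionrestriction} and \ref{l:uniquewlabel}; you have merely written out the well-definedness and mutual-inverse checks that the paper leaves implicit. The bookkeeping point you flag (that membership in $\mathrm{Lab}(\mathcal{W}_{\Phi(w)},w)$ includes $\mathrm{supp}(T)=\Phi(w)$ per Definition~\ref{d:partialinversionlabel}, so Lemma~\ref{l:uniquewlabel} applies verbatim) is the only subtlety, and you handle it correctly.
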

\begin{proof}
This follows from Lemmas~\ref{l:uniquewlabel} and ~\ref{l:inversionrestriction}.
\end{proof}
\begin{cor} \label{c:labelinversion}
Let $(W,S)$ be an arbitrary Coxeter system and let $w \in W$.  Then the set $\mathcal{R}(w)$ of reduced expressions for $w$ is in $1-1$ correspondence with the set $\mathrm{Lab}(\mathcal{W}_{\Phi(w)},w)$ of all sequential standard $\mathcal{W}_{\Phi(w)}$-labelings that satisfy the restrictions of $\Phi(w)$.
\end{cor}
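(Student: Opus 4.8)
The plan is to obtain this correspondence purely by composing the two bijections already established in this chapter, so that essentially no new work is required. First I would invoke Corollary~\ref{c:reducedtheorem}, which gives a 1--1 correspondence $\mathbf{x} \longleftrightarrow T_{\mathbf{x}}$ between $\mathcal{R}(w)$ and the set $\mathrm{Lab}(\mathcal{W},w)$ of sequential standard $\mathcal{W}$-labelings with support $\Phi(w)$. Then I would invoke Corollary~\ref{c:wlabelcorrespondence}, which gives a 1--1 correspondence between $\mathrm{Lab}(\mathcal{W},w)$ and $\mathrm{Lab}(\mathcal{W}_{\Phi(w)},w)$ via $T \mapsto T|_{\Phi(w)}$ (with inverse $T \mapsto T'$). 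Composing these two bijections yields a 1--1 correspondence between $\mathcal{R}(w)$ and $\mathrm{Lab}(\mathcal{W}_{\Phi(w)},w)$.

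Concretely, I would spell out the composite map as $\mathbf{x} \longmapsto T_{\mathbf{x}}|_{\Phi(w)}$ and its inverse as follows: given a sequential standard $\mathcal{W}_{\Phi(w)}$-labeling $T$ satisfying the restrictions of $\Phi(w)$, form $T'$ (the extension by zero to $\Phi^+$ from Lemma~\ref{l:uniquewlabel}), which lies in $\mathrm{Lab}(\mathcal{W},w)$, and then apply the inverse of the correspondence of Corollary~\ref{c:reducedtheorem} to recover the unique reduced expression encoded by $T'$. That the two assignments are mutually inverse is immediate from the fact that each constituent correspondence is a bijection, so I would simply remark that the composite of bijections is a bijection.

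There is no real obstacle here: all of the substantive content---the characterization of standard labelings encoding reduced expressions (Proposition~\ref{p:betweenthm}, Corollary~\ref{c:labelcharacterize}), the identification $\mathrm{Lab}(w) = \mathrm{Lab}(\mathcal{W},w)$ (Corollary~\ref{c:workhorse}), and the bookkeeping needed to pass between $\mathcal{W}$ and $\mathcal{W}_{\Phi(w)}$ via the ``restrictions of $\Phi(w)$'' (Lemmas~\ref{l:uniquewlabel} and \ref{l:inversionrestriction})---has already been carried out. The only point worth a sentence of care is that the notion of ``satisfies the restrictions of $\Phi(w)$'' in Definition~\ref{d:partialinversionlabel} is exactly what is needed for the extension-by-zero map $T \mapsto T'$ to land in the standard $\mathcal{W}$-labelings, and this is precisely what Lemma~\ref{l:uniquewlabel} guarantees; so the composition goes through without any compatibility issue. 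Thus the proof is a short two-line citation of Corollaries~\ref{c:reducedtheorem} and \ref{c:wlabelcorrespondence}.
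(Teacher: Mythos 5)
Your proposal is correct and matches the paper's own proof, which likewise obtains the bijection by composing the correspondences of Corollaries~\ref{c:reducedtheorem} and~\ref{c:wlabelcorrespondence}. The extra detail you give about the explicit composite maps and the role of Lemma~\ref{l:uniquewlabel} is accurate but not needed beyond the two-line citation.
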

\begin{proof}
This correspondence can be obtained by composing the correspondence given in Corollary~\ref{c:wlabelcorrespondence} with the one given in Corollary~\ref{c:reducedtheorem}.
\end{proof}
\begin{ex} \label{ex:inversionex}
For purposes of comparison, we demonstrate the correspondence of Corollary~\ref{c:labelinversion} using the same Coxeter group element as in Example~\ref{ex:mainexample}, namely $W = W(B_3)$ and $w = \phi(u,s,t,s,t,u)$.  For this example, the vertices are only the roots of $\Phi(w)$ instead of all the roots in $\Phi^+$, and we pictorially represent the inversion sets with two roots.  We represent the partial inversion sets with an arrow pointing in the direction that the labels must go.  The first figure below depicts the standard segment structure of $\Phi(w)$ and the chart to its right gives names we will use for the roots in $\Phi(w)$.  Thus, the full inversion sets of $w$ are $\{A,B\}$, $\{B,F\}$, and $\{B,C,D,E\}$.  The partial inversion sets of $w$ are $\{A,E\}$, $\{A,D,F\}$, $\{A,C\}$, $\{C,F\}$, and $\{E,F\}$.\\ \\
The four figures following the first figure give the sequential standard $\mathcal{W}_{\Phi(w)}$-labelings that satisfy the restrictions of $\Phi(w)$, which we discuss after listing the figures.\\ \\
\begin{tikzpicture}[scale=2.5] \label{Bsequentialfig}
\filldraw           (3,0) circle (0.02)
                    (0.75,0.75) circle (0.02)
                    (0,0) circle (0.02)
                    (1.67,0.44) circle (0.02)
                    (1.33,.56) circle (0.02)
                    (1.63,0.68) circle (0.02);
\draw (0,0) -- (0.75,0.75);
\draw (0.75,0.75) -- (3,0);
\draw (0,0) -- (1.63,0.68);
\draw (1.63,0.68) -- (1.67,0.44);
\draw (0.75,0.75) -- (1.63,0.68);
\draw (0,0) -- (1.63,0.68);
\draw (0,0) -- (3,0);
\draw (0,0) -- (1.67,0.44);
\draw (1.63,0.68) -- (3,0);
\draw [->] (0.87,0.87) -- (0.79,0.79);
\draw [->] (1.84,0.77) -- (1.7,0.71);
\draw [->] (1.86,0.49) -- (1.73,0.46);
\draw [->] (1.6,0.86) -- (1.62,0.74);
\draw [->] (1.91,0.65) -- (1.75,0.67);
\draw (-.1,-.05) node {$A$};
\draw (3.05,-.1) node {$B$};
\draw (1.67,0.32) node {$C$};
\draw (1.35,0.44) node {$D$};
\draw (0.63,0.76) node {$E$};
\draw (1.69,0.79) node {$F$};
\node [right=1cm,text width=8cm] at (2.45,1.2)
{
\begin{tabular}{cl} Name & Root\\A & $\alpha_u$\\B & $\alpha_s$\\
C & $\sqrt{2}\alpha_s + \alpha_t + \alpha_u$\\D & $\alpha_s + \sqrt{2} \alpha_t + \sqrt{2}\alpha_u$\\E & $\alpha_t + \alpha_u$\\F & $\sqrt{2}\alpha_s + 2\alpha_t + \alpha_u$
\end{tabular}
};

\end{tikzpicture}
\\
The labelings corresponding to the reduced expressions $(u,s,t,s,t,u)$ and $(s,u,t,s,t,u)$, respectively are:\\ \\
\begin{tikzpicture}[scale=1.8] \label{Bexample3fig}
\filldraw           (3,0) circle (0.02)
                    (0.75,0.75) circle (0.02)
                    (0,0) circle (0.02)
                    (1.67,0.44) circle (0.02)
                    (1.33,.56) circle (0.02)
                    (1.63,0.68) circle (0.02);
\draw (0,0) -- (0.75,0.75);
\draw (0.75,0.75) -- (3,0);
\draw (0,0) -- (1.63,0.68);
\draw (1.63,0.68) -- (1.67,0.44);
\draw (0.75,0.75) -- (1.63,0.68);
\draw (0,0) -- (1.63,0.68);
\draw (0,0) -- (3,0);
\draw (0,0) -- (1.67,0.44);
\draw (1.63,0.68) -- (3,0);
\draw [->] (0.87,0.87) -- (0.79,0.79);
\draw [->] (1.84,0.77) -- (1.7,0.71);
\draw [->] (1.86,0.49) -- (1.73,0.46);
\draw [->] (1.6,0.86) -- (1.62,0.74);
\draw [->] (1.91,0.65) -- (1.75,0.67);
\draw (-.1,-.05) node {$6$};
\draw (3.05,-.1) node {$2$};
\draw (1.67,0.32) node {$3$};
\draw (1.35,0.44) node {$4$};
\draw (0.63,0.76) node {$5$};
\draw (1.69,0.79) node {$1$};

\filldraw           (6.5,0) circle (0.02)
                    (4.25,0.75) circle (0.02)
                    (3.5,0) circle (0.02)
                    (5.17,0.44) circle (0.02)
                    (4.83,.56) circle (0.02)
                    (5.13,0.68) circle (0.02);
\draw (3.5,0) -- (4.25,0.75);
\draw (4.25,0.75) -- (6.5,0);
\draw (3.5,0) -- (5.13,0.68);
\draw (5.13,0.68) -- (5.17,0.44);
\draw (4.25,0.75) -- (5.13,0.68);
\draw (3.5,0) -- (5.13,0.68);
\draw (3.5,0) -- (6.5,0);
\draw (3.5,0) -- (5.17,0.44);
\draw (5.13,0.68) -- (6.5,0);
\draw [->] (4.37,0.87) -- (4.29,0.79);
\draw [->] (5.34,0.77) -- (5.2,0.71);
\draw [->] (5.36,0.49) -- (5.23,0.46);
\draw [->] (5.1,0.86) -- (5.12,0.74);
\draw [->] (5.41,0.65) -- (5.25,0.67);
\draw (3.4,-.05) node {$6$};
\draw (6.65,-.1) node {$1$};
\draw (5.17,0.32) node {$3$};
\draw (4.85,0.44) node {$4$};
\draw (4.13,0.76) node {$5$};
\draw (5.19,0.79) node {$2$};
\end{tikzpicture}
\\
The labelings corresponding to the reduced expressions $(u,t,s,t,s,u)$ and $(u,t,s,t,u,s)$, respectively are:\\ \\
\begin{tikzpicture}[scale=1.8] \label{Bexample4fig}
\filldraw           (3,0) circle (0.02)
                    (0.75,0.75) circle (0.02)
                    (0,0) circle (0.02)
                    (1.67,0.44) circle (0.02)
                    (1.33,.56) circle (0.02)
                    (1.63,0.68) circle (0.02);
\draw (0,0) -- (0.75,0.75);
\draw (0.75,0.75) -- (3,0);
\draw (0,0) -- (1.63,0.68);
\draw (1.63,0.68) -- (1.67,0.44);
\draw (0.75,0.75) -- (1.63,0.68);
\draw (0,0) -- (1.63,0.68);
\draw (0,0) -- (3,0);
\draw (0,0) -- (1.67,0.44);
\draw (1.63,0.68) -- (3,0);
\draw [->] (0.87,0.87) -- (0.79,0.79);
\draw [->] (1.84,0.77) -- (1.7,0.71);
\draw [->] (1.86,0.49) -- (1.73,0.46);
\draw [->] (1.6,0.86) -- (1.62,0.74);
\draw [->] (1.91,0.65) -- (1.75,0.67);
\draw (-.1,-.05) node {$6$};
\draw (3.05,-.1) node {$5$};
\draw (1.67,0.32) node {$4$};
\draw (1.35,0.44) node {$3$};
\draw (0.63,0.76) node {$2$};
\draw (1.69,0.79) node {$1$};

\filldraw           (6.5,0) circle (0.02)
                    (4.25,0.75) circle (0.02)
                    (3.5,0) circle (0.02)
                    (5.17,0.44) circle (0.02)
                    (4.83,.56) circle (0.02)
                    (5.13,0.68) circle (0.02);
\draw (3.5,0) -- (4.25,0.75);
\draw (4.25,0.75) -- (6.5,0);
\draw (3.5,0) -- (5.13,0.68);
\draw (5.13,0.68) -- (5.17,0.44);
\draw (4.25,0.75) -- (5.13,0.68);
\draw (3.5,0) -- (5.13,0.68);
\draw (3.5,0) -- (6.5,0);
\draw (3.5,0) -- (5.17,0.44);
\draw (5.13,0.68) -- (6.5,0);
\draw [->] (4.37,0.87) -- (4.29,0.79);
\draw [->] (5.34,0.77) -- (5.2,0.71);
\draw [->] (5.36,0.49) -- (5.23,0.46);
\draw [->] (5.1,0.86) -- (5.12,0.74);
\draw [->] (5.41,0.65) -- (5.25,0.67);
\draw (3.4,-.05) node {$5$};
\draw (6.65,-.1) node {$6$};
\draw (5.17,0.32) node {$4$};
\draw (4.85,0.44) node {$3$};
\draw (4.13,0.76) node {$2$};
\draw (5.19,0.79) node {$1$};
\end{tikzpicture}
\\
One can check that the four figures above are sequential standard $\mathcal{W}_{\Phi(w)}$-labelings satisfying the restrictions of $\Phi(w)$.  Conversely, if we follow the arrows, we see many of the features we pointed out in Example~\ref{ex:mainexample}.  Root $A$ must have a greater label than all other roots except possibly root $B$, so it must be labeled $5$ or $6$.  The arrows also give that root $F$ must have a smaller label than all other roots except possibly root $B$.  As in Example~\ref{ex:mainexample}, the remaining labels occur on the $B-E$ line.  Thus, we can again break our analysis into the eight cases according to whether root $A$ is labeled $5$ or $6$, whether root $F$ is labeled $1$ or $2$, and whether root $E$ has the highest or lowest remaining label on the $B-E$ line.\\ \\
The main difference in the reasoning is that wherever we contradicted the assumption that $T$ is standard in Example~\ref{ex:mainexample}, we now contradict the assumption that $T$ satisfies the restrictions of a particular partial inversion set.  For example, in Example~\ref{ex:mainexample}, we derived the following contradiction: \\ \\
``Now suppose that root $A$ is labeled $6$ and root $F$ is labeled $2$.  Then root $E$ must be labeled $1$ or $5$.  If root $E$ is labeled $1$, then $T(G) = 0$, $T(F) = 2$, and $T(E) = 1$ contradicts the assumption that $T$ is standard.'' \\ \\
The corresponding contradiction here is as follows:  Suppose that root $A$ is labeled $6$ and root $F$ is labeled $2$. Then root $E$ must be labeled $1$ or $5$.  If root $E$ is labeled $1$, then we contradict the assumption that $T$ satisfies the restrictions of the partial inversion set $\{E,F\}$, which requires that $T(F) < T(E)$ since root $E$ is the canonical simple root of $\{E,F\}$.\\ \\
Note that the above restriction on labelings for the partial inversion set $\{E,F\}$ is caused by the absence of root $G$ from $\Phi(w)$.  The other contradictions derived in Example~\ref{ex:mainexample} are translated similarly to the setting of $\mathcal{W}_{\Phi(w)}$-labelings.  One can also check that the four labelings given for this example are the same as those of Example~\ref{ex:mainexample}, except that the labelings are restricted to the set $\Phi(w)$.
\end{ex}
\section{Labelings and Directed Acyclic Graphs}
We establish a connection between the labelings we have obtained, and directed acyclic graphs with restrictions imposed upon them.  For this purpose, we need the basic terminology and some basic results from graph theory.
\begin{defn}
We call the pair $G = (V,E)$ a \emph{simple directed graph} if $V$ is a set and $E \subseteq V \times V$ such that for each $v \in V$, $(v,v) \not\in E$.  The elements of $V$ are called \emph{vertices} and the elements of $E$ are called \emph{directed edges}.  We sometimes denote the assertion that $(u,v) \in E$ by $u \rightarrow v$.  An \emph{oriented graph} is a simple directed graph such that for any $u,v \in V$, there is at most one directed edge connecting $u$ and $v$ (i.e. $u \rightarrow v$ or $v \rightarrow u$ or neither).
\end{defn}
\begin{rem}
Every oriented graph is a simple directed graph.  However, a simple directed graph may have both $u \rightarrow v$ and $v \rightarrow u$.  The directed graphs that arise in this thesis are all simple and directed, so we do not bother with the full spectrum and generality of graph theoretic objects.  It should also be remarked that all of the directed graphs we use have a finite vertex set $V$ (and hence a finite edge set $E$).
\end{rem}
\begin{defn}
Let $G = (V,E)$ be a directed graph.  If $v_1,\ldots,v_n \in V$, then a sequence of edges satisfying $v_1 \rightarrow v_2 \cdots \rightarrow v_n \rightarrow v_1$ is called a \emph{directed cycle}.  If $G$ contains no directed cycles, then we call $G$ a \emph{directed acyclic graph}.
\end{defn}
\begin{rem}
A directed acyclic graph is necessarily oriented since $u \rightarrow v$ and $v \rightarrow u$ gives a cycle.
\end{rem}
\begin{lem}
Let $G = (V,E)$ be a directed acyclic graph.  Then there exists a partial order $\leq$ on $V$ such that for $x,y \in V$, $x \leq y$ whenever $x \rightarrow y$.
\end{lem}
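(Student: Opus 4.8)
The plan is to take $\leq$ to be the reachability relation on $V$: declare $x \leq y$ if and only if either $x = y$ or there exists a directed path $x = v_1 \rightarrow v_2 \rightarrow \cdots \rightarrow v_n = y$ in $G$ with $n \geq 2$. The statement that $x \leq y$ whenever $x \rightarrow y$ is then immediate, since a single edge $x \rightarrow y$ is a directed path of length one. So the work is entirely in verifying that this relation is a partial order, i.e. that it is reflexive, antisymmetric, and transitive in the sense of the definition recalled earlier in the excerpt.

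First I would check reflexivity, which holds by fiat since the definition includes the case $x = y$. Next I would handle transitivity: if $x \leq y$ and $y \leq z$, then either one of the two relations is an equality (in which case the conclusion $x \leq z$ is trivial) or there are directed paths from $x$ to $y$ and from $y$ to $z$; concatenating these at the common vertex $y$ produces a directed path from $x$ to $z$, so $x \leq z$. The one point requiring a moment of care is that concatenation of finite sequences of edges is again a finite sequence of edges, which is routine.

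The main obstacle, such as it is, is antisymmetry, and this is exactly where the acyclicity hypothesis is used. Suppose $x \leq y$ and $y \leq x$ with $x \neq y$. Then neither relation can be an equality, so there is a directed path $x = v_1 \rightarrow \cdots \rightarrow v_m = y$ and a directed path $y = u_1 \rightarrow \cdots \rightarrow u_n = x$, both with at least two vertices. Concatenating them yields a closed directed walk $x \rightarrow \cdots \rightarrow y \rightarrow \cdots \rightarrow x$; I would then extract from this walk a directed cycle (in the strict sense of the definition given above, with distinct intermediate vertices, by taking a minimal closed subwalk), contradicting the assumption that $G$ is a directed acyclic graph. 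Hence $x = y$, establishing antisymmetry and completing the proof that $\leq$ is a partial order with the required property.
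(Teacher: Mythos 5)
Your proof is correct and takes essentially the same route as the paper: the reachability relation you define is exactly the reflexive and transitive closure of the edge relation, which is what the paper forms (citing a handbook for the verification you carry out explicitly). Your antisymmetry argument correctly isolates where acyclicity is used, and in fact under the paper's definition of directed cycle the concatenated closed walk already suffices, so your extraction of a minimal cycle is a harmless extra precaution.
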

\begin{proof}
For any edge $x \rightarrow y$ we require that $x \leq y$.  Forming the reflexive and transitive closure of the required relations makes $\leq$ a partial order.  See \cite[Section 3.2.3]{handbookgraphtheory} for details.
\end{proof}
\begin{defn}
A \emph{topological sort} of a directed acyclic graph $G = (V,E)$ is a linear ordering of $V$ such that if $G$ contains an edge $u \rightarrow v$, then $u$ appears before $v$ in the linear ordering.  A \emph{linear extension} of a partially ordered set $(X,\leq)$ is a linear ordering $\leq'$ of $X$ such that $x \leq y$ implies $x \leq' y$.
\end{defn}
\begin{lem}
There exists a topological sort of any finite directed acyclic graph.
\end{lem}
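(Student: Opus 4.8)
The plan is to proceed by induction on $|V|$, the crucial ingredient being the observation that every nonempty finite directed acyclic graph contains a \emph{source}, i.e.\ a vertex with no incoming edge.

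First I would establish this source lemma. Suppose for contradiction that $G = (V,E)$ is a nonempty finite DAG in which every vertex has at least one incoming edge. Pick any $v_0 \in V$; since $v_0$ has an incoming edge, choose $v_1$ with $v_1 \rightarrow v_0$, and iterating, build a sequence $v_0, v_1, v_2, \ldots$ with $v_{i+1} \rightarrow v_i$ for all $i$. Because $V$ is finite, there exist indices $i < j$ with $v_i = v_j$, and then $v_j \rightarrow v_{j-1} \rightarrow \cdots \rightarrow v_i = v_j$ is a directed cycle, contradicting acyclicity. Hence a source $s$ exists.

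Next, the induction itself. For $|V| \le 1$ the (at most one-element) listing of $V$ is vacuously a topological sort, since such a graph has no edges (recall a simple directed graph has no loops $(v,v)$). For the inductive step, let $s$ be a source of $G$ and let $G' = (V \setminus \{s\}, E')$ be the induced subgraph on the remaining vertices, with $E' = E \cap ((V \setminus \{s\}) \times (V \setminus \{s\}))$. Any directed cycle in $G'$ would be a directed cycle in $G$, so $G'$ is acyclic, and $|V \setminus \{s\}| < |V|$; by the induction hypothesis $G'$ admits a topological sort $u_1, \ldots, u_{n-1}$. I claim $s, u_1, \ldots, u_{n-1}$ is a topological sort of $G$: if $u \rightarrow v$ is an edge of $G$ and $u = s$, then $u$ is listed first and hence precedes $v$; since $s$ is a source we have $v \neq s$, so in the only remaining case $u, v \in V \setminus \{s\}$, whence $u \rightarrow v$ is an edge of $G'$ and $u$ precedes $v$ by choice of the listing $u_1, \ldots, u_{n-1}$. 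This completes the induction.

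Alternatively one could invoke the preceding lemma: a finite DAG determines a partial order $\leq$ on $V$ with $x \leq y$ whenever $x \rightarrow y$, and a topological sort is precisely a linear extension of $(V, \leq)$; existence of linear extensions of finite posets follows by the same induction, repeatedly extracting a maximal element and placing it last. I expect the only genuine subtlety to be the source lemma, specifically the pigeonhole step converting an infinite backward walk in a finite graph into a cycle; the remainder is bookkeeping about which edges fall into which case.
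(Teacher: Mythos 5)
Your proof is correct and complete. The paper itself does not prove this lemma; it simply cites Fact~23 of Section~3.2.4 of the Gross--Yellen \emph{Handbook of Graph Theory}, so there is no argument in the text to compare against. What you supply is the standard self-contained argument: the source-extraction lemma (every nonempty finite DAG has a vertex with no incoming edge, proved by following in-edges backward and applying pigeonhole to produce a directed cycle), followed by induction on $|V|$ with the source placed first. Both steps are handled carefully --- in particular you correctly observe that an edge $u \rightarrow v$ with $u \neq s$ forces $v \neq s$ because $s$ has no incoming edges, which is the one case split that is easy to fumble. Note also that the paper's definition of directed cycle does not require the vertices to be distinct, so the closed walk obtained from the pigeonhole step qualifies directly without needing to extract a simple cycle from it. Your alternative remark, that the lemma follows from the existence of linear extensions of finite posets via the preceding lemma in the paper, is also sound and is essentially the route the cited handbook discussion takes.
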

\begin{proof}
See Fact 23 of \cite[Section 3.2.4]{handbookgraphtheory}.
\end{proof}
\noindent
As discussed in \cite[Section 3.2.3]{handbookgraphtheory}, there is a close connection between directed acyclic graphs and partially ordered sets.  From any directed acyclic graph we can produce a partially ordered set compatible with it, and associated to a partially ordered set is at least one directed acyclic graph (typically we represent partially ordered sets with ``Hasse diagrams'', which can be interpreted as a directed acyclic graph).  Thus it is not surprising that we can also give a linear extension (via  a topological sort) of a finite partially ordered set.
\begin{lem}
Every finite partially ordered set $(X,\leq)$ has a linear extension.
\end{lem}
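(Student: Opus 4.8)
The plan is to deduce this from the existence of a topological sort for a finite directed acyclic graph, which was just established. First I would associate to the poset $(X,\leq)$ the simple directed graph $G = (X,E)$ with $E = \{(x,y) : x \leq y \text{ and } x \neq y\}$, i.e.\ an edge $x \to y$ precisely when $x$ is strictly below $y$. Since $(x,x) \notin E$ for every $x \in X$, this is indeed a simple directed graph, and it is finite because $X$ is.

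Next I would verify that $G$ is acyclic. A directed cycle $v_1 \to v_2 \to \cdots \to v_n \to v_1$ would give $v_1 \leq v_2 \leq \cdots \leq v_n \leq v_1$ with all consecutive terms distinct; transitivity of $\leq$ then yields $v_1 \leq v_1$ together with $v_1 \neq v_2 \leq \cdots \leq v_1$, and antisymmetry forces $v_1 = v_2$, a contradiction. Hence $G$ is a finite directed acyclic graph, so by the preceding lemma it admits a topological sort, i.e.\ an enumeration $v_{(1)}, v_{(2)}, \ldots, v_{(n)}$ of $X$ (where $n = |X|$) such that every edge $v_{(i)} \to v_{(j)}$ of $G$ satisfies $i < j$.

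I would then define $\leq'$ on $X$ by declaring $v_{(i)} \leq' v_{(j)}$ if and only if $i \leq j$; this is manifestly a linear order on $X$. To check it is a linear extension of $\leq$, suppose $x \leq y$. If $x = y$ then $x \leq' y$ trivially. If $x \neq y$, then $x \to y$ is an edge of $G$, so writing $x = v_{(i)}$ and $y = v_{(j)}$ the topological sort property gives $i < j$, whence $x \leq' y$. Thus $x \leq y$ implies $x \leq' y$, which is exactly the condition defining a linear extension.

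I do not anticipate any real obstacle here: the argument is routine once the earlier lemmas are in hand. The only points that need a moment of care are that edges of $G$ should be recorded only for strict relations (so that $G$ has no loops and the acyclicity argument goes through cleanly) and that the combinatorial output of the topological sort, phrased as an enumeration, must be repackaged as an order relation $\leq'$ in the sense of the earlier definition. As an alternative not relying on graphs, one could argue directly by induction on $|X|$: the base case $|X| \leq 1$ is immediate, and for $|X| = n > 0$ one picks a maximal element $m$ of the finite poset, applies the inductive hypothesis to $X \setminus \{m\}$ to obtain a linear extension of the restricted order, and then appends $m$ as the greatest element, which yields a linear extension of $(X,\leq)$.
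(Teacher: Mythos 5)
Your argument is correct and follows exactly the route the paper intends: the paper itself defers to a citation, but the surrounding discussion makes clear the intended proof is precisely your reduction to a topological sort of the associated directed acyclic graph (with the sensible precaution of using only strict relations as edges). Both your main argument and your alternative induction on $|X|$ via a maximal element are sound.
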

\begin{proof}
See \cite[Algorithm 12.2.2]{graphtheoryapps}.
\end{proof}
\begin{defn}
We call a directed graph $G = (V,E)$ a \emph{tournament on $V$} if for every pair of distinct vertices $x,y \in V$, we have either $x \rightarrow y$ or $y \rightarrow x$ (but not both).  A tournament is \emph{transitive} if for all $x,y,z \in V$, $(x \rightarrow y$ and $y \rightarrow z$) implies $x \rightarrow z$.
\end{defn}
\begin{rem}
According to \cite[Section 3.3]{handbookgraphtheory}, tournaments arise as models of round-robin tournaments, which are competitions in which each participant competes against every other participant exactly once.  Thus an edge $u \rightarrow v$ models the situation in which $u$ defeats $v$ in the tournament.  In a transitive tournament, there is a unique overall winner, unique overall second place, and so forth.  This is captured in the next lemma.
\end{rem}
\begin{lem} \label{l:uniquesort}
Let $G = (V,E)$ be a transitive tournament.  Then $G$ is a directed acyclic graph and there is a unique topological sort of $G$.
\end{lem}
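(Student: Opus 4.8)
The plan is to treat the two assertions separately, using the tournament and transitivity hypotheses directly, and invoking the earlier lemma that every finite directed acyclic graph admits a topological sort.

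First I would show that $G$ is a directed acyclic graph. Suppose, for contradiction, that $G$ contains a directed cycle $v_1 \to v_2 \to \cdots \to v_n \to v_1$. Since $G$ is a simple directed graph it has no loops, so $n \geq 2$. Applying transitivity repeatedly to the edges $v_1 \to v_2$, $v_2 \to v_3$, \ldots, $v_{n-1} \to v_n$ yields $v_1 \to v_n$. But the cycle also provides the edge $v_n \to v_1$, and because $G$ is a tournament there is at most one directed edge between any two distinct vertices, so $v_1 \to v_n$ and $v_n \to v_1$ cannot both hold. This contradiction shows $G$ has no directed cycles, i.e.\ $G$ is a directed acyclic graph; in particular, the earlier existence lemma guarantees that $G$ admits at least one topological sort.

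Next I would prove uniqueness. Suppose $\leq_1$ and $\leq_2$ are two topological sorts of $G$, i.e.\ two linear orderings of $V$ each compatible with the edge directions, and assume $\leq_1 \neq \leq_2$. Then there exist distinct $x, y \in V$ whose relative order differs, say $x <_1 y$ but $y <_2 x$. Since $G$ is a tournament, either $x \to y$ or $y \to x$. If $x \to y$, then the definition of topological sort forces $x$ to precede $y$ in every topological sort, contradicting $y <_2 x$; if $y \to x$, it forces $y$ to precede $x$ in every topological sort, contradicting $x <_1 y$. Either way we reach a contradiction, so $\leq_1 = \leq_2$, and the topological sort is unique.

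The only mild subtlety — hardly an obstacle — is making sure the degenerate cycle of length $n=1$ is excluded (it is, since simple directed graphs have no loops) and that ``differs in relative order on some pair'' is the correct way to witness two distinct linear orders on the same finite set; once that is set up, the tournament property and transitivity do all the work.
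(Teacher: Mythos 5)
Your proof is correct. Note that the paper does not actually prove this lemma at all --- it simply cites Fact 14 of Section 3.3.2 of the Gross--Yellen \emph{Handbook of Graph Theory} --- so you have supplied a self-contained elementary argument where the thesis outsources to a reference. Both halves of your argument are sound: acyclicity follows because transitivity collapses any purported cycle into a pair of opposite edges (or a loop), which the tournament condition (``either $x \rightarrow y$ or $y \rightarrow x$ but not both'') and simplicity forbid; and uniqueness follows because any two linear orders that disagree must disagree on some pair $\{x,y\}$, and the tournament supplies an edge between $x$ and $y$ that pins down their relative order in every topological sort. Your handling of the degenerate $n=1$ case is also right. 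One tiny remark: your appeal to the earlier existence lemma for topological sorts uses finiteness of $V$, which the statement of the lemma does not explicitly assume, though the paper's surrounding remarks do restrict attention to finite vertex sets; alternatively, for a transitive tournament existence is immediate even without finiteness, since the reflexive closure of the edge relation is itself the required linear order. This is a cosmetic point, not a gap.
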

\begin{proof}
See Fact 14 of \cite[Section 3.3.2]{handbookgraphtheory}.
\end{proof}
\begin{lem} \label{l:inversiontournament}
Let $T:\Phi(w) \rightarrow \mathbb{N}_0$ be a sequential $\mathcal{W}_{\Phi(w)}$-labeling such that $\text{supp}(T) = \Phi(w)$.  Let $G_T = (V,E)$ be the directed graph with vertex set $\Phi(w)$ and edge set $$E = \{(\alpha,\beta) : T(\alpha) < T(\beta)\}.$$  The directed graph $G_T$ is a transitive tournament on $\Phi(w)$.
\end{lem}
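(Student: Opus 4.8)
The plan is to observe that this lemma is essentially a routine unwinding of definitions: everything follows from the fact that a sequential labeling is injective on its support, together with the elementary properties of the strict total order $<$ on $\mathbb{N}_0$. In particular, standardness of $T$ plays no role here and need not be assumed.

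First I would record that, since $\text{supp}(T) = \Phi(w)$, the labeling $T$ takes only nonzero values on its (finite) domain, and since $T$ is a sequential $\mathcal{W}_{\Phi(w)}$-labeling we have $T(\Phi(w)) = T(\text{supp}(T)) = \{1,\ldots,|\Phi(w)|\}$. By the remark following Definition~\ref{d:labeling}, the restriction of a sequential labeling to its support is injective, so $T$ is in fact a bijection from $\Phi(w)$ onto $\{1,\ldots,|\Phi(w)|\}$. Consequently, whenever $\alpha,\beta \in \Phi(w)$ are distinct we have $T(\alpha) \neq T(\beta)$; this is the key point that rules out a pair of vertices with no edge between them.

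Next I would check that $G_T$ is a tournament on $\Phi(w)$. For each $\alpha \in \Phi(w)$ we have $(\alpha,\alpha) \notin E$ because $T(\alpha) < T(\alpha)$ is false, so $G_T$ is a simple directed graph. Given distinct $\alpha,\beta \in \Phi(w)$, the fact that $T(\alpha) \neq T(\beta)$ and that $<$ is a strict total order on $\mathbb{N}_0$ imply that exactly one of $T(\alpha) < T(\beta)$ and $T(\beta) < T(\alpha)$ holds; equivalently, exactly one of $\alpha \rightarrow \beta$ and $\beta \rightarrow \alpha$ is an edge of $G_T$. Hence $G_T$ is a tournament on $\Phi(w)$ in the sense of the definition given just above the lemma.

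Finally I would verify transitivity: if $\alpha \rightarrow \beta$ and $\beta \rightarrow \gamma$, then $T(\alpha) < T(\beta)$ and $T(\beta) < T(\gamma)$, so $T(\alpha) < T(\gamma)$ by transitivity of $<$, and therefore $\alpha \rightarrow \gamma$. This shows $G_T$ is a transitive tournament, which completes the argument. There is no genuine obstacle in this proof; the only point requiring care is the invocation of injectivity of a sequential labeling on its support, and one should also note for later use that the unique topological sort guaranteed by Lemma~\ref{l:uniquesort} is precisely the linear order on $\Phi(w)$ given by increasing values of $T$.
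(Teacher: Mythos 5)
Your proof is correct and follows essentially the same route as the paper's, which simply observes that a sequential labeling with support $\Phi(w)$ is injective there, so for distinct $\alpha,\beta$ exactly one of $T(\alpha)<T(\beta)$ or $T(\beta)<T(\alpha)$ holds; your additional explicit checks of loop-freeness and transitivity (and the remark that standardness is not needed) are correct but routine elaborations of what the paper leaves implicit.
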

\begin{proof}
If $\alpha, \beta \in \Phi(w)$ are distinct positive roots, then by Definition~\ref{d:incidencelabel}, either $T(\alpha) < T(\beta)$ or $T(\beta) < T(\alpha)$.  Thus, either $\alpha \rightarrow \beta$ or $\beta \rightarrow \alpha$.
\end{proof}
\begin{defn} \label{d:labeltournament}
We call the directed graph $G_T$ in Lemma~\ref{l:inversiontournament} the \emph{transitive tournament on $\Phi(w)$ determined by $T$}.
\end{defn}
\noindent
It will be convenient to regard the labelings of $\Phi(w)$ in Corollary~\ref{c:labelinversion} as inducing linear orders on $\Psi \cap \Phi(w)$ for any inversion set $\Psi$.
\begin{defn}
Let $(W,S)$ be a Coxeter system and let $w \in W$.  If $$\Psi \cap \Phi(w) = \{\gamma_1,\ldots,\gamma_k\}$$ is a partial inversion set of $w$, then we call the linear ordering $(\Psi \cap \Phi(w), \leq)$ given by $\gamma_1 > \cdots > \gamma_k$ the \emph{induced ordering on $\Psi \cap \Phi(w)$}.  Suppose $$\Psi = \{\gamma_1,\ldots,\gamma_m\}$$ is a full inversion set.  Then we call the two linear orderings $\leq_1$ and $\leq_2$ on $\Psi$ given by $\gamma_1 <_1 \cdots <_1 \gamma_m$ and $\gamma_m <_2 \cdots <_2 \gamma_1$ the \emph{dual orderings of $\Psi$}.
\end{defn}
\noindent
To phrase our correspondence in the terminology of tournaments, we need to account for the variety of restrictions that our correspondence from the previous section imposes.
\begin{defn}
Let $G = (V,E)$ be a transitive tournament.  Suppose $U \subseteq V$ and $(U,\leq)$ is a linearly ordered set.  We say that \emph{the tournament $G$ is consistent with $(U,\leq)$} if for any $u,u' \in U$, $u < u'$ implies $(u,u') \in E$.
\end{defn}
\begin{ex}
Let $V = \{A,B,C,D,E\}$ and let $G = (V,E)$ be the tournament having the edge set depicted in the graph below.\\ \\

\begin{tikzpicture}[scale=3.4] \label{consistentfig}
\node [right=1cm,text width=8cm] at (0,0.6) { };
\filldraw           (2.2,0.6) circle (0.03)
                    (2.8,0.6) circle (0.03)
                    (3.1,1.35) circle (0.03)
                    (2.5,1.95) circle (0.03)
                    (1.9,1.35) circle (0.03);
\draw (2.2,0.6) -- (2.8,0.6);
\draw (2.8,0.6) -- (3.1,1.35);
\draw (3.1,1.35) -- (2.5,1.95);
\draw (2.5,1.95) -- (1.9,1.35);
\draw (1.9,1.35) -- (2.2,0.6);
\draw (2.2,0.6) -- (3.1,1.35);
\draw (3.1,1.35) -- (1.9,1.35);
\draw (1.9,1.35) -- (2.8,0.6);
\draw (2.8,0.6) -- (2.5,1.95);
\draw (2.5,1.95) -- (2.2,0.6);
\draw [->, thick] (2.2,0.6) -- (2.7,0.6);
\draw [->, thick] (3.1,1.35) -- (2.0,1.35);
\draw [->, thick] (2.8,0.6) -- (3.04,1.2);
\draw [->, thick] (3.1,1.35) -- (2.55,1.9);
\draw [->, thick] (2.5,1.95) -- (1.95,1.4);
\draw [->, thick] (2.2,0.6) -- (1.96,1.2);
\draw [->, thick] (2.2,0.6) -- (3.04,1.3);
\draw [->, thick] (2.8,0.6) -- (1.96,1.3);
\draw [->, thick] (2.2,0.6) -- (2.44,1.68);
\draw [->, thick] (2.8,0.6) -- (2.56,1.68);
\draw (2.12,0.52) node {$A$};
\draw (2.79,0.5) node {$B$};
\draw (3.19,1.33) node {$C$};
\draw (2.4,1.95) node {$D$};
\draw (1.8,1.33) node {$E$};
\end{tikzpicture}
\\
Let $U = \{A,B,C\}$ and $U' = \{C,D,E\}$.  Let $(U,\leq)$ be the linear order determined by the relations $A < B < C$ and let $(U',\leq')$ be the linear order determined by the relations $E <' D <' C$.  Then $G$ is consistent with $(U,\leq)$ since $A \rightarrow B \rightarrow C$ in $G$ and $A < B < C$.  However, $G$ is not consistent with $(U',\leq')$ since $C \rightarrow D \rightarrow E$ in $G$ and $E <' D <' C$.
\end{ex}
\begin{lem}
Let $G = (V,E)$ be a transitive tournament.  Suppose $(U, \leq)$ is a linearly ordered set and suppose that $G$ is consistent with $U$.  Let $(V,\leq')$ be the unique topological sort of $G$.   Then for any distinct $u,u' \in U$ we have $u < u'$ if and only if $u <' u'$.
\end{lem}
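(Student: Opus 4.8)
The plan is to reduce everything to two facts already available: first, by Lemma~\ref{l:uniquesort}, a transitive tournament is a directed acyclic graph with a \emph{unique} topological sort, so the ordering $\leq'$ in the statement is well defined; second, by the defining property of a topological sort, whenever $G$ contains an edge $u \rightarrow v$ we have $u <' v$. With these in hand the proof is essentially a two-line argument exploiting the consistency hypothesis together with the fact that a linear order is antisymmetric.

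First I would prove the forward implication. Suppose $u, u' \in U$ are distinct and $u < u'$. By the definition of ``$G$ is consistent with $(U,\leq)$'', this gives $(u,u') \in E$, i.e. $u \rightarrow u'$ in $G$. Since $\leq'$ is a topological sort of $G$, the presence of this edge forces $u <' u'$.

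Next I would prove the converse. Suppose $u, u' \in U$ are distinct with $u <' u'$. Because $(U,\leq)$ is a linear order on $U$ and $u \neq u'$, exactly one of $u < u'$ or $u' < u$ holds. If $u' < u$ held, then consistency would give $u' \rightarrow u$ in $G$, hence $u' <' u$ by the topological sort property; but then $u <' u'$ and $u' <' u$ together contradict the antisymmetry of the linear order $\leq'$. Therefore $u < u'$, completing the equivalence. The ``hard part'' here is purely bookkeeping: there is no real obstacle beyond correctly invoking Lemma~\ref{l:uniquesort} to know that a unique topological sort exists (so that $\leq'$ refers to an unambiguous order) and using antisymmetry of $\leq'$ to rule out the wrong orientation.
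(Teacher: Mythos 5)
Your proposal is correct and follows essentially the same route as the paper: the forward direction is identical, and your converse (ruling out $u' < u$ via consistency plus antisymmetry of $\leq'$) is just a mild reorganization of the paper's argument, which rules out the edge $u' \rightarrow u$ via the tournament dichotomy and then invokes consistency. Nothing is missing.
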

\begin{proof}
If $u < u'$, then since $G$ is consistent with $U$ we have $u \rightarrow u'$.  Thus, since $<'$ is given by the topological sort of $G$, we have $u <' u'$.\\ \\
Conversely, suppose $u <' u'$.  Since $G$ is a tournament, we have either $u \rightarrow u'$ or $u' \rightarrow u$, but not both.  As $<'$ is a topological sort of $G$, we must have $u \rightarrow u'$.  Now since $G$ is consistent with $U$, we have $u < u'$.
\end{proof}
\begin{defn}
Let $G = (V,E)$ be a transitive tournament with vertex set $V = \Phi(w)$ and let $\Psi$ be an inversion set.  Suppose that for any partial inversion set $\Psi \cap \Phi(w)$, the tournament $G$ is consistent with the induced ordering on $\Psi \cap \Phi(w)$.  Also suppose that for any full inversion set $\Psi$, the tournament $G$ is consistent with one of the two dual orders of $\Psi$.  Then we say that \emph{$G$ satisfies the restrictions imposed by $\Phi(w)$}.  The set of all transitive tournaments on $\Phi(w)$ that satisfy the restrictions imposed by $\Phi(w)$ is denoted by $\mathrm{Tour}(w)$.
\end{defn}
\begin{lem} \label{l:tourcorrespondence}
Let $(W,S)$ be an arbitrary Coxeter system and let $w \in W$.  Then the set $\mathrm{Lab}(\mathcal{W}_{\Phi(w)},w)$ of sequential standard labelings of $\mathcal{W}_{\Phi(w)}$ satisfying the restrictions of $\Phi(w)$ is in 1--1 correspondence with the set $\mathrm{Tour}(w)$ of transitive tournaments satisfying the restrictions imposed by $\Phi(w)$.
\end{lem}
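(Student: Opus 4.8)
The plan is to exhibit the bijection via the transitive tournament $G_T$ of Definition~\ref{d:labeltournament}: to a labeling $T$ associate the tournament on $\Phi(w)$ with an edge $\alpha\to\beta$ exactly when $T(\alpha)<T(\beta)$, and conversely to a tournament $G$ associate the labeling that records the position of each vertex in the unique topological sort of $G$. Everything then reduces to checking that these two maps are well defined into the prescribed sets and are mutually inverse.

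First I would check that $T\mapsto G_T$ lands in $\mathrm{Tour}(w)$. By Lemma~\ref{l:inversiontournament}, $G_T$ is a transitive tournament on $\Phi(w)$, so only the restrictions imposed by $\Phi(w)$ need verification. For a partial inversion set $\Psi\cap\Phi(w)=[\gamma_1,\gamma_k]$ (notation from Corollary~\ref{c:thatcorollary}), the hypothesis that $T$ satisfies the restrictions of $\Phi(w)$ (Definition~\ref{d:partialinversionlabel}) gives $T(\gamma_1)>\cdots>T(\gamma_k)$, which is exactly the assertion that $G_T$ is consistent with the induced ordering $\gamma_1>\cdots>\gamma_k$. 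For a full inversion set $\Psi=\{\gamma_1,\dots,\gamma_m\}$ both canonical simple roots lie in $\Phi(w)=\mathrm{supp}(T)$, so I would pass to the extension $T'$ supplied by Lemma~\ref{l:uniquewlabel} (a sequential standard $\mathcal{W}$-labeling, hence a standard labeling of $\Phi^+$ by Proposition~\ref{p:workhorse}) and apply Lemma~\ref{l:wlabel}(3): $m$ is finite and $T$ is strictly monotonic along $\gamma_1,\dots,\gamma_m$. By Lemma~\ref{l:sequencerelation} the two dual orderings of $\Psi$ are precisely $\gamma_1<\cdots<\gamma_m$ and $\gamma_m<\cdots<\gamma_1$, so $G_T$ is consistent with one of them.

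Next, for $G\in\mathrm{Tour}(w)$ let $\le_G$ be the unique topological sort of $G$ (Lemma~\ref{l:uniquesort}) and define $T_G(\alpha)$ to be the position of $\alpha$ in $\le_G$ (the empty labeling if $\Phi(w)=\emptyset$); then $T_G$ is sequential with $\mathrm{supp}(T_G)=\Phi(w)$. To see $T_G$ is a standard $\mathcal{W}_{\Phi(w)}$-labeling, suppose $B_{\Phi(w)}(\lambda,\mu,\nu)$ holds; by Lemma~\ref{l:betweenconerestrict} and Corollary~\ref{c:inversionline} there is a unique inversion set $\Psi$ containing $\lambda,\mu,\nu$, with $\mu$ in the convex cone of $\lambda$ and $\nu$. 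If $\Psi\subseteq\Phi(w)$, consistency of $G$ with one of the dual orders of $\Psi$ forces $\le_G$, restricted to $\Psi$, to coincide with that total order, hence to respect the betweenness; if instead $\Psi\cap\Phi(w)$ is a partial inversion set, Lemma~\ref{l:thatlemma} and Corollary~\ref{c:thatcorollary} put $\lambda,\mu,\nu$ inside $[\gamma_1,\gamma_k]$ and consistency of $G$ with the induced ordering does the same. In either case $\mu$ lies between $\lambda$ and $\nu$ in $\le_G$, so $T_G(\lambda)\le T_G(\mu)\le T_G(\nu)$ or the reverse. The same consistency with induced orderings shows $T_G$ satisfies the restrictions of $\Phi(w)$, so $T_G\in\mathrm{Lab}(\mathcal{W}_{\Phi(w)},w)$.

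Finally I would check the two constructions are inverse: $G_{T_G}=G$ because in a transitive tournament $\alpha\to\beta$ holds iff $\alpha$ precedes $\beta$ in the unique topological sort, and $T_{G_T}=T$ because $T$, being a bijection onto $\{1,\dots,|\Phi(w)|\}$, induces a linear order that satisfies the defining property of a topological sort of $G_T$ and hence equals $\le_{G_T}$ by uniqueness, so positions in $\le_{G_T}$ are just the values of $T$. The main obstacle I anticipate is entirely bookkeeping: pinning down the compatibility of the several orderings in play — the total orders $\le_{\Psi,\gamma}$, the induced ordering on a partial inversion set, the dual orderings of a full inversion set, the betweenness relation $B_{\Phi(w)}$, and the topological sort — and confirming that "consistency" of a tournament with one of these orders propagates to its topological sort. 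Once these identifications are made, each verification above is routine.
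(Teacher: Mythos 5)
Your proposal is correct and follows essentially the same route as the paper: the same pair of maps $T \mapsto G_T$ and $G \mapsto T_G$ (via the unique topological sort of Lemma~\ref{l:uniquesort}), with the same case analysis over partial and full inversion sets and the same verification that the maps are mutually inverse. The only difference is that where the paper asserts directly that standardness of $T$ forces monotonicity of the labels along a full inversion set, you justify this by extending $T$ to $\Phi^+$ via Lemma~\ref{l:uniquewlabel} and invoking Lemma~\ref{l:wlabel}(3), a slightly more careful treatment of the same step.
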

\begin{proof}
Let $T$ be a sequential standard labeling of $\mathcal{W}_{\Phi(w)}$ satisfying the restrictions of $\Phi(w)$.  Let $\Psi \cap \Phi(w)$ be a partial inversion set.  By Lemma~\ref{l:thatlemma}, there exists a canonical simple root $\gamma \in \Psi \cap \Phi(w)$.  By Corollary~\ref{c:thatcorollary}, we have $\Psi \cap \Phi(w) = [\gamma_1,\gamma_k]$ in the total ordering $(\Psi, <_{\Psi,\gamma})$.  By Definition~\ref{d:partialinversionlabel}, $T(\gamma_1) > \cdots > T(\gamma_k)$.  In the transitive tournament $G_T$ determined by the labeling $T$, we have $\gamma_i \rightarrow \gamma_j$ for any $i > j$.  Thus, $G_T$ is consistent with the induced ordering on $\Psi \cap \Phi(w)$.\\ \\
Similarly, if $\Psi = \{\gamma_1,\ldots, \gamma_m\}$ is a full inversion set, then since $T$ is standard, either $$T(\gamma_1) < \cdots < T(\gamma_m)$$
\begin{center}
or
\end{center}
$$T(\gamma_m) < \cdots < T(\gamma_1).$$  For $G_T$, the subgraph induced by the vertices of $\Psi$ is consistent with one of those orders.  Thus we may define
$\mathcal{G}:\mathrm{Lab}(\mathcal{W},w) \rightarrow \mathrm{Tour}(w)$ by $\mathcal{G}(T) = G_T$.\\ \\
Conversely, let $G = (V,E)$ be a transitive tournament on the vertex set $V = \Phi(w)$ that satisfies the restrictions imposed by $\Phi(w)$.  By Lemma~\ref{l:uniquesort}, there is a unique topological sort $(\Phi(w),\leq)$.  We define a labeling $T$ by $T(\alpha) = k$ if $\alpha$ is the $k$-th element appearing in the linear ordering $\leq$.  This immediately implies that $T$ is sequential and $\text{supp}(T) = \Phi(w)$.\\ \\
If $\Psi \cap \Phi(w) = \{\gamma_1,\ldots,\gamma_k\}$ is a partial inversion set of $w$, then $\gamma_i \rightarrow \gamma_j$ for any $i > j$ since $G$ satisfies the restrictions imposed by $\Phi(w)$.  Since $\gamma_i \rightarrow \gamma_j$ for $i > j$ gives $\gamma_i < \gamma_j$ in the linear ordering given by the topological sort, it follows that $T(\gamma_1) > \cdots > T(\gamma_k)$ by our construction of $T$.\\ \\
If $\Psi = \{\gamma_1,\ldots,\gamma_m\}$ is a full inversion set, then the restrictions imposed by $\Phi(w)$ give that either $$\gamma_1 \rightarrow \cdots \rightarrow \gamma_m$$
\begin{center}
or
\end{center}
$$\gamma_m \rightarrow \cdots \rightarrow \gamma_1.$$  By our definition of $T$, this implies that either $$T(\gamma_1) < \cdots < T(\gamma_m)$$
\begin{center}
or
\end{center}
$$T(\gamma_m) < \cdots < T(\gamma_1),$$ so that $T$ is a standard $\mathcal{W}_{\Phi(w)}$-labeling.  Thus, we may define $$\mathcal{F}:\mathrm{Tour}(w) \rightarrow \mathrm{Lab}(\mathcal{W},w)$$ by $\mathcal{F}(G) = T_G$, where $T_G$ is determined by the unique topological sort of $G$.\\ \\
The labeling $T$ can be reconstructed by knowing whether $T(\alpha) < T(\beta)$ or $T(\alpha) > T(\beta)$ for each pair $\alpha,\beta \in \Phi(w)$.  Thus we have $T_{G_T} = T$, since $G_T$ faithfully encodes the order of the labels.  Similarly, $G_{T_G} = G$ so that $\mathcal{F}$ and $\mathcal{G}$ are inverse to each other.
\end{proof}
\begin{cor} \label{c:tourcorrespondence}
Let $(W,S)$ be an arbitrary Coxeter system and let $w \in W$.  Then the set $\mathrm{Tour}(w)$ of all transitive tournaments on $\Phi(w)$ satisfying the restrictions induced by $\Phi(w)$ are in $1-1$ correspondence with the set $\mathcal{R}(w)$ of reduced expressions for $w$.
\end{cor}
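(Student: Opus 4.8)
\section*{Proof proposal}

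The plan is to obtain this correspondence by composition, exactly as the analogous corollaries earlier in the chapter were obtained. The two ingredients are already in place: Corollary~\ref{c:labelinversion} gives a $1$--$1$ correspondence between $\mathcal{R}(w)$ and $\mathrm{Lab}(\mathcal{W}_{\Phi(w)},w)$, the set of sequential standard $\mathcal{W}_{\Phi(w)}$-labelings that satisfy the restrictions of $\Phi(w)$, and Lemma~\ref{l:tourcorrespondence} gives a $1$--$1$ correspondence between $\mathrm{Lab}(\mathcal{W}_{\Phi(w)},w)$ and $\mathrm{Tour}(w)$. Composing these two bijections yields a bijection between $\mathcal{R}(w)$ and $\mathrm{Tour}(w)$, which is the assertion.

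Concretely, first I would invoke Corollary~\ref{c:labelinversion} to send a reduced expression $\textbf{x}$ to the restriction $T_{\textbf{x}}|_{\Phi(w)}$ (more precisely, the $\mathcal{W}_{\Phi(w)}$-labeling corresponding to $\textbf{x}$ under that corollary), and then apply the map $\mathcal{G}$ of Lemma~\ref{l:tourcorrespondence} to obtain the transitive tournament $G_{T_{\textbf{x}}}$ whose edge set records, for each pair $\alpha,\beta \in \Phi(w)$, whether $\alpha$ receives a smaller label than $\beta$. In the reverse direction I would take a tournament $G \in \mathrm{Tour}(w)$, use the unique topological sort guaranteed by Lemma~\ref{l:uniquesort} to recover the labeling $T_G \in \mathrm{Lab}(\mathcal{W}_{\Phi(w)},w)$ via $\mathcal{F}$, and then feed that labeling back through Corollary~\ref{c:labelinversion} to recover the reduced expression. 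That the two composites are mutually inverse is immediate from the corresponding statements in the two results being composed.

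I do not anticipate a genuine obstacle here: the entire content has been absorbed into Lemma~\ref{l:tourcorrespondence} and Corollary~\ref{c:labelinversion}, and the proof is a one-line composition of bijections. The only thing worth a sentence of care is making sure the intermediate object is the \emph{same} set in both statements --- namely $\mathrm{Lab}(\mathcal{W}_{\Phi(w)},w)$, the sequential standard $\mathcal{W}_{\Phi(w)}$-labelings satisfying the restrictions of $\Phi(w)$ --- so that the two correspondences are literally composable with no reconciliation needed; this is so by construction, since both results are phrased in terms of exactly that set.
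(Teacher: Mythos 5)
Your proposal is correct and matches the paper's own proof, which simply combines the correspondences of Lemma~\ref{l:tourcorrespondence} and Corollary~\ref{c:labelinversion}; the extra detail you supply about the composed maps is accurate but not needed.
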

\begin{proof}
This follows by combining the correspondences of Lemma~\ref{l:tourcorrespondence} and Corollary~\ref{c:labelinversion}.
\end{proof}
\noindent
We now combine the correspondences obtained so far into a single main theorem.
\begin{theorem} \label{t:maintheorem}
Let $w \in W$.  Then the following sets are all in natural bijection:
\begin{enumerate}[(1)]
\item the set $\mathcal{R}(w)$ of reduced expressions for $w$;
\item the set $\mathrm{Lab}(w)$ of all sequential standard labelings with $\Phi(w)$ as support;
\item the set $\mathrm{Lab}(\mathcal{W},w)$ of all sequential standard $\mathcal{W}$-labelings with $\Phi(w)$ as support;
\item the set $\mathrm{Lab}(\mathcal{W}_{\Phi(w)},w)$ of all sequential standard $\mathcal{W}_{\Phi(w)}$-labelings that satisfy the restrictions of $\Phi(w)$;
\item the set $\mathrm{Tour}(w)$ of all transitive tournaments satisfying restrictions imposed by $\Phi(w)$.
\end{enumerate}
\end{theorem}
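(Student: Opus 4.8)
The plan is to assemble the theorem from the individual correspondences already proved in this chapter; no new argument is required beyond chaining them together, so I expect the proof to be essentially a bookkeeping exercise.

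First I would recall that the bijection between $\mathcal{R}(w)$ in part $(1)$ and $\mathrm{Lab}(w)$ in part $(2)$ is exactly the content of Proposition~\ref{p:correspondence1}, the correspondence being $\textbf{x} \longleftrightarrow T_{\textbf{x}}$ via the standard encoding of Definition~\ref{d:standardencoding}. Next, parts $(2)$ and $(3)$ require nothing at all: Corollary~\ref{c:workhorse} establishes the literal set equality $\mathrm{Lab}(w) = \mathrm{Lab}(\mathcal{W},w)$, since by Proposition~\ref{p:workhorse} a function $\Phi^+ \to \mathbb{N}_0$ with support $\Phi(w)$ is a standard labeling of $\Phi^+$ if and only if it is a standard $\mathcal{W}$-labeling. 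Then I would invoke Corollary~\ref{c:wlabelcorrespondence} for the bijection between $\mathrm{Lab}(\mathcal{W},w)$ in part $(3)$ and $\mathrm{Lab}(\mathcal{W}_{\Phi(w)},w)$ in part $(4)$, given by restriction $T \mapsto T|_{\Phi(w)}$ in one direction and by the extension $T \mapsto T'$ of Lemma~\ref{l:uniquewlabel} in the other; this is where the ``restrictions of $\Phi(w)$'' bookkeeping of Definition~\ref{d:partialinversionlabel} does its work, encoding the forced orientations on partial inversion sets. Finally, Lemma~\ref{l:tourcorrespondence} supplies the bijection between $\mathrm{Lab}(\mathcal{W}_{\Phi(w)},w)$ in part $(4)$ and $\mathrm{Tour}(w)$ in part $(5)$, sending a labeling $T$ to the transitive tournament $G_T$ of Definition~\ref{d:labeltournament} and recovering $T$ from $G$ via the unique topological sort guaranteed by Lemma~\ref{l:uniquesort}. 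Composing these four links yields a chain of natural bijections $(1) \leftrightarrow (2) = (3) \leftrightarrow (4) \leftrightarrow (5)$, which proves the theorem.

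Since each link is given by an explicitly described map (standard encoding, the identity, restriction/extension, and the tournament--topological-sort construction), the composite is canonical in $w$, which justifies the word ``natural'' in the statement. The only thing worth double-checking is that the domains and codomains of the quoted results match the sets named in $(1)$--$(5)$ verbatim --- in particular that $\mathrm{Lab}(\mathcal{W},w)$ in Corollary~\ref{c:wlabelcorrespondence} is the same set as in Corollary~\ref{c:reducedtheorem} and in part $(3)$ --- but this is immediate from the definitions, so I do not anticipate any genuine obstacle.
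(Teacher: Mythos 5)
Your proposal is correct and takes essentially the same route as the paper: the theorem is proved by citing the previously established correspondences (Proposition~\ref{p:correspondence1}, Corollary~\ref{c:reducedtheorem}/\ref{c:workhorse}, Corollary~\ref{c:wlabelcorrespondence}, and Lemma~\ref{l:tourcorrespondence}/Corollary~\ref{c:tourcorrespondence}), the only cosmetic difference being that you chain the bijections $(1)\leftrightarrow(2)=(3)\leftrightarrow(4)\leftrightarrow(5)$ while the paper anchors each of $(2)$--$(5)$ directly to $(1)$.
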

\begin{proof}
Proposition~\ref{p:correspondence1} establishes a bijection between $(1)$ and $(2)$.  Corollary~\ref{c:reducedtheorem} establishes a bijection between $(1)$ and $(3)$.  Corollary~\ref{c:wlabelcorrespondence} establishes a bijection between $(1)$ and $(4)$.  Corollary~\ref{c:tourcorrespondence} establishes a bijection between $(1)$ and $(5)$.
\end{proof}
\begin{ex}
To illustrate Theorem~\ref{t:maintheorem}, we give a ``side by side'' view of the corresponding element of the sets $(1)-(5)$ for a single reduced expression of a Coxeter group element.  As in Example~\ref{ex:inversionex}, we let $W = W(B_3)$ and $w = \phi(u,s,t,s,t,u)$.  Let $\textbf{x} = (u,s,t,s,t,u)$.  Recall that
\begin{equation*}
\begin{split}
\Phi^+ = \{\alpha_s,\alpha_t,\alpha_u,\sqrt{2}\alpha_s + \alpha_t, \alpha_t + \alpha_u, \alpha_s + \sqrt{2} \alpha_t,  \sqrt{2}\alpha_s + \alpha_t + \alpha_u,& \\ \alpha_s + \sqrt{2}\alpha_t + \sqrt{2} \alpha_u, \sqrt{2}\alpha_s + 2\alpha_t + \alpha_u\}.
\end{split}
\end{equation*}
Also recall that $$\Phi(w) = \{\alpha_s,\alpha_u, \alpha_t + \alpha_u,\sqrt{2}\alpha_s + \alpha_t + \alpha_u, \alpha_s + \sqrt{2}\alpha_t + \sqrt{2}\alpha_u, \sqrt{2}\alpha_s + 2\alpha_t + \alpha_u\}.$$
Then the corresponding elements for $(1)-(5)$ are given by:
\begin{enumerate}[(1)]
\item the reduced expression $\textbf{x} = (u,s,t,s,t,u)$;
\item the labeling $T_{\textbf{x}}:\Phi^+ \rightarrow \{1,2,3,4,5,6\}$ given by $$T_{\textbf{x}}(\alpha_t) = T_{\textbf{x}}(\alpha_s + \sqrt{2}\alpha_t) = T_{\textbf{x}}(\sqrt{2}\alpha_s + \alpha_t) = 0,$$  $$T_{\textbf{x}}(\sqrt{2}\alpha_s + 2\alpha_t + \alpha_u) = 1, \,T_{\textbf{x}}(\alpha_s) = 2,\, T_{\textbf{x}}(\sqrt{2}\alpha_s + \alpha_t + \alpha_u) = 3,$$  $$T_{\textbf{x}}(\alpha_s + \sqrt{2}\alpha_u + \sqrt{2} \alpha_u) = 4, \,T_{\textbf{x}}(\alpha_t + \alpha_u) = 5, \, T_{\textbf{x}}(\alpha_u) = 6;$$
\item the sequential standard $\mathcal{W}$-labeling below, using the root chart of Example~\ref{ex:mainexample}:\\ \\
\begin{tikzpicture}[scale=1.8] \label{Bstandseqfig}
\filldraw           (3,0) circle (0.02)
                    (0.75,0.75) circle (0.02)
                    (1.5,1.5) circle (0.02)
                    (0,0) circle (0.02)
                    (2.11,0.88) circle (0.02)
                    (2.38,0.62) circle (0.02)
                    (1.67,0.44) circle (0.02)
                    (1.33,.56) circle (0.02)
                    (1.63,0.68) circle (0.02);
\draw (0,0) -- (1.5,1.5);
\draw (1.5,1.5) -- (3,0);
\draw (0.75,0.75) -- (3,0);
\draw (0,0) -- (2.11,0.88);
\draw (1.5,1.5) -- (1.67,0.44);
\draw (0.75,0.75) -- (2.38,0.62);
\draw (0,0) -- (2.38,0.62);
\draw (-.1,-.05) node {$6$};
\draw (3.05,-.1) node {$2$};
\draw (1.67,0.32) node {$3$};
\draw (1.35,0.44) node {$4$};
\draw (0.63,0.76) node {$5$};
\draw (1.69,0.79) node {$1$};
\draw (2.48,0.63) node {$0$};
\draw (2.23,0.92) node {$0$};
\draw (1.39,1.52) node {$0$};
\end{tikzpicture}
\item the sequential standard $\mathcal{W}_{\Phi(w)}$-labeling below that satisfies the restrictions of $\Phi(w)$:\\ \\
\begin{tikzpicture}[scale=1.8] \label{Brestrictedfig}
\filldraw           (3,0) circle (0.02)
                    (0.75,0.75) circle (0.02)
                    (0,0) circle (0.02)
                    (1.67,0.44) circle (0.02)
                    (1.33,.56) circle (0.02)
                    (1.63,0.68) circle (0.02);
\draw (0,0) -- (0.75,0.75);
\draw (0.75,0.75) -- (3,0);
\draw (0,0) -- (1.63,0.68);
\draw (1.63,0.68) -- (1.67,0.44);
\draw (0.75,0.75) -- (1.63,0.68);
\draw (0,0) -- (1.63,0.68);
\draw (0,0) -- (3,0);
\draw (0,0) -- (1.67,0.44);
\draw (1.63,0.68) -- (3,0);
\draw [->] (0.87,0.87) -- (0.79,0.79);
\draw [->] (1.84,0.77) -- (1.7,0.71);
\draw [->] (1.86,0.49) -- (1.73,0.46);
\draw [->] (1.6,0.86) -- (1.62,0.74);
\draw [->] (1.91,0.65) -- (1.75,0.67);
\draw (-.1,-.05) node {$6$};
\draw (3.05,-.1) node {$2$};
\draw (1.67,0.32) node {$3$};
\draw (1.35,0.44) node {$4$};
\draw (0.63,0.76) node {$5$};
\draw (1.69,0.79) node {$1$};
\end{tikzpicture}
\item the transitive tournament satisfying the restrictions imposed by $\Phi(w)$: \\ \\
\begin{tikzpicture}[scale=1.8] \label{Btournamentfig}
\filldraw           (3,0) circle (0.02)
                    (0.75,0.75) circle (0.02)
                    (0,0) circle (0.02)
                    (1.67,0.44) circle (0.02)
                    (1.33,.56) circle (0.02)
                    (1.63,0.68) circle (0.02);
\draw [thick] (0,0) -- (0.75,0.75);
\draw [thick] (0.75,0.75) -- (3,0);
\draw [thick] (0,0) -- (1.63,0.68);
\draw [thick] (1.63,0.68) -- (1.67,0.44);
\draw [thick] (0.75,0.75) -- (1.63,0.68);
\draw [thick] (0,0) -- (1.63,0.68);
\draw [thick] (0,0) -- (3,0);
\draw [thick] (0,0) -- (1.67,0.44);
\draw [thick] (1.63,0.68) -- (3,0);
\draw [->] (0.87,0.87) -- (0.79,0.79);
\draw [->] (1.84,0.77) -- (1.7,0.71);
\draw [->] (1.86,0.49) -- (1.73,0.46);
\draw [->] (1.6,0.86) -- (1.62,0.74);
\draw [->] (1.91,0.65) -- (1.75,0.67);
\draw [->, thick] (3,0) -- (0.5,0);
\draw [->, thick] (0.75,0.75) -- (0.15,0.15);
\draw [->, thick] (1.33,0.56) -- (0.35,0.15);
\draw [->, thick] (1.67,0.44) -- (0.7,0.18);
\draw [->, thick] (3,0) -- (2,0.33);
\draw [->, thick] (1.67,0.44) -- (1.49,0.5);
\draw [->, thick] (1.33,0.56) -- (0.99, 0.67);
\draw [->, thick] (3,0) -- (2.1,0.44);
\draw [->, thick] (1.63,0.68) -- (1.13, 0.72);
\end{tikzpicture}
\end{enumerate}
\noindent
For the pictorial representation of the tournament in $(5)$, we make the convention that we need not display arrows implied by transitivity.  Furthermore, the arrows ``outside the lines'' give the restrictions on $\Phi(w)$ associated with the partial inversion sets of $\Phi(w)$.  The arrows ``along the lines'' give the choice of direction made for each line of the standard segment structure $\mathcal{W}_{\Phi(w)}$ of $\Phi(w)$.  Observe that the arrows point in the directions of the labels in increasing order from the previous two examples.
\end{ex}
%end of chap4.tex
%copy of chap5.tex
\chapter{Reduced Expression Combinatorics}
\label{redexpchapt}
The goal of this chapter is to apply the constructions of the previous chapters to answer questions involving the combinatorics of reduced expressions.  The main theorem of this chapter is Theorem~\ref{t:deletiontheorem}, which gives a formula for the length of an element of $W$ obtained by deleting a generator from a reduced expression for another element of $W$.  In Section $5.2$, we collect facts about the relationship between braid moves and root sequences which will be applied in Section $5.3$.  In Section $5.3$, we introduce the freely braided elements of Green and Losonczy, and give a characterization of the elements $w$ that are freely braided in terms of statistics derived from $w$.
\section{Deletion}
Segment structures do not come equipped with a metric.  Though we can define one based on the betweenness relation, we only need such a metric for the standard segment structure $\mathcal{W}$ of $\Phi^+$.
\begin{defn} \label{d:distance}
Let $\Psi$ be an inversion set and let $\gamma$ and $\delta$ be the canonical simple roots of $\Psi$.  Let $\overline{\gamma}$ and $\overline{\delta}$ be the associated $\gamma$- and $\delta$-sequences.  If $\alpha = \gamma_i$ and $\beta = \gamma_j$, then we define the \emph{distance between $\alpha$ and $\beta$} to be $|i - j|$.  Similarly, if $\alpha = \delta_i$ and $\beta = \delta_j$ then we define the distance between $\alpha$ and $\beta$ to be $|i - j|$.  If neither of the above two conditions hold then the distance between $\alpha$ and $\beta$ is undefined.
We denote the distance between $\alpha$ and $\beta$ by $d(\alpha,\beta)$.
\end{defn}
\begin{rem}
We only use this metric in the context of the restriction of the standard segment structure $\mathcal{W} = (\Phi^+,\text{Inv}(\Phi^+),B_W)$ to $$\mathcal{W}_{\Phi(w)} = (\Phi(w), \text{Inv}(\Phi^+)_{\Phi(w)}, B_{\Phi(w)})$$ for a prescribed $w \in W$.  In this context, if $\alpha, \beta \in \Psi \cap \Phi(w)$ for some inversion set $\Psi$, then Corollary~\ref{c:biconvexstructure} implies $\alpha$ and $\beta$ lie in the same local root sequence.  It follows by Corollary~\ref{c:inversionline} that if $\alpha, \beta \in \Phi(w)$, then $d(\alpha,\beta)$ is defined.
\end{rem}
\begin{defn}  \label{d:endpointdistance}
Let $\mathcal{W}_{\Phi(w)} = (\Phi(w), \text{Inv}(W)_{\Phi(w)}, B_{\Phi(w)})$ be the standard segment structure of $\Phi(w)$.  Let $L \in \text{Inv}(W)_{\Phi(w)}$ be a line in $\mathcal{W}_{\Phi(w)}$ and let $\theta \in L$.  If $L = [\gamma_1,\gamma_k]$, then we define the \emph{minimum distance of $\theta = \gamma_i$ to the endpoints of $L$} to be $\text{min}(d(\gamma_i,\gamma_1),d(\gamma_i,\gamma_k)) = \text{min}(i - 1,k - i)$, which we denote by $|\theta|_L$.  If $\theta \not\in L$, we set $|\theta|_L = 0$.
\end{defn}
\begin{ex} \label{ex:distanceex}
Let $W = W(B_3)$ and $w = \phi(u,s,t,s,t,u)$.  Below, we represent the standard segment structure of $\Phi(w)$ in a way similar to Example~\ref{ex:inversionex} so that roots in the same inversion set (or partial inversion set) appear to be collinear.  Throughout our explanation, we refer to the roots by their name given in the chart below. (Note that this naming is not a ``labeling'' in the sense of Definition~\ref{d:labeling}.)\\
\\
Let $L$ be the full inversion set $\Psi \cap \Phi(w) = \{B,C,D,E\}$.  Then root $B$ and root $E$ are the canonical simple roots of $\Psi$.  Thus, if $E = \gamma = \gamma_1$, we have that $E = \gamma_1$, $D = \gamma_2$, $C = \gamma_3$, and $B = \gamma_4$.  To find $|C|_\Psi$,  we note that $d(C,E) = 2$ and $d(C,B) = 1$.  Thus $|C|_\Psi = \text{min}(1,2) = 1$.  On the other hand, if $\Psi'$ is the full inversion set $\Psi' \cap \Phi(w) = \{A,B\}$, then we have $|C|_{\Psi'} = 0$, since $C \not\in \Psi'$.  In the partial inversion $\Psi'' \cap \Phi(w) = \{C,F\}$, we have $|C|_{\Psi''} = 0$ because $d(C,C) = 0$ and $C$ is an endpoint of $\Psi''$.
\end{ex}
\noindent
One advantage to ordering the positive roots of a dihedral subsystem according to how many alternating reflections are applied to a root is that we can use statistics from the ordering to determine whether a reflection applied to a root within the dihedral subsystem is positive or negative.
\begin{lem} \label{l:localreflect}
Let $\gamma$, $\delta$ be the canonical simple roots for a local Coxeter system and let $i \geq 1, j \in \mathbb{Z}$.  Then we have $s_{\gamma_i} (\gamma_j) = \delta_{j - 2i + 1}$.  Similarly, $s_{\delta_i} (\delta_j) = \gamma_{j - 2i + 1}$.
\end{lem}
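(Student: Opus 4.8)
The plan is to prove the two identities simultaneously by induction on $i \ge 1$, using the mutual symmetry between the $\overline{\gamma}$- and $\overline{\delta}$-sequences so that the $\gamma$-statement at stage $i+1$ is deduced from the $\delta$-statement at stage $i$, and symmetrically. For the base case $i=1$ the assertions reduce to $s_\gamma(\gamma_j) = \delta_{j-1}$ and $s_\delta(\delta_j) = \gamma_{j-1}$, and both are immediate from the backward recurrences (\ref{e:backrecurrence}): reading $\delta_i = s_\gamma(\gamma_{i+1})$ with $i = j-1$ gives the first, and reading $\gamma_i = s_\delta(\delta_{i+1})$ with $i = j-1$ gives the second. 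Note that this already matches the desired formulas since $j - 2\cdot 1 + 1 = j - 1$.

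For the inductive step, I would use the forward recurrence (\ref{e:recurrence}), which gives $\gamma_{i+1} = s_\gamma(\delta_i)$, together with the reflection-conjugation identity $s_{u(\mu)} = u\, s_\mu\, u^{-1}$, valid for every root $\mu$ and every $u \in W$ (immediate from the definition of a reflection; cf.\ Proposition~\ref{p:basiccoxeter}(1)). Applying this with $u = s_\gamma$ and $\mu = \delta_i$ yields $s_{\gamma_{i+1}} = s_\gamma\, s_{\delta_i}\, s_\gamma$, so that for any $j \in \mathbb{Z}$,
$$
s_{\gamma_{i+1}}(\gamma_j) = s_\gamma\bigl(s_{\delta_i}(s_\gamma(\gamma_j))\bigr) = s_\gamma\bigl(s_{\delta_i}(\delta_{j-1})\bigr) = s_\gamma(\gamma_{j-2i}) = \delta_{j-2i-1} = \delta_{j-2(i+1)+1},
$$
where the second and fourth equalities use the base case $s_\gamma(\gamma_k) = \delta_{k-1}$ and the third uses the inductive hypothesis $s_{\delta_i}(\delta_k) = \gamma_{k-2i+1}$ with $k = j-1$. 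Interchanging the roles of $\gamma$ and $\delta$ throughout (starting from $\delta_{i+1} = s_\delta(\gamma_i)$) gives $s_{\delta_{i+1}}(\delta_j) = \gamma_{j-2(i+1)+1}$ by the identical computation, which completes the induction.

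I do not expect a genuine obstacle here; the only points requiring mild care are organizing the simultaneous induction so the $\gamma$- and $\delta$-statements feed into one another cleanly, and observing that the conjugation identity $s_{u(\mu)} = u s_\mu u^{-1}$ needs no positivity hypothesis on $\mu$ (since $s_\mu = s_{-\mu}$), which lets us avoid any case analysis on the signs of the entries $\gamma_i,\delta_i$ — signs that otherwise depend on whether the index lies in the range $1 \le \cdot \le m = |s_\gamma s_\delta|$. As an alternative one could argue directly from the Chebyshev coordinates of Lemma~\ref{l:cheb:alphabeta}, expanding $s_{\gamma_i}(\gamma_j) = \gamma_j - 2B(\gamma_j,\gamma_i)\gamma_i$ in the basis $\{\gamma,\delta\}$ and simplifying via the product identity (\ref{e:cheb:biconvexity}) and the recurrences (\ref{e:cheb:recurrence})--(\ref{e:cheb:backrecurrence}); this is correct but computationally heavier, so I would present the inductive route.
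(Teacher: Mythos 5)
Your proof is correct and follows essentially the same route as the paper's: both anchor the base case $i=1$ in the backward recurrences and run an induction on $i$ powered by the conjugation identity $s_{u(\mu)} = u\,s_\mu\,u^{-1}$. The only difference is organizational — the paper first establishes $s_{\gamma_i} = (s_\gamma s_\delta)_{2i-1}$ and then peels off two factors to reduce to $s_{\gamma_{i-1}}(\gamma_{j-2})$, whereas you conjugate once and route through the $\delta$-statement at stage $i$; your explicit remark that the conjugation identity requires no positivity hypothesis (since $s_\mu = s_{-\mu}$) is a point the paper leaves implicit.
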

\begin{proof}
For $i = 1$, this is just the backward recurrence $s_\gamma (\gamma_j) = \delta_{j-1}$ of (\ref{e:backrecurrence}).  For $i > 1$, we first show that $s_{\gamma_i} = [(s_\gamma s_\delta)]_{2i-1}$ and $s_{\delta_i} = [(s_\delta s_\gamma)]_{2i-1}$ by induction:
\begin{equation*}
\begin{split}
s_{\gamma_i} &=
    s_{s_\gamma \delta_{i-1}}\\
    &= s_\gamma s_{\delta_{i-1}} s_\gamma\\
    &= s_\gamma [(s_\delta s_\gamma)]_{2i-3} s_\gamma\\
    &= [(s_\gamma s_\delta)]_{2i - 1}.
\end{split}
\end{equation*}
Now, since $\gamma_j = s_\gamma (\delta_{j-1})$ and since the last factor of $s_{\gamma_i}$ is $s_\gamma$, we have
\begin{equation*}
\begin{split}
s_{\gamma_i}(\gamma_j) &= [(s_\gamma s_\delta)]_{2i - 1} \ s_\gamma (\delta_{j-1}) \\
&= [(s_\gamma s_\delta)]_{2i - 3} \ s_\delta s_\gamma s_\gamma (\delta_{j-1})\\
&= s_{\gamma_{i-1}} (\gamma_{j-2})\\
&= \delta_{(j - 2) - 2i + 3}\\
&= \delta_{j - 2i + 1}.
\end{split}
\end{equation*}
The third equality applies the backward recurrence of (\ref{e:backrecurrence}) to get the equality $s_{\delta}(\delta_{j-1}) = \gamma_{j-2}$.  The fourth equality follows by induction.  Interchanging the roles of $\gamma$ and $\delta$ gives the last statement of the lemma.
\end{proof}
\begin{cor} \label{c:localreflect}
Let $\gamma, \delta$ be canonical simple roots for a local Coxeter system and let $i \geq 1$.  Let $m = |s_\gamma s_\delta|$ and let $j$ be such that $i < j \leq m$ ($j < m$ if $m = \infty$).  Then $s_{\gamma_i}(\gamma_j)$ is negative if $j < 2i$, otherwise $s_{\gamma_i}(\gamma_j)$ is positive.
\end{cor}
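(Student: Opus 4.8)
The plan is to reduce the statement entirely to Lemma~\ref{l:localreflect}, which supplies the exact identity $s_{\gamma_i}(\gamma_j) = \delta_{j - 2i + 1}$, and then to determine the sign of $\delta_{j-2i+1}$ using the positivity and negativity results for local root sequences. First I would set $k = j - 2i + 1$ and note that the hypothesis dichotomy ``$j < 2i$ versus $j \geq 2i$'' translates precisely into ``$k \leq 0$ versus $k \geq 1$''.

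In the case $j \geq 2i$, I would verify that $k$ lands in the range governed by Lemma~\ref{l:sequencepositive}: since $i \geq 1$ we have $1 \leq k \leq j-1$, and $j - 1 \leq m - 1 < m$ when $m$ is finite while $k$ is finite when $m = \infty$, so $\delta_k = s_{\gamma_i}(\gamma_j)$ is positive. In the case $j < 2i$, I would write $k = -\ell$ with $\ell = 2i - j - 1 \geq 0$ and check the range hypothesis of Corollary~\ref{c:sequencenegative}: because $i < j \leq m$ forces $i < m$, one gets $2i < i + j < m + j$, hence $\ell = 2i - j - 1 < m - 1 < m$; thus $0 \leq \ell < m$ and $\delta_{-\ell} = s_{\gamma_i}(\gamma_j)$ is negative. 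The symmetric claim for $s_{\delta_i}(\delta_j)$ would follow by interchanging $\gamma$ and $\delta$, exactly as in Lemma~\ref{l:localreflect}.

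The only real obstacle is the index bookkeeping, namely confirming that the boundary convention ``$j < m$ when $m = \infty$'' in the hypothesis suffices to keep $k$ (respectively $\ell$) strictly below $m$, so that Lemma~\ref{l:sequencepositive} and Corollary~\ref{c:sequencenegative} are genuinely applicable; beyond this range-checking there is no substantive difficulty.
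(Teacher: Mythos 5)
Your proposal is correct and follows essentially the same route as the paper: apply Lemma~\ref{l:localreflect} to rewrite $s_{\gamma_i}(\gamma_j)$ as $\delta_{j-2i+1}$, then read off the sign from the range of the index via Lemma~\ref{l:sequencepositive} and Corollary~\ref{c:sequencenegative} (the paper cites Lemma~\ref{l:sequencenegative} directly for the negative case, an immaterial difference). Your index bookkeeping, including the boundary checks when $m = \infty$, is accurate.
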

\begin{proof}
By Lemma~\ref{l:localreflect}, $s_{\gamma_i}(\gamma_j) = \delta_{j - 2i + 1}$.  If $j < 2i$, then since $j \leq m$, we have $-(m - 1) \leq j - 2i + 1 \leq 0$ \ ($j - 2i + 1 \leq 0$ if $m = \infty$), which implies that $s_{\gamma_i}(\gamma_j)$ is negative by Lemma~\ref{l:sequencenegative}.  If $j \geq 2i$, then since $j \leq m$ and $i \geq 1$, we have $1 \leq j - 2i + 1 \leq m$, which implies that $s_{\gamma_i}(\gamma_j)$ is positive by Lemma~\ref{l:sequencepositive}.
\end{proof}
\begin{lem} \label{l:gammachoice}
Let $w \in W$ and let $\Psi$ be an inversion set intersecting $\Phi(w)$ so that $|\Psi \cap \Phi(w)| \geq 2$.  Let $\textbf{x}$ be a reduced expression for $w$.  Then there exists a unique canonical simple root $\gamma$ of $\Psi$ that is the last root of $\Psi \cap \Phi(w)$ to occur in the root sequence $\overline{\theta}(\textbf{x})$.
\end{lem}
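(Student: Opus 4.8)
The plan is to pass through the standard encoding $T_{\textbf{x}}$ of $\textbf{x}$ and translate ``last root of $\Psi \cap \Phi(w)$ to occur in $\overline{\theta}(\textbf{x})$'' into ``root of $\Psi \cap \Phi(w)$ with the largest value under $T_{\textbf{x}}$''. Since $\textbf{x}$ is reduced, Proposition~\ref{p:basiccoxeter}(8) gives that the entries of $\overline{\theta}(\textbf{x})$ are exactly the roots of $\Phi(w)$, each occurring once, and by Definition~\ref{d:standardencoding} the position of $\theta_k$ in $\overline{\theta}(\textbf{x})$ is $T_{\textbf{x}}(\theta_k) = k$. Hence there is a well-defined, unique last root $\lambda$ of $\Psi \cap \Phi(w)$ to occur, namely the one maximizing $T_{\textbf{x}}$ on $\Psi \cap \Phi(w)$ (uniqueness coming from the injectivity of $T_{\textbf{x}}$ on $\Phi(w)$, as $T_{\textbf{x}}$ is sequential with $\text{supp}(T_{\textbf{x}}) = \Phi(w)$). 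The whole content of the lemma is then that $\lambda$ is a canonical simple root of $\Psi$.

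To see this I would use Lemma~\ref{l:standard} (so that $T_{\textbf{x}}$ is standard) together with the fact that $T_{\textbf{x}}$ is sequential with finite support $\Phi(w)$, and apply Lemma~\ref{l:wlabel} to $\Psi$ with canonical simple roots $\gamma, \delta$ and $m = |s_\gamma s_\delta|$. Case $(1)$ of Lemma~\ref{l:wlabel} ($T_{\textbf{x}}(\gamma) = T_{\textbf{x}}(\delta) = 0$) forces $T_{\textbf{x}}(\mu) = 0$ for all $\mu \in \Psi$, hence $\Psi \cap \Phi(w) = \emptyset$, contradicting $|\Psi \cap \Phi(w)| \geq 2$; so it does not occur. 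In case $(2)$, say $T_{\textbf{x}}(\gamma) \neq 0$ and $T_{\textbf{x}}(\delta) = 0$ (relabel the canonical roots if necessary), Lemma~\ref{l:wlabel}(2) gives $T_{\textbf{x}}(\gamma_1) > \cdots > T_{\textbf{x}}(\gamma_k)$ with $T_{\textbf{x}}(\gamma_i) = 0$ for $i > k$, so $\Psi \cap \Phi(w) = \{\gamma_1,\ldots,\gamma_k\}$ and its largest $T_{\textbf{x}}$-value is attained at $\gamma_1 = \gamma$, a canonical simple root; moreover $\delta \notin \Psi \cap \Phi(w)$, so $\gamma$ is trivially the unique canonical simple root of $\Psi$ lying in $\Psi \cap \Phi(w)$. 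In case $(3)$, $T_{\textbf{x}}(\gamma), T_{\textbf{x}}(\delta) \neq 0$, Lemma~\ref{l:wlabel}(3) gives $m$ finite and either $T_{\textbf{x}}(\gamma_1) < \cdots < T_{\textbf{x}}(\gamma_m)$ or the reverse; using Lemma~\ref{l:sequencerelation} to identify $\gamma_m = \delta_1 = \delta$, the maximum of $T_{\textbf{x}}$ on $\Psi \cap \Phi(w)$ is attained at $\gamma_m = \delta$ in the increasing case and at $\gamma_1 = \gamma$ in the decreasing case — a canonical simple root either way — while the other canonical simple root has a strictly smaller label, hence is not last. Thus in every occurring case $\lambda$ is a canonical simple root of $\Psi$, and it is uniquely determined, which is the assertion.

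The only point needing genuine care is the bookkeeping in case $(3)$: matching the direction of monotonicity of $T_{\textbf{x}}$ along the $\overline{\gamma}$-sequence with which endpoint, $\gamma$ or $\delta$, occurs last in $\overline{\theta}(\textbf{x})$. This is exactly where the symmetry $\gamma_i = \delta_{m+1-i}$ of Lemma~\ref{l:sequencerelation} is needed. Apart from that, the proof is a direct translation of the structure result Lemma~\ref{l:wlabel} (itself a consequence of Corollary~\ref{c:biconvexstructure} and biconvexity) through the encoding--root-sequence dictionary, so I do not anticipate any substantive obstacle.
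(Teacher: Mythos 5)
Your proposal is correct and follows essentially the same route as the paper: both reduce the claim to identifying the maximum of the standard encoding $T_{\textbf{x}}$ on $\Psi \cap \Phi(w)$ and then invoke Lemma~\ref{l:wlabel} (with Lemma~\ref{l:sequencerelation} for the full/finite case to see that both extremes of the $\overline{\gamma}$-sequence are canonical simple roots). The only cosmetic difference is that you organize the cases by the trichotomy of Lemma~\ref{l:wlabel} rather than by the partial/full inversion set dichotomy via Lemma~\ref{l:thatlemma}, but these case divisions coincide.
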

\begin{proof}
If $\Psi \cap \Phi(w)$ is a partial inversion set, then by Lemma~\ref{l:thatlemma}, there is only one canonical simple root of $\Psi$ in $\Phi(w)$.  By Lemma~\ref{l:wlabel}(2), $T_{\textbf{x}}(\gamma)$ has the highest label of the roots in $\Psi \cap \Phi(w)$.\\ \\
If $\Psi \cap \Phi(w)$ is a full inversion set, then there are two distinct canonical simple roots in $\Phi(w)$.  By  Lemmas~\ref{l:wlabel} and \ref{l:sequencerelation}, the root labeled highest by $T_{\textbf{x}}$ among all the roots of $\Psi \cap \Phi(w)$ is one of the canonical simple roots of $\Psi$.  We let $\gamma$ be that highest labeled root and note that the root sequence result follows from Definition~\ref{d:standardencoding}.
\end{proof}
\noindent
Recall that $D_j(\textbf{x})$ denotes the expression obtained from $\textbf{x}$ by deleting the $j$-th generator.  By Lemma~\ref{l:reflectdelete}, if $\textbf{x}$ is a reduced expression for $w$ and $\theta_j$ is the $j$-th entry of the root sequence $\overline{\theta}(\textbf{x})$, then $D_j(\textbf{x})$ is an expression for $ws_{\theta_j}$.  Thus, the length of the Coxeter group element $\phi(D_j(\textbf{x}))$ is given by the length of $ws_{\theta_j}$.  Lastly, recall that by Corollary~\ref{c:deletion}, the length of $ws_{\theta_j}$ is $\ell(w) - 2d - 1$ where $d$ is the number of $\theta_k$ in $\overline{\theta}(\textbf{x})$ satisfying $1 \leq k < j$ and $s_{\theta_j}(\theta_k) \in \Phi^-$.  The content of the next theorem is that the number $d$ can be expressed in terms of a statistic associated to $\theta_k$ in the standard segment structure of $\Phi(w)$.
\begin{theorem} \label{t:deletiontheorem}
Let $w \in W$, let $\theta \in \Phi(w)$, and let $$\mathcal{W}_{\Phi(w)} = (\Phi(w), \text{Inv}(W)_{\Phi(w)}, B_{\Phi(w)})$$ be the standard segment structure of $\Phi(w)$.  Define $$D = \sum_{L \in \text{Inv}(W)_{\Phi(w)}} |\theta|_L,$$ where $|\theta|_L$ is as it is in Definition~\ref{d:endpointdistance}.  Then
\begin{equation*}
\ell(ws_\theta) = \ell(w) - 1 - 2D.
\end{equation*}
\end{theorem}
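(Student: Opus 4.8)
The plan is to fix any reduced expression $\textbf{x}=(s_1,\dots,s_n)$ for $w$, locate $\theta$ as the $j$-th entry $\theta_j$ of the root sequence $\overline{\theta}(\textbf{x})$ (possible since $\theta\in\Phi(w)$, by Proposition~\ref{p:basiccoxeter}(8)), and invoke Corollary~\ref{c:deletion}, which gives $\ell(\phi(D_j(\textbf{x})))=\ell(w)-2d-1$ with $d=\#\{\,k:1\le k<j,\ s_\theta(\theta_k)\in\Phi^-\,\}$. Since $D_j(\textbf{x})$ is an expression for $ws_\theta$ by Lemma~\ref{l:reflectdelete}, it then suffices to prove $d=D$; in particular this will show $d$ is independent of $\textbf{x}$.

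First I would reorganize $d$ according to inversion sets. If $1\le k<j$ and $s_\theta(\theta_k)\in\Phi^-$, then $\theta_k\ne\theta$, and $\theta,\theta_k$ lie in the unique inversion set $\Psi=\Phi^+_{\{\theta,\theta_k\}}$ (Corollary~\ref{c:inversionline}); moreover $\theta,\theta_k\in\Psi\cap\Phi(w)$, so $\Psi\cap\Phi(w)$ has at least two elements and is a line $L$ of $\mathcal{W}_{\Phi(w)}$ passing through $\theta$. Conversely distinct inversion sets meet in at most one root (Lemma~\ref{l:oneroot}), so each counted $\theta_k$ is assigned to exactly one such line. Hence $d=\sum_L d_L$, where $L$ ranges over the lines of $\mathcal{W}_{\Phi(w)}$ through $\theta$ and $d_L=\#\{\,k:1\le k<j,\ \theta_k\in L,\ \theta_k\ne\theta,\ s_\theta(\theta_k)\in\Phi^-\,\}$; for lines not through $\theta$ both $d_L$ and $|\theta|_L$ vanish, so it is enough to establish $d_L=|\theta|_L$ for every line $L\ni\theta$.

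Next I would fix such a line $L=\Psi\cap\Phi(w)$ with $\theta\in L$, $|L|\ge 2$. By Lemma~\ref{l:gammachoice} there is a unique canonical simple root $\gamma$ of $\Psi$ that is the last element of $L$ to appear in $\overline{\theta}(\textbf{x})$; name the canonical simple roots so that this one is $\gamma$, with local root sequence $\overline\gamma$ and $m=|s_\gamma s_\delta|$. By Corollary~\ref{c:biconvexstructure}, $L=[\gamma_1,\gamma_p]$ in $(\Psi,\le_{\Psi,\gamma})$, that is $L=\{\gamma_1,\dots,\gamma_p\}$ with $p\le m$ (and $p<m$ when $m=\infty$, since $\Phi(w)$ is finite). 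Applying Lemma~\ref{l:wlabel}(2) in the partial case, and Lemma~\ref{l:wlabel}(3) together with the choice of $\gamma$ in the full case, to the standard encoding $T_{\textbf{x}}$ (a sequential standard labeling of finite support) gives $T_{\textbf{x}}(\gamma_1)>\cdots>T_{\textbf{x}}(\gamma_p)$; since $T_{\textbf{x}}(\gamma_l)$ is the position of $\gamma_l$ in $\overline{\theta}(\textbf{x})$, the roots of $L$ occur in $\overline{\theta}(\textbf{x})$ in the order $\gamma_p,\gamma_{p-1},\dots,\gamma_1$. Writing $\theta=\gamma_i$ with $1\le i\le p$, the roots $\theta_k$ with $k<j$ lying on $L$ and distinct from $\theta$ are exactly $\gamma_{i+1},\dots,\gamma_p$. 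For $i<l\le p$ we have $i<l\le m$ with strict inequality when $m=\infty$, so Corollary~\ref{c:localreflect} applies and yields $s_\theta(\gamma_l)=s_{\gamma_i}(\gamma_l)\in\Phi^-$ if and only if $l<2i$. Therefore $d_L=\#\{\,l:i<l\le p,\ l\le 2i-1\,\}=\min(p,2i-1)-i=\min(p-i,\,i-1)$, which is nonnegative because $1\le i\le p$. On the other hand $|\theta|_L=\min\bigl(d(\gamma_i,\gamma_1),d(\gamma_i,\gamma_p)\bigr)=\min(i-1,\,p-i)$ by Definitions~\ref{d:distance} and \ref{d:endpointdistance}, so $d_L=|\theta|_L$. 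Summing over all lines through $\theta$ gives $d=D$, and the theorem follows.

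The main obstacle I anticipate is the bookkeeping around the choice of canonical simple root: one must check that the root singled out by Lemma~\ref{l:gammachoice} can be taken as the distinguished endpoint $\gamma_1$ in the interval description of $\Psi\cap\Phi(w)$ supplied by Corollary~\ref{c:biconvexstructure}, that this forces the roots of $L$ to appear in $\overline{\theta}(\textbf{x})$ in precisely the reversed order $\gamma_p,\dots,\gamma_1$ (so that the hypothesis $i<l\le m$ of Corollary~\ref{c:localreflect} matches exactly the roots of $L$ preceding $\theta$), and that the infinite case causes no trouble (there $\Psi\cap\Phi(w)$ is automatically a partial inversion set, so $p<m$ and all indices stay in the valid range). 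Once the orientation is pinned down, reconciling $\min(p,2i-1)-i$ with the endpoint-distance statistic $\min(i-1,p-i)$ is immediate, as are the degenerate cases $i=1$ and $i=p$, where both sides are zero.
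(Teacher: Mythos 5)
Your proposal is correct and follows essentially the same route as the paper's proof: reduce to Corollary~\ref{c:deletion}, partition the counted roots among the lines through $\theta$ via Corollary~\ref{c:inversionline} and Lemma~\ref{l:oneroot}, orient each line using Lemma~\ref{l:gammachoice} and Corollary~\ref{c:biconvexstructure}, and count negatives with Corollary~\ref{c:localreflect} to get $\min(p-i,\,i-1)=|\theta|_L$. Your use of Lemma~\ref{l:wlabel} to pin down the reversed order $\gamma_p,\dots,\gamma_1$ in the root sequence is a slightly more explicit justification of a step the paper asserts directly from the choice of $\gamma$, but it is not a different argument.
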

\begin{proof}
Let $\textbf{x}$ be a reduced expression for $w$ with root sequence $\overline{\theta}(\textbf{x})$ and let $n$ be the index of the root sequence satisfying $\theta_n = \theta$.  Since $\theta$ is fixed throughout the proof, so is the associated index $n$.  Let $$\Theta = \{\theta_k \in \overline{\theta}(\textbf{x}) \, : \, 1 \leq k < n \text{ and } s_{\theta_n} (\theta_k) \in \Phi^-\}$$ and set $d = |\Theta|$.  By Corollary~\ref{c:deletion}, it suffices to show that $D = d$.\\ \\
For each $L \in \text{Inv}(W)_{\Phi(w)}$, define $\Theta_L = \Theta \cap L$.\\ \\
By Corollary~\ref{c:inversionline}, for any $k < n$ there exists a unique inversion set $\Psi$ containing $\theta_n$ and $\theta_k$.  By Definition~\ref{d:standardphisegment} each $L \in \text{Inv}(W)_{\Phi(w)}$ is of the form $\Psi \cap \Phi(w)$ for some inversion set $\Psi$.  Thus there exists a unique inversion set $L \in \text{Inv}(W)_{\Phi(w)}$ containing both $\theta_n$ and $\theta_k$ by Corollary~\ref{c:inversionline}.  By Lemma~\ref{l:oneroot} if $L, L' \in \text{Inv}(W)_{\Phi(w)}$ are distinct lines that contain $\theta_n$, then they have no other points in common.  In other words, $$(L \setminus \{\theta_n\}) \cap (L' \setminus \{\theta_n\}) = \emptyset.$$  Since $\theta_n \not\in \Theta$, $\Theta$ is the disjoint union of sets of the form $\Theta \cap L$, where $L$ contains $\theta_n$.  Thus we have $$|\Theta| = \sum |\Theta \cap L|,$$
where the sum is over all $L \in \text{Inv}(W)_{\Phi(w)}$ that contain $\theta = \theta_n$.\\ \\
Since $|\theta_n|_L = 0$ for any $L \in \text{Inv}(W)_{\Phi(w)}$ such that $\theta_n \not\in L$, it now suffices to show that $|\theta_n|_L = |\Theta \cap L|$ for all $L \in \text{Inv}(W)_{\Phi(w)}$ containing $\theta_n$.\\ \\
Now let $L$ be an arbitrary line $L$ of $\text{Inv}(W)_{\Phi(w)}$  containing $\theta_n$ and let $\theta_l$ be a root occurring before $\theta_n$ in $\overline{\theta}(\textbf{x})$ so that $1 \leq l < n$.  By Corollary~\ref{c:inversionline}, there exists a unique inversion set $\Psi$ containing $\theta_n$ and $\theta_l$.  By Lemma~\ref{l:gammachoice}, we may let $\gamma$ denote the unique canonical simple root of $\Psi$ that occurs latest in the root sequence $\overline{\theta}(\textbf{x})$.\\ \\
By Corollary~\ref{c:biconvexstructure}, since $L = \Psi \cap \Phi(w)$ and $|L| \geq 2$, we have $L = [\gamma_1,\gamma_k]$ for some $k \leq m = |s_\gamma s_\delta|$.  Thus $\theta_n = \gamma_i$ for some $i$ satisfying $1 \leq i \leq k$.  We have chosen $\gamma$ so that the only roots in $[\gamma_1,\gamma_k]$ occurring before $\theta_n$ in the root sequence $\overline{\theta}(\textbf{x})$ are the roots $\gamma_{i+1},\ldots,\gamma_k$.  Thus, $\theta_l = \gamma_j$ where $i < j$.\\ \\
By Corollary~\ref{c:localreflect}, since $i < j$, $s_{\gamma_i}(\gamma_j)$ is negative if and only $j < 2i$.  There are two cases:  $k - i \leq i - 1$ and $k - i > i - 1$.\\ \\
First suppose $k - i \leq i - 1$, so that $k \leq 2i - 1$.  Then, for any $j$ satisfying $i + 1 \leq j \leq k$, $k \leq 2i - 1$ implies $j < 2i$.  Thus $s_{\gamma_i}(\gamma_j)$ is negative.  There are $k - i$ such $j$ in this interval, so that, in this case, $|\theta_n|_L = \text{min}(k - i, i - 1)$.\\ \\
Next suppose $i - 1 < k - i$ so that $2i - 1 < k$.  Then, for $j$ satisfying $i + 1 \leq j \leq 2i - 1 < k$, $s_{\gamma_i}(\gamma_j)$ is negative.  For $j$ satisfying $2i \leq j \leq k$, $s_{\gamma_i}(\gamma_j)$ is positive.  Thus the number of $s_{\gamma_i}(\gamma_j)$ sent negative, where $i < j \leq k$, is $i - 1$, which is $\text{min}(k - i, i - 1)$ in this case as well.\\ \\
Thus, in both cases the number of roots in $L$ occurring before $\theta_n$ in $\overline{\theta}(\textbf{x})$ is $\text{min}(k - i, i - 1) = |\theta_n|_L$, as desired.
\end{proof}
\begin{ex}
Again we use the example $W = W(B_3)$, $\textbf{x} = (u,s,t,s,t,u)$, and $w = \phi(\textbf{x})$.  Using the naming scheme of Example~\ref{ex:distanceex}, we have that the root sequence $\overline{\theta}(\textbf{x}) = (F,B,C,D,E,A)$.  Thus, deletion of the root $C$ in $\overline{\theta}(\textbf{x})$ (given by right multiplying by $s_{\sqrt{2}\alpha_s + \alpha_t + \alpha_u}$) corresponds to the expression $\textbf{x}' = (u,s,s,t,u)$.  We have that $|C|_{\Psi} = 0$ except for $\Psi = \{B,C,D,E\}$.  In that case $|C|_\Psi = 1$, so that $D = 1$ in Theorem~\ref{t:deletiontheorem}.  Theorem~\ref{t:deletiontheorem} now predicts that $\ell(\textbf{x}') = 6 - 2(1) - 1 = 3$, which is easily verified by inspection.\\ \\
If instead we delete the root $D$ from $\overline{\theta}(\textbf{x})$, we get the expression $$\textbf{x}' = (u,s,t,t,u).$$  On the lines $L = \{B,C,D,E\}$ and $L' = \{A,D,F\}$, we have $|D|_L = 1$ and $D_{L'} = 1$, respectively.  Thus, $D = 2$ in Theorem~\ref{t:deletiontheorem}.  It follows that $\ell(\phi(\textbf{x}')) = 6 - 2(2) - 1 = 1$, which is also easily verified by inspection.  Note that the deletion of a generator associated to any other root results in a reduced expression.
\end{ex}
\noindent
In \cite{shortbraid}, Fan explores the connection between three types of special elements of Coxeter groups: fully commutative elements, short-braid avoiding elements, and fully covering elements.  The fully commutative elements are the $w \in W$ such that only relations of the form $st = ts$ need to be applied to transform one reduced expression for $w$ into another.  The short-braid avoiding elements are those whose reduced expressions avoid substrings of the form $sts$, where $s,t \in S$ are noncommuting generators.  If the deletion of any generator from any reduced expression for $w$ results in a reduced expression (for some $w' \in W$), then the element is called fully covering.  Fan (\cite[Theorem 1]{shortbraid}) shows that in a finite Weyl group, the short-braid avoiding elements are precisely the fully covering elements.  In \cite{hagiwara}, Hagiwara, et al$.$ show that the fully commutative elements of a simply-laced Coxeter group are precisely the fully covering elements so long as the group has finitely many fully commutative elements.  Since fully commutative elements are precisely the short-braid avoiding elements in simply-laced Coxeter groups, this extends Fan's result.  In Proposition~\ref{p:fullycovering}, we give a characterization of the fully covering elements in terms of the geometry of $\mathcal{W}_{\Phi(w)}$, which applies in the setting of arbitrary Coxeter groups.
\begin{defn} \label{d:bruhatdef}
Let $(W,S)$ be a Coxeter system and let $u,v \in W$.  If there exists a reflection $t \in T$ such that $ut = w$, then we write $u \rightarrow w$.  We denote the transitive closure of $\rightarrow$ by $\prec$, which we call the \emph{Bruhat--Chevalley order of $W$}.
\end{defn}
\begin{defn} \label{d:fullycovering}
We say that \emph{$w$ covers $v$ relative to $\prec$} if $v \prec u \prec w$ implies that either $u = v$ or $u = w$.  We say that $w$ is \emph{fully covering} if $$\ell(w) = |\{u \in W \, : \, w \text{ covers } u\}|.$$
\end{defn}
\noindent
In the next proposition, we record some well-known facts about the Bruhat--Chevalley order.  In Corollary~\ref{c:charcovering}, we give a trivial (but useful) translation of Definition~\ref{d:fullycovering} into a generator deletion property.
\begin{prop} \label{p:bruhatcharacterization}
Let $(W,S)$ be a Coxeter system and let $\prec$ denote the Bruhat--Chevalley order of $W$.  Then:
\begin{enumerate}[(1)]
\item Suppose $w = s_1\cdots s_k$ is a reduced expression for $w \in W$.  Then for any $u \in W$, $u \prec w$ if and only if $u = s_{i_1} \cdots s_{i_k}$ for some choice $i_1,\ldots,i_k$ of indices.
\item  For any $u \in W$, $w$ covers $u$ if and only if $\ell(u) = \ell(w) - 1$.
\end{enumerate}
\end{prop}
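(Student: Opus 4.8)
The plan is to treat both items as classical facts about the Bruhat--Chevalley order and to derive them from the strong exchange condition, Proposition~\ref{p:basiccoxeter}(5), supplemented by the deletion machinery already in hand (Lemma~\ref{l:reflectdelete}, Lemma~\ref{l:gendeletion}, Corollary~\ref{c:deletion}); statement $(1)$ is the \emph{subword property} (Chevalley's criterion) and statement $(2)$ records that a Bruhat cover changes length by exactly one. Accordingly the argument essentially transcribes the standard treatment, e.g.\ \cite[Sections 5.9--5.11]{humph} or \cite[Section 2.2]{bb}, into the present notation, and I would keep it brief and cite those sources for the routine parts.

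For the forward direction of $(1)$ I would first isolate the one-step statement: if $t\in R$ and $\ell(wt)<\ell(w)$, then for a reduced expression $\textbf{x}=(s_1,\dots,s_k)$ of $w$ the strong exchange condition yields an index $i$ with $wt=s_1\cdots\widehat{s_i}\cdots s_k=D_i(\textbf{x})$ (this is Lemma~\ref{l:reflectdelete} with $t=s_{\theta_i}$), a subword of $\textbf{x}$. One then inducts on the length $r$ of a chain $u=u_0\to u_1\to\cdots\to u_r=w$ realizing $u\prec w$: the element $u_{r-1}$ is a subword $D_i(\textbf{x})$ of $\textbf{x}$, a reduced expression $\textbf{x}'$ for $u_{r-1}$ can be extracted from that subword, and applying the inductive hypothesis to $u\prec u_{r-1}$ with respect to $\textbf{x}'$ exhibits $u$ as a subword of $\textbf{x}'$, hence of $\textbf{x}$. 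For the converse I would induct on $k=\ell(w)$, writing $w=w's_k$ with $\ell(w')=k-1$: a subword of $\textbf{x}$ either omits $s_k$ and so lies in $w'$ (with $w'\prec w$), or has the form $u's_k$ with $u'$ a subword of a reduced word for $w'$, so that $u'\preceq w'$ by induction and one concludes with the ``lifting'' principle that $u'\preceq w'$ together with $\ell(w's_k)>\ell(w')$ force $u's_k\preceq w's_k$. Deducing this lifting principle from the exchange condition is, as usual, the one genuinely delicate point, and I expect it to be the main obstacle in establishing $(1)$.

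Given $(1)$, statement $(2)$ is short. Since $\prec$ is strictly monotone for length (a chain $u_0\to\cdots\to u_r$ has $\ell(u_0)<\cdots<\ell(u_r)$), any $u$ with $u\prec w$ satisfies $\ell(u)<\ell(w)$, and if $\ell(u)=\ell(w)-1$ then a chain from $u$ to $w$ has a single step, so nothing lies strictly between and $w$ covers $u$. Conversely, if $u\prec w$ with $\ell(u)\le\ell(w)-2$, then a reduced expression for $w$ restricts to a reduced subword representing $u$ and using $\ell(u)$ of its $\ell(w)$ positions; enlarging the retained set of positions to a reduced subword of length $\ell(u)+1$ yields an element strictly between $u$ and $w$, so $w$ does not cover $u$. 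The only point needing care is this last ``chain property'' (that a position can always be inserted so as to keep the subword reduced); it again follows from the exchange condition along the lines of \cite[Sections 5.9--5.11]{humph}, or, for the particular count needed later in Section~5.1, one may instead read it off from Corollary~\ref{c:deletion}, which records the lengths $\ell(ws_{\theta_i})=\ell(w)-1-2d_i$ of all the one-reflection descents of $w$.
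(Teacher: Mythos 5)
Your proposal is correct and is essentially the same approach as the paper, which disposes of both statements by citing \cite[Theorem 5.10]{humph} for $(1)$ and \cite[Proposition 5.11]{humph} for $(2)$; the argument you sketch (subword property via the strong exchange condition, plus the lifting and chain properties) is precisely the content of those cited results, and the two delicate points you flag are exactly what the citations cover. Since the paper offers no independent proof, your outline is, if anything, more detailed than what appears there.
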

\begin{proof}
For fact $(1)$, see \cite[Theorem 5.10]{humph}.  For fact $(2)$, see \cite[Proposition 5.11]{humph}.
\end{proof}
\begin{cor} \label{c:charcovering}
Let $w \in W$.  Let $s_1 \cdots s_k$ be a reduced expression for $w$.  Then $w$ is fully covering if and only if the expression $s_1 \cdots \widehat{s_i} \cdots s_k$ is reduced for every $i$ satisfying $1 \leq i \leq k$.
\end{cor}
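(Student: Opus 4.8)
The plan is to translate Definition~\ref{d:fullycovering} through the standard combinatorics of Bruhat covers and the strong exchange condition. Fix the reduced expression $\textbf{x} = (s_1,\ldots,s_k)$ (so $\ell(w) = k$) and let $\overline{\theta}(\textbf{x}) = (\theta_1,\ldots,\theta_k)$ be its root sequence. By Proposition~\ref{p:basiccoxeter}(8) the $\theta_i$ are exactly the elements of $\Phi(w)$, hence pairwise distinct; consequently the reflections $s_{\theta_i}$ are pairwise distinct, and so are the elements $ws_{\theta_i}$. By Lemma~\ref{l:reflectdelete}, $\phi(D_i(\textbf{x})) = ws_{\theta_i}$, and since $D_i(\textbf{x})$ has length $k-1$ it is reduced precisely when $\ell(ws_{\theta_i}) = k-1$ (equivalently, when the deletion count of Corollary~\ref{c:deletion} vanishes).

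Next I would pin down the covered elements: I claim $\{u \in W : w \text{ covers } u\} = \{\, ws_{\theta_i} : 1 \le i \le k,\ \ell(ws_{\theta_i}) = k-1 \,\}$. The inclusion $\supseteq$ is Proposition~\ref{p:bruhatcharacterization}(2), since each $ws_{\theta_i}$ of length $k-1$ lies below $w$ and has length one less than $w$. For $\subseteq$, if $w$ covers $u$ then $u \prec w$ and $\ell(u) = k-1$ by Proposition~\ref{p:bruhatcharacterization}(2); by Proposition~\ref{p:bruhatcharacterization}(1), $u$ is expressed by a subword of $s_1\cdots s_k$, and such a subword has length at least $\ell(u) = k-1$, so it omits at most one letter. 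Omitting none would force $u = w$, impossible by length; hence $u = \phi(D_i(\textbf{x})) = ws_{\theta_i}$ for some $i$ with $\ell(ws_{\theta_i}) = k-1$, and $D_i(\textbf{x})$ is a reduced expression for $u$.

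Finally I would count. Because the $ws_{\theta_i}$ are distinct, $|\{u : w \text{ covers } u\}| = |\{\, i : D_i(\textbf{x}) \text{ is reduced} \,\}|$, a subset of $\{1,\ldots,k\}$. By Definition~\ref{d:fullycovering}, $w$ is fully covering iff this cardinality equals $\ell(w) = k$, i.e.\ iff $D_i(\textbf{x}) = s_1\cdots\widehat{s_i}\cdots s_k$ is reduced for every $i$, which is exactly the assertion. I do not expect a real obstacle: the result is essentially bookkeeping built on the strong exchange condition. The one place that needs a moment's care is the $\subseteq$ step --- verifying that a length-$(k-1)$ element below $w$ really arises from deleting exactly one letter of the fixed reduced word, and that distinct deletions give distinct elements --- but both points follow from the distinctness of the roots $\theta_i$ in $\Phi(w)$ together with Lemma~\ref{l:reflectdelete}.
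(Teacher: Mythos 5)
Your proof is correct and follows essentially the same route as the paper: translate ``fully covering'' through Proposition~\ref{p:bruhatcharacterization} into the statement that every one-letter deletion of the fixed reduced word is reduced. The one thing you add beyond the paper's (terser) argument is the explicit verification, via Lemma~\ref{l:reflectdelete} and the distinctness of the roots $\theta_i$ in $\Phi(w)$, that distinct deletions yield distinct elements $ws_{\theta_i}$ --- a detail the paper leaves implicit but which is genuinely needed for the count $|\{u : w \text{ covers } u\}| = \ell(w)$.
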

\begin{proof}
Let $w$ be fully covering so that $w$ covers $\ell(w)$ elements in the Bruhat--Chevalley order.  By Proposition~\ref{p:bruhatcharacterization}, $w$ covers $u$ if and only if the equation $\ell(u) = \ell(w) - 1$ holds and $u$ can be realized as a subexpression of $s_1 \cdots s_k$.  The only subexpressions having $k-1$ generators are those formed by deletion of a generator.  Their length is $\ell(w) - 1$ if and only if each such expression is reduced.
\end{proof}
\begin{ex} \label{ex:fullycovering}
Let $W = W(A_3)$ with $S = \{s,t,u\}$, $m_{s,t} = 3$, $m_{t,u} = 3$, and $m_{s,u} = 2$.  Then the element $w = \phi(t,s,u,t)$ is fully covering since $(s,u,t)$, $(t,u,t)$, $(t,s,t)$, and $(t,s,u)$ are all reduced expressions in type $A_3$.\\ \\
For an example of an element that is not fully covering, consider the Coxeter group $W = W(\widetilde{A_2})$.  This is the Coxeter group with generating set $S = \{s,t,u\}$, $m_{s,t} = 3$, $m_{t,u} = 3$, and $m_{s,u} = 3$.  Thus, $W$ is simply-laced, but it is known (see \cite[Theorem 4.1]{onfc}) that there are infinitely many fully commutative elements in type $\widetilde{A_2}$.  Thus Hagiwara et al.'s characterization (\cite[Theorem 2.9]{hagiwara}) of fully covering elements does not apply.  Let $w = \phi(t,u,s,t,u)$.  Then $w$ is fully commutative since no Coxeter relations can be applied.  However, $(t,u,t,u)$ is a non-reduced subexpression, so $w$ is not fully covering.
\end{ex}
\begin{lem} \label{l:fullycovering}
Let $(W,S)$ be an arbitrary Coxeter group and let $$\mathcal{W}_{\Phi(w)} = (\Phi(w), \text{Inv}(W)_{\Phi(w)}, B_{\Phi(w)})$$ be the standard segment structure of $\Phi(w)$.  Then $w \in W$ is fully covering if and only if every point of the standard segment structure is an endpoint of the standard segment structure $\mathcal{W}_{\Phi(w)}$ of $\Phi(w)$.
\end{lem}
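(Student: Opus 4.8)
The plan is to assemble the proof from Corollary~\ref{c:charcovering} and Theorem~\ref{t:deletiontheorem}, translating each stage into the language of $\mathcal{W}_{\Phi(w)}$. First I would fix a reduced expression $\textbf{x} = (s_1,\ldots,s_k)$ for $w$, so that $k = \ell(w)$, and recall from Corollary~\ref{c:charcovering} that $w$ is fully covering if and only if every single-generator deletion $D_j(\textbf{x})$ is reduced. By Lemma~\ref{l:reflectdelete}, $\phi(D_j(\textbf{x})) = w s_{\theta_j}$, where $\theta_j$ is the $j$-th entry of the root sequence $\overline{\theta}(\textbf{x})$, and by Proposition~\ref{p:basiccoxeter}(8) the entries $\theta_1,\ldots,\theta_k$ of $\overline{\theta}(\textbf{x})$ are precisely the elements of $\Phi(w)$. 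Since $D_j(\textbf{x})$ has $k - 1$ letters, it is reduced if and only if $\ell(w s_{\theta_j}) = \ell(w) - 1$, and therefore $w$ is fully covering if and only if $\ell(w s_\theta) = \ell(w) - 1$ for every $\theta \in \Phi(w)$.

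Next I would invoke Theorem~\ref{t:deletiontheorem}: for each $\theta \in \Phi(w)$ we have $\ell(w s_\theta) = \ell(w) - 1 - 2D_\theta$, where $D_\theta = \sum_{L \in \text{Inv}(W)_{\Phi(w)}} |\theta|_L$ is a sum of nonnegative integers. Hence $\ell(w s_\theta) = \ell(w) - 1$ if and only if $D_\theta = 0$, which in turn holds if and only if $|\theta|_L = 0$ for every line $L$ of $\mathcal{W}_{\Phi(w)}$.

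It then remains to recast the condition ``$|\theta|_L = 0$ for all $L$'' in segment-structure terms. By Definition~\ref{d:endpointdistance}, $|\theta|_L = 0$ automatically when $\theta \notin L$, while for a line $L = [\gamma_1,\gamma_k]$ with $\theta = \gamma_i$ we have $|\theta|_L = \min(i-1,\, k-i)$, which vanishes exactly when $i = 1$ or $i = k$; by the definition of the betweenness relation on $L$ (which records precisely the index order, i.e. $B(\gamma_a,\gamma_b,\gamma_c)$ for $a < b < c$ or $c < b < a$), this is exactly the statement that $\theta$ is an endpoint of $L$ in the sense of Definition~\ref{d:segmentstructure}. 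Finally I would note that in any segment structure a point is an endpoint of $\mathcal{I}$ if and only if it is an endpoint of every line containing it: if $\theta$ is an endpoint of every line through it, then no relation $B(p_1,\theta,p_3)$ can hold at all, since by axiom (B2) such a relation would place $\theta$ as an intermediate point of some line through it, and so $\theta$ is vacuously an endpoint of every line. Chaining these equivalences, $\ell(ws_\theta) = \ell(w) - 1$ for all $\theta \in \Phi(w)$ if and only if every point of $\mathcal{W}_{\Phi(w)}$ is an endpoint of $\mathcal{W}_{\Phi(w)}$, which is the assertion.

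I expect the only genuine subtlety to be in the last paragraph: matching the numeric statistic $\min(i-1,k-i)$ to the combinatorial notion of ``endpoint of a line,'' and justifying the passage from ``endpoint of every line through $\theta$'' to ``endpoint of $\mathcal{W}_{\Phi(w)}$.'' Everything else is a direct chain of results already established in the excerpt, so the write-up should be short.
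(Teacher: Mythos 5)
Your proof is correct and follows essentially the same route as the paper: reduce ``fully covering'' to the condition that every single-generator deletion is reduced via Corollary~\ref{c:charcovering}, apply Theorem~\ref{t:deletiontheorem} to see this is equivalent to $|\theta|_L = 0$ for every $\theta \in \Phi(w)$ and every line $L$, and identify that condition with $\theta$ being an endpoint of $\mathcal{W}_{\Phi(w)}$. The paper's write-up is terser, and your extra care in the final paragraph (matching $\min(i-1,k-i)=0$ to the betweenness relation and handling lines not containing $\theta$) fills in details the paper leaves implicit.
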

\begin{proof}
If $w$ is fully covering then given a reduced expression $\textbf{x}$ for $w$, $D_j(\textbf{x})$ is reduced for every $j$ satisfying $1 \leq j \leq \ell(w)$.  Thus, by Theorem~\ref{t:deletiontheorem}, we have that for every $\mu \in \Phi(w)$, $|\mu|_L = 0$ for every line $L$ containing $\mu$.  Thus $\mu$ is an endpoint of $(\Phi(w),\text{Inv}(W)_{\Phi(w)}, B_{\Phi(w)})$.\\ \\
Conversely, if every point of $\Phi(w)$ is an endpoint, then by Theorem~\ref{t:deletiontheorem}, $\ell(ws_\mu) = \ell(w) - 1$ for every $\mu \in \Phi(w)$.  Thus, the number of roots of $\Phi(w)$ for which this happens is $\ell(w)$.
\end{proof}
\noindent
Recall that by Definition~\ref{d:segmentstructure} that every line of a segment structure contains at least two points.
\begin{prop} \label{p:fullycovering}
Let $w \in W$ and let $$\mathcal{W}_{\Phi(w)} = (\Phi(w), \text{Inv}(W)_{\Phi(w)}, B_{\Phi(w)})$$ be the standard segment structure of $\Phi(w)$.  Then $w \in W$ is fully covering if and only if $|L| = 2$ for every line $L \in \text{Inv}(W)_{\Phi(w)}$.
\end{prop}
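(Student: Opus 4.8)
The plan is to reduce the statement to Lemma~\ref{l:fullycovering}, which already characterizes full covering of $w$ by the condition that every point of $\mathcal{W}_{\Phi(w)}$ is an endpoint of $\mathcal{W}_{\Phi(w)}$. So it suffices to prove that every point of $\mathcal{W}_{\Phi(w)}$ is an endpoint of $\mathcal{W}_{\Phi(w)}$ if and only if $|L| = 2$ for every line $L \in \text{Inv}(W)_{\Phi(w)}$.

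First I would dispatch the easy direction. Suppose $|L| = 2$ for every line $L \in \text{Inv}(W)_{\Phi(w)}$. By property $(B1)$ of Definition~\ref{d:segmentstructure}, any betweenness relation $B_{\Phi(w)}(p_1,p_2,p_3)$ requires three pairwise distinct points lying on a common line; since no line has three points, no betweenness relation holds. Hence, for every $L \in \text{Inv}(W)_{\Phi(w)}$, every point of $L$ is an endpoint of $L$, so every point of $\mathcal{W}_{\Phi(w)}$ is an endpoint of $\mathcal{W}_{\Phi(w)}$. Lemma~\ref{l:fullycovering} then gives that $w$ is fully covering.

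For the converse I would argue by contraposition: assume some line $L \in \text{Inv}(W)_{\Phi(w)}$ satisfies $|L| \geq 3$, and exhibit a point of $\mathcal{W}_{\Phi(w)}$ that is not an endpoint. By Definition~\ref{d:standardphisegment}, $L = \Psi \cap \Phi(w)$ for some inversion set $\Psi$; by Corollary~\ref{c:biconvexstructure} (applied to $w$ and $\Psi$, and relabeling the canonical simple roots if necessary) we may choose canonical simple roots $\gamma,\delta$ of $\Psi$ so that $L = [\gamma_1,\gamma_k]$ in the total order $(\Psi,\leq_{\Psi,\gamma})$ with $k = |L| \geq 3$. Then $\gamma_1,\gamma_2,\gamma_3$ are pairwise distinct elements of $\Phi(w)$ with $\gamma_1 <_{\Psi,\gamma} \gamma_2 <_{\Psi,\gamma} \gamma_3$, so $B_W(\gamma_1,\gamma_2,\gamma_3)$ holds by Definition~\ref{d:betweenness}, and hence $B_{\Phi(w)}(\gamma_1,\gamma_2,\gamma_3)$ holds by Definition~\ref{d:restrictsegment}. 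Thus $\gamma_2$ is an intermediate point of $L$, so $\gamma_2$ is not an endpoint of $\mathcal{W}_{\Phi(w)}$, and Lemma~\ref{l:fullycovering} gives that $w$ is not fully covering.

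The argument is essentially immediate once Lemma~\ref{l:fullycovering} and Corollary~\ref{c:biconvexstructure} are invoked; the only point needing a small amount of care is that a line of $\mathcal{W}_{\Phi(w)}$ with at least three points genuinely contains a betweenness triple, which is precisely what the description of $\Phi(w) \cap \Psi$ as a consecutive interval $[\gamma_1,\gamma_k]$ starting at a canonical simple root supplies. I do not expect any serious obstacle here.
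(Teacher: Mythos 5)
Your proof is correct and follows essentially the same route as the paper: both directions reduce to Lemma~\ref{l:fullycovering} and the endpoint characterization. The only difference is in the converse, where the paper invokes axiom (B3) of Definition~\ref{d:segmentstructure} abstractly to produce an intermediate point on any line with at least three points, while you obtain the same conclusion concretely from the interval description in Corollary~\ref{c:biconvexstructure}; both are valid.
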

\begin{proof}
Let $\lambda$ be a positive root in $\Phi(w)$ and suppose $|L| = 2$ for every line $L \in \text{Inv}(W)_{\Phi(w)}$.  By Definition~\ref{d:segmentstructure}, $\lambda$ is an endpoint of the standard segment structure $\mathcal{W}_{\Phi(w)}$ if $\lambda$ is an endpoint for each line $L \in \text{Inv}(W)_{\Phi(w)}$ that contains $\lambda$.  However, if $\lambda \in L$, then $\lambda$ is an endpoint of $L$ since $|L| = 2$ by Remark~\ref{rem:lines}.\\ \\
Conversely, if $w \in W$ is fully covering, then by Lemma~\ref{l:fullycovering}, every root $\lambda \in \Phi(w)$ is an endpoint of the standard segment structure $\mathcal{W}_{\Phi(w)}$.  However, if there exists a line $L \in \text{Inv}(W)_{\Phi(w)}$ such that $|L| > 2$, then property (B3) of Definition~\ref{d:segmentstructure} implies the existence of an intermediate point $\lambda \in L$.  This contradicts that $\lambda$ is an endpoint of the standard segment structure $\mathcal{W}_{\Phi(w)}$.
\end{proof}
\begin{ex}
Let $W = W(\widetilde{A_2})$ and consider the element $w = \phi(t,u,s,t,u)$ given in Example~\ref{ex:fullycovering}.  By calculation, $$\Phi(w) = \{\alpha_u, \alpha_t + \alpha_u, \alpha_s + \alpha_t + 2\alpha_u, \alpha_s + 2\alpha_t + 2\alpha_u, 2\alpha_s + 2\alpha_t + 3\alpha_u\}.$$  The set $$L = \{\alpha_u, (\alpha_s + \alpha_t) + 2\alpha_u, 2(\alpha_s + \alpha_t) + 3\alpha_u\}$$ is a partial inversion set:  one can check (using Definitions~\ref{d:preorder} and \ref{d:simplelocal}) that the inversion set containing $\alpha_u$ and $\alpha_s + \alpha_t$ has $\gamma = \alpha_u$ and $\delta = \alpha_s + \alpha_t$ as canonical simple roots so that $L$ is a partial inversion set.  Since $|L| = 3$, we have that $w$ is not fully covering by Proposition~\ref{p:fullycovering}.
\end{ex}
\section{Braid moves}
\begin{defn} \label{braidmovedef}
Let $\textbf{a},\textbf{b} \in S^*$ and $s,t \in S$.  Let $\textbf{x} = \textbf{a}(s,t)_{m_{s,t}}\textbf{b}$ and $\textbf{x}' = \textbf{a}(t,s)_{m_{s,t}}\textbf{b}$.  We call the substitution $\textbf{a}(s,t)_{m_{s,t}}\textbf{b} = \textbf{a}(t,s)_{m_{s,t}}\textbf{b}$ an \emph{$m$-braid move} and we say that $\textbf{x}$ and $\textbf{x}'$ are \emph{braid move related}.  We denote the braid move transforming $\textbf{x}$ into $\textbf{x}'$ by $\textbf{x} \rightarrow \textbf{x}'$.  The reflexive and transitive closure of the braid move relation is an equivalence relation called \emph{braid equivalence}.
\end{defn}
\noindent
A well-known result of Matsumoto \cite{matsumoto} and Tits \cite{tits} states that any reduced expression for $w$ can be transformed into any other by applying a finite sequence of braid moves.  Thus, the set $\mathcal{R}(w)$ of reduced expressions for $w$ form a single equivalence class with respect to braid equivalence.
\begin{theorem}[\textbf{Matsumoto, Tits}] \label{t:matsumototits}
Let $w \in W$.  Let $\textbf{x}$ and $\textbf{y}$ be reduced expressions for $w$.  Then $\textbf{x}$ and $\textbf{y}$ are braid equivalent.
\end{theorem}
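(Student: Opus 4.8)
The plan is to prove Theorem~\ref{t:matsumototits} by induction on $\ell(w)$, comparing the first letters of the two reduced expressions and, when they differ, reducing to a dihedral normal form statement. One fact will be used repeatedly: braid equivalence is a congruence for concatenation, since if $\textbf{u}$ and $\textbf{u}'$ are braid move related then so are $\textbf{a}\,\textbf{u}\,\textbf{b}$ and $\textbf{a}\,\textbf{u}'\,\textbf{b}$ for all $\textbf{a},\textbf{b}\in S^{*}$ (the braid move still takes place inside a contiguous block). In particular braid equivalence is preserved by prepending a letter, and a contiguous prefix or suffix of a reduced expression is itself reduced.

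Write $\textbf{x}=(s)\,\textbf{x}''$ and $\textbf{y}=(t)\,\textbf{y}''$ with $s,t\in S$. The base case $\ell(w)=0$ is immediate. If $s=t$, then $\textbf{x}''$ and $\textbf{y}''$ are reduced expressions for the strictly shorter element $sw=\phi(\textbf{x}'')$, hence braid equivalent by induction, and prepending $s$ shows $\textbf{x}$ and $\textbf{y}$ are braid equivalent. Now suppose $s\neq t$ and set $m=m_{s,t}$. The crux is the claim that $m<\infty$ and that there is $z\in W$ with $w=(st)_{m}z=(ts)_{m}z$ and $\ell(w)=m+\ell(z)$. Granting this, fix a reduced expression $\textbf{z}$ for $z$; then $(s,t)_{m}\,\textbf{z}$ and $(t,s)_{m}\,\textbf{z}$ are reduced expressions for $w$ (each has length $m+\ell(z)=\ell(w)$) and a single $m$-braid move carries the first to the second. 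Since $\textbf{x}$ and $(s,t)_{m}\,\textbf{z}$ are reduced expressions for $w$ both beginning with $s$, deleting $s$ yields reduced expressions for the shorter element $sw$, which are braid equivalent by induction; prepending $s$ gives that $\textbf{x}$ is braid equivalent to $(s,t)_{m}\,\textbf{z}$, and symmetrically $\textbf{y}$ is braid equivalent to $(t,s)_{m}\,\textbf{z}$. Chaining these three equivalences completes the inductive step.

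To prove the claim I would invoke the dihedral machinery of the preceding chapters. From $\ell(sw)<\ell(w)$ and $\ell(tw)<\ell(w)$, Proposition~\ref{p:basiccoxeter}(4) applied to $w^{-1}$ gives $\alpha_{s},\alpha_{t}\in\Phi(w^{-1})$. Since $\alpha_{s}$ and $\alpha_{t}$ are simple, Lemmas~\ref{l:simplerootlocal} and \ref{l:dihedralsize} identify $\{\alpha_{s},\alpha_{t}\}$ as the local simple system of the inversion set $\Psi:=\Phi^{+}_{\{\alpha_{s},\alpha_{t}\}}$, with $m=|s_{\alpha_{s}}s_{\alpha_{t}}|$. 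By Lemma~\ref{l:inversionbiconvex}, $\Phi(w^{-1})$ is finite and biconvex, so Corollary~\ref{c:biconvexstructure} describes $\Phi(w^{-1})\cap\Psi$ as an initial interval of $(\Psi,\le_{\Psi,\gamma})$ for one canonical simple root $\gamma$; since this interval is nonempty and contains \emph{both} canonical simple roots, Proposition~\ref{p:localdisjoint} forces $m<\infty$ and $\Psi\subseteq\Phi(w^{-1})$. The factorization is then obtained by peeling simple reflections off the left of $w$ in the alternating pattern $s,t,s,\dots$: using the recurrences of Definition~\ref{d:alphasequence}, the root $\rho_{k+1}$ whose membership in $\Phi(w^{-1})$ (via Proposition~\ref{p:basiccoxeter}(4)) justifies the $(k+1)$-st peel works out to be $\gamma_{k+1}$, which lies in $\Psi\subseteq\Phi(w^{-1})$ for all $k+1\le m$. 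After $m$ peels one is left with an element $z$ of length $\ell(w)-m$, and the accumulated prefix is the longest element $(st)_{m}=(ts)_{m}$ of the dihedral subgroup $\langle s,t\rangle$ (an involution), so $w=(st)_{m}z=(ts)_{m}z$ with one and the same $z$.

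The main obstacle is precisely this last step — upgrading the set inclusion $\Psi\subseteq\Phi(w^{-1})$ to a length-additive factorization of $w$. The alternating-peeling argument is the cleanest self-contained route, but it needs a small bookkeeping lemma matching the root removed at the $k$-th peel with $\gamma_{k}$ (or $\delta_{k}$) in the notation of Definition~\ref{d:alphasequence}; alternatively one can iterate the strong exchange condition (Proposition~\ref{p:basiccoxeter}(5)) or simply cite the standard ``prefix'' property of weak order. Everything else is routine once the congruence property of braid equivalence and the dihedral normal form are in hand. One could instead try to argue entirely with root sequences — by Lemma~\ref{l:subrs} and Lemma~\ref{l:sequencerelation} a braid move reverses exactly a contiguous dihedral block of the root sequence — but proving that any two reduced-expression orderings of $\Phi(w)$ are connected by such reversals is no easier than the argument sketched above.
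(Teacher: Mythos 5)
Your argument is correct, but it is a genuinely different route from the paper's: the thesis does not prove Theorem~\ref{t:matsumototits} at all, it simply cites Bj{\"o}rner--Brenti, Theorem 3.3.1(ii). What you give is the classical Tits induction on $\ell(w)$, and the interesting point is that you carry it out entirely with the machinery already built in Chapters 2--3. The two nontrivial ingredients both check out against the paper's results: first, $\alpha_s,\alpha_t\in\Phi(w^{-1})$ together with Lemma~\ref{l:simplerootlocal}, Corollary~\ref{c:biconvexstructure} and Proposition~\ref{p:localdisjoint} forces $m_{s,t}<\infty$ and $\Phi^+_{\{\alpha_s,\alpha_t\}}\subseteq\Phi(w^{-1})$ (this is essentially the argument inside Lemma~\ref{l:thatlemma}); second, the ``bookkeeping lemma'' you flag for the alternating peeling is exactly Lemma~\ref{l:alphabetasolution} combined with Lemma~\ref{l:sequencepositive}: the root governing the $(k+1)$-st peel is $w_k^{-1}(\alpha_u)=w^{-1}\bigl((st)_k(\alpha_u)\bigr)$ with $(st)_k(\alpha_u)=\gamma_{k+1}$ (or $\delta_{k+1}$), which is a positive root of $\Psi$ for $k+1\le m$ and hence is sent negative by $w^{-1}$, justifying the peel via Proposition~\ref{p:basiccoxeter}(4). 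After $m$ peels one has $w=(st)_m z$ with $\ell(w)=m+\ell(z)$, and $(st)_m=(ts)_m$ since it is the longest element of the finite dihedral group $\langle s,t\rangle$; the rest of your induction (congruence of braid equivalence under prepending a letter, and chaining through $(s,t)_m\,\textbf{z}$ and $(t,s)_m\,\textbf{z}$) is routine and correct. The citation buys brevity; your version buys a self-contained proof showing the thesis's dihedral-subsystem apparatus is strong enough to recover Matsumoto--Tits internally.
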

\begin{proof}
For a proof, see \cite[Theorem 3.3.1(ii)]{bb}.
\end{proof}
\begin{defn}
Let $\overline{\theta} = (\theta_1,\ldots,\theta_k)$ be a sequence of roots.  Then we denote the sequence $(\theta_k,\ldots,\theta_1)$ by $\overline{\theta}^R$ and call $\overline{\theta}^R$ the \emph{reversal of $\overline{\theta}$}.
\end{defn}
\noindent
Recall that if $\textbf{x}$ factors as $\textbf{x} = \textbf{x}_1 \cdots \textbf{x}_n$ then we call $\overline{\theta}(\textbf{x}) = \overline{\theta_1} \cdots \overline{\theta}_n$ the decomposition of $\overline{\theta}(\textbf{x})$ respecting $\textbf{x}_1 \cdots \textbf{x}_n$ if $\ell(\textbf{x}_i) = \ell(\overline{\theta_i})$ for each $i$ satisfying $1 \leq i \leq n$.
\begin{lem} \label{l:revroots}
Let $m = m_{s,t}$ be finite and let $\textbf{x} = \textbf{a}(s,t)_m\textbf{b}$ be a reduced expression for some $w \in W$.  Let $\overline{\theta}(\textbf{x}) = \overline{\theta_1}\,\overline{\theta_2}\,\overline{\theta_3}$ be the decomposition of $\overline{\theta}(\textbf{x})$ respecting $\textbf{a}(s,t)_m\textbf{b}$.  Let $\textbf{x}' = \textbf{a}(t,s)_m\textbf{b}$.  Then $\overline{\theta}(\textbf{x}') = \overline{\theta_1}\,\overline{\theta_2}^R\,\overline{\theta_3}$ is the decomposition of $\overline{\theta}(\textbf{x}')$ respecting $\textbf{a}(t,s)_m\textbf{b}$.
\end{lem}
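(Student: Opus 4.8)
The plan is to reduce the claim entirely to the behavior of the ``middle block'' $\overline{\theta_2}$ under the braid move, since $\overline{\theta_1}$ and $\overline{\theta_3}$ are governed by Lemma~\ref{l:subrs} (substituting equivalent expressions changes only the corresponding portion of the root sequence). First I would apply Lemma~\ref{l:subrs} with $\textbf{a}$, $\textbf{b}=(s,t)_m$, $\textbf{b}'=(t,s)_m$, and $\textbf{c}=\textbf{b}$ (the suffix): since $\phi((s,t)_m)=\phi((t,s)_m)$ by the braid relation, the lemma gives that $\overline{\theta}(\textbf{x}')$ decomposes as $\overline{\theta_1}\,\overline{\theta_2}'\,\overline{\theta_3}$ respecting $\textbf{a}(t,s)_m\textbf{b}$, where $\overline{\theta_2}' = v[\overline{\theta}((t,s)_m)]$ and $\overline{\theta_2} = v[\overline{\theta}((s,t)_m)]$ for $v^{-1} = \phi(\textbf{b})$. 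So it remains to show $\overline{\theta}((t,s)_m) = \overline{\theta}((s,t)_m)^R$, because then applying $v$ entrywise and noting that reversal commutes with the entrywise $W$-action yields $\overline{\theta_2}' = \overline{\theta_2}^R$.

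The core computation is thus purely about the rank-two (dihedral) word $(s,t)_m$ inside the parabolic $\langle s,t\rangle$. The root sequence of $(s,t)_m$ is a reduced expression for the longest element $w_0$ of that dihedral group, and its entries are exactly the $m$ positive roots of the dihedral root system. I would identify $\langle s,t\rangle$ with the dihedral subsystem having $\gamma=\alpha_s$, $\delta=\alpha_t$ as canonical simple roots (using Lemma~\ref{l:simplerootlocal} to see that $\alpha_s,\alpha_t$ are indeed the local simple roots), and then compute $\overline{\theta}((s,t)_m)$ directly from the definition $\theta_k = s_m\cdots s_{k+1}(\alpha_{s_k})$. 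Each such entry is an alternating product of $s_\gamma$ and $s_\delta$ applied to $\gamma$ or $\delta$, so Lemma~\ref{l:alphabetasolution} (or an induction using the recurrences~\eqref{e:recurrence} and \eqref{e:backrecurrence}) identifies the $k$-th entry of $\overline{\theta}((s,t)_m)$ as $\gamma_{m+1-k}$, i.e. reading the $\overline{\gamma}$-sequence from $\gamma_m$ down to $\gamma_1$. Performing the same computation for $(t,s)_m$ with the roles of $\gamma$ and $\delta$ interchanged, and then invoking Lemma~\ref{l:sequencerelation} ($\gamma_i = \delta_{m+1-i}$ and $\delta_i=\gamma_{m+1-i}$, valid since $m$ is finite) to translate between the $\delta$-indexed and $\gamma$-indexed descriptions, gives that the $k$-th entry of $\overline{\theta}((t,s)_m)$ equals $\gamma_k$. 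Comparing the two lists, $\overline{\theta}((t,s)_m) = (\gamma_1,\ldots,\gamma_m)$ is precisely the reverse of $\overline{\theta}((s,t)_m)=(\gamma_m,\ldots,\gamma_1)$, which is the desired identity.

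Finally I would assemble the pieces: $\overline{\theta}(\textbf{x}') = \overline{\theta_1}\,\overline{\theta_2}'\,\overline{\theta_3} = \overline{\theta_1}\,\overline{\theta_2}^R\,\overline{\theta_3}$, and since the length conditions $\ell(\overline{\theta_i}) = \ell(\textbf{a}_i)$ carry over from the decomposition of $\overline{\theta}(\textbf{x})$ (lengths of $\textbf{a}$, $(s,t)_m$, $\textbf{b}$ are unchanged by the braid move, and reversal preserves length), this is indeed the decomposition of $\overline{\theta}(\textbf{x}')$ respecting $\textbf{a}(t,s)_m\textbf{b}$, as required. I expect the main obstacle to be the bookkeeping in the dihedral computation — keeping track of which of $\gamma,\delta$ plays which role in $(s,t)_m$ versus $(t,s)_m$ and correctly aligning the indices via Lemma~\ref{l:sequencerelation}; once the index convention is pinned down carefully, the rest is a routine application of Lemmas~\ref{l:subrs}, \ref{l:alphabetasolution}, and \ref{l:sequencerelation}. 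One should also double-check the edge cases where $m_{s,t}=2$ (so $\overline{\theta_2}$ has length $2$ and reversal is a genuine swap) and confirm that the entrywise $W$-action indeed commutes with reversal, which is immediate from the definition $w[(\theta_1,\ldots,\theta_k)] = (w(\theta_1),\ldots,w(\theta_k))$.
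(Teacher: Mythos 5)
Your proposal is correct and follows essentially the same route as the paper: reduce to the middle block via Lemma~\ref{l:subrs} and Lemma~\ref{l:basicrs}, then identify $\overline{\theta}((s,t)_m)$ and $\overline{\theta}((t,s)_m)$ with the first $m$ entries of the two local root sequences and use Lemma~\ref{l:sequencerelation} to see that they are reverses of one another. The one caveat is exactly the index bookkeeping you flagged: whether the $k$-th entry of $\overline{\theta}((s,t)_m)$ is $\gamma_{m+1-k}$ or $\gamma_k$ (with $\gamma = \alpha_s$) depends on the parity of $m$, but in either case the two root sequences are reverses of each other, so the conclusion is unaffected.
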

\begin{proof}
Let $v^{-1} = \phi(\textbf{b})$ and let $\overline{\alpha} = \overline{\theta}((s,t)_m) = (\theta_1,\ldots,\theta_m)$ be the root sequence for $(s,t)_m$.  The first $m$ entries of the associated $\overline{\alpha_s}$- and $\overline{\alpha_t}$-sequences give the root sequences for $(s,t)_m$ and $(t,s)_m$ respectively, so Lemma~\ref{l:sequencerelation} implies that $\overline{\beta} = \overline{\theta}((t,s)_m) = (\theta_m,\ldots,\theta_1) = \overline{\alpha}^R$ is the root sequence for $(t,s)_m$.  Since $\phi((s,t)_m) = \phi((t,s)_m)$,  Lemma~\ref{l:subrs} implies that $\overline{\theta}(\textbf{x}') = \overline{\theta_1}\, \overline{\theta_2}'\, \overline{\theta_3}$, where $\overline{\theta_2}' = v[\overline{\theta}((t,s)_m)] = v[\overline{\beta}]$.  By Lemma~\ref{l:basicrs}, $\overline{\theta_2} = v[\overline{\theta}((s,t)_m) = v[\overline{\alpha}]$.  Thus $\overline{\theta_2}' = \overline{\theta_2}^R$.
\end{proof}
\begin{lem} \label{l:moveinversion}
Let $m = m_{s,t}$ be finite and let $\textbf{x} = \textbf{a}(s,t)_m\textbf{b}$ be a reduced expression for some $w \in W$.  Let $\overline{\theta}(\textbf{x}) = \overline{\theta_1}\,\overline{\theta_2}\,\overline{\theta_3}$ be the decomposition of $\overline{\theta}(\textbf{x})$ respecting $\textbf{x} = \textbf{a}(s,t)_m\textbf{b}$.  If $\Psi$ is the set of roots in the sequence $\overline{\theta_2}$, then $\Psi$ is an inversion $m$-set.
\end{lem}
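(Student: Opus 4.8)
The plan is to recognize the middle segment $\overline{\theta_2}$ as the image, under a fixed element of $W$, of the root sequence of the standard reduced word $(s,t)_m$ for the longest element of the dihedral parabolic $\langle s,t\rangle$; the set of entries of that root sequence is exactly $\Phi^+_{\{\alpha_s,\alpha_t\}}$, and then we transport this inversion set by applying Lemma~\ref{l:preservesignspan}.

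First I would pin down the pieces. Since $\textbf{x} = \textbf{a}(s,t)_m\textbf{b}$ is reduced, the factor $(s,t)_m$ is itself reduced (an infix of a reduced word is reduced, e.g. by the deletion condition together with subadditivity of $\ell$; it is also the well-known reduced word for the longest element of $\langle s,t\rangle$), so by Proposition~\ref{p:basiccoxeter}(8) the entries of $\overline{\theta}((s,t)_m)$ are precisely the $m$ distinct positive roots in $\Phi(\phi((s,t)_m))$. Each such entry has the form $u(\alpha_s)$ or $u(\alpha_t)$ for some $u\in\langle s,t\rangle$, hence lies in $\mathrm{span}(\{\alpha_s,\alpha_t\})$ by the reflection formula~(\ref{e:reflection}); thus all $m$ entries lie in $\mathrm{span}(\{\alpha_s,\alpha_t\})\cap\Phi^+$. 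On the other hand, $\alpha_s$ and $\alpha_t$ are simple roots of $(W,S)$ lying in $\Phi^+_{\{\alpha_s,\alpha_t\}}$, so by Lemma~\ref{l:simplerootlocal} they lie in $\Delta_{\{\alpha_s,\alpha_t\}}$, and since $|\Delta_{\{\alpha_s,\alpha_t\}}|=2$ by Lemma~\ref{l:dihedralsize} they are the canonical simple roots of $(W,S)_{\{\alpha_s,\alpha_t\}}$; as $|s_{\alpha_s}s_{\alpha_t}| = m$ is finite, Proposition~\ref{p:localdisjoint} gives $|\Phi^+_{\{\alpha_s,\alpha_t\}}| = m$, while Lemma~\ref{l:localsystemform} identifies $\Phi^+_{\{\alpha_s,\alpha_t\}} = \mathrm{span}(\{\alpha_s,\alpha_t\})\cap\Phi^+$. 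Comparing cardinalities, the entries of $\overline{\theta}((s,t)_m)$ are exactly the elements of $\Phi^+_{\{\alpha_s,\alpha_t\}}$.

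Next I would identify $\overline{\theta_2}$. Applying Lemma~\ref{l:basicrs} to $\textbf{x} = (\textbf{a}(s,t)_m)\textbf{b}$ and then to $\textbf{a}(s,t)_m = \textbf{a}\cdot(s,t)_m$, together with the uniqueness built into Definition~\ref{d:factordecomp} (via Lemma~\ref{l:likecancelation}), yields $\overline{\theta_2} = v[\overline{\theta}((s,t)_m)]$, where $v^{-1} = \phi(\textbf{b})$; hence $\Psi = v(\Phi^+_{\{\alpha_s,\alpha_t\}})$. Because $\textbf{x}$ is reduced, every entry of $\overline{\theta}(\textbf{x})$ is positive by Proposition~\ref{p:rsdeletion}, so in particular every entry of $\overline{\theta_2}$ is positive; thus $v$ maps each root of $\Phi^+_{\{\alpha_s,\alpha_t\}}$ to a positive root, and in particular $v(\alpha_s), v(\alpha_t)\in\Phi^+$. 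Now Lemma~\ref{l:preservesignspan}, applied with canonical simple roots $\gamma=\alpha_s$, $\delta=\alpha_t$ and the element $v$, gives $v(\Phi^+_{\{\alpha_s,\alpha_t\}}) = \Phi^+_{\{v(\alpha_s),v(\alpha_t)\}}$, i.e. $\Psi = \Phi^+_{\{v(\alpha_s),v(\alpha_t)\}}$. Since $v$ is injective and $\alpha_s\neq\alpha_t$, the roots $v(\alpha_s)$ and $v(\alpha_t)$ are distinct positive roots, hence not scalar multiples of one another, so $\Psi$ is an inversion set by Definition~\ref{d:inversionmset}; and $|\Psi| = |\Phi^+_{\{\alpha_s,\alpha_t\}}| = m$, so $\Psi$ is an inversion $m$-set.

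I expect the only genuine work to lie in the first paragraph's bookkeeping: verifying that $\alpha_s,\alpha_t$ are the canonical simple roots of the dihedral subsystem they generate and that the entries of $\overline{\theta}((s,t)_m)$ exhaust $\Phi^+_{\{\alpha_s,\alpha_t\}}$. Once that is in place, the remainder is a direct application of Lemma~\ref{l:preservesignspan}, whose hypothesis $v(\alpha_s), v(\alpha_t)\in\Phi^+$ is simply the positivity of the root sequence of a reduced expression, so I anticipate no real obstacle.
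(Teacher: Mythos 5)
Your proposal is correct and follows essentially the same route as the paper: identify $\overline{\theta_2}$ as $v[\overline{\theta}((s,t)_m)]$ with $v^{-1}=\phi(\textbf{b})$, note that the entries of $\overline{\theta}((s,t)_m)$ form the inversion $m$-set $\Phi^+_{\{\alpha_s,\alpha_t\}}$, and transport it via Lemma~\ref{l:preservesignspan} using the positivity guaranteed by reducedness. The paper's proof is just a terser version of yours, leaving implicit the bookkeeping you spell out in your first paragraph.
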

\begin{proof}
Let $v^{-1} = \phi(\textbf{b})$.  If $\textbf{u} = (s,t)_m$, then the roots in $\overline{\theta}(\textbf{u})$ form an inversion $m$-set $\Psi'$.  Let $\Psi = v(\Psi')$ be the set of roots in $\overline{\theta_2}$.  Then, since all roots in $v(\Psi')$ are in $\Phi(w)$ (and thus positive), Lemma~\ref{l:preservesignspan} implies that $\Psi$ is an inversion $m$-set.
\end{proof}
\begin{defn}
Let $\textbf{x} = \textbf{a}(s,t)_m\textbf{b}$ and $\textbf{x}' = \textbf{a}(t,s)_m\textbf{b}$ be reduced expressions for $w$ that are braid move related.  Let $\overline{\theta_1}\, \overline{\theta_2}\, \overline{\theta_3}$ be the decomposition of $\overline{\theta}(\textbf{x})$ respecting $\textbf{x} = \textbf{a}(s,t)_m\textbf{b}$.  We call the set of roots $\Psi$ in the sequence $\overline{\theta_2}$, the \emph{inversion set of the move $\textbf{x} \rightarrow \textbf{x}'$}.
\end{defn}
\begin{ex}
Let $W = W(A_3)$ with $S = \{s,t,u\}$, and let $\textbf{x} = (u, s, t, s, u)$.  Let $\textbf{a} = \textbf{b} = (u)$ so that $\textbf{x} = \textbf{a} (s,t,s) \textbf{b}$.  Then $\overline{\theta_1} = (\alpha_s + \alpha_t)$, $$\overline{\theta_2} = (\alpha_t + \alpha_u, \alpha_s + \alpha_t + \alpha_u, \alpha_s),$$ and $\overline{\theta_3} = (\alpha_u)$ determine the decomposition of $\overline{\theta}(\textbf{z})$ respecting $\textbf{a} (s,t,s) \textbf{b}$. Then $$\Psi = \{\alpha_t + \alpha_u, \alpha_s + \alpha_t + \alpha_u, \alpha_s\}$$ is the inversion set of the move $(u,s,t,s,u) \rightarrow (u,t,s,t,u)$.
\end{ex}
\begin{lem} \label{l:rootform}
Let $\textbf{x}$ be a reduced expression for $w$ of the form $$\textbf{x} = \textbf{a}\textbf{b}\textbf{c},$$
and let $\overline{\theta}(\textbf{x}) = \overline{\theta_1}\,\overline{\theta_2}\,\overline{\theta_3}$ be the decomposition of $\overline{\theta}(\textbf{x})$ respecting $\textbf{a}\textbf{b}\textbf{c}$.  Suppose that the roots of $\overline{\theta_2}$ form an inversion $m$-set.  Then, there exist distinct $s,t \in S$ such that $\textbf{b} = (s,t)_m$ with $m_{s,t} = m$.
\end{lem}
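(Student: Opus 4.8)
The plan is to show that $\phi(\textbf{b})$ is the longest element of a rank-two standard parabolic subgroup, and then to recover the word $\textbf{b}$ from its root sequence via Proposition~\ref{p:basiccoxeter}(9). To set up, note that since $\textbf{x} = \textbf{a}\textbf{b}\textbf{c}$ is reduced, the subword $\textbf{b}$ is reduced; write $y = \phi(\textbf{b})$ and $v^{-1} = \phi(\textbf{c})$. Applying Lemma~\ref{l:basicrs} twice to the factorization and matching block lengths with Lemma~\ref{l:likecancelation} gives $\overline{\theta_2} = v[\overline{\theta}(\textbf{b})]$, and since $\textbf{b}$ is reduced, Proposition~\ref{p:basiccoxeter}(8) identifies the entries of $\overline{\theta}(\textbf{b})$ with $\Phi(y)$. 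Hence $\Phi(y) = v^{-1}(\Psi)$, where $\Psi$ is the inversion $m$-set formed by the entries of $\overline{\theta_2}$; in particular $\ell(\textbf{b}) = m$.

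The first real step is to show $\Phi(y)$ is itself an inversion $m$-set. Write $\Psi = \Phi^+_{\{\alpha,\beta\}}$ and put $\alpha' = v^{-1}(\alpha)$, $\beta' = v^{-1}(\beta)$, which are distinct, non-proportional roots. By Lemma~\ref{l:preservespan}, $v^{-1}(\Phi_{\{\alpha,\beta\}}) = \Phi_{\{\alpha',\beta'\}}$, so $\Phi(y) = v^{-1}(\Psi) \subseteq \Phi_{\{\alpha',\beta'\}} \cap \Phi^+ = \Phi^+_{\{\alpha',\beta'\}}$ by Lemma~\ref{l:localsystemform}. Since $v^{-1}$ bijects $\Phi_{\{\alpha,\beta\}}$ onto $\Phi_{\{\alpha',\beta'\}}$ and each of these splits into equal-size positive and negative parts (Lemma~\ref{l:basiclocal}(2)), we get $|\Phi^+_{\{\alpha',\beta'\}}| = |\Phi^+_{\{\alpha,\beta\}}| = |\Psi| = m = |\Phi(y)|$, so $\Phi(y) = \Phi^+_{\{\alpha',\beta'\}}$, an inversion $m$-set. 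Let $\gamma,\delta$ be its canonical simple roots; since $\Phi(y)$ is finite, Proposition~\ref{p:localdisjoint} forces $m = |s_\gamma s_\delta| < \infty$. Next, by Corollary~\ref{c:labelcharacterize} the standard encoding $T_{\textbf{b}}$ is a sequential standard labeling of $\Phi^+$ with support $\Phi(y) = \Phi^+_{\{\alpha',\beta'\}}$; as $\gamma$ and $\delta$ both lie in this support, Lemma~\ref{l:wlabel}(3) shows $T_{\textbf{b}}$ is strictly monotone along the $\gamma$-sequence, and since $\overline{\theta}(\textbf{b})$ lists $\Phi(y)$ in order of increasing $T_{\textbf{b}}$-value, it is either $(\gamma_1,\ldots,\gamma_m)$ or $(\gamma_m,\ldots,\gamma_1) = (\delta_1,\ldots,\delta_m)$ (the latter equality by Lemma~\ref{l:sequencerelation}). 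Swapping the names of $\gamma$ and $\delta$ if needed, assume $\overline{\theta}(\textbf{b}) = (\gamma_1,\ldots,\gamma_m)$.

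Finally, write $\textbf{b} = (b_1,\ldots,b_m)$; by definition of the root sequence, $\delta = \gamma_m = \alpha_{b_m}$, a simple root. Deleting $b_m$ and applying Lemma~\ref{l:gendeletion} gives $\overline{\theta}(b_1,\ldots,b_{m-1}) = s_\delta[(\gamma_1,\ldots,\gamma_{m-1})]$, and combining Lemma~\ref{l:localreflect} (with $i=m$), Lemma~\ref{l:sequencerelation}, and the periodicity of Corollary~\ref{c:periodic} yields $s_\delta(\gamma_j) = s_{\gamma_m}(\gamma_j) = \gamma_{m-j}$ for $1 \le j \le m-1$; hence this root sequence is $(\gamma_{m-1},\ldots,\gamma_1)$, whose last entry equals $\alpha_{b_{m-1}}$, so $\gamma = \gamma_1 = \alpha_{b_{m-1}}$ is also simple. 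Setting $s = b_{m-1}$ and $t = b_m$, we get $s \neq t$ (since $\gamma \neq \delta$), $\Phi(y) = \Phi^+_{\{\alpha_s,\alpha_t\}}$, and $m = |s_\gamma s_\delta| = |st| = m_{s,t}$. As noted in the proof of Lemma~\ref{l:revroots}, the root sequence of $(s,t)_{m_{s,t}}$ is the list of the first $m_{s,t}$ entries of the $\overline{\alpha_s}$-sequence of $(W,S)_{\{\alpha_s,\alpha_t\}}$, namely $(\gamma_1,\ldots,\gamma_m)$; by Lemma~\ref{l:sequencepositive} and Proposition~\ref{p:rsdeletion} this expression is reduced, and as it has the same root sequence as $\textbf{b}$ it represents the same element by Proposition~\ref{p:basiccoxeter}(7), so Proposition~\ref{p:basiccoxeter}(9) gives $\textbf{b} = (s,t)_{m_{s,t}} = (s,t)_m$.

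I expect the main obstacle to be the full-inversion-set step in the second paragraph: it is crucial to upgrade ``$\Phi(y)$ is contained in an inversion set'' to ``$\Phi(y)$ equals a full inversion $m$-set'', since this is precisely what makes Lemma~\ref{l:wlabel}(3) applicable and forces the root sequence of $\textbf{b}$ to run from one canonical simple root to the other. The deletion computation in the last paragraph is routine once the index bookkeeping among Lemmas~\ref{l:localreflect}, \ref{l:sequencerelation} and \ref{c:periodic} is handled carefully.
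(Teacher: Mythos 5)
Your argument is correct, but its second half takes a genuinely different route from the paper's, so a comparison is worthwhile. Both proofs open the same way: transport the inversion $m$-set back by $v^{-1}=\phi(\textbf{c})$ to see that the entries of $\overline{\theta}(\textbf{b})$ themselves form an inversion $m$-set equal to $\Phi(\phi(\textbf{b}))$ (the paper does this via positivity and Lemma~\ref{l:preservesignspan}; you do it by a cardinality count through Lemmas~\ref{l:preservespan} and~\ref{l:basiclocal}(2) --- both work). From there the paper argues directly: the last two letters of $\textbf{b}$ contribute roots of the form $\alpha_s$ and $s(\alpha_t)$ to $\overline{\theta}(\textbf{b})$, so by Lemmas~\ref{l:simplerootlocal} and~\ref{l:closure} the inversion set is $\Phi^+_{\{\alpha_s,\alpha_t\}}$, and a coefficient argument (any third generator $u$ occurring in $\textbf{b}$ would place a root with nonzero $\alpha_u$-coordinate inside a two-dimensional span, contradicting Lemma~\ref{l:inversion2d}) forces $\textbf{b}$ to be an alternating word in $s$ and $t$. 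You instead deploy the labeling machinery: Lemma~\ref{l:wlabel}(3) pins $\overline{\theta}(\textbf{b})$ down to a monotone traversal $(\gamma_1,\ldots,\gamma_m)$ of a local root sequence, the deletion computation via Lemmas~\ref{l:gendeletion}, \ref{l:localreflect}, \ref{l:sequencerelation} and Corollary~\ref{c:periodic} identifies both endpoints $\gamma_1,\gamma_m$ as the simple roots of the last two letters, and Proposition~\ref{p:basiccoxeter}(9) finishes by matching root sequences. Your route is heavier but extracts strictly more information (the exact order of the roots, not merely the set of letters); the paper's is shorter and more elementary. One small slip to repair: the alternating word whose root sequence is $(\gamma_1,\ldots,\gamma_m)$ with $\gamma=\alpha_{b_{m-1}}$ is the one \emph{ending} in $(\ldots,b_{m-1},b_m)$, which equals $(s,t)_m$ with your naming $s=b_{m-1}$, $t=b_m$ only when $m$ is even, and equals $(t,s)_m$ when $m$ is odd (the parenthetical in the paper's proof of Lemma~\ref{l:revroots} that you cite carries the same off-by-parity ambiguity, harmless there because only the reversal statement is used). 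Since the lemma's conclusion is symmetric in $s$ and $t$, this costs nothing beyond swapping names, but as written your final sentence asserts the identity $\textbf{b}=(s,t)_m$ with $s=b_{m-1}$ first, which fails for odd $m$.
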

\begin{proof}
Let $v^{-1} = \phi(\textbf{c})$.  Let $s$ be the last entry of $\textbf{b}$ and $t$ be the $(m-1)$-st entry of $\textbf{b}$.  Then $\alpha_s$ and $s(\alpha_t)$ are in the root sequence $\overline{\theta}(\textbf{b}) = (\theta_1',\ldots,\theta_m')$.  Note that $s(\alpha_t)$ is a positive root in the local Coxeter system $(W,S)_{\{\alpha_s,\alpha_t\}}$.  Since $\alpha_s$, $\alpha_t$ are simple roots, $\Delta_{\{\alpha_s,\alpha_t\}} = \{\alpha_s,\alpha_t\}$ by Lemma~\ref{l:simplerootlocal}.  By hypothesis and Lemma~\ref{l:subrs}, $\{v(\theta_1'),\ldots,v(\theta_m')\}$ is an inversion $m$-set.  Thus there exist $\alpha,\beta \in \Phi$ such that $$\Phi^+_{\{\alpha,\beta\}} = \{v(\theta_1'),\ldots,v(\theta_m')\}$$ with canonical simple roots $\gamma = v(\theta_i')$ and $\delta = v(\theta_j')$, where $1 \leq i,j \leq m$.  Since $\textbf{b}$ is a reduced expression, $v^{-1}(\gamma)$ and $v^{-1}(\delta)$ are positive roots.  By Lemma~\ref{l:preservesignspan} and Definition~\ref{d:inversionmset}, $\{\theta_1',\ldots,\theta_m'\}$ is an inversion $m$-set.  In particular, we must have $\{\theta_1',\ldots,\theta_m'\} = \Phi^+_{\{\alpha_s,\alpha_t\}}$ by Lemma~\ref{l:closure}.\\ \\
Suppose $u \in S$ is an entry occurring in $\textbf{b}$ and suppose $u \neq s,t$ .  Then, for some $\theta_k'$ in the root sequence $\overline{\theta}(\textbf{b})$, the $\alpha_u$ coefficient is nonzero by the reflection formula (\ref{e:reflection}). This contradicts the fact that $$\text{span}(\{\theta_1',\ldots,\theta_m'\}) = \text{span}(\{\alpha_s,\alpha_t\})$$ is a two-dimensional subspace of $V$ by Lemma~\ref{l:inversion2d}.  Thus, every entry in $\textbf{b}$ is either $s$ or $t$, and since $\textbf{b}$ is reduced we have $\textbf{b} = (s,t)_m$ or $\textbf{b} = (t,s)_m$.\\ \\
Since $\left|\Phi^+_{\{\alpha_s, \alpha_t\}}\right| = \left|st\right| = m_{s,t}$, we have $m = m_{s,t}$.
\end{proof}
\noindent
Lemma~\ref{l:revroots} asserts that the effect of applying a braid move upon the root sequence is that the corresponding roots are reversed in the root sequence of the resulting expression.  Lemma~\ref{l:moveinversion} asserts that associated to any braid move is an inversion set.  Thus the roots of an inversion set are the roots being reversed upon applying a braid move.  Lemma~\ref{l:rootform} asserts that when roots of an inversion set appear consecutively in the root sequence of an expression, then there exists a braid in the corresponding portion of the expression.  In light of this, the next definition is natural in the context of braid moves.
\begin{defn}  \label{d:contracted}
Let $\textbf{x}$ be a reduced word for some $w \in W$ with root sequence $\overline{\theta}(\textbf{x})$.  We say that the inversion $k$-set $\Psi$ is \emph{contracted} in $\textbf{x}$ if the roots in $\Psi$ occur as a consecutive subsequence of $\overline{\theta}(\textbf{x})$.  If $\Psi$ is an inversion $k$-set and there exists a reduced expression $\textbf{y}$ for $w$ such that $\Psi$ is contracted in $\textbf{y}$ then we say that $\Psi$ is \emph{contractible in $w$}.  Otherwise we say that $\Psi$ is non-contractible.
\end{defn}
\begin{rem}
Though we do not pursue this, Lemmas~\ref{l:revroots}, \ref{l:moveinversion}, and \ref{l:rootform} combine in a natural way with the reduced expression correspondences of Chapter $4$.  In particular, when an inversion set is contracted, the associated labelings of $\Phi^+$ or $\Phi(w)$ given by Theorem~\ref{t:maintheorem} will be consecutive and applying a braid move reverses the labelings.
\end{rem}
\noindent
Recall that two ``braid move related'' expressions differ by a single braid move.
\begin{lem}
Let $\textbf{x}$ and $\textbf{x}'$ be reduced expressions for $w$ that are braid move related.  Let $\Psi$ be the inversion set of the braid move $\textbf{x} \rightarrow \textbf{x}'$.  If $\Psi' \neq \Psi$ is an inversion set, then the roots of $\Psi' \cap \Phi(w)$ appear in the same order in $\overline{\theta}(\textbf{x}')$ as they do in $\overline{\theta}(\textbf{x})$.
\end{lem}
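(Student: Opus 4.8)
The plan is to derive this as a bookkeeping consequence of how a single braid move acts on a root sequence, combined with the fact that two distinct inversion sets meet in at most one root.

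First I would fix notation in line with Definition~\ref{braidmovedef}: write $\textbf{x} = \textbf{a}(s,t)_m\textbf{b}$ and $\textbf{x}' = \textbf{a}(t,s)_m\textbf{b}$ with $m = m_{s,t}$, and let $\overline{\theta}(\textbf{x}) = \overline{\theta_1}\,\overline{\theta_2}\,\overline{\theta_3}$ be the decomposition of $\overline{\theta}(\textbf{x})$ respecting $\textbf{a}(s,t)_m\textbf{b}$. By Lemma~\ref{l:revroots}, $\overline{\theta}(\textbf{x}') = \overline{\theta_1}\,\overline{\theta_2}^R\,\overline{\theta_3}$ is the decomposition of $\overline{\theta}(\textbf{x}')$ respecting $\textbf{a}(t,s)_m\textbf{b}$. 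So the two root sequences agree on the initial block of length $\ell(\textbf{a})$ and on the final block of length $\ell(\textbf{b})$, and differ only by reversing the middle block of length $m$; in particular the set of indices occupied by the middle block is the same in both sequences. By the definition of the inversion set of the move (and Lemma~\ref{l:moveinversion}), the set of roots occurring in $\overline{\theta_2}$ is exactly $\Psi$, and since $\textbf{x}$ is reduced these entries are pairwise distinct, the root sequence of a reduced expression being precisely $\Phi(w)$ by Proposition~\ref{p:basiccoxeter}(8).

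Next I would take two distinct roots $\alpha,\beta \in \Psi' \cap \Phi(w)$ and show that $\alpha$ precedes $\beta$ in $\overline{\theta}(\textbf{x})$ if and only if $\alpha$ precedes $\beta$ in $\overline{\theta}(\textbf{x}')$; since both are total orders on the finite set $\Psi' \cap \Phi(w)$, this is all that is needed. Each of $\alpha,\beta$ occurs exactly once in each of the two root sequences, again by Proposition~\ref{p:basiccoxeter}(8). The key observation is that $\alpha$ and $\beta$ cannot both lie in the middle block $\overline{\theta_2}$: if they did, they would both lie in $\Psi \cap \Psi'$, forcing $|\Psi \cap \Psi'| \geq 2$ and contradicting Lemma~\ref{l:oneroot}, since $\Psi \neq \Psi'$ by hypothesis. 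I would then split into cases. If neither $\alpha$ nor $\beta$ lies in the middle block, their positions and the entries at those positions are literally unchanged, so their relative order is unchanged. If exactly one of them, say $\alpha$, lies in the middle block, then in both $\overline{\theta}(\textbf{x})$ and $\overline{\theta}(\textbf{x}')$ the root $\alpha$ occupies some position inside the (common) middle index set while $\beta$ occupies a single fixed position lying either strictly before or strictly after that whole index set; hence the relative order of $\alpha$ and $\beta$ is the same in both sequences. These cases are exhaustive, which gives the result.

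I do not expect a genuine obstacle: the statement reduces to the fact that reversing a contiguous block of a sequence cannot change the relative order of two entries unless both entries lie in that block, and Lemma~\ref{l:oneroot} rules that out for a second inversion set. The only points requiring care are making precise the phrase ``appear in the same order'' (I would interpret it as equality of the induced linear orders on $\Psi' \cap \Phi(w)$) and noting at the outset that the relevant roots are honest elements of $\Phi(w)$ occurring exactly once in each root sequence, so that ``position in the root sequence'' is well defined.
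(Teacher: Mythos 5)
Your proposal is correct and follows essentially the same route as the paper: the paper's proof is the contrapositive of your argument, observing via Lemma~\ref{l:revroots} that a change in relative order forces both roots into $\Psi$, which Lemma~\ref{l:oneroot} forbids since $\Psi' \neq \Psi$. Your version merely spells out the block-reversal bookkeeping and the case analysis more explicitly.
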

\begin{proof}
If $|\Psi' \cap \Phi(w)| < 2$, the statement is vacuously true.  Let $$\mu, \nu \in \Psi' \cap \Phi(w)$$ and suppose that $\mu$ and $\nu$ appear in a different order in $\overline{\theta}(\textbf{x}')$ than they do in $\overline{\theta}(\textbf{x})$.  By Lemma~\ref{l:revroots}, $\mu$ and $\nu$ can only appear in a different order in $\overline{\theta}(\textbf{x}')$ as they do in $\overline{\theta}(\textbf{x})$ if $\mu,\nu \in \Psi$.  However, by Lemma~\ref{l:oneroot}, this would imply that $\Psi' = \Psi$.
\end{proof}
\begin{lem} \label{l:detect2set}
Let $\textbf{x}$ be a reduced expression for some $w \in W$.  Let $$\overline{\theta}(\textbf{x}) = (\theta_1,\ldots,\theta_{\ell(w)})$$ and suppose that $B(\theta_i,\theta_{i+1}) = 0$ for some $i$ such that $1 \leq i \leq \ell(w) - 1$.  Then $\Psi = \{\theta_i,\theta_{i+1}\}$ is an inversion $2$-set.
\end{lem}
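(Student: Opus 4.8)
The plan is to show that $\Phi^{+}_{\{\theta_i,\theta_{i+1}\}}=\{\theta_i,\theta_{i+1}\}$. Since $\textbf{x}$ is reduced, Proposition~\ref{p:basiccoxeter}(8) says the entries of $\overline{\theta}(\textbf{x})$ are precisely the $\ell(w)$ elements of $\Phi(w)$, hence pairwise distinct positive roots; in particular $\theta_i$ and $\theta_{i+1}$ are distinct positive roots and so not scalar multiples of one another. Thus, once we know $\Phi^{+}_{\{\theta_i,\theta_{i+1}\}}$ has exactly two elements, Definition~\ref{d:inversionmset} will give that $\Psi$ is an inversion $2$-set. By Remark~\ref{r:dihedral} the subsystem $(W,S)_{\{\theta_i,\theta_{i+1}\}}$ is dihedral; let $\gamma,\delta$ be its canonical simple roots and $m=|s_\gamma s_\delta|$ (so $m\ge 2$, since $\gamma\neq\delta$ forces $s_\gamma\neq s_\delta$ by Lemma~\ref{l:pmroots}). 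By Proposition~\ref{p:localdisjoint} it then suffices to prove $m=2$.

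First I would extract the group-theoretic content of the hypothesis $B(\theta_i,\theta_{i+1})=0$. By the reflection formula~(\ref{e:reflection}), $s_{\theta_i}(\theta_{i+1})=\theta_{i+1}-2B(\theta_{i+1},\theta_i)\theta_i=\theta_{i+1}$, so Proposition~\ref{p:basiccoxeter}(1) (applied with $w=s_{\theta_i}$ and $\alpha=\beta=\theta_{i+1}$) gives $s_{\theta_i}s_{\theta_{i+1}}s_{\theta_i}=s_{\theta_{i+1}}$. Hence $s_{\theta_i}$ and $s_{\theta_{i+1}}$ commute and $(s_{\theta_i}s_{\theta_{i+1}})^{2}=1$; since $\theta_i\neq\theta_{i+1}$ and both are positive, Lemma~\ref{l:pmroots} gives $s_{\theta_i}\neq s_{\theta_{i+1}}$, so $s_{\theta_i}s_{\theta_{i+1}}$ has order exactly $2$.

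Next I would locate $\theta_i,\theta_{i+1}$ inside the subsystem and use consecutiveness. Since $\theta_i,\theta_{i+1}\in\Phi(w)\cap\Psi\neq\emptyset$, Corollary~\ref{c:biconvexstructure} forces $\Phi(w)\cap\Psi=\{\gamma_1,\dots,\gamma_k\}=[\gamma_1,\gamma_k]$ for some $k\le m$ (interchanging the names of the two canonical simple roots if the conclusion comes out as $[\delta_1,\delta_k]$ instead). Write $\theta_i=\gamma_a$, $\theta_{i+1}=\gamma_b$ with $1\le a,b\le k$ and $a\neq b$. If $|a-b|\ge 2$, pick $c$ strictly between $a$ and $b$; then $\gamma_c\in\Phi(w)$, and by Lemma~\ref{l:biconvexity} the root $\gamma_c$ lies in the convex cone spanned by $\gamma_a$ and $\gamma_b$, so by Corollary~\ref{c:rootseqrem}(1) it occurs between $\theta_i$ and $\theta_{i+1}$ in $\overline{\theta}(\textbf{x})$ — contradicting that these are consecutive entries. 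Hence $|a-b|=1$, so $\{\theta_i,\theta_{i+1}\}=\{\gamma_a,\gamma_{a+1}\}$ for some $a\ge 1$. Finally, from the proof of Lemma~\ref{l:localreflect} we have $s_{\gamma_j}=[(s_\gamma s_\delta)]_{2j-1}=(s_\gamma s_\delta)^{j-1}s_\gamma$, and writing $r=s_\gamma s_\delta$ the identity $s_\gamma r s_\gamma=r^{-1}$ gives $s_{\gamma_a}s_{\gamma_{a+1}}=r^{a-1}(s_\gamma r^{a}s_\gamma)=r^{a-1}r^{-a}=r^{-1}$. Thus $s_{\theta_i}s_{\theta_{i+1}}$ equals $r^{-1}$ or $r$, which has order $m$; combined with the previous step, $m\mid 2$, so $m=2$. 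Then Proposition~\ref{p:localdisjoint} yields $\Phi^{+}_{\{\theta_i,\theta_{i+1}\}}=\{\gamma_1,\gamma_2\}$, a two-element set containing $\{\theta_i,\theta_{i+1}\}$ and therefore equal to it.

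The step I expect to be the real obstacle is the middle one: ruling out that $\theta_i$ and $\theta_{i+1}$ are \emph{non-adjacent} mutually orthogonal roots of a larger dihedral subsystem. The $B_2$ subsystem, where $\gamma_1$ and $\gamma_3$ are orthogonal yet do not span an inversion $2$-set, shows that orthogonality alone is insufficient and that the consecutiveness hypothesis must be invoked essentially — which is exactly what the argument via Corollary~\ref{c:biconvexstructure}, Lemma~\ref{l:biconvexity}, and Corollary~\ref{c:rootseqrem} accomplishes. The reflection and conjugation computations are routine; the only care needed is the bookkeeping over which canonical simple root is called $\gamma$.
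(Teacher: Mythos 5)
Your proof is correct, but it takes a genuinely different and considerably longer route than the paper's. The paper argues directly from the word: writing $\theta_{i+1}=v(\alpha_s)$ and $\theta_i=vs(\alpha_t)$ with $s=s_{i+1}$, $t=s_i$ and $v=s_n\cdots s_{i+2}$, the $W$-invariance of $B$ gives $B(\theta_i,\theta_{i+1})=-B(\alpha_s,\alpha_t)$, so the hypothesis forces $m_{s,t}=2$; the factor $(t,s)=(t,s)_2$ sitting at positions $i,i+1$ of $\textbf{x}$ then hands the conclusion to Lemma~\ref{l:moveinversion}. You instead never look at the letters of $\textbf{x}$: you show the reflections $s_{\theta_i},s_{\theta_{i+1}}$ commute, use Corollary~\ref{c:biconvexstructure} together with Corollary~\ref{c:rootseqrem} to force $\theta_i,\theta_{i+1}$ to be adjacent in the local root sequence of the dihedral subsystem they span, and then compute $s_{\gamma_a}s_{\gamma_{a+1}}=(s_\gamma s_\delta)^{\mp 1}$ to conclude $m=2$. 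All the steps check out: the strictness needed in Corollary~\ref{c:rootseqrem}(1) is available because the three roots are pairwise distinct and lie in $\text{supp}(T_{\textbf{x}})$, and $s_{\theta_i}\neq s_{\theta_{i+1}}$ follows from Lemma~\ref{l:pmroots}. What your route buys is a word-free, purely root-geometric argument, plus the by-product (your middle step, which uses no orthogonality) that consecutive entries of any reduced root sequence are always adjacent in the local root sequence of the inversion set they determine; what it costs is length, invoking the biconvexity and standard-encoding machinery where a two-line computation with the reflection formula and Lemma~\ref{l:moveinversion} suffices.
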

\begin{proof}
By Corollary~\ref{c:inversionline}, there is a unique inversion set $\Psi$ that contains $\theta_i$ and $\theta_{i+1}$. By calculating $\theta_{i+1}$ and $\theta_i$ in the root sequence $\overline{\theta}(\textbf{x})$, we have that $\theta_{i+1} = v(\alpha_s)$ and $\theta_i = vs(\alpha_t)$ for some $s,t \in S$.  Thus $$B(\theta_i, \theta_{i+1}) = B(vs(\alpha_t), v(\alpha_s)) = B(s(\alpha_t), \alpha_s) = -B(\alpha_t,\alpha_s)$$ since $B$ is invariant under multiplication in $W$.  By hypothesis, $B(\alpha_s,\alpha_t) = 0$ so that $m_{st} = 2$.  By Lemma~\ref{l:moveinversion}, we have that $\Psi = \{\theta_i,\theta_{i+1}\}$ is an inversion $2$-set since the factor $st = (st)_2$ occurs as the associated factor in $\textbf{x}$.
\end{proof}
\begin{lem} \label{l:orthoroots}
Let $\Psi$ be an inversion $m$-set, where $m \geq 2$.  Suppose that $\lambda \not\in \Psi$ and that there are distinct roots $\alpha, \beta \in \Psi$ such that $B(\lambda,\alpha) = 0$ and $B(\lambda,\beta) = 0$.  Then, for any root $\mu$ in $\Psi$, we have $B(\lambda,\mu) = 0$.
\end{lem}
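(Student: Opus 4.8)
The plan is to reduce everything to the two-dimensionality of $\mathrm{span}(\Psi)$ together with bilinearity of $B$. First I would invoke Lemma~\ref{l:inversion2d}: since $\Psi$ is an inversion set, $\mathrm{span}(\Psi)$ is a two-dimensional subspace of $V$, and because $\alpha$ and $\beta$ are \emph{distinct} roots of $\Psi$ (hence positive roots, hence not scalar multiples of one another), we have $\mathrm{span}(\{\alpha,\beta\}) = \mathrm{span}(\Psi)$. In particular $\{\alpha,\beta\}$ is a basis of $\mathrm{span}(\Psi)$.

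Next, given an arbitrary root $\mu \in \Psi$, note that $\mu \in \mathrm{span}(\Psi) = \mathrm{span}(\{\alpha,\beta\})$, so I can write $\mu = a\alpha + b\beta$ for some scalars $a,b \in \mathbb{R}$. Then, using that $B$ is bilinear,
\begin{equation*}
B(\lambda,\mu) = B(\lambda, a\alpha + b\beta) = a\,B(\lambda,\alpha) + b\,B(\lambda,\beta) = a\cdot 0 + b\cdot 0 = 0,
\end{equation*}
by the hypotheses $B(\lambda,\alpha) = 0$ and $B(\lambda,\beta) = 0$. This is exactly the desired conclusion, and the hypothesis $\lambda \notin \Psi$ is not even needed for this computation (it is presumably included only because the lemma will be applied in that situation).

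There is essentially no obstacle here: the only substantive input is Lemma~\ref{l:inversion2d}, which guarantees that two distinct roots of $\Psi$ already span the whole linear hull of $\Psi$, and the rest is a one-line bilinearity argument. If anything needs care it is merely making explicit that distinct roots of an inversion set are linearly independent, which follows because they are positive roots and distinct positive roots are not scalar multiples of one another.
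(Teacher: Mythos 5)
Your proof is correct and is essentially identical to the paper's: both invoke Lemma~\ref{l:inversion2d} to write $\mu = a\alpha + b\beta$ and then conclude by bilinearity of $B$. Your observation that the hypothesis $\lambda \notin \Psi$ is not actually used in the computation is also accurate.
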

\begin{proof}
We can express any $\mu \in \Psi$ as $\mu = a \alpha + b \beta$ since $$\text{span}(\{\alpha,\beta\}) = \text{span}(\Psi)$$ by Lemma~\ref{l:inversion2d}.  Thus $$B(\lambda,\mu) = B(\lambda, a\alpha + b\beta) = aB(\lambda,\alpha) + bB(\lambda, \beta) = 0,$$ since $B$ is bilinear.
\end{proof}
\begin{lem} \label{l:orthointersections}
Let $\Psi$ be an inversion $m$-set with $m > 2$ and $\Psi'$ be an inversion $m'$-set with $m' > 2$, and assume that $|\Psi \cap \Psi'| = 1$.  Then there exist distinct roots $\alpha \in \Psi$ and $\beta \in \Psi'$, neither of which is in $\Psi \cap \Psi'$, such that $B(\alpha,\beta) \neq 0$. If $\alpha$ and $\beta$ occur together in an inversion $m''$-set $\Psi''$, then $m'' > 2$.
\end{lem}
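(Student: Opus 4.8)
The plan is to prove both assertions by contradiction, with Lemma~\ref{l:orthoroots} doing the heavy lifting for the first one and an order-$2$ dihedral computation for the second.

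For the first assertion, I would write $\{\lambda\}=\Psi\cap\Psi'$ and assume that $B(\alpha,\beta)=0$ for \emph{every} $\alpha\in\Psi\setminus\{\lambda\}$ and \emph{every} $\beta\in\Psi'\setminus\{\lambda\}$. First fix one such $\beta$ (it exists since $m'>2$ forces $|\Psi'|\ge 3$) and note that $\beta\notin\Psi$ because the two inversion sets meet only in $\lambda$; since $m>2$ there are two distinct roots in $\Psi\setminus\{\lambda\}$, both orthogonal to $\beta$, so Lemma~\ref{l:orthoroots} upgrades this to $B(\beta,\mu)=0$ for all $\mu\in\Psi$, and in particular $B(\beta,\lambda)=0$. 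Letting $\beta$ vary, $\lambda$ is orthogonal to every root of $\Psi'\setminus\{\lambda\}$. Then I would pick two distinct roots $\beta_1,\beta_2$ there, use Lemma~\ref{l:inversion2d} to write $\text{span}(\Psi')=\text{span}(\{\lambda,\beta_1\})$ and hence $\beta_2=a\lambda+b\beta_1$, and pair with $\lambda$ (recalling $B(\lambda,\lambda)=1$) to force $a=0$; this makes $\beta_2$ a scalar multiple of $\beta_1$, which is impossible for distinct positive roots. The contradiction produces $\alpha\in\Psi\setminus\{\lambda\}$ and $\beta\in\Psi'\setminus\{\lambda\}$ with $B(\alpha,\beta)\ne 0$, and such $\alpha,\beta$ are automatically distinct (since $\alpha\notin\Psi'$) and avoid $\Psi\cap\Psi'$.

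For the second assertion, I would suppose $\alpha$ and $\beta$ lie together in an inversion $m''$-set $\Psi''$ with $m''=2$. Then $\Psi''=\{\alpha,\beta\}$, and combining Corollary~\ref{c:inversionline}, Lemma~\ref{l:dihedralsize}, and Proposition~\ref{p:localdisjoint} identifies $\alpha,\beta$ as the canonical simple roots of $(W,S)_{\{\alpha,\beta\}}$ with $|s_\alpha s_\beta|=2$. From $s_\alpha s_\beta s_\alpha=s_\beta$ and Proposition~\ref{p:basiccoxeter}(1) I get $s_{s_\alpha(\beta)}=s_\beta$, so $s_\alpha(\beta)=\pm\beta$ by Lemma~\ref{l:pmroots}; the minus sign would make $\beta$ a scalar multiple of $\alpha$, i.e. $\beta=-\alpha$, contradicting positivity of the roots in $\Psi''$, so $s_\alpha(\beta)=\beta$, which by the reflection formula~(\ref{e:reflection}) says $B(\alpha,\beta)=0$ --- contrary to the choice of $\alpha,\beta$. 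Hence $m''>2$.

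I expect the only place needing care is the bookkeeping in the first assertion: tracking that $m>2$ and $m'>2$ are used precisely to supply two roots on each side of $\lambda$, and that the selected $\beta$ really lies outside $\Psi$ so that Lemma~\ref{l:orthoroots} is applicable. The rest is bilinearity of $B$, the unit-length and no-scalar-multiples facts, and the standard properties of order-$2$ dihedral subsystems.
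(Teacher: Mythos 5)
Your proof is correct and follows essentially the same route as the paper: argue by contradiction, use Lemma~\ref{l:orthoroots} to upgrade pairwise orthogonality across the two inversion sets, and rule out $m''=2$ because the two roots of an inversion $2$-set are orthogonal. The only cosmetic difference is that the paper first fixes $\alpha\in\Psi$ with $B(\alpha,\gamma)\neq 0$ and lands on the contradiction $B(\alpha,\gamma)=0$, whereas you show two distinct roots of $\Psi'$ would be proportional; both contradictions rest on the same two-dimensionality (Lemma~\ref{l:inversion2d}) and unit-length facts.
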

\begin{proof}
Let $\gamma$ be the root in $\Psi \cap \Psi'$.  Choose $\alpha \in \Psi$ distinct from $\gamma$ such that $B(\alpha,\gamma) \neq 0$.  Since $m > 2$, Lemma~\ref{l:orthoroots} implies such an $\alpha$ exists.  Suppose towards a contradiction that $B(\alpha, \beta) = 0$ for all roots $\beta \in \Psi'$ such that $\beta \neq \gamma$.  Since $m' > 2$, there are two roots in $\Psi'$ orthogonal to $\alpha$, so $\alpha$ is orthogonal to every root in $\Psi'$.  In particular, since $\gamma \in \Psi'$, we have $B(\gamma, \alpha) = 0$, a contradiction.  Thus we may choose $\beta$ such that $B(\alpha,\beta) \neq 0$, so that if $\alpha$ and $\beta$ occur together in an inversion $m''$-set, we have $m'' > 2$.
\end{proof}
\begin{defn} \label{d:commutation}
Let $\textbf{x} = \textbf{a}(s,t)\textbf{b}$ and suppose $m_{s,t} = 2$.  Then we say $\textbf{x}' = \textbf{a}(t,s)\textbf{b}$ is a \emph{$2$-braid neighbor of $\textbf{x}$}.  The reflexive and transitive closure of the $2$-braid neighbor relation is an equivalence relation that we call \emph{commutation equivalence}.  The equivalence classes are called \emph{commutation classes}.  The set of commutation classes for the reduced expressions for $w$ is denoted $\mathcal{C}(w)$.  We denote commutation equivalent expressions $\textbf{x}$ and $\textbf{x}'$ by $\textbf{x} \sim_C \textbf{x}'$.
\end{defn}
\begin{rem}
We can partition the reduced expressions for $w$ into commutation classes.  See \cite[Section 1.1]{onfc} for details.
\end{rem}
\begin{lem} \label{l:cancelsim}
Let $\textbf{x} = \textbf{a}\textbf{b}$ and let $\textbf{x}' = \textbf{a}\textbf{b}'$.  Suppose $\textbf{b} \sim_C \textbf{b}'$.  Then $\textbf{x} \sim_C \textbf{x}'$.
\end{lem}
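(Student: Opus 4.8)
The plan is to reduce to the case of a single $2$-braid neighbor move and then chain. First I would unwind Definition~\ref{d:commutation}: since the $2$-braid neighbor relation is symmetric (if $\textbf{x}' = \textbf{a}(t,s)\textbf{b}$ is a $2$-braid neighbor of $\textbf{x} = \textbf{a}(s,t)\textbf{b}$ with $m_{s,t} = 2$, then $m_{t,s} = 2$ and $\textbf{x}$ is a $2$-braid neighbor of $\textbf{x}'$), the commutation equivalence $\sim_C$ is precisely the equivalence relation generated by it. Hence $\textbf{b} \sim_C \textbf{b}'$ means there is a finite sequence $\textbf{b} = \textbf{b}_0, \textbf{b}_1, \ldots, \textbf{b}_n = \textbf{b}'$ in $S^*$ such that for each $i$ with $0 \leq i < n$, either $\textbf{b}_i = \textbf{b}_{i+1}$ or $\textbf{b}_{i+1}$ is a $2$-braid neighbor of $\textbf{b}_i$.

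The key step is the observation that concatenation on the left with a fixed $\textbf{a} \in S^*$ preserves the $2$-braid neighbor relation. Suppose $\textbf{b}_{i+1}$ is a $2$-braid neighbor of $\textbf{b}_i$, so we may write $\textbf{b}_i = \textbf{c}\,(s,t)\,\textbf{d}$ and $\textbf{b}_{i+1} = \textbf{c}\,(t,s)\,\textbf{d}$ for some $\textbf{c}, \textbf{d} \in S^*$ and $s,t \in S$ with $m_{s,t} = 2$. Then $\textbf{a}\textbf{b}_i = (\textbf{a}\textbf{c})\,(s,t)\,\textbf{d}$ and $\textbf{a}\textbf{b}_{i+1} = (\textbf{a}\textbf{c})\,(t,s)\,\textbf{d}$, so $\textbf{a}\textbf{b}_{i+1}$ is a $2$-braid neighbor of $\textbf{a}\textbf{b}_i$ (taking $\textbf{a}\textbf{c}$ in the role of the left factor). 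Of course, if $\textbf{b}_i = \textbf{b}_{i+1}$ then $\textbf{a}\textbf{b}_i = \textbf{a}\textbf{b}_{i+1}$ as well.

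Applying this to the chain above, the sequence $\textbf{a}\textbf{b}_0, \textbf{a}\textbf{b}_1, \ldots, \textbf{a}\textbf{b}_n$ is a chain of $2$-braid neighbors (with possible repetitions) from $\textbf{x} = \textbf{a}\textbf{b}$ to $\textbf{x}' = \textbf{a}\textbf{b}'$. Since $\sim_C$ is the transitive closure of the $2$-braid neighbor relation together with reflexivity, this gives $\textbf{x} \sim_C \textbf{x}'$, as required. I do not anticipate any real obstacle here; the only point requiring a moment's care is confirming that the substituted subword $(s,t)$ in $\textbf{b}_i$ remains a legitimate site for a $2$-braid neighbor move inside $\textbf{a}\textbf{b}_i$, which is immediate because the left factor $\textbf{a}\textbf{c}$ absorbs the prefix $\textbf{a}$ without disturbing the $(s,t)$ block or the suffix $\textbf{d}$.
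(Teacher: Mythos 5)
Your proof is correct and is exactly the argument the paper gives, just spelled out in more detail: the paper's one-line proof also lifts each $2$-braid move in $\textbf{b}$ to one in $\textbf{a}\textbf{b}$ by absorbing the prefix $\textbf{a}$ into the left factor and then chains. No issues.
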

\begin{proof}
The sequence of 2-braid moves transforming $\textbf{b}$ into $\textbf{b}'$ can be applied to transform $\textbf{x}$ into $\textbf{x}'$ via 2-braid moves.
\end{proof}
\begin{defn} \label{d:longinversion}
Let $\Psi$ be an inversion set.  If $|\Psi| = 2$ then we call $\Psi$ a \emph{short inversion set}.  If $|\Psi| > 2$, we call $\Psi$ a \emph{long inversion set}.  The set of all contractible long inversion sets is denoted $\text{CInv}(w)$.
\end{defn}
\begin{rem}
By Proposition~\ref{p:localdisjoint}, if $|\Psi| = 2$, then $m = |s_\gamma s_\delta| = 2$.  Since $B(\gamma,\delta) = -\cos(\pi/|s_\gamma s_\delta|)$, the two roots in a short inversion set are necessarily orthogonal relative to $B$.  In a long inversion set there exists at least one pair of nonorthogonal roots.  The notion of ``long inversion set'' generalizes the notion of ``inversion triple'', as defined by Green and Losonczy in \cite{fbI}, from the context of simply-laced Coxeter groups to that of arbitrary Coxeter groups.  That is, long inversion sets are inversion triples (and vice versa) whenever $W$ is simply-laced, but long inversion sets need not be inversion triples in a Coxeter group that is not simply laced.
\end{rem}
\begin{lem}  \label{l:inversioncontracts}
Let $w \in W$ and let $\alpha, \beta \in \Phi(w)$.  Suppose there exist reduced expressions $\textbf{x}$ and $\textbf{x}'$ such that $\alpha$ occurs before $\beta$ in $\overline{\theta}(\textbf{x})$, but $\beta$ occurs before $\alpha$ in $\overline{\theta}(\textbf{x}')$.  Then there exists a contractible inversion set $\Psi$ of $w$ that contains $\alpha$ and $\beta$.
\end{lem}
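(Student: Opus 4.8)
The plan is to reduce to a single braid move via the Matsumoto--Tits theorem and then read off the inversion set of that move. First I would invoke Theorem~\ref{t:matsumototits} to obtain a finite sequence of braid moves
\[
\textbf{x} = \textbf{x}_0 \rightarrow \textbf{x}_1 \rightarrow \cdots \rightarrow \textbf{x}_n = \textbf{x}',
\]
in which every $\textbf{x}_i$ is a reduced expression for $w$. By Proposition~\ref{p:basiccoxeter}(8), each root sequence $\overline{\theta}(\textbf{x}_i)$ has the elements of $\Phi(w)$ as its entries, each occurring exactly once, so it is meaningful to ask whether $\alpha$ occurs before $\beta$ in $\overline{\theta}(\textbf{x}_i)$; call this condition $P_i$. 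By hypothesis $P_0$ holds and $P_n$ fails, so by a discrete intermediate value argument there is an index $i$ with $1 \le i \le n$ for which $P_{i-1}$ holds but $P_i$ fails.

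Next I would analyze the single braid move $\textbf{x}_{i-1} \rightarrow \textbf{x}_i$. Write $\textbf{x}_{i-1} = \textbf{a}\,(s,t)_m\,\textbf{b}$ and $\textbf{x}_i = \textbf{a}\,(t,s)_m\,\textbf{b}$ with $m = m_{s,t}$, and let $\overline{\theta}(\textbf{x}_{i-1}) = \overline{\theta_1}\,\overline{\theta_2}\,\overline{\theta_3}$ be the decomposition respecting $\textbf{a}\,(s,t)_m\,\textbf{b}$. Let $\Psi$ be the set of roots appearing in $\overline{\theta_2}$; by Lemma~\ref{l:moveinversion}, $\Psi$ is an inversion $m$-set. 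By Lemma~\ref{l:revroots}, $\overline{\theta}(\textbf{x}_i) = \overline{\theta_1}\,\overline{\theta_2}^R\,\overline{\theta_3}$ is the decomposition of $\overline{\theta}(\textbf{x}_i)$ respecting $\textbf{a}\,(t,s)_m\,\textbf{b}$. Since the entries of $\overline{\theta_1}$ and of $\overline{\theta_3}$ retain their positions and the entries of $\overline{\theta_2}$ stay within the same block of positions (merely being reversed among themselves), the only pairs of roots whose relative order in $\overline{\theta}(\textbf{x}_i)$ differs from their relative order in $\overline{\theta}(\textbf{x}_{i-1})$ are pairs both lying in $\overline{\theta_2}$, i.e.\ both in $\Psi$. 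As $\alpha$ and $\beta$ swap their relative order at this step, it follows that $\alpha, \beta \in \Psi$.

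Finally, since the roots of $\Psi$ form the consecutive subsequence $\overline{\theta_2}$ of $\overline{\theta}(\textbf{x}_{i-1})$, the inversion set $\Psi$ is contracted in $\textbf{x}_{i-1}$, and hence $\Psi$ is contractible in $w$ by Definition~\ref{d:contracted}. Because $\Psi$ also contains both $\alpha$ and $\beta$, this $\Psi$ is the desired contractible inversion set. The argument is essentially bookkeeping; the only point requiring care is the passage from ``different order in two expressions'' to ``different order across one braid move,'' which is handled by the discrete intermediate value step together with Lemma~\ref{l:revroots}, and the well-definedness of the relative-order statistic, which is why I invoke Proposition~\ref{p:basiccoxeter}(8) at the outset. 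I do not anticipate a substantial obstacle beyond these details.
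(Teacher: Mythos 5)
Your proof is correct and follows essentially the same route as the paper: apply Matsumoto--Tits, locate the braid move at which the relative order of $\alpha$ and $\beta$ flips, and observe via Lemma~\ref{l:revroots} that the inversion set of that move (which is contracted in the expression where the move is applied, hence contractible) must contain both roots. Your explicit discrete intermediate value step and your citation of Lemma~\ref{l:moveinversion} in place of the paper's Lemma~\ref{l:rootform} are minor tidying of the same argument.
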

\begin{proof}
By Theorem~\ref{t:matsumototits}, there exists a sequence of braid moves transforming $\textbf{x}$ into $\textbf{x}'$.  By Lemma~\ref{l:revroots} and Lemma~\ref{l:rootform}, each braid move reverses the order in the root sequence of the roots in an inversion $m$-set $\Psi$.  If such a $\Psi$ contains both $\alpha$ and $\beta$, then it is a contractible inversion set of $w$ containing both $\alpha$ and $\beta$.  Otherwise, Lemma~\ref{l:revroots} implies that $\alpha$ and $\beta$ remain in the same order.  Since $\alpha$ and $\beta$ occur in a different order in $\overline{\theta}(\textbf{x}')$ than they do in $\overline{\theta}(\textbf{x})$, there must exist an inversion set $\Psi$ containing both $\alpha$ and $\beta$.
\end{proof}
\begin{cor} \label{c:2setlemma}
Let $w \in W$ and let $\alpha, \beta \in \Phi(w)$.  Suppose $\alpha$ and $\beta$ do not occur together in a contractible long inversion set and suppose there exist reduced expressions $\textbf{x}$ and $\textbf{x}'$ for $w$ where $T_{\textbf{x}}(\alpha) > T_{\textbf{x}}(\beta)$ and $T_{\textbf{x}'}(\alpha) < T_{\textbf{x}'}(\beta)$.  Then there exists a contractible inversion $2$-set $\Psi$ containing $\alpha$ and $\beta$ so that $\alpha \perp \beta$.
\end{cor}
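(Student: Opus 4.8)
The plan is to obtain this as an immediate packaging of Lemma~\ref{l:inversioncontracts} together with the hypothesis that $\alpha$ and $\beta$ lie in no common contractible long inversion set. First I would translate the hypotheses on the standard encodings into statements about the order of roots in root sequences. By Definition~\ref{d:standardencoding}, if $\overline{\theta}(\textbf{x}) = (\theta_1,\ldots,\theta_{\ell(w)})$ then $T_{\textbf{x}}(\theta_k) = k$; hence $T_{\textbf{x}}(\alpha) > T_{\textbf{x}}(\beta)$ means precisely that $\beta$ occurs before $\alpha$ in $\overline{\theta}(\textbf{x})$, and $T_{\textbf{x}'}(\alpha) < T_{\textbf{x}'}(\beta)$ means precisely that $\alpha$ occurs before $\beta$ in $\overline{\theta}(\textbf{x}')$.

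Next I would apply Lemma~\ref{l:inversioncontracts} with the roles arranged so that $\alpha$ occurs before $\beta$ in $\overline{\theta}(\textbf{x}')$ while $\beta$ occurs before $\alpha$ in $\overline{\theta}(\textbf{x})$. The lemma then produces a contractible inversion set $\Psi$ of $w$ containing both $\alpha$ and $\beta$.

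Finally I would invoke the standing hypothesis. Since $\Psi$ is a contractible inversion set containing $\alpha$ and $\beta$, it cannot be long by assumption, so Definition~\ref{d:longinversion} forces $|\Psi| = 2$; that is, $\Psi$ is a contractible inversion $2$-set containing $\alpha$ and $\beta$. By the Remark following Definition~\ref{d:longinversion}, the two roots of a short inversion set are orthogonal with respect to $B$ (because $B(\gamma,\delta) = -\cos(\pi/|s_\gamma s_\delta|)$ with $|s_\gamma s_\delta| = 2$ by Proposition~\ref{p:localdisjoint}), so $\alpha \perp \beta$, which completes the proof.

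There is no genuinely hard step: the corollary is a direct consequence of Lemma~\ref{l:inversioncontracts}. The only point requiring care is the bookkeeping in the first paragraph — keeping straight that a larger $T_{\textbf{x}}$-value corresponds to a \emph{later} position in the root sequence, and matching the two inequalities $T_{\textbf{x}}(\alpha) > T_{\textbf{x}}(\beta)$ and $T_{\textbf{x}'}(\alpha) < T_{\textbf{x}'}(\beta)$ to the ``before/after'' hypotheses of Lemma~\ref{l:inversioncontracts} — after which the remaining inferences are immediate.
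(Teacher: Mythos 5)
Your proposal is correct and follows essentially the same route as the paper: apply Lemma~\ref{l:inversioncontracts} to obtain a contractible inversion set containing $\alpha$ and $\beta$, then use the hypothesis to rule out the long case so that $|\Psi|=2$ and orthogonality follows. Your version merely makes explicit the (correct) translation between the standard-encoding inequalities and the before/after order in the root sequences, which the paper leaves implicit.
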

\begin{proof}
By Lemma~\ref{l:inversioncontracts}, there exists an inversion $\Psi$ containing $\alpha$ and $\beta$.  Since $\alpha$ and $\beta$ are not contained in a long inversion set, $\Psi$ is an inversion $2$-set by Definition~\ref{d:longinversion}.
\end{proof}
\section{Freely braided elements}
The elements whose contractible long inversion sets are pairwise disjoint are the freely braided elements.  They were introduced by Green and Losonczy in \cite{fbI} for simply-laced Coxeter groups.  Our definition is for arbitrary Coxeter groups, but in many places our exposition closely follows that of \cite{fbI}.
\begin{defn} \label{d:freelybraided}
Let $w \in W$.  We say that $w$ is \emph{freely braided} if for every pair of contractible long inversion sets $\Omega,\Omega' \in \text{CInv}(w)$, we have $\Omega \cap \Omega' = \emptyset$.
\end{defn}
\begin{lem} \label{l:contractibleorthogonal}
Let $\Psi = \{\gamma_1,\ldots,\gamma_m\}$ be a contractible long inversion set.  Let $\textbf{x}$ be reduced expression for a freely braided element $w \in W$ and let $\alpha \in \Phi(w)$ be such that $\alpha \not\in \Psi$.  Then the following implications hold:
\begin{enumerate}[(1)]
\item If $T_{\textbf{x}}(\gamma_1) < T_{\textbf{x}}(\alpha) < T_{\textbf{x}}(\gamma_2)$, then $\alpha \perp \gamma_1$.
\item  If $T_{\textbf{x}}(\gamma_{m-1}) < T_{\textbf{x}}(\alpha) < T_{\textbf{x}}(\gamma_m)$, then $\alpha \perp \gamma_m$.
\item If $m > 3$ and $T_{\textbf{x}}(\gamma_i) < T_{\textbf{x}}(\alpha) < T_{\textbf{x}}(\gamma_{i+1})$ for some $i$ satisfying the inequality $1 < i < m-1$, then $\alpha \perp \mu$ for all $\mu \in \Psi$.
\end{enumerate}
\end{lem}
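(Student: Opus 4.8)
The plan is to prove all three statements by a single mechanism built on free-braidedness, Lemma~\ref{l:inversioncontracts}, and Lemma~\ref{l:orthoroots}. Write $\Psi = \{\gamma_1,\ldots,\gamma_m\}$ with the indices coming from the $\overline{\gamma}$-sequence, so that $\gamma_1=\gamma$ and $\gamma_m=\delta$ are the canonical simple roots (Proposition~\ref{p:localdisjoint}); since $\Psi$ is contractible it occurs as a consecutive block in the root sequence of some reduced expression, so $m$ is finite and $\Psi\subseteq\Phi(w)$. In each of the three statements the hypothesis $T_{\textbf{x}}(\gamma_i)<T_{\textbf{x}}(\alpha)<T_{\textbf{x}}(\gamma_{i+1})$ forces $\gamma_i$ to precede $\gamma_{i+1}$ in $\overline{\theta}(\textbf{x})$; combining Lemma~\ref{l:biconvexity} with Corollary~\ref{c:rootseqrem}(1), the roots $\gamma_1,\ldots,\gamma_m$ occur in $\overline{\theta}(\textbf{x})$ either in increasing index order or in reverse, so here they occur in increasing index order, with $\alpha$ sitting strictly between $\gamma_i$ and $\gamma_{i+1}$ (here $i=1$ for (1), $i=m-1$ for (2), $1<i<m-1$ for (3)). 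Also fix, by Definition~\ref{d:contracted}, a reduced expression $\textbf{y}$ for $w$ in which the roots of $\Psi$ form a consecutive block of $\overline{\theta}(\textbf{y})$; since $\alpha\in\Phi(w)\setminus\Psi$, the root $\alpha$ lies strictly before, or strictly after, that whole block.

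\textbf{The orthogonality mechanism.} The key observation is that the internal order of the $\gamma_j$ inside the block of $\textbf{y}$ is irrelevant: whatever that order is, if $\alpha$ lies before the block in $\textbf{y}$ then the relative order of $\alpha$ and $\gamma_j$ is reversed between $\textbf{x}$ and $\textbf{y}$ exactly for $j\le i$, while if $\alpha$ lies after the block it is reversed exactly for $j\ge i+1$. For any such reversed index $j$, Lemma~\ref{l:inversioncontracts} applied to $\textbf{x},\textbf{y}$ and the pair $\alpha,\gamma_j$ produces a contractible inversion set $\Psi_j$ containing $\alpha$ and $\gamma_j$. Since $\alpha\notin\Psi$ we have $\Psi_j\ne\Psi$, yet $\gamma_j\in\Psi\cap\Psi_j$; so if $\Psi_j$ were a long inversion set, then $\Psi,\Psi_j\in\text{CInv}(w)$ would be contractible long inversion sets with nonempty intersection, contradicting Definition~\ref{d:freelybraided}. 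Hence $|\Psi_j|=2$, and by the remark following Definition~\ref{d:longinversion} the two roots of a short inversion set are orthogonal, so $\alpha\perp\gamma_j$.

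\textbf{Finishing.} If $\alpha$ lies before the block in $\textbf{y}$ the reversed index set is $\{1,\ldots,i\}$, and if it lies after it is $\{i+1,\ldots,m\}$. In statement (1) with $\alpha$ before the block this set is $\{1\}$, yielding $\alpha\perp\gamma_1$ directly; in statement (2) with $\alpha$ after the block it is $\{m\}$, yielding $\alpha\perp\gamma_m$ directly. In every remaining case the reversed index set has at least two elements --- this is where $m>2$ is used in (1) and (2), and where $1<i<m-1$ (hence $m>3$) is used in (3) --- so we obtain $\alpha\perp\gamma_j$ for two distinct $j$ in $\Psi$, and Lemma~\ref{l:orthoroots} (with $\alpha$ in the role of the outside root $\lambda$) upgrades this to $\alpha\perp\mu$ for all $\mu\in\Psi$. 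This gives $\alpha\perp\gamma_1$ for (1), $\alpha\perp\gamma_m$ for (2), and the full conclusion for (3).

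\textbf{Main obstacle.} The only delicate point is the case where $\alpha$ ends up on the ``opposite'' side of the contracted block from what the hypothesis suggests: there one does not obtain the single desired orthogonality directly, but is instead forced into two orthogonalities, which must then be combined via Lemma~\ref{l:orthoroots}. This is exactly why the numerical conditions on $i$ and $m$ appear in the statement; verifying that the reversed index set always has the needed size in each of the three cases is the crux of the argument.
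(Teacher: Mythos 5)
Your proposal is correct and follows essentially the same route as the paper's proof: pass to a reduced expression in which $\Psi$ is contracted, note that $\alpha$ must sit entirely before or after the block, identify which pairs $(\alpha,\gamma_j)$ reverse their relative order, conclude each such pair spans a contractible inversion $2$-set (hence is orthogonal) because free-braidedness forbids a second contractible long inversion set meeting $\Psi$, and invoke Lemma~\ref{l:orthoroots} when two orthogonalities are available. The only cosmetic difference is that you apply Lemma~\ref{l:inversioncontracts} directly and make the free-braidedness deduction explicit, where the paper packages the same step as Corollary~\ref{c:2setlemma}.
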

\begin{proof}
Recall, by Lemmas~\ref{l:standard} and \ref{l:wlabel}(3), that $T_{\textbf{x}}$ is monotone with respect to the indices of elements $\gamma_i$ of a local root sequence.  That is, we either have $T_{\textbf{x}}(\gamma_1) < \cdots < T_{\textbf{x}}(\gamma_m)$ or $T_{\textbf{x}}(\gamma_m) < \cdots < T_{\textbf{x}}(\gamma_1)$.\\ \\
We first prove statement $(1)$, so suppose that $T_{\textbf{x}}(\gamma_1) < T_{\textbf{x}}(\alpha) < T_{\textbf{x}}(\gamma_2)$.  The monotonicity of $T_{\textbf{x}}$ with respect to $\Psi$ implies that $T_{\textbf{x}}(\alpha) < T_{\textbf{x}}(\gamma_i)$ for $i \geq 2$.  Since $\Psi$ is contractible, there exists a reduced expression $\textbf{x}'$ for $w$ in which $\Psi$ is contracted.  Since $\Psi$ is contracted in $\textbf{x}'$
we either have $T_{\textbf{x}'}(\alpha) < T_{\textbf{x}'}(\gamma_i)$ for all $1 \leq i \leq m$ or  $T_{\textbf{x}'}(\gamma_i) < T_{\textbf{x}'}(\alpha)$ for all $1 \leq i \leq m$.\\ \\
In the case that $T_{\textbf{x}'}(\alpha) < T_{\textbf{x}'}(\gamma_i)$ for all $1 \leq i \leq m$, we have $T_{\textbf{x}}(\gamma_1) < T_{\textbf{x}}(\alpha)$ and $T_{\textbf{x}'}(\alpha) < T_{\textbf{x}'}(\gamma_1)$. By Corollary~\ref{c:2setlemma}, we have $\alpha \perp \gamma_1$.\\ \\
In the case that $T_{\textbf{x}'}(\gamma_i) < T_{\textbf{x}'}(\alpha)$ for all $1 \leq i \leq m$, we have in particular that $T_{\textbf{x}'}(\gamma_{m-1}) < T_{\textbf{x}'}(\alpha)$ and $T_{\textbf{x}'}(\gamma_m) < T_{\textbf{x}'}(\alpha)$.  Since $$T_{\textbf{x}}(\alpha) < T_{\textbf{x}}(\gamma_{m-1}) < T_{\textbf{x}}(\gamma_m),$$ Corollary~\ref{c:2setlemma} implies that $\alpha \perp \gamma_{m-1}$ and that $\alpha \perp \gamma_m$.  Thus, in this case, $\alpha \perp \gamma_1$ by Lemma~\ref{l:orthoroots} and statement $(1)$ follows.\\ \\
The argument for statement $(2)$ can be obtained by interchanging $\gamma_1$ and $\gamma_m$ in the proof of statement $(1)$, by interchanging $\gamma_2$ and $\gamma_{m-1}$ in the proof of statement $(1)$, and by reversing the inequalities in the proof of statement $(1)$.\\ \\
Turning to statement $(3)$, suppose that $m > 3$ and $$T_{\textbf{x}}(\gamma_i) < T_{\textbf{x}}(\alpha) < T_{\textbf{x}}(\gamma_{i+1})$$ for some $i$ satisfying $1 < i < m-1$.  The monotonicity of $T_{\textbf{x}}$ with respect to $\Psi$ implies that $T_{\textbf{x}}(\gamma_j) < T_{\textbf{x}}(\alpha)$ for $1 \leq j \leq i$ and $T_{\textbf{x}}(\alpha) < T_{\textbf{x}}(\gamma_j)$ for $i + 1 \leq j \leq m$.  Since $\Psi$ is contractible, there
exists a reduced expression $\textbf{x}'$ for $w$ such that $\Psi$ is contracted in $\textbf{x}'$.  Since $\Psi$ is contracted in $\textbf{x}'$, we either have $T_{\textbf{x}'}(\alpha) < T_{\textbf{x}'}(\gamma_i)$ for all $1 \leq i \leq m$, or $T_{\textbf{x}'}(\gamma_i) < T_{\textbf{x}'}(\alpha)$ for all $1 \leq i \leq m$.\\ \\
In the case that $T_{\textbf{x}'}(\alpha) < T_{\textbf{x}'}(\gamma_i)$ for all $1 \leq i \leq m$, we have in particular that $T_{\textbf{x}'}(\alpha) < T_{\textbf{x}'}(\gamma_1)$ and $T_{\textbf{x}'}(\alpha) < T_{\textbf{x}'}(\gamma_2)$.  As $T_{\textbf{x}}(\alpha) > T_{\textbf{x}}(\gamma_1)$ and $T_{\textbf{x}}(\alpha) > T_{\textbf{x}}(\gamma_2)$, Corollary~\ref{c:2setlemma} implies that $\alpha \perp \gamma_1$ and $\alpha \perp \gamma_2$.  Thus, $\alpha \perp \mu$ for all $\mu \in \Psi$ by Lemma~\ref{l:orthoroots}.\\ \\
In the case that $T_{\textbf{x}'}(\alpha) > T_{\textbf{x}'}(\gamma_i)$ for all $1 \leq i \leq m$, we have in particular that $T_{\textbf{x}'}(\alpha) > T_{\textbf{x}'}(\gamma_m)$ and $T_{\textbf{x}'}(\alpha) > T_{\textbf{x}'}(\gamma_{m-1})$.  Since $T_{\textbf{x}}(\alpha) < T_{\textbf{x}}(\gamma_{m-1})$ and $T_{\textbf{x}}(\alpha) < T_{\textbf{x}}(\gamma_m)$, Corollary~\ref{c:2setlemma} implies that $\alpha \perp \gamma_{m-1}$ and that $\alpha \perp \gamma_m$.  By Lemma~\ref{l:orthoroots}, $\alpha \perp \mu$ for all $\mu \in \Psi$, proving statement $(3)$.
\end{proof}
\noindent
Recall that sequence multiplication is given by concatenation, so that $\overline{\theta_1} (\alpha)$ refers to the concatenation of the sequence $\overline{\theta_1}$ with the length $1$ sequence $(\alpha)$.
\begin{lem} \label{l:moveroots}
Let $w \in W$ and let $\textbf{x}$ be a reduced expression for $w$ that factors as $\textbf{x} = \textbf{x}_1\textbf{x}_2 (s) \textbf{x}_3 \textbf{x}_4$ and let $\overline{\theta}(\textbf{x}) = \overline{\theta_1} \,\overline{\theta_2} (\alpha) \overline{\theta_3}\, \overline{\theta_4}$ be the decomposition of $\overline{\theta}(\textbf{x})$ respecting $\textbf{x}_1\textbf{x}_2 (s) \textbf{x}_3 \textbf{x}_4$.  Suppose that $\alpha \perp \theta$ for every $\theta \in \overline{\theta_2}$.  Then $\textbf{x}' = \textbf{x}_1 (s) \textbf{x}_2 \textbf{x}_3 \textbf{x}_4$ is a reduced expression for $w$ such that $\textbf{x} \sim_C \textbf{x}'$ and $\overline{\theta}(\textbf{x}') = \overline{\theta_1} (\alpha) \overline{\theta_2}\, \overline{\theta_3}\,\overline{\theta_4}$ is the decomposition of $\overline{\theta}(\textbf{x}')$ respecting $\textbf{x}_1 (s) \textbf{x}_2 \textbf{x}_3 \textbf{x}_4$.\\ \\
Similarly, if $\alpha \perp \theta$ for every $\theta \in \overline{\theta_3}$, then $\textbf{x}' = \textbf{x}_1 \textbf{x}_2 \textbf{x}_3 (s) \textbf{x}_4$ is a reduced expression for $w$ such that $\textbf{x} \sim_C \textbf{x}'$ and $\overline{\theta}(\textbf{x}') = \overline{\theta_1}\, \overline{\theta_2}\, \overline{\theta_3} (\alpha) \overline{\theta_4}$ is the decomposition of $\overline{\theta}(\textbf{x}')$ respecting $\textbf{x}_1 \textbf{x}_2 \textbf{x}_3 (s) \textbf{x}_4$.
\end{lem}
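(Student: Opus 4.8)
The plan is to establish the first statement by induction on $\ell(\textbf{x}_2)$, turning the move of $(s)$ past $\textbf{x}_2$ into a chain of commutation moves. The base case $\ell(\textbf{x}_2)=0$ is immediate, since then $\textbf{x}'=\textbf{x}$ and $\overline{\theta_2}$ is empty. For the inductive step I would write $\textbf{x}_2 = \textbf{x}_2'\,(t)$ with $t$ the last generator of $\textbf{x}_2$, and let $\beta$ be the last entry of $\overline{\theta_2}$, so that $\overline{\theta}(\textbf{x}) = \overline{\theta_1}\,\overline{\theta_2'}\,(\beta)(\alpha)\,\overline{\theta_3}\,\overline{\theta_4}$ is the decomposition respecting $\textbf{x}_1\textbf{x}_2'(t)(s)\textbf{x}_3\textbf{x}_4$; here $\overline{\theta_2}=\overline{\theta_2'}(\beta)$, and $\beta,\alpha$ are consecutive entries of the root sequence sitting above the adjacent generators $t$ and $s$.

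The first key point is that $\alpha\perp\beta$ forces $m_{s,t}=2$: by the definition of the root sequence one has $\alpha = v(\alpha_s)$ and $\beta = vs(\alpha_t)$ for a suitable $v\in W$, so $W$-invariance of $B$ and $s(\alpha_s)=-\alpha_s$ give $B(\beta,\alpha)=B(s(\alpha_t),\alpha_s)=-B(\alpha_t,\alpha_s)$, which is exactly the computation in the proof of Lemma~\ref{l:detect2set}; hence $B(\alpha_s,\alpha_t)=0$, i.e. $m_{s,t}=2$. Therefore $\textbf{x}'':=\textbf{x}_1\textbf{x}_2'(s)(t)\textbf{x}_3\textbf{x}_4$ differs from $\textbf{x}$ by a single $2$-braid move, so $\textbf{x}\sim_C\textbf{x}''$ and $\textbf{x}''$ is again a reduced expression for $w$ (a commutation move preserves both $\phi$ and length). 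Applying Lemma~\ref{l:revroots} with $m=2$ to the factorization $\textbf{x}_1\textbf{x}_2'\cdot(t,s)_2\cdot\textbf{x}_3\textbf{x}_4$ shows that passing from $\textbf{x}$ to $\textbf{x}''$ transposes $\beta$ and $\alpha$ in the root sequence, so $\overline{\theta}(\textbf{x}'')=\overline{\theta_1}\,\overline{\theta_2'}\,(\alpha)(\beta)\,\overline{\theta_3}\,\overline{\theta_4}$ is the decomposition respecting $\textbf{x}_1\textbf{x}_2'(s)(t)\textbf{x}_3\textbf{x}_4$.

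Now I would apply the inductive hypothesis to $\textbf{x}''$, reading it with $\textbf{x}_2'$ in place of $\textbf{x}_2$ and $(t)\textbf{x}_3$ in place of $\textbf{x}_3$: the middle block $\textbf{x}_2'$ is strictly shorter than $\textbf{x}_2$, the root-sequence block above it is $\overline{\theta_2'}$, and since $\overline{\theta_2'}$ is an initial segment of $\overline{\theta_2}$ we still have $\alpha\perp\theta$ for every $\theta\in\overline{\theta_2'}$. The hypothesis then yields that $\textbf{x}_1(s)\textbf{x}_2'(t)\textbf{x}_3\textbf{x}_4 = \textbf{x}_1(s)\textbf{x}_2\textbf{x}_3\textbf{x}_4 = \textbf{x}'$ is commutation equivalent to $\textbf{x}''$, hence to $\textbf{x}$, is reduced for $w$, and has $\overline{\theta}(\textbf{x}')=\overline{\theta_1}\,(\alpha)\,\overline{\theta_2'}(\beta)\,\overline{\theta_3}\,\overline{\theta_4}=\overline{\theta_1}(\alpha)\overline{\theta_2}\,\overline{\theta_3}\,\overline{\theta_4}$ as the decomposition respecting $\textbf{x}_1(s)\textbf{x}_2\textbf{x}_3\textbf{x}_4$. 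The second (``similarly'') statement is proved by the mirror-image induction on $\ell(\textbf{x}_3)$: peel off the \emph{first} generator $\textbf{x}_3=(t)\textbf{x}_3'$, use $\alpha\perp\beta$ for the first root $\beta$ of $\overline{\theta_3}$ to obtain a commutation move sliding $(s)$ one step to the right, track the root sequence with Lemma~\ref{l:revroots}, and invoke the hypothesis with $\textbf{x}_2(t)$ in place of $\textbf{x}_2$ and $\textbf{x}_3'$ in place of $\textbf{x}_3$. I expect the only delicate part to be the bookkeeping: keeping straight which subsequence of $\overline{\theta}(\textbf{x})$ lies above which factor after each move and checking that the orthogonality hypothesis restricts correctly to the shorter block — the geometric content, that orthogonality of consecutive root-sequence entries is equivalent to commutativity of the underlying generators, is already isolated in Lemma~\ref{l:detect2set}.
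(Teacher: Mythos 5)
Your proposal is correct and follows essentially the same route as the paper's proof: peel off the last generator of $\textbf{x}_2$, use the orthogonality of $\alpha$ with the adjacent root-sequence entry to deduce $m_{s,t}=2$ (you inline the computation from Lemma~\ref{l:detect2set} where the paper cites Lemmas~\ref{l:detect2set} and \ref{l:rootform}), apply the resulting $2$-braid move, track the transposition of the two roots via Lemma~\ref{l:revroots}, and finish by induction on $\ell(\textbf{x}_2)$. Your version is somewhat more explicit about the inductive bookkeeping than the paper's, but the argument is the same.
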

\begin{proof}
Let $\overline{\theta_2} = (\alpha_1,\ldots,\alpha_k)$.  Since $\alpha \perp \alpha_k$, Lemma~\ref{l:detect2set} implies that $\Psi = \{\alpha,\alpha_k\}$ is an inversion $2$-set that is contracted in $\overline{\theta}(\textbf{x})$.  Thus, by Lemma~\ref{l:rootform}, $\textbf{x}_2 = \textbf{x}_2' (t)$, where $m_{s,t} = 2$.  Let $\textbf{x}' = \textbf{x}_1 \textbf{x}_2 (s,t) \textbf{x}_3 \textbf{x}_4$ be the reduced expression formed by applying the braid move $(s,t) = (t,s)$.  Furthermore, by Lemma~\ref{l:revroots}, $$\overline{\theta}(\textbf{x}') = \overline{\theta_1}(\alpha_1,\ldots,\alpha_{k-1},\alpha,\alpha_k) \overline{\theta_3}\,\overline{\theta_4}.$$  Also, since $m_{s,t} = 2$, we have $\textbf{x}' \sim_C \textbf{x}$.\\ \\
Since $\alpha \perp \alpha_1,\ldots,\alpha_{k-1}$ we may proceed inductively to obtain a reduced expression $\textbf{y}$ such that $\overline{\theta}(\textbf{y}) = \overline{\theta_1}\,(\alpha)\overline{\theta_2}\,\overline{\theta_3}\,\overline{\theta_4}$.  Furthermore, $\textbf{y} \sim_C \textbf{x}$.\\ \\
The statement for the situation in which $\alpha \perp \theta$ for every $\theta$ in $\overline{\theta_3}$ is proven similarly.
\end{proof}
\begin{lem} \label{l:maintaincontract}
Let $w \in W$ be freely braided.  Let $\textbf{x}$ be a reduced expression for $w$ that factors as $\textbf{x} = \textbf{x}_1 (s) \textbf{x}_2 (t) \textbf{x}_3$ and let $\overline{\theta}(\textbf{x}) = \overline{\theta_1}(\alpha) \overline{\theta_2}(\beta)\overline{\theta_3}$ be the decomposition of $\overline{\theta}(\textbf{x})$ respecting $\textbf{x}_1 (s) \textbf{x}_2 (t) \textbf{x}_3$.  Suppose that $\Psi$ is a contractible long inversion set containing both $\alpha$ and $\beta$ and that $\Psi' \neq \Psi$ is a long inversion set that is contracted in $\textbf{x}$.  Then every root in $\Psi'$ occurs consecutively in exactly one of the subsequences $\overline{\theta_1}$, $\overline{\theta_2}$, or $\overline{\theta_3}$.
\end{lem}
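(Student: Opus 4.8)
The plan is to show that the run of consecutive positions in $\overline{\theta}(\textbf{x})$ occupied by the roots of $\Psi'$ can contain neither the position of $\alpha$ nor the position of $\beta$; once that is established, the decomposition $\overline{\theta}(\textbf{x}) = \overline{\theta_1}(\alpha)\overline{\theta_2}(\beta)\overline{\theta_3}$ forces the run to lie inside one of $\overline{\theta_1}$, $\overline{\theta_2}$, $\overline{\theta_3}$, which is exactly the claim.

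First I would record that $\Psi'$ belongs to $\text{CInv}(w)$. By hypothesis $\Psi'$ is contracted in $\textbf{x}$, and $\textbf{x}$ is a reduced expression for $w$, so $\Psi'$ is contractible in $w$ by Definition~\ref{d:contracted}; since $\Psi'$ is a long inversion set, $\Psi' \in \text{CInv}(w)$ by Definition~\ref{d:longinversion}. The set $\Psi$ is a contractible long inversion set by hypothesis, so $\Psi \in \text{CInv}(w)$ as well, and $\Psi \neq \Psi'$. Because $w$ is freely braided, Definition~\ref{d:freelybraided} gives $\Psi \cap \Psi' = \emptyset$, and since $\alpha,\beta \in \Psi$ this yields $\alpha \notin \Psi'$ and $\beta \notin \Psi'$.

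Next I would invoke Definition~\ref{d:contracted} again: the roots of $\Psi'$ occupy a block of consecutive positions of $\overline{\theta}(\textbf{x})$. In the order given by the decomposition, the positions of $\overline{\theta}(\textbf{x})$ are: the positions of $\overline{\theta_1}$, then the single position of $\alpha$, then the positions of $\overline{\theta_2}$, then the single position of $\beta$, then the positions of $\overline{\theta_3}$. A run of consecutive positions that omits both the $\alpha$-position and the $\beta$-position must lie entirely within one of the three blocks $\overline{\theta_1}$, $\overline{\theta_2}$, $\overline{\theta_3}$, since any run meeting two of them would have to pass through the $\alpha$-position or the $\beta$-position. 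As the three blocks are pairwise position-disjoint and $|\Psi'| > 2$, "within one block" is automatically "within exactly one block," and the roots of $\Psi'$, being consecutive in $\overline{\theta}(\textbf{x})$, are consecutive within that block. This gives the statement. The proof is short; the only real content is the reduction to the freely braided hypothesis, and the sole thing to be careful about is the positional bookkeeping together with the observation that "$\Psi'$ contracted in $\textbf{x}$" is precisely what places $\Psi'$ in $\text{CInv}(w)$ so that Definition~\ref{d:freelybraided} applies.
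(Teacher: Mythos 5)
Your proof is correct and is essentially the paper's argument: the paper phrases it as a contradiction (if $\Psi'$ met more than one block, consecutiveness would force $\alpha$ or $\beta$ into $\Psi'$, contradicting $\Psi \cap \Psi' = \emptyset$), while you run the same three ingredients — contractedness gives a consecutive run, free braidedness gives $\alpha,\beta \notin \Psi'$, and positional bookkeeping confines the run to one block — in the direct direction. Your explicit observation that ``contracted in $\textbf{x}$'' places $\Psi'$ in $\text{CInv}(w)$ so that Definition~\ref{d:freelybraided} applies is a small point the paper leaves implicit.
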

\begin{proof}
Suppose that $\Psi'$ has roots in more than one of the subsequences.  Since $\Psi'$ is contracted in $\textbf{x}$,
the roots of $\Psi'$ occur consecutively in $\overline{\theta}(\textbf{x})$.  It follows that $\Psi'$ contains $\alpha$ or $\Psi'$ contains $\beta$.  Thus $\Psi \cap \Psi' \neq \emptyset$, contradicting the assumption that $w$ is freely braided.
\end{proof}
\begin{defn}
Let $\textbf{x}$ be a reduced expression for a freely braided element $w \in W$.  We say that $\textbf{x}$ is a \emph{contracted reduced expression} for $w$ if every long contractible inversion set of $w$ is contracted in $\overline{\theta}(\textbf{x})$.
\end{defn}
\begin{lem}  \label{l:cre}
Let $w \in W$ be a freely braided element.  Then there exists a contracted reduced expression $\textbf{x}$ for $w$.  Furthermore, every reduced expression $\textbf{x}$ for $w$ is commutation equivalent to a contracted reduced expression.
\end{lem}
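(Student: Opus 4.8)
Since $w$ has at least one reduced expression, the first assertion is a consequence of the second, so I will aim to show that an arbitrary reduced expression $\textbf{x}$ for $w$ is commutation equivalent to a contracted one. Because $\Phi(w)$ is finite, $w$ has only finitely many contractible long inversion sets, and these are pairwise disjoint since $w$ is freely braided (Definition~\ref{d:freelybraided}). The plan is to contract them one at a time: starting from $\textbf{x}$, I repeatedly pass to a commutation-equivalent reduced expression in which one further contractible long inversion set becomes contracted while all those already contracted remain so. Since the set of contracted long inversion sets then strictly grows at each step, this terminates with a contracted reduced expression commutation equivalent to $\textbf{x}$. Everything thus reduces to the following claim: if $\textbf{y} \in \mathcal{R}(w)$ and $\Psi$ is a contractible long inversion set not contracted in $\textbf{y}$, there is $\textbf{y}' \sim_C \textbf{y}$ in $\mathcal{R}(w)$ in which $\Psi$ is contracted and every long inversion set contracted in $\textbf{y}$ is still contracted in $\textbf{y}'$.

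To prove the claim, note first that $\Psi$, being contractible, is finite, hence equals $\{\gamma_1,\dots,\gamma_m\}$ with $m \geq 3$, where $\overline{\gamma}$ is a local root sequence of $\Psi$ with canonical simple roots $\gamma,\delta$; invoking Lemma~\ref{l:standard}, Lemma~\ref{l:wlabel}(3), and if necessary Lemma~\ref{l:sequencerelation} to interchange $\gamma$ and $\delta$, we may arrange that $T_{\textbf{y}}(\gamma_1) < \dots < T_{\textbf{y}}(\gamma_m)$. Call $\alpha \in \Phi(w)\setminus\Psi$ an \emph{intruder} if its position in $\overline{\theta}(\textbf{y})$ lies strictly between those of $\gamma_1$ and $\gamma_m$; then $\Psi$ is contracted in $\textbf{y}$ precisely when there are no intruders, so it suffices to produce, whenever one exists, a commutation-equivalent reduced expression with fewer intruders for $\Psi$ and the same contracted long inversion sets. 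Take $\alpha$ to be the intruder of largest label, say with $T_{\textbf{y}}(\gamma_i) < T_{\textbf{y}}(\alpha) < T_{\textbf{y}}(\gamma_{i+1})$. By maximality, $\gamma_{i+1},\dots,\gamma_m$ occupy exactly the positions between $\alpha$ and $\gamma_m$ in $\overline{\theta}(\textbf{y})$ and so form a consecutive run directly after $\alpha$. If $i \geq 2$, Lemma~\ref{l:contractibleorthogonal}(2)--(3) gives $\alpha \perp \gamma_j$ for $i < j \leq m$, and Lemma~\ref{l:moveroots} slides $\alpha$ rightward past this run by commutation moves, placing it beyond $\gamma_m$ and out of the range of $\Psi$. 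If $i = 1$, then every intruder lies in the first gap; replacing $\alpha$ by the intruder of smallest label, $\gamma_1$ is its immediate predecessor, $\alpha \perp \gamma_1$ by Lemma~\ref{l:contractibleorthogonal}(1), and Lemma~\ref{l:moveroots} slides $\alpha$ before $\gamma_1$. Either way the intruder count of $\Psi$ decreases.

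The delicate point, and the heart of the argument, is to ensure these moves do not disturb the other contracted long inversion sets; this is exactly where the pairwise disjointness furnished by free-braidedness is needed. Let $\Psi''$ be a long inversion set contracted in $\textbf{y}$. Being contractible it is disjoint from $\Psi$, and by Lemma~\ref{l:maintaincontract}, taking the two distinguished roots of $\Psi$ to be $\gamma_1$ and $\gamma_m$, the block of $\Psi''$ lies entirely among the roots preceding $\gamma_1$, entirely among those strictly between, or entirely among those following $\gamma_m$. If the intruder $\alpha$ being moved lies in no contracted long inversion set, I move $\alpha$ by itself; every transposition performed is between $\alpha$ and some $\gamma_j$, and neither of these roots lies in $\Psi''$ (for $\gamma_j$ because $\Psi'' \cap \Psi = \emptyset$), so both transposed positions lie on the same side of the block of $\Psi''$ and that block is left intact. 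If instead $\alpha$ lies in a contracted long inversion set $\Psi'$, which is the unique one containing $\alpha$ by disjointness, then the position analysis shows $\alpha$ is at an end of the block of $\Psi'$, and the root of $\Psi'$ adjacent to $\alpha$ in that block is again an intruder lying in the same gap as $\alpha$; Lemma~\ref{l:contractibleorthogonal} makes this root orthogonal to the endpoint $\alpha$ is crossing (or to all of $\Psi$, in the deep case), and Lemma~\ref{l:orthoroots} then forces every root of $\Psi'$ to be orthogonal to each $\gamma_j$ that must be crossed. Applying Lemma~\ref{l:moveroots} one root of $\Psi'$ at a time transports the entire block of $\Psi'$ past that run of $\gamma_j$'s, keeping $\Psi'$ contracted and again only ever transposing a $\gamma_j$ with a root of $\Psi'$, so no $\Psi''$ is harmed. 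This establishes the claim, and with it the lemma. I expect this last verification, namely that an intruder belonging to an already contracted long inversion set can be pried loose only by transporting its whole block and that the required orthogonalities are always available, to be the main obstacle.
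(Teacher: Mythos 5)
Your proof is correct, but it runs the contraction in the opposite direction from the paper. The paper contracts $\Psi=\{\gamma_1,\dots,\gamma_m\}$ by moving its \emph{own} roots together: $\gamma_{j+1}$ is slid leftward until adjacent to $\gamma_j$ for $j=2,\dots,m-1$, and finally $\gamma_1$ is slid rightward to meet $\gamma_2$, with Lemma~\ref{l:contractibleorthogonal} supplying the orthogonality of every crossed root to the moving $\gamma_j$ and Lemma~\ref{l:maintaincontract} guaranteeing that every other contracted block sits wholly inside a subsequence whose internal order the move preserves. You instead expel the ``intruders'' from the interval spanned by $\Psi$ while leaving the $\gamma_j$'s (essentially) in place. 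The price of your route is exactly the complication you flag: an intruder may belong to another contracted long inversion set $\Psi'$, and then it cannot be moved alone. Your resolution is sound --- disjointness of contractible long inversion sets plus Lemma~\ref{l:maintaincontract} pins the block of $\Psi'$ strictly between $\gamma_1$ and $\gamma_m$ with the chosen intruder at its end, all of its roots lie in the same gap, two of them are orthogonal to the relevant $\gamma_j$'s by Lemma~\ref{l:contractibleorthogonal}, and Lemma~\ref{l:orthoroots} then propagates the orthogonality to the whole of $\Psi'$ so the block can be transported intact. The paper's version avoids this case analysis entirely, since the roots it moves always lie in $\Psi$ and hence in no other contracted long inversion set; your version makes the role of free-braidedness more explicit and has the mild advantage that the relative order of the non-$\Psi$ roots is never disturbed. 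Both arguments are complete.
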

\begin{proof}
Let $\textbf{y}$ be a reduced expression such that the number of contracted long inversion sets in $\textbf{y}$ is $n < N(w)$.  Let $\Psi = \{\gamma_1,\ldots,\gamma_m\}$ be a contractible long inversion set that is not contracted in $\textbf{y}$.  We suppose without loss of generality that the order in which the roots of $\Psi$ appear in $\overline{\theta}(\textbf{y})$ is $\gamma_1,\ldots,\gamma_m$.\\ \\
Since $m \geq 3$, $\gamma_2$ is a root that is neither $\gamma_1$ nor $\gamma_m$.  Factor $\textbf{y}$ as $$\textbf{y} = \textbf{y}_1 (s) \textbf{y}_2 (t) \textbf{y}_3$$ so that the decomposition of $\overline{\theta}(\textbf{y})$ respecting $\textbf{y}_1 (s) \textbf{y}_2 (t) \textbf{y}_3$ is given by $$\overline{\theta}(\textbf{y}) = \overline{\theta_1} (\gamma_2) \overline{\theta_2} (\gamma_3) \overline{\theta_3}.$$
By Lemma~\ref{l:contractibleorthogonal}, every root $\lambda$ between $\gamma_3$ and $\gamma_2$ in $\overline{\theta}(\textbf{y})$ is orthogonal to $\gamma_3$.  Thus, by Lemma~\ref{l:moveroots}, the expression $\textbf{y}' = \textbf{y}_1 (s,t) \textbf{y}_2 \textbf{y}_3$ is a reduced expression for $w$ such that $\textbf{y} \sim_C \textbf{y}'$ and $\overline{\theta}(\textbf{y}') = \overline{\theta_1} (\gamma_2) (\gamma_3) \overline{\theta_2} \, \overline{\theta_3}$ is the decomposition of $\overline{\theta}(\textbf{y}')$ respecting $\textbf{y}_1 (s,t) \textbf{y}_2 \textbf{y}_3$.\\ \\
Let $\Psi' \neq \Psi$ be an inversion set that is contracted in $\textbf{y}$.  By Lemma~\ref{l:maintaincontract}, $\Psi'$ has all of its roots in exactly one of the subsequences $\overline{\theta_1}$, $\overline{\theta_2}$, or $\overline{\theta_3}$.  Thus, $\Psi'$ remains contracted in $\textbf{y}'$.\\ \\
We repeat this procedure of shifting $\gamma_{i+1}$ to the left until it is consecutive with $\gamma_i$ for all $2 \leq i \leq m-1$.  This results in a reduced expression $\textbf{y}''$ in which $\gamma_2,\ldots,\gamma_m$ are consecutive in $\overline{\theta}(\textbf{y}'')$, and by the same logic as above, all inversion sets that are contracted in $\textbf{y}$ remain contracted in $\textbf{y}''$.  Furthermore, $\textbf{y}'' \sim_C \textbf{y}$.\\ \\
With $\gamma_2,\ldots,\gamma_m$ consecutive in $\overline{\theta}(\textbf{y}'')$, we apply Lemma~\ref{l:contractibleorthogonal} to get that every root $\lambda$ between $\gamma_1$ and $\gamma_2$ in $\overline{\theta}(\textbf{y}'')$ is orthogonal to $\gamma_1$.  Thus we may apply
Lemma~\ref{l:moveroots} to shift $\gamma_1$ to the right to obtain a reduced expression $\textbf{y}'''$ in which $\gamma_1$ is consecutive with $\gamma_2$, and where the contracted long inversion sets of $\textbf{y}$ are contracted in $\textbf{y}'''$.  Furthermore, $\textbf{y}''' \sim_C \textbf{y}$.\\ \\
The result is that $\textbf{y}'''$ has $n + 1$ long inversion sets that are contracted.  By induction, there exists a reduced expression $\textbf{x}$ such that there are $N(w)$ contracted long inversion sets.\\ \\
Since the only moves required to transform a reduced expression into a contracted reduced expression are commutation moves, the last statement of the lemma follows.
\end{proof}
\begin{defn} \label{d:commutationorder}
Let $\textbf{x}$ be a reduced expression for a Coxeter group element $w$ and let $\overline{\theta}(\textbf{x})$ be the associated root sequence.  Let $$R = \{(\theta_i,\theta_j) \; : \; i < j \text{ and } \theta_i \not\perp \theta_j \}.$$  Then the reflexive and transitive closure of $R$ is a partial order on $\Phi(w)$ that we denote by $\leq_{\overline{\theta}(\textbf{x})}$.
\end{defn}
\begin{rem}
Given distinct reduced expressions $\textbf{x}$ and $\textbf{x}'$ for $w$, the partial orders $\leq_{\overline{\theta}(\textbf{x})}$ and $\leq_{\overline{\theta}(\textbf{x}')}$ are both binary relations on $\Phi(w)$.  Thus, when we speak of equality of these partial orders in the sequel, we mean that $\leq_{\overline{\theta}(\textbf{x})}$ and $\leq_{\overline{\theta}(\textbf{x}')}$, viewed as subsets of $\Phi(w) \times \Phi(w)$, are equal.
\end{rem}
\noindent
The statement and proof of \cite[Proposition 3.1.5]{fbI}, which is formed in the context of simply-laced Coxeter group generalizes exactly.  We reproduce the proof and statement here for convenience.
\begin{prop}[\textbf{Green and Losonczy}]  \label{p:propheap}
Let $w \in W$.  Let $\textbf{x}$ and $\textbf{x}'$ be reduced expressions for $w$.  Then $\leq_{\overline{\theta}(\textbf{x})}$ and $\leq_{\overline{\theta}(\textbf{x}')}$ are equal as partially ordered sets if and only if $\textbf{x} \sim_C \textbf{x}'$.
\end{prop}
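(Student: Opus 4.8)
The proof proposal is to treat the two implications separately, the ``if'' direction being a short check that commutation moves do not alter the betweenness poset, and the ``only if'' direction being an induction on $\ell(w)$ whose engine is a sublemma about sliding a minimal element of the poset to the front.

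First I would handle the ``if'' direction. Since commutation equivalence is generated by $2$-braid moves (Definition~\ref{d:commutation}), by induction on the number of such moves it suffices to assume $\textbf{x}'$ arises from $\textbf{x}$ by a single $2$-braid move, replacing a factor $(s,t)$ with $m_{s,t}=2$ by $(t,s)$. By Lemma~\ref{l:revroots} (with $m=2$, so that the ``reversal'' of a length-two subsequence is a transposition) and Lemma~\ref{l:moveinversion}, the effect on $\overline{\theta}(\textbf{x})$ is to interchange two adjacent entries $\alpha,\beta$ that form an inversion $2$-set; by the remark after Definition~\ref{d:longinversion} these satisfy $\alpha\perp\beta$. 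Consequently the only pair of roots whose relative order in the root sequence changes is $\{\alpha,\beta\}$, and since $\alpha\perp\beta$ neither $(\alpha,\beta)$ nor $(\beta,\alpha)$ belongs to the generating relation $R$ of Definition~\ref{d:commutationorder} for either expression. Hence $R$ is literally the same set of ordered pairs of roots for $\textbf{x}$ and $\textbf{x}'$, and therefore so is its reflexive transitive closure, giving $\leq_{\overline{\theta}(\textbf{x})}=\leq_{\overline{\theta}(\textbf{x}')}$.

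For the ``only if'' direction I would induct on $\ell(w)$, the case $\ell(w)=0$ being trivial. Assume $\leq_{\overline{\theta}(\textbf{x})}=\leq_{\overline{\theta}(\textbf{x}')}$, and let $\theta$ be the first entry of $\overline{\theta}(\textbf{x})$. Since $R$ only ever relates an earlier entry of the root sequence to a later one, $\theta$ is a minimal element of the common poset. The key sublemma is: \emph{if $\mu$ is minimal in $\leq_{\overline{\theta}(\textbf{y})}$ for a reduced expression $\textbf{y}$, then some $\textbf{y}^\ast\sim_C\textbf{y}$ has $\mu$ as the first entry of its root sequence.} To prove it, write $\overline{\theta}(\textbf{y})=(\mu_1,\dots,\mu_n)$ with $\mu=\mu_i$; minimality forces $\mu_j\perp\mu_i$ for every $j<i$ (else $(\mu_j,\mu_i)\in R$ gives $\mu_j<_{\overline{\theta}(\textbf{y})}\mu_i$). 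A short computation as in the proof of Lemma~\ref{l:detect2set} shows that two consecutive root-sequence entries are orthogonal exactly when their corresponding generators commute, so $\mu_{i-1}\perp\mu_i$ licenses a $2$-braid move at positions $i-1,i$; by Lemma~\ref{l:revroots} this moves $\mu_i$ into position $i-1$ and fixes the earlier entries (still orthogonal to $\mu_i$). Iterating $i-1$ times, using only commutation moves, proves the sublemma. Now apply it to $\textbf{x}'$ with $\mu=\theta$ to get $\textbf{x}''\sim_C\textbf{x}'$ whose root sequence begins with $\theta$; as commutation moves preserve the element, $\phi(\textbf{x}'')=w$, and by Lemma~\ref{l:reflectdelete} the first letter of both $\textbf{x}$ and $\textbf{x}''$ equals the (simple) reflection $w s_\theta w^{-1}$, call it $s$. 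Write $\textbf{x}=(s)\textbf{y}$, $\textbf{x}''=(s)\textbf{y}''$, reduced expressions for $w'=sw$ with $\Phi(w')=\Phi(w)\setminus\{\theta\}$; by Lemma~\ref{l:basicrs}, $\overline{\theta}(\textbf{y})$ and $\overline{\theta}(\textbf{y}'')$ are obtained by deleting the initial entry $\theta$ from $\overline{\theta}(\textbf{x})$ and $\overline{\theta}(\textbf{x}'')$. Because $\theta$ is minimal, no $R$-chain runs through it, so the poset restricted to $\Phi(w')$ is exactly $\leq_{\overline{\theta}(\textbf{y})}$ (respectively $\leq_{\overline{\theta}(\textbf{y}'')}$); combined with $\leq_{\overline{\theta}(\textbf{x})}=\leq_{\overline{\theta}(\textbf{x}')}=\leq_{\overline{\theta}(\textbf{x}'')}$ (the last by the ``if'' direction) this yields $\leq_{\overline{\theta}(\textbf{y})}=\leq_{\overline{\theta}(\textbf{y}'')}$. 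The induction hypothesis gives $\textbf{y}\sim_C\textbf{y}''$, and Lemma~\ref{l:cancelsim} then gives $\textbf{x}=(s)\textbf{y}\sim_C(s)\textbf{y}''=\textbf{x}''\sim_C\textbf{x}'$.

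I expect the main obstacle to be the two bookkeeping points inside the ``only if'' step: verifying cleanly that a minimal element of the betweenness poset is precisely one that can be commuted to the front (the orthogonality-to-commutation translation via the computation of Lemma~\ref{l:detect2set}, together with the iteration), and verifying that deleting such a minimal root $\theta$ commutes with passing to the induced partial order on $\Phi(w')=\Phi(w)\setminus\{\theta\}$, i.e.\ that no directed $R$-path through $\theta$ is destroyed in the restriction. Once these are pinned down, the remaining arguments are routine.
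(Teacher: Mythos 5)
Your proposal is correct and follows essentially the same route as the paper's proof: the ``if'' direction by checking that a single $2$-braid move only transposes two adjacent orthogonal entries of the root sequence (hence leaves the generating relation $R$ untouched), and the ``only if'' direction by observing that every root preceding $\theta_1$ in $\overline{\theta}(\textbf{x}')$ must be orthogonal to $\theta_1$, commuting $\theta_1$ to the front with $2$-braid moves, stripping the common first letter, and inducting via Lemma~\ref{l:cancelsim}. Your explicit attention to the two bookkeeping points (minimality $\Leftrightarrow$ commutable to the front, and the restriction of the poset after deleting the minimal root) only makes explicit what the paper leaves implicit.
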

\begin{proof}
Suppose $\textbf{x}$ and $\textbf{x}'$ are reduced expressions that differ by a single $2$-braid move.  Then, by Lemma~\ref{l:rootform} and Definition~\ref{d:commutationorder}, $\alpha \leq_{\overline{\theta}(\textbf{x})} \beta$ if and only if $\alpha \leq_{\overline{\theta}(\textbf{x}')} \beta$.  Thus if $\textbf{x}$ and $\textbf{x}'$ are commutation equivalent, then $\leq_{\overline{\theta}(\textbf{x})}$ and $\leq_{\overline{\theta}(\textbf{x}')}$ are equal.\\ \\
Conversely, suppose $\leq_{\overline{\theta}(\textbf{x})}$ and $\leq_{\overline{\theta}(\textbf{x}')}$ are equal as partially ordered sets.  Write $\overline{\theta}(\textbf{x}) = (\theta_1,\ldots,\theta_n)$ and $\overline{\theta}(\textbf{x}') = (\theta_1',\ldots,\theta_n')$.  Let $\pi \in S_n$ be the permutation of the indices satisfying $\theta_i = \theta_{\pi(i)}'$.  Suppose $\theta_1$ and $\theta_i$ are nonorthogonal.  Then, since the partial orders are equal, $\pi(1) < \pi(i)$ so that the roots occurring before $\theta_1$ in $\overline{\theta}(\textbf{x}')$ are all orthogonal to $\theta_1$.\\ \\
Let $(\theta_1',\ldots, \theta_k', \theta_{\pi(1)}')$ be the initial subsequence of $\overline{\theta}(\textbf{x}')$.  By Lemma~\ref{l:rootform}, we can form an expression $\textbf{x}''$ by applying a sequence of 2-braid moves starting with $\textbf{x}'$ such that $\overline{\theta}(\textbf{x}'') = (\theta_1'',\ldots,\theta_n'')$ satisfies $\theta_1'' = \theta_1$.  Since the sequence consisted of $2$-braid moves, we have $\textbf{x}'' \sim_C \textbf{x}'$.\\ \\
Now we factor $\textbf{x} = (s_\alpha) \textbf{y}$ and $\textbf{x}'' = (s_\beta) \textbf{y}''$.  Let $v = \phi(\textbf{y})$ and $v'' = \phi(\textbf{y}'')$.  Since the root sequences $\overline{\theta}(\textbf{y})$ and $\overline{\theta}(\textbf{y}'')$ have the same entries, $v = v''$ by Proposition~\ref{p:basiccoxeter} parts $(7)$ and $(8)$.  Similarly, we have $s_\alpha v = s_\beta v''$ so that $s_\alpha = s_\beta$.  By induction, since the partial orders $\leq_{\overline{\theta}(\textbf{y})}$ and $\leq_{\overline{\theta}(\textbf{y}'')}$ are equal, we have $\textbf{y} \sim_C \textbf{y}''$.  By Lemma~\ref{l:cancelsim}, we now have $\textbf{x} \sim_C \textbf{x}''$.  It follows that $\textbf{x} \sim_C \textbf{x}'' \sim_C \textbf{x}'$.
\end{proof}
\noindent
Following \cite{fbI}, we denote the number of contractible long inversion sets of $w$ by $N(w)$.  One result of \cite{fbI} and \cite{fbII} we wish to generalize is that an element $w \in W$ is freely braided if and only if the number of commutation classes of $w$ is $2^{N(w)}$.  Towards this end we introduce an injective map from the set of reduced expressions for $w$ to the set of 0-1 states indexed by the contractible long inversion sets of $w$.
\begin{defn}
Let $w \in W$ and let $\textbf{x}$ be a fixed reduced expression for $w$ with standard encoding $T_{\textbf{x}}$.  Let $\mathcal{R}(w)$ denote the set of reduced expressions for $w$.  We define a map $F_{\textbf{x}}:\mathcal{R}(w) \rightarrow \{0,1\}^{\text{CInv}(w)}$ by
\begin{equation*}
F_{\textbf{x}}(\textbf{x}')\,(\Omega) = \begin{cases} 0 &\text{ if }\Omega \text{ is in the same relative ordering in }
T_{\textbf{x}} \text{ as in } T_{\textbf{x}'}\\
                                      1 &\text{ otherwise.}
\end{cases}
\end{equation*}
\end{defn}
\begin{lem} \label{l:statewd}
Let $w \in W$ and $\textbf{x}$ be a fixed reduced expression for $w$.  Let $\textbf{y}$ and $\textbf{y}'$ be reduced expressions for $w$.  If $\textbf{y} \sim_C \textbf{y}'$, then $F_{
\textbf{x}}(\textbf{y}) = F_{\textbf{x}}(\textbf{y}')$.
\end{lem}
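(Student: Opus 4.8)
The plan is to show that for commutation-equivalent reduced expressions the function $F_{\textbf{x}}$ takes the same value, by checking it value-by-value on each contractible long inversion set $\Omega \in \text{CInv}(w)$. The main tool will be Proposition~\ref{p:propheap}: since $\textbf{y} \sim_C \textbf{y}'$, the heap partial orders $\leq_{\overline{\theta}(\textbf{y})}$ and $\leq_{\overline{\theta}(\textbf{y}')}$ on $\Phi(w)$ coincide, so it suffices to prove that the relative ordering of $\Omega$ induced by $T_{\textbf{z}}$ is recoverable from $\leq_{\overline{\theta}(\textbf{z})}$ for an arbitrary reduced expression $\textbf{z}$ of $w$. (Note there is no need to reduce to a single $2$-braid move, since Proposition~\ref{p:propheap} already characterizes commutation equivalence.)

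First I would record the structure of $\Omega$. Since $\Omega$ is contracted in some reduced expression for $w$, we have $\Omega \subseteq \Phi(w)$, so $\Omega$ is a full inversion set of $w$ with two canonical simple roots; write $\Omega = \{\gamma_1,\ldots,\gamma_m\}$ in the order of a local root sequence with $\gamma_1$ a canonical simple root, so that $\gamma_m = \delta$ is the other one. Because $T_{\textbf{z}}$ is a sequential standard labeling of $\Phi^+$ with finite support (Lemma~\ref{l:standard}), Lemma~\ref{l:wlabel}(3) applies and shows that $T_{\textbf{z}}$ is monotone in the index on $\Omega$: either $T_{\textbf{z}}(\gamma_1) < \cdots < T_{\textbf{z}}(\gamma_m)$ or the reverse. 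Hence the relative ordering of $\Omega$ in $T_{\textbf{z}}$ is one of exactly the two dual orderings of $\Omega$, and it is determined by which of $\gamma_1$, $\gamma_2$ occurs earlier in $\overline{\theta}(\textbf{z})$.

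The key step is to prove $\gamma_1 \not\perp \gamma_2$. Since $\Omega \in \text{CInv}(w)$, there is a reduced expression $\textbf{z}_0$ for $w$ in which the roots of $\Omega$ form a consecutive block of $\overline{\theta}(\textbf{z}_0)$; by the monotonicity just noted, $\gamma_1$ and $\gamma_2$ are adjacent within that block, hence consecutive entries of $\overline{\theta}(\textbf{z}_0)$. If $B(\gamma_1,\gamma_2) = 0$, then Lemma~\ref{l:detect2set} would make $\{\gamma_1,\gamma_2\}$ an inversion $2$-set; but by Corollary~\ref{c:inversionline} the only inversion set containing both $\gamma_1$ and $\gamma_2$ is $\Omega$, forcing $\{\gamma_1,\gamma_2\} = \Omega$, contradicting $|\Omega| = m > 2$ (Definition~\ref{d:longinversion}). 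So $\gamma_1 \not\perp \gamma_2$. Consequently, in any reduced expression $\textbf{z}$ for $w$, the roots $\gamma_1$ and $\gamma_2$ are comparable in $\leq_{\overline{\theta}(\textbf{z})}$, with the one occurring first in $\overline{\theta}(\textbf{z})$ being the smaller; combined with monotonicity this pins down the relative ordering of $\Omega$ in $T_{\textbf{z}}$ from the single datum of whether $\gamma_1 <_{\overline{\theta}(\textbf{z})} \gamma_2$.

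Finally I would assemble the argument: applying the previous paragraph with $\textbf{z} = \textbf{y}$ and with $\textbf{z} = \textbf{y}'$, and using $\leq_{\overline{\theta}(\textbf{y})} = \leq_{\overline{\theta}(\textbf{y}')}$ from Proposition~\ref{p:propheap}, we get that $\gamma_1 <_{\overline{\theta}(\textbf{y})} \gamma_2$ holds iff $\gamma_1 <_{\overline{\theta}(\textbf{y}')} \gamma_2$ holds, so $\Omega$ has the same relative ordering in $T_{\textbf{y}}$ as in $T_{\textbf{y}'}$. Therefore $\Omega$ is in the same relative ordering in $T_{\textbf{x}}$ as in $T_{\textbf{y}}$ precisely when it is in the same relative ordering in $T_{\textbf{x}}$ as in $T_{\textbf{y}'}$, i.e.\ $F_{\textbf{x}}(\textbf{y})(\Omega) = F_{\textbf{x}}(\textbf{y}')(\Omega)$; since $\Omega \in \text{CInv}(w)$ was arbitrary, $F_{\textbf{x}}(\textbf{y}) = F_{\textbf{x}}(\textbf{y}')$. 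The main obstacle is the orthogonality step together with the observation that the relative ordering of $\Omega$ is a two-valued invariant already determined by the heap partial order; once those are in place the rest is bookkeeping.
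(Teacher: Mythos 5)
Your proof is correct, but it takes a genuinely different route from the paper's. The paper reduces to a single $2$-braid move and argues locally: by Lemma~\ref{l:revroots} such a move merely swaps two adjacent labels $k,k+1$ on a pair $\{\alpha,\beta\}$ that forms an inversion $2$-set (Lemma~\ref{l:rootform}), and since a long inversion set $\Omega$ can meet $\{\alpha,\beta\}$ in at most one root (Lemma~\ref{l:oneroot}), the relative ordering of $\Omega$ is untouched; induction on the number of moves finishes it. You instead route everything through the heap order: Proposition~\ref{p:propheap} gives $\leq_{\overline{\theta}(\textbf{y})}=\leq_{\overline{\theta}(\textbf{y}')}$, and you show the relative ordering of $\Omega$ is a function of the heap order because (i) by Lemma~\ref{l:wlabel}(3) that ordering is one of the two dual orders and is pinned down by which of $\gamma_1,\gamma_2$ comes first, and (ii) $\gamma_1\not\perp\gamma_2$, which you extract from contractibility via Lemma~\ref{l:detect2set}, Corollary~\ref{c:inversionline}, and $|\Omega|>2$. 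All the steps check out: $\Omega\subseteq\Phi(w)$ since $\Omega$ is contracted in some reduced expression, so Lemma~\ref{l:wlabel}(3) applies, and the heap order does determine the relative position of any non-orthogonal pair. The paper's argument is more elementary and self-contained (it is essentially the same single-move analysis that underlies the easy direction of Proposition~\ref{p:propheap}, applied directly); yours buys a cleaner conceptual statement, namely that the $\text{CInv}(w)$-state of a reduced expression is an invariant of its heap, at the cost of the extra observation that consecutive roots $\gamma_1,\gamma_2$ of a long local root sequence are never orthogonal (which one could also see directly from $B(\gamma_1,\gamma_2)=-B(\gamma,\delta)\neq 0$ when $m>2$).
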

\begin{proof}
Suppose $\textbf{y}$ and $\textbf{y}'$ differ by a $2$-braid move.  By Lemma~\ref{l:revroots}, there exist $\alpha, \beta \in \Phi(w)$, and a label $k$ such that $T_{\textbf{y}}(\alpha) = k$, $T_{\textbf{y}}(\beta) = k + 1$, $T_{\textbf{y}'}(\alpha) = k+1$, and $T_{\textbf{y}'}(\beta) = k$.  Since $\textbf{y}$ and $\textbf{y}'$ differ by only a $2$-braid move, $T_{\textbf{y}}(\gamma) = T_{\textbf{y}'}(\gamma)$ whenever $\gamma$ is neither $\alpha$ nor $\beta$. By Lemma~\ref{l:rootform}, $\{\alpha, \beta\}$ is an inversion $2$-set.  Thus if $\Omega \in \text{CInv}(w)$, then $\Omega \cap \{\alpha, \beta\} \neq \{\alpha,\beta\}$ by Lemma~\ref{l:oneroot}.  It follows that $\Omega$ is in the same relative ordering in $T_{\textbf{y}}$ as in $T_{\textbf{y}'}
$.  Hence, $F_{\textbf{x}}(\textbf{y}) \, (\Omega) = F_{\textbf{x}}(\textbf{y}') \, (\Omega)$ for any $\Omega \in \text{CInv}(w)$.  Repeatedly applying $2$-braid moves gives the same result, so $\textbf{y} \sim_C \textbf{y}'$ implies $F_{\textbf{x}}(\textbf{y}) = F_{\textbf{x}}(\textbf{y}')$.
\end{proof}
\noindent
We now prove the converse of the previous assertion.
\begin{lem} \label{l:stateinjective}
Let $w \in W$ and $\textbf{x}$ be a fixed reduced expression for $w$.  Let $\textbf{y}$ and $\textbf{y}'$ be reduced expressions for $w$.  If $F_{\textbf{x}}(\textbf{y}) = F_{\textbf{x}}(\textbf{y}')$, then $\textbf{y} \sim_C \textbf{y}'$.
\end{lem}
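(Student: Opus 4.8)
The plan is to invoke Proposition~\ref{p:propheap}, which reduces the desired conclusion $\textbf{y} \sim_C \textbf{y}'$ to showing that the partial orders $\leq_{\overline{\theta}(\textbf{y})}$ and $\leq_{\overline{\theta}(\textbf{y}')}$ on $\Phi(w)$ coincide. These partial orders are the transitive closures of the relations generated by pairs of \emph{nonorthogonal} roots (Definition~\ref{d:commutationorder}); moreover, any two nonorthogonal roots $\alpha \not\perp \beta$ are strictly comparable in $\leq_{\overline{\theta}(\textbf{y})}$, with the direction of the comparison equal to the order in which they occur in $\overline{\theta}(\textbf{y})$ (antisymmetry rules out the reverse comparison). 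So the first step is to observe that $\leq_{\overline{\theta}(\textbf{y})} = \leq_{\overline{\theta}(\textbf{y}')}$ if and only if every pair $\alpha,\beta \in \Phi(w)$ with $\alpha \not\perp \beta$ occurs in the same relative order in $\overline{\theta}(\textbf{y})$ as in $\overline{\theta}(\textbf{y}')$: agreement of the orders on nonorthogonal pairs gives agreement of the generating relations, hence of their closures, and conversely the closures determine the order of each nonorthogonal pair by antisymmetry.

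Next I would unpack the hypothesis $F_{\textbf{x}}(\textbf{y}) = F_{\textbf{x}}(\textbf{y}')$. For any contractible long inversion set $\Omega = \{\gamma_1,\ldots,\gamma_m\} \in \text{CInv}(w)$, all of its roots lie in $\Phi(w)$ by Definition~\ref{d:contracted} and Proposition~\ref{p:basiccoxeter}(8), so in particular both canonical simple roots of $\Omega$ lie in $\text{supp}(T_{\textbf{y}}) = \Phi(w)$. Since $T_{\textbf{y}}$ is a sequential standard labeling by Lemma~\ref{l:standard}, Lemma~\ref{l:wlabel}(3) shows that $T_{\textbf{y}}$ is monotone along $\gamma_1,\ldots,\gamma_m$, so $\Omega$ has exactly one of two possible relative orderings in $T_{\textbf{y}}$, and likewise in $T_{\textbf{y}'}$ and $T_{\textbf{x}}$. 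Because there are only two, the equality $F_{\textbf{x}}(\textbf{y})(\Omega) = F_{\textbf{x}}(\textbf{y}')(\Omega)$ forces $\Omega$ to have the same relative ordering in $T_{\textbf{y}}$ as in $T_{\textbf{y}'}$, whether their common value is $0$ or $1$.

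The third step is a proof by contradiction. Suppose some nonorthogonal pair $\alpha \not\perp \beta$ occurs in opposite relative orders in $\overline{\theta}(\textbf{y})$ and $\overline{\theta}(\textbf{y}')$. By Lemma~\ref{l:inversioncontracts} there is a contractible inversion set $\Psi$ of $w$ containing both $\alpha$ and $\beta$. Since $\alpha \not\perp \beta$, the set $\Psi$ cannot be a short inversion set, because the two roots of a short inversion set are orthogonal (the remark following Definition~\ref{d:longinversion}); hence $\Psi$ is a contractible long inversion set, i.e.\ $\Psi \in \text{CInv}(w)$. But then $\alpha$ and $\beta$ are two roots of $\Psi$ occurring in opposite relative orders in $T_{\textbf{y}}$ and $T_{\textbf{y}'}$, and by the monotonicity established above this means $\Psi$ itself has opposite relative orderings in $T_{\textbf{y}}$ and $T_{\textbf{y}'}$ — contradicting the previous paragraph. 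Therefore every nonorthogonal pair occurs in the same order in both root sequences, so $\leq_{\overline{\theta}(\textbf{y})} = \leq_{\overline{\theta}(\textbf{y}')}$, and Proposition~\ref{p:propheap} yields $\textbf{y} \sim_C \textbf{y}'$.

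The only point requiring care is the first reduction, namely that equality of the two transitive-closure partial orders is equivalent to agreement of the underlying orders on nonorthogonal pairs; one direction is immediate and the other needs antisymmetry to recover the generating relation from its closure. I do not anticipate a genuine obstacle beyond this bookkeeping: the substantive facts — that a reversal of two roots' order across reduced expressions must take place inside an inversion set, that ``long'' means at least three roots and a short inversion set consists of orthogonal roots, and that the standard encoding is monotone along a local root sequence — are already supplied by Lemmas~\ref{l:inversioncontracts}, \ref{l:wlabel} and \ref{l:standard} together with Proposition~\ref{p:propheap}.
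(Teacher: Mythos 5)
Your proof is correct and follows essentially the same route as the paper's: reduce via Proposition~\ref{p:propheap} to showing that every nonorthogonal pair keeps its relative order, and deduce this from the equality of the $F_{\textbf{x}}$-states. You are in fact slightly more careful than the paper at one point — the paper applies $F_{\textbf{x}}$ directly to the inversion set containing a nonorthogonal pair without checking that it lies in $\text{CInv}(w)$, whereas you justify this via Lemma~\ref{l:inversioncontracts} together with the orthogonality of short inversion sets.
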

\begin{proof}
Let $\alpha$ and $\beta$ be distinct nonorthogonal roots of $\Phi(w)$.  Without loss of generality, suppose that $\alpha <_{\overline{\theta}(\textbf{y})} \beta$.  Then, since $\alpha$ and $\beta$ lie in some inversion set $\Psi$ by Corollary~\ref{c:inversionline}, we have $F_{\textbf{x}}(\textbf{y})(\Psi) = F_{\textbf{x}}(\textbf{y}')(\Psi)$.  Thus, $\alpha$ and $\beta$ occur in the same relative order in $\overline{\theta}(\textbf{y})$ as they do in $\overline{\theta}(\textbf{y}')$.  It follows that $\leq_{\overline{\theta}(\textbf{y})}$ and $\leq_{\overline{\theta}(\textbf{y}')}$ are the same partial order.  By Proposition~\ref{p:propheap}, we have $\textbf{y} \sim_C \textbf{y}'$.
\end{proof}
\begin{cor} \label{c:Finjective}
The map $F_{\textbf{x}}$ induces a an injective mapping $$F_{\textbf{x}}':\mathcal{C}(w) \rightarrow \{0,1\}^{\text{CInv(w)}}.$$
\end{cor}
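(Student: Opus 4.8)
The plan is to derive this corollary directly from the two lemmas immediately preceding it, namely Lemma~\ref{l:statewd} and Lemma~\ref{l:stateinjective}, with no further computation required.

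First I would use Lemma~\ref{l:statewd} to produce the induced map.  Recall that $\mathcal{C}(w)$ is the set of commutation classes of reduced expressions for $w$, i.e.\ the quotient of $\mathcal{R}(w)$ by the equivalence relation $\sim_C$.  Lemma~\ref{l:statewd} says precisely that $F_{\textbf{x}}$ is constant on each commutation class: if $\textbf{y} \sim_C \textbf{y}'$ then $F_{\textbf{x}}(\textbf{y}) = F_{\textbf{x}}(\textbf{y}')$.  Hence $F_{\textbf{x}}$ factors through the quotient map $\mathcal{R}(w) \to \mathcal{C}(w)$, yielding a well-defined function $F_{\textbf{x}}' : \mathcal{C}(w) \to \{0,1\}^{\text{CInv}(w)}$ characterized by $F_{\textbf{x}}'(C) = F_{\textbf{x}}(\textbf{y})$ for any representative $\textbf{y} \in C$.

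Next I would invoke Lemma~\ref{l:stateinjective} to establish injectivity.  Suppose $C$ and $C'$ are commutation classes with $F_{\textbf{x}}'(C) = F_{\textbf{x}}'(C')$.  Choosing representatives $\textbf{y} \in C$ and $\textbf{y}' \in C'$, we obtain $F_{\textbf{x}}(\textbf{y}) = F_{\textbf{x}}(\textbf{y}')$, so Lemma~\ref{l:stateinjective} gives $\textbf{y} \sim_C \textbf{y}'$, whence $C = C'$.  This shows $F_{\textbf{x}}'$ is injective and completes the argument.

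There is no genuine obstacle here; the corollary is a formal consequence of the two lemmas, which together assert that $F_{\textbf{x}}(\textbf{y}) = F_{\textbf{x}}(\textbf{y}')$ if and only if $\textbf{y} \sim_C \textbf{y}'$.  The only point requiring a modicum of care is the standard verification that a function constant on equivalence classes descends to a well-defined function on the quotient, which is routine.

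\begin{proof}
By Lemma~\ref{l:statewd}, the map $F_{\textbf{x}}:\mathcal{R}(w) \rightarrow \{0,1\}^{\text{CInv}(w)}$ takes the same value on any two commutation equivalent reduced expressions for $w$.  Hence $F_{\textbf{x}}$ descends to a well-defined map $F_{\textbf{x}}':\mathcal{C}(w) \rightarrow \{0,1\}^{\text{CInv}(w)}$ given by $F_{\textbf{x}}'(C) = F_{\textbf{x}}(\textbf{y})$ for any $\textbf{y} \in C$.  To see that $F_{\textbf{x}}'$ is injective, suppose $C, C' \in \mathcal{C}(w)$ satisfy $F_{\textbf{x}}'(C) = F_{\textbf{x}}'(C')$.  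Pick representatives $\textbf{y} \in C$ and $\textbf{y}' \in C'$, so that $F_{\textbf{x}}(\textbf{y}) = F_{\textbf{x}}(\textbf{y}')$.  By Lemma~\ref{l:stateinjective}, $\textbf{y} \sim_C \textbf{y}'$, so $C = C'$.  Therefore $F_{\textbf{x}}'$ is injective.
\end{proof}
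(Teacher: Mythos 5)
Your proof is correct and follows exactly the paper's own argument: Lemma~\ref{l:statewd} shows $F_{\textbf{x}}$ is constant on commutation classes and hence descends to $\mathcal{C}(w)$, and Lemma~\ref{l:stateinjective} gives injectivity of the induced map. You have merely written out the routine quotient argument in more detail than the paper does.
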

\begin{proof}
By Lemma~\ref{l:statewd}, the map $F_{\textbf{x}}$ is well-defined on the commutation classes of $w$.  By Lemma~\ref{l:stateinjective}, $F_{\textbf{x}}$ is injective.
\end{proof}
\begin{theorem} \label{t:fbcharacterization}
Let $w \in W$.  Then $w$ is freely braided if and only if the number of commutation classes of $w$ is $2^{N(w)}$.
\end{theorem}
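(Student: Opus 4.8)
\section*{Proof proposal}

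The plan is to reduce the statement to the surjectivity of the map $F_{\textbf{x}}$, and then prove that surjectivity holds exactly for freely braided elements. Fix a reduced expression $\textbf{x}$ for $w$. Corollary~\ref{c:Finjective} provides an injection $F_{\textbf{x}}':\mathcal{C}(w)\to\{0,1\}^{\mathrm{CInv}(w)}$, and the codomain has $2^{N(w)}$ elements, so $|\mathcal{C}(w)|\le 2^{N(w)}$ always, with equality if and only if $F_{\textbf{x}}$ is onto $\{0,1\}^{\mathrm{CInv}(w)}$. Whether $F_{\textbf{x}}$ is onto does not depend on the chosen base expression: for two reduced expressions $\textbf{x}_1,\textbf{x}_2$, the value $F_{\textbf{x}_1}(\textbf{y})(\Psi)$ differs from $F_{\textbf{x}_2}(\textbf{y})(\Psi)$ by the quantity $F_{\textbf{x}_1}(\textbf{x}_2)(\Psi)$, which is independent of $\textbf{y}$, so the images of $F_{\textbf{x}_1}$ and $F_{\textbf{x}_2}$ are translates of one another under coordinatewise addition mod $2$, hence one fills the cube exactly when the other does. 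So it suffices to prove that $w$ is freely braided if and only if $F_{\textbf{x}}$ is surjective for some (equivalently every) reduced expression $\textbf{x}$.

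For the forward direction I would use the contracted reduced expression. Assume $w$ is freely braided and pick, by Lemma~\ref{l:cre}, a contracted reduced expression $\textbf{x}_0$, so every member of $\mathrm{CInv}(w)$ is contracted in $\textbf{x}_0$; note $F_{\textbf{x}_0}(\textbf{x}_0)$ is the zero tuple. Given $\epsilon\in\{0,1\}^{\mathrm{CInv}(w)}$, process the sets $\Psi$ with $\epsilon(\Psi)=1$ one at a time: if $\Psi$ is contracted in the current reduced expression, then by Lemma~\ref{l:rootform} the consecutive block of the root sequence carrying $\Psi$ comes from a factor of the form $(s,t)_{|\Psi|}$, and performing the associated braid move reverses precisely that block by Lemma~\ref{l:revroots}, hence flips the $\Psi$-coordinate of $F_{\textbf{x}_0}$. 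Because $w$ is freely braided, the distinct members of $\mathrm{CInv}(w)$ are pairwise disjoint by Definition~\ref{d:freelybraided}, so any other such set occupies a block of positions disjoint from that of $\Psi$; thus the braid move leaves it contracted and, by the lemma asserting that a braid move preserves the relative order of the roots of every inversion set other than the one it reverses, leaves its $F_{\textbf{x}_0}$-coordinate unchanged. After processing all of the $\Psi$ with $\epsilon(\Psi)=1$ we obtain a reduced expression realizing $\epsilon$, so $F_{\textbf{x}_0}$ is surjective.

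For the converse, suppose $w$ is not freely braided, so by Definition~\ref{d:freelybraided} and Lemma~\ref{l:oneroot} there are distinct $\Omega,\Omega'\in\mathrm{CInv}(w)$ with $\Omega\cap\Omega'=\{\gamma\}$ for a single root $\gamma$. Apply Lemma~\ref{l:orthointersections} to obtain $\alpha\in\Omega$ and $\beta\in\Omega'$, neither equal to $\gamma$, with $B(\alpha,\beta)\ne 0$; then $\gamma,\alpha,\beta$ are pairwise distinct, and by Corollary~\ref{c:inversionline} there is a unique inversion set $\Psi''$ containing $\alpha$ and $\beta$, which is long by the final assertion of Lemma~\ref{l:orthointersections}. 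In any reduced expression the roots of $\Omega$ occur in one of its two dual orderings, by Lemma~\ref{l:wlabel}(3) applied through Lemma~\ref{l:standard} and Proposition~\ref{p:workhorse}; since $\gamma,\alpha\in\Omega$, the order in which $\gamma$ and $\alpha$ appear in the root sequence is therefore an injective function of the $\Omega$-coordinate of $F_{\textbf{x}}$, and likewise the order of $\gamma,\beta$ is an injective function of the $\Omega'$-coordinate. The order of $\alpha,\beta$ is governed by the $\Psi''$-coordinate if $\Psi''\in\mathrm{CInv}(w)$, and is the same in all reduced expressions otherwise, by Lemma~\ref{l:inversioncontracts} and Corollary~\ref{c:2setlemma} (since $\Psi''$ is then the only inversion set through $\alpha$ and $\beta$). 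Now the three pairwise orders of $\gamma,\alpha,\beta$ are all induced from a single linear order on the entries of the root sequence, so they can never be cyclic; hence the attainable triples lie in a set of size at most $6$. If $\Psi''$ is not contractible, the $\alpha$–$\beta$ order is fixed, and this confines the pair formed by the $\Omega$- and $\Omega'$-coordinates of $F_{\textbf{x}}$ to at most three of the four possible values, so $F_{\textbf{x}}$ is not onto. If $\Psi''$ is contractible, then $\Omega,\Omega',\Psi''$ are three distinct members of $\mathrm{CInv}(w)$ and the non-cyclicity confines the corresponding triple of coordinates of $F_{\textbf{x}}$ to at most six of the eight possible values, so again $F_{\textbf{x}}$ is not onto. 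Either way, $F_{\textbf{x}}$ fails to be surjective, which finishes the proof.

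I expect the converse to be the main obstacle. The delicate points are selecting the right auxiliary roots $\alpha,\beta$ via Lemma~\ref{l:orthointersections}, verifying cleanly that each pairwise order among $\gamma,\alpha,\beta$ is controlled by exactly one coordinate of $F_{\textbf{x}}$ (or else frozen across all reduced expressions), and packaging the transitivity of the total order carried by a root sequence into the conclusion that the relevant tuple of coordinates is confined to a proper subset of its cube. The reduction to surjectivity and the forward implication are comparatively routine given Corollary~\ref{c:Finjective} and Lemmas~\ref{l:cre}, \ref{l:rootform}, and \ref{l:revroots}.
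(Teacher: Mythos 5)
Your proposal is correct and follows essentially the same route as the paper's proof: the reduction to surjectivity of $F_{\textbf{x}}$ via Corollary~\ref{c:Finjective}, the forward direction via a contracted reduced expression (Lemma~\ref{l:cre}) with independent reversals of the pairwise-disjoint contractible long inversion sets, and the converse via Lemma~\ref{l:orthointersections} and the impossibility of a cyclic triple of pairwise orders on $\gamma,\alpha,\beta$, split into the same two cases according to whether $\Psi''$ is contractible. Your explicit check that surjectivity is independent of the base expression and your counting of attainable coordinate tuples are slightly more systematic packagings of what the paper does, not a different argument.
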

\begin{proof}
Let $w$ be freely braided and $\textbf{x}$ be a reduced expression for $w$.
Since $w$ is freely braided, Lemma~\ref{l:cre} implies that there is a contracted reduced expression $\textbf{y}$ for $w$.  By Lemma~\ref{l:revroots}, applying a braid move to a contracted long inversion set $\Psi$ results in a contracted reduced expression $\textbf{y}'$ for $w$ such that the roots in $\overline{\theta}(\textbf{y}')$ occur in the reverse order of those in $\overline{\theta}(\textbf{y})$.  Thus, for each contracted long inversion set, we may specify that it be in either order and find a sequence of braid moves that transforms $\textbf{y}$ into a contracted reduced expression with each contracted inversion set in the prescribed order.  Thus, $F_{\textbf{x}}$ is surjective.  By Corollary~\ref{c:Finjective}, we have that $|\mathcal{C}(w)| = 2^{N(w)}$.\\ \\
Conversely, if $w$ is not freely braided, then there exist distinct contractible inversion sets $\Psi$ and $\Psi'$ that intersect in a single root $\delta$.  By Lemma~\ref{l:orthointersections}, there exist $\alpha,\beta \in \Phi^+$, neither of which is $\delta$, such that $\alpha \in \Psi$, $\beta \in \Psi'$, and $B(\alpha, \beta) \neq 0$.  By Corollary~\ref{c:inversionline}, there is an inversion set $\Psi''$ containing $\alpha$ and $\beta$, but not $\delta$.\\ \\
If $\Psi''$ is not contractible, then by Lemma~\ref{l:revroots}, $\alpha$ is before $\beta$ in every root sequence or vice versa.  Suppose without loss of generality that $T_{\textbf{x}}(\alpha) > T_{\textbf{x}}(\beta)$ for all reduced expressions $\textbf{x}$ for $w$.  If there were $2^{N(w)}$ commutation classes for $w$, then there would be a commutation class representative $\textbf{y}$ such that $\Psi$ was in the relative ordering $T_{\textbf{y}}(\alpha) < T_{\textbf{y}}(\delta)$ and $\Psi'$ was in the relative ordering where $T_{\textbf{y}}(\delta) < T_{\textbf{y}}(\beta)$.  This would then imply that $T_{\textbf{y}}(\alpha) < T_{\textbf{y}}(\delta) < T_{\textbf{y}}(\beta) < T_{\textbf{y}}(\alpha)$, a contradiction.\\ \\
If instead, the set $\Psi''$ is a contractible inversion set of $\Phi(w)$, and there are $2^{N(w)}$ commutation classes, then there is a labeling $T$ of $\Phi(w)$ such that the relative ordering of $\Psi$ implies $T(\alpha) < T(\delta)$, that of $\Psi'$ implies $T(\delta) < T(\beta)$ and that of $\Psi''$ implies $T(\beta) < T(\alpha)$.  However, this relative ordering can not exist, for it implies $T(\alpha) < T(\alpha)$.
\end{proof}
%end of chap5.tex
%%%%%%%%%%%%%%%%%%%%%%%%%%%%%%%%%%%%%%%%%%%%%%%%%%%%%%%%%%%%%%%%%%%
%%%%%%%%%%%%%%%%%%%%%%%  Bibliography %%%%%%%%%%%%%%%%%%%%%%%%%%%%%
%%%%%%%%%%%%%%%%%%%%%%%%%%%%%%%%%%%%%%%%%%%%%%%%%%%%%%%%%%%%%%%%%%%

\bibliographystyle{plain}

\begin{thebibliography}{}

\end{thebibliography}


\begin{thebibliography}{10}

\bibitem{coxeterorderings}
A.~Bj{\"{o}}rner.
\newblock Orderings of {C}oxeter groups.
\newblock {\em Contemporary Math.}, 34:175 -- 195, 1984.

\bibitem{bb}
A.~Bj{\"{o}}rner and F.~Brenti.
\newblock {\em Combinatorics of {C}oxeter Groups}.
\newblock Springer, New York, NY, 2005.

\bibitem{bourbakicoxeter}
N.~Bourbaki.
\newblock {\em Groupes et Alg\`ebres de Lie, Chapitres IV--VI}.
\newblock Masson, Paris, 1981.

\bibitem{reflectiongen}
V.V. Deodhar.
\newblock A note on subgroups generated by reflections in {C}oxeter groups.
\newblock {\em Arch. Math.}, 53:543 -- 546, 1989.

\bibitem{dyerreflection}
M.~Dyer.
\newblock Reflection subgroups of {C}oxeter systems.
\newblock {\em J. Algebra}, 135:57 -- 73, 1990.

\bibitem{dyerbruhat}
M.~Dyer.
\newblock On the ``{B}ruhat graph'' of a {C}oxeter system.
\newblock {\em Comp. Math.}, 78:185 -- 191, 1991.

\bibitem{dyerhecke}
M.~Dyer.
\newblock Hecke algebras and shellings of {B}ruhat intervals.
\newblock {\em Comp. Math.}, 89:91 -- 115, 1993.

\bibitem{balancedtableaux}
P.~Edelman and C.~Greene.
\newblock Balanced tableaux.
\newblock {\em Advances in Mathematics}, 63:42 -- 99, 1987.

\bibitem{shortbraid}
C.K. Fan.
\newblock Schubert varieties and short braidedness.
\newblock {\em Transformation Groups}, 3(1):51 -- 56, 1998.

\bibitem{nilorbits}
C.K. Fan and J.R. Stembridge.
\newblock Nilpotent orbits and commutative elements.
\newblock {\em J. Algebra}, 196:490 -- 498, 1997.

\bibitem{balancedlabel}
S.~Fomin, C.~Greene, V.~Reiner, and M.~Shimozono.
\newblock Balanced labellings and {S}chubert polynomials.
\newblock {\em Europ. J. Combinatorics}, 18:373 -- 389, 1997.

\bibitem{fbI}
R.~M. Green and J.~Losonczy.
\newblock Freely braided elements in {C}oxeter groups.
\newblock {\em Ann. Comb.}, 6:337 -- 348, 2002.

\bibitem{fbII}
R.~M. Green and J.~Losonczy.
\newblock Freely braided elements in {C}oxeter groups, {II}.
\newblock {\em Adv. in Appl. Math.}, 33:26 -- 39, 2004.

\bibitem{svfb}
R.~M. Green and J.~Losonczy.
\newblock {S}chubert varieties and free braidedness.
\newblock {\em Transformation Groups}, 9(4):327 -- 336, 2004.

\bibitem{handbookgraphtheory}
J.~Gross and J.~Yellen.
\newblock {\em Handbook of Graph Theory}.
\newblock CRC Press, Boca Raton, FL, 2004.

\bibitem{graphtheoryapps}
J.~Gross and J.~Yellen.
\newblock {\em Graph Theory and Its Applications}.
\newblock Chapman and Hall/CRC, Boca Raton, FL, 2006.

\bibitem{hagiwara}
M.~Hagiwara, M.~Ishikawa, and H.~Tagawa.
\newblock A characterization of the simply-laced {FC}-finite {C}oxeter groups.
\newblock {\em Ann. Comb.}, 8:177 -- 196, 2004.
%\newblock In {\em Proceedings of the 16th International Conference on Formal
%  Power Series and Algebraic Combinatorics (FPSAC '04)}, pages 1 -- 8,
%  Vancouver B.C., 2004.

\bibitem{cheby}
D.C Handscomb and J.C. Mason.
\newblock {\em Chebyshev Polynomials}.
\newblock CRC Press, Boca Raton, FL, 2003.

\bibitem{humph}
J.E. Humphreys.
\newblock {\em Reflection Groups and {C}oxeter Groups}.
\newblock Cambridge University Press, Cambridge, 1990.

\bibitem{reducedweyl}
W.~Kra\'skiewicz.
\newblock Reduced decompositions in {W}eyl groups.
\newblock {\em Europ. J. Combinatorics}, 16:293 -- 313, 1995.

\bibitem{matsumoto}
H.~Matsumoto.
\newblock G\'en\'erateurs et relations des groupes de {W}eyl g\'en\'eralis\'es.
\newblock {\em C. R. Acad. Sci. Paris}, 258:3419 -- 3422, 1964.

\bibitem{onfc}
J.R. Stembridge.
\newblock On the fully commutative elements of {C}oxeter groups.
\newblock {\em J. Algebraic Combin.}, 5:353 -- 385, 1996.

\bibitem{tits}
J.~Tits.
\newblock Le probl\`eme des mots dans les groupes de {C}oxeter.
\newblock {\em Ist. Naz. Alta Mat. (1968), Sympos. Math.}, 258(1):175 -- 185,
  1969.

\bibitem{tits2}
J.~Tits.
\newblock {\em Buildings of Spherical Type and Finite BN-Pairs}, volume 386 of
  {\em Lecture Notes in Math.}
\newblock Springer-Verlag, New York, 1974.

\bibitem{udrea}
G.~Udrea.
\newblock A problem of {D}iophantos--{F}ermat and {C}hebyshev polynomials of
  the second kind.
\newblock {\em Portugaliae Math.}, 52:301 -- 304, 1995.

\end{thebibliography}
%copy of .bbl file

\end{document}